\tikzstyle{NE-lines}=[pattern=north east lines, pattern color=black!45]
\theoremstyle{definition}
\newtheorem{theorem}{Theorem}[chapter]
\newtheorem{proposition}[theorem]{Proposition}
\newtheorem{lemma}[theorem]{Lemma}
\newtheorem{corollary}[theorem]{Corollary}
\newtheorem{definition}{Definition}[chapter]
\newtheorem{remark}{Remark}[chapter]
\newtheorem{example}{Example}[chapter]
\newtheorem{openproblem}{Open Problem}[chapter]
\newenvironment{DrawPerm}
{\begin{tikzpicture}[scale=0.4, baseline=20pt]}
{\end{tikzpicture}}
\newcommand{\fillPerm}[3]{
\draw [semithick] (0.001,0.001) grid ({#2},{#3});
\foreach \y [count=\xi] in {#1} ;
	\foreach \x in {1,...,\xi};
		\foreach \y [count=\x] in {#1} \filldraw (\x,\y) circle (6pt);
}
\newcommand{\meshBox}[2]{\fill[NE-lines] #1 rectangle #2;}
\newenvironment{DrawPath}
{\begin{tikzpicture}[scale=0.75, baseline=20pt]}
{\end{tikzpicture}}
\newcommand{\fillPath}[3]{
(0,0)
\foreach \dir in {#1}{
	\ifnum\dir=1
		-- ++(1,1)
	\else
		\ifnum\dir=0
			-- ++(1,0)
		\else
			-- ++(1,0)
		\fi
	\fi
} |- (#1);
\draw[help lines] (0,0) grid (#2,#3);
\coordinate (prev) at (0,0);
\filldraw (prev) circle (3pt);
\foreach \dir in {#1}{
	\ifnum\dir=1
		\coordinate (dep) at (1,1);
	\else
 		\ifnum\dir=0
			\coordinate (dep) at (1,0);
		\else
        	\coordinate (dep) at (1,-1);
		\fi
	\fi
\draw[thick] (prev) -- ++(dep) coordinate (prev);
\filldraw (prev) circle (3pt);
    };
}
\newcommand{\oneton}{[n]} 
\newcommand{\catalan}{\mathfrak{c}} 
\newcommand{\CatalanFun}{C} 
\newcommand{\narayana}{\mathfrak{n}} 
\newcommand{\Do}{\texttt{min}} 
\newcommand{\Co}{\texttt{cons}} 
\newcommand{\fishpattern}{\mathfrak{f}}
\renewcommand{\hat}{\widehat}
\renewcommand{\mod}{\omega}
\newcommand{\Perm}{\mathfrak{S}} 
\newcommand{\identity}{\mathsf{id}} 
\newcommand{\antiid}{\mathsf{ai}} 
\newcommand{\inverse}{\mathcal{I}}
\newcommand{\reverse}{\mathcal{R}}
\renewcommand{\complement}{\mathcal{C}}
\newcommand{\Des}{\mathrm{Des}} 
\newcommand{\Asc}{\mathrm{Asc}} 
\newcommand{\des}{\mathrm{des}} 
\newcommand{\asc}{\mathrm{asc}} 
\renewcommand{\max}{\mathrm{max}} 
\newcommand{\ltrmin}{\mathrm{LTRmin}} 
\newcommand{\ltrminsize}{\mathrm{ltrmin}} 
\newcommand{\ltrmax}{\mathrm{LTRmax}} 
\newcommand{\ltrmaxsize}{\mathrm{ltrmax}} 
\newcommand{\core}{\mathrm{core}} 
\newcommand{\rmost}{\mathrm{rm}} 
\newcommand{\lmost}{\mathrm{lm}} 
\renewcommand{\top}{\mathrm{top}} 
\newcommand{\std}{\mathrm{std}} 
\newcommand{\depth}{\mathrm{dep}} 
\newcommand{\nat}{\mathbb{N}} 
\newcommand{\Cay}{\mathfrak{C}\mathfrak{a}\mathfrak{y}} 
\newcommand{\Modasc}{\mathfrak{M}\mathfrak{A}} 
\newcommand{\Ascseq}{\mathfrak{A}} 
\newcommand{\RGF}{\mathfrak{R}\mathfrak{G}\mathfrak{F}} 
\newcommand{\rgf}{{\sc rgf}} 
\newcommand{\rgfs}{{\sc rgf}s} 
\newcommand{\Fish}{\mathfrak{F}} 
\newcommand{\Sort}{\mathrm{Sort}} 
\newcommand{\out}[1]{\mathcal{S}^{#1}} 
\newcommand{\fsigma}[1]{f^{#1}} 
\newcommand{\Fsigma}[1]{F^{#1}} 
\newcommand{\DSort}{\mathrm{Sort}^{\downarrow}}
\newcommand{\mapsigma}[1]{\mathcal{S}^{#1}}
\newcommand{\pathsigma}[1]{\mathcal{P}_{#1}}
\newcommand{\sorted}{\mathrm{Sorted}} 
\newcommand{\fert}[1]{\mathrm{fert}^{#1}} 
\newcommand{\U}{\mathtt{U}} 
\newcommand{\D}{\mathtt{D}} 
\renewcommand{\H}{\mathtt{H}} 
\newcommand{\Dyck}{\mathcal{D}}
\newcommand{\Motzkin}{\mathcal{M}}
\begin{document}
\pagestyle{myheadings}
\thispagestyle{empty}
\begin{center}
\includegraphics[width = 0.34\linewidth]{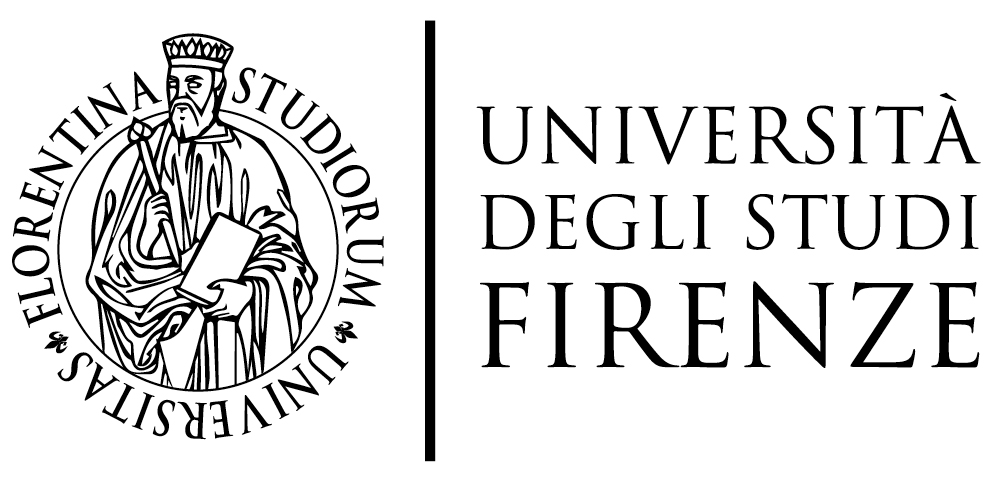}
\includegraphics[width = 0.34\linewidth]{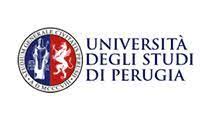}
\includegraphics[width = 0.29\linewidth]{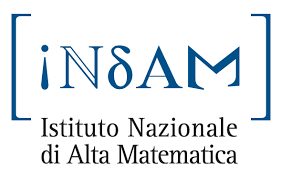}

\bigskip

Universit\`a di Firenze -- Universit\`a di Perugia -- INdAM  -- CIAFM

\bigskip
\bigskip

\large
\textbf{DOTTORATO DI RICERCA\\
IN MATEMATICA, INFORMATICA, STATISTICA}

\bigskip
\medskip

CURRICULUM IN INFORMATICA\\
CICLO XXXIII

\bigskip
\medskip

\textbf{Sede amministrativa: Universit\`a degli Studi di Firenze}\\

\bigskip

Coordinatore Prof.~Paolo Salani

\bigskip
\bigskip

\huge
\textbf{Sorting permutations with}

\medskip
\textbf{pattern-avoiding machines}

\bigskip
\bigskip

\large
Settore Scientifico Disciplinare INF/01
\end{center}

\bigskip
\medskip

\normalsize
\begin{minipage}{0.25\linewidth}
\textbf{Dottorando}

Dott. Giulio Cerbai

\bigskip
\medskip

\hrule
\end{minipage}
\hfill
\begin{minipage}{0.25\linewidth}
\textbf{Tutore}

Prof. Luca Ferrari

\bigskip
\medskip

\hrule

\bigskip
\medskip

\textbf{Coordinatore}

Prof. Paolo Salani

\bigskip
\medskip

\hrule
\end{minipage}

\bigskip
\medskip

\begin{center}
Anni 2017/2020
\end{center}

\newpage
\thispagestyle{empty}
\null\vspace{\stretch{3}}
\begin{flushright}
\textit{A mio babbo}
\end{flushright}
\vspace{\stretch{1}}\null

\restoregeometry
\pagenumbering{roman}

\chapter*{Acknowledgements}

First, I thank my advisor Luca Ferrari, for being the kindest and most helpful guide I could have asked for. His office door was always open, and the hours we spent discussing combinatorics (and everything else) are countless. Regardless how busy he was, he managed to carve out some time to answer my questions and hear my thoughts. I owe him all my academic results: it is not much, but I hope more will come in the future. 

I thank Anders Claesson for raising the question upon which this entire work of thesis is built, and for his precious teachings and insight. I thank him for having invited me in Iceland for a month. He shared his office and treated me as an equal from the first moment, although the difference in our understanding and knowledge was immense.

I thank Luca Ferrari, Antonio Bernini, Elena Barcucci and Renzo Pinzani for welcoming me in the combinatorics family in Firenze.

I thank all the people I've worked with during my PhD, as well as those that were kind enough to share their knowledge with me. Amongst them, Einar Steingr\'imsson, not only for his inspiring lessons and stimulating discussions, but also for his introduction to the study of patterns on ascent sequences. Christian Bean, for providing a lot of useful data. Jean-Luc Baril, Carine Khalil and Vincent Vajnovszki, for the paper we wrote together.

\chapter*{Introduction}
\addcontentsline{toc}{chapter}{Introduction}

To characterize and enumerate permutations that can be sorted by two consecutive stacks, connected in series, is amongst the most challenging open problems in combinatorics. It is known that the set of sortable permutations is a class with an infinite basis, but the basis remains unknown. The enumeration of sortable permutations is still missing as well. Several variants and weaker formulations have been discussed in the literature, some of which are particularly meaningful. For instance, West~$2$-stack sortable permutations are those that can be sorted by making two passes through an \textit{increasing} stack. To use an increasing stack means to always perform a push operation, in a greedy fashion, unless adding the next element would make the content of the stack not increasing, reading from top to bottom. As it is well known, an increasing stack is optimal for the classical problem of sorting with one stack, thus West's device is arguably the most straightforward generalization to two stacks of the orginal instance. This monotonicity requirement can be equivalently expressed by saying that the stack is~$21$-avoiding, again referring to the stack not being allowed to contain occurrences of the pattern~$21$. The present work of thesis is nothing more than an attempt to answer a question raised\footnote{As a follow-up question of a related talk given by Ferrari and the current author at Permutation Patterns 2018.} by Claesson:

\begin{center}
\textit{What happens if we regard the stack as~$\sigma$-avoiding during the first pass, for some pattern~$\sigma$, and~$21$-avoiding during the second pass?}
\end{center}

The resulting device is called the~$\sigma$\textit{-machine}. A~$\sigma$-machine is the natural generalization of West's device obtained by replacing~$21$ with any pattern~$\sigma$, during the first pass, and using the optimal algortithm, during the second pass. This approach can be pushed even further by replacing~$\sigma$ with a set of patterns~$\Sigma$, obtaining a family of sorting devices which we call \textit{pattern-avoiding machines}.

We devote this entire manuscript to the study of pattern-avoiding machines, aiming to gain a better understanding of sorting with two consecutive stacks. For specific choices of~$\sigma$, we characterize and enumerate the set of permutations that are sortable by the~$\sigma$-machine, which we call~$\sigma$\textit{-sortable}. The combinatorics underlying~$\sigma$-machines and~$\sigma$-sortable permutations is extremely rich. It often displays geometric structure and reveals links with a great deal of discrete objects. Certain patterns are particularly relevant. For instance, by setting~$\sigma=21$ we get West's device, while the~$12$-machine is similar to a device studied by Smith (but uses a different sorting algorithm). In order to sort the largest amount of permutations, a sensible try is to set~$\sigma=231$: indeed a~$231$-stack constantly aims to prevent the output of occurrences of~$231$, which is the well known requirement that the input of a classical (increasing) stack has to satisfy in order to be sortable.

The thesis has the following structure:

In Chapter~\ref{chapter_single_pattern} we provide results that cover families of patterns by analyzing how the choice of~$\sigma$ affects the structure of~$\sigma$-sortable permutations. The most relevant (and maybe surprising) is the proof that sets of $\sigma$-sortable permutations that are not permutation classes are enumerated by the Catalan numbers, contained in Chapter~\ref{chapter_single_pattern}. If~$\sigma$-sortable permutations form a class, it is the set of permutations avoiding~$132$ and the reverse of~$\sigma$. On the other hand, sets that are not classes are really hard to characterize and enumerate. Amongst them, the only patterns we are able to solve are~$123$ and~$132$. A description of~$231$-sortable permutations remains unknown, as well as their enumeration. We then prove that~$\sigma$-sortable permutations avoid a bivincular pattern~$\xi$ of length three, unless~$\sigma$ is the skew-sum of~$12$ minus a~$231$-avoiding permutation.

The pattern~$123$ is solved in Chapter~\ref{chapter_pattern123}. We provide a step by step construction of~$123$-sortable permutations that leads to a bijection with a class of pattern-avoiding Schr\"oder paths, whose enumeration is known.

The pattern~$132$ is solved in Chapter~\ref{chapter_pattern132}. We first characterize~$132$-sortable permutations as those avoiding~$2314$ and a mesh pattern of length three. The obtained description is then exploited to determine their geometric structure. More precisely, we show that~$132$-sortable permutations satisfy specific geometric constraints in the grid decomposition induced by left-to-right minima. Finally, we define a bijection with a class of pattern-avoiding set partitions, encoded as restricted growth functions.

In Chapter~\ref{chapter_sigma_tau_machine} we discuss a variant of pattern-avoiding machines where the first stack is~$(\sigma,\tau)$-avoiding, for a pair of patterns~$\sigma$ and~$\tau$. We solve several pairs of patterns of length three. We then determine an infinite family of pairs where sortable permutations are enumerated by the Catalan numbers.

In Chapter~\ref{chapter_sorted_perm_fertilities} we analyze some dynamical aspects of the~$\sigma$-stack operator. We introduce the notions of~$\sigma$\textit{-sorted permutations}, $\sigma$\textit{-fertility} and \textit{effective} pattern. We characterize effective patterns, then we describe~$\sigma$-sorted permutations and~$\sigma$-fertilities of the~$123$-machine.

In Chapter~\ref{chapter_sort_words_various_kind} we consider a natural generalization of~$\sigma$-machines where input sequences and forbidden patterns are chosen in different sets of words, namely Cayley permutations, ascent sequences and modified ascent sequences. In each case, we characterize sets of~$\sigma$-sortable sequences that are classes. By encoding the action of~$\sigma$-stacks as labeled Dyck paths, we then determine for which patterns~$\sigma$ the~$\sigma$-stack operator is bijective on the set of Cayley permutations.

Many aspects concerning pattern-avoiding machines are yet to be investigated thoroughly. Some of them, as well as some open problems and questions that motivated our research, will be mentioned at some point in the manuscript.

\tableofcontents

\listoffigures\addcontentsline{toc}{chapter}{List of figures}

\listoftables\addcontentsline{toc}{chapter}{List of tables}

\chapter*{Notations}
\addcontentsline{toc}{chapter}{Notations}

{\centering
\begin{tabular}{lll}
\toprule%
\textbf{Symbol} & \textbf{Meaning} & \textbf{Note}\\%
\midrule%
\textbf{Words} & & \\%
$\nat$ & Natural numbers & $\nat=1,2,3,\dots$\\%
$\oneton$ & Interval~$\lbrace 1,2,\dots,n\rbrace$ & \\%
$\nat^*$ & Words on~$\nat$ & \\%
$\Perm$ & Permutations & \\%
$\Cay$ & Cayley permutations & Sections~\ref{section_sequences_integers}, \ref{section_sorting_Cayley_perms}\\%
$\RGF$ & Restricted growth functions & Section~\ref{section_sequences_integers}\\%
$\fishpattern$ & Fishburn pattern & $\fishpattern=(231,\lbrace 1\rbrace,\lbrace 1\rbrace)$\\%
$\Fish$ & Fishburn permutations & $\Fish=\Perm(\fishpattern)$\\%
$\Ascseq$ & Ascent sequences & Sections~\ref{section_sequences_integers}, \ref{section_asc_seq_stack}\\%
$\Modasc$ & Modified ascent sequences & Sections~\ref{section_sequences_integers}, \ref{section_modasc_seq_stack}\\%
$X_n$ & Words in~$X$ having length~$n$ & For any set~$X$ and~$n\ge 1$\\%
$X(p)$ & Words in~$X$ avoiding~$p$ & For any set~$X$ and pattern~$p$\\%
$X(A)$ & Words in~$X$ avoiding patterns in~$A$ & For any sets~$X$ and~$A$\\%
$\identity$ & Identity permutation & $\identity_n=12\cdots n$\\%
\midrule%
$\sigma$-\textbf{machines} & & \\%
$\mapsigma{\sigma}$ & $\sigma$-stack operator & \\%
$\mapsigma{21}\circ\mapsigma{\sigma}$ & $\sigma$-machine & \\%
$\Sort(\sigma)$ & $\sigma$-sortable permutations & \\%
$\fsigma{\sigma}_n$ & Cardinality of~$\Sort_n(\sigma)$ & $\Fsigma{\sigma}(t)=\sum_{n\ge 0}\fsigma{\sigma}_nt^n$\\%
$\hat{\sigma}$ & $\hat{\sigma}=\sigma_2\sigma_1\sigma_3\cdots\sigma_k$ & \\%
$\sorted(\sigma)$ & $\sigma$-sorted permutations & \\%
$\fert{\sigma}(\pi)$ & $\sigma$-fertility of~$\pi$ & $\fert{\sigma}(\pi)=|\left(\mapsigma{\sigma}\right)^{-1}(\pi)|$\\%
\midrule%
\textbf{Miscellanea} & & \\%
$\inverse$, $\reverse$, $\complement$ & Inverse, reverse, complement & Trivial bijections on~$\Perm$\\%
$\Des(\cdot)$ & Set of descents & $\des(\cdot)=|Des(\cdot)|$\\%
$\Asc(\cdot)$ & Set of ascents & $\asc(\cdot)=|Asc(\cdot)|$\\%
$\ltrmin(\cdot)$ & Set of ltr-minima & $\ltrminsize(\cdot)=|\ltrmin(\cdot)|$\\%
$\ltrmax(\cdot)$ & Set of ltr-maxima & $\ltrmaxsize(\cdot)=|\ltrmax(\cdot)|$\\%
\bottomrule%
\end{tabular}
\par}

\chapter{Preliminaries}
\pagenumbering{arabic}

\section{Patterns on words and permutations}\label{section_pattern_avoidance}

The notion of pattern avoidance plays a central role in our work. A detailed introduction on patterns on words can be found in the books~\cite{Bo2} and~\cite{Ki}, while the paper~\cite{Be} contains a brief presentation on permutation patterns. In this section we recall some basic definitions and notations from the literature.

Let~$\nat^*$ be the set of words over the alphabet~$\nat=\left\lbrace 1,2,\dots\right\rbrace$ of positive integers. Let~$x=x_1\cdots x_n$ and~$y=y_1\cdots y_k$ be words in~$\nat^*$, with~$k\le n$. We say that~$y$ is a \textit{pattern} of~$x$ if there exist indices~$i_1<i_2<\cdots<i_k$ such that~$x_{i_1}x_{i_2}\cdots x_{i_k}$ is \textit{order isomorphic} to~$y$, that is:
\begin{itemize}
\item $x_{i_s}<x_{i_t}$ if and only if~$y_s<y_t$; and
\item $x_{i_s}=x_{i_t}$ if and only if~$y_s=y_t$,
\end{itemize}
for each pair of indices~$s,t$. The subsequence~$x_{i_1}\cdots x_{i_k}$ is an \textit{occurrence} of~$y$ in~$x$ and we write~$x_{i_1}\cdots x_{i_k}\simeq y$. If~$y$ is a pattern of~$x$, we say that~$x$ \textit{contains}~$y$ and we write~$x\ge y$. Otherwise, we say that~$x$ \textit{avoids}~$y$ (or~$x$ is~$y$\textit{-avoiding}) and write~$x\not\ge y$.

Let~$X$ be a set of words. Given a pattern~$y$, let~$X(y)$ be the set of words in~$X$ that avoid~$y$. For a set of patterns~$Y$, denote by~$X(Y)$ the set of words in~$X$ that avoid every pattern in~$Y$. If~$Y=\lbrace y_1,\dots,y_t\rbrace$, we write~$X(y_1,\dots,y_t)$ instead of~$X\left(\lbrace y_1,\dots,y_t\rbrace\right)$. We use the notation~$X_n$ to denote the set of words of length~$n$ in~$X$, where the length of a word is the number of letters it contains. The sets~$X_n(y)$ and~$X_n(Y)$ are defined accordingly.

Let~$n\ge 1$ and let~$\oneton=\lbrace 1,\dots,n\rbrace$. A \textit{permutation} of length~$n$ is a rearrangement~$x=x_1\cdots x_n$ of the integers~$\oneton$. Sometimes we regard the empty word as the only permutation of length zero. Denote by~$\Perm$ the set of all permutations. A permutation~$x=x_1\cdots x_n$ is often represented by plotting the points~$\lbrace(i,x_i)\rbrace_i$ in the Euclidean plane, as in Figure~\ref{figure_plot_perm}. The \textit{identity} (or \textit{increasing}) permutation of length~$n$ is~$\identity_n=12\cdots n$. The \textit{anti-identity} (or \textit{decreasing}) permutation of length~$n$ is~$\antiid_n=n\cdots 21$.

Pattern containment is a partial order on the set~$\Perm$ and the resulting poset is called the \textit{permutation pattern poset}. A set of permutations~$C$ which is closed downwards under this partial order is said to be a \textit{permutation class} (or simply \textit{class}). To be closed downwards means that for any pair of permutations~$x,y$, if~$x\in C$ and~$y\le x$, then~$y\in C$ too. A permutation class~$C$ is characterized completely by the minimal permutations in the complementary set~$\Perm\setminus C$. The \textit{basis} of~$C$ is the set of minimal avoided patterns. Note that the basis of a class is an antichain due to minimality. Conversely, if~$B$ is an antichain, then~$\Perm(B)$ is a class with basis~$B$. A class~$C$ is \textit{finitely based} if its basis is finite. If the basis is a singleton, then~$C$ is said to be \textit{principal}.

\begin{example}\label{example_pat_involv}
Let~$y=231$. A permutation~$x=x_1\cdots x_n$ contains~$y$ if there are three elements~$x_i,x_j,x_k$, with~$i<j<k$, such that~$x_k<x_i<x_j$. It is well known that there are~$\catalan_n$ $231$-avoiding permutations of length~$n$, where~$\catalan_n=\frac{1}{n+1}\binom{2n}{n}$ is the~$n$-th Catalan number (sequence~A000108 in~\cite{Sl}).
\end{example}

\subsection{Generalized pattern avoidance}

Pattern containment has been generalized in a variety of ways. Some notions of non-classical pattern are recalled below.

A \textit{barred} permutation~\cite{We2} is a permutation where some entries are barred. Let~$y$ be a barred permutation and let~$y'$ be the classical permutation underlying~$y$, that is the permutation obtained from~$y$ by ignoring the bars. Let~$w$ be the permutation order isomorphic to the non-barred entries of~$y$. For a permutation~$x$, to avoid the barred pattern~$y$ means that every occurrence of~$w$ in~$x$ is part of a classical occurrence of~$y'$.

A \textit{bivincular} pattern~\cite{BMCDK} of length~$k$ is a triple~$(y,S,T)$, where~$y$ is a permutation of length~$k$ and~$S,T$ are subsets of~$\lbrace 0,1,\dots,k\rbrace$. An occurrence of the bivincular pattern~$(y,S,T)$ in a permutation~$x=x_1\cdots x_n$ is then a classical occurrence~$x_{i_1}\cdots x_{i_k}$ of~$y$ such that:
\begin{itemize}
\item $i_{s+1}=i_s+1$, for each~$s\in S$;
\item $j_{t+1}=j_{t}+1$, for each~$t\in T$,
\end{itemize}
where~$\lbrace x_{i_1},\dots,x_{i_k}\rbrace=\lbrace j_1,\dots,j_k\rbrace$, with $j_1<\cdots<j_k$; by conventions, $i_0=j_0=0$ and $i_{k+1}=j_{k+1}=n+1$. The set~$S$ identifies contraints of adjacency on the positions of the elements of~$y$, while the set~$T$, symmetrically, identifies constraints on their values. An example of bivincular pattern is depicted in Figure~\ref{figure_bivincular_pattern_132}.

Mesh patterns generalize both classical and bivincular patterns. A \textit{mesh pattern}~\cite{BC} of length~$k$ is a pair~$(y,A)$, where~$y$ is a permutation of length~$k$ and~$A\subseteq\lbrace 0,1,\dots,k\rbrace\times\lbrace 0,1,\dots,k\rbrace$ is a set of pairs of integers. The elements of~$A$ identify the lower left corners of shaded squares in the plot of~$y$. An occurrence of the mesh pattern~$(y,A)$ in the permutation~$x$ is then an occurrence of the classical pattern~$y$ in~$x$ such that no elements of~$x$ are placed into a shaded square of~$A$. An example of mesh pattern is depicted in Figure~\ref{figure_mesh_pattern_132}.

\begin{figure}
\centering
\begin{DrawPerm}
\fillPerm{2,5,3,4,1}{5.999}{5.999}
\end{DrawPerm}
\hspace{25pt}
\begin{DrawPerm}
\meshBox{(0,2)}{(4,3)}
\meshBox{(2,0)}{(3,4)}
\fillPerm{1,3,2}{3.99}{3.99}
\end{DrawPerm}
\hspace{25pt}
\begin{DrawPerm}
\meshBox{(0,2)}{(1,3)}
\meshBox{(2,0)}{(3,1)}
\meshBox{(2,1)}{(3,2)}
\fillPerm{1,3,2}{3.99}{3.99}
\end{DrawPerm}
\caption[The plot of a permutation and two examples of generalized patterns.]{The plot of the permutation~$x=25341$, on the left. The bivincular pattern~$\sigma=(132,\lbrace 2\rbrace,\lbrace 2\rbrace)$, in the center. And the mesh pattern~$\mu=(132,\left\lbrace(0,2),(2,0),(2,1)\right\rbrace)$, on the right. Note that, for example, $253$ is an occurrence of~$\mu$ in~$x$, but it is not an occurrence of~$\sigma$, since the element~$4$ breaks the adjacency constraint between~$5$ and~$3$. On the other hand, $254$ is not an occurrence of~$\sigma$ in~$\pi$ due to the element~$3$ falling in the shaded square~$(2,1)$.}\label{figure_plot_perm}\label{figure_bivincular_pattern_132}\label{figure_mesh_pattern_132}
\end{figure}

In analogy with classical pattern avoidance, we use the same notation~$\Perm(\sigma)$ (respectively $\Perm_n(\sigma)$) to denote the set of permutations (respectively permutations of length~$n$) that avoid $\sigma$, where~$\sigma$ is either a barred, bivincular or mesh pattern. Notice that~$\Perm(\sigma)$ is not necessarily a permutation class when~$\sigma$ is a non-classical pattern.

\section{Statistics and decompositions}\label{section_stats_and_decomp}

Let~$x=x_1\cdots x_n$ be a permutation of length~$n$.

The \textit{trivial bijections} on~$\Perm_n$ are inverse, reverse and complement. The \textit{inverse} of~$x$ is its usual group theoretic inverse~$\inverse(x)$. In one-line notation, $\inverse(x)=y_1\cdots y_n$ is defined by~$y_i=j$, if~$x_j=i$, for each~$i=1,\dots,n$. The \textit{reverse} of~$x$ is~$\reverse(x)=x_n\cdots x_1$. The \textit{complement} of~$x$ is~$\complement(x)=(n+1-x_1)\cdots(n+1-x_n)$. The three trivial bijections behave well with respect to pattern involvement, that is~$x$ contains~$y$ if and only if~$\mathcal{O}(x)$ contains~$\mathcal{O}(y)$, for~$\mathcal{O}\in\lbrace\inverse,\reverse,\complement\rbrace$. As shown in~\cite{Sm2}, the trivial bijections generate the full automorphism group of the permutation pattern poset.

An \textit{inversion} is a pair of indices~$(i,j)$ such that~$i<j$ and~$x_i>x_j$. Equivalently, it is an occurrence~$x_ix_j$ of the pattern~$21$.

An \textit{ascent} is an index~$i\in\lbrace 1,\dots,n-1\rbrace$ such that~$x_i<x_{i+1}$. If~$i$ is an ascent, we will sometimes abuse notation and say that~$x_ix_{i+1}$ is an ascent. If~$x_{i+1}=x_i+1$, the ascent is said to be \textit{consecutive}. Denote by~$\Asc(x)$ the set of ascents of~$x$ and let~$\asc(x)=|\Asc(x)|$. \textit{Descents}, \textit{consecutive descents}, the set~$\Des(x)$ and the statistic~$\des(x)$ are defined symmetrically.

An entry~$x_i$ is a \textit{left-to-right minimum} (briefly \textit{ltr-minimum}) of~$x$ if~$x_i<x_j$, for each~$j<i$. The set of ltr-minima of~$x$ is denoted with~$\ltrmin(x)$ and~$\ltrminsize(x)=|\ltrmin(x)|$. The \textit{left-to-right minima decomposition} (briefly \textit{ltr-min decomposition}) of~$x$ is~$x=m_1B_1m_2B_2\cdots m_tB_t$, where~$t=\ltrminsize(x)$, $\ltrmin(x)=\lbrace m_1,\dots,m_t\rbrace$ and the \textit{block}~$B_i$ contains the entries of~$x$ that are placed strictly between~$m_{i}$ and~$m_{i+1}$, for~$i=1,\dots,n-1$. The last block~$B_t$ contains the entries that follow~$m_t$. Note that~$m_t=1$. The notion of \textit{left-to-right maximum} (briefly \textit{ltr-maximum}), the set~$\ltrmax(x)$, its size~$\ltrmaxsize(x)$ and the \textit{ltr-max decomposition}~$x=M_1B_1M_2B_2\cdots M_tB_t$ are defined analogously. In this case, we have~$M_t=n$.

\begin{example}
Let~$x=471823769$. Then~$\Asc(x)=\lbrace 1,3,5,6,8\rbrace$, where the only consecutive ascent is~$5$. The ltr-minima of~$x$ are~$\ltrmin(x)=\lbrace 4,1\rbrace$, thus its ltr-min decomposition is~$x=m_1B_1m_2B_2$, where $m_1=1$, $B_1=7$, $m_2=1$ and $B_2=823769$.
\end{example}

Let~$k\ge 1$. The~\textit{$k$-inflation of~$x$ at~$x_i$} is the permutation of length~$n+k-1$ obtained from~$x$ by replacing~$x_i$ with the consecutive increasing sequence~$x_i(x_{i}+1)\cdots(x_{i}+k-1)$ and suitably rescaling the remaining elements. For instance, the~$3$-inflation of~$451\mathbf{3}2$ at~$3$ is~$671\mathbf{345}2$.

Let~$x=x_1\cdots x_n$ and~$y=y_1\cdots y_k$. The \textit{direct sum} of~$x$ and~$y$ is the permutation~$x\oplus y=xy'$, where~$y'$ is obtained from~$y$ by adding~$n$ to each of its entries. In other words, $x\oplus y$ is the only permutation~$w_1\cdots w_{n+k}$ of length~$n+k$ such that~$w_i<w_j$ for each~$i\le n$ and~$j\ge n+1$, $w_1\cdots w_n$ is an occurrence of~$x$ and~$w_{n+1}\cdots w_{n+k}$ is an occurrence of~$y$. The \textit{skew sum}~$x\ominus y$ is obtained analogously by requiring~$w_i>w_j$ for each~$i\le n$ and~$j\ge n+1$. A permutation is \textit{layered} if it is the direct sum of decreasing permutations. As it is well known~\cite{AN}, a permutation~$w$ is layered if and only~$w\in\Perm(231,312)$ and there are~$2^{n-1}$ layered permutations of length~$n$, for each~$n\ge 1$. Similarly, a permutation~$w$ is \textit{co-layered} if~$w\in\Perm(132,213)$, or, equivalently, if~$w$ is the skew sum of increasing permutations.

\section{Sets of integer sequences}\label{section_sequences_integers}

Let~$x=x_1\cdots x_n$ be a word of length~$n$ on~$\nat$.

The word~$x$ is a \textit{Cayley permutation} if~$\lbrace x_1,\dots,x_n\rbrace=[\max(x)]$. Equivalently, if~$x$ contains at least one copy of each integer from one to its maximum value. Denote by~$\Cay$ the set of Cayley permutations. Cayley permutations are sometimes called \textit{normalized words}~\cite{DK}, but also surjective words\footnote{Cayley permutations encode surjective endofunctions~$[n]\mapsto [k]$.}, Fubini words or packed words. A Cayley permutation~$x=x_1\cdots x_n$ with maximum equal to~$k$ encodes an ordered set partition (\textit{ballot}) of~$[n]$ with~$k$ blocks~$B_1B_2\dots B_k$, where~$i\in B_{x_i}$ for each~$i$. Cayley permutations are enumerated, with respect to their length, by the Fubini numbers (sequence~A000670 in~\cite{Sl}). For example, the only Cayley permutation of length one is~$1$, there are three Cayley permutations of length two, namely~$11,12$ and~$21$, and thirteen Cayley permutations of length three, which are~$111$, $112$, $121$, $122$, $123$, $132$, $211$, $212$, $213$, $221$, $231$, $312$, $321$. Given a word~$x=x_1\cdots x_k$ on~$\nat$, there is exactly one Cayley permutation~$\std(x)$ that is order isomorphic to~$x$. We call~$\std(x)$ the \textit{standardization} of~$x$. The sequence~$\std(x)$ is obtained by replacing each occurrence of the smallest integer of~$x$ with~$1$, each occurrence of the second smallest integer with~$2$ and so on. For instance, we have~$\std(1381365)=1251243$. Notice that, if~$x$ is a word on~$\nat$ and~$y$ is a Cayley permutation, then~$x_{i_1}\cdots x_{i_k}$ is an occurrence of~$y$ in~$x$ if and only if~$\std(x_{i_1}\cdots x_{i_k})=y$. In other words, the set~$\Cay$ is the set of standardized sequences, and thus it is is the set where patterns live naturally in.

The word~$x$ is a \textit{restricted growth function} (briefly {\rgf}) if~$x_1=1$ and~$x_{i+1}\le 1+\max\lbrace x_1,\dots, x_i\rbrace$, for each~$i\le n-1$. Similarly to Cayley permutations, a {\rgf}~$x=x_1\cdots x_n$~naturally encodes the partition of~$\oneton$ with blocks~$B_1B_2\dots B_k$, where~$x_i$ is the index of the block that contains~$i$. Thus {\rgfs} are enumerated by the Bell numbers (sequence~A000110 in~\cite{Sl}). The set of {\rgfs} is denoted by~$\RGF$. Note that~$\RGF$ is a subset of~$\Cay$. Pattern avoidance on {\rgfs} was discussed in~\cite{CDDGGPS,JM,Sa}.

The word~$x$ is an \textit{ascent sequence} if~$x_1=1$ and~$x_{i+1}\le 2+\asc\left(x_1\cdots x_i\right)$, for each~$i\le n-1$. Ascent sequences were introduced\footnote{In the original work ascent sequences are~$0$-based, that is~$x_1=0$ and~$x_{i+1}\le 1+\asc\left(x_1\cdots x_i\right)$.} in~\cite{BMCDK} as an auxiliary class of objects that embodies the structure of~$(2+2)$-free posets, certain chord diagrams and Fishburn permutations. Fishburn permutations are those avoiding the bivincular pattern~$\fishpattern=(231,\lbrace 1\rbrace,\lbrace 1\rbrace)$ (see Figure~\ref{figure_fishpat_diagram_ascseq}). Roughly speaking, ascent sequences encode bijectively the so called active sites of Fishburn permutations. All these objects are enumerated by the Fishburn numbers (sequence~A022493 in~\cite{Sl}). Denote by~$\Ascseq$ the set of ascent sequences. Due to the intrinsically complicated structure of ascent sequences, pattern avoidance on~$\Ascseq$ seems to be rather more complicated than its analogue on permutations~\cite{BP,CeCl,DS,MS}. \textit{Modified ascent sequences}~\cite{BMCDK} are a slightly more manageable version of~$\Ascseq$. Let~$x=x_1\cdots x_n$ be an ascent sequence and let~$\Asc(x)=\lbrace i_1,\dots,i_k\rbrace$. The modified ascent sequence of~$x$ is obtained as follows. For~$j=1,\dots,k$, increase by one each entry that precedes position~$i_j$ and is greater than or equal to~$x_{i_j+1}$. The resulting sequence~$\mod(x)$ is the modified sequence of~$x$. Let~$\Modasc$ be the set of modified ascent sequences. This procedure can be easily inverted, thus the map~$\mod:\Ascseq\to\Modasc$ defined this way is a size-preserving bijection between~$\Ascseq$ and~$\Modasc$. The maps~$\phi$ and~$\mod$, as well as the bijection~$\phi':\Modasc\to\Fish$ obtained by composition, are depicted in Figure~\ref{figure_fishpat_diagram_ascseq}. A recursive construction of~$\Modasc$ can be found in~\cite{CeCl}: There is exactly one modified ascent sequence of length one, namely the single letter word~$1$. If~$n\geq 2$, then every~$x\in\Modasc_n$ is of one of two forms, depending on whether its last letter~$a$ forms an ascent with the penultimate letter:

\begin{itemize}
\item $x=ya$, with~$1\le a\le b$, or
\item $x=y'a$, with~$b<a\le 2+\asc(y)$,
\end{itemize}

where~$y\in\Modasc_{n-1}$, $b$ is the last letter of~$y$, and~$y'$ is obtained from~$y$ by increasing by one each entry that is less than or equal to~$a$. One of the main advantages of working with modified sequences is that~$\Modasc$ is a subset of~$\Cay$, contrary to~$\Ascseq$. For example, the sequence~$x=12124$ is an ascent sequence, but not a Cayley permutation. Its modified sequence is~$\mod(x)=13124$, which is indeed an element of~$\Cay$. Some more tools and notions on (modified) ascent sequences, as well as a characterization of~$\Modasc$ as a subset of~$\Cay$ (in terms of patterns), will be provided in Section~\ref{section_asc_seq_stack}.

In all the above sets, we either include or rule out the empty word (of length zero), according to the situation. Moreover, when possible, we extend the notions introduced in Section~\ref{section_pattern_avoidance} and Section~\ref{section_stats_and_decomp} accordingly.

\begin{figure}
\begin{minipage}{7cm}
\centering
$
\fishpattern=
\begin{DrawPerm}
\meshBox{(1,0)}{(2,4)}
\meshBox{(0,1)}{(4,2)}
\fillPerm{2,3,1}{3.99}{3.99}
\end{DrawPerm}
$
\end{minipage}
\begin{minipage}{7cm}
\centering
$
\xymatrix{
\Ascseq \ar@{->}[rr]^{\phi} \ar@{->}[d]_{\mod}& & \Fish\\
\Modasc\ar@{->}[urr]_{\phi'} & &\\
}
$
\end{minipage}
\caption[The Fishburn pattern and the bijections linking Fishburn permutations, ascent sequences and modified ascent sequences.]{The pattern~$\fishpattern$, on the left. How the bijections~$\phi$, $\mod$ and~$\phi'$ are related, on the right.}\label{figure_fishpat_diagram_ascseq}
\end{figure}

\section{Lattice paths}\label{section_lattice_paths}

Lattice paths are amongst the most studied combinatorial objects due to the huge amount of combinatorial issues they can model. They are well suited to be studied with the elegant symbolic approach, which often leads directly to enumerative results. In this thesis, we always consider lattice paths in the first quadrant of the discrete plane~$\mathbb{Z}\times\mathbb{Z}$, starting at the origin and ending on the~$x$-axis. The \textit{length} of a lattice path is its final abscissa. A lattice path is encoded by the word that records its steps, going from left to right. A \textit{labeled} lattice path is a path where each step has a label. According to what kind of steps are allowed, we obtain several families of lattice paths.

A \textit{Dyck path} is a lattice path that uses two kinds of steps (of length one), namely up steps~$\U=(+1,+1)$ and down steps~$\D=(+1,-1)$. The \textit{height} of a step is its final ordinate. The \textit{height} of a Dyck path is the maximum height of its steps. Observe that in a Dyck path the number of~$\U$ steps matches the number of~$\D$ steps, since the path ends on the~$x$-axis. Moreover, in each prefix the number of~$\U$ steps is at least equal to the number of~$\D$ steps (due to the path never falling below the~$x$-axis). These two properties characterize the set of words on~$\lbrace\U,\D\rbrace$ that encode Dyck paths. For each up step~$\U$, there is a unique \textit{matching} down step~$\D$ defined as the first~$\D$ step after~$\U$ that has height one less than~$\U$. Since each step has length one, the length of a Dyck path is equal to the total number of its steps, which is two times the number of~$\D$ (or equivalently~$\U$) steps. Given a Dyck path~$P$ of semilength~$n$, the \textit{reverse} path of~$P$ is the path~$\reverse(P)$ obtained from~$P$ by taking the symmetric path with respect to the vertical line~$x=n$. Equivalently, the path~$\reverse(P)$ is encoded by the word obtained by taking the reverse of the word that encodes~$P$ and then transforming each~$\U$ in~$\D$ and viceversa. It is well known that Dyck paths, according to their semilength, are enumerated by the Catalan numbers. An example of Dyck path is depicted in Figure~\ref{figure_dyck_path}. The set of Dyck paths of semilength~$n$ is denoted by~$\Dyck_n$, while~$\Dyck$ denotes the set of all Dyck paths.

\begin{remark}\label{remark_dyck_first_return}
It is well known that any non-empty Dyck path~$P$ has a unique decomposition~$P=\U Q_1\D Q_2$, where~$Q_1$ and~$Q_2$ are two (possibly empty) Dyck paths. Since the~$\D$ step that follows~$Q_1$ is the first return on the~$x$-axis, this is called the \textit{first-return decomposition} of~$P$.
\end{remark}

A \textit{Motzkin path} is defined exactly like a Dyck path, except that one additional kind of step is allowed: the horizontal step~$\H=(1,0)$. Again the length of a Motzkin path is equal to the total number of its steps. Motzkin paths, according to their length, are enumerated by the Motzkin numbers (sequence~A001006 in~\cite{Sl}). The sets of Motzkin paths and Motzkin paths of length~$n$ are denoted by~$\Motzkin$ and~$\Motzkin_n$, respectively. If we allow horizontal steps~$\H_2=(2,0)$ of length two, instead of~$\H$, we obtain \textit{Schr\"oder paths}. The length of a Schr\"oder path is then the sum of the number of its up steps and down steps plus twice the number of its double horizontal steps. Schr\"oder paths are enumerated by the large Schr\"oder numbers (sequence~A006318 in~\cite{Sl}).

Pattern containment can be naturally extended to lattice paths by considering the words that encode them. The arising notion of classical pattern is rather dull on words on small alphabets (such as~$\lbrace\U,\D\rbrace$). Thus we always consider \textit{consecutive} patterns on paths, that is where occurrences of a pattern must be realized by consecutive letters. From here on, we omit the word consecutive in this context. A \textit{valley} is an occurrence of the pattern~$\D\U$. A \textit{peak} is an occurrence of~$\U\D$. In a Dyck path, the number of peaks is equal to one plus the number of valleys. A \textit{double rise} is an occurrence of~$\U\U$. An \textit{ascending run} of length~$k$ is a maximal occurrence of~$\U^k$. A \textit{descending run} of length~$k$ is a maximal occurrence of~$\D^k$. Given a pattern~$q$ and a set of paths~$X$, we use the notation~$X(q)$ to denote the set of paths in~$X$ that avoid the (consecutive) pattern~$q$. Pattern avoidance on lattice paths was studied, for instance, in~\cite{BBFGPW} and~\cite{CiF}.

\begin{example}\label{example_dyck_213_bij}
The following is a well known bijection between~$213$-avoiding permutations (of length~$n$) and Dyck paths (of semilength~$n$). Given a Dyck path~$P$ of semilength~$n$, label its down steps from right to left with the integers~$[n]$ in increasing way. Then assign to each up step the label of its matching down step. Finally, read the labels of the up steps from left to right. The resulting sequence is a~$213$-avoiding permutation and this correspondence is bijective. For instance, the Dyck path~$\U\U\D\U\U\D\D\D\U\D$ is mapped to the~$213$-avoiding permutation~$25341$ (see Figure~\ref{figure_dyck_path}). An equivalent version of the above bijection, but using~$132$-avoiding permutations, was given by Krattenthaler in~\cite{Kr}.
\end{example}

\begin{figure}
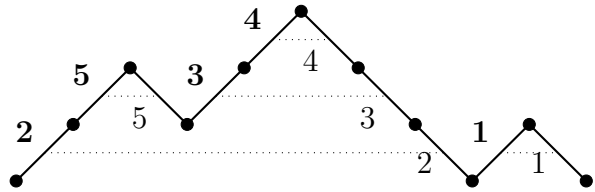

\centering
\begin{DrawPath}
\fillPath{1,1,-1,1,1,-1,-1,-1,1,-1}{0}{0}
\node [above left] at (0.5,0.5) {$\mathbf{2}$};
\node [above left] at (1.5,1.5) {$\mathbf{5}$};
\node [above left] at (3.5,1.5) {$\mathbf{3}$};
\node [above left] at (4.5,2.5) {$\mathbf{4}$};
\node [above left] at (8.5,0.5) {$\mathbf{1}$};
\node [below left] at (2.5,1.5) {$5$};
\node [below left] at (5.5,2.5) {$4$};
\node [below left] at (6.5,1.5) {$3$};
\node [below left] at (7.5,0.7) {$2$};
\node [below left] at (9.5,0.7) {$1$};
\draw [dotted] (0.5,0.5) -- (7.5,0.5);
\draw [dotted] (1.5,1.5) -- (2.5,1.5);
\draw [dotted] (3.5,1.5) -- (6.5,1.5);
\draw [dotted] (4.5,2.5) -- (5.5,2.5);
\draw [dotted] (8.5,0.5) -- (9.5,0.5);
\end{DrawPath}
\caption[A Dyck path and the corresponding~$213$-avoiding permutation.]{The Dyck path~$\U\U\D\U\U\D\D\D\U\D$, of semilength~$5$ and with~$3$ peaks. The corresponding~$213$-avoiding permutation~$25341$ is obtained by reading the bold labels from left to right. Dotted lines connect matching steps.}\label{figure_dyck_path}
\end{figure}

\section{Generating trees and succession rules}

Generating trees and succession rules are very powerful tools in enumerative and bijective combinatorics. Roughly speaking, a generating tree describes a combinatorial construction for a family of discrete objects. Each node of the tree produces a certain set of children, and each child has a unique father, that is it is uniquely obtained from a node situated at the previous level. Typically, starting from the root, each level of the tree contains all the objects of the family that have a given size, which is equal to one more than the size of the objects contained in the previous level. Generating trees are well encoded by succession rules. A succession rule naturally translates the combinatorial construction into something algebraic, which can be exploited to effectively enumerate the objects it represents. Succession rules were introduced by West in~\cite{We3} and~\cite{We4}. Below we recall some basic definitions, referring the interested reader to~\cite{BDLPP} and~\cite{Fe} for a more detailed discussion.

A \textit{generating tree} is a rooted, labeled tree with the property that the label of each node determines the labels of its children. A generating tree is recursively encoded by a \textit{succession rule} consisting in:
\begin{itemize}
\item the label of the root, and
\item a set of rules (\textit{productions}) that explain how to derive, given any node of the tree, the number of its children and their labels.
\end{itemize}

\begin{example}\label{example_dyck_paths_new_peak}
To illustrate the above constructions, we show a very well known succession rule for Dyck paths (see for instance~\cite{BDLPP}). Given a Dyck path~$P$, let~$k$ be the number of points with integer coordinates contained in the last descending run of~$P$. Then we obtain~$k$ Dyck paths of semilength one more by inserting a new peak~$\U\D$ either before a~$\D$ step of the last descending run or at the end of the path. The resulting paths are the children of~$P$. Note that every Dyck path~$P$ of semilength at least two is uniquely constructed this way. Since this construction depends solely on the parameter~$k$, we shall use that integer as label of a given Dyck path. This generation of Dyck paths is then encoded via the following succession rule:
$$
\Omega:\begin{cases}
(2)\\
(k)\longrightarrow (2)(3)\cdots(k)(k+1)
\end{cases}
$$
The root is the path~$\U\D$, which has label~$(2)$ and children~$\U\D\U\D$, with label~$(2)$, and~$\U\U\D\D$, with label~$(3)$. Note that Dyck paths of semilength~$n$ are in one-to-one correspondence with nodes at level~$n$, supposing the root is at level one. Therefore~$\Omega$ is a generating rule for Dyck paths according to their semilength.
\end{example}

\begin{example}\label{example_motzkin_paths_rule}
The following succession rule generates Motzkin paths according to their length.
$$
\Omega:\begin{cases}
(1)\\
(1)\longrightarrow (2)\\
(k)\longrightarrow (1)(2)\cdots(k-1)(k+1),\quad k\ge 2.
\end{cases}
$$
\end{example}

Observe that different generating rules may encode the same family of objects and the same generating rule may encode different families. Notice also that a bijection between two combinatorial families is immediately obtained by showing that both families are generated by the same succession rule, a fact that will be used in the rest of this thesis.

\chapter{Stack sorting and pattern-avoiding machines}

\section{Classical stack sorting}

A stack is a data structure equipped with two operations: \textit{push}, which adds an element to the stack, at the top; and \textit{pop}, which extracts from the stack the most recently pushed element. The problem of sorting permutations using a stack, together with its many variants, has been widely studied in the literature. The reader is referred to~\cite{Bo} for an extensive survey on stack-sorting disciplines. The original version was proposed by Knuth in~\cite{Kn}: given an input permutation~$\pi$, scan its elements from left to right. Every time an element of~$\pi$ is scanned, either push it into the stack or pop the top element of the stack, placing it into the output. The goal is to describe and enumerate sortable permutations. To sort a permutation means to produce a sorted output, that is the identity permutation. As stated in the next lemma, which will be repeatedly used throughout the rest of this thesis, an elegant solution to the original problem can be given in terms of pattern avoidance.

\begin{lemma}\cite{Kn}\label{lemma_classical_stacksort}
Let~$\pi$ be a permutation. Then~$\pi$ is sortable using a classical stack if and only if~$\pi$ avoids the pattern~$231$.
\end{lemma}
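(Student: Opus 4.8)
The plan is to prove both directions of the characterization, establishing first that containing $231$ obstructs sortability, and then that avoiding $231$ guarantees it. Throughout, I recall the operating convention for the classical stack: scanning the input left to right, at each step we either push the current input element onto the stack or pop the top of the stack to the output. The key structural fact I will exploit is that the stack, read from top to bottom, must always be increasing if we are to have any hope of producing the identity: whenever the element currently needed for the output (the smallest unoutput value) is not yet available, it is the push/pop discipline that records which elements are ``trapped'' beneath larger ones.

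For the forward direction (sortable $\Rightarrow$ avoids $231$), I would argue by contrapositive. Suppose $\pi$ contains an occurrence of $231$, say $\pi_i \pi_j \pi_k$ with $i<j<k$ and $\pi_k < \pi_i < \pi_j$. The idea is that in any successful sorting procedure, the value $\pi_k$ must be output before $\pi_i$ (since $\pi_k < \pi_i$ and the output must be increasing). But at the moment $\pi_k$ enters the picture, both $\pi_i$ and $\pi_j$ have already been read, and I claim at least one of them is still inside the stack sitting below where $\pi_k$ would need to exit. Since $\pi_i < \pi_j$ and $\pi_i$ precedes $\pi_j$ in the input, if both were pushed then $\pi_i$ lies below $\pi_j$ in the stack, violating the increasing-from-top-to-bottom requirement; the careful bookkeeping here shows $\pi_i$ cannot have been output before $\pi_k$, so the output fails to be sorted. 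I would phrase this cleanly in terms of the positions of the three values relative to the stack at the critical moment.

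For the reverse direction (avoids $231$ $\Rightarrow$ sortable), I would give an explicit greedy algorithm and prove it succeeds, or equivalently proceed by induction on the length $n$. The natural greedy rule is: always pop when the top of the stack is the smallest value not yet output, and otherwise push the next input element. I would show that if $\pi$ avoids $231$ then this greedy procedure never gets stuck, i.e., it always manages to output the values $1, 2, \dots, n$ in order. The inductive approach is cleaner: locate the position of the value $n$ (or alternatively use a first-return-type decomposition of $\pi$ around the entry $1$). Writing $\pi = \alpha \, n \, \beta$ where $\alpha$ contains the entries before the maximum and $\beta$ those after, the avoidance of $231$ forces every entry of $\alpha$ to be smaller than every entry of $\beta$; one then checks $\alpha$ and $\beta$ are themselves $231$-avoiding, sorts $\alpha$ first, pushes $n$ after $\alpha$ is cleared, and sorts $\beta$, popping $n$ last. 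The decomposition hypothesis is exactly what makes the stack discipline consistent.

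The main obstacle I anticipate is making the forward direction fully rigorous: the informal statement ``$\pi_i$ is trapped below $\pi_j$'' must be converted into a precise invariant about the stack contents at a well-chosen time step, namely the instant just before $\pi_k$ is output. I would pin this down by observing that the contents of the stack, read top to bottom at any moment of a sorting run, must form an increasing sequence (otherwise some larger value sits atop a smaller one and can never be output in the correct relative order), and then show the $231$ pattern forces a violation of this invariant. Once that invariant is isolated and stated as a small auxiliary claim, both directions become short; I expect the bulk of the writing to go into stating that invariant carefully rather than into either implication itself.
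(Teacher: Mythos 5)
The paper does not prove this lemma at all: it is quoted directly from Knuth \cite{Kn}, so there is no in-paper argument to compare against. Your proposal is the standard proof of this classical fact, and it is correct; in particular, the auxiliary invariant you isolate (in any successful sorting run the stack content, read from top to bottom, is increasing at every moment) is exactly the right claim, and both directions become short once it is stated. Two small points of care for the write-up. In the forward direction the clean case split is at the moment $\pi_j$ is pushed: either $\pi_i$ has already been output, which is impossible because $\pi_k<\pi_i$ is still unread and the output must be increasing, or $\pi_i$ is still in the stack, whence $\pi_j$ sits above the smaller $\pi_i$ and your invariant is violated; note that $\pi_k$ is needed only to rule out the first case, so your sentence about ``at least one of them sitting below where $\pi_k$ would exit'' should be reorganized along these lines. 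In the backward direction, the decomposition $\pi=\alpha\, n\,\beta$ works exactly as you say: any $a\in\alpha$ and $b\in\beta$ with $b<a$ would give an occurrence $a\,n\,b$ of $231$, so all entries of $\alpha$ lie below all entries of $\beta$; then induction applies to $\alpha$ and $\beta$ separately, with $n$ sitting harmlessly at the bottom of the stack while $\beta$ is processed and popped last.
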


More in general, one can sort permutations using a connected network of data structures~\cite{Ta}. The input permutation is scanned and its entries flow through the network, going from one device to another, according to how the network is built (or equivalently to which sorting procedure is used). In this framework, some of the most interesting questions that arise are the following.

\begin{itemize}
\item How to characterize those permutations that can be sorted by a given network?
\item How to enumerate sortable permutations?
\item What is the behavior of specific sorting algorithms?
\end{itemize}

A reasonable way to pick a specific procedure consists in imposing static constraints on a stack, for instance by restricting the set of sequences it is allowed to contain. It is also interesting to search for the most efficient sorting algorithm. An algorithm is \textit{optimal} if it sorts every sortable permutations. In other words, if it has the same sorting power as the device used in its full generality (with a non-deterministic sorting procedure). Finding optimal strategies is often a hard task. Regarding classical stack sorting, it is very well known that there is an optimal algorithm, known as \textit{stacksort}, which is defined by the two following key properties (see Listing~\ref{listing_stacksort} in Appendix~\ref{appendix_listings}):

\begin{enumerate}
\item the elements in the stack are maintained in increasing order, reading from top to bottom; in other words, the stack tries to prevent an occurrence of~$21$ to be output. This can be expressed by saying that a classical stack is \textit{increasing}.
\item the algorithm is \textit{right-greedy}, meaning that it always performs a push operation, unless this violates the previous condition. The expression ``right-greedy" refers to the usual (and most natural) representation of this problem, depicted in Figure~\ref{figure_stacksort_machine}.
\end{enumerate}

Although the classical problem is rather simple, it becomes much harder as soon as one allows several stacks connected in series. Quite recently, Pierrot and Rossin~\cite{PR} proved that the problem of deciding whether a given permutation is sortable by two stacks in series is polynomial. Nevertheless, almost every other related question remains unsolved. For example, it is known that sortable permutations form a class, but its basis is infinite~\cite{Mu}, and still unknown. The enumeration of sortable permutations is still unknown too. In the attempt of gaining a better understanding of two stacks in series, some (simpler) variants of the problem have been considered. In his PhD thesis~\cite{We}, West considered two passes through a classical (i.e. increasing) stack, which is equivalent to perform a \textit{right-greedy} algorithm on two stacks in series. In~\cite{Sm}, Smith considered a decreasing stack followed by an increasing stack. Smith's approach was pushed further in~\cite{CeCiF}, where the authors consider many decreasing stacks, followed by an increasing one. More recently, Claesson, Ferrari and the current author~\cite{CeClF} introduced an even more general device consisting of two stacks in series with a right-greedy procedure, where a restriction on the first stack is given in terms of pattern avoidance. The present work of thesis is dedicated to the analysis of these devices, which we call \textit{pattern-avoiding machines}. Pattern-avoiding machines are defined formally in Section~\ref{section_pat_av_mac}, and discussed extensively in the following chapters.

Other than imposing restrictions on devices and sorting algorithms, one can also allow a larger set of input sequences (see~\cite{AAAHH,ALW,DK}). This line of research, on pattern-avoiding machines, is investigated in Chapter~\ref{chapter_sort_words_various_kind}.

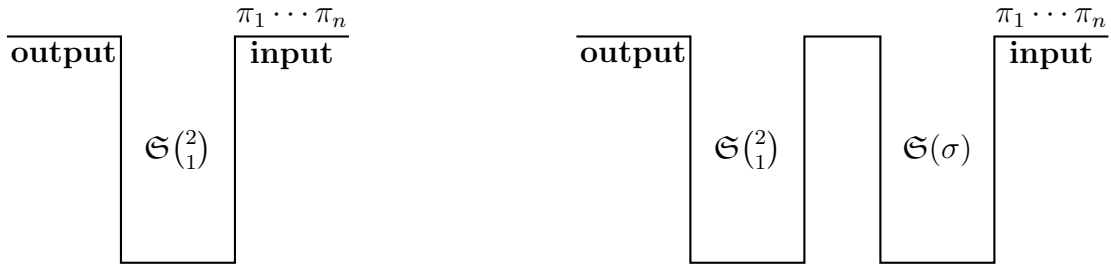
\begin{figure}
\centering
\begin{tikzpicture}[scale=1, baseline=20pt]
\draw[thick] (0,3)--(1.5,3)--(1.5,0)--(3,0)--(3,3)--(4.5,3);
\node at (3.75,2.75){\textbf{input}};
\node at (0.75,2.75){\textbf{output}};
\node at (3.75,3.25){$\pi_1\cdots\pi_n$};
\node at (2.25,1.5){$\Perm\binom{2}{1}$};
\end{tikzpicture}
\hfill
\begin{tikzpicture}[scale=1, baseline=20pt]
\draw[thick] (0,3)--(1.5,3)--(1.5,0)--(3,0)--(3,3)--(4,3)--(4,0)--(5.5,0)--(5.5,3)--(7,3);
\node at (6.25,2.75){\textbf{input}};
\node at (0.75,2.75){\textbf{output}};
\node at (6.25,3.25){$\pi_1\cdots\pi_n$};
\node at (2.25,1.5){$\Perm\binom{2}{1}$};
\node at (4.75,1.5){$\Perm(\sigma)$};
\end{tikzpicture}
\caption[Classical stack sorting and the~$\sigma$-machine.]{The usual representation of sorting with one stack, on the left. The $\sigma$-machine, on the right.}\label{figure_stacksort_machine}
\end{figure}

\section{Pattern-avoiding machines}\label{section_pat_av_mac}

Let~$\Sigma$ be a set of permutations.

\begin{definition}
A~$\Sigma$\textit{-avoiding stack} (or simply~$\Sigma$-stack) is a stack that is not allowed to contain an occurrence of the pattern~$\sigma$, reading from top to bottom, for each~$\sigma\in\Sigma$.
\end{definition}

\begin{definition}
The term~$\Sigma$\textit{-machine} refers to performing a right-greedy algorithm on two stacks in series: a~$\Sigma$-stack, followed by a~$21$-stack.
\end{definition}

Recall that a~$21$-stack is simply a stack as normally used in classical stack-sorting. Thus the~$\Sigma$-machine consists in a pass through a~$\Sigma$-avoiding stack, followed by a pass of the resulting output through a classical stack. From now on, we will assume that~$\Sigma$ does not contain the unit length permutation, since otherwise no element could be pushed in the~$\Sigma$-stack. For singletons and pairs of patterns, we omit the brackets to ease notation. For example, if~$\Sigma=\lbrace\sigma\rbrace$, we write~$\sigma$\textit{-stack} and~$\sigma$\textit{-machine}. An illustration of the~$\sigma$-machine is depicted in Figure~\ref{figure_stacksort_machine}, while the corresponding algorithm is described formally in Listing~\ref{listing_sigma_avoiding} of Appendix~\ref{appendix_listings}. The sequence of operations performed by the~$231$-stack on input~$2413$ is represented in Figure~\ref{figure_sorting_operations}.

\begin{figure}
\centering
\def\arraystretch{6}
\begin{tabular}{|c|c|c|}
\hline
\begin{tikzpicture}[baseline=20pt]
\draw[thick] (0,2)--(1.5,2)--(1.5,0)--(2.5,0)--(2.5,2)--(4,2);
\node at (3.25,1.75){\textbf{input}};
\node at (0.75,1.75){\textbf{output}};
\node at (3.25,2.25){$2413$};
\node at (0.5,0.5){\textbf{Step 1}};
\node at (2.75,0.25){\scriptsize{$\left\lfloor\begin{smallmatrix}2\\3\\1\end{smallmatrix}\right\rfloor$}};
\end{tikzpicture}
&
\begin{tikzpicture}[scale=1, baseline=20pt]
\draw[thick] (0,2)--(1.5,2)--(1.5,0)--(2.5,0)--(2.5,2)--(4,2);
\node at (3.25,1.75){\textbf{input}};
\node at (0.75,1.75){\textbf{output}};
\node at (3.25,2.25){$413$};
\node at (2,0.5){$2$};
\node at (0.5,0.5){\textbf{Step 2}};
\node at (2.75,0.25){\scriptsize{$\left\lfloor\begin{smallmatrix}2\\3\\1\end{smallmatrix}\right\rfloor$}};
\end{tikzpicture}
&
\begin{tikzpicture}[scale=1, baseline=20pt]
\draw[thick] (0,2)--(1.5,2)--(1.5,0)--(2.5,0)--(2.5,2)--(4,2);
\node at (3.25,1.75){\textbf{input}};
\node at (0.75,1.75){\textbf{output}};
\node at (3.25,2.25){$13$};
\node at (2,0.5){$2$};
\node at (2,1){$4$};
\node at (0.5,0.5){\textbf{Step 3}};
\node at (2.75,0.25){\scriptsize{$\left\lfloor\begin{smallmatrix}2\\3\\1\end{smallmatrix}\right\rfloor$}};
\end{tikzpicture}\\
\hline
\begin{tikzpicture}[scale=1, baseline=20pt]
\draw[thick] (0,2)--(1.5,2)--(1.5,0)--(2.5,0)--(2.5,2)--(4,2);
\node at (3.25,1.75){\textbf{input}};
\node at (0.75,1.75){\textbf{output}};
\node at (3.25,2.25){$3$};
\node at (2,0.5){$2$};
\node at (2,1){$4$};
\node at (2,1.5){$1$};
\node at (0.5,0.5){\textbf{Step 4}};
\node at (2.75,0.25){\scriptsize{$\left\lfloor\begin{smallmatrix}2\\3\\1\end{smallmatrix}\right\rfloor$}};
\end{tikzpicture}
&
\begin{tikzpicture}[scale=1, baseline=20pt]
\draw[thick] (0,2)--(1.5,2)--(1.5,0)--(2.5,0)--(2.5,2)--(4,2);
\node at (3.25,1.75){\textbf{input}};
\node at (0.75,1.75){\textbf{output}};
\node at (3.25,2.25){$3$};
\node at (2,0.5){$2$};
\node at (2,1){$4$};
\node at (0.75,2.25){$1$};
\node at (0.5,0.5){\textbf{Step 5}};
\node at (2.75,0.25){\scriptsize{$\left\lfloor\begin{smallmatrix}2\\3\\1\end{smallmatrix}\right\rfloor$}};
\end{tikzpicture}
&
\begin{tikzpicture}[scale=1, baseline=20pt]
\draw[thick] (0,2)--(1.5,2)--(1.5,0)--(2.5,0)--(2.5,2)--(4,2);
\node at (3.25,1.75){\textbf{input}};
\node at (0.75,1.75){\textbf{output}};
\node at (3.25,2.25){$3$};
\node at (2,0.5){$2$};
\node at (0.75,2.25){$14$};
\node at (0.5,0.5){\textbf{Step 6}};
\node at (2.75,0.25){\scriptsize{$\left\lfloor\begin{smallmatrix}2\\3\\1\end{smallmatrix}\right\rfloor$}};
\end{tikzpicture}\\
\hline
\begin{tikzpicture}[scale=1, baseline=20pt]
\draw[thick] (0,2)--(1.5,2)--(1.5,0)--(2.5,0)--(2.5,2)--(4,2);
\node at (3.25,1.75){\textbf{input}};
\node at (0.75,1.75){\textbf{output}};
\node at (2,0.5){$2$};
\node at (2,1){$3$};
\node at (0.75,2.25){$14$};
\node at (0.5,0.5){\textbf{Step 7}};
\node at (2.75,0.25){\scriptsize{$\left\lfloor\begin{smallmatrix}2\\3\\1\end{smallmatrix}\right\rfloor$}};
\end{tikzpicture}
&
\begin{tikzpicture}[scale=1, baseline=20pt]
\draw[thick] (0,2)--(1.5,2)--(1.5,0)--(2.5,0)--(2.5,2)--(4,2);
\node at (3.25,1.75){\textbf{input}};
\node at (0.75,1.75){\textbf{output}};
\node at (2,0.5){$2$};
\node at (0.75,2.25){$143$};
\node at (0.5,0.5){\textbf{Step 8}};
\node at (2.75,0.25){\scriptsize{$\left\lfloor\begin{smallmatrix}2\\3\\1\end{smallmatrix}\right\rfloor$}};
\end{tikzpicture}
&
\begin{tikzpicture}[scale=1, baseline=20pt]
\draw[thick] (0,2)--(1.5,2)--(1.5,0)--(2.5,0)--(2.5,2)--(4,2);
\node at (3.25,1.75){\textbf{input}};
\node at (0.75,1.75){\textbf{output}};
\node at (0.75,2.25){$1432$};
\node at (0.5,0.5){\textbf{Step 9}};
\node at (2.75,0.25){\scriptsize{$\left\lfloor\begin{smallmatrix}2\\3\\1\end{smallmatrix}\right\rfloor$}};
\end{tikzpicture}\\
\hline
\end{tabular}
\caption{The action of the~$231$-stack on input~$2413$.}\label{figure_sorting_operations}
\end{figure}
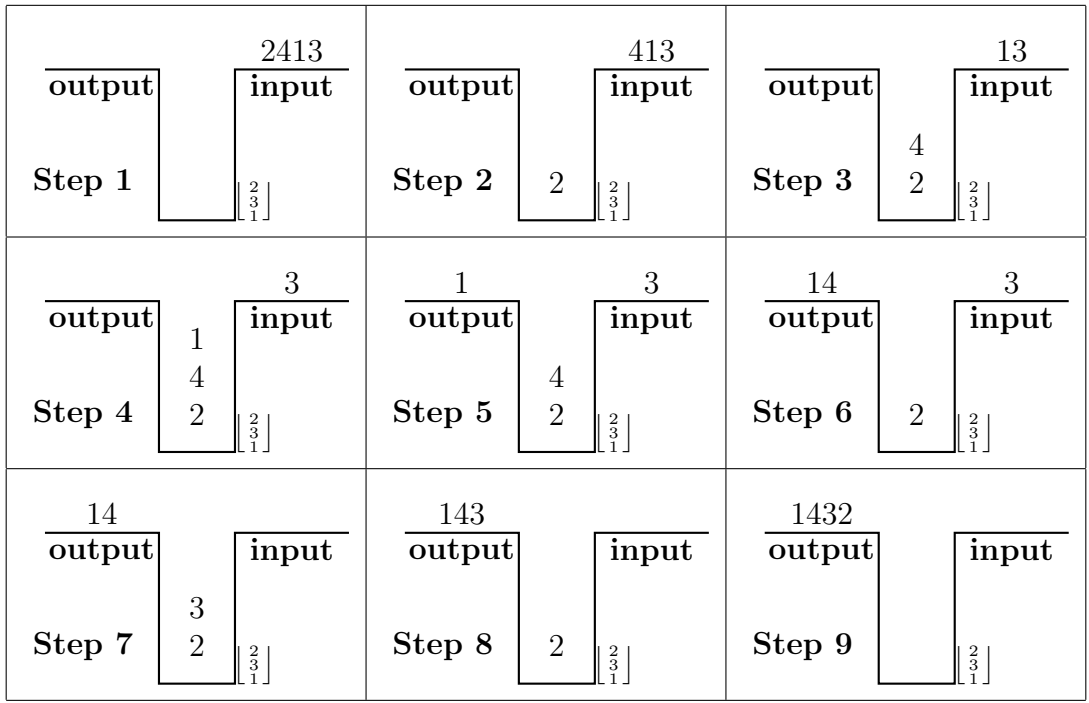

Next we introduce some tools and notations and prove some basic results regarding~$\Sigma$-machines. A permutation~$\pi$ is~\textit{$\Sigma$-sortable} if the~$\Sigma$-machine on input~$\pi$ yields the identity permutation. The set of~$\Sigma$-sortable permutations is denoted by~$\Sort(\Sigma)$. For~$n\ge 1$, denote by~$\fsigma{\Sigma}_n$ the cardinality of~$\Sort_n(\Sigma)$, that is the number of~$\Sigma$-sortable permutations of length~$n$. Let~$\Fsigma{\Sigma}(t)=\displaystyle{\sum_{n\ge 1}\fsigma{\Sigma}_n t^n }$ be the ordinary generating function of~$\Sort(\Sigma)$. Given a permutation~$\pi$, denote by~$\out{\Sigma}(\pi)$ the output of the~$\Sigma$-stack on input~$\pi$. Due to Lemma~\ref{lemma_classical_stacksort}, since~$\out{\Sigma}(\pi)$ is the input of the (final) classical stack in the~$\Sigma$-machine, a permutation~$\pi$ is~$\Sigma$-sortable if and only if~$\out{\Sigma}(\pi)$ avoids the pattern~$231$. This basic fact allows us to determine the~$\Sigma$-sortability of an input permutation~$\pi$ by simply checking whether~$\out{\Sigma}(\pi)$ avoids~$231$ or not, ignoring the final stack. We highlight this remark in the next lemma, which will be used repeatedly from now on.

\begin{lemma}\label{lemma_out_231}
Let~$\pi$ be an input permutation for the~$\Sigma$-machine. Then~$\pi$ is~$\Sigma$-sortable if and only if~$\out{\Sigma}(\pi)$ avoids~$231$.
\end{lemma}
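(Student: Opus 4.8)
The plan is to observe that Lemma~\ref{lemma_out_231} is an almost immediate consequence of Knuth's characterization (Lemma~\ref{lemma_classical_stacksort}) together with the optimality of the right-greedy stacksort algorithm recalled earlier. The key structural fact is that the $\Sigma$-machine factors as a composition of two independent passes: the first pass produces the intermediate permutation $\out{\Sigma}(\pi)$, and the second pass feeds this intermediate permutation, treated now as a fixed input, into a classical $21$-stack operated with the right-greedy discipline. Consequently the final output of the $\Sigma$-machine is precisely the result of applying stacksort to $\out{\Sigma}(\pi)$, and $\pi$ is $\Sigma$-sortable exactly when this final output is the identity.

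First I would record that $\pi$ is $\Sigma$-sortable if and only if the right-greedy stacksort sorts $\out{\Sigma}(\pi)$, which is immediate from the factorization above. Next I would invoke Lemma~\ref{lemma_classical_stacksort}: a permutation $\tau$ is sortable by a classical stack (that is, some sequence of push/pop operations yields the identity) if and only if $\tau$ avoids $231$. Applying this with $\tau=\out{\Sigma}(\pi)$ produces the desired equivalence, provided one knows that the specific deterministic procedure used in the second stack actually realizes a sorting whenever one exists.

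This last caveat is the only point requiring any care, and it is settled by the optimality of stacksort. One direction follows at once: if $\out{\Sigma}(\pi)$ avoids $231$ then it is sortable by Lemma~\ref{lemma_classical_stacksort}, and since the right-greedy algorithm sorts every sortable permutation it outputs the identity on $\out{\Sigma}(\pi)$, so $\pi$ is $\Sigma$-sortable. For the converse, if $\out{\Sigma}(\pi)$ contains $231$ then by Lemma~\ref{lemma_classical_stacksort} \emph{no} sequence of stack operations sorts it, so in particular the right-greedy pass cannot produce the identity, and $\pi$ fails to be $\Sigma$-sortable. Combining the two implications gives the stated equivalence.

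I do not expect a genuine obstacle: this is essentially a restatement lemma whose entire content lies in decomposing the machine into two successive passes and in matching the deterministic second-stage algorithm against the non-deterministic sortability criterion of Knuth. The one thing to state carefully is that the output $\out{\Sigma}(\pi)$ of the first $\Sigma$-stack may be regarded as an ordinary input permutation for the final stack, so that Lemma~\ref{lemma_classical_stacksort} applies to it verbatim.
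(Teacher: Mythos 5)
Your proof is correct and follows essentially the same route as the paper, which treats the lemma as an immediate consequence of the two-pass factorization of the $\Sigma$-machine together with Lemma~\ref{lemma_classical_stacksort}. Your extra care in matching the deterministic right-greedy second pass against Knuth's non-deterministic sortability criterion (via optimality of stacksort) is exactly the implicit step the paper relies on when it recalls that the classical greedy algorithm sorts every $231$-avoiding permutation.
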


The next lemma shows that prefixes of~$\Sigma$-sortable permutations are~$\Sigma$-sortable.

\begin{lemma}\label{lemma_prefix_sortable}
Let~$\pi=\pi_1\cdots\pi_n$ be a permutation of length~$n$. Suppose that the prefix~$\pi_1\cdots\pi_k$ is an occurrence of the pattern~$\gamma$, for some~$1\le k\le n$ and~$\gamma\in\Perm_k$. If~$\pi$ is~$\Sigma$-sortable, then~$\gamma$ is~$\Sigma$-sortable.
\end{lemma}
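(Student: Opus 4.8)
The plan is to track the behavior of the two stacks on the prefix $\pi_1\cdots\pi_k$ and argue that the operations performed on these $k$ elements, during the run of the $\Sigma$-machine on the full permutation $\pi$, are \emph{exactly the same} operations that the $\Sigma$-machine performs when run on the standardized prefix $\gamma$. The key observation is that the $\Sigma$-machine is a right-greedy procedure driven entirely by \emph{comparisons} between the element currently being scanned and the contents of the stacks: a push is performed unless it would create a forbidden occurrence (an occurrence of some $\sigma\in\Sigma$ in the first stack, or of $21$ in the second), and these conditions depend only on the relative order of the elements involved, not on their absolute values. Since $\pi_1\cdots\pi_k$ is an occurrence of $\gamma$, the relative order of its entries coincides with that of $\gamma$, so every comparison made while processing these entries yields the same outcome in both runs.

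First I would set up the correspondence carefully: let $\std$ denote standardization, so $\std(\pi_1\cdots\pi_k)=\gamma$, and observe that at each step the decision made by either stack is invariant under order isomorphism. Then I would argue by induction on the steps of the computation that, after scanning the first $k$ input elements, the configuration reached by the $\Sigma$-machine on input $\pi$ (restricted to these $k$ elements, and standardized) equals the configuration reached by the $\Sigma$-machine on input $\gamma$ after scanning all of $\gamma$. Because the entries $\pi_1,\dots,\pi_k$ are scanned consecutively and \emph{before} any later entry $\pi_{k+1},\dots,\pi_n$ enters either stack, no later element can interfere with the decisions taken on the prefix: at the moment $\pi_k$ is read, the stacks contain only elements of the prefix, and the subsequent entries play no role in the portion of the computation involving $\pi_1,\dots,\pi_k$.

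The cleanest way to conclude is via Lemma~\ref{lemma_out_231}. Since $\pi$ is $\Sigma$-sortable, $\out{\Sigma}(\pi)$ avoids $231$. By the invariance just described, the output the $\Sigma$-stack produces from the prefix $\pi_1\cdots\pi_k$ is, up to standardization, precisely $\out{\Sigma}(\gamma)$; moreover this output forms a subword (indeed an order-isomorphic copy) of $\out{\Sigma}(\pi)$. As any subword of a $231$-avoiding permutation is itself $231$-avoiding, it follows that $\out{\Sigma}(\gamma)$ avoids $231$, and hence $\gamma$ is $\Sigma$-sortable by Lemma~\ref{lemma_out_231} again.

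I expect the main obstacle to be the bookkeeping needed to make the phrase ``the operations on the prefix are the same in both runs'' precise. One must verify that the greedy push/pop decision truly depends only on relative order, handling both the $\Sigma$-avoidance condition on the first stack and the $21$-avoidance condition on the second, and one must confirm that elements of the prefix are fully emitted into the final output before the standardization argument is applied (so that the emitted prefix entries really do occur in the order claimed in $\out{\Sigma}(\pi)$). Once the order-isomorphism invariance of the machine's decisions is established as a lemma, the rest reduces to the elementary fact that $231$-avoidance is inherited by subpermutations.
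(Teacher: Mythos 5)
Your proposal is correct and follows essentially the same route as the paper's proof: the $\Sigma$-stack's push/pop decisions depend only on the relative order of the elements processed, the decisions taken on the prefix are unaffected by the later entries, and hence the restriction of~$\out{\Sigma}(\pi)$ to the prefix elements is order isomorphic to~$\out{\Sigma}(\gamma)$, so $231$-avoidance transfers by Lemma~\ref{lemma_out_231}. The only refinement worth noting is that prefix elements still inside the $\Sigma$-stack after~$\pi_k$ is read are popped later, possibly interleaved with the entries~$\pi_{k+1},\dots,\pi_n$; this does not harm the argument since a stack emits them in the same relative (top-to-bottom) order regardless, which is exactly the point the paper also leaves implicit.
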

\begin{proof}
Observe that the behavior of the~$\Sigma$-stack on the prefix~$\pi_1\cdots\pi_k$ does not depend on the remaining entries~$\pi_{k+1}\cdots\pi_{n}$ of~$\pi$. Moreover, it is the same as the behavior on~$\gamma$, since the operations performed by the~$\Sigma$-stack depend solely on the relative order of the elements processed. Therefore~$\out{\Sigma}(\gamma)$ avoids~$231$, or else~$\out{\Sigma}(\pi)$ would contain~$231$ too, contradicting the hypothesis that~$\pi$ is~$\Sigma$-sortable.
\end{proof}

Lemma~\ref{lemma_prefix_sortable} suggests a recursive construction for~$\Sort_n(\Sigma)$. Indeed every~$\Sigma$-sortable permutation is obtained by appending a new rightmost element to a~$\Sigma$-sortable permutation of length one less, and suitably rescaling the other elements.

As anticipated before, almost the entirety of this thesis is devoted to the analysis of~$\Sigma$-machines. The combinatorics underlying these devices turns out to be unexpectedly rich and deep, offering links with other discrete objects such as lattice paths, set partitions and various families of integer sequences. Some of the questions that motivate our research are reported below (in random order).

\begin{itemize}
\item Given a set of patterns~$\Sigma$, how to characterize the set~$\Sort(\Sigma)$ of~$\Sigma$-sortable permutations? Ideally, we wish to find geometric descriptions, recursive generations and characterizations in terms of pattern avoidance.

\item Given a set of pattern~$\Sigma$, what is the number of~$\Sigma$-sortable permutations of length~$n$? To answer this question, we shall exploit an eventual structural description of~$\Sort(\Sigma)$, find a bijection with other discrete objects or provide a generating tree for~$\Sort(\Sigma)$.

\item Are there properties of~$\Sigma$ that allow us to determine structural information on~$\Sort(\Sigma)$? In this framework, our main result is a characterization of the patterns~$\sigma$ such that~$\Sort(\sigma)$ is a permutation class.

\item Given~$n\ge 1$, what is the number of Wilf-classes of~$\sigma$-sortable permutations of length~$n$? In other words, how many different counting sequences arise by considering the sets~$\Sort(\sigma)$, for each permutation~$\sigma\in\Perm_n$?
\end{itemize}

\section{\texorpdfstring{The~$\sigma$-stack}{The sigma-stack}}

Before moving on to the study of~$\Sigma$-machines, we analyze~$\sigma$-stacks separately.

\begin{lemma}\label{lemma_first_k-1_elements}
Let~$\sigma$ be a permutation of length~$k\ge 2$. Let~$\pi$ be a permutation of length~$n\ge k-2$ and suppose that~$\out{\sigma}(\pi)$ is the increasing permutation. Then~$\pi_1\pi_2\cdots\pi_{k-2}=n(n-1)\cdots (n-k+3)$.
\end{lemma}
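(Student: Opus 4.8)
The plan is to track the \emph{size} of the $\sigma$-stack rather than its detailed contents, and to exploit the fact that a pattern of length $k$ cannot occur in a stack holding fewer than $k$ elements. Recall that the algorithm is right-greedy: it pushes the incoming element unless doing so would create an occurrence of $\sigma$ in the stack, in which case it pops. Since any occurrence of $\sigma$ uses $k$ entries, pushing a new element can only produce one when the stack already holds at least $k-1$ elements. Hence the $\sigma$-stack performs a pop (during the reading of the input) only when its current size is at least $k-1$; after such a pop the size is at least $k-2$, and as soon as the size drops to $k-2$ every push is again legal. Consequently, once the stack has accumulated $k-2$ elements, its size never falls below $k-2$ until the input is exhausted. (Here the hypothesis $n\ge k-2$ guarantees there are enough elements to push.)

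Because no pop occurs while the stack is smaller than $k-1$, the first $k-2$ entries $\pi_1,\dots,\pi_{k-2}$ are pushed consecutively onto the bottom of the stack, with $\pi_1$ deepest. By the size bound just established these $k-2$ bottom elements are never popped during the processing of $\pi$: to remove $\pi_{k-2}$ the stack would have to shrink below $k-2$, which cannot happen. They therefore remain frozen at the bottom and are unloaded only in the final flush, once all of $\pi$ has been read. At that point the stack is emptied from the top, so these $k-2$ elements leave in the order $\pi_{k-2},\pi_{k-3},\dots,\pi_1$, with $\pi_1$ (the deepest) emitted very last.

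Now I would invoke the hypothesis that $\out{\sigma}(\pi)=\identity_n$: the output is read in increasing order, so later-emitted entries carry larger values, and the last $k-2$ values output are the $k-2$ largest, namely $n-k+3<n-k+4<\cdots<n$. Matching this with the emission order of the frozen block gives $\pi_1=n$, $\pi_2=n-1,\ \dots,\ \pi_{k-2}=n-k+3$, which is exactly the claim. The degenerate case $n=k-2$ is covered by the same reasoning, the entire permutation being pushed without any pop and then reversed on unloading.

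I expect the one delicate point to be the freezing step: arguing rigorously that the bottom $k-2$ entries are untouched requires the monotone size floor (pops happen only from size $\ge k-1$), so I would state and justify that floor carefully, since everything else follows mechanically from it together with the increasing-output hypothesis. No appeal to the structure of $\sigma$ beyond its length is needed, which is why the conclusion holds uniformly for every pattern of length $k$.
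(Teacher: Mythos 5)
Your proof is correct and follows essentially the same route as the paper's: the first $k-2$ elements are pushed without interference (a pop needs at least $k-1$ stack entries when $\sigma$ has length $k$), they stay frozen at the bottom until the final flush, and the increasing-output hypothesis then forces $\pi_1\cdots\pi_{k-2}=n(n-1)\cdots(n-k+3)$. The paper states these steps tersely, whereas you justify the freezing via the explicit size-floor argument, which is exactly the missing detail and is sound.
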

\begin{proof}
Since~$\sigma$ has length~$k$, the elements~$\pi_1\cdots\pi_{k-2}$ are pushed directly into the~$\sigma$-stack. Then they remain at the bottom of the~$\sigma$-stack until the end of the sorting process and thus they are the rightmost elements of~$\out{\sigma}(\pi)$.
\end{proof} 

\begin{theorem}\label{theorem_single_stack}
Let~$\pi=\pi_1\cdots\pi_n$ be a permutation of length~$n$ and let~$\sigma\in\Perm_k$, with~$k\ge 2$.
\begin{enumerate}
\item If~$\sigma=\identity_k$, then~$\out{\sigma}(\pi)$ is the increasing permutation if and only if~$\pi=\antiid_n$ and~$n\le k-1$.
\item If~$\sigma=21\oplus\identity_t$, then~$\out{\sigma}(\pi)$ is the increasing permutation if and only if~$\pi=\antiid_t\ominus\alpha$, for some~$\alpha\in\Perm(231)$.
\item In all the remaining cases, $\out{\sigma}(\pi)$ is the increasing permutation if and only if~$\pi=\antiid$.
\end{enumerate}
\end{theorem}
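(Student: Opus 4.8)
The plan rests on one structural observation: while the $\sigma$-stack reads a strictly decreasing run of its input, its content read from top to bottom increases, and such a stack contains $\sigma$ only when $\sigma=\identity_k$. Hence, if $\sigma\ne\identity_k$, a strictly decreasing input is pushed in full and then emptied, so $\out{\sigma}(\antiid_n)=\identity_n$; this already supplies the ``if'' parts of items~(2) and~(3) for $\pi=\antiid_n$. Item~(1) I would settle outright. If $n\le k-1$ the stack never has height $k$, so $\identity_k$ can never occur, everything is pushed and then popped, and $\out{\identity_k}(\pi)=\identity_n$ forces $\pi=\antiid_n$. If instead $n\ge k$, Lemma~\ref{lemma_first_k-1_elements} makes $\pi_1\cdots\pi_{k-2}=n\cdots(n-k+3)$ an increasing block $B$ of the $k-2$ largest values at the bottom; any two entries above $B$ forming an ascent read top to bottom would, together with $B$, spell $\identity_k$, so the part above $B$ must stay strictly decreasing from top to bottom and is therefore run by a decreasing stack, which emits its minimum last. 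Since an increasing output demands that $1$ be emitted first, no such $\pi$ exists, which is exactly the restriction $n\le k-1$.

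The engine for items~(2) and~(3) is a reduction. Assume $\sigma\ne\identity_k$, $\out{\sigma}(\pi)=\identity_n$ and $n\ge k-2$. By Lemma~\ref{lemma_first_k-1_elements} the prefix $\pi_1\cdots\pi_{k-2}$ equals $n(n-1)\cdots(n-k+3)$ and forms the increasing block $B$ of the $k-2$ largest values, which stays inert at the bottom until every smaller value has been output. Let $m$ be the length of the longest suffix of $\sigma$ made of its $m$ largest values in increasing order (so $0\le m\le k-2$ because $\sigma\ne\identity_k$), and put $\sigma'=\sigma_1\cdots\sigma_{k-m}$. The key claim is that, since $B$ lies below everything and carries the largest values, the stack contains $\sigma$ if and only if the part sitting above $B$ contains $\sigma'$ read from top to bottom. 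Granting this, the entries above $B$ are processed exactly as by a $\sigma'$-stack, and $B$ is emptied last in increasing order, so $\out{\sigma}(\pi)=\identity_n$ if and only if $\pi=\antiid_{k-2}\ominus\rho$ with $\out{\sigma'}(\rho)=\identity$, where $\rho$ is the standardisation of $\pi_{k-1}\cdots\pi_n$. (For $n<k-2$ the block never completes and one falls back on the first paragraph.)

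From this reduction the last two items fall out by identifying $\sigma'$. For item~(2), $\sigma=21\oplus\identity_t$ has $k=t+2$ and $\sigma'=21$, so the part above $B$ is run by a classical $21$-stack; by Lemma~\ref{lemma_classical_stacksort} its output is the identity exactly when $\rho$ avoids $231$, giving $\pi=\antiid_t\ominus\alpha$ with $\alpha\in\Perm(231)$. For item~(3), $\sigma$ is neither $\identity_k$ nor $21\oplus\identity_t$, which forces $k\ge 3$ and, crucially, keeps $\sigma'$ non-exceptional: one checks that $\sigma'=\identity_{k-m}$ would force $\sigma=\identity_k$, and $\sigma'=21\oplus\identity_s$ would force $\sigma=21\oplus\identity_{k-2}$, both excluded. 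Since $\rho$ is strictly shorter than $\pi$ (by $k-2\ge 1$) and $\sigma'$ is again non-exceptional, strong induction on the input length — with base case $n\le k-2$ handled as in the first paragraph — yields $\rho=\antiid$, hence $\pi=\antiid_n$.

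The main obstacle is the block-inertness equivalence of the second paragraph: that the stack contains $\sigma$ precisely when the entries above $B$ contain $\sigma'$. This hinges on the fact that $B$ sits at the bottom, so its elements can occupy only the final positions of any occurrence and, being the largest available values, they realise an increasing suffix of $\sigma$ of length at most $m$; one must rule out longer or interleaved uses of $B$ and verify that deleting this suffix leaves a residual pattern that is still neither $\identity$ nor of the skew form $21\oplus\identity_s$. That bookkeeping is what powers the whole trichotomy. Everything else — the top-to-bottom monotonicity on decreasing runs, the inertness of $B$ until the smaller values are exhausted, and the reduction to classical stack-sorting in item~(2) — is routine once this equivalence is established.
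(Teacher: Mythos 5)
Your proof is correct, but the engine behind item (3) is genuinely different from the paper's. On items (1) and (2) the two arguments essentially coincide: the paper also uses Lemma~\ref{lemma_first_k-1_elements} to pin down the prefix, and its proof of item (2) --- the $t$ maxima at the bottom ``play the role of $\identity_t$'' in any occurrence of $21\oplus\identity_t$, so the stack behaves as an empty $21$-stack and Lemma~\ref{lemma_classical_stacksort} finishes --- is precisely your block-inertness equivalence specialized to $m=t$, $\sigma'=21$. For item (3), however, the paper argues directly by contraposition with no induction and no change of pattern: if $\pi\neq\antiid$, it takes the leftmost ascent $\pi_i<\pi_{i+1}$, observes that the decreasing prefix sits in the stack increasing from top to bottom, and shows that either $\pi_{i+1}$ enters above $\pi_i$ (instant inversion in the output) or the pop of $\pi_i$ is caused by an occurrence whose shape forces $\sigma=\identity_k$ or $\sigma=21\oplus\identity_{k-2}$ unless the output picks up an inversion. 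You instead exploit the identity $\sigma=\sigma'\oplus\identity_m$ (which is what your definition of $m$ amounts to), prove the equivalence ``the stack contains $\sigma$ iff the part above $B$ contains $\sigma'$'', and run strong induction on length over all non-exceptional patterns, using that non-exceptionality is inherited by $\sigma'$: indeed $\sigma'=\identity$ or $\sigma'=21\oplus\identity_s$ would give back $\sigma=\identity_k$ or $\sigma=21\oplus\identity_{k-2}$ after summing with $\identity_m$. Your reduction is sound --- both directions of the equivalence follow because $B$'s elements, being bottommost and largest, can only occupy final positions of an occurrence and realize an increasing suffix of largest values, hence at most $m$ of them can be used --- and the inertness of $B$ is automatic, since once only $B$ remains any tested configuration has $k-1$ entries, too few for a length-$k$ pattern. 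What your route buys is uniformity: items (2) and (3) become two instances of one reduction, and the induction replaces the paper's somewhat delicate ``minimal indices'' analysis inside the stack; what the paper's route buys is a shorter, induction-free argument that exhibits an explicit inversion in the output.
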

\begin{proof}
\begin{enumerate}
\item Let~$\sigma=\identity_k$. If~$\pi=\antiid_n$, for some~$n\le k-1$, then~$\out{\sigma}(\pi)=\reverse(\pi)=\identity_n$. Conversely, suppose that~$\out{\sigma}(\pi)=\identity$. If~$n\le k-1$, then it must be~$\pi=\antiid_n$ due to Lemma~\ref{lemma_first_k-1_elements}. Otherwise, suppose, for a contradiction, that~$n\ge k$ and write~$\pi=\pi_1\cdots\pi_{k-1}\pi_k\cdots\pi_n$. Initially, the elements~$\pi_1,\dots,\pi_{k-1}$ are pushed into the~$\sigma$-stack. If~$\pi_k\pi_{k-1}\cdots\pi_1\simeq\sigma$, then~$\pi_k<\pi_{k-1}$ and~$\pi_{k-1}$ is extracted, which is a contradiction with the hypothesis that~$\out{\sigma}(\pi)$ is increasing. On the other hand, if~$\pi_k\cdots\pi_1$ is not an occurrence of~$\sigma$, then~$\pi_k$ is pushed into the~$\sigma$-stack and thus~$\out{\sigma}(\pi)$ contains the substring~$\pi_k\cdots\pi_1$. But, since~$\pi_k\cdots\pi_1$ is not an occurrence of~$\sigma=\identity_k$, the output~$\out{\sigma}(\pi)$ is not increasing, which contradicts the hypothesis.

\item Let~$\sigma=21\oplus\identity_t$. Suppose that~$\out{\sigma}(\pi)$ is the increasing permutation. By Lemma~\ref{lemma_first_k-1_elements}, we have~$\pi_1\pi_2\cdots\pi_t=n(n-1)\cdots (n-t+1)$. Thus~$\pi=\antiid_t\ominus\alpha$. We wish to show that~$\alpha$ avoids~$231$. Observe that the elements~$\pi_1\cdots\pi_t$ are pushed into the~$\sigma$-stack at the beginning of the sorting process. Then, since~$\sigma=21\oplus\identity_t$, the behavior of the~$\sigma$-stack with~$n,n-1,\dots,n-t+1$ at the bottom is equivalent to the behavior of an empty~$21$-stack. Indeed the elements~$n,n-1,\dots,n-t+1$ play the role of~$\identity_t$ in any potential occurrence of~$\sigma=21\oplus\identity_t$. In other words, the~$\sigma$-stack with~$n,n-1,\dots,n-t+1$ at the bottom performs a pop operation if and only if an empty~$21$-stack performs a pop operation. Therefore~$\alpha$ is~$21$-sortable, which in turn is equivalent to~$\alpha$ avoiding~$231$ by Lemma~\ref{lemma_classical_stacksort}. Similarly, it is easy to prove that if~$\pi=\antiid_t\ominus\alpha$, with~$\alpha$ a $231$-avoiding permutation, then~$\out{\sigma}(\pi)$ is increasing.

\item Finally, suppose that~$\sigma$ is not increasing and~$\sigma$ is not the direct sum of~$21$ and the identity permutation. Since~$\sigma$ is not increasing, we have~$\out{\sigma}(\antiid)=\reverse(\antiid)=\identity$. Conversely, suppose that~$\pi\neq\antiid$. Write~$\pi=\pi_1\cdots\pi_i\pi_{i+1}\cdots\pi_n$, where~$i$ is the leftmost ascent of~$\pi$. We show that~$\out{\sigma}(\pi)$ is not increasing. Since~$\sigma\neq\identity$ and~$\pi_1>\pi_2>\cdots>\pi_i$, the elements~$\pi_1,\dots,\pi_i$ are pushed into the~$\sigma$-stack. Now, if~$\pi_{i+1}$ enters the~$\sigma$-stack above~$\pi_i$, then~$\pi_{i+1}$ precedes~$\pi_i$ in~$\out{\sigma}(\pi)$, with~$\pi_{i+1}>\pi_i$, thus~$\out{\sigma}(\pi)$ is not increasing. Otherwise, suppose that~$\pi_i$ is extracted from the~$\sigma$-stack before~$\pi_{i+1}$ enters. That is, the~$\sigma$-stack contains~$k-1$ elements, say~$\pi_{j_2},\dots,\pi_{j_k}$, with~$j_2<\cdots<j_k$, such that~$\pi_{i+1}\pi_{j_k}\cdots\pi_{j_2}$ is an occurrence of~$\sigma$. Notice that~$\pi_{j_k}<\cdots<\pi_{j_2}$ due to our assumptions. Without losing generality, choose the minimal indices~$j_2,\dots,j_k$ such that~$\pi_{i+1}\pi_{j_k}\cdots\pi_{j_2}$ is an occurrence of~$\sigma$, so that~$\pi_{i+1}$ enters the~$\sigma$-stack above~$\pi_{j_{k-1}}$ (and thus~$\pi_{i+1}$ precedes~$\pi_{j_{k-1}}$ in~$\out{\sigma}(\pi)$). Now, if~$\pi_{i+1}<\pi_{j_{k-1}}$, then~$\sigma=21\oplus\identity_{k-2}$, which is a contradiction. Otherwise, if~$\pi_{i+1}>\pi_{j_{k-1}}$, then~$\out{\sigma}(\pi)$ is not increasing, as desired.
\end{enumerate}
\end{proof}

\chapter{\texorpdfstring{The~$\sigma$-machine}{The sigma-machine}}\label{chapter_single_pattern}

This chapter is devoted to the analysis of~$\sigma$-machines. Most\footnote{With the exception of Section~\ref{section_bivincular_result}.} of the results presented here are contained in~\cite{CeClF}. Patterns~$\sigma$ of length two are discussed in Section~\ref{section_patterns_length_two}. In Section~\ref{section_classes_vs_nonclasses} we prove the main result of this chapter, which is a characterization of those patterns~$\sigma$ where the set of~$\sigma$-sortable permutations is a class. We prove the surprising fact that there are~$\catalan_n$ patterns~$\sigma$ of length~$n$ such that~$\Sort(\sigma)$ is not a class. If instead~$\Sort(\sigma)$ is a class, we explicitly determine its basis, which is either the singleton~$\lbrace 132\rbrace$ or the pair~$\lbrace 132,\reverse(\sigma)\rbrace$. In Section~\ref{section_bivincular_result} we define a bivincular pattern~$\xi$ and show that $\sigma$-sortable permutations avoid~$\xi$, unless~$\sigma$ is the skew-sum of~$12$ minus a~$231$-avoiding permutation. Permutations avoiding~$\xi$ display a rather regular geometric structure. This suggests that the cases where~$\Sort(\sigma)$ is not contained in~$\Perm(\xi)$ could be the most challenging to be solved. In Section~\ref{section_decreasing_pattern} we investigate the decreasing pattern~$\sigma=\antiid$ and show that~$\Sort(\antiid_k)$ is in bijection with Dyck paths of height at most~$k-1$. Finally, in Section~\ref{section_open_problems} we suggest some open problems and lines of research.

\section{Patterns of length two}\label{section_patterns_length_two}

We start by analyzing the~$12$- and~$21$-machines.

Let~$\sigma=12$. Recall that the~$12$-machine consists in a pass through a~$12$-stack, followed by a pass through a classical stack. Notice that this device is substantially different from the one considered in~\cite{Sm}, which is constituted by the same stacks, but allows a non-deterministic (and thus more powerful) sorting procedure.

\begin{theorem}\label{theorem_12machine}
Let~$\pi$ be a permutation. If~$\pi$ is~$12$-sortable, then~$\out{12}(\pi)=\antiid$. Moreover, we have:
$$
\Sort(12)=\Perm(213).
$$
Therefore~$\fsigma{12}_n=\catalan_n$, the~$n$-th Catalan number.
\end{theorem}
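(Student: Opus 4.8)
The plan is to reduce everything to one explicit description of the $12$-stack together with the observation that its output is either decreasing or manifestly unsortable. First I would pin down the behaviour of the $12$-stack. Since its content read from top to bottom must avoid $12$, it is always strictly decreasing, so the current top is the maximum of the stored elements. The right-greedy rule then amounts to: upon reading an entry $x$, pop (and output) all stored elements exceeding $x$, in decreasing order, and then push $x$; at the end, flush the whole stack from the top. Two consequences of this description do most of the work. First, an entry $\pi_i$ leaves the stack precisely when the first entry smaller than $\pi_i$ lying to its right is read (or at the flush, if there is none). Second, the global minimum $1$ is never popped before the flush and sits at the bottom, so it is the very last letter of $\out{12}(\pi)$.

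Next I would prove sufficiency, $\pi\in\Perm(213)\Rightarrow\out{12}(\pi)=\antiid$, by induction on the length. Writing $\pi=A\,1\,B$ with $1$ in the position of the global minimum, $213$-avoidance forces every entry of $A$ to exceed every entry of $B$, and $A,B$ are themselves $213$-avoiding. When $1$ is read it is smaller than everything in the stack, so it flushes out exactly the letters of $A$ that a standalone $12$-stack would emit on $A$; by induction these come out in decreasing order. The entry $1$ is then pushed to the bottom and never disturbed, so $B$ is processed as on an empty stack and, again by induction, emitted in decreasing order, followed last of all by $1$. Since the values of $A$ dominate those of $B$, which dominate $1$, the concatenation is exactly $\antiid$.

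Then I would prove necessity, $\pi\notin\Perm(213)\Rightarrow\pi$ not $12$-sortable, by exhibiting an occurrence of $231$ in $\out{12}(\pi)$. Fix an occurrence $\pi_i\pi_j\pi_k$ of $213$, so $i<j<k$ and $\pi_j<\pi_i<\pi_k$. By the first consequence above, $\pi_i$ is output at the reading of some entry in a position $\le j<k$, hence strictly before $\pi_k$ can be output; thus $\pi_i$ precedes $\pi_k$ in $\out{12}(\pi)$ with $\pi_i<\pi_k$. By the second consequence, the letter of value $1$ is output last, and $1<\pi_i$ since $\pi_i\ge 2$. Therefore the triple $(\pi_i,\pi_k,1)$ appears in that order in $\out{12}(\pi)$ with values forming $231$, and Lemma~\ref{lemma_out_231} shows $\pi$ is not $12$-sortable.

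Finally I would assemble the statement. The decreasing permutation avoids $231$, so sufficiency and Lemma~\ref{lemma_out_231} give $\Perm(213)\subseteq\Sort(12)$, while necessity gives $\Sort(12)\subseteq\Perm(213)$; hence $\Sort(12)=\Perm(213)$. The ``moreover'' part follows at once: a $12$-sortable $\pi$ lies in $\Perm(213)$, so by sufficiency $\out{12}(\pi)=\antiid$. For the enumeration, the complement $\complement$ is a bijection between $\Perm_n(213)$ and $\Perm_n(231)$, and the latter has cardinality $\catalan_n$ by Example~\ref{example_pat_involv}, giving $\fsigma{12}_n=\catalan_n$. I expect the main obstacle to be the inductive sufficiency argument, specifically verifying that reading the global minimum reproduces exactly the standalone output on the prefix $A$ and that the suffix $B$ is processed as on an empty stack; the necessity direction is short once one realises that the global minimum can always play the role of the final ``$1$'' in the pattern $231$.
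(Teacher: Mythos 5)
Your proof is correct, and although it rests on the same central decomposition as the paper's proof --- splitting the input around its minimum as $\pi=A\,1\,B$ and factoring $\out{12}(\pi)=\out{12}(A)\,\out{12}(B)\,1$ --- the two inclusions are handled by swapped methods, so the architecture is genuinely different. The paper runs its induction under the hypothesis ``$\pi$ is $12$-sortable'' and concludes $\out{12}(\pi)=\antiid$ and $213$-avoidance in a single pass (the ordering between the blocks being forced by $231$-avoidance of the \emph{output}), and it then proves $\Perm(213)\subseteq\Sort(12)$ by a backward argument: from an occurrence $bca$ of $231$ in the output it reconstructs an occurrence $bxc$ of $213$ in the input, where $x$ is the entry that forced $b$ to be popped. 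You instead run the induction under ``$\pi$ avoids $213$'' (so the ordering $A>B$ comes from $213$-avoidance of the \emph{input}), which yields $\Perm(213)\subseteq\Sort(12)$ together with the ``moreover'' clause, and you obtain $\Sort(12)\subseteq\Perm(213)$ by the forward construction: from $\pi_i\pi_j\pi_k\simeq 213$ you exhibit $\pi_i\,\pi_k\,1\simeq 231$ in $\out{12}(\pi)$, using your two preliminary observations (an entry exits exactly when the first smaller entry to its right is read; the minimum exits last) together with Lemma~\ref{lemma_out_231}. Your arrangement buys an explicit, reusable description of the deterministic behaviour of the $12$-stack, which the paper leaves implicit, and a necessity argument that needs no analysis of which element triggered a pop; the paper's arrangement buys both conclusions of the theorem from one induction, and its backward argument traces every $231$ of the output to an explicit $213$ of the input. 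Your enumeration step (the complement bijection onto $\Perm_n(231)$, then Example~\ref{example_pat_involv}) is also valid.
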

\begin{proof}
Suppose that~$\pi$ is~$12$-sortable. We show that~$\out{12}(\pi)=\antiid$ and~$\pi$ avoids~$213$ by induction on the length of~$\pi$. This is trivial for the unit length permutation. Let~$\pi$ be a permutation of length two or more. Write~$\pi$ as~$\pi=L1R$, where~$L$ is the prefix of~$\pi$ preceding~$1$ and~$R$ is the suffix of~$\pi$ following~$1$. Since~$1x$ is an occurrence of~$12$ for each~$x\in L$, $1$ enters the~$12$-stack only when the~$12$-stack is empty. Similarly, $1$ is extracted from the~$12$-stack only at the end, since~$y1$ is not an occurrence of~$12$ for each~$y\in R$. Therefore~$\out{12}(\pi)=\out{12}(L)\out{12}(R)1$. By the inductive hypothesis, $\out{12}(L)$ and~$\out{12}(R)$ are decreasing. Moreover, it must be~$x>y$ for each~$x\in\out{12}(L)$ and~$y\in\out{12}(R)$, otherwise~$xy1$ would be an occurrence of~$231$ in~$\out{12}(\pi)$, contradicting the hypothesis that~$\pi$ is~$12$-sortable. Therefore~$\out{12}(\pi)=\antiid$, as wanted. Finally, suppose, for a contradiction, that~$\pi$ contains an occurrence~$bac$ of~$213$. If~$bac$ is contained in~$L$, then~$\out{12}(L)$ contains~$231$ by the inductive hypothesis and thus~$\out{12}(\pi)$ contains~$231$, a contradiction with~$\pi$ being~$12$-sortable. The same happens if~$bac$ is contained in~$R$. On the other hand, if~$b\in L$ and~$c\in R$, then~$bc1$ is an occurrence of~$231$ in~$\out{12}(\pi)$, which is impossible.

Conversely, suppose that~$\pi$ is not~$12$-sortable, or, equivalently, that~$\out{12}(\pi)$ contains an occurrence~$bca$ of~$231$. We wish to show that~$\pi$ contains~$213$. Note that necessarily~$b$ comes before~$c$ in~$\pi$. Indeed a non-inversion in the output necessarily comes from a non-inversion in the input, since the~$12$-stack cannot repair inversions\footnote{If~$\pi_i>\pi_j$, with~$i<j$, then~$\pi_i$ is extracted from the~$12$-stack before~$\pi_j$ enters.}. Moreover, $b$ is extracted from the~$12$-stack before~$c$ enters. This must be due to the presence of an element~$x$, located between~$b$ and~$c$ in~$\pi$, which is smaller than~$b$. More precisely, $x$ is the next element of the input when~$b$ is extracted. The three elements~$b$, $x$ and~$c$ are thus an occurrence of~$213$ in~$\pi$, as desired.
\end{proof} 

Next we consider the pattern~$21$. The~$21$-machine is equivalent to the device considered by West in~\cite{We2}, where the following result is stated.

\begin{theorem}\cite{We2}\label{theorem_west_2stack}
We have:
$$
\Sort(21)=\Perm(3241,3\bar{5}241).
$$
\end{theorem}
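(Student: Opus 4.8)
=== PROOF PROPOSAL ===

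The statement $\Sort(21)=\Perm(3241,3\bar{5}241)$ is West's classical characterization of two-stack-sortable permutations, since the $21$-machine is precisely West's device of two passes through an increasing (i.e. $21$-avoiding) stack. The plan is to prove both inclusions by combining the recursive structure of West's sorting operator with a careful analysis of how the forbidden patterns propagate. Throughout, I would use Lemma~\ref{lemma_out_231}: a permutation $\pi$ is $21$-sortable if and only if $\out{21}(\pi)$ avoids $231$, so the task reduces to understanding exactly which inputs produce a $231$-avoiding output after one pass through the increasing stack.

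The key tool is an explicit description of the stacksort operator $\out{21}$ via the left-to-right maxima (or equivalently ltr-minima) decomposition introduced in Section~\ref{section_stats_and_decomp}. First I would recall (or re-derive) the standard fact that a single pass of stacksort, when the input is written around its maximum $n$ as $\pi = \alpha\, n\, \beta$, outputs $\out{21}(\pi) = \out{21}(\alpha)\,\out{21}(\beta)\,n$, because $n$ stays at the bottom of the stack until every element of $\alpha$ has been flushed and is itself popped only at the very end. This recursive factorization lets me track occurrences of $231$ in the output as a function of the block structure of $\pi$. The forward inclusion $\Sort(21)\subseteq\Perm(3241,3\bar{5}241)$ would then be shown contrapositively: assuming $\pi$ contains either $3241$ or the barred pattern $3\bar{5}241$ (meaning an occurrence of $3241$ not extendable to $35241$ by inserting a suitable larger element between the $2$ and the $4$), I would trace these entries through the factorization above and exhibit an explicit $231$ pattern surviving in $\out{21}(\pi)$, so $\pi$ is not $21$-sortable.

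For the reverse inclusion $\Perm(3241,3\bar{5}241)\subseteq\Sort(21)$, I would argue that if $\out{21}(\pi)$ contains a $231$ occurrence, say on output entries $b, c, a$ with $a < b < c$, then pulling these back through the recursive description of $\out{21}$ forces a specific configuration in the input: the relative order and the interleaving of the preimages of $b$, $c$, $a$ together with the element responsible for prematurely popping $b$ must realize either $3241$ or, in the boundary case where an intermediate large element is present, exactly the barred pattern $3\bar{5}241$. The barred entry $\bar{5}$ encodes precisely the situation where an occurrence of $3241$ is "rescued" by a larger element sitting between the roles of $2$ and $4$, which is why $3241$ alone does not suffice and the barred pattern must be excluded as well.

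The main obstacle is the correct bookkeeping for the barred pattern $3\bar{5}241$. The subtle point is characterizing, in terms of the input, exactly when an output $231$ arises and ensuring that every such input indeed contains one of the two forbidden patterns with no gaps or overcounting. I would handle this by inducting on the length of $\pi$ using the factorization $\pi=\alpha\,n\,\beta$: the inductive hypothesis gives the characterization for $\alpha$ and $\beta$ separately, and the remaining work is to analyze the cross terms, namely $231$ occurrences in $\out{21}(\pi)$ that straddle the blocks $\out{21}(\alpha)$, $\out{21}(\beta)$, and the final $n$. These cross terms are where the barred condition becomes essential, and verifying that they correspond precisely to occurrences of $3\bar{5}241$ (rather than plain $3241$) is the delicate heart of the argument. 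Since this is West's original theorem quoted from~\cite{We2}, I would also note that a complete self-contained proof can simply be cited, with the sketch above indicating how it fits the $\Sigma$-machine framework of the present work.
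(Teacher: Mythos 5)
Your overall plan (reduce everything to $\out{21}(\pi)$ avoiding $231$ via Lemma~\ref{lemma_out_231}, use the stacksort factorization $\out{21}(\alpha\, n\, \beta)=\out{21}(\alpha)\,\out{21}(\beta)\,n$, and induct on this decomposition) is indeed the shape of West's original argument, and since the paper offers no proof of this theorem at all --- it is simply quoted from \cite{We2} --- your closing remark that a citation suffices matches what the paper actually does. However, your sketch contains a step that would fail, and it is the central one. In the forward inclusion you propose to show that if $\pi$ contains $3241$ then $\out{21}(\pi)$ contains $231$. This is false: $35241$ contains $3241$ and yet is $21$-sortable (one computes $\out{21}(35241)=32145$, which avoids $231$); the paper itself points this out in the sentence immediately following the theorem. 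The root of the problem is that the statement as printed is miscopied from West: since any permutation avoiding the classical pattern $3241$ vacuously avoids the barred pattern $3\bar{5}241$, the right-hand side $\Perm(3241,3\bar{5}241)$ collapses to the class $\Perm(3241)$, contradicting the fact (also noted in the paper) that $\Sort(21)$ is not a class. West's theorem reads $\Sort(21)=\Perm(2341,3\bar{5}241)$: the classical pattern in the basis is $2341$, and $3241$ enters only through the barred pattern. A careful execution of your own plan --- tracing a $3241$ occurrence through the factorization --- would have exposed exactly this: the occurrence can be ``rescued'' by an intervening large element, which is precisely what the barred pattern encodes. The correct move was to flag the discrepancy and prove (or cite) the $2341$ version.

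A second, smaller slip: you twice describe the barred entry of $3\bar{5}241$ as a large element sitting ``between the $2$ and the $4$''. In $3\bar{5}241$ the barred $5$ lies between the $3$ and the $2$; avoiding the barred pattern means every occurrence of $3241$ extends to an occurrence of $35241$ by an element, larger than the ``$4$'', placed positionally between the ``$3$'' and the ``$2$''. Since the reverse inclusion is exactly where the barred condition does its work, misplacing this element would derail the case analysis you describe as the ``delicate heart'' of the argument.
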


Due to the presence of the barred pattern~$3\bar{5}241$, the set~$\Sort(21)$ is not a permutation class. For example, the~$21$-sortable permutation~$35241$ contains the pattern~$3241$, which is not~$21$-sortable. On the other hand, $\Sort(12)$ is a class due to Theorem~\ref{theorem_12machine}. By looking at some data for permutations of length three or more, it seems that the number of permutations~$\sigma$ such that~$\sigma$-sortable permutations are not a class is equal to the~$n$-th Catalan number. We will prove this rather striking fact in Section~\ref{section_classes_vs_nonclasses}.

\section{\texorpdfstring{Classes and non-classes of~$\sigma$-sortable permutations}{Classes and non-classes of sigma-sortable permutations}}\label{section_classes_vs_nonclasses}

Let~$\sigma$ be a permutation of length two or more. As one would expect, the~$\sigma$-sortability of a permutation~$\pi$ is strongly affected by how~$\pi$ is related to the pattern~$\sigma$ defining the constraint of the stack. The permutation~$\hat{\sigma}$, defined below, proves to be crucial.

\begin{definition}\label{definition_hat_sigma}
Let~$\sigma=\sigma_1\cdots\sigma_k$, with~$k\ge 2$. Then define~$\hat{\sigma}$ as the permutation:
$$
\hat{\sigma}=\sigma_2\sigma_1\sigma_3\cdots\sigma_k.
$$
In other words, $\hat{\sigma}$ is the permutation obtained from~$\sigma$ by interchanging the first two elements~$\sigma_1$ and~$\sigma_2$.
\end{definition}

\begin{lemma}\label{lemma_reverse_outputs_hat}
Let~$\pi$ be a permutation. If~$\pi$ contains~$\reverse(\sigma)$, then~$\out{\sigma}(\pi)$ contains~$\hat{\sigma}$.
\end{lemma}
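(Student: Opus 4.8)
The plan is to argue directly from the dynamics of the $\sigma$-stack, since $\hat{\sigma}$ can only be created in $\out{\sigma}(\pi)$ at a step where the right-greedy $\sigma$-avoiding discipline is forced to interrupt a would-be occurrence of $\sigma$ lying in the stack. Fix an occurrence $\pi_{p_1}\cdots\pi_{p_k}\simeq\reverse(\sigma)=\sigma_k\cdots\sigma_1$ of $\reverse(\sigma)$ in $\pi$, so that $\pi_{p_i}$ plays the role of $\sigma_{k+1-i}$. All of $\pi_{p_1},\dots,\pi_{p_{k-1}}$ (the players of $\sigma_k,\dots,\sigma_2$) are read before $e:=\pi_{p_k}$ (the player of $\sigma_1$); moreover, since they are pushed in the order $\sigma_k,\sigma_{k-1},\dots,\sigma_2$, if they are all still present in the stack then they occur there, read from top to bottom, as $\sigma_2\,\sigma_3\cdots\sigma_k$, so that $e$ on top would complete $\sigma_1\sigma_2\cdots\sigma_k=\sigma$. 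The goal is to show that these elements leave the stack in the order $\sigma_2\,\sigma_1\,\sigma_3\cdots\sigma_k=\hat{\sigma}$.

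First I would analyse the \emph{last} pop that takes place before $e$ is pushed. Assume for the moment that pushing $e$ is refused at least once, i.e.\ that with $e$ on top the stack would contain an occurrence of $\sigma$. Let $w$ be the element removed by the last such pop, so that pushing $e$ becomes legal immediately afterwards. Since $w$ lies directly below $e$ and its removal destroys every occurrence of $\sigma$ created by $e$, every such occurrence must use $w$; as nothing separates $e$ from $w$ in the stack, $w$ is forced to play the role of $\sigma_2$, while the elements below it that complete the occurrence play $\sigma_3,\dots,\sigma_k$ and appear, from top to bottom, in exactly this order. Now $w$ is output before $e$, and after $w$ is popped $e$ is pushed on top of the players of $\sigma_3,\dots,\sigma_k$; by the last-in-first-out rule $e$ is therefore output before all of them, and they in turn are output in the order $\sigma_3,\dots,\sigma_k$. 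Reading the output restricted to these $k$ elements yields $\sigma_2\,\sigma_1\,\sigma_3\cdots\sigma_k=\hat{\sigma}$, as wanted.

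It remains to guarantee the standing assumption, namely that pushing $e$ is refused at least once; equivalently, that when $e$ is processed the players of $\sigma_2,\dots,\sigma_k$ are still simultaneously in the stack, where (as noted above) they automatically read $\sigma_2\cdots\sigma_k$ from top to bottom and so are completed into $\sigma$ by $e$. This is the technical heart of the argument and the step I expect to cost the most work: for an arbitrary occurrence of $\reverse(\sigma)$ some of $\pi_{p_1},\dots,\pi_{p_{k-1}}$ may already have been popped before $e$ arrives, and other entries of $\pi$ may be interleaved among them in the stack. I would resolve this not by fixing the occurrence in advance but by choosing it extremally --- for instance taking $e=\pi_{p_k}$ with $p_k$ as small as possible and then the remaining indices as large as possible --- so that none of the chosen players can have been evicted before $e$ is read, or alternatively by a minimal-counterexample argument in which an earlier eviction of a player would itself be a forced completion of $\sigma$ and would relocate the occurrence. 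The main obstacle, then, is precisely controlling this interference: once the extremal choice is shown to keep the players of $\sigma_2,\dots,\sigma_k$ in the stack at the moment $e$ is processed, the clean last-pop analysis above applies verbatim.
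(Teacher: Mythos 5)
Your central device --- the analysis of the \emph{last} pop before $e$ is pushed --- is correct, and it is genuinely different from the paper's argument. The paper fixes the occurrence $s_k s_{k-1}\cdots s_1$ of $\reverse(\sigma)$ that the machine meets first (``leftmost'' chosen precisely so that no pop can be triggered before $s_1$ becomes the next input), shows that $s_2$ must then be popped, and must still argue separately that $s_3,\dots,s_k$ survive until $s_1$ enters --- the eviction of $s_3$ would expose an occurrence of $\reverse(\sigma)$ located further to the left, a contradiction. Your last-pop analysis sidesteps that second step entirely: the witnesses playing $\sigma_2,\dots,\sigma_k$ are taken from the blocking occurrence present at the moment of the final refusal, not from the originally chosen occurrence, so you never need the original players to survive. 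That part of your argument is sound as written and is a real simplification.

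However, as you yourself flag, the proposal is incomplete: you still must produce some element whose push is refused at least once. What you have not noticed is that your own last-pop analysis makes this step nearly trivial, and no extremal choice of the occurrence is needed at all. Run the machine on $\pi$ and distinguish two cases. If no pop occurs before $e=\pi_{p_k}$ becomes the next element of the input, then nothing has ever been removed from the stack, so $\pi_{p_1},\dots,\pi_{p_{k-1}}$ all sit in it and, as you observed, read $\sigma_2\cdots\sigma_k$ from top to bottom; hence pushing $e$ is refused and your analysis applies to $e$. If instead some pop occurs earlier, let $t$ be the next-input element that triggered the first pop of the run; then $t$'s push was refused, and your last-pop analysis applies verbatim to $t$ in place of $e$, again producing an occurrence of $\hat{\sigma}$ in $\out{\sigma}(\pi)$ --- the conclusion of the lemma does not care which element supplies the witnesses. (Alternatively, your proposed choice of $p_k$ minimal also works: the first pop of the run happens with a never-popped stack, whose elements appear in reverse reading order, so the blocking pattern at that moment yields an occurrence of $\reverse(\sigma)$ in $\pi$ ending at the triggering element, which by minimality of $p_k$ cannot precede $e$; the clause ``remaining indices as large as possible'' plays no role.) So the obstacle you anticipated as the technical heart of the proof dissolves in a few lines, and the resulting proof is, if anything, cleaner than the paper's.
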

\begin{proof}
Let~$s_ks_{k-1}\cdots s_1$ be the (lexicographically) leftmost occurrence of~$\reverse(\sigma)$ in~$\pi$, where~$k$ is the length of~$\sigma$. Consider the action of the~$\sigma$-stack on~$\pi$. Initially, every element of~$\pi$ is pushed into the~$\sigma$-stack, until~$s_1$ is the next element of the input. Now, before pushing~$s_1$ into the~$\sigma$-stack, the element~$s_2$ has to be extracted, since~$s_1s_2\cdots s_k\simeq\sigma$, with~$s_2\cdots s_k$ inside the~$\sigma$-stack. On the other hand, $s_3$ is still in the~$\sigma$-stack when~$s_1$ enters, otherwise~$\pi$ would contain another occurrence of~$\reverse(\sigma)$ strictly to the left of~$s_k s_{k-1}\cdots s_1$, which is a contradiction. Thus~$s_1$ is pushed into the~$\sigma$-stack above~$s_3$ and~$\out{\sigma}(\pi)$ contains~$s_2s_1s_3\cdots s_k$, which is an occurrence of~$\hat{\sigma}$.
\end{proof}

\begin{lemma}\label{lemma_hat_sigma}
Let~$\pi$ be an input permutation for the~$\sigma$-machine.
\begin{enumerate}
\item If~$\pi$ avoids~$\reverse(\sigma)$, then~$\out{\sigma}(\pi)=\reverse(\pi)$. In this case, $\pi$ is~$\sigma$-sortable if and only if~$\pi$ avoids~$132$.
\item If~$\pi$ contains~$\reverse(\sigma)$, then~$\out{\sigma}(\pi)$ contains~$\hat{\sigma}$. In this case, if~$\hat{\sigma}$ contains~$231$, then~$\pi$ is not~$\sigma$-sortable.
\end{enumerate}
\end{lemma}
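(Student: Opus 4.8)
The plan is to treat the two parts separately, leaning heavily on Lemma~\ref{lemma_out_231} (which reduces $\sigma$-sortability to $231$-avoidance of the intermediate output) and on the already-established Lemma~\ref{lemma_reverse_outputs_hat}.

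For part (1), the first thing I would do is establish the structural identity $\out{\sigma}(\pi)=\reverse(\pi)$. The key observation is that the content of the $\sigma$-stack, read from top to bottom, is exactly the reverse (in order of insertion) of the elements that have been pushed. Hence the stack contains an occurrence of $\sigma$ from top to bottom if and only if the pushed elements, read in the order they entered, contain an occurrence of $\reverse(\sigma)$. Since $\pi$ avoids $\reverse(\sigma)$ and pattern avoidance is inherited by subsequences, no subsequence of the pushed elements can ever realize $\reverse(\sigma)$. Consequently the right-greedy procedure never needs to perform a forced pop: every element of $\pi$ is pushed in, and the stack is emptied only at the very end. Emptying the stack outputs its content from top to bottom, which is precisely $\reverse(\pi)$, giving $\out{\sigma}(\pi)=\reverse(\pi)$.

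With this identity in hand, the sortability criterion follows immediately. By Lemma~\ref{lemma_out_231}, $\pi$ is $\sigma$-sortable if and only if $\out{\sigma}(\pi)=\reverse(\pi)$ avoids $231$. Since the reverse map is an involution on $\Perm$ that sends occurrences of $231$ to occurrences of $\reverse(231)=132$, the output $\reverse(\pi)$ avoids $231$ exactly when $\pi$ avoids $132$, which is the claim. For part (2), the containment statement is exactly Lemma~\ref{lemma_reverse_outputs_hat}, so I would simply cite it: if $\pi$ contains $\reverse(\sigma)$, then $\out{\sigma}(\pi)$ contains $\hat{\sigma}$. For the sortability consequence, suppose $\hat{\sigma}$ contains $231$. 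Since containment is transitive and $\out{\sigma}(\pi)$ contains $\hat{\sigma}$, the output $\out{\sigma}(\pi)$ contains $231$, so by Lemma~\ref{lemma_out_231} the permutation $\pi$ is not $\sigma$-sortable.

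The routine content here is minimal; the one step deserving care is the structural identity in part (1). The subtlety is the bookkeeping that translates the stack's \emph{top-to-bottom} avoidance condition into an avoidance condition on the \emph{input reading order} of the pushed elements, together with the observation that avoidance of $\reverse(\sigma)$ by $\pi$ is precisely what guarantees that no forced pop ever occurs during the right-greedy pass. Once that translation is made explicit, both parts reduce to applications of Lemma~\ref{lemma_out_231} and reverse symmetry.
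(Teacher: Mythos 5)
Your proof is correct and follows essentially the same route as the paper's: part (1) is the observation that avoidance of $\reverse(\sigma)$ by~$\pi$ means the stack restriction is never triggered (you spell out the top-to-bottom versus insertion-order bookkeeping that the paper leaves implicit), so~$\out{\sigma}(\pi)=\reverse(\pi)$ and sortability reduces via Lemma~\ref{lemma_out_231} to~$132$-avoidance; part (2) is, as in the paper, a direct citation of Lemma~\ref{lemma_reverse_outputs_hat} combined with Lemma~\ref{lemma_out_231}.
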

\begin{proof}
\begin{enumerate}
\item If~$\pi$ avoids~$\reverse(\sigma)$, then the restriction of the~$\sigma$-stack is never triggered. Thus every element of~$\pi$ is pushed directly into the~$\sigma$-stack and~$\out{\sigma}(\pi)=\reverse(\pi)$. In particular, $\pi$ is~$\sigma$-sortable if and only if~$\reverse(\pi)$ avoids~$231$, or, equivalently, $\pi$ avoids~$132$.
\item Suppose that~$\pi$ contains~$\reverse(\sigma)$. By Lemma~\ref{lemma_reverse_outputs_hat}, $\out{\sigma}(\pi)$ contains~$\hat{\sigma}$. Therefore, if~$\hat{\sigma}$ contains~$231$ then~$\pi$ is not~$\sigma$-sortable.
\end{enumerate}
\end{proof}

\begin{corollary}\label{corollary_av_132_sigmarev}
For each permutation~$\sigma$, we have:
$$
\Perm\left(132,\reverse(\sigma)\right)\subseteq\Sort(\sigma).
$$
\end{corollary}
\begin{proof}
It is an immediate consequence of the first item of Lemma~\ref{lemma_hat_sigma}.
\end{proof}

\begin{theorem}\label{theorem_suff_cond_class}
Let~$\sigma$ be a permutation. If~$\hat{\sigma}$ contains~$231$, then:
$$
\Sort(\sigma)=\Perm(132,\reverse(\sigma)).
$$
Therefore~$\Sort(\sigma)$ is a class with basis either~$\lbrace132,\reverse(\sigma)\rbrace$, if~$\reverse(\sigma)$ avoids~$132$, or~$\lbrace132\rbrace$, otherwise.
\end{theorem}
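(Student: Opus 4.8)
The plan is to establish the set equality first and then read off the basis. One inclusion is already in hand: Corollary~\ref{corollary_av_132_sigmarev} gives $\Perm(132,\reverse(\sigma))\subseteq\Sort(\sigma)$ for every~$\sigma$, so only the reverse inclusion $\Sort(\sigma)\subseteq\Perm(132,\reverse(\sigma))$ needs work, and it is here that the hypothesis $\hat{\sigma}\ge 231$ will finally be used.

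For the reverse inclusion I would take an arbitrary~$\sigma$-sortable permutation~$\pi$ and show it avoids both~$132$ and~$\reverse(\sigma)$ by splitting on the dichotomy of Lemma~\ref{lemma_hat_sigma}. If~$\pi$ contained~$\reverse(\sigma)$, then item~$2$ of that lemma, together with the standing assumption $\hat{\sigma}\ge 231$, would force~$\pi$ to be non-sortable, contradicting the choice of~$\pi$; hence~$\pi$ avoids~$\reverse(\sigma)$. But then item~$1$ applies, so $\out{\sigma}(\pi)=\reverse(\pi)$ and sortability of~$\pi$ is equivalent to~$\pi$ avoiding~$132$. Since~$\pi$ is sortable, it avoids~$132$ too. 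Thus $\pi\in\Perm(132,\reverse(\sigma))$, and combined with the corollary this yields the claimed equality.

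Finally I would determine the basis of the class $\Perm(132,\reverse(\sigma))$, whose avoided set is $\lbrace 132,\reverse(\sigma)\rbrace$; the basis is the antichain of minimal elements of this set under pattern containment, so everything reduces to comparing~$132$ with~$\reverse(\sigma)$. The crucial preliminary observation is that $\hat{\sigma}\ge 231$ forces $|\sigma|\ge 3$, since~$\hat{\sigma}$ is a rearrangement of~$\sigma$ and must be long enough to contain a length-three pattern; hence~$\reverse(\sigma)$ has length at least three. If~$\reverse(\sigma)$ contains~$132$, then avoiding~$132$ already implies avoiding~$\reverse(\sigma)$, so~$\reverse(\sigma)$ is redundant and the basis collapses to~$\lbrace 132\rbrace$. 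If instead~$\reverse(\sigma)$ avoids~$132$, then $132\not\le\reverse(\sigma)$; and $\reverse(\sigma)\not\le 132$ because a pattern of~$132$ of length at least three can only be~$132$ itself, which is excluded since~$\reverse(\sigma)$ avoids~$132$. Hence~$132$ and~$\reverse(\sigma)$ are incomparable and the basis is the full pair~$\lbrace 132,\reverse(\sigma)\rbrace$.

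The argument is essentially a bookkeeping recombination of the two preceding lemmas, so there is no single deep step; the one place demanding care is the basis computation, where one must notice that the hypothesis rules out the short patterns $\sigma=12$ and $\sigma=21$ (for which~$\reverse(\sigma)$ would be a length-two pattern of~$132$ and the comparison above would break down). Checking that $\hat{\sigma}\ge 231$ genuinely excludes these degenerate cases is the main, if modest, obstacle.
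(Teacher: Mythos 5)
Your proposal is correct and follows essentially the same route as the paper: one inclusion via Corollary~\ref{corollary_av_132_sigmarev}, the other by applying the two cases of Lemma~\ref{lemma_hat_sigma} to a sortable~$\pi$, using $\hat{\sigma}\ge 231$ to rule out containment of~$\reverse(\sigma)$ and then forcing avoidance of~$132$. Your explicit verification of the basis claim (incomparability of~$132$ and~$\reverse(\sigma)$ when the latter avoids~$132$, using $|\sigma|\ge 3$) is a detail the paper leaves implicit, but it is routine and correct.
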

\begin{proof}
Following Corollary~\ref{corollary_av_132_sigmarev}, all we need to prove is that~$\Sort(\sigma)\subseteq\Perm(132,\reverse(\sigma))$. Suppose that~$\pi$ is~$\sigma$-sortable. If~$\pi$ contains~$\reverse(\sigma)$, then~$\out{\sigma}(\pi)$ contains~$\hat{\sigma}$ by Lemma~\ref{lemma_hat_sigma}. But then~$\hat{\sigma}$ contains~$231$ by hypothesis, contradicting the fact that~$\pi$ is~$\sigma$-sortable. Otherwise, suppose that~$\pi$ avoids~$\reverse(\sigma)$, but contains~$132$. Due to the same Lemma~\ref{lemma_hat_sigma}, we have~$\out{\sigma}(\pi)=\reverse(\pi)$, which contains~$231$, a contradiction with~$\pi$ being~$\sigma$-sortable.
\end{proof}

\begin{corollary}\label{corollary_decr_pattern_class}
Let~$k\ge 3$. Then:
$$
\Sort(\antiid_k)=\Perm(132,\identity_k).
$$
In particular, the set of~$321$-sortable permutations is a class with basis~$\lbrace 132,123\rbrace$.
\end{corollary}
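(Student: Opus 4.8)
The plan is to recognize this as a direct instance of Theorem~\ref{theorem_suff_cond_class}, applied to the pattern $\sigma=\antiid_k$. That theorem reduces everything to two computations: identifying $\reverse(\sigma)$ and checking that $\hat{\sigma}$ contains $231$. So the whole argument hinges on two elementary pattern computations rather than on any fresh analysis of the sorting process itself.

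First I would compute $\reverse(\antiid_k)$. Since $\antiid_k=k(k-1)\cdots 21$ is the decreasing permutation, reversing it yields the increasing permutation, so $\reverse(\antiid_k)=\identity_k$. This already identifies $\Perm(132,\identity_k)$ as the intended right-hand side. Next I would compute $\hat{\antiid_k}$ and verify that it contains $231$. Writing $\antiid_k=\sigma_1\sigma_2\cdots\sigma_k$ with $\sigma_i=k+1-i$, Definition~\ref{definition_hat_sigma} gives $\hat{\antiid_k}=\sigma_2\sigma_1\sigma_3\cdots\sigma_k=(k-1)\,k\,(k-2)\cdots 1$. Its first three entries are $k-1$, $k$, $k-2$, and since $k-2<k-1<k$ they form an occurrence of $231$. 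This is exactly where the hypothesis $k\ge 3$ enters: for $k\ge 3$ the third entry $k-2$ exists and is a positive integer, whereas for $k=2$ one has $\hat{\antiid_2}=12$, which avoids $231$ (consistently with $\Sort(21)$ failing to be a class, by Theorem~\ref{theorem_west_2stack}). With the hypothesis of Theorem~\ref{theorem_suff_cond_class} verified, I would conclude $\Sort(\antiid_k)=\Perm(132,\reverse(\antiid_k))=\Perm(132,\identity_k)$.

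For the final sentence I would specialize to $k=3$, so that $\identity_3=123$ and $\Sort(321)=\Perm(132,123)$. To read the basis off from the theorem, it remains only to decide between the pair $\lbrace 132,\reverse(\sigma)\rbrace$ and the singleton $\lbrace 132\rbrace$, which amounts to checking that $\reverse(\sigma)=123$ avoids $132$. This holds since $123$ is increasing while $132$ has a descent, so the basis is the pair $\lbrace 132,123\rbrace$.

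I do not anticipate a genuine obstacle, as the statement is a corollary of the preceding theorem; the only point requiring care is the bookkeeping in the two pattern computations, in particular confirming that the leading triple $(k-1)\,k\,(k-2)$ of $\hat{\antiid_k}$ is an honest occurrence of $231$ for every $k\ge 3$, and that this is what forces the lower bound on $k$.
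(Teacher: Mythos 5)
Your proposal is correct and follows exactly the paper's own route: a direct application of Theorem~\ref{theorem_suff_cond_class} after verifying that $\hat{\antiid_k}$ contains~$231$ (the paper exhibits the occurrence $(k-1)k1$, you exhibit $(k-1)k(k-2)$; both are valid and coincide when $k=3$). Your extra check that $\reverse(\antiid_k)=\identity_k$ avoids~$132$, justifying the two-element basis, is a detail the paper leaves implicit in the statement of the theorem.
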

\begin{proof}
It follows immediately from Theorem~\ref{theorem_suff_cond_class}, since~$\hat{\antiid_k}=(k-1)k(k-2)\cdots21$ contains an occurrence~$(k-1)k1$ of~$231$.
\end{proof}

Theorem~\ref{theorem_suff_cond_class}, which is a rather straightforward consequence of Lemma\ref{lemma_hat_sigma}, gives a sufficient condition for the set~$\Sort(\sigma)$ to be a class. Next we show that this condition is also necessary.

\begin{theorem}\label{theorem_necess_cond_class}
If~$\hat{\sigma}$ avoids the pattern~$231$, then~$\Sort(\sigma)$ is not a permutation class.
\end{theorem}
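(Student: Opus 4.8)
The goal is to exhibit a witness to the failure of downward closure: a permutation $\pi\in\Sort(\sigma)$ together with a pattern $\gamma\le\pi$ that is \emph{not} $\sigma$-sortable. Throughout I take $|\sigma|=k\ge 3$ (the length-two patterns having been settled separately in Section~\ref{section_patterns_length_two}; note that $\sigma=12$ does give a class even though $\hat{\sigma}=21$ avoids $231$, so the relevant range is $k\ge 3$). The basic building block is $\reverse(\sigma)$. First I would sharpen Lemma~\ref{lemma_reverse_outputs_hat}: tracing the $\sigma$-stack on the input $\reverse(\sigma)=\sigma_k\cdots\sigma_1$, every prefix leaves in the stack a proper suffix of $\sigma$, which is too short to contain $\sigma$, so all entries are pushed until $\sigma_1$ arrives; at that moment $\sigma_2$ is ejected and $\sigma_1$ is pushed above $\sigma_3$, yielding exactly $\out{\sigma}(\reverse(\sigma))=\hat{\sigma}$. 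Since $\hat{\sigma}$ avoids $231$ by hypothesis, Lemma~\ref{lemma_out_231} gives $\reverse(\sigma)\in\Sort(\sigma)$.

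I would then split according to how $\sigma$ relates to $231$. The easy case is when $\sigma$ contains $231$ and $\sigma\neq 231$, which forces $k\ge 4$. Here $\reverse(\sigma)$ contains $\reverse(231)=132$, while $132$ itself avoids $\reverse(\sigma)$ (as $|\reverse(\sigma)|\ge 4$); by Lemma~\ref{lemma_hat_sigma}(1) we get $\out{\sigma}(132)=\reverse(132)=231$, so $132\notin\Sort(\sigma)$. If $\Sort(\sigma)$ were a class it would contain $\reverse(\sigma)$, hence every pattern of $\reverse(\sigma)$, in particular $132$ --- a contradiction. This disposes of every $\sigma\supseteq 231$ of length at least four with no construction beyond $\reverse(\sigma)$.

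The substance lies in the complementary case, $\sigma$ avoiding $231$ (here $\sigma\neq\antiid$, since $\hat{\antiid}$ contains $231$ and $\sigma=\antiid$ would already be covered by Theorem~\ref{theorem_suff_cond_class} and Corollary~\ref{corollary_decr_pattern_class}). In this case $132$ is again non-$\sigma$-sortable --- $\reverse(\sigma)$ avoids $132$ and has length $\ge 3$, so $132$ avoids $\reverse(\sigma)$ and $\out{\sigma}(132)=231$ --- but now $\reverse(\sigma)$ does \emph{not} contain $132$, so I must build a sortable $\pi$ that harbours a $132$. My plan is to start from $\reverse(\sigma)$ (which, as $\sigma\neq\antiid$, is not increasing and hence has a descent) and insert one or two extra entries so that (i) a $132$ pattern is created, and (ii) the inserted entries complete an occurrence of $\sigma$ inside the $\sigma$-stack at the right moment, forcing a pop that rescues the output. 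The rescued output is $\hat{\sigma}$ with a few controlled entries attached, and the insertions must be placed so that this word still avoids $231$; sortability of $\pi$ is then confirmed by a direct simulation of the $\sigma$-stack. The isolated pattern $\sigma=231$ escapes all of the above --- there $132=\reverse(231)$ is itself sortable --- and must be handled by a dedicated small witness, in the spirit of West's $35241\supset 3241$ for $\sigma=21$.

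The main obstacle is precisely step (ii) in the $231$-avoiding case: the instant at which the $\sigma$-stack triggers a pop depends delicately on the internal order of $\sigma$, and no single explicit $\pi$ works for all $\sigma$ (for instance $4132$ witnesses $\sigma=123$ but not $\sigma=312$, while $3142$ does the reverse). Making the insertion rule uniform --- or, failing that, organising the $231$-avoiding patterns into finitely many structural families, each with its own simulation-verified witness --- is where the real effort goes, together with the ad hoc treatment of $\sigma=231$.
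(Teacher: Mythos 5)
The half you complete is correct, and is in fact slicker than the paper's argument for that case: when $\sigma$ contains $231$ and $\sigma\neq 231$ (so $k\ge 4$), your observation that $\reverse(\sigma)$ is itself $\sigma$-sortable --- via the sharpened computation $\out{\sigma}(\reverse(\sigma))=\hat{\sigma}$, which is correct and is indeed used elsewhere in the paper --- and contains the non-sortable pattern $132$ settles non-closure with no construction at all. But this is the easy half. For $\sigma$ avoiding $231$ --- which covers the bulk of the patterns, including $123$, $132$, $213$ and $312$ at length three --- you offer only a plan: insert entries into $\reverse(\sigma)$ so as to create a $132$ and trigger a pop ``at the right moment''. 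That is a restatement of what a witness must do, not a proof that one exists, and you acknowledge as much. The pattern $\sigma=231$ is likewise deferred to an unspecified ``ad hoc'' witness (the paper's is $361425$, which is sortable and contains the non-sortable $1324$). So as it stands your proposal proves the theorem only on a proper subcase.

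The missing construction is less delicate than your closing paragraph fears: the paper does it uniformly, with a single inserted value $z$ and a dichotomy on whether $\sigma_1<\sigma_2$ or $\sigma_1>\sigma_2$. Writing $\sigma'_i$ for $\sigma_i$ rescaled upward by one to make room for $z$, one takes $\alpha=\sigma'_k\cdots\sigma'_3\, z\,\sigma'_2\sigma'_1$ with $z=\sigma_1$ when $\sigma_1<\sigma_2$, and $\alpha=\sigma'_k\cdots\sigma'_2\sigma'_1 z$ with $z=\sigma_2+1$ when $\sigma_1>\sigma_2$. In both cases $z$ supplies the occurrence of $132$, the $\sigma$-stack pops exactly once (ejecting $\sigma'_2$ when $\sigma'_1$ arrives), the output is $\hat{\sigma}$ with $z$ spliced in ($\sigma'_2\sigma'_1 z\sigma'_3\cdots\sigma'_k$, respectively $\sigma'_2 z\sigma'_1\sigma'_3\cdots\sigma'_k$), and a short argument shows that any occurrence of $231$ in the output would force one inside $\hat{\sigma}$, contradicting the hypothesis. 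Your own two examples are exactly the two branches of this rule: $4132$ for $\sigma=123$ is the first case, $3142$ for $\sigma=312$ is the second; so the ``finitely many structural families'' you hoped to organise number precisely two. Without this (or an equivalent) construction, and without a witness for $\sigma=231$, the statement is not proved.
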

\begin{proof}
The case by case analysis of Table~\ref{table_patterns_length_three} and Corollary~\ref{corollary_decr_pattern_class} show that the theorem holds for patterns~$\sigma$ of length three. Now suppose that~$\sigma$ has length at least four. It is not hard to realize that the permutation~$132$ is not~$\sigma$-sortable, since~$\out{\sigma}(132)=231$. Next we show that, under the hypothesis that~$\hat{\sigma}$ avoids~$231$, it is always possible to construct a permutation~$\alpha$ such that~$\alpha$ contains~$132$, but~$\alpha$ is~$\sigma$-sortable. This proves that~$\Sort(\sigma)$ is not closed downwards, as desired. Let~$\sigma=\sigma_1\sigma_2\cdots\sigma_k$. We distinguish two cases, according to whether~$\sigma_1<\sigma_2$ or~$\sigma_1>\sigma_2$.

\begin{enumerate}
\item If~$\sigma_1<\sigma_2$, define~$\alpha=\sigma'_k\sigma'_{k-1}\cdots\sigma'_3z\sigma'_2\sigma'_1$, where:

\begin{itemize}
\item $z=\sigma_1$;
\item $\sigma'_i=
\begin{cases}
\sigma_i, & \mbox{ if }\sigma_i<\sigma_1;\\
\sigma_i+1, & \mbox{ otherwise.}
\end{cases}$
\end{itemize}

Notice that~$z\sigma'_2\sigma'_1$ is an occurrence of~$132$. We show that~$\alpha$ is~$\sigma$-sortable by providing a detailed analysis of the behavior of the~$\sigma$-machine on input~$\alpha$. Initially, the elements of~$\alpha$ are pushed into the~$\sigma$-stack until~$\sigma'_1$ is the next element of the input. In particular, both the additional element~$z$ and~$\sigma'_2$ can be safely pushed: indeed~$\sigma'_2z\cdots\sigma'_{k-1}\sigma'_k$ is not an occurrence of~$\sigma$, since~$\sigma_1<\sigma_2$, whereas~$\sigma'_2>z$. Now, before~$\sigma'_1$ enters the~$\sigma$-stack, the element~$\sigma'_2$ is extracted. At this point, $\sigma'_1$ can enter without violating the restriction, again because~$\sigma_2>\sigma_1$, whereas~$z<\sigma'_1$, and so~$\sigma'_1z\sigma'_3\cdots\sigma'_k$ is not an occurrence of~$\sigma$. The output of the~$\sigma$-stack is then~$\out{\sigma}(\alpha)=\sigma'_2\sigma'_1z\sigma'_3\cdots\sigma'_k$. We wish to show that~$\out{\sigma}(\alpha)$ avoids~$231$. Since~$\hat{\sigma}$ avoids~$231$ by hypothesis, and~$\sigma'_2\sigma'_1\sigma'_3\cdots\sigma'_k$ is an occurrence of~$\hat{\sigma}$, any potential occurrence of~$231$ necessarily involves the additional element~$z$. In particular, it is easy to observe that~$z$ can be neither the smallest nor the biggest element of such a pattern, because~$z<\sigma'_1<\sigma'_2$ and~$z$ is the third element of~$\out{\sigma}(\alpha)$. Finally, if~$z$ were the first element of an occurrence~$z\sigma'_j\sigma'_l$ of~$231$ in~$\out{\sigma}(\alpha)$, then~$\sigma_1\sigma_j\sigma_l$ would be an occurrence of~$231$ in~$\hat{\sigma}$, contradicting the hypothesis.

\item If~$\sigma_1>\sigma_2$, define~$\alpha=\sigma'_k\sigma'_{k-1}\cdots\sigma'_3\sigma'_2\sigma'_1z$, where:

\begin{itemize}
\item $z=\sigma_2+1$;
\item $\sigma'_i=
\begin{cases}
\sigma_i , & \mbox{ if }\sigma_i\le\sigma_2;\\
\sigma_i+1 , & \mbox{ otherwise.}
\end{cases}$
\end{itemize}

Observe that~$\sigma'_2\sigma'_1z$ is an occurrence of~$132$. As for the previous case, we now describe what happens when~$\alpha$ is processed by the~$\sigma$-machine. The first element that cannot be pushed into the~$\sigma$-stack is~$\sigma'_1$, which forces~$\sigma'_2$ to be extracted. Successively both~$\sigma'_1$ and~$z$ can enter the~$\sigma$-stack, since~$z\sigma'_1\sigma'_3\cdots\sigma'_k$ is not an occurrence of~$\sigma$: indeed~$\sigma_1>\sigma_2$, whereas~$z<\sigma'_1$. Therefore the output of the~$\sigma$-stack is~$\out{\sigma}(\alpha)=\sigma'_2z\sigma'_1\sigma'_3\cdots\sigma'_k$. Again any potential occurrence of~$231$ in~$\out{\sigma}(\alpha)$ must involve the additional element~$z$. However~$z$ cannot be the smallest element of a pattern~$231$, because it is the second element of~$\out{\sigma}(\alpha)$. Moreover, if~$z$ were the first element of a~$231$, then~$\sigma_2$ would be the first element of an occurrence of~$231$ in~$\hat{\sigma}$, which is forbidden. Finally, if~$z$ were the largest element of a~$231$, then~$\sigma'_2$ would be the first element of such an occurrence, so also~$\sigma'_1$, which is greater than both~$\sigma'_2$ and~$z$, would be the largest element of an occurrence of~$231$ which does not involve~$z$, giving again a contradiction.
\end{enumerate}

In each of the two cases considered, we proved that~$\out{\sigma}(\alpha)$ avoids~$231$, thus~$\alpha$ is a~$\sigma$-sortable permutation that contains the non~$\sigma$-sortable pattern~$132$, as desired.
\end{proof}

\begin{table}
\centering
\def\arraystretch{1.1}
\begin{tabular}{lcc}
\toprule
$\sigma$ & $\sigma$\textbf{-sortable permutation} & \textbf{Non-}$\sigma$\textbf{-sortable pattern}\\
\midrule
123 & 4132 & 132\\
132 & 2413 & 132\\
213 & 4132 & 132\\
231 & 361425 & 1324\\
312 & 3142 & 132\\
321 & \text{class} &\\
\bottomrule
\end{tabular}
\caption[Classes and non-classes of~$\sigma$-sortable permutations, for patterns~$\sigma$ of length three.]{Classes and non-classes of~$\sigma$-sortable permutations, for patterns~$\sigma$ of length three.}\label{table_patterns_length_three}
\end{table}

\begin{corollary}\label{corollary_class_nonclass_char}
For every permutation~$\sigma$ of length three or more, the set~$\Sort(\sigma)$ is a permutation class if and only if~$\hat{\sigma}$ contains the pattern~$231$.
\end{corollary}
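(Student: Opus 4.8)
The plan is to obtain this corollary as an immediate combination of the two preceding theorems, which together cover the two directions of the biconditional for all patterns of length three or more. Since both implications have already been established in full generality, essentially no new argument is required; the task is only to assemble them correctly, handling one direction by contraposition.

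First I would dispatch the ``if'' direction. Assuming that $\hat{\sigma}$ contains $231$, Theorem~\ref{theorem_suff_cond_class} gives the identity $\Sort(\sigma)=\Perm(132,\reverse(\sigma))$, and any set of the form $\Perm(B)$ is by definition closed downward under pattern containment. Hence $\Sort(\sigma)$ is a permutation class, and this implication is complete with no further work.

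For the ``only if'' direction I would argue contrapositively. Suppose instead that $\hat{\sigma}$ avoids $231$. Then Theorem~\ref{theorem_necess_cond_class} asserts precisely that $\Sort(\sigma)$ is \emph{not} a permutation class: this is where the genuine content resides, namely the explicit construction of a $\sigma$-sortable permutation $\alpha$ containing the non-$\sigma$-sortable pattern $132$, witnessing that $\Sort(\sigma)$ fails to be closed downward. Contrapositively, if $\Sort(\sigma)$ is a class, then $\hat{\sigma}$ must contain $231$. Since the alternatives ``$\hat{\sigma}$ contains $231$'' and ``$\hat{\sigma}$ avoids $231$'' are mutually exclusive and exhaustive, the two implications combine into the stated equivalence.

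There is no real obstacle here, since the difficulty has been entirely absorbed into Theorem~\ref{theorem_necess_cond_class}. The only point meriting a moment of care is the scope: the necessity theorem treats the length-three patterns separately via Table~\ref{table_patterns_length_three} and Corollary~\ref{corollary_decr_pattern_class}, and the general construction for length at least four, so its combined range together with the sufficiency theorem is exactly ``length three or more''. I would simply remark that this matches the hypothesis of the corollary, leaving no gap.
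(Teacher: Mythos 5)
Your proposal is correct and coincides with the paper's treatment: the corollary is stated there without proof precisely because it is the immediate conjunction of Theorem~\ref{theorem_suff_cond_class} (giving $\Sort(\sigma)=\Perm(132,\reverse(\sigma))$, hence a class, when $\hat{\sigma}\ge 231$) and Theorem~\ref{theorem_necess_cond_class} (giving a non-class when $\hat{\sigma}$ avoids $231$), assembled exactly as you do, contrapositive and all. Your remark on the scope --- that the length-three cases of the necessity theorem are covered by Table~\ref{table_patterns_length_three} and Corollary~\ref{corollary_decr_pattern_class} --- is a correct and welcome check that the paper leaves implicit.
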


\begin{corollary}\label{corollary_class_nonclass_enum}
The permutations~$\sigma$ for which~$\Sort(\sigma)$ is not a permutation class are enumerated by the Catalan numbers.
\end{corollary}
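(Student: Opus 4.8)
The plan is to derive this purely as a counting consequence of the characterization already obtained in Corollary~\ref{corollary_class_nonclass_char}. Fix a length~$n\ge 3$. By that corollary, a permutation~$\sigma\in\Perm_n$ yields a set~$\Sort(\sigma)$ that \emph{fails} to be a class precisely when~$\hat{\sigma}$ avoids~$231$. So the number I want to compute is simply
$$
\left|\left\lbrace \sigma\in\Perm_n : \hat{\sigma}\in\Perm(231)\right\rbrace\right|,
$$
and the whole task reduces to enumerating the permutations whose image under the hat operation avoids~$231$.

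The key observation I would use is that the operation~$\sigma\mapsto\hat{\sigma}$ of Definition~\ref{definition_hat_sigma}, which merely interchanges the first two entries of~$\sigma$, is an \emph{involution} on~$\Perm_n$: applying it twice restores~$\sigma_1\sigma_2$ to their original order while leaving~$\sigma_3\cdots\sigma_n$ fixed. An involution is in particular a bijection of~$\Perm_n$ onto itself, so as~$\sigma$ ranges over~$\Perm_n$ the image~$\hat{\sigma}$ ranges over all of~$\Perm_n$, each permutation being attained exactly once. Restricting to those images that avoid~$231$ then gives
$$
\left|\left\lbrace \sigma\in\Perm_n : \hat{\sigma}\in\Perm(231)\right\rbrace\right|=\left|\Perm_n(231)\right|=\catalan_n,
$$
where the final equality is the classical count recalled in Example~\ref{example_pat_involv}.

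I do not expect a genuine obstacle here; once Corollary~\ref{corollary_class_nonclass_char} is in hand the result is essentially immediate, and the only point demanding a little care is the hypothesis~$n\ge 3$. For~$n=2$ the characterization is unavailable (indeed~$\hat{12}=21$ avoids~$231$, yet~$\Sort(12)=\Perm(213)$ is a class by Theorem~\ref{theorem_12machine}), so the bijective argument cannot be invoked verbatim; one simply records by hand that among the length-two patterns only~$21$ gives a non-class. For~$n\ge 3$ the argument above is complete, and as a sanity check it reproduces the length-three data of Table~\ref{table_patterns_length_three}, where exactly the five patterns other than~$321$ give non-classes, in agreement with~$\catalan_3=5$.
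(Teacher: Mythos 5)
Your proposal is correct and is essentially the paper's own argument: the paper also deduces the count from Corollary~\ref{corollary_class_nonclass_char} by noting that the map~$\sigma\mapsto\hat{\sigma}$ places the non-class patterns in bijection with~$\Perm(231)$, which is Catalan-enumerated. Your only addition is spelling out that the hat operation is an involution and flagging the length-two case (where the characterization does not apply and~$12$ is a genuine exception), which the paper leaves implicit.
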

\begin{proof}
Such permutations are in bijection with~$\Perm(231)$, which is known to be enumerated by the Catalan numbers.
\end{proof}

What we have proved so far assures that~$\Sort(\sigma)$ is a permutation class if and only if~$\hat{\sigma}$ contains the pattern~$231$. In this case, $\Sort(\sigma)=\Perm\left(132,\reverse(\sigma)\right)$, hence the basis of~$\Sort(\sigma)$ has exactly two elements if and only if~$\reverse(\sigma)$ avoids~$132$, or, equivalently, if~$\sigma$ avoids~$231$. Next we enumerate those patterns~$\sigma$ such that the basis of~$\Sort(\sigma)$ has two elements.

\begin{lemma}
Let~$\sigma=\sigma_1\cdots\sigma_k$, with~$k\ge 3$, and suppose that~$\reverse(\sigma)$ avoids~$132$. Then~$\hat{\sigma}$ contains the pattern~$231$ if and only if~$\sigma_1\sigma_2\sigma_3$ is an occurrence of~$321$.
\end{lemma}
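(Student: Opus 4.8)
The plan is to first translate the hypothesis into a statement about $\sigma$ itself. Since reverse is a trivial bijection that sends the pattern $132$ to $231$, the assumption that $\reverse(\sigma)$ avoids $132$ is equivalent to $\sigma$ avoiding $231$; I would record this reformulation at the outset and use it throughout. The reverse implication ($\Leftarrow$) is then immediate and needs no hypothesis: if $\sigma_1\sigma_2\sigma_3$ is an occurrence of $321$, then $\sigma_1>\sigma_2>\sigma_3$, so the first three entries $\sigma_2\sigma_1\sigma_3$ of $\hat{\sigma}$ satisfy $\sigma_3<\sigma_2<\sigma_1$, which is precisely an occurrence of $231$.

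For the forward implication ($\Rightarrow$) I would exploit that $\hat{\sigma}$ is obtained from $\sigma$ by swapping only the entries in positions $1$ and $2$, while every entry in a position $\geq 3$ is untouched. Hence any occurrence of $231$ in $\hat{\sigma}$ that avoids both positions $1$ and $2$ would already be an occurrence of $231$ in $\sigma$, contradicting the hypothesis; so a $231$ in $\hat{\sigma}$ must use position $1$ or position $2$. The core of the argument is a short case analysis organized by the roles (``$2$'', ``$3$'', ``$1$'') the entries play in the pattern. Writing the occurrence at positions $p<q<r$ (so $p$ is the ``$2$'', $q$ the ``$3$'', $r$ the ``$1$''), I would rule out every configuration except $p=1$, $q=2$: if $p=1$ and $q\geq 3$, then the triple at positions $2,q,r$ of $\sigma$ gives $\sigma_r<\sigma_2<\sigma_q$, a $231$ in $\sigma$; if $p=2$, then the triple at positions $1,q,r$ of $\sigma$ gives $\sigma_r<\sigma_1<\sigma_q$, again a $231$ in $\sigma$. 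Both contradict the hypothesis, leaving only $p=1$, $q=2$, $r\geq 3$, which forces $\sigma_r<\sigma_2<\sigma_1$ for some $r\geq 3$.

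It then remains to deduce $\sigma_1>\sigma_2>\sigma_3$ from this surviving condition. The inequality $\sigma_1>\sigma_2$ is given directly, so the point is to show $\sigma_2>\sigma_3$. I would argue by contradiction: if $\sigma_3>\sigma_2$, then necessarily $r\geq 4$ (since $r=3$ would give $\sigma_3<\sigma_2$), and the entries at positions $2,3,r$ of $\sigma$ would satisfy $\sigma_r<\sigma_2<\sigma_3$, an occurrence of $231$ in $\sigma$, contradicting the hypothesis once more. Hence $\sigma_3<\sigma_2$, and together with $\sigma_2<\sigma_1$ this shows that $\sigma_1\sigma_2\sigma_3$ is an occurrence of $321$.

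The routine part is the bookkeeping of the case analysis; the one place requiring genuine care is the systematic use of the hypothesis $\sigma\in\Perm(231)$ to collapse every ``new'' occurrence of $231$ in $\hat{\sigma}$ down to the single configuration that uses positions $1$ and $2$ as the first two entries of the pattern, and then again to eliminate the far index $r$ in the case $r\geq 4$. Once one sees that the swap can manufacture a $231$ only by promoting the larger of $\sigma_1,\sigma_2$ into position $2$ as the apex, the equivalence with $\sigma_1\sigma_2\sigma_3\simeq 321$ falls out.
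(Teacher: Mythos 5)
Your proof is correct and follows essentially the same route as the paper's: reformulate the hypothesis as $\sigma$ avoiding $231$, observe that any occurrence of $231$ in $\hat{\sigma}$ must use $\sigma_2$ and $\sigma_1$ (in positions $1$ and $2$) as its first two entries, and then rule out $\sigma_3>\sigma_2$ by producing the occurrence $\sigma_2\sigma_3\sigma_r$ of $231$ in $\sigma$. The only difference is that you spell out the case analysis behind the initial observation, which the paper merely asserts.
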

\begin{proof}
Observe that, since~$\sigma$ avoids~$231$ by hypothesis, an occurrence of~$231$ in~$\hat{\sigma}=\sigma_2\sigma_1\sigma_3\cdots\sigma_k$ must involve both~$\sigma_1$ and~$\sigma_2$, respectively as the first and the second element of the pattern, with~$\sigma_2<\sigma_1$.

Suppose that~$\hat{\sigma}$ contains an occurrence~$\sigma_2\sigma_1\sigma_i$ of~$231$, for some~$i\ge 3$. If~$\sigma_3>\sigma_2$, then~$i>4$ and thus~$\sigma_2\sigma_3\sigma_i$ is an occurrence of~$231$ in~$\sigma$, which is a contradiction. Therefore we have~$\sigma_3<\sigma_2$ and~$\sigma_1\sigma_2\sigma_3$ is an occurrence of~$321$, as desired.

Conversely, if~$\sigma_1\sigma_2\sigma_3$ is an occurrence of the pattern~$321$, then clearly~$\sigma_2\sigma_1\sigma_3$ is an occurrence of~$231$ in~$\hat{\sigma}$.
\end{proof}

\begin{proposition}\label{proposition_fourth_convolution}
Let~$n\ge 1$. Define~$\mathcal{A}_n=\left\lbrace\pi\in\Perm_n(231):\pi_1\pi_2\pi_3\simeq 321\right\rbrace$ and let~$a_n=|\mathcal{A}_n|$. Then, for each~$n\ge 2$, we have~$a_n=\catalan_n-2\catalan_{n-1}$. In particular, the generating function of the sequence~$(a_n)_{n\ge 0}$ is:
$$
A(x)=\frac{1-4x+2x^2 -(1-2x)\sqrt{1-4x}}{2x}.
$$
\end{proposition}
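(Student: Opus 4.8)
The plan is to count $\mathcal{A}_n$ by complementary counting inside $\Perm_n(231)$. Since $\Perm_n(231)$ has exactly $\catalan_n$ elements (Example~\ref{example_pat_involv}) and any three distinct values realize one of the six patterns of length three, a permutation $\pi\in\Perm_n(231)$ fails to lie in $\mathcal{A}_n$ precisely when $\pi_1\pi_2\pi_3$ is \emph{not} an occurrence of $321$, that is, when $\pi_1<\pi_2$, or when $\pi_1>\pi_2<\pi_3$ (the remaining case $\pi_1\pi_2\pi_3\simeq 231$ being impossible). These two events are disjoint, so I would derive $a_n=\catalan_n-2\catalan_{n-1}$ by showing that each of them is realized by exactly $\catalan_{n-1}$ permutations of $\Perm_n(231)$. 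Throughout I take $n\ge 3$; the value $a_2=0$ agrees with the formula trivially since $\mathcal{A}_2=\emptyset$.

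For the first event I would begin by showing that a $\pi\in\Perm_n(231)$ satisfies $\pi_1<\pi_2$ if and only if $\pi_1=1$. Indeed, if $\pi_1=m\ge 2$ then the value $1$ lies in some position $j\ge 3$ (it cannot be $\pi_2$, which exceeds $\pi_1$), and $\pi_1\pi_2\pi_j$ is an occurrence of $231$, a contradiction; the converse is immediate. It then remains to count the permutations of $\Perm_n(231)$ with $\pi_1=1$: deleting this leading $1$ and standardizing is a bijection onto $\Perm_{n-1}(231)$, because a front entry equal to $1$ can only play the role of the smallest (hence last) element of a $231$ pattern, which is impossible. Thus the first event contributes $\catalan_{n-1}$.

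For the second event I would use the classical first-entry decomposition of $231$-avoiders: if $\pi\in\Perm_n(231)$ has $\pi_1=k$, then avoidance forces every value smaller than $k$ to precede every value larger than $k$, so the entries $1,\dots,k-1$ fill positions $2,\dots,k$ as a $231$-avoider $\alpha$ of length $k-1$ and the entries $k+1,\dots,n$ fill the remaining positions as an independent $231$-avoider of length $n-k$. The condition $\pi_1>\pi_2<\pi_3$ forces $k\ge 2$, and it translates into ``$\alpha$ begins with an ascent'' when $k\ge 3$, while it is automatic when $k=2$ (there $\pi_2=1$). Applying the count from the first event to $\alpha$, the number of admissible permutations with $\pi_1=k$ is $\catalan_{k-2}\catalan_{n-k}$ for every $k\ge 2$, and summing gives $\sum_{k=2}^{n}\catalan_{k-2}\catalan_{n-k}=\catalan_{n-1}$ by the Catalan recurrence. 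This convolution is the technical heart of the argument, and the main point to get right is the uniform treatment of the boundary case $k=2$ (where $\catalan_0=\catalan_1$ makes the formula continue to hold).

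Combining the two events yields $\catalan_n-a_n=2\catalan_{n-1}$, i.e. $a_n=\catalan_n-2\catalan_{n-1}$ for $n\ge 2$. The generating function then follows by a routine computation: since $a_0=a_1=0$, one has $A(x)=\sum_{n\ge 2}(\catalan_n-2\catalan_{n-1})x^n=(1-2x)\CatalanFun(x)-1+x$, and substituting $\CatalanFun(x)=\frac{1-\sqrt{1-4x}}{2x}$ produces the claimed closed form. As a final sanity check I would verify the equivalent identity $A(x)=x^3\CatalanFun(x)^4$ using $x\CatalanFun(x)^2=\CatalanFun(x)-1$; this exhibits $A(x)$ as the fourth Catalan convolution shifted by three, explaining the name of the proposition.
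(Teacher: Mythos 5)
Your proof is correct, and its skeleton coincides with the paper's: the same complementary counting over $\Perm_n(231)$ via the two disjoint events $\pi_1<\pi_2$ and $\pi_1>\pi_2<\pi_3$, the same observation that $\pi_1<\pi_2$ forces $\pi_1=1$ so that deleting the leading $1$ gives a bijection with $\Perm_{n-1}(231)$, and essentially the same generating function computation. Where you genuinely diverge is the second event. The paper notes that $\pi_1>\pi_2<\pi_3$ together with $231$-avoidance forces $\pi_2=1$ (otherwise $\pi_2\pi_3$ and the entry $1$, which must then occur in position at least $4$, form an occurrence of $231$), so deleting $\pi_2$ is again a one-line bijection onto $\Perm_{n-1}(231)$. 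You instead invoke the first-entry block decomposition of $231$-avoiders ($\pi_1=k$ splits $\pi$ into a $231$-avoider on $\{1,\dots,k-1\}$ followed by one on $\{k+1,\dots,n\}$), translate the condition into ``$\alpha$ begins with an ascent,'' and evaluate the convolution $\sum_{k=2}^{n}\catalan_{k-2}\catalan_{n-k}=\catalan_{n-1}$. This is sound, including the boundary case $k=2$, but it is heavier than necessary: the symmetric forced value $\pi_2=1$ would have let you reuse your deletion argument verbatim, which is what the paper does; what your route buys in exchange is that it only ever uses the first-event count and the Catalan recurrence, with no second ad hoc bijection. A small bonus of your write-up: the closing identity $A(x)=x^3\CatalanFun(x)^4$ (via $x\CatalanFun(x)^2=\CatalanFun(x)-1$) is the correct exponent consistent with the closed form, since the first nonzero coefficient is $a_3=1$; the paper's follow-up remark states $A(t)=t^2\CatalanFun(t)^4$, which is off by one.
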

\begin{proof} Suppose that~$n\ge 2$. Define the sets:
$$
\mathcal{F}_n=\left\lbrace\pi\in\Perm_n(231):\pi_1<\pi_2\right\rbrace
$$
and
$$
\mathcal{G}_n=\left\lbrace\pi\in\Perm_n(231):\pi_1>\pi_2,\pi_2<\pi_3\right\rbrace,
$$
so that:
$$
\Perm_n(231)=\mathcal{A}_n\dot{\cup}\mathcal{F}_n\dot{\cup}\mathcal{G}_n.
$$
Let~$f_n=|\mathcal{F}_n|$ and~$g_n=|\mathcal{G}_n|$. Since~$|\Perm_n(231)|=\catalan_n$, we have~$a_n=\catalan_n -(f_n +g_n)$. We now show that~$f_n=g_n=\catalan_{n-1}$ by providing bijections between~$\mathcal{F}_n$ and~$\Perm_{n-1}(231)$, as well as between~$\mathcal{G}_n$ and~$\Perm_{n-1}(231)$. The desired enumeration follows.

\begin{itemize}
\item If~$\pi\in\mathcal{F}_n$, then it must be~$\pi_1=1$, otherwise~$\pi_1\pi_21$ would be an occurrence of~$231$ in~$\pi$. Define the map~$f:\mathcal{F}_n\rightarrow\Perm_{n-1}(231)$, where~$f(\pi)$ is obtained from~$\pi$ by removing~$\pi_1=1$ and subtracting one to the remaining entries. It is easy to realize that~$f(\pi)\in\Sort_n(231)$ and that~$f$ is an injection. Moreover, if~$\tau\in\Perm_{n-1}(231)$, then adding a new minimum at the beginning (and rescaling the other elements) cannot create any occurrence of~$231$, so~$f$ is also surjective.

\item If~$\pi\in\mathcal{G}_n$, then it must be~$\pi_2=1$, otherwise it would be~$\pi_2\pi_31\simeq 231$ in~$\pi$, a contradiction. We thus define~$g:\mathcal{G}_n\rightarrow\Perm_{n-1}(231)$ such that~$g(\pi)$ is obtained from~$\pi$ by removing~$\pi_2=1$ and rescaling the remaining elements. Again it is clear that~$g(\pi)\in\Perm_n(231)$ and that~$g$ is an injection. Finally, if~$\tau\in\Perm_{n-1}(231)$, then the permutation~$\pi$ obtained from~$\tau$ by adding a new minimum in the second position avoids~$231$. Indeed a potential occurrence of~$231$ in~$\pi$ should involve the added element~$\pi_2$, and so~$\pi_2$ would be either the first or the second element of such an occurrence. But this is impossible since~$\pi_2=1$. Therefore~$g$ is a bijection between~$\mathcal{G}_n$ and~$\Perm_{n-1}(231)$, as desired.
\end{itemize}

Let us now compute the generating function~$A(t)=\sum_{n\ge 1}a_nt^n$. Let~$\CatalanFun(t)=(1-\sqrt{1-4t})/(2t)$ be the generating function for the Catalan numbers. We have:

\begin{equation*}
\begin{split}
A(t)=\sum_{n\ge0}a_{n+2}t^{n+2}=\\
\sum_{n\ge0}\catalan_{n+2}t^{n+2}-2t\sum_{n\ge0}a_{n+1}t^{n+1}\\
=\CatalanFun(t)-t-1-2t(\CatalanFun(t)-1)=\\
\CatalanFun(t)(1-2t)+t-1,
\end{split}
\end{equation*}
from which
$$
A(t)=\frac{1-4t+2t^2 -(1-2t)\sqrt{1-4t}}{2t},
$$
as desired.
\end{proof}

The sequence~$(a_n)_{n\ge 0}$ is recorded (with offset two) as sequence~A002057 in~\cite{Sl}. The first terms are~$0,0,1,4,14,48,165,572,2002$. An alternative expression for its generating function is given by~$A(t)=t^2C(t)^4$, although we are not able to provide a combinatorial explanation of this fact.

We end this section by collecting some enumerative results concerning classes of~$\sigma$-sortable permutation with basis of cardinality two (see Appendix~\ref{appendix_basis_size_two}). A direct combinatorial argument can be used in order to prove each of these results, as we show in the following example. In fact, due to Theorem~\ref{theorem_suff_cond_class}, each of these classes is a subclass of~$\Perm(132)$ of the form~$\Perm(132,\reverse(\sigma))$, and thus its generating function is rational. A constructive proof of this fact can be found in~\cite{MV}, which provides an algorithm to compute the generating function in all such cases. A clear and succint description of the algorithm (in the context of representing catalan structures as arch systems) is given in~\cite{AB}.

\begin{example}
Let~$\sigma=421356$. Then~$\Sort(\sigma)=\Perm(132,653124)$ due to Theorem~\ref{theorem_suff_cond_class}. Given~$\pi\in\Perm_n(132,653124)$, write~$\pi=LnR$, where~$L$ is the prefix of~$\pi$ that precedes~$n$ and~$R$ is the suffix of~$\pi$ that follows~$n$. Notice that, since~$\pi$ avoids~$132$, we have~$L>R$, i.e.~$x>y$ for each~$x\in L$ and~$y\in R$ (otherwise it would be~$xny\simeq 132$). Now, we can partition~$\Perm(132,653124)$ according to whether~$L$ is increasing or not in the above decomposition of~$\pi$. If~$L$ is increasing, then~$\pi\in\Perm(132,653124)$ if and only if~$R\in\Perm(132,53124)$. Indeed any occurrence of~$53124$ in~$R$ would realize an occurrence of~$653124$ together with~$n$. Conversely, if~$L$ is increasing, then the elements corresponding to~$53124$ in any occurrence of~$653124$ in~$\pi$ must belong to~$R$. Similarly, if~$L$ contains at least one descent, then~$\pi\in\Perm(132,653124)$ if and only if~$R\in\Perm(132,3124)$. Let~$G(t)=\sum_{k\ge 0}g_kt^k$ be the generating function of~$\Perm(132,53124)$ and let~$H(t)=\sum_{k\ge 0}h_kt^k$ be the generating function of~$\Perm(132,3124)$. Let~$F(t)=\Fsigma{421356}(t)$ and~$f_n=\fsigma{421356}$, for~$n\ge 0$. Then, due to the above discussion:
$$
f_{n+1}=\sum_{k=0}^n1\cdot g_{n-k}+\sum_{k=0}^n (f_k-1)h_{n-k}.
$$
By summing over~$n$, we get:
$$
\frac{1}{t}\left(F(t)-1)\right)=\frac{1}{1-t}G(t)+F(t)H(t)-\frac{1}{1-t}H(t).
$$
Now, it is easy to compute the generating functions~$G(t)=\frac{t^2-3t+1}{3t^2-4t+1}$ and~$H(t)=\frac{1-2t}{1-3t+t^2}$. Then, solving the above equation yields:
$$
F(t)=\frac{2t^5-16t^4+29t^3-23t^2+8t-1}{9t^5-33t^4+46t^3-30t^2+9t-1}
$$
The resulting sequence starts~$1,2,5,14,42,131,416,1329,4247,13544,\dots$ and does not appear in~\cite{Sl}.
\end{example}

\section{A set of challenging patterns}\label{section_bivincular_result}

For the rest of this chapter, denote by~$\xi=(132,\lbrace 0,2\rbrace,\emptyset)$ the bivincular pattern depicted in Figure~\ref{figure_bivincular_pattern_132_0_2}. The main result of this section is a proof that~$\Sort(\sigma)$ is always a subset of~$\Perm(\xi)$, unless~$\sigma$ is the skew sum of~$12$ with a non-empty~$231$-avoiding permutation~$\beta$. The geometric structure of permutations avoiding~$\xi$ can be described precisely, as we show in what follows. This suggests that the family of~$\sigma$-machines, when~$\sigma=12\ominus\beta$, could contain the more challenging~$\sigma$-machines to be studied. The shortest such pattern is~$231$. In fact, as suggested by some data, the~$231$-machine seems to be the~$\sigma$-machine that can sort the largest amount of permutations. For example, it is the only one that can sort every permutation of length three.

We start by providing a geometric description of~$\Perm(\xi)$, from which its enumeration follows easily. Let~$\pi=\pi_1\cdots\pi_n\in\Perm(\xi)$ and let~$\pi_1=t+1$, for some~$t\ge 0$. Let~$\lbrace \pi_{i_1},\dots,\pi_{i_t}\rbrace$ be the set of elements of~$\pi$ that are smaller than~$\pi_1$, with~$i_1<i_2<\cdots<i_t$. For~$j=1,\dots,t$, let~$b_j=\pi_{i_j}$. Finally, write:
$$
\pi=\pi_1 B_0 b_1 B_1 b_2 B_2\cdots b_t B_t,
$$
where~$B_j=\pi_{i_j+1}\cdots\pi_{i_{j+1}-1}$, for~$j=0,1,\dots,t$. We refer to this as the \textit{first-element decomposition} of~$\pi$; for~$j=0,1,\dots,t$, $B_j$ is said to be the~$j$-th \textit{block} of~$\pi$ in its first-element decomposition.

\begin{figure}
\centering
$\xi=$
\begin{DrawPerm}
\meshBox{(0,0)}{(1,4)}
\meshBox{(2,0)}{(3,4)}
\fillPerm{1,3,2}{3.99}{3.99}
\end{DrawPerm}
\qquad
$\reverse(\xi)=$
\begin{DrawPerm}
\meshBox{(3,0)}{(4,4)}
\meshBox{(1,0)}{(2,4)}
\fillPerm{2,3,1}{3.99}{3.99}
\end{DrawPerm}
\caption[Bivincular patterns~$\xi$ and~$\reverse(\xi)$.]{Bivincular patterns~$\xi=(132,\lbrace 0,2\rbrace,\emptyset)$ and~$\reverse(\xi)$.}\label{figure_bivincular_pattern_132_0_2}
\end{figure}

\begin{lemma}\label{lemma_first_el_dec_increasing_blocks}
Let~$\pi=\pi_1 B_0 b_1 B_1 b_2 B_2\cdots b_t B_t$ be the first-element decomposition of~$\pi$. Then~$\pi$ avoids~$\xi$ if and only if~$B_j$ is increasing for each~$j$.
\end{lemma}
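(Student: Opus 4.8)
The plan is to first translate the abstract bivincular condition defining $\xi=(132,\lbrace 0,2\rbrace,\emptyset)$ into a concrete statement about the one-line notation of $\pi$, and then read off the equivalence directly from the first-element decomposition. Unravelling the position constraints $S=\lbrace 0,2\rbrace$ (using the convention $i_0=0$), the constraint $0\in S$ forces $i_1=1$, so the smallest entry of the $132$-occurrence must be $\pi_1$, while $2\in S$ forces $i_3=i_2+1$, so the ``$3$'' and the ``$2$'' of the pattern occupy two \emph{adjacent} positions. Hence $\pi$ contains $\xi$ if and only if there is an index $i\ge 2$ with $\pi_i>\pi_{i+1}>\pi_1$; in words, an occurrence of $\xi$ is exactly an adjacent descent $\pi_i\pi_{i+1}$, lying strictly to the right of position $1$, both of whose entries exceed $\pi_1$. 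I would state and use this reformulation as the single bridge between the two formulations.

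For the direction asserting that increasing blocks force avoidance, I would argue by contraposition: assume $\pi$ contains $\xi$ and produce a non-increasing block. By the reformulation there is an adjacent descent $\pi_i>\pi_{i+1}$ with $i\ge 2$ and $\pi_{i+1}>\pi_1$. Since every $b_j$ satisfies $b_j<\pi_1$ while every entry of a block exceeds $\pi_1$, the entry $\pi_{i+1}$ must lie inside some block $B_j$. I would then split into two cases according to the position of $\pi_{i+1}$ within $B_j$. If $\pi_{i+1}$ is not the leftmost entry of $B_j$, its left neighbour $\pi_i$ lies in the same block, so the descent $\pi_i>\pi_{i+1}$ already shows $B_j$ is not increasing. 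If $\pi_{i+1}$ is the leftmost entry of $B_j$, its left neighbour $\pi_i$ is the marker immediately preceding the block: either $\pi_1$ (when $j=0$), forcing $i=1$ and contradicting $i\ge 2$, or $b_j$ (when $j\ge 1$), forcing $\pi_i=b_j<\pi_1<\pi_{i+1}$ and contradicting the descent. Thus only the first case survives, yielding a non-increasing block.

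For the converse I would use the elementary fact that a finite sequence is increasing precisely when it has no adjacent descent. If some block $B_j$ fails to be increasing, then, because $B_j$ occupies a contiguous run of positions in $\pi$, there are adjacent positions $i,i+1$ inside $B_j$ with $\pi_i>\pi_{i+1}$. Both entries exceed $\pi_1$ and $i\ge 2$, so $\pi_1\,\pi_i\,\pi_{i+1}$ is an occurrence of $132$ meeting both adjacency constraints, that is, an occurrence of $\xi$, and hence $\pi$ contains $\xi$.

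I expect the only genuinely delicate point to be the careful decoding of the bivincular constraints, in particular handling the boundary block $B_0$ correctly: there the left neighbour of the first block entry is $\pi_1$ itself, which is exactly why the index bound $i\ge 2$ (equivalently $i_2>i_1=1$) rules out a spurious occurrence. Once the reformulation is pinned down, both implications reduce to the routine case analysis sketched above.
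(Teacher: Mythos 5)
Your proof is correct and takes essentially the same approach as the paper: both decode the bivincular constraints of $\xi$ into the statement that an occurrence is an adjacent descent $\pi_i>\pi_{i+1}>\pi_1$ with $i\ge 2$, observe that any such descent must lie inside a single block (since the blocks are exactly the contiguous runs of entries exceeding $\pi_1$), and conversely that a descent inside a block yields such an occurrence. Your case analysis on whether $\pi_{i+1}$ is the leftmost entry of its block merely spells out what the paper asserts in one line.
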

\begin{proof}
Suppose that~$\pi$ avoids~$\xi$ and let~$j\ge0$. By definition of first-block decomposition, all the elements contained in~$B_j$ are greater than~$\pi_1$. Therefore~$B_j$ is increasing, since otherwise a descent in~$B_j$ would result in an occurrence of~$\xi$. On the other hand, suppose that~$\pi_u\pi_v\pi_{v+1}$ is an occurrence of~$\xi$ in~$\pi$. Note that~$u=1$ and~$\pi_v>\pi_{v+1}$, with~$v\ge 2$ and~$\pi_{v+1}>\pi_1$. Thus~$\pi_v$ and~$\pi_{v+1}$ are in the same block~$B_j$, for some~$j$, and~$B_j$ is not increasing.
\end{proof}

\begin{corollary}\label{lemma_first_el_dec_identity}
If~$\pi$ avoids~$\xi$ and~$\pi_1=1$, then~$\pi$ is the increasing permutation.
\end{corollary}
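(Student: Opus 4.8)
The plan is to read this off directly from the first-element decomposition together with Lemma~\ref{lemma_first_el_dec_increasing_blocks}, so almost all the work has already been done. The first step is to record what the hypothesis $\pi_1=1$ says about the parameter $t$ in that decomposition. By construction $\pi_1=t+1$, where $t$ counts the entries of $\pi$ that are strictly smaller than $\pi_1$. Since $1$ is the global minimum, no entry lies below it, so $t=0$ and the first-element decomposition collapses to
$$
\pi=\pi_1 B_0 = 1\,B_0,
$$
with $B_0=\pi_2\cdots\pi_n$ comprising all of $2,3,\dots,n$ in some order.

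Next I would invoke Lemma~\ref{lemma_first_el_dec_increasing_blocks}: since $\pi$ avoids $\xi$, every block $B_j$ of its first-element decomposition is increasing. With $t=0$ there is only the single block $B_0$, so $B_0$ is increasing. An increasing arrangement of the set $\{2,3,\dots,n\}$ is forced to be $23\cdots n$, whence
$$
\pi = 1\,2\,3\cdots n = \identity_n,
$$
which is exactly the claim.

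There is no real obstacle here: the statement is an immediate specialization of Lemma~\ref{lemma_first_el_dec_increasing_blocks} to the degenerate case $t=0$, and the only point that must be stated carefully is that $\pi_1=1$ genuinely forces $t=0$ (equivalently, that the decomposition has a single block). One could also give an entirely self-contained argument without the lemma, by noting that any descent $\pi_v>\pi_{v+1}$ with $v\ge 2$ would, together with $\pi_1=1$, form an occurrence of $\xi=(132,\lbrace 0,2\rbrace,\emptyset)$ (the adjacency in position is supplied by $v,v+1$ being consecutive, and the adjacency in value by $1$ being the minimum with nothing between it and the pair below), forcing $\pi_2\cdots\pi_n$ to be increasing; but deducing it from the lemma is cleaner and I would present it that way.
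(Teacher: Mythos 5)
Your proof is correct and matches the paper's intent exactly: the paper states this as an immediate corollary of Lemma~\ref{lemma_first_el_dec_increasing_blocks}, and your specialization to $t=0$ (forced by $\pi_1=1$) with the single block $B_0$ increasing is precisely that argument. One tiny slip in your optional aside: since $T=\emptyset$ in $\xi=(132,\lbrace 0,2\rbrace,\emptyset)$ there is no value-adjacency constraint at all---the constraint $0\in S$ just forces the occurrence to start at position $1$, which your configuration $\pi_1\pi_v\pi_{v+1}$ satisfies anyway, so the alternative argument still goes through.
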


\begin{theorem}\label{theorem_first_el_dec_enumer}
For~$n\ge 0$ and~$t=0,1,\dots,n-1$, define~$\Perm_n^t(\xi)$ by
$$
\Perm_n^t(\xi)=\lbrace\pi\in\Perm_n(\xi):\pi_1=t+1\rbrace.
$$
Let~$f_{n,t}$ be the cardinality of~$\Perm_n^t$. Then:
$$
f_{n,t}=t!(t+1)^{n-t-1}.
$$
In particular, $|\Perm_n(\xi)|=\displaystyle{\sum_{t=0}^{n-1}t!(t+1)^{n-t-1}}$ (sequence~A129591 in~\cite{Sl}).
\end{theorem}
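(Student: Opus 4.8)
The plan is to count $\Perm_n^t(\xi)$ directly, using the first-element decomposition together with Lemma~\ref{lemma_first_el_dec_increasing_blocks}. Fix $\pi\in\Perm_n^t(\xi)$ and write its first-element decomposition $\pi=\pi_1 B_0 b_1 B_1\cdots b_t B_t$ with $\pi_1=t+1$. I would observe that the data determining $\pi$ splits into two independent pieces: the arrangement of the $t$ small entries $b_1,\dots,b_t$ (which are exactly the values $1,2,\dots,t$), and the distribution of the $n-t-1$ large entries (the values $t+2,\dots,n$) among the $t+1$ blocks $B_0,\dots,B_t$.

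First I would count the small part. The entries smaller than $\pi_1=t+1$ are precisely $1,2,\dots,t$, and they occupy the fixed slots $b_1,\dots,b_t$. Since Lemma~\ref{lemma_first_el_dec_increasing_blocks} constrains only the blocks $B_j$ and places no restriction on the order of the $b_j$, these slots may be filled by any of the $t!$ rearrangements of $\{1,\dots,t\}$.

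Next I would count the large part. By Lemma~\ref{lemma_first_el_dec_increasing_blocks}, $\pi$ avoids $\xi$ if and only if every block $B_j$ is increasing; hence each block is completely determined by its set of entries, the increasing order being forced. Placing the large entries therefore amounts to assigning each of the $n-t-1$ values in $\{t+2,\dots,n\}$ to one of the $t+1$ blocks, giving $(t+1)^{n-t-1}$ possibilities. As these choices are independent of the arrangement of the small entries, multiplying yields $f_{n,t}=t!(t+1)^{n-t-1}$, and summing over $t=0,1,\dots,n-1$ gives the stated formula for $|\Perm_n(\xi)|$.

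What the argument really produces is a bijection between $\Perm_n^t(\xi)$ and the set of pairs consisting of a linear arrangement of $\{1,\dots,t\}$ and a function from $\{t+2,\dots,n\}$ into $\{0,1,\dots,t\}$. The main obstacle, though a mild one, is to confirm that this correspondence is genuinely bijective: distinct choices must give distinct permutations, and every $\xi$-avoider with $\pi_1=t+1$ must arise. Both follow from the fact that the first-element decomposition is uniquely recoverable from $\pi$ — the positions of the $b_j$ are exactly the positions of the values below $\pi_1$, and the blocks are the resulting gaps — so I would record this invertibility explicitly to finish. As a sanity check, the formula specializes correctly at the extremes: $t=0$ gives $1$, matching Corollary~\ref{lemma_first_el_dec_identity}, and $t=n-1$ gives $(n-1)!$.
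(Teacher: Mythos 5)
Your proposal is correct and follows essentially the same route as the paper's own proof: both use the first-element decomposition, invoke Lemma~\ref{lemma_first_el_dec_increasing_blocks} to force the blocks of large entries to be increasing (so each large entry is determined by its block index, giving $(t+1)^{n-t-1}$ choices), and observe that the $t$ small entries are unconstrained, giving the factor $t!$. Your explicit remark that the correspondence is invertible because the decomposition can be recovered from $\pi$ is a welcome clarification of a step the paper leaves implicit, but it does not change the argument.
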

\begin{proof}
Any permutation~$\pi\in\Perm_n^t(\xi)$ can be constructed as follows. The~$t$ elements of~$\pi$ that are smaller than~$\pi_1=t+1$ can be chosen freely, since they cannot contribute to an occurrence of~$\xi$. This can be done in~$t!$ distinct ways. On the other hand, by Lemma~\ref{lemma_first_el_dec_increasing_blocks}, elements greater than~$\pi_1$ must be arranged in increasing blocks. In other words, for each of them it is sufficient to choose the index of the (increasing) block it belongs to. So there are~$t+1$ possibilities for each of the remaining~$n-t-1$ elements. Therefore~$f_{n,t}=t!\cdot(t+1)^{n-t-1}$, as desired.
\end{proof}

\begin{theorem}\label{theorem_bivincular_pattern_gen_result}
Let~$\sigma$ be a permutation of length at least three. The following three conditions are equivalent:
\begin{enumerate}
\item[$(1)$]~$\Sort(\sigma)\not\subseteq\Perm(\xi)$.
\item[$(2)$]~$\sigma= 12\ominus\beta$, for some~$\beta\in\Perm(231)$.
\item[$(3)$]~$\hat{\sigma}\in\Perm(231)$ and~$\reverse(\sigma)\notin\Perm(\xi)$.
\end{enumerate}
\end{theorem}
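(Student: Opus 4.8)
The plan is to prove the cycle $(2)\Rightarrow(1)\Rightarrow(3)\Rightarrow(2)$. Two translations are used throughout. First, by Lemma~\ref{lemma_first_el_dec_increasing_blocks} a permutation $\pi$ contains $\xi$ exactly when it has a descent $\pi_v>\pi_{v+1}$ at some position $v\ge 2$ with $\pi_{v+1}>\pi_1$ (equivalently, both entries of the descent exceed the first entry $\pi_1$). Applying this to $\reverse(\sigma)$, whose first entry is $\sigma_k$, and converting every descent of $\reverse(\sigma)$ at a position $\ge 2$ into the corresponding ascent of $\sigma$ at a position $\le k-2$, one obtains the reformulation of $(3)$ that I will actually use: $\reverse(\sigma)\notin\Perm(\xi)$ if and only if there is an ascent $\sigma_i<\sigma_{i+1}$ with $i\le k-2$ and $\sigma_i>\sigma_k$, where $k=|\sigma|$. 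Second, I record the elementary fact that an occurrence of the bivincular pattern $\xi$ is in particular a classical occurrence of $132$, so $\Perm(132)\subseteq\Perm(\xi)$.

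For $(3)\Rightarrow(2)$, which is pure pattern analysis, assume $\hat{\sigma}\in\Perm(231)$ and fix an ascent $(i,i+1)$ with $i\le k-2$ and $\sigma_i>\sigma_k$. If $i\ge 2$ then the three entries $\sigma_i,\sigma_{i+1},\sigma_k$ sit at increasing positions of $\hat{\sigma}=\sigma_2\sigma_1\sigma_3\cdots\sigma_k$ with values $\sigma_k<\sigma_i<\sigma_{i+1}$, i.e.\ they form a $231$ in $\hat{\sigma}$ (the case $i=2$ uses that $\sigma_2$ moves to the front, but positions $1<3<k$ still work, and $i\le k-2$ forces $k>3$), contradicting the hypothesis. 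Hence $i=1$, giving $\sigma_1<\sigma_2$ and $\sigma_1>\sigma_k$. Now if $\sigma_2\ne k$, locate the maximum $k=\sigma_p$ with $p\ge 3$ (and $p<k$ since $\sigma_k<\sigma_1<k$); then $\sigma_2,\sigma_p,\sigma_k$ form a $231$ in $\hat{\sigma}$, a contradiction, so $\sigma_2=k$. Likewise, if $\sigma_1\ne k-1$ then $k-1=\sigma_p$ for some $p\ge 3$ (again $p<k$), and $\sigma_1,\sigma_p,\sigma_k$ form a $231$ in $\hat{\sigma}$; hence $\sigma_1=k-1$. Thus $\sigma_1=k-1$, $\sigma_2=k$, and the factor $\beta=\sigma_3\cdots\sigma_k$ avoids $231$ (being a factor of $\hat{\sigma}$), so $\sigma=12\ominus\beta$ with $\beta\in\Perm(231)$. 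The converse $(2)\Rightarrow(3)$ is the easy check that for $\sigma=(k-1)\,k\,\beta$ the leading two entries cannot enter any $231$ of $\hat{\sigma}=k(k-1)\beta$, while the trailing descent of $\reverse(\sigma)=\reverse(\beta)\,k\,(k-1)$ witnesses $\xi$.

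For $(2)\Rightarrow(1)$ I reuse the construction from the proof of Theorem~\ref{theorem_necess_cond_class}. When $\sigma=12\ominus\beta$ we are in its case $\sigma_1<\sigma_2$, and the permutation built there is $\alpha=\reverse(\beta)\,(k-1)(k+1)\,k$, which that theorem shows to be $\sigma$-sortable. Its last two entries $(k+1)\,k$ form a descent at position $\ge 2$, both exceeding $\alpha_1\le k-2$, so $\alpha$ contains $\xi$. Hence $\alpha\in\Sort(\sigma)\setminus\Perm(\xi)$, which is $(1)$.

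For $(1)\Rightarrow(3)$ I argue the contrapositive $\neg(3)\Rightarrow \Sort(\sigma)\subseteq\Perm(\xi)$. If $\hat{\sigma}$ contains $231$, then Theorem~\ref{theorem_suff_cond_class} gives $\Sort(\sigma)=\Perm(132,\reverse(\sigma))\subseteq\Perm(132)\subseteq\Perm(\xi)$ and we are done. The remaining case, $\hat{\sigma}\in\Perm(231)$ together with $\reverse(\sigma)\in\Perm(\xi)$, is the heart of the matter. Here I take a $\sigma$-sortable $\pi$ and must show $\pi$ avoids $\xi$; suppose not. By Lemma~\ref{lemma_prefix_sortable} I may replace $\pi$ by the prefix ending at its $\xi$-descent, so $\pi$ ends with $\pi_v>\pi_{v+1}$, both exceeding $\pi_1$. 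Since $k\ge 3$, a one- or two-element stack can never contain $\sigma$, so $\pi_1$ is never popped during the scan and is the rightmost entry of $\out{\sigma}(\pi)$; consequently $\out{\sigma}(\pi)$ contains $231$ as soon as the entries exceeding $\pi_1$ fail to be decreasing in the output. As $\pi$ is sortable, the stack must therefore output $\pi_v$ before $\pi_{v+1}$, which means processing $\pi_{v+1}$ pops $\pi_v$; this pop is triggered by a $\sigma$-occurrence $\pi_{v+1}u_2\cdots u_k$ read top-to-bottom, equivalently a $\reverse(\sigma)$-occurrence of $\pi$ ending at $\pi_{v+1}$ (so $\pi_{v+1}\leftrightarrow\sigma_1$ and $u_k\leftrightarrow\sigma_k$). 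The plan is now to use $\reverse(\sigma)\in\Perm(\xi)$ (i.e.\ the failure of the ascent condition above), reinforced by $\hat{\sigma}\in\Perm(231)$, to guarantee that some entry exceeding $\pi_{v+1}$ remains in the stack below $\pi_{v+1}$ after the greedy popping; together with $\pi_1$ at the very bottom this yields an output $231$ of the shape $\pi_{v+1},\,e,\,\pi_1$, contradicting sortability.

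I expect this last step to be the main obstacle. The comparison $\sigma_1<\sigma_k$ forced by $\reverse(\sigma)\in\Perm(\xi)$ makes $u_k$ the natural candidate for the surviving large entry $e$, but the genuinely delicate point is controlling \emph{which} stacked entries the greedy, minimal popping actually removes when $\pi_{v+1}$ enters (nested $\sigma$-occurrences could in principle force deeper pops), together with the boundary behaviour when $\sigma_1$ is extremal in $\sigma$. Handling these cases cleanly — rather than the two purely combinatorial implications $(2)\Leftrightarrow(3)$ and the explicit witness for $(2)\Rightarrow(1)$, which are routine — is where the careful analysis of the $\sigma$-stack dynamics will be needed.
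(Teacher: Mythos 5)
Your handling of $(2)\Leftrightarrow(3)$ is correct (the reformulation of $\reverse(\sigma)\notin\Perm(\xi)$ as ``some ascent $\sigma_i<\sigma_{i+1}$ with $i\le k-2$ and $\sigma_i>\sigma_k$'' matches Lemma~\ref{lemma_first_el_dec_increasing_blocks} applied to $\reverse(\sigma)$), and your $(2)\Rightarrow(1)$ is essentially sound, with one citation caveat: the construction in Theorem~\ref{theorem_necess_cond_class} is only justified in that proof for patterns of length at least four, so for $k=3$ (i.e.\ $\sigma=231$) you must either verify directly that $1243$ is $231$-sortable and contains $\xi$, or use the paper's simpler witness, namely $\reverse(\sigma)$ itself: under $(3)$ one has $\out{\sigma}(\reverse(\sigma))=\hat{\sigma}$, which avoids $231$, so $\reverse(\sigma)\in\Sort(\sigma)\setminus\Perm(\xi)$ with no auxiliary construction at all.

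The genuine gap is the implication $(1)\Rightarrow(3)$ (equivalently, given your $(2)\Leftrightarrow(3)$, the paper's $(1)\Rightarrow(2)$), which is the heart of the theorem and which you only plan rather than prove; the two difficulties you flag at the end are real and are exactly what the paper's proof resolves. Concretely, what is missing is: (a) a proof that, among the stacked entries $u_2\cdots u_k$ witnessing the pop of $\pi_v$, those playing the roles of $\sigma_3,\dots,\sigma_k$ can be chosen so that they are still in the stack, below $\pi_{v+1}$, when $\pi_{v+1}$ finally enters. The paper gets this by taking the \emph{deepest} witnessing occurrence and arguing that any pop of its third element would have to be justified by an even deeper witness, a contradiction; this is precisely what neutralizes your worry about nested $\sigma$-occurrences forcing deeper pops, and it is what makes the output contain $u_2\,\pi_{v+1}\,u_3\cdots u_k\,\pi_1$, so that any $u_j>\pi_{v+1}$ with $j\ge3$ yields the forbidden $231$. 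And (b) the case $\sigma_1>\sigma_2$, where after (a) one knows $\sigma_1$ is the maximum of $\sigma$ and no output pattern $\pi_{v+1}\,e\,\pi_1$ is available: there the contradiction comes from a different mechanism, namely that $\pi_v>\pi_{v+1}$ can then replace $\pi_{v+1}$ as the top element of the witnessing occurrence, so that immediately after $\pi_v$ is pushed the stack already contains an occurrence of $\sigma$ reading top to bottom, violating the defining property of the $\sigma$-stack. Without (a) and (b) your argument establishes only the routine directions, so the proposal as written does not prove the theorem.
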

\begin{proof}
Let~$\sigma=\sigma_1\cdots\sigma_k$, with~$k\ge 3$.
\begin{itemize}
\item We start by proving that~$(2)$ and~$(3)$ are equivalent. Suppose that~$\sigma=12\ominus\beta$, for some~$\beta=\beta_1\cdots\beta_s\in\Perm(231)$, where~$s=k-2$. Observe that~$\hat{\sigma}=(s+2)(s+1)\beta$ avoids~$231$, since~$\beta$ does so. Finally, we have~$\reverse(\sigma)=\beta_s\cdots\beta_1(s+2)(s+1)$, thus~$\beta_s(s+2)(s+1)$ is an occurrence of~$\xi$ in~$\reverse(\sigma)$, as wanted.

Conversely, suppose that~$\hat{\sigma}$ avoids~$231$ and~$\reverse(\sigma)$ contains~$\xi$, or, equivalently, $\sigma$ contains~$\reverse(\xi)$. The pattern~$\reverse(\xi)$ is depicted in Figure~\ref{figure_bivincular_pattern_132_0_2}. Let~$\sigma_i\sigma_{i+1}\sigma_k$ be an occurrence of~$\reverse(\xi)$ in~$\sigma$. Note that the classical pattern underlying~$\reverse(\xi)$ is~$231$, but~$\hat{\sigma}$ avoids~$231$ by hypothesis. Therefore it has to be~$i=1$, otherwise~$\sigma_i\sigma_{i+1}\sigma_k$ would still be an occurrence of~$231$ in~$\hat{\sigma}$, which is impossible. Thus~$\sigma_k<\sigma_1<\sigma_2$. Now, observe that~$\sigma_u<\sigma_1$ for each~$u>2$. Otherwise, if~$\sigma_u>\sigma_1$ for some~$2<u<k$, then~$\sigma_1\sigma_u\sigma_k$ would be an occurrence of~$231$ in~$\hat{\sigma}$, which is again impossible. Therefore~$\sigma=\sigma_1\sigma_2\ominus\beta=12\ominus\beta$, where~$\beta=\sigma_3\cdots\sigma_k$. Finally, $\beta$ avoids~$231$ because~$\hat{\sigma}$ does so, as wanted.

\item Next we wish to prove that~$(3)$ implies~$(1)$. Suppose that~$\hat{\sigma}$ avoids~$231$ and~$\reverse(\sigma)$ contains~$\xi$. We show that~$\reverse(\sigma)$ is~$\sigma$-sortable (and contains~$\xi$), thus~$\reverse(\sigma)\in\Sort(\sigma)\setminus\Perm(\xi)$. Due to Lemma~\ref{lemma_hat_sigma}, we have~$\out{\sigma}(\reverse(\sigma))=\hat{\sigma}$. Finally, $\hat{\sigma}$ avoids~$231$, so~$\reverse(\sigma)$ is~$\sigma$-sortable, as desired.

\item Finally, we show that~$(1)$ implies~$(2)$, which completes the proof. Suppose that there is a permutation~$\pi=\pi_1\cdots\pi_n$ such that~$\pi$ is~$\sigma$-sortable and~$\pi$ contains~$\xi$. Let~$\pi_1\pi_j\pi_{j+1}$ be an occurrence of~$\xi$ in~$\pi$. Let~$\beta=\sigma_3\cdots\sigma_k$. We show that~$\sigma_2>\sigma_1>\sigma_u$ for each~$u\ge 3$ and~$\beta$ is a~$231$-avoiding permutation. Observe that~$\hat{\sigma}$ avoids~$231$. Otherwise it would be~$\Sort(\sigma)=\Perm(132,\reverse(\sigma))$ due to Theorem~\ref{theorem_suff_cond_class} and thus~$\Sort(\sigma)\subseteq\Perm(\xi)$, contradicting the hypothesis. In particular, $\beta$ avoids~$231$, as wanted. Now, since~$\pi_1$ is the last element that exits the~$\sigma$-stack, $\pi_j$ must be extracted before~$\pi_{j+1}$ enters, else~$\pi_{j+1}\pi_j\pi_1$ would be an occurrence of~$231$ in~$\out{\sigma}(\pi)$, contradicting the fact that~$\pi$ is~$\sigma$-sortable. Let us consider the instant when~$\pi_j$ is extracted (and~$\pi_{j+1}$ is the next element of the input). Since a pop operation is performed by the~$\sigma$-stack, the~$\sigma$-stack must contain~$k-1$ elements~$\alpha_2\alpha_3\cdots\alpha_k$ (reading from top to bottom) such that~$\pi_{j+1}\alpha_2\cdots\alpha_k$ is an occurrence of~$\sigma$. Without losing generality, we can suppose that~$\alpha_3$ is still in the~$\sigma$-stack when~$\pi_{j+1}$ enters: this can be achieved, for instance, by taking the ``deepest" such sequence of elements in the~$\sigma$-stack. Note that~$\out{\sigma}(\pi)$ contains the occurrence~$\alpha_2\pi_{j+1}\alpha_3\cdots\alpha_k$ of~$\hat{\sigma}$. Now, if~$\alpha_v>\pi_{j+1}$ for some~$v\ge 3$, then~$\alpha_v\neq\pi_1$ (because~$\pi_1<\pi_{j+1}$) and~$\pi_{j+1}\alpha_v\pi_1$ is an occurrence of~$231$ in~$\out{\sigma}(\pi)$, a contradiction with~$\pi$ being~$\sigma$-sortable. Therefore, since~$\pi_{j+1}\alpha_2\cdots\alpha_k\simeq\sigma$, we have~$\sigma_u<\sigma_1$ for each~$u\ge 3$. To conclude the proof, we have to show that~$\sigma_1<\sigma_2$. Suppose, for a contradiction, that~$\sigma_1>\sigma_2$. Then~$\sigma_1=k$ is the maximum element of~$\sigma$, since~$\sigma_1>\sigma_u$ for each~$u\ge 3$. Now, consider the instant immediately after~$\pi_j$ is pushed into the~$\sigma$-stack (and~$\pi_{j+1}$ is the next element of the input). Note that~$\pi_{j+1}>\alpha_2$, because we are assuming~$\sigma_1>\sigma_2$ and~$\pi_{j+1}\alpha_2\simeq\sigma_1\sigma_2$. But~$\pi_j>\pi_{j+1}$, thus~$\pi_j\alpha_2\cdots\alpha_s$ is an occurrence of~$\sigma$ contained in the~$\sigma$-stack, which is impossible.
\end{itemize}
\end{proof}

\begin{corollary}\label{corollary_bivinc_patt_identity}
Let~$\sigma=12\ominus\beta$, for some non-empty and~$231$-avoiding permutation~$\beta$. Let~$\pi=\pi_1\cdots\pi_n$ be a~$\sigma$-sortable permutation with~$\pi_1=1$. Then~$\pi$ is the identity permutation.
\end{corollary}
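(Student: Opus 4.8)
The plan is to combine two observations. First, since $\pi_1=1$ is pushed to the bottom of the $\sigma$-stack before anything else is read, it can only be popped last, so $\out{\sigma}(\pi)=w\,1$ for some arrangement $w$ of $\lbrace 2,\dots,n\rbrace$. By Lemma~\ref{lemma_out_231}, $\pi$ is $\sigma$-sortable exactly when $\out{\sigma}(\pi)$ avoids $231$; but any non-inversion $w_i<w_j$ inside $w$ would yield an occurrence $w_iw_j1$ of $231$, so sortability forces $w$ to be decreasing, that is $\out{\sigma}(\pi)=\antiid_n$. The task thus reduces to showing that, under $\sigma=12\ominus\beta$, the only permutation $\pi$ with $\pi_1=1$ whose $\sigma$-stack output is fully decreasing is the identity. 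Second, I would split on whether $\pi$ contains $\reverse(\sigma)$, since this is precisely the dichotomy that governs the stack through Lemma~\ref{lemma_hat_sigma}.

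If $\pi$ avoids $\reverse(\sigma)$, then by Lemma~\ref{lemma_hat_sigma}(1) the restriction is never triggered and $\out{\sigma}(\pi)=\reverse(\pi)$; combined with the first paragraph this gives $\reverse(\pi)=\antiid_n$, hence $\pi=\identity_n$. (Equivalently, Lemma~\ref{lemma_hat_sigma}(1) makes $\pi$ sortable iff it avoids $132$, and a $132$-avoider whose first entry is $1$ can have no descent, so it is increasing by Corollary~\ref{lemma_first_el_dec_identity}.) This case is routine and needs no structural input on $\sigma$.

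The real work is the case where $\pi$ contains $\reverse(\sigma)$. Here Lemma~\ref{lemma_reverse_outputs_hat} guarantees that $\out{\sigma}(\pi)$ contains $\hat{\sigma}=\sigma_2\sigma_1\sigma_3\cdots\sigma_k$, while the first paragraph forces $\out{\sigma}(\pi)=\antiid_n$ to be decreasing; I would close the case by showing that a decreasing permutation cannot contain $\hat{\sigma}$. Writing $\sigma=12\ominus\beta$ with $s=k-2$ gives $\hat{\sigma}=(s+2)(s+1)\beta$, so the goal is to exhibit an ascent inside this word, forcing a non-inversion in the output and contradicting decreasingness. Note that $\hat{\sigma}$ necessarily avoids $231$ (otherwise Theorem~\ref{theorem_suff_cond_class} would put $\Sort(\sigma)=\Perm(132,\reverse(\sigma))$ inside $\Perm(\xi)$, against $\sigma=12\ominus\beta$), so the ascent cannot come from the leading descent $(s+2)(s+1)$ and must be sought within $\beta$. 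This is the step I expect to be the main obstacle: the ascent exists precisely when $\beta$ is not the decreasing permutation, and the delicate configurations are exactly those with $\beta=\antiid_s$ (the family $\sigma=12\ominus\antiid_s$, beginning with $\sigma=231$). For those I would abandon the $\hat{\sigma}$-occurrence shortcut and argue directly at the pop that extracts the top witness of a hypothetical descent $\pi_j>\pi_{j+1}$ of $\pi$, tracking which elements remain below it in the stack to show that the decreasing output is broken — and I expect this direct analysis to be where all the difficulty, and any additional hypothesis on $\beta$, ultimately resides.
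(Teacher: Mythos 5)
Your closing paragraph is the crux, and your suspicion there is right in a stronger sense than you hedge: the case $\beta=\antiid_s$ cannot be closed by any argument, because the statement as printed is false exactly there, and your own machinery exhibits the failure. For $\sigma=12\ominus\antiid_s$ one has $\hat{\sigma}=\antiid_{s+2}$, which avoids $231$; hence, by Lemma~\ref{lemma_hat_sigma}, the permutation $\reverse(\sigma)=12\cdots s\,(s+2)(s+1)$ satisfies $\out{\sigma}(\reverse(\sigma))=\hat{\sigma}=\antiid_{s+2}$, so it is $\sigma$-sortable, begins with $1$, and is not the identity. Concretely, $132$ is $231$-sortable (here $\out{231}(132)=321$) and $1243$ is $3421$-sortable (here $\out{3421}(1243)=4321$). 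The paper itself concedes this in the sentence immediately following the corollary, where $12354$, $12453$, $12534$, $12543$ are listed as $3421$-sortable and the corollary is said to ``fail if $\Sort(\sigma)\not\subseteq\Perm(\xi)$''; but by Theorem~\ref{theorem_bivincular_pattern_gen_result} that failure condition is \emph{equivalent} to the printed hypothesis $\sigma=12\ominus\beta$. The hypothesis is therefore a misprint for its negation. The intended statement is: if $\sigma$ is \emph{not} of the form $12\ominus\beta$ with $\beta\in\Perm(231)$ nonempty (equivalently, $\Sort(\sigma)\subseteq\Perm(\xi)$), then every $\sigma$-sortable $\pi$ with $\pi_1=1$ is the identity; under that hypothesis the paper's two cited references give the whole proof, since $\pi\in\Sort(\sigma)\subseteq\Perm(\xi)$ and a $\xi$-avoiding permutation with first entry $1$ is increasing by Corollary~\ref{lemma_first_el_dec_identity}.

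Judged against the intended statement, your route is sound and genuinely different from the paper's: you never invoke $\xi$ or the block structure of $\Perm(\xi)$, working instead with the forced output $\out{\sigma}(\pi)=\antiid_n$ (valid because $|\sigma|\ge 3$ keeps $\pi_1$ pinned at the bottom of the $\sigma$-stack) together with Lemma~\ref{lemma_reverse_outputs_hat}. In fact your argument proves something sharper than the corrected corollary: for \emph{every} $\sigma$ of length at least three that is not of the form $12\ominus\antiid_s$, a $\sigma$-sortable permutation beginning with $1$ must be the identity — if $\pi$ contained $\reverse(\sigma)$, the decreasing output would have to contain $\hat{\sigma}$, forcing $\hat{\sigma}=\antiid_{|\sigma|}$, i.e.\ $\sigma=12\ominus\antiid_s$. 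This covers patterns such as $3412=12\ominus 12$, which the $\Perm(\xi)$-phrased corollary does not reach. Combined with the counterexamples above, your approach characterizes the exceptional patterns exactly as the family $12\ominus\antiid_s$, $s\ge 1$. Your only real shortfall is the hope that the $\beta=\antiid_s$ case might be rescued by a finer pop-by-pop analysis or an extra hypothesis: it cannot be, and noticing that $\reverse(\sigma)$ is itself a sortable counterexample there would have let you close the analysis completely.
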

\begin{proof}
It follows from Lemma~\ref{lemma_first_el_dec_identity} and Theorem~\ref{theorem_bivincular_pattern_gen_result}.
\end{proof}

Corollary~\ref{corollary_bivinc_patt_identity} fails if~$\Sort(\sigma)\not\subseteq\Perm(\xi)$. For example, the permutations~$12354$, $12453$, $12534$ and $12543$ are~$3421$-sortable.

\section{The decreasing pattern}\label{section_decreasing_pattern}

In this section we provide some enumerative results for the sets~$\Sort(\antiid_k)$, highlighting a link with a class of pattern-avoiding lattice paths. The results of the previous section allow us to directly characterize~$\sigma$-sortable permutations when~$\sigma$ is the decreasing pattern. Indeed, by Theorem~\ref{theorem_suff_cond_class} and for each~$k\ge 1$, we have~$\Sort(\antiid_k)=\Perm(\identity_k,132)$. The sequences that enumerate these sets, for~$k\le 7$, are reported in Table~\ref{table_decr_pattern}.

\begin{table}
\centering
\def\arraystretch{1.1}
\begin{tabular}{llr}
\toprule
$k$ & \textbf{Sequence}~$\lbrace\fsigma{\antiid_k}_n\rbrace_n$ & \textbf{OEIS}\\
\midrule
3 & 1, 2, 4, 8, 16, 32, 64, 128, 256, 512, 1024 & A011782\\
4 & 1, 2, 5, 13, 34, 89, 233, 610, 1597, 4181, 10946 & A001519\\
5 & 1, 2, 5, 14, 41, 122, 365, 1094, 3281, 9842, 29525 & A124302\\
6 & 1, 2, 5, 14, 42, 131, 417, 1341, 4334, 14041, 45542 & A080937\\
7 & 1, 2, 5, 14, 42, 132, 428, 1416, 4744, 16016, 54320 & A024175\\
\bottomrule
\end{tabular}
\caption[Enumerative results for the~$\antiid_k$-machine.]{Enumerative results for~$\antiid_k$-sortable permutations, with~$k=3,4,5,6,7$, starting from permutations of length one.}\label{table_decr_pattern}
\end{table}

If~$n<k$, then obviously~$\Sort_n(\antiid_k)=\Perm_n(132)$ and thus~$\fsigma{\antiid_k}_n=\catalan_n$. Therefore the rows of Table~\ref{table_decr_pattern} tend to the sequence of Catalan numbers. By looking at the reference in~\cite{Sl} for small values of~$k$, we notice that~$\lbrace\fsigma{\antiid_k}_n\rbrace_n$ counts the number of Dyck paths of height at most~$k-1$. A formal proof can be obtained by using the bijection between Dyck paths and~$132$-avoiding permutations mentioned in Example~\ref{example_dyck_213_bij}. Indeed, if~$\pi$ is a~$132$-avoiding permutation and~$P$ is the Dyck path associated to~$\pi$, then the maximum length of an increasing sequence in~$\pi$ is equal to to the height of~$P$. Finally, a permutation~$\pi$ avoids~$\identity_k$ if and only if the maximum length of an increasing sequence in~$\pi$ is at most~$k-1$. Dyck paths of bounded height are rather well studied objects (see for example~\cite{BM,GX}).

We now compute the generating function of~$\lbrace\fsigma{\antiid_k}_n\rbrace_n$ by exploiting this connection with Dyck paths of bounded height. Let~$F_k(t)=\Fsigma{\antiid_k}(t)$. Given a Dyck path~$P$, consider its first-return decomposition~$P=\U Q_1\D Q_2$, for some (possibly empty) Dyck paths~$Q_1,Q_2$ (see Remark~\ref{remark_dyck_first_return}). If~$P$ has height at most~$k$, then~$Q_2$ has height at most~$k$, whereas~$Q_1$ has height at most~$k-1$. This provides a recursive description of~$F_k(t)$ with respect to the semilength:
$$
\begin{cases}
F_0(t)=1;\\
F_k(t)=1+tF_{k-1}(t)F_k(t),\ k\ge 1.
\end{cases}
$$
A consequence of the above recurrence is that~$F_k(t)$ is rational, for all~$k$; indeed we have
$$
F_k(t)=\frac{G_k(t)}{G_{k+1}(t)},
$$
where~$G_0(t)=G_1(t)=1$ and~$G_k(t)$ satisfies the recurrence
$$
G_{k+1}(t)=G_k(t)-tG_{k-1}(t).
$$
Solving this recurrence yields
$$
G_k(t)=\sum_{i\ge 0}\binom{n-1}{i}(-t)^i.
$$
The polynomials~$G_k(t)$ are sometimes called \textit{Catalan polynomials} (see for instance~\cite{CLF}); the table of their coefficients is sequence~A115139 in~\cite{Sl}.

\section{Open problems}\label{section_open_problems}

In this chapter we provided some general results regarding~$\sigma$-machines and sets of~$\sigma$-sortable permutations. As a consequence of Corollary~\ref{corollary_class_nonclass_char}, we are able to tell when~$\Sort(\sigma)$ is a permutation class by simply checking whether~$\hat{\sigma}$ contains~$231$ or not. If~$\Sort(\sigma)$ is a class, Theorem~\ref{theorem_suff_cond_class} states that~$\Sort(\sigma)=\Perm(132,\reverse(\sigma))$, thus the set of~$\sigma$-sortable permutations is completely determined (and enumerated). On the other hand, Theorem~\ref{theorem_bivincular_pattern_gen_result} is currently the only known general result when~$\Sort(\sigma)$ is not a permutation class. It would be interesting to provide more results in order to find structural information on the sets~$\Sort(\sigma)$, when they are not permutation classes.

\begin{openproblem}
Find geometric properties of the set~$\Sort(\sigma)$, when~$\Sort(\sigma)$ is not a permutation class.
\end{openproblem}

More specifically, the only non-class for patterns~$\sigma$ of length two is~$\Sort(21)$, which is the classical case of West's~$2$-stack sortable permutations. Moving on to patterns of length three, the only permutation class is~$\Sort(321)=\Perm(132,123)$. We provide a characterization of the sets~$\Sort(123)$, in Chapter~\ref{chapter_pattern123}, and~$\Sort(132)$, in Chapter~\ref{chapter_pattern132}. The remaining three patterns are yet to be solved. Some related data are reported in Table~\ref{table_unsolved_patterns}. A potentially interesting link with ascent sequences is the following: in Chapter~\ref{chapter_pattern132} we prove that~$\Sort(132)$ is Wilf-equivalent to the set~$\Ascseq(312,321)$ of ascent sequences avoiding~$312$ and~$321$ (see~\cite{BP}), while~$\Sort(321)$ seems to be Wilf-equivalent to~$\Ascseq(312)$.

\begin{openproblem}
Characterize and enumerate the sets~$\Sort(213)$, $\Sort(231)$ and~$\Sort(312)$.
\end{openproblem}

\begin{table}
\centering
\def\arraystretch{1.1}
\begin{tabular}{llr}
\toprule
$\sigma$ & \textbf{Sequence}~$\lbrace\fsigma{\sigma}_n\rbrace_n$ & \textbf{OEIS}\\
\midrule
213 & 1, 2, 5, 16, 62, 273, 1307, 6626, 35010, 190862 & \\
231 & 1, 2, 6, 23, 102, 496, 2569, 13934, 78295, 452439 & \\
312 & 1, 2, 5, 15, 52, 201, 843, 3764, 17659, 86245 & A202062\\
\bottomrule
\end{tabular}
\caption[Unsolved patterns of length three.]{Enumerative data for unsolved patterns of length three, starting from~$\sigma$-sortable permutations of length one.}
\label{table_unsolved_patterns}
\end{table}

If we consider the family of~$\sigma$-machines from the enumerative perspective, it would be nice to investigate deeper the notion of Wilf-equivalence that naturally arises by looking at how many different sequences of~$\sigma$-sortable permutations can be obtained for patterns~$\sigma$ of a fixed length. Formally, we say that two patterns~$\sigma$ and~$\tau$ of length~$k$ are \textit{PAM-Wilf-equivalent} (where PAM stands for pattern-avoiding machine) if the sets~$\Sort(\sigma)$ and~$\Sort(\tau)$ are Wilf-equivalent in the usual sense. Denote by~$w_k$ the number of PAM-Wilf classes of length~$k$.

\begin{openproblem}
Compute the number of PAM-Wilf classes, that is the sequence~$\lbrace w_k\rbrace_{k\ge 1}$.
\end{openproblem}

A slightly easier version of the above open problem can be obtained by considering the sets~$\Sort(\sigma)$ which are permutation classes only. Some data (for which the author is extremely grateful to Christian Bean and Anders Claesson) indicate that the first terms of the resulting sequence, starting from length two, are~$1,1,2,5,11,25,55,126,283$ (not in~\cite{Sl}). For example, there are~$11$ such Wilf-classes for patterns~$\sigma$ of length six:~$10$ of them are reported in Appendix~\ref{appendix_basis_size_two} and the last one consists of those patterns~$\sigma$ such that~$\Sort(\sigma)$ is a class and~$\sigma\ge 132$, that is where~$\Sort(\sigma)=\Perm(132)$ and the counting sequence is the sequence of Catalan numbers.

\chapter{\texorpdfstring{The~$123$-machine}{The 123-machine}}\label{chapter_pattern123}

This chapter is devoted to the analysis of the~$123$-machine. The paper~\cite{CeClFS} contains most of the results presented in this part of the thesis. Since, as a consequence of Corollary~\ref{corollary_class_nonclass_char}, the set~$\Sort(123)$ is not a permutation class, this pattern is considerably more challenging that the decreasing pattern of the same length. If we compute the first terms of the sequence~$\lbrace\fsigma{123}_n\rbrace_{n\ge 1}$, we get~$1,2,5,13,35,99,...$, which suggests a match with~A294790 in~\cite{Sl}. This sequence enumerates, for example, Schr\"oder paths avoiding the (consecutive) path~$\U\H_2\D$ (see~\cite{CiF}). Our goal is to provide a length-preserving bijection between~$123$-sortable permutations and this family of pattern-avoiding paths. To do that, we follow a step-by-step procedure, aiming to progressively reduce the problem of characterizing~$123$-sortable permutations to more manageable subsets of~$\Sort(123)$.

\section{\texorpdfstring{Structural description of~$\Sort(123)$}{Structural description of Sort(123)}}\label{section_123_struct}

We start by dealing with~$123$-sortable permutations that start with an ascent.

\begin{lemma}\label{lemma_123_init_asc}
Let~$\pi\in\Perm_n$. If~$\pi$ is~$123$-sortable, then~$\pi_2\le\pi_1+1$.
\end{lemma}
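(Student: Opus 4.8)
The plan is to prove the contrapositive: assuming $\pi_2\ge\pi_1+2$, I will show that $\out{123}(\pi)$ contains $231$, so that $\pi$ is not $123$-sortable by Lemma~\ref{lemma_out_231}. The whole argument rests on pinning down where the two initial entries $\pi_1$ and $\pi_2$ end up in the output of the $123$-stack, and then exhibiting a third entry that completes an occurrence of $231$ together with them.

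First I would establish a structural fact, valid for \emph{every} input permutation: the entries $\pi_1$ and $\pi_2$ are the last two elements to leave the $123$-stack, in the order $\cdots\pi_2\pi_1$. Since $\pi_1$ is pushed first and $\pi_2$ immediately after it, $\pi_2$ sits directly on top of $\pi_1$ and every subsequent entry is pushed above $\pi_2$. Hence $\pi_2$ can be popped only when it is the top of the stack, that is, only when the stack consists of exactly the two elements $\pi_2$ over $\pi_1$. But a stack holding only two elements can never force a pop, because triggering a pop requires the incoming element to create an occurrence of the length-three pattern $123$, which is impossible with fewer than three elements present. Therefore $\pi_2$ (and a fortiori $\pi_1$ beneath it) is never extracted while the input is being read, and both are flushed out at the very end, $\pi_2$ second-to-last and $\pi_1$ last.

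With this in hand, the conclusion is immediate. Because $\pi_1<\pi_1+1<\pi_2$, the value $c=\pi_1+1$ occurs in $\pi$ and is distinct from both $\pi_1$ and $\pi_2$. By the structural fact, $c$ is output strictly before $\pi_2$, which in turn precedes $\pi_1$. Reading these three entries in output order gives $c\,\pi_2\,\pi_1$, and since $\pi_1<c<\pi_2$ this is an occurrence of $231$. Thus $\out{123}(\pi)$ contains $231$ and $\pi$ is not $123$-sortable, which is exactly the contrapositive of the claim.

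The only delicate point, and the step I would write out most carefully, is the structural fact about $\pi_2$. The right-greedy rule must be invoked precisely: one has to argue that no cascade of pops provoked by some later input entry can ever remove $\pi_2$, and the clean way to do this is the observation that $\pi_2$ becomes exposed at the top only in the two-element configuration $\pi_2$ over $\pi_1$, in which the pop condition is vacuously unsatisfiable. Everything else (the existence and position of $\pi_1+1$, and the verification that $c\,\pi_2\,\pi_1\simeq 231$) is routine.
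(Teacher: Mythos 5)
Your overall strategy is the same as the paper's: show that $\pi_1$ and $\pi_2$ are the last two elements to exit the $123$-stack (in the order $\pi_2\pi_1$), then use an entry with value strictly between $\pi_1$ and $\pi_2$ to exhibit an occurrence of $231$ in $\out{123}(\pi)$. The second half of your argument is fine. The problem is exactly the step you flagged as delicate: your justification of the structural fact is wrong, and the fact itself is false in the generality you claim. A pop \emph{can} be triggered from a two-element stack, because the occurrence of $123$ that triggers a pop includes the incoming element as its topmost entry; with two elements in the stack and one incoming there are three elements available, so your counting argument (``impossible with fewer than three elements present'') does not apply. Concretely, take $\pi=321$: after $3$ and $2$ are pushed, the stack reads $2,3$ from top to bottom, and the incoming $1$ would complete the top-to-bottom occurrence $123$, forcing $2$ to be popped. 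Here $\out{123}(321)=213$, so $\pi_2=2$ is the \emph{first} element of the output, not the second-to-last --- your claim that the structural fact is ``valid for every input permutation'' fails.

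What saves the argument is the hypothesis you never invoke: in the contrapositive setting $\pi_2\ge\pi_1+2$, so in particular $\pi_1<\pi_2$. A pop of $\pi_2$ could only occur from the configuration $\pi_2$ over $\pi_1$ with some incoming element $x$, and it would require $x<\pi_2<\pi_1$ reading from top to bottom, contradicting $\pi_1<\pi_2$. This is precisely the reason the paper gives: since $\pi_1<\pi_2$ and $\pi_2$ sits above $\pi_1$, the two of them can never both take part in an occurrence of $123$ inside the stack. With that one-line repair your proof is correct and coincides with the paper's.
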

\begin{proof}
Suppose, for a contradiction, that~$\pi_2>\pi_1+1$. Then there exists an index~$i\ge 3$ such that~$\pi_2>\pi_i>\pi_1$. Note that the first two elements~$\pi_1$ and~$\pi_2$ are extracted from the~$123$-stack only when the~$123$-stack is emptied at the end of the sorting process. Indeed, since~$\pi_1<\pi_2$ (and~$\pi_2$ enters above~$\pi_1$), they cannot be both part of an occurrence of~$123$. Thus~$\pi_i\pi_2\pi_1$ is an occurrence of~$231$ in~$\out{123}(\pi)$, contradicting the hypothesis that~$\pi$ is~$123$-sortable.
\end{proof}

Let us now partition~$\Sort(123)$ according to Lemma~\ref{lemma_123_init_asc}: permutations starting with a consecutive ascent~$\pi_2=\pi_1+1$, and permutations starting with a descent. The next step consists in showing that inflating the first element of a~$123$-sortable permutation does not affect its~$123$-sortability.

\begin{lemma}\label{lemma_123_infl_lemma}
Let~$\pi$ be a permutation of length~$n$ and let~$\pi'$ be the permutation (of length~$n+1$) obtained from~$\pi$ by~$2$-inflating~$\pi_1$. Then~$\pi$ is~$123$-sortable if and only if~$\pi'$ is~$123$-sortable.
\end{lemma}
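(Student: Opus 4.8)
The plan is to run the $123$-stack on $\pi$ and on $\pi'$ in parallel, show that the two computations stay synchronized, and then compare their outputs by means of Lemma~\ref{lemma_out_231}. Write $a=\pi_1$, so that $\pi'_1=a$, $\pi'_2=a+1$, and the entries $\pi'_3\cdots\pi'_{n+1}$ are $\pi_2\cdots\pi_n$ with every value larger than $a$ raised by one. Let $\phi$ be the order-preserving injection on the values of $\pi$ defined by $\phi(v)=v$ for $v<a$ and $\phi(v)=v+1$ for $v>a$, so that $\phi(a)=a$ and $\pi'_{i+1}=\phi(\pi_i)$ for $i\ge 2$.

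First I would record that the bottom of each stack is frozen during the input phase. A pop occurs only when pushing the next entry would create an occurrence of $123$ read from top to bottom; but pushing onto a stack whose whole content is $a$, or is $a+1,a$ (top to bottom), can never yield such an occurrence, because $a+1,a$ is a descent. Hence $a$ remains at the bottom of the $\pi$-stack, and both $a$ and $a+1$ (with $a+1$ directly above $a$) remain at the bottom of the $\pi'$-stack, until the final flush.

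The core step is an induction on the number of processed entries showing that the computations run in lockstep: whenever the $\pi$-stack reads $s_1\cdots s_{m-1}a$ from top to bottom, the $\pi'$-stack reads $\phi(s_1)\cdots\phi(s_{m-1})(a+1)a$, and the partial outputs agree under $\phi$. The inductive step amounts to the equivalence ``pushing $y$ creates a $123$ in the $\pi$-stack $\iff$ pushing $\phi(y)$ creates a $123$ in the $\pi'$-stack''. One direction is clear since $\phi$ is order preserving and inserting $a+1$ does not change the relative order of the other entries. For the converse, a witnessing increasing pair (top to bottom) in the $\pi'$-stack either avoids $a+1$, and is then the $\phi$-image of such a pair in the $\pi$-stack, or has $a+1$ as its deeper (larger) member, in which case swapping $a+1$ for the element $a$ just beneath it gives a witnessing pair in the $\pi$-stack; the remaining possibility, that $a+1$ is the shallower member, cannot occur because only the smaller value $a$ lies below it. Since neither bottom element is ever popped, this equivalence survives every cascade of pops, and flushing the stacks at the end gives
$$
\out{123}(\pi)=O\,s_1\cdots s_{m-1}\,a,\qquad \out{123}(\pi')=\phi(O)\,\phi(s_1)\cdots\phi(s_{m-1})\,(a+1)\,a,
$$
for a common prefix $O$. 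Thus $\out{123}(\pi')$ is $\out{123}(\pi)$ relabelled by $\phi$ with the value $a+1$ inserted immediately before the final entry $a$.

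Finally I would show this modification preserves $231$-avoidance both ways, so that Lemma~\ref{lemma_out_231} concludes. Relabelling by $\phi$ is harmless. Deleting the inserted $a+1$ exhibits $\phi(\out{123}(\pi))$ as a pattern of $\out{123}(\pi')$, giving one implication. For the other, assume $\out{123}(\pi)$ avoids $231$ and suppose a $231$ in $\out{123}(\pi')$ used $a+1$. As $a+1$ sits next to last with only the smaller value $a$ to its right, and $a,a+1$ are consecutive values, $a+1$ can be neither the ``$2$'' (which would need two entries to its right) nor the ``$3$'' (which would need an intermediate value between $a$ and $a+1$); and if $a+1$ were the ``$1$'', the two larger entries before it together with the final $a$ would already form a $231$ in $\out{123}(\pi)$, a contradiction. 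The delicate point throughout is the converse half of the push/pop equivalence, where one must reliably trade the inserted value $a+1$ for the consecutive value $a$ below it while remaining consistent across repeated pops; everything else is bookkeeping.
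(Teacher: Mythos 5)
Your proof is correct and follows essentially the same route as the paper's: both arguments freeze the pair $a,a+1$ at the bottom of the $123$-stack, show that $a+1$ never influences a push/pop decision (so the computations on $\pi$ and $\pi'$ run in lockstep and $\out{123}(\pi')$ is just $\out{123}(\pi)$, relabelled, with $a+1$ inserted before the final $a$), and then check that this insertion neither creates nor destroys an occurrence of $231$. Your write-up merely makes explicit, via the map $\phi$ and the case analysis on the role of $a+1$, the steps the paper treats as immediate.
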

\begin{proof}
Observe that, by hypothesis, the first two elements of~$\pi'$ are consecutive in value ($a$ and~$a+1$, say) and the first one is smaller than the second one. Therefore, during the sorting process, such two elements remain at the bottom of the $123$-stack (with~$a+1$ above~$a$) until all the other elements of the input permutations have exited it. Moreover, since~$a+1$ is above~$a$, the behavior of the~$123$-stack is not affected by the presence of~$a+1$, meaning that~$a$ and~$a+1$ can be considered as a single element. As a consequence, the last two elements of~$\out{123}(\pi')$ are~$a+1$ and~$a$. Finally, it is easy to realize that~$\out{123}(\pi)$ contains~$231$ if and only~$\out{123}(\pi')$ contains~$231$.
\end{proof}

\begin{corollary}\label{corollary_123_infl_cor}
Let~$\pi$ be a permutation of length~$n$ and let~$\pi'$ be the permutation (of length~$n+k-1$) obtained from~$\pi$ by~$k$-inflating~$\pi_1$, for some~$k\ge 1$. Then~$\pi$ is~$123$-sortable if and only if~$\pi'$ is~$123$-sortable.
\end{corollary}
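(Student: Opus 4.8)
The plan is to prove the statement by induction on~$k$, using Lemma~\ref{lemma_123_infl_lemma} (the case~$k=2$) as the engine of the induction. The base case~$k=1$ is trivial, since the~$1$-inflation of~$\pi$ at~$\pi_1$ replaces~$\pi_1$ by the single-letter run~$\pi_1$ and hence leaves~$\pi$ unchanged; thus~$\pi'=\pi$ and there is nothing to prove. The case~$k=2$ is precisely Lemma~\ref{lemma_123_infl_lemma}.

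For the inductive step I would exploit the fact that inflating the first element behaves well under composition: the~$(k+1)$-inflation of~$\pi$ at~$\pi_1$ coincides with the~$2$-inflation, at the first position, of the permutation~$\pi^{(k)}$ obtained by~$k$-inflating~$\pi$ at~$\pi_1$. Indeed, after~$k$-inflating, the first~$k$ entries of~$\pi^{(k)}$ form a consecutive increasing run~$a(a+1)\cdots(a+k-1)$; applying a~$2$-inflation to its first entry~$a$ inserts one further element immediately after~$a$ and rescales the larger values upward, producing the run~$a(a+1)\cdots(a+k)$ in the first~$k+1$ positions, which is exactly what the~$(k+1)$-inflation of~$\pi$ at~$\pi_1$ yields. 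Granting this identity, the inductive hypothesis gives that~$\pi$ is~$123$-sortable if and only if~$\pi^{(k)}$ is~$123$-sortable, while Lemma~\ref{lemma_123_infl_lemma} applied to~$\pi^{(k)}$ gives that~$\pi^{(k)}$ is~$123$-sortable if and only if its~$2$-inflation, namely~$\pi'$, is~$123$-sortable. Chaining the two equivalences closes the induction.

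The only point requiring genuine care is verifying this composition identity for inflations at the first position. This is a routine rescaling check, made transparent by tracking the consecutive increasing run at the front of the permutation, and it is the sole step where one must be attentive to how values are renormalized after each inflation. Since no new analysis of the behavior of the~$123$-stack is needed beyond Lemma~\ref{lemma_123_infl_lemma}, I do not anticipate any substantial obstacle.
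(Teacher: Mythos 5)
Your proposal is correct and takes essentially the same route as the paper, whose entire proof is that the statement follows ``by just iterating the same argument'' of Lemma~\ref{lemma_123_infl_lemma}; your induction on~$k$, together with the composition identity for inflations at the first position, is precisely a careful writing-out of that iteration. The composition identity you flag does hold, since in~$\pi^{(k)}$ the first element still has value~$\pi_1$ and~$2$-inflating it merely extends the initial consecutive increasing run by one while shifting all larger entries up by one, which is exactly the~$(k+1)$-inflation.
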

\begin{proof} This is a direct consequence of the previous corollary, by just iterating the same argument.
\end{proof}

Due to Lemma~\ref{lemma_123_init_asc} and Corollary~\ref{corollary_123_infl_cor}, in order to describe~$\Sort(123)$ we just need to investigate the sortability of permutations starting with a descent. Denote by~$\DSort(123)$ the set:
$$
\DSort(123)=\lbrace\pi\in\Sort(123):\pi_1>\pi_2\rbrace.
$$
By first characterizing and enumerating~$\DSort_n(123)$, we can easily recollect the analogous results for~$\Sort(123)$. Indeed by deflating the prefix of consecutive ascents (if there is one), we can always trace back the~$123$-sortability of a permutation to another permutation in~$\DSort(123)$.

\begin{lemma}\label{lemma_123_output}
Let~$\pi\in\DSort_n(123)$, with~$\pi_1=k$. Then:
$$
\out{123}(\pi)=n(n-1)\cdots(k+1)(k-1)\cdots 21k.
$$
\end{lemma}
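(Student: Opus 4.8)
The target can be restated compactly: writing $D=n(n-1)\cdots(k+1)(k-1)\cdots 21\,k$, the claim is that every entry other than $\pi_1=k$ is output in strictly decreasing order, with $k$ appended last. The plan is to track the content of the $123$-stack throughout the reading phase and to show that, under the hypothesis that $\pi$ is $123$-sortable, the stack is forced into a very rigid shape. Throughout I will use that $\out{123}(\pi)$ avoids $231$ (Lemma~\ref{lemma_out_231}).

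First I would dispatch the two easy facts. Since $\pi_1=k$ is pushed first and sits at the bottom of the stack, and since a two-element stack can never contain an occurrence of $123$, the element $k$ can never be forced out while it is alone; hence $k$ is never popped during the reading phase and is the last entry of $\out{123}(\pi)$. Next, the entries larger than $k$ must appear in the output in decreasing order: if two of them occurred in increasing order, then together with the final $k$ (smaller than both) they would form an occurrence of $231$, contradicting sortability.

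The core of the argument is an invariant on the stack. Because $\pi_1>\pi_2$, the second entry $\pi_2<k$ is a \emph{small} value and is pushed directly on top of $k$, so the stack immediately acquires a small value above $k$. I would prove that, for a sortable $\pi$, the stack read from top to bottom always has the form $G\,S\,k$, where $G$ is a strictly decreasing run of values greater than $k$ (the \emph{large} ones) and $S$ is a nonempty strictly decreasing run of values smaller than $k$. The descent hypothesis is exactly what keeps $S$ nonempty: it starts nonempty (it contains $\pi_2$), a small value is only ever popped as part of pushing a strictly smaller small value that then replaces it, and pushing a large value never pops a small one; hence $S$ stays nonempty for the entire reading phase.

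To establish the invariant I would check its preservation under each push, using the blocking rule that pushing a value $x<k$ is blocked precisely when the stack contains a value strictly between $x$ and $k$. When the invariant would be violated --- a large value pushed below a larger one already present, or a small value coming to rest above a large value --- the presence of a completing entry (the final $k$ in the large/large case, a strictly smaller member of the nonempty run $S$ in the small/large case) produces an occurrence of $231$ in $\out{123}(\pi)$, contradicting sortability. When instead the incoming value is a small $x$ that triggers a pop, right-greediness pops off exactly the current run $G$ (decreasing large values) followed by the small values exceeding $x$, all in decreasing order, before $x$ settles; this both preserves the invariant and outputs those entries in decreasing order. Granting the invariant, every pop in the reading phase emits the current top, and the final flush emits $G$, then $S$, then $k$, so the whole output is the decreasing arrangement of $[n]\setminus\lbrace k\rbrace$ followed by $k$, namely $D$. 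The main obstacle is precisely this case analysis: showing that each way of breaking the invariant forces a $231$, where the delicate point is the interleaving of large and small values and one must invoke the nonemptiness of $S$ (guaranteed by the descent hypothesis) to supply the smallest element of the forbidden pattern.
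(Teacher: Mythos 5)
Your invariant and its preservation analysis are essentially sound: the two ``violation'' cases (a large pushed under a larger large, a small coming to rest above a large) do produce occurrences of $231$, and the burst case is described correctly. The genuine gap is the final inference, ``granting the invariant, \dots the whole output is the decreasing arrangement of $[n]\setminus\lbrace k\rbrace$ followed by $k$.'' The invariant constrains the stack at each instant, but the output is a concatenation of pop-bursts spread over time, and nothing in the invariant forbids an \emph{ascent across bursts}. Concretely: a burst triggered by a small value $x$ ends by emitting some small value $s>x$ and then pushing $x$; afterwards a large value $g$ (or another small exceeding $s$) can be pushed and later emitted without the invariant ever being violated, and then the output contains $s$ before $g$ --- together with the final $k$ this is an occurrence of $132$, not $231$, so none of your contradictions fire, yet the output is no longer the claimed decreasing word. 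This is not hypothetical: for $\pi=42315$ (which starts with a descent, $k=4$, $n=5$) the stack has the form $G\,S\,k$ at every step, the large values of the output are trivially decreasing, and no invariant violation ever occurs; nevertheless $\out{123}(\pi)=32514$, not $53214$. The $231$-witness is $3\,5\,1$: the emitted small $3$, the late large $5$, and the small $1$ still inside the stack --- exactly the configuration your case analysis never examines. Since everything you derive from sortability (pairwise-decreasing larges plus invariant preservation) holds in this example while the conclusion fails, the implication you rely on is false and the proof does not go through.

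To close the gap you need one more sortability argument killing cross-burst ascents. One way: strengthen the invariant to record that the element sitting just above $k$ at the bottom of the stack is only ever replaced by strictly smaller elements, hence is smaller than every value $<k$ emitted so far; then an emitted small $s$, a later-emitted value $v>s$, and that bottom element $w$ give $s\,v\,w\simeq 231$. This is precisely the role of the element $w$ in the paper's proof, which takes a different route altogether: it assumes a \emph{consecutive} ascent $u<v$ with $v\neq k$ anywhere in the output, shows first that $v$ must precede $u$ in $\pi$ and that $v>\pi_1$, notes that some $w<\pi_1$ sits just above $\pi_1$, and then extracts the comparison $u>w$ for free from the $123$-avoidance inside the stack (if $u<w$ then $u\,w\,\pi_1$ would be a forbidden $123$), so that $u\,v\,w\simeq 231$. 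That last trick is what lets the paper avoid tracking how $w$ evolves over time, which your burst-by-burst bookkeeping would otherwise have to do.
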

\begin{proof}
Let~$\out{123}(\pi)=\gamma_1\gamma_2\cdots\gamma_n$. Clearly~$\gamma_n=k$. Now suppose, for a contradiction, that the two elements~$u$ and~$v$ constitute an ascent in~$\out{123}(\pi)$, with~$u<v$ and~$v\neq k$. We first show that~$v$ precedes~$u$ in~$\pi$. Suppose in fact that this is not the case, and focus on the instant when~$u$ is extracted from the~$123$-stack. Let~$a$ be the next element of the input when this happens. Then there are two elements~$b,c$ in the~$123$-stack, with~$b<c$ and~$b$ above~$c$, such that~$abc\simeq 123$. We distinguish two cases.

\begin{itemize}
\item $u=b$. In this case, we have~$a\neq v$, and so~$v$ follows~$a$ in~$\pi$. Therefore~$\out{123}(\pi)$ contains either the subword~$uav$, which is impossible since~$u$ and~$v$ are supposed to be consecutive in~$\out{123}(\pi)$, or the subword~$uva$, which is impossible too since otherwise~$\out{123}(\pi)$ would contain the pattern~$231$, contradicting the fact that~$\pi$ is~$123$-sortable.

\item $u\neq b$. In this case, $\out{123}(\pi)$ would contain the subword~$ubv$, which is impossible, again because~$u$ and~$v$ would not be consecutive.
\end{itemize}

Thus we can write~$\pi$ as~$\pi=k\pi_2\cdots v\cdots u\cdots$. Since~$u$ and~$v$ are consecutive in~$\out{123}(\pi)$, $u$ must enter the~$123$-stack just above~$v$. This implies, in particular, that~$v>\pi_1$, otherwise~$u$, $v$ and~$\pi_1$ would constitute a forbidden~$123$ inside the~$123$-stack. We also notice that, when~$u$ enters the~$123$-stack, at the bottom of the~$123$-stack there is at least one element~$w<\pi_1$ just above~$\pi_1$. Indeed, either~$\pi_2$ is still in the~$123$-stack (and in this case~$w=\pi_2$) or~$\pi_2$ has been forced to exit by some~$\tilde{w}<\pi_2<\pi_1$; in this case, $\tilde{w}$ replaces~$\pi_2$ just above~$\pi_1$. Iterating this argument, we get the desired property. Summing up, when~$u$ enters the~$123$-stack, the~$123$-stack itself contains the elements (from top to bottom)~$u,v,w,\pi_1$. Now, it must be~$u>w$, otherwise~$uw\pi_1$ would be a forbidden~$123$ in the~$123$-stack. Hence~$\out{123}(\pi)$ contains the subword~$uvw\simeq 231$ and~$\pi$ is not~$123$-sortable, a contradiction.
\end{proof}

\begin{corollary}\label{corollary_123_before_maximum}
Let~$\pi\in\DSort_n(123)$ and suppose that~$\pi_1<n$. Also, suppose that~$\pi_i=n$, for some~$i\ge 2$. Then either~$\pi_{i-1}=n-1$, if~$\pi_1\neq n-1$, or~$\pi_{i-1}=n-2$, if~$\pi_1=n-1$.
\end{corollary}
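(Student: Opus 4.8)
The plan is to reduce the statement to a short analysis of the stack dynamics near the maximum, leveraging the explicit form of $\out{123}(\pi)$ supplied by Lemma~\ref{lemma_123_output}. Writing $k=\pi_1$, that lemma gives $\out{123}(\pi)=n(n-1)\cdots(k+1)(k-1)\cdots21k$; in particular its second entry is $n-1$ when $k\neq n-1$ and $n-2$ when $k=n-1$. Observe that in either case this second output entry is exactly the value the corollary predicts for $\pi_{i-1}$. So the whole statement follows once I prove the clean intermediate claim: the element immediately preceding $n$ in the input, namely $\pi_{i-1}$, is precisely the second element emitted by the $123$-stack.

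To establish this claim, I would first note that $n$ is the first element of the output, hence the very first pop of the process emits $n$. Since pops are the only source of output and $n$ is not yet in the stack before position $i$, no pop can occur while $\pi_1,\dots,\pi_{i-1}$ are being read; thus all of these, and $\pi_i=n$ as well, are pushed without any intervening pop. At the instant $n$ is pushed, the stack content read from top to bottom is $n\,\pi_{i-1}\cdots\pi_1$. If $i<n$, right-greediness would push $\pi_{i+1}$ next unless that push creates a forbidden $123$; but a successful push of $\pi_{i+1}$ would bury $n$ and prevent it from being output first, contradicting Lemma~\ref{lemma_123_output}. Hence the push of $\pi_{i+1}$ is illegal and $n$ is popped (the case $i=n$ is immediate, as the stack is simply emptied).

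The technical heart of the argument, and the step I expect to require the most care, is showing that the \emph{next} pop emits $\pi_{i-1}$. For this I would argue that the maximum $n$, sitting at the top of the stack, cannot participate in any increasing top-to-bottom triple: it is too large to be the first or the middle entry of such a triple, and being at the very top it cannot be a lower entry either. Consequently the forbidden $123$ created by the attempted push of $\pi_{i+1}$ must be realized by $\pi_{i+1}$ together with two elements lying strictly below $n$, say $s_p<s_q$ with $s_p$ above $s_q$ among $\pi_{i-1},\dots,\pi_1$. Popping $n$ off the top does not disturb $s_p$ and $s_q$, so re-attempting the push of $\pi_{i+1}$ still produces the same forbidden triple, forcing a second, immediate pop whose output is the new top, $\pi_{i-1}$. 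Therefore the second output entry equals $\pi_{i-1}$.

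Combining the two pieces finishes the proof: reading off the second entry of $\out{123}(\pi)$ from Lemma~\ref{lemma_123_output} and equating it with $\pi_{i-1}$ yields $\pi_{i-1}=n-1$ when $\pi_1\neq n-1$, and $\pi_{i-1}=n-2$ when $\pi_1=n-1$. A few small sanity checks deserve a line in the write-up — that the opening descent forces $i\ge 3$, so that $\pi_{i-1}$ has index at least two and is distinct from $\pi_1$, and that $n\ge 3$, so the predicted second output entry genuinely exists — but these are routine.
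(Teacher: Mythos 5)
Your proposal is correct and follows essentially the same route as the paper: both rest on Lemma~\ref{lemma_123_output} together with the key observation that the maximum $n$ can never play the middle role in a forbidden $123$ inside the stack, so that $n$ and $\pi_{i-1}$ are popped consecutively and hence $\pi_{i-1}$ is the second letter of the output, which the lemma identifies as $n-1$ or $n-2$. The paper phrases the dynamics slightly more economically---pushing $n$ is always legal, so $n$ enters directly above $\pi_{i-1}$, and whenever $n$ is popped the same forbidden triple forces $\pi_{i-1}$ out immediately after---without needing your preliminary deduction that no pops occur before $n$ enters, but the substance is identical.
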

\begin{proof}
Notice that~$i\ge 3$: indeed~$i\neq 1$ by hypothesis and~$i\neq 2$ since~$\pi$ starts with a descent. The element~$\pi_i=n$ enters the~$123$-stack immediately above~$\pi_{i-1}$, since pushing the maximum~$n$ into the~$123$-stack can never generate a forbidden pattern~$123$. Moreover, $n$ and~$\pi_{i-1}$ are extracted from the~$123$-stack together, since~$n$ cannot play the role of the second element in a forbidden pattern inside the~$123$-stack. Therefore, $\out{123}(\pi)$ contains the factor~$n\pi_{i-1}$. The desired result follows then from Lemma~\ref{lemma_123_output}.
\end{proof}

\begin{corollary}\label{corollary_123_starting_maximum}
Let~$n\ge 2$. Then the set of permutations of~$\DSort_n(123)$ starting with~$n$ is the set of~$213$-avoiding permutations of length~$n$ that start with~$n$.
\end{corollary}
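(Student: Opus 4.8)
The plan is to reduce the statement to the already-established analysis of the $12$-machine in Theorem~\ref{theorem_12machine}. First I observe that if $\pi_1=n$ then $\pi_1>\pi_2$ holds automatically, so for a permutation starting with $n$ the condition of lying in $\DSort_n(123)$ coincides with plain $123$-sortability; by Lemma~\ref{lemma_out_231} this is exactly the condition that $\out{123}(\pi)$ avoid $231$. Writing $\pi=n\tau$ with $\tau=\pi_2\cdots\pi_n$, the whole problem thus becomes: $\out{123}(n\tau)$ avoids $231$ if and only if $\tau$ (equivalently $\pi$) avoids $213$.

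The key structural observation is that the leading maximum collapses the $123$-stack into a $12$-stack. Since $n$ is pushed first and is the global maximum, it sits at the bottom of the $123$-stack and can only ever play the role of the largest (``$3$'') element of a forbidden $123$; consequently any two entries lying above $n$ that form an ascent reading from top to bottom already create a forbidden $123$ together with $n$. Hence, as long as $n$ remains at the bottom, the restriction ``no $123$ in the stack'' is precisely the restriction ``no $12$ among the entries above $n$'', and the right-greedy discipline is unchanged. I would then check that $n$ is never popped before the final flush (whenever $n$ is on top the stack consists of $n$ alone, so no push can force its extraction), so the entries of $\tau$ are processed above $n$ exactly as a $12$-stack processes $\tau$ on its own. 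This yields the factorization $\out{123}(n\tau)=\out{12}(\tau)\,n$.

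From here the conclusion is immediate. Because $n$ is the last and largest entry of $\out{123}(n\tau)$, it cannot participate in any occurrence of $231$, so $\out{123}(n\tau)$ avoids $231$ if and only if $\out{12}(\tau)$ avoids $231$; and because $n$ is the first and largest entry of $\pi$, it cannot participate in any occurrence of $213$, so $\pi$ avoids $213$ if and only if $\tau$ does. Applying Lemma~\ref{lemma_out_231} to the $12$-machine together with Theorem~\ref{theorem_12machine}, $\out{12}(\tau)$ avoids $231$ exactly when $\tau$ is $12$-sortable, which happens exactly when $\tau\in\Perm(213)$. Chaining these equivalences shows that a permutation of length $n$ starting with $n$ lies in $\DSort_n(123)$ precisely when it avoids $213$, which is the claim.

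I expect the only genuinely delicate point to be the second paragraph: justifying rigorously that the presence of $n$ at the bottom converts the $123$-constraint into the $12$-constraint for the remaining entries, and that $n$ is never extracted prematurely. Everything else is a matter of transporting $231$- and $213$-occurrences across the fixed extreme position of $n$ and then quoting Theorem~\ref{theorem_12machine}.
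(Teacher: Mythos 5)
Your proof is correct and follows essentially the same route as the paper: both arguments rest on the observation that once the leading maximum~$n$ sits at the bottom of the~$123$-stack, the restriction collapses to that of a~$12$-stack on the remaining entries, after which Theorem~\ref{theorem_12machine} finishes the job. Your write-up is in fact more detailed than the paper's (which asserts the collapse without justifying that~$n$ is never popped prematurely or factoring the output as~$\out{12}(\tau)\,n$), so no gaps to report.
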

\begin{proof}
Let~$\pi\in\DSort_n(123)$, and suppose that~$\pi_1=n$. As soon as~$n$ enters the~$123$-stack, it makes the~$123$-stack act as a~$12$-stack for the rest of the permutation. This means, formally, that since~$n$ is the maximum of~$\pi$, from now on the restriction of the~$123$-stack is triggered if and only if the restriction of a~$12$-stack that ignores~$n$ is triggered. Therefore, by Theorem~\ref{theorem_12machine}, $\pi$ is~$123$-sortable if and only if the permutation obtained from~$\pi$ by removing the first element avoids~$213$, which is in turn equivalent to the fact that~$\pi$ avoids~$213$.
\end{proof}

A straightforward consequence of Corollary~\ref{corollary_123_starting_maximum} is that there are~$\catalan_{n-1}$ permutations in~$\DSort_n(123)$ that start with the maximum~$n$. The remaining permutations of~$\DSort_n(123)$ are precisely those having at least two ltr-maxima. Denote this set by~$\DSort_n({\ge}2; 123)$. Similarly, denote by~$\DSort_n(i;123)$ the set of permutations of~$\DSort_n(123)$ having exactly~$i$ ltr-maxima.

\begin{theorem}\label{theorem_123_bijection}
Let~$n\ge 3$. There exists a bijection:
$$
\varphi:\DSort_{n-1}(123)\rightarrow\DSort_n({\ge}2,123).
$$
Moreover, the restriction of~$\varphi$ to~$\DSort_{n-1}(i;123)$ is a bijection between~$\DSort_{n-1}(i;123)$ and~$\DSort_n(i+1;123)$.
\end{theorem}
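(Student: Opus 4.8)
The plan is to realize $\varphi$ as the insertion of a new maximum and its inverse as deletion of that maximum, and then to verify the three requirements (that both maps are well defined, that the number of ltr-maxima shifts by one, and that they are mutually inverse) through a single ``transparency'' principle for the global maximum inside the $123$-stack.

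First I would record the elementary observation that $\DSort_n({\ge}2;123)$ is exactly the set of $\pi'\in\DSort_n(123)$ with $\pi'_1<n$: a permutation of $\DSort_n(123)$ beginning with $n$ has a single ltr-maximum (by Corollary~\ref{corollary_123_starting_maximum} these are precisely the $213$-avoiders, counted by $\catalan_{n-1}$), whereas if $\pi'_1<n$ then $n$ occurs later and is a second ltr-maximum. For such $\pi'$, Corollary~\ref{corollary_123_before_maximum} pins down the entry immediately preceding $n$, namely $n-1$ when $\pi'_1\neq n-1$ and $n-2$ when $\pi'_1=n-1$, and (from its proof) $n$ occupies a position $\ge 3$. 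This dictates the definition of $\varphi$: given $\pi\in\DSort_{n-1}(123)$, insert a new maximum $n$ immediately to the right of the entry $n-1$ if $\pi_1\neq n-1$, and immediately to the right of $n-2$ if $\pi_1=n-1$ (in the latter case $\pi$ begins with its own maximum, so this entry exists in some position $\ge 2$). The candidate inverse $\psi$ simply deletes the entry $n$.

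The crux is a transparency lemma that I would isolate and prove: if a permutation is augmented by inserting a new global maximum $M$ immediately after an entry $e$, then $M$ is pushed directly on top of $e$ and is thereafter invisible to the $123$-stack---it never participates in a forbidden $123$ forcing a pop, and it is ejected exactly one step before $e$. The reason is that any value separating $M$ from $e$ inside a forbidden increasing triple would have to lie strictly between $e$ and $M$; in our two cases $e\in\{n-1,\,n-2\}$, and the only such separating value (namely $n-1$ in the second case) has already been pushed to the bottom of the stack and is never a future input. Hence the push/pop decisions on every other entry are unchanged, so $\out{123}(\varphi(\pi))$ equals $\out{123}(\pi)$ with $n$ inserted immediately before its predecessor; since by Lemma~\ref{lemma_123_output} that predecessor heads the output, $\out{123}(\varphi(\pi))$ is just $\out{123}(\pi)$ with $n$ prepended. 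As prepending (or deleting) the global maximum preserves $231$-avoidance, Lemma~\ref{lemma_out_231} shows that $\varphi(\pi)$ is $123$-sortable if and only if $\pi$ is, and symmetrically for $\psi$; together with the fact that inserting or deleting $n$ in a position $\ge 3$ leaves the first two entries, hence the initial descent, intact, this establishes that $\varphi$ and $\psi$ map between $\DSort_{n-1}(123)$ and $\DSort_n({\ge}2;123)$.

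For the ltr-maximum bookkeeping I would argue that inserting $n$ in a position $i\ge 3$ adds exactly one ltr-maximum: the new entry $n$ is one, no entry to its right can be an ltr-maximum (all are dominated by $n$), and the entries to its left keep their status; moreover the predecessor is the maximum of $\pi$ (general case) or $\pi$ begins with its maximum (edge case), so $\pi$ has no ltr-maximum strictly to the right of the insertion point, giving $\ltrmaxsize(\varphi(\pi))=\ltrmaxsize(\pi)+1$ and hence the refinement $\varphi\colon\DSort_{n-1}(i;123)\to\DSort_n(i{+}1;123)$. Finally $\psi\circ\varphi=\identity$ is immediate, and $\varphi\circ\psi=\identity$ follows because Corollary~\ref{corollary_123_before_maximum} forces $\varphi$ to reinsert $n$ in precisely the position from which $\psi$ removed it. I expect the transparency lemma to be the main obstacle: making rigorous that the buried maximum never alters a pop decision---especially in the edge case $\pi_1=n-1$, where one must exploit that $n-1$ is already locked at the bottom of the stack---is where the real content lies, while the ltr-maxima count and the inversion checks are routine.
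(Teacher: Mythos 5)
Your proposal is correct and follows essentially the same route as the paper: the same insertion map $\varphi$ (new maximum placed right after $n-1$, or after $n-2$ when $\pi_1=n-1$), the same deletion inverse $\psi$, and the same key fact that the inserted maximum rides invisibly on top of its predecessor because it can never be the middle element of a forbidden triple and the only possible ``separating'' value, $n-1$ in the edge case, is locked at the bottom of the stack. The paper establishes this transparency inline in its two case analyses (computing the outputs via Lemma~\ref{lemma_123_output} and arguing that $n$ and its predecessor exit together), whereas you package it as a standalone lemma applied in both directions; the mathematical content is identical.
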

\begin{proof}
Let~$\pi=\pi_1\cdots\pi_{n-1}\in\DSort_{n-1}(123)$. Let~$\varphi(\pi)$ be obtained from~$\pi$ by inserting~$n$:

\begin{itemize}
\item either immediately after~$n-1$, if~$\pi_1\neq n-1$, or
\item immediately after~$n-2$, if~$\pi_1=n-1$.
\end{itemize}

First we show that~$\varphi$ is well defined, that is~$\varphi(\pi)\in\DSort_n({\ge} 2;123)$. We analyze the two cases in the definition of~$\varphi$ separately.

\begin{itemize}
\item If~$\pi\in\DSort_{n-1}(1;123)$ (that is~$\pi_1=n-1$), then by Lemma~\ref{lemma_123_output} we have:
$$
\out{123}(\pi)=(n-2)(n-3)\cdots 1(n-1).
$$
Now we analyze what happens on input~$\varphi(\pi)$ after the first pass through the~$123$-stack. Remember that the first element of~$\varphi(\pi)$ is~$n-1$ and that~$n$ immediately follows~$n-2$; moreover, suppose that~$n-2$ is the~$i$-th element of~$\varphi(\pi)$. Therefore, the first~$i$ elements of~$\pi$ and~$\varphi(\pi)$ are equal, and so they are processed exactly in the same way by the~$123$-stack. In particular, since~$n-2$ is the first element of~$\out{123}(\pi)$, when~$n-2$ enters the~$123$-stack, all the previous elements of~$\varphi(\pi)$ are still inside the~$123$-stack. Immediately after~$n-2$ enters the~$123$-stack, $n$ enters the~$123$-stack as well, since it cannot produce a forbidden pattern. Now we claim that~$n$ and~$n-2$ exit the~$123$-stack together. This is trivial if~$n$ is the last element of the input. If instead~$n$ is not the last element of~$\varphi(\pi)$, consider the next element~$\pi_{i+1}$. Such element cannot enter the~$123$-stack, otherwise~$\pi_{i+1}$, $n-2$ and~$n-1$ (which is at the bottom of the~$123$-stack) would constitute a forbidden pattern~$123$. Thus~$n-2$ must exit the~$123$-stack before~$\pi_{i-1}$ enters it, and this forces~$n$ to exit as well. As a consequence of this fact, we have that:
$$
\out{123}(\varphi(\pi))=n(n-2)(n-3)\cdots 1(n-1),
$$
which avoids~$231$. Hence~$\varphi(\pi)$ is sortable.

\item If~$\pi\in\DSort_{n-1}({\ge} 2;123)$ (that is~$\pi_1=k\neq n-1$), then by Lemma~\ref{lemma_123_output} we have:
$$
\out{123}(\pi)=(n-1)(n-2)\cdots (k+1)(k-1)\cdots 21k.
$$
Finally, an analogous argument can be used to prove that:
$$
\out{123}(\varphi(\pi))=n(n-1)(n-2)\cdots (k+1)(k-1)\cdots 21k,
$$
and so that~$\varphi(\pi)$ is $123$-sortable.
\end{itemize}

To complete the proof we now have to show that~$\varphi$ is a bijection. The fact that~$\varphi$ is injective is trivial. To show that~$\varphi$ is surjective, consider the map~$\psi:\DSort_n({\ge} 2;123)\rightarrow\DSort_{n-1}(123)$ which removes~$n$ from~$\alpha\in\DSort_n({\ge} 2;123)$. Let~$\alpha=\alpha_1\cdots\alpha_n$ and let~$i\in\lbrace3,4,\dots n\rbrace$ such that~$\alpha_i=n$. From Corollary~\ref{corollary_123_before_maximum}, we have that either~$\alpha_{i-1}=n-1$ (if~$\alpha_1\neq n-1$) or~$\alpha_{i-1}=n-2$ (if~$\alpha_1 =n-1$). Moreover, Lemma~\ref{lemma_123_output} implies that:
$$
\out{123}(\pi)=n(n-1)\cdots (k+1)(k-1)\cdots 21k,
$$
with~$k=\alpha_1\ge 2$. Therefore, when~$n$ enters the~$123$-stack, all the previous elements are still inside the~$123$-stack. In particular, at the top of the~$123$-stack there are~$n$ and~$\alpha_{i-1}$. Now notice that, if~$n$ is forced to exit the~$123$-stack, this is due to the fact that there exist~$j,h,l$, with~$j<h\le i$ and~$l>i$, such that~$\alpha_l$, $\alpha_h$ and~$\alpha_j$ form an occurrence of~$123$. However, it cannot be~$h=i$, since~$n$ cannot play the role of the~$2$ in a~$123$. Similarly, it cannot be~$h=i-1$: in fact, if~$\alpha_{i-1}=n-1$, then~$n$ and~$n-1$ are consecutive in the~$123$-stack and so they play the same role in any pattern; if instead~$\alpha_{i-1}=n-2$, then~$\alpha_1 =n-1$ is at the bottom of the~$123$-stack, and so~$n$ and~$n-2$ play the same role in any forbidden pattern. As a consequence, $h<i-1$, and so~$n$ and~$\alpha_{i-1}$ are forced to leave the~$123$-stack together. This means that basically~$n$ does not modify the behavior of the machine, and so:
$$
\out{123}(\psi(\alpha))=(n-1)(n-2)\cdots (k+1)(k-1)\cdots 21k,
$$
that is~$\psi(\alpha)$ is~$123$-sortable, as desired.
\end{proof}

\begin{corollary}
For all~$n\ge 3$, $|\DSort_n({\ge} 2;123)|=|\DSort_{n-1}(123)|$.
\end{corollary}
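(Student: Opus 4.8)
The plan is to read the enumerative content directly off Theorem~\ref{theorem_123_bijection}, since that theorem has already done all the combinatorial work. There we constructed an explicit map $\varphi:\DSort_{n-1}(123)\rightarrow\DSort_n({\ge}2;123)$, obtained by inserting the new maximum $n$ immediately after $n-1$ (or after $n-2$ when $\pi_1=n-1$), and proved that it is a bijection. Injectivity of $\varphi$ is immediate, and surjectivity is witnessed by the inverse map $\psi$ that deletes $n$ from any permutation of $\DSort_n({\ge}2;123)$; the analysis of $\out{123}(\cdot)$ in the theorem shows that $\psi$ lands in $\DSort_{n-1}(123)$ and that $\varphi$ and $\psi$ are mutually inverse. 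Hence all I would do is invoke the standard fact that a bijection between two finite sets forces them to have equal cardinality.

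Concretely, first I would note that both $\DSort_n({\ge} 2;123)$ and $\DSort_{n-1}(123)$ are finite, being subsets of the sets of permutations of length $n$ and $n-1$ respectively. Then, since Theorem~\ref{theorem_123_bijection} provides the bijection $\varphi$ between them, I would conclude immediately that $|\DSort_n({\ge} 2;123)|=|\DSort_{n-1}(123)|$, which is exactly the asserted identity. There is no genuine obstacle to address here: the delicate points—verifying that the prescribed insertion of $n$ preserves $123$-sortability while creating a second left-to-right maximum, and that deleting $n$ inverts this without leaving the sortable set—were precisely the content of the proof of Theorem~\ref{theorem_123_bijection}, so the corollary is merely the cardinality statement attached to that bijection.
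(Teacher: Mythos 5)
Your proposal is correct and matches the paper exactly: the corollary is stated without proof precisely because it is the cardinality statement attached to the bijection $\varphi$ of Theorem~\ref{theorem_123_bijection}, which is all you invoke. Nothing further is needed.
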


\section{\texorpdfstring{Enumeration of~$\Sort(123)$}{Enumeration of Sort(123)}}\label{section_enumeration_123}

We now use the results proved in Section~\ref{section_123_struct} to enumerate~$\Sort_n(123)$. Due to Corollary~\ref{corollary_123_infl_cor}, Corollary~\ref{corollary_123_starting_maximum} and Theorem~\ref{theorem_123_bijection}, any~$123$-sortable permutation~$\pi$ which is not the identity permutation can be uniquely constructed as follows:

\begin{enumerate}
\item choose~$\alpha=\alpha_1\alpha_2\cdots\alpha_k\in\Perm_k(213)$, with~$\alpha_1=k\ge 2$;
\item add~$h$ new maxima, $k+1,\dots,k+h$, one at a time, using the bijection~$\varphi$ of Theorem~\ref{theorem_123_bijection};
\item add~$n-k-h$ consecutive ascents at the beginning, by inflating the first element of the permutation, according to Corollary~\ref{corollary_123_infl_cor}.
\end{enumerate}

As an example to illustrate the given construction, let~$\pi=567148923$. By deflating the prefix of initial consecutive ascents, we get the permutation~$\pi'=5146723$; due to Corollary~\ref{corollary_123_infl_cor}, $\pi$ is~$123$-sortable if and only if~$\pi'$ is~$123$-sortable. Now, $\pi'$ is (uniquely) obtained by adding two new maxima to the permutation~$\pi''=51423$, whose first element is its maximum, according to the bijection of Theorem~\ref{theorem_123_bijection}. Since~$\pi''$ avoids~$213$, we can finally conclude that~$\pi$ is~$123$-sortable.

\begin{theorem}\label{theorem_123_enum}
For all~$n\ge 1$, we have:
$$
\fsigma{123}_n=1+\sum_{h=1}^{n-1}(n-h)\catalan_h.
$$
\end{theorem}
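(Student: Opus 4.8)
The plan is to convert the unique construction recalled just above into a single sum and then simplify. I separate the identity permutation $\identity_n$, which is $123$-sortable and accounts for the leading $1$, from all other $123$-sortable permutations of length $n$, and I count the latter by the parameters used to build them. Concretely, every non-identity permutation of $\Sort_n(123)$ arises exactly once from a triple $(\alpha,h,j)$, where $\alpha\in\Perm_k(213)$ with $\alpha_1=k\ge 2$ is the base permutation, $h\ge 0$ is the number of new maxima inserted one at a time via the bijection $\varphi$ of Theorem~\ref{theorem_123_bijection}, and $j\ge 0$ is the number of initial consecutive ascents prepended by inflating the first element (Corollary~\ref{corollary_123_infl_cor}). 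Since inserting a maximum and prepending a consecutive ascent each raise the length by one, the only constraint is $k+h+j=n$.

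Before counting I would check that this correspondence is a genuine bijection onto the non-identity $123$-sortable permutations of length $n$, so that no permutation is produced twice and the parameter ranges are exactly as stated. Deflating the maximal initial run of consecutive ascents recovers $j$ together with a unique permutation $\pi'\in\DSort(123)$ (Corollary~\ref{corollary_123_infl_cor}); grading $\DSort(123)$ by the number of left-to-right maxima, Theorem~\ref{theorem_123_bijection} shows that peeling off maxima via $\varphi^{-1}$ recovers $h=\ltrmaxsize(\pi')-1$ and a unique base $\alpha$ having a single left-to-right maximum; and by Corollary~\ref{corollary_123_starting_maximum} the admissible bases of length $k$ are precisely the $213$-avoiding permutations beginning with $k$, of which there are $\catalan_{k-1}$.

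It then remains to sum $\catalan_{k-1}$ over all admissible triples. For a fixed base length $k$ (with $2\le k\le n$), the number of ways to split the remaining $n-k$ insertions between added maxima and added ascents, that is, the number of pairs $(h,j)$ with $h,j\ge 0$ and $h+j=n-k$, equals $n-k+1$, and each such choice carries $\catalan_{k-1}$ possible bases. Hence the non-identity permutations number
$$
\sum_{k=2}^{n}(n-k+1)\,\catalan_{k-1},
$$
and the substitution $h=k-1$ rewrites this as $\sum_{h=1}^{n-1}(n-h)\,\catalan_h$. Adding back the identity yields $\fsigma{123}_n=1+\sum_{h=1}^{n-1}(n-h)\catalan_h$, as claimed.

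The only genuinely delicate point is the bijectivity asserted in the second paragraph: one must verify that the three construction steps are independent and jointly reversible, so that the parameters $(\alpha,h,j)$ are unambiguously determined by the output permutation. Once this is secured from the structural results of Section~\ref{section_123_struct}, the enumeration is a one-line reindexing, and I expect no further analytic difficulty.
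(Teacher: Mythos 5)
Your proof is correct and follows essentially the same route as the paper: both arguments invoke the unique construction of a non-identity $123$-sortable permutation from a triple $(\alpha,h,j)$ with $\alpha\in\Perm_k(213)$, $\alpha_1=k$, counting~$\catalan_{k-1}$ choices for the base and $n-k+1$ ways to split the remaining insertions, then reindexing the sum. Your extra verification that the three steps are jointly reversible is exactly what the paper delegates to Corollary~\ref{corollary_123_infl_cor}, Corollary~\ref{corollary_123_starting_maximum} and Theorem~\ref{theorem_123_bijection}, so there is no gap.
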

\begin{proof}
A permutation~$\pi\in\Sort_n(123)$ is either the identity or it is obtained by choosing a permutation~$\alpha$ in~$\Perm_k (213)$ starting with its maximum~$k$ (with~$k\ge 2$) and then (possibly) adding the remaining~$n-k$ elements according to the above construction, i.e. adding new maxima and/or consecutive ascents at the beginning. Concerning~$\alpha$, there are~$\catalan_{k-1}$ possible choices, thanks to the observation following Corollary~\ref{corollary_123_starting_maximum}. For the remaining elements, one has to choose, for instance, the number of new maxima to add, which runs from~$0$ to~$n-k$, so that the total number of choices is~$n-k+1$. Summing on all possible values of~$k$, we get:
$$
\fsigma{123}_n=1+\sum_{k=2}^{n}\catalan_{k-1}\cdot(n-k+1)=1+\sum_{h=1}^{n-1}(n-h)\catalan_h,
$$
as desired.
\end{proof}

We end this section by computing the generating function~$\Fsigma{123}(t)$ of~$\Sort(123)$. As anticipated, we shall exploit the link with pattern-avoiding Schr\"oder paths.

\begin{theorem}\label{theorem_genfun_123}
We have:
$$
\Fsigma{123}(t)=\frac{1-2t+t\CatalanFun(t)}{(1-t)^2}.
$$
\end{theorem}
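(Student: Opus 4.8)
The plan is to derive the generating function $\Fsigma{123}(t)$ directly from the explicit formula of Theorem~\ref{theorem_123_enum}, namely $\fsigma{123}_n=1+\sum_{h=1}^{n-1}(n-h)\catalan_h$, by summing over $n$. This reduces the problem to recognizing the summation as a Cauchy (convolution) product, which I now spell out.

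First I would write the ordinary generating function term by term:
\begin{equation*}
\Fsigma{123}(t)=\sum_{n\ge 1}\fsigma{123}_n t^n=\sum_{n\ge 1}t^n+\sum_{n\ge 1}\left(\sum_{h=1}^{n-1}(n-h)\catalan_h\right)t^n.
\end{equation*}
The first sum is simply $\tfrac{t}{1-t}$. For the second (double) sum I would swap the order of summation, isolating the inner convolution. Setting $m=n-h\ge 1$, the coefficient of $t^n$ is $\sum_{h+m=n}\catalan_h\cdot m$, which is precisely the Cauchy product of the sequence $(\catalan_h)_{h\ge 1}$ with the sequence $(m)_{m\ge 1}$. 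Since $\sum_{h\ge 1}\catalan_h t^h=\CatalanFun(t)-1$ (as $\catalan_0=1$) and $\sum_{m\ge 1}m\,t^m=\tfrac{t}{(1-t)^2}$, the double sum equals the product
\begin{equation*}
\left(\CatalanFun(t)-1\right)\cdot\frac{t}{(1-t)^2}.
\end{equation*}

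Combining the two pieces gives
\begin{equation*}
\Fsigma{123}(t)=\frac{t}{1-t}+\frac{t\left(\CatalanFun(t)-1\right)}{(1-t)^2}.
\end{equation*}
Putting everything over the common denominator $(1-t)^2$, the numerator becomes $t(1-t)+t\CatalanFun(t)-t=t\CatalanFun(t)-t^2$. At this point the closed form claimed in the statement is $\tfrac{1-2t+t\CatalanFun(t)}{(1-t)^2}$, so I would reconcile the two numerators. The discrepancy is accounted for by the treatment of the constant/empty term: the formula I obtained, $\tfrac{t\CatalanFun(t)-t^2}{(1-t)^2}$, counts permutations of length $n\ge 1$, whereas the target numerator $1-2t+t\CatalanFun(t)$ corresponds to including the empty permutation (the constant term $1$) in the generating function. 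Indeed, verifying $1+\tfrac{t\CatalanFun(t)-t^2}{(1-t)^2}=\tfrac{(1-t)^2+t\CatalanFun(t)-t^2}{(1-t)^2}=\tfrac{1-2t+t\CatalanFun(t)}{(1-t)^2}$ closes the computation.

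The only genuine obstacle here is bookkeeping rather than ideas: I must be careful about the index conventions (whether $\catalan_0$ and the empty permutation are included), since the formula in Theorem~\ref{theorem_123_enum} starts at $n\ge 1$ while the target generating function evidently carries a constant term $1$. Once the convolution $\sum_h\catalan_h t^h\cdot\sum_m m\,t^m$ is identified and the series $\sum_{m\ge 1}m\,t^m=t/(1-t)^2$ and $\sum_{h\ge 0}\catalan_h t^h=\CatalanFun(t)$ are substituted, the result follows by elementary algebraic manipulation of rational and algebraic series. An equally valid alternative, which I would mention as a cross-check, is to derive the same generating function via the promised bijection with $\U\H_2\D$-avoiding Schröder paths and the known generating function for that class; but the direct summation above is self-contained and shorter.
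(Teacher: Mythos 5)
Your proof is correct, and it takes a genuinely different route from the paper. You derive $\Fsigma{123}(t)$ purely algebraically from the closed formula of Theorem~\ref{theorem_123_enum}, recognizing $\sum_{h=1}^{n-1}(n-h)\catalan_h$ as the Cauchy product of $\CatalanFun(t)-1$ with $t/(1-t)^2$; your algebra checks out, and your reconciliation of the constant term is exactly right (the stated formula has constant term $1$, i.e.\ it follows the $n\ge 0$ convention of the Notations table rather than the $n\ge 1$ convention used when $\Fsigma{\Sigma}$ is first defined). The paper instead proves the theorem by constructing an explicit length-preserving bijection $f$ between $\Sort_n(123)$ and $\U\H_2\D$-avoiding Schr\"oder paths of semilength $n-1$ (splitting $\pi=L\beta$, stripping trailing maxima, and encoding the $213$-avoiding core as a Dyck path via Example~\ref{example_dyck_213_bij}), and then reads off the generating function from the symbolic decomposition of those paths. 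What you lose relative to the paper is the bijection itself, which is the announced goal of the chapter (it realizes the match with sequence A294790 and the pattern-avoiding Schr\"oder paths of~\cite{CiF}); what you gain is brevity and self-containedness, since Theorem~\ref{theorem_123_enum} is already available at this point so there is no circularity. It is also worth noting that your bookkeeping of the constant term is arguably more careful than the paper's own computation, which weights the all-horizontal paths by $t^{n-1}$ but the remaining paths by $t^{n}$ and lands on the correct series only because the two off-by-one effects compensate.
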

\begin{proof}
We start by providing a bijection~$f$ between~$123$-sortable permutations of length~$n$ and~$\U\H_2\D$-avoiding Schr\"oder paths of semilength~$n-1$. Given~$\pi\in\Sort_n(123)$, decompose it as~$\pi=L\beta$, where~$L$ is the (possibly empty) initial sequence of consecutive ascents of~$\pi$, deprived of the last element, and~$\beta$ is the remaining suffix of~$\pi$. Suppose that~$L$ has length~$r$. Now repeatedly remove the maximum from~$\beta$ until the remaining word~$\beta'$ starts with its maximum. Denote with~$s$ the number of elements removed this way. Then~$\beta'$ is order isomorphic to a~$213$-avoiding permutation~$\alpha$ of length~$k+1=n-r-s$, that starts with its maximum. Removing the maximum from~$\alpha$ results in another~$213$-avoiding permutation~$\rho$ of length~$k$. We can now describe the Schr\"oder path~$f(\pi)$ associated with~$\pi$: it starts with~$r$ double horizontal steps and ends with~$s$ double horizontal steps; in the middle, there is the Dyck path of semilength~$k$ associated to the~$213$-avoiding permutation~$\rho$ through the bijection described in Example~\ref{example_dyck_213_bij}.

Next, as announced, we express the generating function of~$\Sort(123)$ by exploiting the bijection~$f$. In fact, the generic Schr\"oder path avoiding~$\U\H_2\U$ either consists of double horizontal steps only (so the generating function is~$(1-t)^{-1}$), or can be obtained by concatenating an initial sequence of double horizontal steps (having generating function~$(1-t)^{-1}$) with a non-empty Dyck path (whose generating function is~$(\CatalanFun(t)-1)\cdot t$, where~$\CatalanFun(t)$ is the generating function of the Catalan numbers and the additional factor~$t$ takes into account the removal of the starting maximum from the permutation~$\alpha$ above), finally adding a sequence of double horizontal steps (again with generating function~$(1-t)^{-1}$). Summing up, we get:
$$
\Fsigma{123}(t)
=\frac{1}{1-t}+\frac{1}{1-t}\big(t(\CatalanFun(t)-1)\big)\frac{1}{1-t}
=\frac{1-2t+t\CatalanFun(t)}{(1-t)^2}.
$$
\end{proof}

\begin{figure}
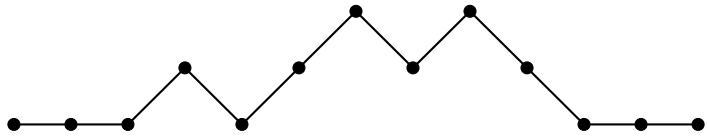

\centering
\begin{DrawPath}
\fillPath{0,0,1,-1,1,1,-1,1,-1,-1,0,0}{0}{0}
\end{DrawPath}
\caption[An~$\U\H_2\D$-avoiding Schr\"oder path and the corresponding~$123$-sortable permutation.]{The~$\U\H_2\D$-avoiding Schr\"oder path associated to the~$123$-sortable permutation~$567489132$. Referring to the notations of Theorem~\ref{theorem_genfun_123}, we have~$L=56$, $\beta=7489132$, and so~$r=s=2$. Moreover, $\alpha=4132$ and 
the associated Dyck path is~$\U\D\U\U\D\U\D\D$.}\label{figure_123_schroder_bij}
\end{figure}

\chapter{\texorpdfstring{The~$132$-machine}{The 132-machine}}\label{chapter_pattern132}

This chapter, whose paper version is~\cite{CeClFS}, is devoted to the study of the~$132$-machine. We prove that~$132$-sortable permutations are enumerated by the binomial transform of Catalan numbers (sequence~A007317 in~\cite{Sl}) by first characterizing~$\Sort(132)$ in terms of avoidance of a classical pattern and a mesh pattern. Then we exploit this result to determine some geometric properties of~$\Sort(132)$. These ultimately lead to a bijection with the set of $12231$-avoiding \rgfs, whose enumeration is a corollary of a much more general mechanism proposed by Jel\'inek and Mansour in~\cite{JM}. We then exhibit direct combinatorial proofs for the enumeration of some patterns in the same Wilf-equivalence class as $12231$, highlighting connections with lattice paths and pattern-avoiding permutations. We enumerate two of these patterns via a bijection with a family of labeled Motzkin paths, which provide a combinatorial interpretation of a beautiful continued fraction for the related counting sequence. Finally, by putting all these pieces together, we obtain an independent proof of the enumeration of~$\Sort(132)$.

\section{\texorpdfstring{Characterization of~$\Sort(132)$}{Characterization of Sort(132)}}\label{section_mesh_132}

We start by showing a useful decomposition lemma for the~$132$-stack.

\begin{lemma}\label{lemma_ltr_min_dec_132}
Let~$\pi$ be a permutation and let~$\pi=m_1B_1m_2B_2\cdots m_tB_t$ be the ltr-min decomposition of~$\pi$. Then:
\begin{enumerate}
\item $\out{132}(\pi)=\widetilde{B_1}\widetilde{B_2}\cdots\widetilde{B_t} m_t m_{t-1}\cdots m_2 m_1$, where each~$\widetilde{B_i}$ is a suitable rearrangement of the elements of~$B_i$.
\item If~$\pi$ is~$132$-sortable, then~$x>y$ for each~$x\in B_i$, $y\in B_j$, with~$i<j$.
\end{enumerate}
\end{lemma}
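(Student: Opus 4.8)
The plan is to prove part~1 by tracking the configuration of the $132$-stack as the input $\pi=m_1B_1m_2B_2\cdots m_tB_t$ is read, and then to derive part~2 as a short consequence of part~1 together with Lemma~\ref{lemma_out_231}. Recall that the blocks satisfy $x>m_i$ for every $x\in B_i$, and that the minima are decreasing, $m_1>m_2>\cdots>m_t$. The engine of the argument is the following invariant, which I would establish by induction on $i$: \emph{immediately after $m_i$ is pushed, the stack reads $m_1,m_2,\dots,m_i$ from bottom to top, and every element of $B_1,\dots,B_{i-1}$ has already been output.} The base case $i=1$ is immediate, since $m_1=\pi_1$ is pushed first.

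For the inductive step I would argue two things. First, while the elements of $B_i$ are being processed, no minimum is ever popped: whenever a minimum $m_\ell$ sits on top, everything below it is $m_{\ell-1},\dots,m_1$, which increases from top to bottom, and the incoming block element $e>m_i\ge m_\ell$ is larger than all of them, so $e$ cannot play the role of the ``$1$'' of a $132$ and no forbidden pattern can be created; hence $e$ is pushed without disturbing the minima. Second, when $m_{i+1}$ is read, the whole of $B_i$ still in the stack is flushed: $m_{i+1}$ is smaller than every stack element, and as long as a block element survives it lies directly above $m_i$, so that element, together with $m_i$ below it and the incoming $m_{i+1}$ on top, would form a $132$ (reading top to bottom, their values are ordered as $1,3,2$); the greedy rule therefore keeps popping until exactly $m_1,\dots,m_i$ remain, whereupon $m_{i+1}$ is pushed. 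This both re-establishes the invariant for $i+1$ and shows that $B_i$ is output, in one contiguous stretch, before $m_{i+1}$. Reading off the output then gives $\out{132}(\pi)=\widetilde{B_1}\cdots\widetilde{B_t}\,m_t m_{t-1}\cdots m_1$, where $\widetilde{B_i}$ records the order in which $B_i$ is emitted (the tail $m_t\cdots m_1$ coming from the final emptying of the stack), which is exactly part~1.

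Part~2 is then immediate. Suppose $\pi$ is $132$-sortable but some $x\in B_i$ and $y\in B_j$ with $i<j$ satisfy $x<y$. By part~1, in $\out{132}(\pi)$ the element $x$ (inside $\widetilde{B_i}$) precedes $y$ (inside $\widetilde{B_j}$), and both precede $m_i$, which lies in the final decreasing run of minima. Since $x\in B_i$ forces $m_i<x$, we obtain $m_i<x<y$ with $x,y,m_i$ occurring in this positional order, that is, an occurrence of $231$ in $\out{132}(\pi)$; by Lemma~\ref{lemma_out_231} this contradicts $132$-sortability. Hence $x>y$ whenever $i<j$, as claimed.

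I expect the genuine work to be concentrated entirely in the inductive verification of the stack invariant in part~1, and specifically in the two greedy-pop bookkeeping claims above: that an incoming block element can never be the ``$1$'' of a fresh $132$ (so the minima are never touched before the end), and that an incoming minimum forces the complete evacuation of the current block (so the blocks come out in order and contiguously). Once the exact shape of $\out{132}(\pi)$ is pinned down, part~2 is a one-line application of Lemma~\ref{lemma_out_231}.
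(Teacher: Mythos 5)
Your proof follows the same route as the paper's. The paper's argument consists precisely of your two bookkeeping claims, stated without the explicit induction: each $x\in B_i$ still in the stack sits above $m_i$, so $m_{i+1},x,m_i$ read from top to bottom would be a forbidden $132$, forcing $B_i$ to be flushed before $m_{i+1}$ enters; and two minima can never both take part in an occurrence of $132$, so the minima survive until the final emptying of the stack. Your part~2 is also the paper's part~2, a one-line application of Lemma~\ref{lemma_out_231} (the paper takes $m_t$ as the ``$1$'' of the $231$ where you take $m_i$; both work).

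However, one justification inside your first claim is wrong. You assert that the incoming block element satisfies $e>m_i\ge m_\ell$ and ``is larger than all of them''. Elements of $B_i$ are only guaranteed to exceed $m_i$; they can be smaller than the earlier minima $m_1,\dots,m_{i-1}$ (in $\pi=31524$ the element $2\in B_2$ is smaller than $m_1=3$), and for $\ell<i$ the inequality even runs the other way, since $m_\ell>m_i$. So the inference ``$e$ exceeds everything below it, hence cannot be the $1$ of a $132$'' is unsound. The conclusion of the claim is nevertheless true, and the correct reason is the other fact in your own sentence: the minima read from top to bottom are increasing, so no two of them can serve as the ``$3$'' and the ``$2$'' of an occurrence of $132$; a newly pushed element is topmost, hence could only ever be the ``$1$'', and that requires an inversion (read top to bottom) below it, which the stack of minima does not contain. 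Thus no incoming element, whatever its value relative to the deeper minima, can force a minimum to be popped. With that sentence repaired, your proof is correct.
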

\begin{proof}
\begin{enumerate}
\item For each~$x\in B_1$, $m_1xm_2\simeq 231$, thus every element of~$B_1$ has to be popped from the~$132$-stack before~$m_2$ enters. After that, we have~$m_1$ and~$m_2$ on the~$132$-stack, with~$m_1>m_2$ and~$m_2$ above~$m_1$. Note that they cannot both be part of a~$132$, therefore~$m_2$ remains in the~$132$-stack until the end of the sorting process. Similarly, each element of~$B_2$ has to be extracted before~$m_3$ enters, since~$m_3xm_2\simeq 132$ for each~$x\in B_2$. The same argument holds for every~$m_j$ with~$j\ge2$.
\item Suppose there are two elements~$x,y$ such that~$x<y$, $x\in B_i$ and~$y\in B_j$, with~$i<j$. Then, as a consequence of the previous item,~$xym_t$ is an occurrence of~$231$ in~$\out{132}(\pi)$, which is impossible since~$\pi$ is~$132$-sortable.
\end{enumerate}
\end{proof}

\begin{lemma}\label{lemma_unimodal_stack_132}
Let~$\pi\in\Sort_n(132)$ and let~$\pi=m_1B_1m_2B_2\cdots m_tB_t$ be its ltr-min decomposition. Suppose that the next element of the input is~$x\in B_i$, for some~$i$. Then the content of the~$132$-stack when read from bottom to top is:
$$
m_1m_2\cdots m_ix_1x_2\cdots x_s,
$$
where~$\{x_1,\dots,xs\}$ is a (possibly empty) subset of~$B_i$ such that~$x_1<x_2<\cdots<x_s$.
\end{lemma}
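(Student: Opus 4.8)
The plan is to prove, by induction on the number of input elements already processed, the following invariant: at the instant any input element is about to be read, the content of the $132$-stack (bottom to top) is $m_1m_2\cdots m_j$ followed by an increasing sequence of elements of $B_j$, where $B_j$ is the block to which the element about to be read belongs (and, if that element is the next left-to-right minimum $m_{j+1}$, we read the ``block part'' as the elements of $B_j$ still inside the stack). The statement to be proved is exactly the instance of this invariant in which the next element lies in $B_i$.

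The base case and the left-to-right-minimum steps are routine and essentially re-derive Lemma~\ref{lemma_ltr_min_dec_132}: after $m_1=\pi_1$ is pushed the stack is $m_1$, and when $m_i$ is about to be read the invariant gives a stack $m_1\cdots m_{i-1}y_1\cdots y_r$ with $y_1<\cdots<y_r$ in $B_{i-1}$ and each $y_l>m_{i-1}>m_i$. Trying to push $m_i$ is repeatedly blocked, since $m_i\,y_1\,m_{i-1}$ is a forbidden $132$ (here $y_1>m_{i-1}>m_i$ and $y_1$ sits above $m_{i-1}$), so every $y_l$ is popped; once the stack equals $m_1\cdots m_{i-1}$, pushing $m_i$ is legal because $m_i<m_{i-1}<\cdots<m_1$ makes the stack increasing read from top to bottom. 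Hence the stack becomes $m_1\cdots m_i$ with empty block part, which is the invariant at the start of block $B_i$, and in particular the block part never mixes elements of different blocks.

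The heart of the argument is the step in which a block element $x\in B_i$ is read, with the stack equal to $m_1\cdots m_i x_1\cdots x_s$ and $x_1<\cdots<x_s$ in $B_i$. I first record a consequence of sortability: by Lemma~\ref{lemma_ltr_min_dec_132} the output is $\widetilde{B_1}\cdots\widetilde{B_t}m_t\cdots m_1$, and since $m_t=1$ is emitted last, any two elements $a<b$ of a single block occurring in the order $a,b$ in $\out{132}(\pi)$ would yield $a\,b\,m_t\simeq 231$; thus each block is output in strictly decreasing order. Now suppose, for a contradiction, that $x<x_s$, and let $r$ be such that $x_r<x<x_{r+1}$. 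The greedy rule pops exactly $x_s,x_{s-1},\dots,x_{r+2}$ (each pop forced by the occurrence of $132$ formed by $x$ together with two of the remaining $x_l$ that exceed it), and then pushes $x$ directly on top of $x_{r+1}$, because once $x_{r+1}$ is the top no element above it exceeds it. Since $x$ then sits above $x_{r+1}$ with $x<x_{r+1}$, the LIFO discipline forces $x$ to be popped before $x_{r+1}$, so $x$ precedes $x_{r+1}$ in $\out{132}(\pi)$; as $x<x_{r+1}$ lie in the same block $B_i$, this is an ascent in $\widetilde{B_i}$, equivalently $x\,x_{r+1}\,m_t\simeq 231$, contradicting $\pi\in\Sort(132)$. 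Therefore $x>x_s$.

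It then remains to check that when $x>x_s$ the element is pushed with no pop and the block part stays increasing. Pushing $x$ would create a new $132$ only if some element $v>x$ lay below a strictly larger element; but every stack element exceeding $x$ is some $m_l$ with $l<i$, and all elements above such an $m_l$ (the remaining $m$'s and all the $x_j$'s, each smaller than $m_l$) are smaller than it, so no $132$ arises. Hence $x$ is pushed, the block part becomes $x_1<\cdots<x_s<x$, and the invariant is preserved, completing the induction. As indicated, the main obstacle is precisely the sortability argument that rules out $x<x_s$; the remaining cases are a direct, mechanical analysis of the greedy $132$-stack.
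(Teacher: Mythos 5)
Your overall strategy (induction along the sorting process, maintaining the invariant ``minima at the bottom plus an increasing run from the current block'') is viable, and your base case, the minimum-reading step, and the case $x>x_s$ are fine. But the heart of your argument --- the claim that sortability forces $x>x_s$ --- is false, and the paper's own running example refutes it. Take $\pi = 13\,14\,15\,10\,12\,6\,7\,8\,11\,9\,3\,1\,4\,5\,2$ from Section~\ref{section_grid_132}, which is $132$-sortable. When $x=9$ is the next element of the input, the stack reads (bottom to top) $13,10,6,7,8,11$: the block part of $B_3$ is $7,8,11$ and $9<11=x_s$. What actually happens is that $11$ is popped before $9$ enters, because pushing $9$ would create the occurrence $9,11,10$ of $132$ read from top to bottom --- note that the role of the ``$2$'' is played by the left-to-right minimum $10$ lying below the block part, not by a block element. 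Then $9$ lands on top of $8$, the block part becomes $7,8,9$, and no $231$ ever reaches the output.

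This is exactly the configuration your argument overlooks. You assert that after popping $x_s,\dots,x_{r+2}$ the element $x$ is pushed ``directly on top of $x_{r+1}$, because once $x_{r+1}$ is the top no element above it exceeds it''; but a forbidden pattern inside the stack need not take both of its larger entries from the block part, since $x_{r+1}$ can serve as the ``$3$'' and a minimum $m_j$ with $x<m_j<x_{r+1}$ as the ``$2$'', forcing a further pop of $x_{r+1}$. In that case $x_{r+1}$ precedes $x$ in the output (a descent, not an ascent), no contradiction arises, and sortable permutations genuinely realize $x<x_s$. Your induction can be repaired by splitting the case $x<x_s$ in two: if no ltr-minimum lies in value between $x$ and $x_{r+1}$, your contradiction stands; if one does, the stack pops past $x_{r+1}$, the element $x$ lands on $x_r$, and the invariant is preserved directly, with no contradiction needed. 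It is worth noting that the paper avoids the induction entirely: elements currently in the stack must leave it in top-to-bottom (LIFO) order whatever happens later, so by Lemma~\ref{lemma_ltr_min_dec_132} (minima at the bottom, only $B_i$-elements above) a single observation suffices --- a descent, read bottom to top, among the block elements in the stack would be emitted as an ascent followed by $m_i$, i.e.\ an occurrence of $231$ in $\out{132}(\pi)$ --- which pins down the configuration without tracing how it arose.
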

\begin{proof}
The first~$i$ ltr-minima~$m_1,\dots,m_i$ of~$\pi$ lie at the bottom of the~$132$-stack, by Lemma~\ref{lemma_ltr_min_dec_132}. Then the remaining elements~$x_1,\dots,x_s$ of~$B_i$ in the~$132$-stack must be in increasing order from bottom to top, for otherwise, if~$x_h>x_{\ell}$ for some~$h<\ell$, then~$\out{132}(\pi)$ would contain~$x_{\ell}x_hm_i\simeq 231$, contradicting the~$132$-sortability of~$\pi$.
\end{proof}

\begin{figure}
\centering
\begin{DrawPerm}
\meshBox{(0,2)}{(1,3)}
\meshBox{(2,0)}{(3,1)}
\meshBox{(2,1)}{(3,2)}
\fillPerm{1,3,2}{3.99}{3.99}
\end{DrawPerm}
\caption[The mesh pattern~$\mu$.]{The mesh pattern~$\mu=(132,\left\lbrace(0,2),(2,0),(2,1)\right\rbrace)$}\label{figure_mesh_pattern_132_bis}
\end{figure}

Next we provide a characterization of~$\Sort(132)$ in terms of pattern avoidance. For the rest of this section, denote by~$\mu$ the mesh pattern~$\mu=(132,\left\lbrace(0,2),(2,0),(2,1)\right\rbrace)$ depicted in Figure~\ref{figure_mesh_pattern_132_bis}. A permutation~$\pi$ thus contains an occurrence of~$\mu$ if~$\pi$ contains an occurrence~$acb$ of the classical pattern~$132$ such that:

\begin{itemize}
\item every element that precedes~$a$ in~$\pi$ is either smaller than~$b$ or greater than~$c$;
\item every element between~$c$ and~$b$ in~$\pi$ is greater than~$b$.
\end{itemize}

\begin{theorem}\label{theorem_mesh_patterns_necessary_132}\label{mesh_patterns_sufficient}\label{corollary_mesh_pattern_char_132}
We have:
$$
\Sort(132)=\Perm(2314,\mu).
$$
\end{theorem}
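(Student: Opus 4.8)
The plan is to reduce the statement to a condition on outputs via Lemma~\ref{lemma_out_231}: a permutation $\pi$ is $132$-sortable if and only if $\out{132}(\pi)$ avoids $231$. Hence it suffices to prove that $\out{132}(\pi)$ avoids $231$ exactly when $\pi$ avoids both $2314$ and the mesh pattern $\mu$. I would establish the two inclusions separately, in each direction arguing by contrapositive, and throughout I would lean on the global shape of the output recorded in Lemma~\ref{lemma_ltr_min_dec_132}(1), namely $\out{132}(\pi)=\widetilde{B_1}\cdots\widetilde{B_t}m_t\cdots m_1$, together with the description of the stack contents at each instant furnished by Lemma~\ref{lemma_unimodal_stack_132}.

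For the necessary direction, $\Sort(132)\subseteq\Perm(2314,\mu)$, I would show that an occurrence of either forbidden pattern forces a $231$ in the output. Given an occurrence $\pi_i\pi_j\pi_k\pi_l\simeq 2314$ (so $\pi_k<\pi_i<\pi_j<\pi_l$), the mechanism is that $\pi_i$ and $\pi_j$ enter the $132$-stack in increasing order from bottom to top, then the small letter $\pi_k$ cannot be pushed above them without creating the forbidden $132$ arrangement $\pi_k\pi_j\pi_i$, so $\pi_j$ is popped while $\pi_i$ stays below; since $\pi_l$ is larger and arrives later, it is output after $\pi_j$ but before $\pi_i$, producing $\pi_j\pi_l\pi_i\simeq 231$. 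For $\mu$, starting from an occurrence $acb$ of the classical pattern $132$ with $a<b<c$ (positions $p_a<p_c<p_b$), the three letters $a$, $c$, $b$ accumulate in the stack in this bottom-to-top order and are eventually output as the factor $b\,c\,a\simeq 231$; the shaded cells are precisely the configurations that would otherwise eject one of them early and break this run. Concretely, the emptiness of the cell $(0,2)$ forbids an element below $a$ with value strictly between $b$ and $c$, which would make the incoming $b$ form a forbidden $132$ with $c$ and eject $c$, while the emptiness of the cells $(2,0)$ and $(2,1)$ forbids an element arriving between $c$ and $b$ with value below $b$, which would eject $c$ before $b$ is read. I expect the bookkeeping here to amount to checking that each shaded region rules out exactly one premature pop.

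For the sufficient direction, $\Perm(2314,\mu)\subseteq\Sort(132)$, I would argue the contrapositive: assuming $\out{132}(\pi)$ contains $231$, I must recover a $2314$ or a $\mu$ occurrence in $\pi$. The plan is to fix a canonical occurrence $\beta\gamma\alpha\simeq 231$ in the output (say leftmost, then with $\gamma$ as low as possible) and to reconstruct the relative positions of the preimages of $\alpha,\beta,\gamma$ in $\pi$ by back-tracing the pushes and pops, using Lemma~\ref{lemma_unimodal_stack_132} to control the stack content at the instants these three letters are pushed and popped. One then identifies the fourth letter of the input whose arrival forces $\gamma$ to be popped before $\alpha$; together with $\alpha,\beta,\gamma$ it realizes either a $2314$ or a classical $132$, and the minimality of the chosen occurrence forces the emptiness of the three shaded cells, upgrading the latter to a genuine occurrence of $\mu$.

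I expect this last direction to be the main obstacle, since translating a $231$ in the output back into a positional configuration in $\pi$ requires a case analysis according to whether the popping letter lies in the same left-to-right minima block as $\beta$ and $\gamma$ or in an earlier one, and according to whether $\alpha$ is itself a left-to-right minimum. The crux is to argue that, in the absence of any $2314$, the minimality of the occurrence forces exactly the three shading conditions defining $\mu$, so that avoidance of $2314$ and $\mu$ together is equivalent to $231$-avoidance of $\out{132}(\pi)$, which is the desired characterization.
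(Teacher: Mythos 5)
Your overall skeleton --- reducing to $231$-avoidance of $\out{132}(\pi)$ via Lemma~\ref{lemma_out_231}, proving the two inclusions separately, and leaning on Lemmas~\ref{lemma_ltr_min_dec_132} and~\ref{lemma_unimodal_stack_132} --- is exactly the paper's, and your outline of the inclusion $\Perm(2314,\mu)\subseteq\Sort(132)$ (contrapositive, canonical occurrence of $231$ in the output, case analysis on blocks) is consistent in spirit with what the paper does there. The genuine gaps are in the necessity direction, where both mechanisms you describe fail on concrete inputs. For $2314$: you assume that $\pi_i,\pi_j$ are still in the stack when $\pi_k$ arrives, that the forced pop removes only $\pi_j$, and that $\pi_l$ is output before $\pi_i$. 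None of this is guaranteed. Take $\pi=345261$ with the occurrence $\pi_2\pi_3\pi_4\pi_5=4\,5\,2\,6\simeq 2314$: the arrival of $2$ pops \emph{both} $5$ and $4$ (because $3$ sits below them and $(2,4,3)\simeq 132$), and $\out{132}(\pi)=546123$, in which your witness $\pi_j\pi_l\pi_i=5\,6\,4$ does not occur in that order, since $4$ exits before $6$. The paper's witness is robust precisely where yours is not: at least one $x\in\lbrace\pi_i,\pi_j\rbrace$ must be popped before $\pi_k$ enters, and then $x\,\pi_l\,m_t\simeq 231$, where $m_t$ is the last left-to-right minimum, which by Lemma~\ref{lemma_ltr_min_dec_132} is output after every non-minimum. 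You need an anchor that provably exits late; $\pi_i$ is not one.

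For $\mu$ the problem is deeper, because your claim that the shaded cells are ``precisely the configurations that would otherwise eject one of them early'' is false. Take $\pi=23154$ and the occurrence $acb=3\,5\,4$ (positions $2,4,5$): all three shaded regions are empty, so this is an occurrence of $\mu$, yet $a=3$ is ejected as soon as $1$ arrives --- and $1$ lies in the \emph{unshaded} cell $(1,0)$. The permutation is indeed unsortable, but the certifying pattern in $\out{132}(\pi)=34512$ is, e.g., $3\,4\,1$, not your triple $b\,c\,a$, which never materializes. So in your contrapositive formulation (``occurrence of $\mu$ implies $231$ in the output'') the three letters need not survive together in the stack, and one must track where ejected letters land --- a substantially harder bookkeeping problem than checking three cells. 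This is exactly why the paper proves this half in the opposite logical orientation: it assumes $\pi$ is sortable and shows that every occurrence $acb$ of $132$ has an element in a shaded cell (equivalently, extends to $3142$, $2413$ or $1423$), so that the absence of $231$ in $\out{132}(\pi)$ is available as a standing hypothesis throughout the case analysis, with the late-exiting elements $m_t$ and $m(a)$ serving as anchors. As written, your necessity argument would fail on the examples above; it needs to be redone either with such anchors or in the paper's orientation.
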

\begin{proof}
We start by showing that~$\Sort(132)\subseteq\Perm(2314,\mu)$. Let~$\pi=m_1 B_1 m_2 B_2\cdots m_t B_t$ be the ltr-min decomposition of~$\pi$. Suppose, for a contradiction, that~$\pi$ contains an occurrence~$bcad$ of~$2314$. When~$a$ enters the~$132$-stack, at least one of~$b$ and~$c$, call it~$x$, has already been popped from the~$132$-stack, otherwise we would get the forbidden pattern~$acb\simeq 132$ inside the~$132$-stack. Hence, by Lemma~\ref{lemma_ltr_min_dec_132}, $\out{132}(\pi)$ contains~$x d m_t\simeq 231$, violating the hypothesis that~$\pi$ is~$132$-sortable. Next suppose that~$acb$ is an occurrence of~$132$ in~$\pi$. We wish to show that~$acb$ is part of an occurrence of either~$\mathbf{3}142$, $24\mathbf{1}3$ or~$14\mathbf{2}3$, thus proving that~$\pi$ avoids~$\mu$. Let~$m(a)$ be the ltr-minimum immediately preceding the block that contains~$a$, or~$a$ itself if~$a$ is an ltr-minimum. Then~$m(a)\le a$ and~$m(a)$ exits the~$132$-stack after~$b$ and~$c$ (by Lemma~\ref{lemma_ltr_min_dec_132}), so~$c$ has to be popped before~$b$ enters, otherwise~$b c m(a)$ would be an occurrence of~$231$ inside~$\out{132}(\pi)$. We consider the following two cases. Note that~$a<b<c$, so~$b,c$ are not ltr-minima in~$\pi$.

\begin{itemize}
\item $c\in B_i$ and~$b\in B_j$, with~$i < j$. In this case, $m_j < m(a)\le a$, hence~$a c m_j b\simeq 2413$, which is one of the desired patterns.

\item $c$ and~$b$ are in the same block~$B_i$. First suppose there is an ltr-minimum~$m=m_\ell$, with~$\ell<i$, such that~$b<m<c$; then~$m>m(a)$, so~$m$ precedes~$m(a)$ in~$\pi$ and~$macb\simeq 3142$, again one of the listed patterns. Otherwise, suppose that, for every ltr-minimum~$m$, either~$m<b$ or~$m>c$ and consider the element~$w$ that immediately precedes~$b$ in~$\pi$. We wish to show that~$w<b$, which will conclude the proof. Suppose, for a contradiction, that~$w>b$ and let~$x_1,x_2,\dots,x_s=w$ be the elements on the~$132$-stack, after~$w$ has been pushed, that are not ltr-minima when we read from bottom to top. By Lemma~\ref{lemma_unimodal_stack_132}, we have~$x_1<x_2<\cdots<x_s$; moreover~$x_s=w > b$, so there is a minimum index~$u$ such that~$x_u>b$. Now observe that, for~$\ell>u$, all the elements~$x_\ell$ are popped from the~$132$-stack before~$b$ enters, because~$b x_\ell x_u\simeq 132$. We also observe that necessarily~$x_u\le c$, otherwise~$c$ would already have been popped and~$\out{132}(\pi)$ would contain the pattern~$c x_u m(a)\simeq 231$. We can now assert that~$b$ is pushed onto the~$132$-stack immediately above~$x_u$. In fact, $x_\ell < b$ for every~$\ell < u$; moreover, our hypothesis implies that either~$m<b$ or~$m>c$ for every ltr-minimum~$m$ inside the~$132$-stack, therefore~$b$ cannot be the first element of an occurrence of~$231$ (read from top to bottom) that involves elements inside the~$132$-stack. However this results in an occurrence~$b x_u m(a)$ of~$231$ in~$\out{132}{(\pi)}$, which again contradicts the hypothesis that~$\pi$ is~$132$-sortable.
\end{itemize}
We have thus proved that~$\Sort(132)\subseteq\Perm(2314,\mu)$. Next we show the opposite inclusion~$\Perm(2314,\mu)\subseteq\Sort(132)$. Let~$\pi\in\Perm(2314,\mu)$. Suppose, for a contradiction, that~$\pi$ is not~$132$-sortable, that is, $\out{132}(\pi)$ contains an occurrence~$bca$ of~$231$. Let again~$\pi=m_1B_1m_2B_2\cdots m_tB_t$ be the ltr-min decomposition of~$\pi$. By Lemma~\ref{lemma_ltr_min_dec_132}, we have
$$
\out{132}(\pi)=\widetilde{B_1}\widetilde{B_2}\cdots\widetilde{B_t} m_t m_{t-1}\cdots m_2 m_1.
$$
Since the ltr-minima are popped from the~$132$-stack in increasing order, neither~$b$ nor~$c$ can be an ltr-minimum. Suppose that~$b\in B_i$ and~$c\in B_j$, for some~$i\le j$. If~$i<j$, then~$m_i b m_j c\simeq 2314$, which is forbidden. Suppose instead that~$i=j$ and consider the leftmost ascent~$x<y$ in~$\widetilde{B_i}$ (indeed there is at least one ascent in~$\widetilde{B_i}$, since the elements~$b,c$ constitute a noninversion in~$\widetilde{B_i}$). There are two possibilities.

\begin{itemize}
\item If~$y$ comes after~$x$ in~$\pi$ then~$x$ has to be popped before~$y$ is pushed onto the~$132$-stack. Therefore, when~$x$ is popped, there are two elements~$u,v$ in the~$132$-stack, with~$v$ above~$u$, such that~$u v w\simeq 231$, where~$w$ is the next element of the input. If~$v\neq x$, then also~$v$ is popped after~$x$ (for the same reason), but this is a contradiction with the fact that~$x$ and~$y$ constitute an ascent in~$\widetilde{B_i}$. Thus we have~$v=x$ and~$u x w\simeq 231$, which implies that~$w\neq y$ and~$u x w y\simeq 2314$ in~$\pi$, contradicting the assumption that~$\pi$ avoids~$2314$.

\item Suppose instead that~$y$ precedes~$x$ in~$\pi$. Observe that~$y$ has to be on the~$132$-stack when~$x$ enters, because~$\out{132}(\pi)$ contains the ascent~$(x,y)$ (this fact will be frequently used in the sequel). In this situation, $m_i y x$ is an occurrence of~$132$ in~$\pi$. We now show that either~$m_i y x$ is an occurrence of~$\mu$ or~$\pi$ contains~$2314$. If there is an element~$z$ that precedes~$m_i$ in~$\pi$ such that~$x<z<y$ (so that~$z m_i y x\simeq 3142$), then~$z$ cannot be an ltr-minimum. In such a case, in fact, by Lemma~\ref{lemma_ltr_min_dec_132}, $z$ would be in the~$132$-stack below~$y$ when~$x$ is pushed, but~$xyz\simeq 132$, which is impossible due to the restriction of the~$132$-stack. Instead, if~$z\in B_\ell$ for some~$\ell<i$, then~$m_\ell z m_i y\simeq 2314$. Therefore we can assume that every element that precedes~$m_i$ in~$\pi$ is either smaller than~$x$ or greater than~$y$. Finally, suppose that there is an element~$z$ between~$y$ and~$x$ in~$\pi$ such that~$z<x$, which gives an occurrence~$m_i y z x$ of either~$2413$ or~$1423$. Then, since~$y$ is still in the~$132$-stack when~$x$ is pushed and~$z$ precedes~$x$ in~$\pi$, $z$ enters the~$132$-stack above~$y$, and so~$\widetilde{B_I}$ contains either~$x\dots z\dots y$ or~$z\dots x\dots y$, with~$z<x$. However, both cases give a contradiction, because~$(x,y)$ is the first ascent in~$\out{132}(\pi)$.
\end{itemize}
\end{proof}

Due to the presence of the mesh pattern~$\mu$ (and in accordance with Theorem~\ref{theorem_necess_cond_class}), the set~$\Perm( 2314,\mu)$ is not a permutation class. For instance, the~$132$-sortable permutation~$2413$ contains the non~$132$-sortable pattern~$132$.

\section{\texorpdfstring{A grid decomposition for~$\Sort(132)$}{A grid decomposition for Sort(132)}}\label{section_grid_132}

In the previous section we have proved that~$\Sort(132)=\Perm(2314,\mu)$, obtaining a precise description (in terms of generalized pattern avoidance) of~$132$-sortable permutations. However, we are still not able to enumerate~$\Sort(132)$ directly. In this section we will thus investigate its geometric structure by refining the ltr-minima decomposition as follows. Let~$\pi$ be a permutation of length~$n$ with~$t$ ltr-minima and let~$\pi=m_1B_1m_2B_2\cdots m_tB_t$ its ltr-min decomposition. Then:

\begin{itemize}
\item for~$j\ge 1$, the~$j$-th \textit{vertical strip} of~$\pi$ is~$B_j$;
\item for~$i\ge 1$, the~$i$-th \textit{horizontal strip} of~$\pi$ is~$H_i=\left\lbrace x\in\pi:m_i<x< m_{i-1}\right\rbrace$, where~$m_0=+\infty$.
\item for any two indices~$i,j$, the \textit{cell} of indices~$i,j$ of~$\pi$ is~$C_{i,j}=H_i\cap B_j$ (note that~$C_{i,j}$ is empty when~$i>j$).
\item the \textit{core} of~$\pi$ is~$\core(\pi)=B_1B_2\cdots B_k$, obtained from~$\pi$ by removing the ltr-minima.
\end{itemize}

From now on, the content of each~$B_j,H_i,C_{i,j}$ will be regarded as a permutation. As an example, consider the permutation~$\pi=13\,14\,15\,10\,12\,6\,7\,8\,11\,9\,3\,1\,4\,5\ 2$. Then (see Figure~\ref{figure_grid_dec_132}):

\begin{itemize}
\item $\pi$ has six ltr-minima, namely~$13,10,6,3,1$;
\item the vertical strips of~$\pi$ are~$B_1=14\,15\simeq 1\,2$, $B_2=12\simeq 1$, $B_3=7\,8\,11\,9\simeq 1\,2\,4\,3$, $B_4=\emptyset$ and~$B_5=4\,5\,2\simeq 2\,3\,1$;
\item the horizontal strips of~$\pi$ are~$H_1=14\,15\simeq 1\,2$, $H_2= 12\,11\simeq 2\,1$, $H_3= 7\,8\,9\simeq 1\,2\,3$, $H_4 = 4\,5\simeq 1\,2$ and~$H_5 = 2\simeq 1$;
\item the nonempty cells of~$\pi$ are~$C_{1,1}= 14\,15\simeq 1\,2$, $C_{2,2}=12\simeq 1$, $C_{2,3}=11\simeq 1$, $C_{3,3}=7\,8\,9\simeq 1\,2\,3$, $C_{4,5}=4\,5\simeq 1\,2$ and~$C_{5,5}= 2\simeq 1$;
\item the core of~$\pi$ is~$\core(\pi)=14\,15\,12\,7\,8\,11\,9\,4\,5\,2\simeq 9\ 10\,8\,4\,5\,7\,6\,2\,3\,1$.
\end{itemize}

The terminology introduced above refers to the graphical representation of~$\pi$, as illustrated in Figure~\ref{figure_grid_dec_132}.

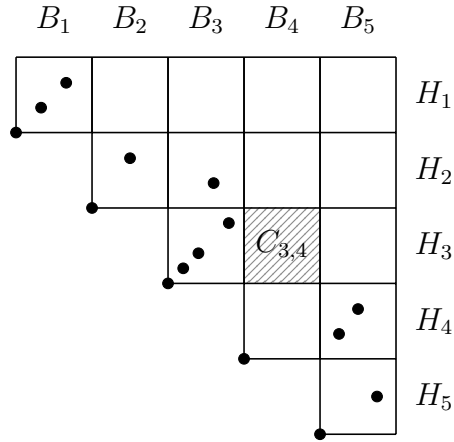
\begin{figure}
\centering
\begin{tikzpicture}[scale=1, baseline=20.5pt]
\fill[NE-lines] (3,2) rectangle (4,3);
\node at (3.5,2.5) {$C_{3,4}$};
\draw [semithick] (4,0) grid (5,5);
\draw [semithick] (3,1) grid (4,5);
\draw [semithick] (2,2) grid (3,5);
\draw [semithick] (1,3) grid (2,5);
\draw [semithick] (0,4) grid (1,5);
\filldraw (0,4) circle (2pt);
\filldraw (0.33,4.33) circle (2pt);
\filldraw (0.66,4.66) circle (2pt);
\filldraw (1,3) circle (2pt);
\filldraw (1.5,3.66) circle (2pt);
\filldraw (2,2) circle (2pt);
\filldraw (2.2,2.2) circle (2pt);
\filldraw (2.4,2.4) circle (2pt);
\filldraw (2.6,3.33) circle (2pt);
\filldraw (2.8,2.8) circle (2pt);
\filldraw (3,1) circle (2pt);
\filldraw (4,0) circle (2pt);
\filldraw (4.25,1.33) circle (2pt);
\filldraw (4.5,1.66) circle (2pt);
\filldraw (4.75,0.5) circle (2pt);
\node[scale=1] at (0.5,5.5) {$B_1$};
\node[scale=1] at (1.5,5.5) {$B_2$};
\node[scale=1] at (2.5,5.5) {$B_3$};
\node[scale=1] at (3.5,5.5) {$B_4$};
\node[scale=1] at (4.5,5.5) {$B_5$};
\node[scale=1] at (5.5,4.5) {$H_1$};
\node[scale=1] at (5.5,3.5) {$H_2$};
\node[scale=1] at (5.5,2.5) {$H_3$};
\node[scale=1] at (5.5,1.5) {$H_4$};
\node[scale=1] at (5.5,0.5) {$H_5$};
\end{tikzpicture}
\caption[The grid decomposition of~$132$-sortable permutations.]{The grid decomposition of~$\pi=13\,14\,15\,10\,12\,6\,7\,8\,11\,9\,3\,1\,4\,5\,2$. The image of~$\pi$ under the bijection of Theorem~\ref{theorem_bij_12231} is the {\rgf}~$\eta(\pi)=111223332345445$.}\label{figure_grid_dec_132}
\end{figure}

In what follows we prove that the requirement of being~$132$-sortable imposes precise constraints on the grid structure of a permutation: both the content of strips and cells and the relative position of non-empty cells are affected.

\begin{lemma}\label{lemma_no_switch_component_132}
Let~$\pi$ be a~$132$-sortable permutation and suppose that the cell~$C_{i,j}$ is nonempty, for some~$i,j$. Then the cell~$C_{u,v}$ is empty for each pair of indices~$(u,v)$ such that~$u<i$ and~$v>j$\footnote{That is when~$C_{u,v}$ is strictly northeast of~$C_{i,j}$ (see Figure~\ref{figure_cells_132}).}.
\end{lemma}
\begin{proof}
Suppose there are two elements~$x\in C_{i,j}$ and~$y\in C_{u,v}$ such that~$u<i$ and~$v>j$. Then~$m_ixm_vy\simeq 2314$, which is impossible due to Theorem~\ref{theorem_mesh_patterns_necessary_132}.
\end{proof}

\begin{lemma}\label{lemma_inversion_in_a_cell_132}
Let~$\pi$ be a~$132$-sortable permutation and suppose that the cell~$C_{i,j}$ contains an inversion~$x>y$, where~$x$ precedes~$y$ in~$C_{i,j}$. Then there is an element~$z$ between~$x$ and~$y$ in~$\pi$ such that~$z<m_i$.
\end{lemma}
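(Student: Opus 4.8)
The plan is to exploit the characterization $\Sort(132)=\Perm(2314,\mu)$ from Theorem~\ref{theorem_mesh_patterns_necessary_132}, leaning entirely on avoidance of the mesh pattern $\mu$. The starting point is the observation that the triple $m_i\,x\,y$ is already an occurrence of the classical pattern $132$: since $C_{i,j}$ is nonempty we have $i\le j$, so $m_i$ precedes the block $B_j$ and hence comes before both $x$ and $y$ in position, while $m_i<y<x<m_{i-1}$ because $x,y\in C_{i,j}$. Writing this occurrence as $acb$ with $a=m_i$, $c=x$, $b=y$, the element $z$ requested by the statement (value below $m_i=a$, positioned strictly between $c=x$ and $b=y$) is exactly what the shaded corner $(2,0)$ of $\mu$ forbids. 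So the whole proof amounts to extracting, from $\mu$-avoidance, an occupant of that shaded region.

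First I would verify that the \emph{left} shading condition of $\mu$ is automatically met by the occurrence $m_i\,x\,y$, i.e.\ no element preceding $m_i$ has value in $(y,x)$. This is immediate from the ltr-min decomposition: every element lying before $m_i$ is either one of $m_1,\dots,m_{i-1}$ or sits in a block $B_\ell$ with $\ell<i$, and in either case its value is at least $m_{i-1}$ (the statement being vacuous when $i=1$, as then nothing precedes $m_1$); since $x<m_{i-1}$, all such elements exceed $x$ and none can fall in $(y,x)$. Because $\pi$ avoids $\mu$, the occurrence $m_i\,x\,y$ must fail one of the shading conditions of $\mu$, and as the left one holds it is the \emph{right} one that fails: some element strictly between $x$ and $y$ in position has value smaller than $y$.

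The delicate point, and the main obstacle, is that this first application of $\mu$ only yields an intermediate element of value below $y$, whereas the conclusion requires value below $m_i$; bridging that gap is the real work. To close it I would let $e$ be the \emph{smallest} element lying strictly between $x$ and $y$ in position, which by the previous paragraph satisfies $e<y$. If $e<m_i$ we are done with $z=e$. Otherwise $m_i<e<y<m_{i-1}$ and $e\in B_j$, so $e\in C_{i,j}$, and $m_i\,x\,e$ is again an occurrence of $132$ (positions $m_i<x<e$, values $m_i<e<x$). Its left shading condition holds for the very same reason as before, and by minimality of $e$ every element strictly between $x$ and $e$ has value exceeding $e$, so its right shading condition holds as well; thus $m_i\,x\,e$ would be a genuine occurrence of $\mu$, contradicting $\mu$-avoidance. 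Hence $e<m_i$, and $z=e$ is the desired element. The only things needing care are the translation of the corners $A=\{(0,2),(2,0),(2,1)\}$ into the correct positional-and-value constraints on $acb$, and checking that the minimal intermediate element really forces the right shading condition in the second application; everything else is routine bookkeeping with the ltr-min decomposition.
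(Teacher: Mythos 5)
Your proof is correct and follows essentially the same route as the paper: both apply the characterization $\Sort(132)=\Perm(2314,\mu)$ to the occurrence $m_i\,x\,y$ of $132$, observe that the left shading condition of $\mu$ holds automatically (everything preceding $m_i$ has value at least $m_{i-1}>x$), and conclude from $\mu$-avoidance that some element between $x$ and $y$ has value below $y$. The only difference is in the finish: the paper iterates the argument (replacing $y$ by the new element until termination), whereas you pick the minimal element between $x$ and $y$ and reach a contradiction in one step --- a slightly tidier packaging of the same idea.
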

\begin{proof}
We refer to Figure~\ref{figure_cells_132} for a description of the statement of the lemma. For~$x$ and~$y$ as above, we have~$m_ixy\simeq 132$. In particular, $x$ and~$y$ are in the same cell~$C_{i,j}$ and~$m_i$ is the corresponding ltr-minimum, hence every element~$w$ preceding~$m_i$ in~$\pi$ is greater than~$x$ (because~$w>m_{i-1}$ and~$x<m_{i-1}$). Therefore, as a consequence of Theorem~\ref{theorem_mesh_patterns_necessary_132}, there exists an element~$z$ between~$x$ and~$y$ in~$\pi$ such that~$z<y$. If~$z<m_i$, then we are done. Otherwise, if~$z>m_i$, we can repeat the same argument using the occurrence~$m_ixz$ of~$132$, in which we have replaced~$y$ with the element~$z$ that comes strictly before~$y$ in~$\pi$; continuing in this way we eventually find an element of~$\pi$ with the desired property.
\end{proof}

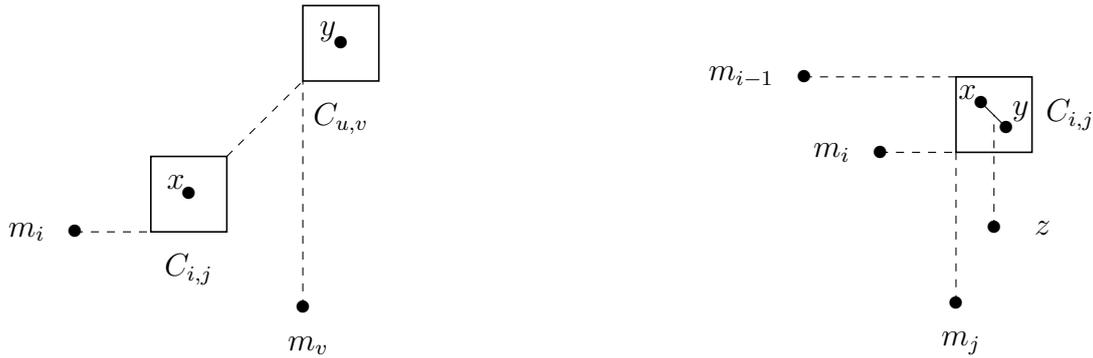
\begin{figure}
\centering
\begin{tikzpicture}
\draw [semithick] (0,0) rectangle (1,1);
\draw [semithick] (2,2) rectangle (3,3);
\draw [dashed] (-1,0) -- (0,0);
\draw [dashed] (2,-1) -- (2,2);
\draw [dashed] (1,1) -- (2,2);
\node [label=left:~$m_i$] at (-1,0) {$\bullet$};
\node [label=below:~$m_v$] at (2,-1) {$\bullet$};
\node [] at (0.5,-0.5) {$C_{i,j}$};
\node [] at (2.5,1.5) {$C_{u,v}$};
\node [] at (0.33,0.66) {$x$};
\node [] at (2.33,2.66) {$y$};
\node [] at (0.5,0.5) {$\bullet$};
\node [] at (2.5,2.5) {$\bullet$};
\end{tikzpicture}
\hfill
\begin{tikzpicture}
\draw [semithick] (1,1) rectangle (2,2);
\draw [dashed] (1,-1) -- (1,1);
\draw [dashed] (0,1) -- (1,1);
\draw [dashed] (-1,2) -- (1,2);
\draw [dashed] (1.5,0) -- (1.5,1.5);
\draw [thin] (1.33,1.66) -- (1.66,1.33);
\node [label=below:~$m_j$] at (1,-1) {$\bullet$};
\node [label=left:~$m_i$] at (0,1) {$\bullet$};
\node [label=left:~$m_{i-1}$] at (-1,2) {$\bullet$};
\node [] at (2.5,1.5) {$C_{i,j}$};
\node [] at (1.33,1.66) {$\bullet$};
\node [] at (1.66,1.33) {$\bullet$};
\node [] at (1.15,1.8) {$x$};
\node [] at (1.85,1.55) {$y$};
\node [label=right:~$z$] at (1.5,0) {$\bullet$};
\end{tikzpicture}
\caption[The geometric constructions of Lemma~\ref{lemma_no_switch_component_132} and~\ref{lemma_inversion_in_a_cell_132}.]{The constructions of Lemma~\ref{lemma_no_switch_component_132}, on the left, and of Lemma~\ref{lemma_inversion_in_a_cell_132}, on the right.}\label{figure_cells_132}
\end{figure}

\begin{proposition}\label{proposition_layered_cells_132}
If~$\pi$ is~$132$-sortable, then~$C_{i,j}\in\Perm(132,213)$, for every~$i,j$.
\end{proposition}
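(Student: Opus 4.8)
The plan is to argue by contradiction, reducing a forbidden pattern inside a cell to a genuine $2314$ in $\pi$. By Theorem~\ref{theorem_mesh_patterns_necessary_132} we have $\Sort(132)=\Perm(2314,\mu)$, so the $132$-sortable permutation $\pi$ avoids $2314$; I will exploit this together with Lemma~\ref{lemma_inversion_in_a_cell_132}, which says that any inversion inside a cell $C_{i,j}$ forces, in $\pi$, an element $z<m_i$ positioned between the two elements of that inversion. The key observation is that each of the two forbidden patterns $132$ and $213$ contains an inversion, so this ``small'' element $z$ is always available, and it can be combined with the ltr-minimum $m_i$ and two elements of the occurrence to produce a $2314$. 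I would set up once and for all the two facts used repeatedly: every element of $C_{i,j}=H_i\cap B_j$ exceeds $m_i$ (since $C_{i,j}\subseteq H_i$), and $m_i$ precedes the whole block $B_j$ in $\pi$ (because $C_{i,j}\neq\emptyset$ forces $i\le j$, and in $\pi=m_1B_1\cdots m_tB_t$ the minimum $m_i$ comes before $B_j$).

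First I would treat $213$. Suppose $C_{i,j}$ contains an occurrence $bac$ of $213$, so $a<b<c$ appearing in the positional order $b,a,c$. The pair $(b,a)$ is an inversion inside the cell, so Lemma~\ref{lemma_inversion_in_a_cell_132} yields $z<m_i$ lying between $b$ and $a$ in $\pi$. Since $m_i<b$ and $m_i$ precedes $b$, the four elements $m_i,b,z,c$ occur in this positional order (note $z$ precedes $a$, hence precedes $c$), and their values satisfy $z<m_i<b<c$, so they standardize to $2314$ — contradicting $\pi\in\Perm(2314)$. Next I would treat $132$ analogously: if $C_{i,j}$ contains an occurrence $acb$ of $132$ (with $a<b<c$ in the order $a,c,b$), then $(c,b)$ is an inversion in the cell, and Lemma~\ref{lemma_inversion_in_a_cell_132} provides $z<m_i$ between $c$ and $b$. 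Using $m_i<a$ and the fact that $m_i$ precedes $a$, the elements $m_i,a,z,b$ appear in this order in $\pi$ (here $a$ precedes $c$, hence precedes $z$) with values $z<m_i<a<b$, again standardizing to $2314$, which is impossible. Hence $C_{i,j}$ avoids both $132$ and $213$, i.e.\ $C_{i,j}\in\Perm(132,213)$.

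I do not expect a serious obstacle here: the real content is packaged in Lemma~\ref{lemma_inversion_in_a_cell_132}, and the only delicate point is the bookkeeping that selects, in each of the two cases, the correct triple among the pattern elements and $m_i$ so that together with $z$ it realizes exactly the ranks $2,3,1,4$. The mild subtlety worth stating carefully is why $m_i$ (rather than, say, $m_j$) is positioned before the chosen pattern elements, which is immediate from $i\le j$ and the ltr-min decomposition. I would also remark that Lemma~\ref{lemma_no_switch_component_132} is \emph{not} needed for this statement: it constrains the relative placement of distinct non-empty cells, whereas the present proposition concerns the internal structure of a single cell, which is governed entirely by Lemma~\ref{lemma_inversion_in_a_cell_132} and $2314$-avoidance.
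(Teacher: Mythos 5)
Your proof is correct and takes essentially the same route as the paper: in both cases you invoke Lemma~\ref{lemma_inversion_in_a_cell_132} on the inversion inside the occurrence and then build exactly the paper's witnesses, $m_ibzc\simeq 2314$ for the $213$ case and $m_iazb\simeq 2314$ for the $132$ case, contradicting the $2314$-avoidance from Theorem~\ref{theorem_mesh_patterns_necessary_132}. Your closing remark is also accurate: Lemma~\ref{lemma_no_switch_component_132} plays no role here, in the paper either.
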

\begin{proof}
Suppose that~$C_{i,j}$ contains an occurrence~$acb$ of~$132$. By Lemma~\ref{lemma_inversion_in_a_cell_132}, there exists an element~$z$ between~$c$ and~$b$ in~$\pi$ such that~$z<m_i$. In particular, $m_iazb\simeq 2314$, which is a contradiction since~$\pi$ is~$132$-sortable (by Theorem~\ref{theorem_mesh_patterns_necessary_132}). On the other hand, if~$C_{i,j}$ contains an occurrence~$bac$ of~$213$, then~$(b,a)$ is an inversion in the cell~$C_{i,j}$ and therefore, again by Lemma~\ref{lemma_inversion_in_a_cell_132}, there is an element~$z$ between~$b$ and~$a$ in~$\pi$ with~$z<m_i$ and~$m_ibzc\simeq 2314$, a contradiction.
\end{proof}

\begin{proposition}\label{proposition_layered_H_strips_132}
If~$\pi$ is a~$132$-sortable permutation, then~$H_i\in\Perm(132,213)$, for every~$i$.
\end{proposition}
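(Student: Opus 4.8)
The plan is to read the horizontal strip $H_i$ as a skew sum of its cells and then to glue together the two local facts already established. First note that $H_i$ contains no ltr-minimum: the ltr-minima are strictly decreasing, so none of them has value in the open band $(m_i,m_{i-1})$. Hence $H_i$ is exactly the disjoint union of the cells $C_{i,i},C_{i,i+1},\dots,C_{i,t}$ (the cells $C_{i,j}$ with $j<i$ being empty). Reading $\pi$ from left to right, all of $B_j$ precedes all of $B_{j'}$ whenever $j<j'$; therefore, positionally, the elements of $C_{i,j}$ precede those of $C_{i,j'}$ within $H_i$.

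Next I would pin down the value order across cells. By Lemma~\ref{lemma_ltr_min_dec_132}(2), since $\pi$ is $132$-sortable, every element of $B_j$ exceeds every element of $B_{j'}$ for $j<j'$; restricting to the band $(m_i,m_{i-1})$, every element of $C_{i,j}$ exceeds every element of $C_{i,j'}$. Combining this value ordering with the positional ordering of the previous paragraph shows that, regarded as a permutation,
$$
H_i=C_{i,i}\ominus C_{i,i+1}\ominus\cdots\ominus C_{i,t},
$$
a skew sum of its (standardized) nonempty cells.

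It then remains to invoke the cell-level structure and a closure property. By Proposition~\ref{proposition_layered_cells_132} each cell satisfies $C_{i,j}\in\Perm(132,213)$, i.e.\ each cell is co-layered. Recalling from Section~\ref{section_stats_and_decomp} that $\Perm(132,213)$ is precisely the class of co-layered permutations (the skew sums of increasing permutations), and that such a class is closed under $\ominus$, the skew sum displayed above is again co-layered. Hence $H_i\in\Perm(132,213)$, as desired.

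The only genuinely load-bearing step is the identification of $H_i$ as a skew sum, and this is immediate once Lemma~\ref{lemma_ltr_min_dec_132}(2) supplies the strict value ordering across the blocks $B_j$; the remainder is the elementary observation that $\Perm(132,213)$ is closed under $\ominus$. I therefore do not anticipate a real obstacle: the substantive work was already done in Proposition~\ref{proposition_layered_cells_132} and Lemma~\ref{lemma_ltr_min_dec_132}, and this proposition is essentially the statement that those local facts glue together along the skew-sum shape of a horizontal strip. One could instead argue by contradiction, showing that any occurrence of $132$ or $213$ inside $H_i$ must, by Proposition~\ref{proposition_layered_cells_132}, span at least two cells and so force a too-large entry into a position incompatible with either pattern; but the skew-sum formulation is the cleanest to write down.
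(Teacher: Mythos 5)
Your proof is correct and follows exactly the route the paper takes: the paper's own proof is the one-line observation that the statement "is a consequence of Lemma~\ref{lemma_ltr_min_dec_132} and Proposition~\ref{proposition_layered_cells_132}", and your skew-sum decomposition of $H_i$ into its cells, combined with closure of $\Perm(132,213)$ under $\ominus$, is precisely the argument being left implicit there. You have simply written out the details the paper omits.
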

\begin{proof}
This is a consequence of Lemma~\ref{lemma_ltr_min_dec_132} and Proposition~\ref{proposition_layered_cells_132}.
\end{proof}

\begin{proposition}\label{proposition_core_avoids_213_132}
If~$\pi$ is~$132$-sortable, then~$\core(\pi)\in\Perm(213)$.
\end{proposition}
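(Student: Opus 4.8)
The plan is to reduce the statement to a claim about a single block of the ltr-min decomposition and then to exhibit a forbidden $2314$. First I would write $\pi=m_1B_1m_2B_2\cdots m_tB_t$ and recall from the second item of Lemma~\ref{lemma_ltr_min_dec_132} that, since $\pi$ is $132$-sortable, every element of $B_i$ exceeds every element of $B_j$ whenever $i<j$; in other words $\core(\pi)=B_1B_2\cdots B_t$ is the skew sum $B_1\ominus\cdots\ominus B_t$. In such a skew sum any occurrence of $213$ must lie entirely inside one block: if $bac\simeq 213$ (so $a<b<c$ with $b,a,c$ read left to right) had its largest and rightmost letter $c$ in a block strictly to the right of the block of $b$, then $c<b$, a contradiction; the same reasoning rules out $a$ and $c$ lying in different blocks. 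Hence it suffices to prove that each block $B_j$ avoids $213$.

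So I would suppose, for contradiction, that a block $B_j$ contains an occurrence $bac$ of $213$, none of whose letters is an ltr-minimum, and let $b\in H_{i_b}$ and $a\in H_{i_a}$ be the horizontal strips (equivalently the cells $C_{i_b,j},C_{i_a,j}$) containing $b$ and $a$. Since $a<b$, the element $a$ lies in a strip no higher than that of $b$, i.e.\ $i_a\ge i_b$, and I would split according to whether this inequality is strict. If $i_a>i_b$, then $a<m_{i_a-1}\le m_{i_b}$, so $a<m_{i_b}$, while $m_{i_b}<b<c$ and $m_{i_b}$ precedes the whole block $B_j$ (as $i_b\le j$); reading $m_{i_b}\,b\,a\,c$ in positional order yields ranks $2,3,1,4$, that is an occurrence of $2314$, contradicting $\pi\in\Perm(2314,\mu)$ from Theorem~\ref{theorem_mesh_patterns_necessary_132}.

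The remaining case $i_a=i_b=:i$ is the one that genuinely needs the earlier geometric lemmas: here $a$ and $b$ share the cell $C_{i,j}$ and form an inversion there ($b$ precedes $a$ and $b>a$). I would invoke Lemma~\ref{lemma_inversion_in_a_cell_132} to obtain an element $z$ lying between $b$ and $a$ in $\pi$ with $z<m_i$. Since $z$ sits before $a$ and $c$ comes after $a$, the four letters $m_i\,b\,z\,c$ occur in this positional order with $z<m_i<b<c$, so again their ranks are $2,3,1,4$ and $m_i\,b\,z\,c\simeq 2314$, the desired contradiction. The main obstacle is precisely this same-cell case: one cannot use $a$ itself as the smallest letter of the $2314$ (because $a>m_i$ there), and the right move is to replace $a$ by the witness $z<m_i$ furnished by Lemma~\ref{lemma_inversion_in_a_cell_132}, after which both cases collapse to the same pattern $m_i\,(\text{middle})\,(\text{small})\,(\text{large})\simeq 2314$. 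The only delicate bookkeeping is verifying the positional orderings, namely that $m_{i_b}$ (resp.\ $m_i$) precedes the block $B_j$ and that $z$ precedes $c$; both follow at once from $i_b\le j$ and from $z$ being placed between $b$ and $a$.
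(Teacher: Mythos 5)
Your proof is correct and takes essentially the same route as the paper's: both reduce to a single vertical strip using item~2 of Lemma~\ref{lemma_ltr_min_dec_132}, split according to whether $a$ lies in a strictly lower horizontal strip than $b$ or in the same cell, and in each case produce a forbidden occurrence of $2314$ (directly as $m_{i_b}\,b\,a\,c$ in the first case, and via the witness $z<m_i$ from Lemma~\ref{lemma_inversion_in_a_cell_132} as $m_i\,b\,z\,c$ in the second). The only cosmetic difference is your skew-sum phrasing of the initial reduction, which the paper instead handles by placing $b$ and $c$ in a common strip by value and $a$ by position.
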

\begin{proof}
Suppose that~$\pi$ contains an occurrence~$bac$ of~$213$ that does not involve any ltr-minimum and suppose that~$b\in C_{i,j}$ for some~$i,j$. Note that~$b<c$, so, by Lemma~\ref{lemma_ltr_min_dec_132}, $b$ and~$c$ must belong to the same vertical strip~$B_j$. Now, if~$a\in C_{\ell,j}$, with~$\ell>i$, then~$m_ibac\simeq 2314$, which is a contradiction, since~$\pi$ is~$132$-sortable. Therefore we must have~$a\in C_{i,j}$. This results in an occurrence~$m_iba$ of~$132$, with~$b$ and~$a$ both in the cell~$C_{i,j}$; thus, by Lemma~\ref{lemma_inversion_in_a_cell_132}, there is an element~$z$ between~$b$ and~$a$ in~$\pi$ such that~$z<m_i$ and~$m_ibzc\simeq 2314$, which is again a contradiction.
\end{proof}

The results proved so far in this section provide necessary conditions that a permutation has to satisfy in order to be~$132$-sortable. Now, since the prefix of a~$\sigma$-sortable permutation is always~$\sigma$-sortable (see Lemma~\ref{lemma_prefix_sortable}), if we remove the last element from a~$132$-sortable permutation we get another~$132$-sortable permutation of length one less. Equivalently, every~$132$-sortable permutation is obtained from a~$132$-sortable permutation (of length one less) by inserting a new rightmost element, and suitably rescaling the remaining ones. Our next goal is to understand which integers are allowed for such an insertion, so to obtain a recursive construction for the set~$\Sort(132)$. For example, since the insertion of a new minimum can never create either~$2314$ or~$\mu$, by Theorem~\ref{corollary_mesh_pattern_char_132}, such an insertion is always allowed. In all the other cases, we need to satisfy the requirements of Lemma~\ref{lemma_no_switch_component_132} and Propositions~\ref{proposition_layered_H_strips_132} and~\ref{proposition_core_avoids_213_132}.

Let~$\pi$ be a~$132$-sortable permutation with~$t$ ltr-minima. Suppose we insert a new rightmost element in a cell~$C_{i,t}$ of the last vertical strip. By Proposition~\ref{proposition_layered_H_strips_132}, any horizontal strip~$H_i$ in a~$132$-sortable permutation avoids both~$132$ and~$213$, that is~$H_i$ is co-layered. Therefore, if we wish to obtain a new co-layered permutation by inserting a new rightmost element, there are exactly two possibilities:

\begin{enumerate}
\item $\Do$: to insert a new minimum in~$C_{i,t}$ (which is also a new minimum of the horizontal strip~$H_i$);
\item $\Co$: to create a consecutive ascent\footnote{recall that an ascent~$\pi_i<\pi_{i+1}$ is consecutive if~$\pi_{i+1}=\pi_i+1$.} in the two final positions of~$C_{i,t}$.
\end{enumerate}

We formalize this construction by introducing the notion of active cell. Let~$\pi$ be a~$132$-sortable permutation with~$t$ ltr-minima. For~$i\ge 1$, the cell~$C_{i,t}$ is said to be \textit{active} if both the following conditions are satisfied:
\begin{itemize}
\item[(i)]~$C_{u,v}$ is empty for each~$u,v$ such that~$u>i$ and~$v<t$;
\item[(ii)] inserting a new rightmost element according to~$\Do$ does not create an occurrence of~$213$ in~$\core(\pi)$. 
\end{itemize}

Thanks to condition (i), we can equivalently express condition (ii) by saying that the permutation~$\bigcup_{j\ge i+1} C_{j,t}$ is increasing. Moreover, as a consequence of Lemma~\ref{lemma_no_switch_component_132} and
Proposition~\ref{proposition_core_avoids_213_132}, if we insert a new rightmost element in a cell~$C_{i,t}$ that is not active, then we get a non~$132$-sortable permutation. Othwerise, if~$C_{i,t}$ is active, we wish to show that exactly one of the operations~$\Do$ and~$\Co$ returns a~$132$-sortable permutation. Let us consider two cases, according to whether~$C_{i,t}$ is empty or not.

\begin{proposition}\label{proposition_cell_legal_insertion_132}
Let~$\pi=\pi_1\cdots\pi_n$ be a~$132$-sortable permutation with~$t$ ltr-minima and let~$C_{i,t}=\gamma_1\cdots\gamma_k$ be a nonempty active cell of~$\pi$. Let~$x=\pi_n$ and suppose~$x\in C_{\ell,t}$. Then:
\begin{enumerate}
\item by performing~$\Do$ on~$C_{i,t}$ we get a~$132$-sortable permutation~$\pi'$ if and only if $\ell>i$;
\item by performing~$\Co$ on~$C_{i,t}$ we get a~$132$-sortable permutation~$\pi'$ if and only if~$\ell\le i$.
\end{enumerate}
\end{proposition}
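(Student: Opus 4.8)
The plan is to use the characterization $\Sort(132)=\Perm(2314,\mu)$ from Theorem~\ref{corollary_mesh_pattern_char_132}. Since both operations $\Do$ and $\Co$ insert a new element $w$ in the last position of the resulting permutation $\pi'$, any newly created occurrence of $2314$ or $\mu$ must have $w$ as its last entry: in an occurrence of $2314$ the last entry is the maximal one (the ``$4$''), and in an occurrence of $\mu$ the last entry is the middle-valued element $b$ of the underlying $132$. First I would record the value of $w$ in each case: under $\Do$ the element $w$ is a new minimum of $H_i$, hence smaller than every entry lying weakly above the $i$-th horizontal strip; under $\Co$ the element $w$ sits immediately above the last entry of $C_{i,t}$ in value. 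I would also observe that, by activeness, condition (i) rules out any northeast nonempty cell created by the insertion (Lemma~\ref{lemma_no_switch_component_132}) and condition (ii) guarantees that $\core(\pi')$ keeps avoiding $213$ (Proposition~\ref{proposition_core_avoids_213_132}), while inserting into the nonempty cell $C_{i,t}$ preserves the co-layered shape of $H_i$ (Proposition~\ref{proposition_layered_H_strips_132}). Thus the whole question reduces to whether the insertion creates $2314$ or $\mu$.

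For $\Do$, because $w$ is a new minimum it can never be the maximal entry of a $2314$, so only $\mu$ is at stake. Writing $x=\pi_n\in C_{\ell,t}$, I would argue the dichotomy as follows. If $\ell>i$ then $x<w$ and $x$ is exactly the entry immediately preceding $w$ in $\pi'$; any hypothetical occurrence $a\,c\,w\simeq\mu$ would require some $c>w$ to the left of $x$, forcing $x$ to lie strictly between $c$ and $b=w$ with $x<w$, which violates the lower shading of $\mu$. Hence no occurrence of $\mu$ arises and $\pi'$ is sortable. If instead $\ell\le i$, then $w$ lies at the very bottom of $H_i$ and I would exhibit an explicit occurrence $m_i\,c\,w\simeq\mu$, where $c$ is a suitable entry of the last vertical strip with $w<c$ (either $x$ itself, or, when an earlier high entry would spoil the upper shading, the largest entry of $C_{i,t}$ lying below $x$); activeness of $C_{i,t}$ together with the fact that every element preceding $m_i$ exceeds $m_{i-1}$ lets me verify both shading conditions, so $\pi'$ is not sortable.

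For $\Co$ the roles are reversed: $w$ is now relatively large (just above the last entry of $C_{i,t}$), so it plays the maximal role in a $2314$ or the $c$ role in $\mu$. If $\ell>i$ then $x<w$ lies below the insertion strip; combined with an ascent or inversion of the last vertical strip this produces a forbidden occurrence of $2314$ of the form $b\,c\,x\,w$ (or, in the degenerate cell shapes, a $\mu$), so $\pi'$ is not sortable. If $\ell\le i$ then I would show, using that the horizontal strips and cells are co-layered (Proposition~\ref{proposition_layered_H_strips_132} and the structure from Lemma~\ref{lemma_ltr_min_dec_132}) and that $C_{i,t}$ is active, that no entry to the left can complete $w$ into an occurrence of $2314$ or $\mu$, so $\pi'$ is sortable.

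The main obstacle will be the bookkeeping for $\mu$ in the two borderline directions, namely producing the correct middle entry $c$ in the non-sortable case $(\Do,\ell\le i)$ and ruling out every candidate occurrence in the sortable case $(\Co,\ell\le i)$. In both, the subtlety is that the naive candidate built from $x=\pi_n$ can fail a shading condition because of a higher entry appearing before $m_i$; the remedy is to exploit condition (i) of activeness and the co-layered shape of the horizontal strips, either to replace $x$ by an appropriate entry of $C_{i,t}$ or to locate the offending entry and extract a $2314$ instead. Once these mesh verifications are carried out, the four cases assemble into the two stated equivalences.
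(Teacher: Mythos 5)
Your overall strategy is the same as the paper's: reduce the proposition, via Theorem~\ref{corollary_mesh_pattern_char_132}, to deciding case by case whether appending the new entry $w$ creates an occurrence of $2314$ or of $\mu$, and your shading argument for $(\Do,\ell>i)$ against $\mu$ (the entry $x<w$ sitting immediately before $w$ kills the lower shaded boxes) is literally the paper's. The genuine gap is your claim that under $\Do$, ``because $w$ is a new minimum it can never be the maximal entry of a $2314$, so only $\mu$ is at stake.'' This is a non sequitur: $w$ is a minimum only of the horizontal strip $H_i$, not of $\pi'$. Every ltr-minimum $m_j$ with $j\ge i$, and every entry of the cells $C_{u,t}$ with $u>i$, is smaller than $w$, and activeness does not make those cells empty (condition (ii) only forces their union to be increasing). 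So an occurrence $bcaw\simeq 2314$ ending at $w$ is a priori possible, and excluding it is real work: it is precisely the paragraph of the paper's proof that splits on whether $m_t$ precedes $c$ (yielding a $213$ in $\core(\pi)$, against Proposition~\ref{proposition_core_avoids_213_132}) or follows $c$ (yielding $c\in B_j$, $j<t$, with $c<\gamma_k$, against Lemma~\ref{lemma_ltr_min_dec_132}). The conclusion you want is true, but only because of this argument, which your sketch omits; as written, the sortable half of item 1 is not established.

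There are secondary defects of the same kind. Your preliminary remark that condition (ii) of activeness ``guarantees that $\core(\pi')$ keeps avoiding $213$'' is meaningful only for the $\Do$ insertion (that is what condition (ii) is about); for $\Co$ it is false in general, and indeed the non-sortability in the case $(\Co,\ell>i)$ is witnessed exactly by $\gamma_k x w\simeq 213$ in $\core(\pi')$ --- a cleaner certificate than your proposed $bcxw\simeq 2314$, though the latter does work if one takes $b=m_i$ and $c=\gamma_k$ (your ``ascent or inversion of the last vertical strip'' need not exist). The sortable case $(\Co,\ell\le i)$ is only asserted: the paper needs a genuine two-case analysis there (if $c>m_{i-1}$ then $m_{i-1}$ occupies the $(0,2)$ shaded box of $\mu$; if $c<m_{i-1}$ then $c\in C_{i,t}$ and $c\gamma_k x\simeq 213$ in $\core(\pi)$ contradicts Proposition~\ref{proposition_core_avoids_213_132}), plus the adjacency argument when $\ell=i$; also note that under $\Co$ the appended entry can only play the role of $b$, never of $c$, in a new occurrence of $\mu$, contrary to what you write. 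Finally, for the non-sortable half of item 1 the paper has a shorter route than your explicit $\mu$-occurrence: apply Lemma~\ref{lemma_inversion_in_a_cell_132} to the inversion $\gamma_k>w$ in $\pi'$ (for $\ell=i$ their adjacency already gives a contradiction), obtain $z<m_i$ between them, and read off $m_i\gamma_k z x\simeq 2314$ in $\pi$ itself. Your planned verification of the shading of $m_i c w$ requires exactly this kind of appeal to the $213$-freeness of $\core(\pi)$, which activeness alone does not supply, so repairing your version would in effect reproduce the paper's argument.
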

\begin{proof}
\begin{enumerate}
\item Suppose that~$\ell<i$ and we want to insert a new rightmost element~$\gamma_{k+1}$ into~$C_{i,t}$ according to~$\Do$. Assume, for a contradiction, that the resulting permutation~$\pi'$ is~$132$-sortable. The elements~$\gamma_k$ and~$\gamma_{k+1}$ form an inversion in~$C_{i,t}$, so by Lemma~\ref{lemma_inversion_in_a_cell_132} there exists an element~$z$ between~$\gamma_k$ and~$\gamma_{k+1}$ in~$\pi$ such that~$z<m_i$. Hence~$m_i\gamma_kzx\simeq 2314$, which contradicts the assumption that~$\pi$ is~$132$-sortable. Instead, if~$\ell=i$, that is, $\gamma_k=x=\pi_n$, then~$\gamma_k\gamma_{k+1}$ is an inversion inside~$C_{i,t}$ such that~$\gamma_k$ and~$\gamma_{k+1}$ are adjacent in~$\pi$. This implies that~$\pi$ is not~$132$-sortable (again as a consequence of Lemma~\ref{lemma_inversion_in_a_cell_132}).

Conversely, suppose that~$\ell>i$ and~$\gamma_{k+1}$ is inserted into~$C_{i,t}$ according to~$\Do$. By Theorem~\ref{theorem_mesh_patterns_necessary_132}, $\pi\in\Perm(2314,\mu)$, so we just have to show that the permutation~$\pi'$ obtained after the insertion still avoids the two forbidden patterns. If~$\gamma_{k+1}$ plays the role of the~$2$ in an occurrence of~$132$, say~$ac\gamma_{k+1}$, then we have either~$acx\gamma_{k+1}\simeq 1423$ or~$acx\gamma_{k+1}\simeq 2413$, which means that the selected occurrence of~$132$ is not an occurrence of the mesh pattern~$\mu$. Otherwise, suppose there is an occurrence~$bca\gamma_{k+1}$ of~$2314$ in~$\pi'$. If~$m_t=1$ precedes~$c$ in~$\pi$, then~$ca\gamma_k\simeq 213$ in~$\core(\pi)$, contradicting Proposition~\ref{proposition_core_avoids_213_132}. On the other hand, if~$m_t$ follows~$c$ in~$\pi$, then~$c\in B_j$, for some~$j<t$, and~$\gamma_k\in B_t$, with~$c<\gamma_k$, contradicting Lemma~\ref{lemma_ltr_min_dec_132}.

\item Suppose we insert~$\gamma_{k+1}$ into~$C_{i,t}$ according to~$\Co$ and~$\ell>i$. Then~$\gamma_kx\gamma_{k+1}$ is an occurrence of~$213$ in~$\core(\pi')$, hence~$\pi'$ is not~$132$-sortable, due to Proposition~\ref{proposition_core_avoids_213_132}, as desired.

Conversely, suppose that~$\ell<i$ and we insert~$\gamma_{k+1}$ into~$C_{i,t}$ according to~$\Co$; this means that~$\gamma_{k+1}=\gamma_k+1$. The resulting permutation~$\pi'$ does not contain an occurrence~$bcad$ of~$2314$ with~$\gamma_{k+1}=d$, for otherwise~$bcax$ would be an occurrence of~$2314$ in~$\pi$, contradicting the hypothesis that~$\pi$ is~$132$-sortable. On the other hand, suppose there are two elements~$a,c$ in~$\pi$ such that~$ac\gamma_{k+1}$ is an occurrence of~$132$. We now prove that~$ac\gamma_{k+1}$ is not an occurrence of the mesh pattern~$\mu$ by distinguishing two cases. 

If~$c>m_{i-1}$ (note that~$i>\ell$, so~$m_{i-1}$ exists), then~$a<\gamma_{k+1}<m_{i-1}$, so~$m_{i-1}$ precedes~$a$ in~$\pi$ (because~$a<m_{i-1}$ and~$m_{i-1}$ is an ltr-minimum) and~$m_{i-1}ac\gamma_{k+1}$ would be an occurrence of~$3142$.

Instead, if~$c<m_{i-1}$, then~$c$ is not an ltr-minimum, because~$a<c$ precedes~$c$; moreover, $c$ is in~$C_{i,t}$, since~$c<m_{i-1}$ and~$c>\gamma_{k+1}$, hence~$c\gamma_kx$ is an occurrence of~$213$ in~$\core(\pi)$, which is impossible due to Proposition~\ref{proposition_core_avoids_213_132}.

Finally, if~$\ell=i$, then~$x=\gamma_k$, $\gamma_{k+1}=\gamma_k+1$ and they are adjacent in~$\pi'$, so~$\gamma_{k+1}$ is neither part of an occurrence of~$2314$ nor of~$\mu$, since otherwise~$\gamma_k$ would be as well, contradicting the hypothesis that~$\pi$ is~$132$-sortable.
\end{enumerate}
\end{proof}

When~$C_{i,t}$ is empty, the only possibility is to try to perform~$\Do$ (since~$\Co$ does not make sense). Next we show that this is always allowed.

\begin{proposition}\label{proposition_empty_cell_132}
Let~$\pi=\pi_1\cdots\pi_n$ be a~$132$-sortable permutation with~$t$ ltr-minima and let~$C_{i,t}$ be an empty active cell of~$\pi$. Let~$\pi'$ be the permutation obtained from~$\pi$ by inserting a new rightmost element~$y$ in~$C_{i,t}$ according to~$\Do$. Then~$\pi'$ is~$132$-sortable.
\end{proposition}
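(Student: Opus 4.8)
The plan is to show that inserting a new rightmost minimum $y$ into an empty active cell $C_{i,t}$ can never create an occurrence of either forbidden pattern of Theorem~\ref{corollary_mesh_pattern_char_132}, so that $\pi'$ remains in $\Perm(2314,\mu)=\Sort(132)$. Since $\pi$ is already $132$-sortable, any new occurrence of $2314$ or $\mu$ must use the freshly inserted element $y$, and because $y$ is the new rightmost entry it can only play the role of the \emph{last} letter in any such occurrence. Moreover $y$ is a new minimum of its horizontal strip $H_i$ but, crucially, it is \emph{not} a global minimum: by the definition of the grid decomposition, $y$ sits in $C_{i,t}$, so $y>m_i$ and in particular $y$ is larger than all ltr-minima $m_i,\dots,m_t$. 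I would open by recording these two structural facts about $y$ (rightmost position, value just above $m_i$), since they control which letters can accompany $y$ in a pattern occurrence.

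The two cases to rule out are an occurrence ending in $y$ of the classical pattern $2314$, and an occurrence of $\mu$ in which $y$ is the ``$2$'' of the underlying $132$ (these are the only roles $y$ can take, since $y$ is rightmost). For the $2314$ case: an occurrence $bcay$ would require $a<y$ with $a$ to the left of $y$, and $bca$ an occurrence of $231$ lying in $\pi$ with all three entries exceeding $y$. But $a<y$ forces $a$ to be smaller than $y$, while $a>$ (the smallest of $b,c,a$)'s role in $231$ means $a$ is the minimum of $b,c,a$; combining, any entry below $y$ and to its left that could serve as the ``$1$'' must itself lie below $m_i$ (since $y$ is the new minimum of $H_i$), and then the activity condition~(i) — that $C_{u,v}$ is empty for $u>i$, $v<t$ — together with Lemma~\ref{lemma_no_switch_component_132} and Proposition~\ref{proposition_core_avoids_213_132} prevents the required $231$ configuration $bca$ from sitting northeast-compatibly with $y$. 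I would phrase this by showing $b,c$ would have to occupy cells violating emptiness, or else $bca$ would already be a $213$ in $\core(\pi)$.

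For the mesh pattern $\mu$ with $y$ as the middle element of $acy$: here $a<y<c$ with $a,c$ preceding $y$, and I must exhibit a witness that breaks one of the shaded-region constraints of $\mu$, i.e. produce either an element before $a$ lying in the forbidden box $(0,2)$, or an element between $c$ and $y$ in the forbidden boxes $(2,0),(2,1)$. The natural witness is an ltr-minimum: since $y\in C_{i,t}$ we have $m_{i-1}>y$, and if $m_{i-1}$ exists and precedes $a$ appropriately one gets $m_{i-1}acy\simeq 3142$, exactly as in the $\ell<i$ subcase of Proposition~\ref{proposition_cell_legal_insertion_132}; otherwise the roles of $c$ relative to $m_{i-1}$ force $c$ into $C_{i,t}$, making $c$ and the previous last element produce a $213$ in $\core(\pi)$, contradicting Proposition~\ref{proposition_core_avoids_213_132}. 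I expect the main obstacle to be the bookkeeping in this mesh-pattern case: one must carefully track, via the value of $c$ against $m_{i-1}$, which ltr-minimum serves as the mesh-violating witness and confirm it lies in the correct shaded box, mirroring the empty/nonempty analysis already done in Proposition~\ref{proposition_cell_legal_insertion_132} but now with $y$ a fresh minimum rather than a consecutive ascent. Once both forbidden patterns are excluded, $\pi'\in\Perm(2314,\mu)$, hence $\pi'$ is $132$-sortable by Theorem~\ref{corollary_mesh_pattern_char_132}, completing the proof.
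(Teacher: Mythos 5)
Your high-level strategy coincides with the paper's: by Theorem~\ref{corollary_mesh_pattern_char_132} it suffices to show the insertion creates no occurrence of $2314$ or $\mu$, and since $y$ is rightmost it can only be the final letter of such an occurrence. But both of your case analyses break down exactly where the real work happens. In the $2314$ case you assert that the letters of $bca$ in $bcay\simeq 2314$ all exceed $y$; this is backwards, since $y$ plays the role of the $4$, so $a<b<c<y$. More seriously, your fallback branch --- ``or else $bca$ would already be a $213$ in $\core(\pi)$, contradicting Proposition~\ref{proposition_core_avoids_213_132}'' --- cannot work: $bca$ is an occurrence of $231$, not $213$, and nothing forces a $213$ inside $\pi$ itself. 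The contradiction the paper extracts necessarily involves the new element: when $a$ is not an ltr-minimum, the triple $cay$ is an occurrence of $213$ in $\core(\pi')$, violating condition (ii) in the definition of an \emph{active} cell (a statement about the insertion), not Proposition~\ref{proposition_core_avoids_213_132} (a statement about $\pi$); when $a$ is an ltr-minimum, $c$ sits in a nonempty cell $C_{u,v}$ with $u>i$, $v<t$, violating condition (i).

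In the mesh case your only proposed witness is $m_{i-1}$, which works precisely when $c>m_{i-1}$ (giving $m_{i-1}acy\simeq 3142$, an element in the shaded box $(0,2)$). Your ``otherwise'' branch is incoherent: if $y<c<m_{i-1}$ then $c\in H_i$, and $c$ cannot be ``forced into $C_{i,t}$'' --- that cell is empty by hypothesis --- so in fact $c\in C_{i,j}$ for some $j<t$, and there is no reason for any $213$ in $\core(\pi)$ involving ``the previous last element.'' The witness you need here is $m_t$: it lies between $c$ and $y$ in position and below $a$ in value, so $acm_ty\simeq 2413$ and the occurrence $acy$ has an element in the shaded box $(2,0)$ of $\mu$. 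This branch (the paper's case $c\in B_j$ with $j<t$) is not optional; it is the only argument available when $i=1$, where $m_{i-1}$ does not exist. As written, your sketch closes neither case.
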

\begin{proof}
By Theorem~\ref{theorem_mesh_patterns_necessary_132} we have that~$\pi\in\Perm(2314,\mu)$ and we want to prove that~$\pi'\in\Perm(2314,\mu)$ as well. Suppose there are three elements~$b,c,a$ in~$\pi$ such that~$bcay\simeq 2314$. Since~$c>b$, the element~$c$ is not an ltr-minimum of~$\pi$. Suppose that~$c\in C_{u,v}$, for some~$u,v$. If~$a$ is an ltr-minimum, then of course~$v<t$, and we have also~$u>i$, because~$y$ is the minimum of its horizontal strip and~$y>c$. This would imply that~$C_{u,v}$ is a nonempty cell, with~$u>i$ and~$v<t$, which is impossible since~$C_{i,t}$ is active. Otherwise, if~$a$ is not an ltr-minimum, then~$cay\simeq 213$ in~$\core(\pi')$, which again contradicts the assumption that~$C_{i,t}$ is active.

Next, in order to prove that~$\pi'$ does not contain the mesh pattern~$\mu$, suppose there are two elements~$a,c$ in~$\pi$ such that~$acy\simeq 132$ and suppose~$c\in B_j$, for some~$j\le t$. If~$j<t$, then~$acm_ty$ is an occurrence of~$2413$, as desired. Otherwise, if~$j=t$, we have that~$c\in C_{\ell,t}$, for some~$\ell<t$, because~$C_{i,t}$ is empty before we insert~$y$; moreover, $m_\ell$ precedes~$a$ in~$\pi$, because~$m_\ell>y$ and~$a<y$. Thus~$m_\ell acy\simeq 3142$, as desired.
\end{proof}

\begin{corollary}\label{corollary_recursive_construction_132}
Let~$\pi$ be a~$132$-sortable permutation. Then, for every active cell of~$\pi$, exactly one of~$\Do$ and~$\Co$ generates a~$132$-sortable permutation.
\end{corollary}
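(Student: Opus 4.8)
The plan is to obtain the corollary as an immediate consequence of the two preceding propositions, organized around a single case distinction on whether the active cell is empty or not. Fix a $132$-sortable permutation $\pi$ with $t$ ltr-minima and let $C_{i,t}$ be an active cell, necessarily lying in the last vertical strip $B_t$. I would first dispose of the degenerate case in which $C_{i,t}$ is empty: here the operation $\Co$ is not even defined, since there is no previously inserted element of $C_{i,t}$ with which to form a consecutive ascent, so the only candidate operation is $\Do$. By Proposition~\ref{proposition_empty_cell_132}, performing $\Do$ on an empty active cell always yields a $132$-sortable permutation. Hence exactly one of $\Do$, $\Co$ — namely $\Do$ — produces a $132$-sortable permutation, as claimed.

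The substantive case is when $C_{i,t}$ is nonempty. The key preliminary observation is that the last letter $x=\pi_n$ of $\pi$ lies in the last vertical strip $B_t$: indeed, nonemptiness of $C_{i,t}\subseteq B_t$ forces $B_t\neq\emptyset$, and since $m_t=1$ is the last ltr-minimum, every entry following it — in particular $\pi_n$ — belongs to $B_t$. Thus $x\in C_{\ell,t}$ for a well-defined row index $\ell$. I would then simply invoke Proposition~\ref{proposition_cell_legal_insertion_132}, which states that $\Do$ applied to $C_{i,t}$ yields a $132$-sortable permutation if and only if $\ell>i$, while $\Co$ applied to $C_{i,t}$ yields a $132$-sortable permutation if and only if $\ell\le i$. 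Since the conditions $\ell>i$ and $\ell\le i$ are mutually exclusive and exhaustive, exactly one of the two operations succeeds, which is precisely the assertion of the corollary.

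Because all the genuine combinatorial content has already been established in Propositions~\ref{proposition_cell_legal_insertion_132} and~\ref{proposition_empty_cell_132} (and ultimately in the characterization $\Sort(132)=\Perm(2314,\mu)$ of Theorem~\ref{theorem_mesh_patterns_necessary_132} together with Lemma~\ref{lemma_inversion_in_a_cell_132}), the corollary is essentially a bookkeeping step and presents no real obstacle. The only point demanding a moment's care is the verification just sketched that $\pi_n$ indeed sits in the last vertical strip, so that the row index $\ell$ governing Proposition~\ref{proposition_cell_legal_insertion_132} is well defined; once this is in place, the dichotomy $\ell>i$ versus $\ell\le i$ drives the whole argument.
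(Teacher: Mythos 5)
Your proposal is correct and follows exactly the route the paper intends: the corollary is stated there as an immediate consequence of Propositions~\ref{proposition_cell_legal_insertion_132} and~\ref{proposition_empty_cell_132}, with the empty-cell case handled by the latter (only $\Do$ is meaningful and it always succeeds) and the nonempty case by the mutually exclusive, exhaustive dichotomy $\ell>i$ versus $\ell\le i$ of the former. Your added check that $\pi_n$ lies in the last vertical strip (so the row index $\ell$ is well defined) is a sensible explicit verification of a hypothesis the paper leaves implicit.
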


As a consequence of Propositions~\ref{proposition_cell_legal_insertion_132} and~\ref{proposition_empty_cell_132}, every~$132$-sortable permutation can be constructed inductively by repeatedly inserting a new rightmost element either as a new minimum or by performing~$\Do$ and~$\Co$, according to the rules of Propositions~\ref{proposition_cell_legal_insertion_132}. In particular, given a~$132$-sortable permutation~$\pi$ with~$k$ active cells, then~$k+1$ $132$-sortable permutations are produced this way (one for each active cell and one when the new minimum is inserted). Using the generating tree terminology, these are the \textit{children} of~$\pi$. Understanding the distribution of active cells of~$132$-sortable permutations would lead to a generating tree for~$\Sort(132)$, which could be used directly to find its enumeration. So far we were not able to fulfill this task, which is left as an open problem.

\begin{openproblem}
Given~$n\ge 1$ and~$k\ge 0$, compute the number of~$132$-sortable permutations with~$k$ active cells. Moreover, given a~$132$-sortable permutation~$\pi$ with~$k$ active cells, compute the number of active cells of each child of~$\pi$.
\end{openproblem}

Instead of using the generating tree approach, we wish to exploit the grid structure of~$132$-sortable permutations in order to determine a bijection with a class of pattern-avoiding {\rgfs}, ultimately obtaining the desired enumeration of~$\Sort(132)$.

Let~$\pi=\pi_1\cdots\pi_n$ be a permutation with~$t$ ltr-minima~$m_1,\dots,m_t$ and set~$m_0=+\infty$. Define the map~$\eta$ by setting~$\eta(\pi)=r_1\cdots r_n$, where~$r_i=j$ if~$m_{j}\le\pi_i < m_{j-1}$. An alternative description of~$\eta(\pi)$ is the following: scan the permutation~$\pi$ from left to right and record the index of the horizontal strip that contains the current element, including the ltr-minima in the corresponding strips. An example of this construction is illustrated in Figure~\ref{figure_grid_dec_132}. It is easy to realize that~$\eta$ is defined for any permutation and that~$\eta(\pi)$ is a {\rgf}. The next theorem asserts that if we restrict to~$132$-sortable permutations, then~$\eta$ is a bijection between~$\Sort_n(132)$ and~$\RGF_n(12231)$. First a useful lemma concerning pattern avoidance on {\rgfs}.

\begin{lemma}\label{lemma_RGF_prop}
Let~$w=w_1w_2\cdots w_k$ be a sequence of positive integers. Let~$w'=\std(w)$ be the standardization\footnote{Recall that~$w'$ is obtained by replacing all the occurrences of the smallest integer of~$w$ with~$1$, all the occurrences of the second smallest integer with~$2$ and so on.} of~$w$ and suppose that~$w'_1=k$, for some~$k\ge 1$. Let~$R$ be a {\rgf}. Then~$w'\le R$ if and only if~$12\dots(k-1)w'\le R$.
\end{lemma}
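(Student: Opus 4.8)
The plan is to dispose of the easy implication first and then concentrate on the forward direction, where the defining property of restricted growth functions does all the work.

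First I would record a structural observation about $w'$. Since $\std(w)$ is a Cayley permutation of length $k$ and its first letter equals $k$, its maximum is at least $k$; but a Cayley permutation of length $k$ has maximum at most $k$, so $\max(w')=k$ and every value of $[k]$ occurs exactly once. Hence $w'$ is genuinely a permutation of $[k]$ with $w'_1=k$ its strict maximum, and $w'_2\cdots w'_k$ is a permutation of $[k-1]$. The implication ``$12\cdots(k-1)w'\le R\Rightarrow w'\le R$'' is then immediate: $w'$ is exactly the suffix of the word $12\cdots(k-1)w'$, so $w'\le 12\cdots(k-1)w'$, and transitivity of pattern containment finishes it.

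For the converse, suppose $R=R_1\cdots R_m$ contains $w'$, realized at positions $p_1<\cdots<p_k$. Let $V_1<V_2<\cdots<V_k$ be the distinct values of $R$ appearing in this occurrence, so that $R_{p_j}$ takes the value $V_{w'_j}$; in particular $R_{p_1}=V_k$ is the largest, because $w'_1=k$. The goal is to prepend an increasing run realizing the prefix $12\cdots(k-1)$ while respecting the equalities hidden in the word $12\cdots(k-1)w'$: there the prefix letter $v$ coincides with the letter $v$ occurring inside $w'$, so an occurrence in $R$ must place, before $p_1$, an increasing sequence of positions whose values are exactly $V_1,\dots,V_{k-1}$, in this order.

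This is precisely what the {\rgf} condition provides. Writing $f(v)$ for the position of the first occurrence of the value $v$ in $R$, the inequality $R_{i+1}\le 1+\max(R_1,\dots,R_i)$ forces the running maximum to increase in unit steps, so $f(1)<f(2)<\cdots<f(\max(R))$ and, since $V_k$ occurs at $p_1$, all smaller values (hence in particular $V_1<\cdots<V_{k-1}$) have already appeared strictly before $p_1$. Setting $q_v=f(V_v)$ for $v=1,\dots,k-1$ therefore yields positions $q_1<\cdots<q_{k-1}<p_1$ with $R_{q_v}=V_v$. I would then check that $q_1\cdots q_{k-1}p_1\cdots p_k$ is an occurrence of $12\cdots(k-1)w'$: the prefix values $V_1<\cdots<V_{k-1}$ are increasing and match $1,\dots,k-1$, the suffix is the given occurrence of $w'$, and the value-preserving map $i\mapsto V_i$ is strictly increasing, so it respects both the strict inequalities and the equalities of the word pattern. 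The one point demanding care — and the step I expect to be the genuine obstacle — is this last verification that the equality constraints of the word $12\cdots(k-1)w'$ (where each prefix letter is repeated inside $w'$) are met; the choice of the first occurrences $f(V_v)$, together with the distinctness $V_1<\cdots<V_{k-1}<V_k$, is exactly what guarantees it.
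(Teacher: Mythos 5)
Your proof is correct, and there is nothing in the paper to compare it against: Lemma~\ref{lemma_RGF_prop} is stated there with no proof at all. The mechanism you use is exactly the one the authors leave implicit: in a {\rgf} the running maximum grows by unit steps, so the first occurrences of the values $1,2,\dots,\max(R)$ appear in increasing order of value, every value smaller than $R_{p_1}$ already occurs strictly to the left of $p_1$, and the strictly increasing relabelling $v\mapsto V_v$ transports both the inequality and the equality constraints of the concatenated pattern.

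One caveat, which matters because your text would sit next to the paper's applications of the lemma. The statement has a notational clash: $k$ names both the length of $w$ and the value $w'_1$. You resolve it by taking the two equal, whence your opening observation that $w'$ is a permutation of $[k]$ with its maximum first. But the paper invokes the lemma for $w'=2231$ (to conclude $\RGF(12231)=\RGF(2231)$) and for $w'=213$ (to conclude $\Ascseq(213)=\Ascseq(1213)$), where $w'_1=k$ is neither the length nor the maximum of $w'$; the intended hypothesis is only that $w'$ is a standardized word whose first letter is $k$. Fortunately your argument never actually uses the permutation structure: the assertion that $R_{p_1}=V_k$ is the largest value of the occurrence is never needed, only that the values indexed below it, $V_1<\dots<V_{k-1}$, have first occurrences strictly before $p_1$, which your {\rgf} argument yields regardless. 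So after deleting the opening structural observation and re-indexing the distinct values of the occurrence as $V_1<\dots<V_m$ with $m=\max(w')\ge k$ (so that $R_{p_j}=V_{w'_j}$ and $R_{p_1}=V_k$), your proof covers, word for word, the generality in which the paper actually uses the lemma.
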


\begin{theorem}\label{theorem_bij_12231}
The map~$\eta$ defined above is injective and the image of~$\Sort_n(132)$ through~$\eta$ is~$\RGF(12231)$.
\end{theorem}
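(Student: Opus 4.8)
We must prove that the map $\eta:\Sort_n(132)\to\RGF_n$ is injective with image exactly $\RGF_n(12231)$.

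The plan is to establish the two assertions---injectivity of $\eta$ and the identity $\eta(\Sort_n(132))=\RGF_n(12231)$---by combining the grid geometry of Section~\ref{section_grid_132} with the recursive construction of $\Sort(132)$ encoded by active cells. A preliminary observation is that $\eta(\pi)$ is always an {\rgf}, and that from $w=\eta(\pi)$ one already recovers the number $t$ of ltr-minima (the largest letter of $w$) and the values $m_1>\cdots>m_t$ (from the multiplicities of the letters, since the band of values carried by horizontal strip $j$ has size equal to the number of occurrences of $j$ in $w$). Thus the only data left to reconstruct is the value carried by each position \emph{within} its horizontal strip, and the whole theorem amounts to showing that $132$-sortability rigidifies this choice in a way that is mirrored, letter by letter, by $12231$-avoidance.

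First, as a transparent warm-up that also isolates the combinatorial heart of the argument, I would prove the inclusion $\eta(\Sort_n(132))\subseteq\RGF_n(12231)$ directly. Suppose $w=\eta(\pi)$ contained an occurrence of $12231$, i.e.\ positions $p_1<p_2<p_3<p_4<p_5$ with $w_{p_1}=w_{p_5}=c$, $w_{p_2}=w_{p_3}=b$ and $w_{p_4}=d$, where $c<b<d$. Reading these off as entries of $\pi$ places them in horizontal strips $c,b,b,d$; since a larger strip index means smaller values, the two strip-$b$ entries lie strictly below the strip-$c$ entry and strictly above the strip-$d$ entry. Using Lemma~\ref{lemma_ltr_min_dec_132} to control the relative order of entries across vertical strips, together with Lemma~\ref{lemma_no_switch_component_132}, I would locate an occurrence of $2314$ among these entries and a suitable ltr-minimum, contradicting $\pi\in\Perm(2314,\mu)=\Sort(132)$ (Theorem~\ref{theorem_mesh_patterns_necessary_132}). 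Lemma~\ref{lemma_RGF_prop} is exactly the tool that lets me assume the occurrence is in reduced form, prepending the increasing prefix $12\cdots(c-1)$, so that the strip-$c$ entry may be taken as close as possible to the rest of the configuration, which is what forces the forbidden pattern to surface.

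For injectivity and surjectivity together I would argue that $\eta$ is an isomorphism of generating trees. On the permutation side, Corollary~\ref{corollary_recursive_construction_132} says that every $\pi\in\Sort(132)$ with $t$ ltr-minima has exactly one child per active cell $C_{i,t}$ (obtained by the forced one of $\Do$ and $\Co$, according to Propositions~\ref{proposition_cell_legal_insertion_132} and~\ref{proposition_empty_cell_132}) together with one child obtained by inserting a new global minimum; by Lemma~\ref{lemma_prefix_sortable} deleting the last entry inverts this, so $\Sort_n(132)$ sits at level $n$ of the tree. Under $\eta$, inserting into the active cell $C_{i,t}$ appends the letter $i$ to $w=\eta(\pi)$ (the new entry remains in horizontal strip $i$), while inserting a new minimum appends the letter $t+1$. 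I would then set up the parallel tree on $\RGF(12231)$ whose children of $w$ are the words $wc$ that still avoid $12231$, and prove the key equivalence: \emph{for $c\le t$, the word $wc$ avoids $12231$ if and only if the cell $C_{c,t}$ is active, while $w(t+1)$ always avoids $12231$}. Granting this, and noting that both roots are the length-one object $1$, a size- and child-preserving induction makes $\eta$ a bijection of the two trees, yielding injectivity, surjectivity, and the image statement at once.

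The main obstacle is precisely this equivalence. Since $w$ already avoids $12231$ by induction, appending $c$ creates a new occurrence exactly when $w$ contains a subword $c\,b\,b\,d$ with $c<b<d$, the trailing $c$ playing the second role of the letter $1$; I must show that the existence of such a subword is equivalent to the failure of one of the two activeness conditions for $C_{c,t}$---namely the presence of a nonempty cell strictly to the southwest of $C_{c,t}$, or the non-monotonicity of $\bigcup_{j>c}C_{j,t}$. Translating between the value language of strips and the position language of cells is the delicate point, and it is where Lemma~\ref{lemma_no_switch_component_132}, the property that $\core(\pi)$ avoids $213$, and the mesh pattern $\mu$ all re-enter; the subtlety is compounded by having to check that the forced choice between $\Do$ and $\Co$ in Proposition~\ref{proposition_cell_legal_insertion_132} is exactly the one whose output has $\eta$-image $wc$. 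Carrying out this correspondence cleanly, ideally by invoking Lemma~\ref{lemma_RGF_prop} once more to reduce every occurrence of $12231$ to a canonical shape before comparing it with the geometric conditions, is the crux on which the whole bijection rests.
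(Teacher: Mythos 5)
Your proposal follows essentially the same route as the paper's proof: the inclusion $\eta(\Sort_n(132))\subseteq\RGF_n(12231)$ via Lemma~\ref{lemma_RGF_prop} and the production of an occurrence of $2314$, injectivity from the forced choice of $\Do$/$\Co$ in Corollary~\ref{corollary_recursive_construction_132}, and the converse inclusion by matching letter-appending in the {\rgf} with insertion into active cells of the last vertical strip. The equivalence you single out as the crux is precisely what the paper's proof carries out: its surjectivity argument shows, by the same two-case analysis of non-activeness that you outline (a nonempty cell strictly southwest of $C_{i,t}$, or an occurrence of $213$ created in $\core(\pi)$), that inserting into a non-active cell forces an occurrence of $2231$ in the {\rgf}, which together with your first part yields the bijection.
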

\begin{proof}
By Lemma~\ref{lemma_RGF_prop}, we have~$\RGF(12231)=\RGF(2231)$. We start by proving that, for each~$132$-sortable permutation~$\pi$, $\eta(\pi)$ avoids~$2231$. Suppose, on the contrary, that~$\eta(\pi)$ contains an occurrence~$r_{i_1}r_{i_2}r_{i_3}r_{i_4}$ of~$2231$. Consider the leftmost occurrence~$r_j$ of the integer~$r_{i_1}$ in~$\pi$ (note that~$j\le i_1$). Then~$r_j$ corresponds through~$\eta$ to the ltr-minimum of the horizontal strip of index~$r_{i_1}$ in~$\pi$. Hence the elements~$\pi_j\pi_{i_2}\pi_{i_3}\pi_{i_4}$ form an occurrence of~$2314$ in~$\pi$\footnote{Note that the value order between elements of~$\pi$ coded by distinct values in~$\eta(\pi)$ is the reverse of their order in~$\eta(\pi)$.}, which contradicts Theorem~\ref{theorem_mesh_patterns_necessary_132}.

That~$\eta$ is injective on~$\Sort_n(132)$ is a consequence of Corollary~\ref{corollary_recursive_construction_132}. Moreover, using the construction of Proposition~\ref{proposition_cell_legal_insertion_132}, we will show that~$\eta(\Sort_n(132))=\RGF(2231)$. Given a {\rgf}~$R=r_1r_2\cdots r_n$, construct the permutation~$\pi_R$ by scanning~$R$ from left to right and, when the current element is~$r_\ell$, insert a new rightmost element~$\pi_\ell$ in the following way (suitably rescaling the previous elements when necessary):

\begin{itemize}
\item when~$r_\ell$ is the first occurrence of an integer in~$R$ then~$\pi_\ell =1$;
\item otherwise, $\pi_\ell$ is inserted in the horizontal strip~$H_{r_\ell}$, according to the rules of Proposition~\ref{proposition_cell_legal_insertion_132}.
\end{itemize}

We now wish to prove that, if the {\rgf}~$R$ avoids~$2231$, then~$\pi_R$ is a~$132$-sortable permutation such that~$\eta(\pi_R)=R$. It is easy to see that~$\eta(\pi_R)=R$, as a direct consequence of the definition of~$\eta$. Since insertions inside active cells are always allowed, what remains to be shown is that each element is in fact inserted into an active cell. We now argue by contradiction, and suppose that~$y$ is the first element that is inserted into a nonactive cell~$C_{i,j}$. According to the definition of an active cell, there are two cases to consider.

\begin{enumerate}
\item If there exists a nonempty cell~$C_{u,v}$, with~$u>i$ and~$v<j$, then, given any~$x\in C_{u,v}$, the elements of~$R$ corresponding to~$m_uxm_jy$ form an occurrence of~$2231$, which is forbidden.
\item Suppose that inserting a new rightmost element according to~$\Do$ creates an occurrence~$bay$ of~$213$ that does not involve any ltr-minima. Let~$H_u$ be the horizontal strip that contains~$b$ and let~$H_v$ be the horizontal strip that contains~$a$. Note that~$v\ge u>i$. If~$v>u$, then the elements corresponding to~$m_ubay$ in~$R$ form an occurrence of~$2231$, which is again a contradiction. On the other hand, if~$v=u$, then~$a$ belongs to the same horizontal strip of~$b$, so, since~$a<b$, $a$ was inserted according to~$\Do$. Therefore, by Proposition~\ref{proposition_cell_legal_insertion_132} and our choice of~$y$, the element~$a'$ that precedes~$a$ in~$\core(\pi)$ belongs to~$H_w$, for some~$w>u$. As a consequence, the elements~$m_uba'c$ correspond to an occurrence of~$2231$ in~$R$, which is impossible.
\end{enumerate}
\end{proof}

\begin{corollary}\label{corollary_enumeration_bij_132}
For every natural number~$n$, we have:
$$
|\Sort_n(132)| = |\RGF_n(12231)|.
$$
\end{corollary}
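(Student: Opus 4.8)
The plan is to read off the statement directly from Theorem~\ref{theorem_bij_12231}, since all the substantive work has already been carried out there. That theorem asserts two things about the map~$\eta$: first, that~$\eta$ is injective on~$\Sort_n(132)$; and second, that the image~$\eta(\Sort_n(132))$ equals~$\RGF_n(12231)$. Together these say precisely that the restriction of~$\eta$ to~$\Sort_n(132)$ is a bijection onto~$\RGF_n(12231)$, and a bijection between two finite sets forces them to have the same cardinality. So the entire proof amounts to the single observation that an injective map whose image is a prescribed set is a bijection onto that set.

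The one point worth checking explicitly is that~$\eta$ is length-preserving, so that the bijection respects the grading by~$n$. By definition~$\eta(\pi)=r_1\cdots r_n$ records, for each of the~$n$ positions of~$\pi$, the index of the horizontal strip containing~$\pi_i$; hence a permutation of length~$n$ is sent to a word of length~$n$, and the restriction of~$\eta$ to~$\Sort_n(132)$ indeed lands in~$\RGF_n(12231)$ rather than in~$\RGF(12231)$ at large. With this remark in place, the cardinality identity~$|\Sort_n(132)|=|\RGF_n(12231)|$ is immediate for every~$n$.

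The genuine difficulty is entirely contained in Theorem~\ref{theorem_bij_12231} and in the structural results leading up to it, not in the corollary itself. In particular, establishing surjectivity of~$\eta$ onto~$\RGF_n(12231)$ required the recursive reconstruction of a~$132$-sortable permutation~$\pi_R$ from an arbitrary~$2231$-avoiding {\rgf}~$R$, together with the verification that each insertion dictated by~$R$ falls into an active cell (via Proposition~\ref{proposition_cell_legal_insertion_132} and the case analysis ruling out occurrences of~$2231$). Once that machinery is granted, as the corollary permits, there is no remaining obstacle: the result is a formal consequence of the principle that an injection whose image is the whole target is a bijection.
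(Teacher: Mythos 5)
Your proof is correct and is essentially the paper's own argument: the corollary is an immediate consequence of Theorem~\ref{theorem_bij_12231}, since an injective, length-preserving map whose image is~$\RGF_n(12231)$ is a bijection onto that set. Your explicit remark that~$\eta$ preserves length (so the grading by~$n$ is respected) is a worthwhile clarification, given that the theorem's statement writes~$\RGF(12231)$ without the subscript, but it does not change the route.
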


The enumeration of~$\RGF(12231)$ is an immediate consequence of the results proved in~\cite{JM}, where the authors determine the Wilf-equivalence class of~$12231$ (see Table~\ref{table_wilf_class_12231}). Amongst the Wilf-equivalent patterns, $12332$ can be easily enumerated. Indeed~$1221$-avoiding {\rgfs} are enumerated by the Catalan numbers (see again~\cite{JM}). Moreover, as a result of Theorem 31 in~\cite{JM}, we immediately obtain that:
$$
|\Sort_n(132)|=\sum_{k=0}^{n-1}\binom{n-1}{k}\mathfrak{c}_k,
$$
that is sequence A007317 in~\cite{Sl}.

\begin{table}
\centering
\begin{tabular}{lcr}
\toprule
\textbf{Pattern}~$p$ & \textbf{Formula} & \textbf{OEIS}\\
\midrule
12123, 12132, 12134, 12213 &  & \\
12231, 12234, 12312, 12321 & $\displaystyle{|\RGF_n(p)|=\sum_{k=0}^{n-1}\binom{n-1}{k}\mathfrak{c}_k}$ & A007317\\
12323, 12331, 12332 & & \\
\bottomrule
\end{tabular}
\caption[A Wilf-class of pattern-avoiding {\rgf}s enumerated by the binomial transform of Catalan numbers.]{The eleven patterns of the Wilf-class containing~$12231$.}\label{table_wilf_class_12231}
\end{table}

\section{Combinatorial proofs for pattern-avoiding restricted growth functions}\label{section_enum_132}

The problem of enumerating~$132$-sortable permutations has been solved in the previous section by means of a bijection~$\eta$ between~$\Sort(132)$ and~$\RGF(12231)$. The enumeration of~$\RGF(12231)$ is a corollary of the (much more general) theory developed by Jel\'{\i}nek and Mansour in~\cite{JM}. However, although~$\eta$ has a neat description ($\eta(\pi)$ records the index of the horizontal strip that contains each element of~$\pi$, from left to right), it is not enough to have a clear understanding of why~$\Sort(132)$ is enumerated by the binomial transform of Catalan numbers.

We choose to devote this section to a deeper investigation on the combinatorics underlying some related sets of pattern-avoiding {\rgfs}, aiming to find a more transparent connection with~$132$-sortable permutations. Ideally, we would like to provide a link between~$\Sort(132)$ and some combinatorial objects that immediately reveals why this counting sequence arises. We start by showing a (presumably) new bijection between~$\RGF_n(1221)$ and the set~$\Dyck_n$ of Dyck paths of semilength~$n$. Then we define new bijections between~$\RGF(y)$, with~$y$ pattern in the Wilf-equivalence class of~$12231$, and other families of combinatorial objects, such as labeled Motzkin paths and pattern-avoiding permutations. Finally, we obtain a bijective argument that clearly justifies the enumeration of~$\Sort(132)$ by showing a bijection between~$\RGF(12231)$ and~$\RGF(12321)$.

\subsection{\texorpdfstring{Pattern~$1221$}{Pattern 1221}}

The following lemma can be found in \cite{CDDGGPS}.

\begin{lemma}[\cite{CDDGGPS}, Lemma 6.2]\label{lemma_char_1221}
Let~$R$ be a \rgf. Then~$R\in\RGF(1221)$ if and only if the subword~$w(R)$ obtained by removing the first occurrence of each letter in~$R$ is weakly increasing.
\end{lemma}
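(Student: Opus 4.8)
The plan is to prove the two implications separately, both by contraposition, relying on a single structural feature of restricted growth functions. Since $R=r_1\cdots r_n\in\RGF$ satisfies $r_{i+1}\le 1+\max\{r_1,\dots,r_i\}$, the running maximum increases by at most one at each step; consequently the first occurrences of the distinct letters of $R$ appear in strictly increasing order of value. Writing $f_v$ for the position of the (leftmost) first occurrence of the value $v$ in $R$, this says that $a<b$ forces $f_a<f_b$ whenever both values occur in $R$. I would record this observation first, as it is the engine of the whole argument. Recall also that an occurrence of the word pattern~$1221$ in~$R$ is a quadruple of positions $i_1<i_2<i_3<i_4$ with $r_{i_1}=r_{i_4}<r_{i_2}=r_{i_3}$, and that $w(R)$ is obtained by deleting exactly the positions $f_v$ (one per value).

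For the contrapositive of the ``only if'' direction, suppose $w(R)$ is not weakly increasing. Then there are two surviving positions $j_2<j_3$ of~$R$ (positions that are not first occurrences, hence not deleted) with $r_{j_2}=b>a=r_{j_3}$. Because $j_2$ is a non-first occurrence of~$b$ we have $f_b<j_2$, and because $a<b$ the structural observation gives $f_a<f_b$. Thus the positions $f_a<f_b<j_2<j_3$ carry the values $a,b,b,a$ with $a<b$, which is an occurrence of~$1221$ in~$R$. Hence $R\notin\RGF(1221)$.

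For the ``if'' direction I would argue the reverse contrapositive: assume $R$ contains an occurrence $i_1<i_2<i_3<i_4$ of~$1221$, with $r_{i_1}=r_{i_4}=a$ and $r_{i_2}=r_{i_3}=b$, $a<b$. The point is that the two rightmost positions survive into $w(R)$. Indeed $i_4$ is not a first occurrence of~$a$ since $i_1<i_4$ already carries the value~$a$, and $i_3$ is not a first occurrence of~$b$ since $i_2<i_3$ already carries the value~$b$. Therefore both $r_{i_3}=b$ and $r_{i_4}=a$ appear in $w(R)$, in this left-to-right order, producing a descent $b>a$; so $w(R)$ is not weakly increasing. Combining the two contrapositives yields the equivalence.

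Both directions are short, and no computation is required. The step I would be most careful about is the bookkeeping of which positions survive the deletion defining $w(R)$: one must check that each relevant position is genuinely a non-first occurrence of its value, and then invoke $f_a<f_b$ from the increasing-first-occurrence property to place the two deleted first occurrences to the left of the surviving descent. This is the only place where the defining inequality of a restricted growth function is actually used, and it is what makes the characterization work.
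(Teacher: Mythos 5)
Your proof is correct: both contrapositives are sound, the bookkeeping of which positions survive into $w(R)$ is handled properly, and the key fact you isolate (that in a restricted growth function the first occurrences of the values $1,2,\dots$ appear in increasing order of value, since each prefix of an {\rgf} has value set $\{1,\dots,\max\}$) is exactly what makes the positions $f_a<f_b<j_2<j_3$ line up to give the occurrence $abba\simeq 1221$. Note that the thesis itself offers no proof of this lemma — it is quoted from~\cite{CDDGGPS} — so there is no in-paper argument to compare against; your self-contained argument is a valid proof and follows the standard route for this characterization.
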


An immediate consequence of Lemma~\ref{lemma_char_1221} is the following.

\begin{corollary}\label{corollary_active_sites_1221}
Let~$R=r_1\cdots r_n\in\RGF(1221)$ and~$M=\max(R)$. If~$R$ has no repeated elements let~$t=1$; otherwise let~$t$ be the maximum among repeated elements of~$R$. Then~$r_1\cdots r_nj\in\RGF(1221)$ if and only if~$t\le j\le M+1$.
\end{corollary}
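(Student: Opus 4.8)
The plan is to reduce the statement to Lemma~\ref{lemma_char_1221} by analysing how the auxiliary word~$w(R)$ changes when a new rightmost letter~$j$ is appended. First I would dispose of the restricted-growth-function condition on its own: writing~$R' = r_1 \cdots r_n j$, and noting that~$R$ is already a {\rgf} with~$\max(R) = M$, all the {\rgf} inequalities for the entries of~$R$ persist in~$R'$, so~$R'$ is a {\rgf} if and only if~$j \le 1 + \max\{r_1, \dots, r_n\} = M+1$. Thus the upper bound~$j \le M+1$ is forced by the {\rgf} requirement alone, independently of~$1221$-avoidance, and it remains to decide when~$R' \in \RGF(1221)$.

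Next I would compute~$w(R')$ in terms of~$w(R)$. Because~$R$ is a {\rgf}, the set of letters occurring in~$R$ is exactly~$\{1, \dots, M\}$, so the appended letter~$j$ is new precisely when~$j = M+1$. If~$j = M+1$, then~$j$ is a first occurrence and is deleted in forming~$w(R')$, so~$w(R') = w(R)$; if instead~$j \le M$, then the appended~$j$ is a later occurrence of an already-present letter and is retained, so~$w(R') = w(R)\, j$, the word~$w(R)$ with~$j$ adjoined at the end.

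The key observation is the identification of the last letter of~$w(R)$. Since~$R \in \RGF(1221)$, Lemma~\ref{lemma_char_1221} tells us that~$w(R)$ is weakly increasing; the letters occurring in~$w(R)$ are exactly the repeated values of~$R$, so the maximal, hence last, letter of~$w(R)$ equals~$t$, the largest repeated value (and when~$R$ has no repeated letters,~$w(R)$ is empty, matching the convention~$t = 1$). Applying Lemma~\ref{lemma_char_1221} to~$R'$ I then conclude: if~$j = M+1$, then~$w(R') = w(R)$ is weakly increasing, so~$R'$ avoids~$1221$ (and here~$j = M+1 \ge t$); if~$t \le j \le M$, then~$w(R') = w(R)\, j$ is still weakly increasing, since~$j$ is at least the last letter~$t$ of~$w(R)$, so~$R'$ avoids~$1221$; and if~$j < t$, then~$j \le M$ and~$w(R') = w(R)\, j$ ends in a strict descent, so~$R'$ contains~$1221$. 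Combining these with the {\rgf} bound~$j \le M+1$ yields that~$R' \in \RGF(1221)$ if and only if~$t \le j \le M+1$, as claimed.

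I do not expect a serious obstacle here, as the whole argument is a direct application of Lemma~\ref{lemma_char_1221}. The only points requiring slight care are verifying that the maximum of~$w(R)$ is precisely~$t$, handling the degenerate case where~$R$ has no repeated letters, and keeping the {\rgf} constraint and the~$1221$-avoidance constraint cleanly separated so that the two bounds~$j \ge t$ and~$j \le M+1$ assemble into the single range.
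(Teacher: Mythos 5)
Your proof is correct and follows exactly the route the paper intends: the paper states this corollary as an immediate consequence of Lemma~\ref{lemma_char_1221}, and your argument simply fills in those details (the {\rgf} bound giving~$j\le M+1$, the identification of the last letter of~$w(R)$ with~$t$, and the case split on whether~$j$ is a new or repeated letter). No gaps.
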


Using again the language of generating trees, we say that an integer~$j$ is an \textit{active site} of the {\rgf}~$R\in\RGF(1221)$ if by appending~$j$ at the end of~$R$ we get another {\rgf} in~$\RGF(1221)$, which is said to be a \textit{child} of~$R$. The set of active sites of~$R$ is the interval~$\lbrace t,t+1,\dots,M,M+1\rbrace$ due to Corollary~\ref{corollary_active_sites_1221}. Thus~$R$ has~$M+1-t+1$ active sites, where~$M$ and~$t$ are defined as in the corollary.

Now, recall from section~\ref{section_lattice_paths} that a \textit{double rise} in a Dyck path is an occurrence of the consecutive pattern~$\U\U$.

\begin{theorem}\label{theorem_enum_1221}
There is a bijection~$\psi:\RGF_n(1221)\to\Dyck_n$, such that the maximum of~$R\in\RGF_n(1221)$ equals one plus the number of double rises in the path~$\psi(R)$. As a consequence, denoting by~$f_{n,k}$ the number of elements in~$\RGF_n(1221)$ whose maximum is~$k$, we get that~$f_{n,k}=\narayana_{n,k}$, where~$\narayana_{n,k}$ is the~$(n,k)$-th Narayana number.
\end{theorem}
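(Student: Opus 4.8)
The plan is to show that $\RGF_n(1221)$ and $\Dyck_n$ are generated by one and the same succession rule, namely the rule $\Omega$ for Dyck paths recalled in Example~\ref{example_dyck_paths_new_peak}, and then to transport the statistic through the resulting generating-tree isomorphism.

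First I would read off the succession rule for $\RGF(1221)$ from Corollary~\ref{corollary_active_sites_1221}. Take $R\in\RGF_n(1221)$ with $M=\max(R)$ and $t$ as in that corollary, so that $R$ has exactly $L:=M-t+2$ active sites, forming the interval $\lbrace t,t+1,\dots,M+1\rbrace$; use $L$ as the label of $R$. I would then compute the label of each child: appending $j=M+1$ produces a new maximum and creates no repetition, giving $M'=M+1$ and $t'=t$, hence label $L+1$; appending $j$ with $t\le j\le M$ repeats the already-present value $j$, so $M'=M$ and $t'=j$, giving label $M-j+2$, which runs through $L,L-1,\dots,2$ as $j$ runs from $t$ to $M$. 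Thus the production is $(L)\longrightarrow (2)(3)\cdots(L)(L+1)$, and the root $R=1$ has $M=t=1$, i.e.\ label $(2)$. This is exactly $\Omega$, so $\RGF_n(1221)$ and $\Dyck_n$ are in bijection via the label-preserving isomorphism $\psi$ of the two generating trees.

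Next I would track the two statistics in parallel. On the RGF side, $\max$ strictly increases (by one) precisely along the child of label $L+1$ and is preserved by every other child. On the Dyck side I would check that the number of double rises increases by one precisely along the child of label $k+1$ and is unchanged otherwise: recalling from $\Omega$ that the label-$(k+1)$ child is obtained by inserting a new peak $\U\D$ immediately before the first $\D$ of the last descending run, this insertion turns the local factor $\U\D^{d}$ into $\U\U\D^{d+1}$, creating a single new occurrence of $\U\U$, whereas the remaining insertions (at the end, or before a later $\D$) produce a factor $\D\U\D$ and create no new double rise. Since $\psi$ matches the unique label-$(L+1)$ child on each side, induction along the tree — with base case $R=1\leftrightarrow\U\D$, where $\max=1$ and there are $0$ double rises — yields $\max(R)=1+(\text{number of double rises of }\psi(R))$.

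Finally, for the enumerative consequence I would note that a Dyck path of semilength $n$ with $p$ peaks has exactly $n-p$ double rises (each maximal ascending run of length $\ell$ contributes $\ell-1$ of them, and the run lengths sum to $n$ over the $p$ runs). Hence $\max(R)=k$ corresponds to $\psi(R)$ having $n-k+1$ peaks, so $f_{n,k}$ equals the number of Dyck paths of semilength $n$ with $n-k+1$ peaks, which is $\narayana_{n,n-k+1}=\narayana_{n,k}$ by the symmetry of the Narayana numbers. The main obstacle I anticipate is the bookkeeping in the first step — verifying that the active-site interval translates cleanly into the production $(L)\longrightarrow (2)\cdots(L+1)$, in particular the case analysis on whether the appended value repeats an existing letter or introduces a new maximum — together with confirming in the second step that a new double rise is created only by the top child; once these are pinned down, the statistic transfer and the Narayana count follow formally.
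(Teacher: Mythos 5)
Your proposal is correct and follows essentially the same route as the paper: both arguments identify the succession rule of~$\RGF(1221)$ (via Corollary~\ref{corollary_active_sites_1221}) with the rule~$\Omega$ for Dyck paths from Example~\ref{example_dyck_paths_new_peak}, match children through the labels, and observe that the maximum increases exactly along the label-$(L+1)$ child while a double rise is created exactly along the label-$(k+1)$ child. The only cosmetic difference is at the end, where you rederive the count of Dyck paths by double rises from the peak statistic and Narayana symmetry, whereas the paper simply cites this known fact.
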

\begin{proof}
Recall from Example~\ref{example_dyck_paths_new_peak} that every Dyck path~$\tilde{P}$ of semilength~$n+1$ is obtained (in a unique way) from a Dyck path~$P$ of semilength~$n$ by inserting a peak~$\U\D$ either before a~$\D$-step in the last descending run of~$P$ or after the last~$\D$-step. This construction gives rise to a well known generating tree for Dyck paths, such that the number of active sites of a path~$P$ is~$k+1$, where~$k$ is the length of the last descending run of~$P$. The path~$\tilde{P}$ is therefore a child of~$P$ in the associated generating tree. Our goal is to define (in a recursive fashion) a bijection~$\alpha$ between the generating tree of~$\RGF(1221)$ and the generating tree of Dyck paths. In other words, we wish to show that~$\alpha$ is a bijection preserving both the size (that is, a {\rgf} of length~$n$ is mapped to a Dyck path of semilength~$n$) and the number of active sites.

We start by setting~$\alpha(1)=\U\D$. Note that~$1$ has two active sites, since the children of~$1$ are~$11$ and~$12$. The path~$\U\D$ has two active sites as well, since its children are~$\U\U\D\D$ and~$\U\D\U\D$. Now let~$R=r_1\cdots r_n$ and~$\alpha(R)=p_1\cdots p_{2n}$, for some~$n\ge 1$. Suppose that the number of active sites of both~$R$ and~$\alpha(R)$ is~$k$. Let~$M=\max(R)$ and let~$t$ be the maximum element of~$R$ that is not an ltr-maximum of~$R$. By Corollary~\ref{corollary_active_sites_1221}, the active sites of~$R$ form the interval~$\lbrace t,t+1,\dots,M,M+1\rbrace$, with~$M+1-t+1=k$ by hypothesis. Moreover, the length of the last descending run of~$\alpha(R)$ is~$k-1$. We shall define~$\alpha$ on the children of both~$R$ and~$\alpha(R)$, and show that the number of active sites is still preserved.

\begin{itemize}
\item The child of~$R$ corresponding to the active site~$M$ is mapped to the path obtained from~$\alpha(R)$ by inserting a new peak~$\U\D$ immediately after the last~$\D$-step of~$\alpha(R)$. Here the active sites of the resulting sequence are~$M+1-M+1=2$. The same holds for the resulting Dyck path, since the length of its last descending run is~$1$.

\item For~$i=1,\dots,M-t$, the child of~$R$ corresponding to the active site~$t+i-1$ is mapped to the path obtained from~$\alpha(R)$ by inserting a new peak~$\U\D$ immediately after the~$i$-th~$\D$ step of the last descending run. Then the number of active sites of the resulting {\rgf} is then~$(M+1)-(t+i-1)+1=M-t-i3$, which is equal to one plus the length of the last descending run of the resulting path, that is~$(M+1-t)-i+1$.

\item Finally, the child of~$R$ corresponding to the active site~$M+1$ is mapped to the path obtained from~$\alpha(R)$ by inserting a new peak~$\U\D$ immediately before the first~$\D$-step of the last descending run of~$\alpha(R)$. In this case the number of active sites of the resulting {\rgf} is~$M+2-t+1=k+1$. Moreover, the number of active sites of the resulting path is also~$k+1$, since the length of its maximal suffix of~$\D$-steps is increased by one with respect to~$\alpha(R)$.
\end{itemize}

Therefore~$\alpha$ is a bijection between the two generating trees, as desired. To conclude, observe that the number of double rises in~$\alpha(R)$ is equal to~$\max(R)-1$. Indeed, by definition of~$\alpha$, each double rise in~$\alpha(R)$ corresponds to the first occurrence of an integer in~$R$, except for the first occurrence of~$1$ (which does not create a double rise). It is well known (see for example~\cite{Deu}) that the number of Dyck paths of semilength~$n$ with~$k-1$ double rises is equal to~$\narayana_{n,k}$, which gives the desired equality~$f_{n,k}=\narayana_{n,k}$.
\end{proof}

\begin{corollary}\label{corollary_enumeration_12332}
Let~$n\ge 0$. Denote by~$g(n,k)$ the number of elements in~$\RGF_n(12332)$ whose maximum is~$k$, for~$1\le k\le n$. Then:
$$
g(n+1,k+1)=\sum_{j=k}^{n}\binom{n}{j}\narayana_{j,k}.
$$
\end{corollary}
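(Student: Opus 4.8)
The plan is to reduce the enumeration of $12332$-avoiding \rgfs\ to that of $1221$-avoiding \rgfs, for which Theorem~\ref{theorem_enum_1221} already supplies the refined count $\narayana_{j,k}$, and then to account separately for the letters equal to the minimum value~$1$ by a binomial factor.

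\emph{Step 1 (reduction to $1221$).} First I would observe that $12332$ is obtained from $1221$ by prepending a new smallest value: $1,b,c,c,b$ is an occurrence of $12332$ whenever $1<b<c$, since its ranks are $1,2,3,3,2$. Because the first letter of any \rgf\ equals $1$, every occurrence of $1221$ in a \rgf\ $R$ whose bottom value is at least $2$ is automatically completed to an occurrence of $12332$ by the leading~$1$. Conversely, the four non-minimal entries of any occurrence $v_1v_2v_3v_3v_2$ of $12332$ (with $v_1<v_2<v_3$, so $v_2,v_3\ge 2$) form an occurrence of $1221$ among the letters $\ge 2$. Hence I claim that $R\in\RGF(12332)$ if and only if the subword $R^{\ge2}$ consisting of the letters of $R$ that are at least~$2$ avoids $1221$.

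\emph{Step 2 (decomposition).} Let $R\in\RGF_{n+1}(12332)$ have maximum $k+1$, and let $m$ be the number of $1$'s in $R$. Since $R$ is a \rgf, the first occurrences of $2,\dots,k+1$ appear in increasing order, so $R^{\ge2}$ begins with $2$, satisfies the restricted-growth condition, and uses every value of $\{2,\dots,k+1\}$; therefore $S:=\std(R^{\ge2})$ is a \rgf\ of length $n+1-m$ with maximum $k$, and by Step~1 it avoids $1221$. Conversely, from any $S\in\RGF_{n+1-m}(1221)$ with maximum $k$ I rebuild $R$ by choosing the positions of the $m$ copies of $1$: position~$1$ must be a $1$ (every \rgf\ starts with $1$), and the remaining $m-1$ ones may go in any of the other $n$ positions, giving $\binom{n}{m-1}$ choices, with $S{+}1$ filling the remaining slots in order. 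The crucial point to verify is that \emph{every} such interleaving is admissible, i.e.\ the result is again a \rgf\ avoiding $12332$: reinserting minima can neither violate restricted growth nor disturb the first-occurrence order of $2,\dots,k+1$, so $R$ is a \rgf, and since $R^{\ge2}=S{+}1$ still avoids $1221$, Step~1 guarantees $R$ avoids $12332$. This makes $R\mapsto\big(S,\{\text{positions of }1\}\big)$ a bijection onto the admissible pairs.

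\emph{Step 3 (counting).} Summing over the number $m$ of minima and invoking Theorem~\ref{theorem_enum_1221} to write the number of length-$(n+1-m)$, maximum-$k$, $1221$-avoiding \rgfs\ as $\narayana_{n+1-m,k}$, I obtain
$$
g(n+1,k+1)=\sum_{m\ge1}\binom{n}{m-1}\,\narayana_{n+1-m,k}.
$$
The substitution $j=n+1-m$ converts $\binom{n}{m-1}$ into $\binom{n}{n-j}=\binom{n}{j}$ and the effective range $1\le m\le n+1-k$ into $k\le j\le n$ (terms with $j<k$ vanish, as $\narayana_{j,k}=0$ there), which yields the claimed identity. The only genuinely delicate step is the admissibility claim in Step~2 — that inserting the minima in an arbitrary way never creates a forbidden $12332$ — and this is precisely what the characterization of Step~1 is designed to make transparent.
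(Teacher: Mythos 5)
Your proof is correct and follows essentially the same route as the paper: the paper's proof also decomposes a $12332$-avoiding {\rgf} of length~$n+1$ into a free choice of positions for the $1$s (the first being fixed) together with a $1221$-avoiding {\rgf} placed, shifted up by one, in the remaining spots, and then applies Theorem~\ref{theorem_enum_1221}. The only difference is that the paper cites this structural decomposition from~\cite{JM}, whereas your Steps 1--2 prove it directly, making the argument self-contained.
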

\begin{proof}
As observed in~\cite{JM}, every~$12332$-avoiding {\rgf} of length~$n+1$ can be obtained by choosing~$n-j$ positions for the~$1$s (except for the first~$1$, which is fixed) and then choosing a {\rgf}~$R\in\RGF_j(1221)$ for the remaining~$j$ spots (where the elements of~$R$ incremented by one will be inserted). In particular, if the maximum of~$R$ is~$k$, then the resulting {\rgf} has maximum~$k+1$. So, as a consequence of Theorem~\ref{theorem_enum_1221}, we have~$g(n+1,k+1)=\sum_{j=k}^{n}\binom{n}{j}\narayana_{j,k}$.
\end{proof}

In the following sections (Proposition~\ref{proposition_narayana_321} and Theorem~\ref{theorem_bij_12321_12231}), we provide a bijection between~$12231$- and~$12321$-avoiding {\rgfs} in order to prove that~$132$-sortable permutations, according to the number of their ltr-minima, are enumerated by the formula in Corollary~\ref{corollary_enumeration_12332}. A direct proof of this fact is still to be found.

\begin{openproblem}\label{open_prob_distribution_132}
Prove directly (that is, without using a bijection involving different objects) that the number of~$132$-sortable permutations of length~$n+1$ with~$k+1$ left-to-right minima is equal to~$\displaystyle{\sum_{j=k}^{n}\binom{n}{j}\narayana_{j,k}}$.
\end{openproblem}

\subsection{\texorpdfstring{Patterns~$12323$ and~$12332$}{Patterns 12323 and 12332}}\label{section_cont_frac}

Consider the ordinary generating function of~$132$-sortable permutations:
$$
\Fsigma{132}(t)=\sum_{n\ge 0}\left(\sum_{k=0}^{n-1}\binom{n-1}{k}\mathfrak{c}_k\right) t^n
$$
Then~$\Fsigma{132}(t)$ can be expressed using the following continued fraction (see, for example, \cite{Ba,Fl}):
$$
\Fsigma{132}(t)=\cfrac{1}{1-2t-\cfrac{t^2}{1-3t-\cfrac{t^2}{1-3t-\cfrac{t^2}{1-3t-\dots}}}}
$$
Labeled Motzkin paths provide a neat combinatorial interpretation for the above continued fraction, via Flajolet's general correspondence~\cite{Fl}. The~$n$-th term of the sequence~$\lbrace|\Sort_{n+1}(132)|\rbrace_n$ is equal to the number of Motzkin paths of length~$n$ such that:

\begin{itemize}
\item each horizontal step at height zero has two types of labels~$\ell_0$, $\ell_1$;
\item each horizontal step at height at least one has three types of labels~$\ell_0,\ell_1,\ell_2$.
\end{itemize}

Denote by~$\Motzkin^{lab}_n$ the set of such labeled Motzkin paths of length~$n$. We shall define a bijection~$\beta$ from~$\Motzkin^{lab}_n$ to~$\RGF_{n+1}(12323)$ by scanning a Motzkin path from left to right and suitably intepreting each labeled step. We use an auxiliary stack~$\Delta$, which is initialized as the empty stack. Let~$P\in\Motzkin^{lab}_n$. Start by setting~$R=1$. Then, if~$L$ is the label of the currently scanned step, append a new rightmost element to~$R$ according to the following rules:

\begin{itemize}
\item if~$L=\U$, then append a new strict maximum~$M$ and push~$M$ onto~$\Delta$;
\item if~$L=\D$, then append~$\top(\Delta)$ and pop it from~$\Delta$;
\item if~$L=\ell_0$, then append a new strict maximum (without pushing it onto~$\Delta$);
\item if~$L=\ell_1$, then append~$1$;
\item if~$L=\ell_2$, then append~$\top(\Delta)$ (without popping it from~$\Delta$).
\end{itemize}

Equivalently, $\U$ corresponds to the first occurrence of a letter~$x$ that appears at least twice in~$R$, $\D$ to the last occurrence of such a letter, and~${\ell}_2$ to an occurrence of such an~$x$ that is neither the first nor the last. Moreover, the label~$\ell_0$ corresponds to an element~$x\neq 1$ appearing only once and the label~$\ell_1$ corresponds to the element~$1$. An example of this construction is illustrated in Figure~\ref{figure_Motzkin_RGF_bij}.

We can also express the correspondence between the labels of~$P$ and~$R=\beta(P)$ in terms of properties of the set partition associated to~$R$. If~$B$ is a block of cardinality at least two in such a partition and~$B$ does not contain~$1$, then~$\U$, $\D$ and~$\ell_2$ correspond, respectively, to the least, the largest and any of the remaining elements of the block. Moreover, $\ell_0$ corresponds to a singleton block not containing~$1$ and~$\ell_1$ corresponds to the elements of the block containing~$1$. At each step of the construction of~$R$, the auxiliary stack~$\Delta$ keeps track of the currently open blocks in the corresponding partition (that is those blocks that have not yet received all their elements).

\begin{figure}
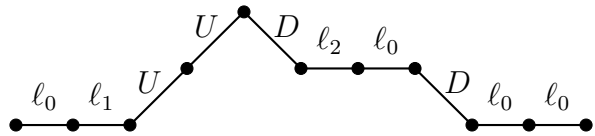

\centering
\begin{DrawPath}
\fillPath{0,0,1,1,-1,0,0,-1,0,0}{0}{0}
\node[] at (0.5,0.5) {$\ell_0$};
\node[] at (1.5,0.5) {$\ell_1$};
\node[above,left] at (2.75,0.75) {$U$};
\node[above,left] at (3.75,1.75) {$U$};
\node[] at (4.75,1.75) {$D$};
\node[] at (5.5,1.5) {$\ell_2$};
\node[] at (6.5,1.5) {$\ell_0$};
\node[] at (7.75,0.75) {$D$};
\node[] at (8.5,0.5) {$\ell_0$};
\node[] at (9.5,0.5) {$\ell_0$};
\end{DrawPath}
\caption[A labeled Motzkin path and the corresponding~$12323$-avoiding {\rgf}.]{The labeled Motzkin path corresponding to the {\rgf}~$R=12134435367$ via the bijection~$\beta$ of Theorem~\ref{theorem_bij_motzkin_12323}. The set partition associated to~$R$ is~$13|2|479|56|8|10|11$.}\label{figure_Motzkin_RGF_bij}
\end{figure}

\begin{theorem}\label{theorem_bij_motzkin_12323}
The map~$\beta$ is a bijection between~$\Motzkin^{lab}_n$ and~$\RGF_{n+1}(12323)$.
\end{theorem}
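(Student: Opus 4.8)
The plan is to prove the statement by producing an explicit inverse of $\beta$ and checking that the two maps compose to the identity, with the avoidance of $12323$ entering at exactly one nontrivial point. I will track $\beta(P)$ through its associated set partition, as described after the construction: a value equal to $1$ corresponds to the label $\ell_1$, a singleton value greater than $1$ to $\ell_0$, and each block of size at least two not containing $1$ is opened by a $\U$ (first occurrence), closed by a $\D$ (last occurrence), and read off by $\ell_2$ at its intermediate occurrences. Throughout, the auxiliary stack $\Delta$ contains exactly the values of those big blocks that are currently open, the most recently opened one on top.

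First I would check that $\beta$ is well defined, i.e. $\beta(P)\in\RGF_{n+1}(12323)$. That $\beta(P)$ is a {\rgf} is immediate: each step either appends a strict maximum $1+\max$, respecting the bound in the definition of an {\rgf}, or repeats an already used value. For the avoidance, suppose $\beta(P)=r_1\cdots r_{n+1}$ contained $r_{i_1}r_{i_2}r_{i_3}r_{i_4}r_{i_5}\simeq 12323$, and set $a=r_{i_1}$, $b=r_{i_2}=r_{i_4}$, $c=r_{i_3}=r_{i_5}$, so $a<b<c$. Since $a\ge 1$ we get $b,c\ge 2$, and each of $b,c$ occurs at least twice, so both are values of big blocks, and the first occurrence of such a value is necessarily a $\U$ (it cannot be $\ell_0$, $\ell_2$ or $\D$). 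As the strict maxima appended by $\beta$ increase along the construction and $b<c$, the value $b$ is pushed onto $\Delta$ before $c$, hence $c$ sits above $b$ and stays there until its last occurrence $i_5$. But the occurrence of $b$ at position $i_4$ (a $\D$- or $\ell_2$-step) can be appended only when $b=\top(\Delta)$, which would force $c$ to have been popped before $i_4$; this contradicts the later occurrence of $c$ at $i_5>i_4$. Hence $\beta(P)$ avoids $12323$.

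Next I would define the candidate inverse on an arbitrary {\rgf} $R=r_1\cdots r_{n+1}$ by reading off, for each $i$ with $2\le i\le n+1$, the step corresponding to $r_i$ from its combinatorial type: assign $\ell_1$ if $r_i=1$; $\ell_0$ if $r_i>1$ occurs exactly once; and $\U$, $\ell_2$, $\D$ according as $r_i>1$ is the first, an intermediate, or the last occurrence of a value appearing at least twice. For any {\rgf} this word of steps is a valid labeled Motzkin path: the height after position $i$ equals the number of big blocks currently open, which is nonnegative and returns to $0$ at the end, while $\U$ and $\D$ change it by $\pm 1$ and the other labels leave it unchanged; moreover an $\ell_2$ occurs only while its block is open, hence at height at least one. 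Applying this recipe to $R=\beta(P)$ recovers the step types of $P$ verbatim, since $\beta$ records each type exactly through the features just listed; thus this map is a left inverse of $\beta$ and $\beta$ is injective.

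Finally I would prove surjectivity onto $\RGF_{n+1}(12323)$ by verifying that $\beta$ applied to the reconstructed path $P$ returns $R$ whenever $R$ avoids $12323$. The $\U$- and $\ell_0$-steps append strict maxima in increasing order, matching the first occurrences of values in $R$, because in an {\rgf} the first occurrence of each value $v$ satisfies $r_i=1+\max(r_1\cdots r_{i-1})$. The one delicate point, which I expect to be the main obstacle, is to show that at every $\D$- or $\ell_2$-step, corresponding to a non-first occurrence of a value $b$ at some position $i$, one has $b=\top(\Delta)$. If instead some $c\ne b$ lay above $b$ on $\Delta$, then $c$ would be a big block opened after $b$, so $c>b\ge 2$ with first occurrence strictly between that of $b$ and $i$, and still open at $i$, so its last occurrence exceeds $i$ strictly (position $i$ carries value $b$). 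Reading positions $1<f_b<f_c<i<l_c$ then yields values $1,b,c,b,c$, an occurrence of $12323$ in $R$, contradicting the hypothesis. Hence the reconstruction is consistent, $\beta$ composed with it is the identity on $\RGF_{n+1}(12323)$, and the image of $\beta$ is exactly that set; together with injectivity this gives the claimed bijection.
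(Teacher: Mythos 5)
Your proof is correct, but it reaches bijectivity by a genuinely different route than the paper. The well-definedness half (that $\beta(P)$ lies in $\RGF_{n+1}(12323)$) coincides with the paper's argument: both rest on the observation that if $b<c$ are repeated values then $c$ sits above $b$ on $\Delta$, so a later occurrence of $b$ forces $c$ to have been popped, killing any subsequent occurrence of $c$. The divergence is in how surjectivity is handled: the paper never inverts $\beta$ at all, but instead invokes the cardinality identity $|\Motzkin^{lab}_n|=|\RGF_{n+1}(12323)|$ — which rests on two external inputs, Flajolet's continued fraction correspondence on one side and Jel\'inek and Mansour's enumeration of $\RGF(12323)$ on the other — so that injectivity plus containment of the image already forces a bijection between finite sets of equal size. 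You instead construct the explicit inverse, reading off the step type of each entry according to whether it equals $1$, is a singleton value, or is a first, intermediate, or last occurrence of a repeated value; you check that this recipe always yields a valid labeled Motzkin path, and then prove it is a right inverse, with the hypothesis that $R$ avoids $12323$ entering exactly once, to guarantee that at each $\D$- or $\ell_2$-step the required value is on top of $\Delta$ (your $1,b,c,b,c$ configuration). What your approach buys: it is self-contained, using neither enumerative result, and therefore gives an independent bijective proof that $\RGF_{n+1}(12323)$ is equinumerous with $\Motzkin^{lab}_n$, i.e.\ counted by the binomial transform of the Catalan numbers — precisely the kind of direct combinatorial explanation this section of the paper states as its goal. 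What the paper's approach buys is brevity, since the enumerative facts were already available in its context. One small point to tighten: your last paragraph tacitly runs an induction (that when $\beta$ is applied to the reconstructed path, the output agrees with the prefix of $R$ and $\Delta$ contains exactly the currently open values of $R$, most recently opened on top); this is true and easy, but it should be stated explicitly, since the $1,b,c,b,c$ argument is applied inside that inductive frame.
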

\begin{proof}
It is straightforward to see that~$\beta$ is injective and that~$\beta(P)$ is a {\rgf} for every~$P\in\Motzkin^{lab}_n$. Since~$|\Motzkin^{lab}_n|=|\RGF_n(12323)|$, we only need to show that~$\beta(P)$ avoids~$12323$, for each~$P\in\Motzkin^{lab}_n$. Suppose, for a contradiction, that~$abcb'c'$ is an occurrence of~$12323$ in~$\beta(P)$. This implies, of course, that~$b,c\neq 1$. Without loss of generality, we may assume that~$b$ and~$c$ are the first occurrences of the corresponding integers in~$\beta(P)$; then both~$b$ and~$c$ correspond to~$\U$-steps in~$P$ and are pushed onto~$\Delta$. Moreover, since~$b'=b$ and~$b'$ follows~$c$ in~$\beta(P)$, when~$c$ enters~$\Delta$, $b$ is still in, and so~$c$ lies above~$b$ in~$\Delta$. Now observe that the element~$b'$ must correspond to either a~$\D$-step or a horizontal step labeled~$\ell_2$ of~$P$. However, in both cases, when~$b'$ is inserted into~$\beta(P)$, $b$ has to be at the top of the stack, hence~$c$ should have been popped. This would imply that there are no more occurrences of~$c$ in~$\beta (P)$ after~$b'$, which is not the case, since~$c'=c$.
\end{proof}

\begin{remark}\label{remark_12332_motzkin}
If we replace the stack~$\Delta$ with a queue~$\Xi$, then the same map gives a bijection with {\rgfs} avoiding~$12332$. The proof is analogous to the previous one, and thus omitted.
\end{remark}

\begin{remark}\label{remark_catalan_cfrac}
If we restrict the previous bijections to Motzkin paths with no horizontal steps labeled~$\ell_1$, then we get bijections with {\rgfs} that avoid~$1212$ (if we use a stack~$\Delta$) or~$1221$ (if we use a queue~$\Xi$), provided that we remove the~$1$ at the beginning and decrease all the other elements by one. This follows again from the characterization of~$\RGF(12323)$ and~$\RGF(12332)$ given in~\cite{JM} (and mentioned in the proof of Corollary~\ref{corollary_enumeration_12332}). The corresponding continued fraction is then:
$$
G(t)=\cfrac{1}{1-t-\cfrac{t^2}{1-2t-\cfrac{t^2}{1-2t-\cfrac{t^2}{1-2t-\cdots}}}}
$$
This gives an alternative proof of the fact that {\rgfs} avoiding either~$1221$ or~$1212$ are enumerated by the Catalan numbers, whose generating function is known to be given by the above continued fraction.
\end{remark}

\begin{remark}\label{remark_sum_distribution_motzkin}
As a consequence of the bijections in Theorem~\ref{theorem_bij_motzkin_12323} and Remark~\ref{remark_12332_motzkin}, the statistic ``sum of the numbers of~$\U$ and~$\ell_0$ steps'' in~$\Motzkin^{lab}_n$ is equidistributed with the statistic ``(value of the) maximum minus one'' both in~$\RGF_{n+1}(12332)$ and in~$\RGF_{n+1}(12323)$. The same holds for the statistics ``number of labels~$\ell_0$'' and ``number of singletons~$\neq\{ 1\}$'', as well as for the statistics ``number of labels~$\ell_1$'' and ``number of occurrences of~$1$ minus one''. Some computations seem to suggest that the distribution of the maximum is the same for several other patterns of the same Wilf-class, namely~$12123$, $12132$, $12213$, $12231$, $12312$, $12321$, $12331$, so we suspect that the same approach should lead to straightforward bijections, by suitably modifying the interpretation of the steps. For example, define~$r_i$ to be a \textit{repeated} ltr-maximum of a {\rgf}~$r_1r_2\cdots r_n$ if~$r_i=\max\left\lbrace r_1,\dots,r_{i-1}\right\rbrace$. Then steps having label~$\ell_1$ seem to have the same distribution as the repeated ltr-maxima in~$\RGF(12321)$ and~$\RGF(12312)$, so in order to define a bijection with~$\Motzkin^{lab}$ it could be enough to find the ``correct'' interpretations for steps having labels~$\D$ and~$\ell_2$.
\begin{openproblem}
Find suitable interpretations of the steps of labeled Motzkin paths in~$\Motzkin^{lab}$ to obtain bijections with the remaining sets of pattern-avoiding {\rgfs} in the same Wilf-equivalence class.
\end{openproblem}
\end{remark}

\subsection{\texorpdfstring{Patterns~$12321$ and~$12312$}{Patterns 12321 and 12312}}

In this section we show a connection between {\rgfs} avoiding the patterns~$12321$ and~$12312$ and permutations avoiding the patterns~$321$ and~$312$, respectively. We initially provide a bijection between~$\RGF(12321)$ and~$\Perm(321)$ by showing that these two sets share the same combinatorial structure: elements of both sets can be written as shuffle of two weakly increasing sequences, one of those being the sequence of ltr-maxima. An analogous property links~$\RGF(12231)$ and~$\Perm(231)$.

Let~$R=r_1\cdots r_n$ be a \rgf. Recall from Remark~\ref{remark_sum_distribution_motzkin} that~$r_i$ is said to be a repeated ltr-maximum when~$r_i=\max\left\lbrace r_1,\dots,r_{i-1}\right\rbrace$. Let~$\RGF^{n.r.}$ be the set of {\rgfs} that have no repeated ltr-maxima. Define~$\RGF^{n.r.}_n$ and~$\RGF^{n.r.}(Q)$, for a pattern~$Q$, as usual. Given~$R=r_1\cdots r_n\in\RGF^{n.r.}$, define~$\tilde{R}$ as the subsequence obtained by deleting the ltr-maxima of~$R$. It is easy to realize that~$\tilde{R}$ is not necessarily a \rgf. For example, if~$R=121311245246$, then~$\tilde{R}=111224$.

\begin{lemma}\label{lemma_321_RGF}
Let~$R\in\RGF^{n.r.}$. Then~$R$ avoids~$12321$ if and only~$\tilde{R}$ is weakly increasing.
\end{lemma}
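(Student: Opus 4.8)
The plan is to prove both implications by contraposition, after first recording the structure of ltr-maxima in an RGF. The key preliminary observation is that in any RGF $R=r_1\cdots r_n$ the running maximum $M_i=\max\{r_1,\dots,r_i\}$ increases by exactly one at each ltr-maximum; hence the ltr-maxima are precisely the first occurrences of the values $1,2,\dots,\max(R)$, and the ltr-maximum carrying value $v$ sits at the position $p_v$ where $M$ first reaches $v$, with $p_1<p_2<\cdots<p_{\max(R)}$. Consequently $\tilde{R}$ is exactly the subsequence of non-first occurrences (the ``repeats''). I would also record how the defining hypothesis of $\RGF^{n.r.}$ enters: a non-ltr-maximum entry $r_i$ satisfies $r_i\le M_{i-1}$, and the absence of repeated ltr-maxima forces $r_i\neq M_{i-1}$, hence $r_i<M_{i-1}$. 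Thus every entry of $\tilde{R}$ is a repeat of a value strictly below the current running maximum.

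For the implication ``$R$ avoids $12321\Rightarrow\tilde{R}$ weakly increasing'' I would argue contrapositively, and this is the substantive step. Assume $\tilde{R}$ has a descent, i.e.\ there are positions $a<b$ of two repeats with $r_a=v>w=r_b$. By the preliminary observation $v$ first occurs at $p_v<a$ and $w$ first occurs at $p_w<b$, with $p_w<p_v$ since $w<v$. The $\RGF^{n.r.}$ hypothesis gives $M_{a-1}\ge v+1$, so the value $v+1$ has already appeared before position $a$, i.e.\ $p_{v+1}<a$ and $p_v<p_{v+1}$. The five positions $p_w<p_v<p_{v+1}<a<b$ then carry the values $w,v,v+1,v,w$, which (since $w<v<v+1$) is an occurrence $\simeq 12321$ in $R$, contradicting avoidance. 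Hence $\tilde{R}$ is weakly increasing.

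For the converse ``$\tilde{R}$ weakly increasing $\Rightarrow R$ avoids $12321$'' the argument is immediate and needs no $n.r.$ assumption: if $r_{i_1}r_{i_2}r_{i_3}r_{i_4}r_{i_5}\simeq 12321$ with $r_{i_1}=r_{i_5}=\alpha$, $r_{i_2}=r_{i_4}=\beta$ and $\alpha<\beta$, then $r_{i_4}$ and $r_{i_5}$ repeat the earlier equal values at $i_2$ and $i_1$, so neither is an ltr-maximum and both survive into $\tilde{R}$; since $i_4<i_5$ and $\beta>\alpha$, they form a descent of $\tilde{R}$.

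The main obstacle is the first implication, where one must manufacture the ``peak'' of the $12321$ pattern: a descent in $\tilde{R}$ only supplies the two outer pairs $w,v,\dots,v,w$, and it is precisely the no-repeated-ltr-maxima hypothesis that guarantees a strictly larger value $v+1$ has already been introduced before the repeat of $v$, furnishing the central $3$. I would carefully verify that the five chosen indices are distinct and correctly ordered (in particular $p_{v+1}<a$ and $p_v<a$, which hold because $a$ is a repeat and $v+1\le M_{a-1}$), and that the lemma's use of $\tilde{R}$ matches the convention that the ltr-maxima — equivalently the first occurrences — are exactly the deleted entries.
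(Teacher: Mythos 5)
Your proof is correct and follows essentially the same route as the paper: both directions are done by contraposition, and the key step is identical, namely using the no-repeated-ltr-maxima hypothesis to produce an element strictly larger than the top~$v$ of a descent of~$\tilde{R}$ occurring before it. The only difference is presentational: where the paper exhibits an occurrence of~$321$ and then cites Lemma~\ref{lemma_RGF_prop} to lift it to~$12321$, you inline that step by explicitly prepending the first occurrences of~$w$ and~$v$ (which exist and are correctly ordered precisely by the {\rgf} structure), which is exactly what that lemma encapsulates.
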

\begin{proof}
Suppose that~$\tilde{R}=\cdots ba\cdots$, with~$b>a$. Note that~$b$ is not a repeated ltr-maximum of~$R$, so there has to be an element~$c$ in~$R$ such that~$c>b$ and~$c$ comes before~$b$. Then~$R$ contains an occurrence~$cba$ of~$321$ and therefore it also contains~$12321$, by Lemma~\ref{lemma_RGF_prop}.

Conversely, if~$R$ contains an occurrence~$abcb'a'$ of~$12321$, then~$b'$ precedes~$a'$ in~$\tilde{R}$ and~$b'>a'$, so~$\tilde{R}$ is not weakly increasing.
\end{proof}

We now wish to describe the anticipated bijection between~$\RGF_n(12321)$ and~$\Perm_n(321)$. Let~$R=r_1\cdots r_n\in\RGF^{n.r.}(12321)$ and let~$\tilde{R}=r_{i_1}\cdots r_{i_k}$, with~$k\ge0$. Construct a permutation~$\pi(R)$ of length~$n$ by keeping the same positions for the ltr-maxima and mapping~$\tilde{R}$ to a strictly increasing sequence~$S=s_1\cdots s_k$ as follows:

\begin{itemize}
\item $s_1=r_{i_1}$;
\item $s_j=s_{j-1}+(r_{i_j}-r_{i_{j-1}})+1$, for~$j\ge 2$.
\end{itemize}

Finally, insert the remaining elements in increasing order in order to get a permutation that avoids~$321$: elements inserted at this point will be the ltr-maxima of the resulting permutation~$\pi(R)$. For instance, let~$R=121314234$. Then the string obtained by removing the ltr-maxima from~$R$ is~$\tilde{R}=11234$. We thus get the increasing sequence~$S=12468$ and finally~$\pi(R)=\mathbf{3}\mathbf{5}1\mathbf{7}2\mathbf{9}468$ (where ltr-maxima of~$\pi(R)$ are bolded). Observe that the number of ltr-maxima of~$\pi(R)$ is equal to the number of ltr-maxima of the starting {\rgf}~$R$. Moreover, it is easy to realize (by construction) that~$\pi(R)$ is a~$321$-avoiding permutation. The map defined this way is also injective. Indeed positions and values of the ltr-maxima uniquely determine a~$321$-avoiding permutations, thus the strictly increasing sequence~$S$ is enough to identify one such permutation. The construction proposed can be inverted in a similar fashion. It follows that the map~$R\mapsto\pi(R)$ is a size-preserving bijection between~$\RGF^{n.r.}(12321)$ and~$\Perm(321)$. The next corollary is an immediate consequence of what discussed so far.

\begin{proposition}\label{proposition_narayana_321} 
The number of {\rgfs} in~$\RGF_n ^{n.r.}(12321)$ is~$\catalan_n$. Moreover, the number of {\rgfs} in~$\RGF_n ^{n.r.}(12321)$ having maximum~$k$ is given by~$\narayana_{n,k}$.
\end{proposition}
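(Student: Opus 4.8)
The plan is to read off everything from the bijection $R\mapsto\pi(R)$ between $\RGF^{n.r.}(12321)$ and $\Perm(321)$ constructed immediately above, together with the classical fact that $321$-avoiding permutations are enumerated by the Catalan numbers. For the first assertion this already suffices: since $R\mapsto\pi(R)$ is a size-preserving bijection onto $\Perm(321)$, one obtains $|\RGF_n^{n.r.}(12321)|=|\Perm_n(321)|=\catalan_n$. No further work is needed there.

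For the refined count by the value of the maximum, I would translate the statistic $\max$ on {\rgfs} into a statistic on $\Perm(321)$. First I would record the elementary observation that for \emph{every} {\rgf} $R$ one has $\max(R)=\ltrmaxsize(R)$: the defining inequality $r_{i+1}\le 1+\max\{r_1,\dots,r_i\}$ forces the running maximum to increase by exactly one whenever it increases, so the values $1,2,\dots,\max(R)$ make their first appearances precisely at the (strict) ltr-maxima of $R$, and those are all of them. Because $\pi$ was defined by keeping the positions of the ltr-maxima fixed, and was already shown to preserve their number, this gives $\ltrmaxsize(\pi(R))=\ltrmaxsize(R)=\max(R)$. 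Consequently the number of elements of $\RGF_n^{n.r.}(12321)$ with maximum $k$ equals the number of permutations in $\Perm_n(321)$ having exactly $k$ left-to-right maxima.

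It then remains to enumerate $321$-avoiding permutations of length $n$ by their number of ltr-maxima, and this is where the genuine content lies. I would prove that this number is $\narayana_{n,k}$ using the fact that a $321$-avoiding permutation is the shuffle of its increasing sequence of ltr-maxima with the increasing subsequence of the remaining entries, so that $\pi$ is uniquely encoded by a Dyck path; under the standard bijection $\Perm_n(321)\to\Dyck_n$ the ltr-maxima correspond to a Narayana statistic (the peaks), and Dyck paths of semilength $n$ with $k$ peaks are counted by $\narayana_{n,k}$, exactly the statistic already used in the proof of Theorem~\ref{theorem_enum_1221} (see~\cite{Deu}). Combining this with the previous paragraph yields $|\{R\in\RGF_n^{n.r.}(12321):\max(R)=k\}|=\narayana_{n,k}$, as claimed. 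The main obstacle is precisely this last step: one must pin down \emph{which} Narayana statistic the ltr-maxima map to under the chosen path bijection and check the index alignment, so that $k$ ltr-maxima produce $\narayana_{n,k}$ rather than a shifted value. A clean way to avoid delicate bijective bookkeeping is to read off the peak correspondence directly from the shuffle decomposition, or simply to invoke the well-known joint distribution of ltr-maxima on $\Perm_n(321)$; the small cases ($n\le 3$) confirm the alignment $k\mapsto\narayana_{n,k}$.
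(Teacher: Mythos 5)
Your proposal is correct and takes essentially the same route as the paper, which states this proposition as an ``immediate consequence'' of the bijection $R\mapsto\pi(R)$ with $\Perm(321)$ and relies implicitly on exactly the facts you spell out: that $\max(R)$ equals the number of ltr-maxima of $R$, that the bijection preserves this number, and that $321$-avoiding permutations of length $n$ with $k$ ltr-maxima are counted by $\narayana_{n,k}$. The only difference is that you make the last (Narayana) step explicit via the Dyck-path/peak correspondence, whereas the paper treats it as a known fact; your details are sound, including the index alignment.
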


Next, in order to enumerate~$\RGF(12321)$, it is sufficient to show that any {\rgf} avoiding~$12321$ can be uniquely obtained by  inserting some repeated ltr-maxima in a sequence in~$\RGF^{n.r.}(12321)$.

\begin{theorem}\label{theorem_321}
Let~$R$ be a {\rgf} and let~$\alpha(R)$ be the sequence obtained from~$R$ by removing all the repeated ltr-maxima. Then~$\alpha(R)$ is a {\rgf}. Moreover, $R$ avoids~$12321$ if and only~$\alpha(R)$ avoids~$12321$.
\end{theorem}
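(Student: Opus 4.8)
The plan is to prove the two assertions separately, handling first the claim that $\alpha(R)$ is again a {\rgf} and then the equivalence of $12321$-avoidance.

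For the first assertion, the key remark is that a \emph{repeated} ltr-maximum $r_i$ satisfies $r_i=\max\{r_1,\dots,r_{i-1}\}$, so its value already occurs strictly to its left; in particular, a repeated ltr-maximum is never the first occurrence of its value. Consequently the first occurrences of the integers $1,2,\dots,\max(R)$ — which are exactly the strict ltr-maxima, the positions at which the running maximum increases by one — all survive in $\alpha(R)$ and keep their relative order. Now a word on $\nat$ starting with $1$ is a {\rgf} precisely when, for every value $v\ge 2$ that occurs, the first occurrence of $v-1$ occurs and precedes the first occurrence of $v$; this condition forces the value set to be an interval $\{1,\dots,\max\}$ and is equivalent to the defining inequality $x_{i+1}\le 1+\max\{x_1,\dots,x_i\}$. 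Since $R$ enjoys this property and $\alpha(R)$ retains all first occurrences in the same order, $\alpha(R)\in\RGF$. I expect this part to be routine.

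For the second assertion, the first step is to reduce the forbidden pattern $12321$ to the shorter pattern $321$. Applying Lemma~\ref{lemma_RGF_prop} to $w=321$ (whose length is $3$ and whose standardization $\std(w)=321$ has first letter $3$, matching the length) yields, for any {\rgf} $S$, that $321\le S$ if and only if $12321\le S$; that is, $\RGF(12321)=\RGF(321)$. Since both $R$ and $\alpha(R)$ are {\rgfs} (the latter by the first assertion), it therefore suffices to prove that $R\ge 321$ if and only if $\alpha(R)\ge 321$, where an occurrence of $321$ is simply a strictly decreasing subsequence of length three.

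The direction $\alpha(R)\ge 321\Rightarrow R\ge 321$ is immediate because $\alpha(R)$ is a subword of $R$. For the converse, suppose $r_i>r_j>r_k$ with $i<j<k$ is an occurrence of $321$ in $R$. The decisive observation concerns which of these three entries can be a repeated ltr-maximum. The entry $r_j$ cannot be one, since $r_i>r_j$ with $i<j$ shows $r_j\ne\max\{r_1,\dots,r_{j-1}\}$; likewise $r_k$ cannot be one, since $r_i>r_k$ with $i<k$. Hence $r_j$ and $r_k$ survive in $\alpha(R)$. If $r_i$ is also not a repeated ltr-maximum, the three original entries already form a $321$ in $\alpha(R)$. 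If instead $r_i$ is a repeated ltr-maximum, I replace it by the first occurrence of its value, at some position $i'<i$; this position is a strict ltr-maximum, hence survives, and $r_{i'}=r_i>r_j>r_k$ with $i'<j<k$ is an occurrence of $321$ in $\alpha(R)$. In either case $\alpha(R)\ge 321$, which completes the equivalence and the proof. The main obstacle — and the only place needing genuine care — is precisely this converse direction: one must ensure that deleting repeated ltr-maxima cannot destroy every occurrence of the forbidden pattern. The reduction to $321$ via Lemma~\ref{lemma_RGF_prop} is what makes it tractable, because for the three-letter pattern the smallest and middle entries are automatically non-maximal, while the top entry can always be slid back to its surviving first occurrence.
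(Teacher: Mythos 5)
Your proof is correct and follows essentially the same route as the paper's: both reduce $12321$-avoidance to $321$-avoidance via Lemma~\ref{lemma_RGF_prop}, observe that the two smaller entries of any occurrence of $321$ can never be repeated ltr-maxima, and slide the largest entry back to the first occurrence of its value (which always survives in $\alpha(R)$). The only difference is that you spell out the verification that $\alpha(R)$ is a {\rgf}, which the paper dismisses as easy to check.
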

\begin{proof}
It is easy to check that~$\alpha(R)$ is still a {\rgf} and clearly~$\alpha(R)$ avoids~$12321$ if~$R$ does. On the other hand, suppose that~$R$ contains an occurrence~$abcb'a'$ of~$12321$. Note that~$b'$ and~$a'$ are not repeated ltr-maxima, so they are elements of~$\alpha(R)$ and they follow~$c$ in~$R$. Let~$c'$ be the first occurrence of the integer~$c$ in~$R$. Then~$c'\in\alpha(R)$ and~$c'$ precedes~$b'$ in~$\alpha(R)$, so~$\alpha(R)$ contains an occurrence~$c'b'a'$ of~$321$, which is equivalent to containing~$12321$.
\end{proof}

\begin{corollary}\label{corollary_enumer_12321}
For each~$n\ge 1$, we have:
$$
|\RGF_{n+1}(12321)|=\sum_{k=0}^n\binom{n}{k}\mathfrak{c}_k.
$$
Moreover, there are~$\sum_{j=k}^{n}\binom{n}{j}\narayana_{j,k}$ {\rgfs} in~$\RGF_{n+1}(12321)$ with maximum~$k$.
\end{corollary}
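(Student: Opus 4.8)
The plan is to realise every $12321$-avoiding {\rgf} as a base sequence having no repeated ltr-maxima into which a collection of repeated ltr-maxima has been freely inserted, and then to count the two ingredients separately using Theorem~\ref{theorem_321} and Proposition~\ref{proposition_narayana_321}. The refined (by-maximum) statement will be proved first, and the bare cardinality will follow by summing over the maximum.

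First I would set up the correspondence. Recall the map $\alpha$ of Theorem~\ref{theorem_321} that deletes all repeated ltr-maxima. For a base $S\in\RGF^{n.r.}(12321)$ I would describe the fibre $\alpha^{-1}(S)\cap\RGF_{n+1}(12321)$: by Theorem~\ref{theorem_321}, an element $R$ of this fibre is exactly a {\rgf} obtained from $S$ by inserting repeated ltr-maxima, and such an $R$ avoids $12321$ automatically because $\alpha(R)=S$ does. The key structural observation is that inserting a letter equal to the current running maximum neither changes the running-maximum profile of the letters of $S$ nor the {\rgf} condition $r_{i+1}\le 1+\max\{r_1,\dots,r_i\}$; consequently the value of each inserted entry is forced (it must equal the running maximum at its position), so $R$ is completely determined by the \emph{set of positions} at which insertions are made, and $\alpha$ inverts the construction.

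Next I would count these position sets. Since the first letter is the fixed initial $1$ of $S$ and cannot be a repeated ltr-maximum, all insertions lie among the $n$ positions $2,\dots,n+1$. If $S$ has length $j+1$, then exactly $n-j$ insertions are needed to reach length $n+1$, and every one of the $\binom{n}{n-j}=\binom{n}{j}$ choices of insertion positions yields a valid fibre element; moreover $\max(R)=\max(S)$, since each inserted value equals a running maximum and hence does not exceed $\max(S)$. Grouping the bases by length $j+1$ and maximum $k$, Proposition~\ref{proposition_narayana_321} supplies $\narayana_{j,k}$ such bases, whence the number of $R\in\RGF_{n+1}(12321)$ with maximum $k$ equals $\sum_{j=k}^{n}\binom{n}{j}\narayana_{j,k}$. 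Summing over $k$ and using $\sum_{k}\narayana_{j,k}=\catalan_j$ then gives $|\RGF_{n+1}(12321)|=\sum_{k=0}^{n}\binom{n}{k}\catalan_k$.

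The main obstacle will be the structural lemma of the first step: verifying that insertions of repeated ltr-maxima can be performed at an \emph{arbitrary} subset of the available positions with no further restriction, that each resulting word is genuinely a {\rgf} whose inserted entries are indeed repeated ltr-maxima, and that the status of each original letter of $S$ as an ltr-maximum or not is unaffected (so that no new repeated ltr-maxima are created among the letters of $S$). Once this freeness together with the forced-value property is established, the remainder is routine bookkeeping of lengths, in particular the index alignment (base length $\ell=j+1$, with $n-j$ insertions) needed to produce $\catalan_j$ and $\narayana_{j,k}$ rather than their shifts.
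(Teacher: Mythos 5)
Your proposal is correct and is essentially the paper's own proof: the paper disposes of this corollary in one sentence, citing exactly the ingredients you use---Theorem~\ref{theorem_321} (the base-plus-repeated-ltr-maxima decomposition), Proposition~\ref{proposition_narayana_321} (the count of bases), and the fact that the first letter of a {\rgf} cannot be a repeated ltr-maximum (which is what restricts insertions to the last $n$ positions)---so your fibre analysis with forced values and freely chosen positions is just the spelled-out version of what the paper leaves implicit. One remark: your index alignment, counting bases of length $j+1$ by $\catalan_j$ and $\narayana_{j,k}$, silently corrects an off-by-one in the literal statement of Proposition~\ref{proposition_narayana_321} (which claims $\catalan_n$ bases of length $n$, yet $\RGF^{n.r.}_2(12321)=\lbrace 12\rbrace$ has $1=\catalan_1$ element, not $\catalan_2=2$); your shifted reading is the one under which the corollary's formula actually follows, so this is a defect of the cited statement rather than of your argument.
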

\begin{proof} This is a direct consequence of the results proved in this section, together with the fact that the first element of a {\rgf} cannot be a repeated ltr-maximum.
\end{proof}

\begin{remark}
The same approach can be used to find a bijection between $\RGF^{n.r.}(12312)$ and~$\Perm(312)$. In fact, $312$-avoiding permutations are also uniquely determined by the positions and values of their ltr-maxima, and a completely analogous argument can be applied. As a consequence, we also have:
$$
|\RGF_{n+1}(12312)|=\sum_{k=0}^n\binom{n}{k}\mathfrak{c}_k.
$$
\end{remark}

\subsection{\texorpdfstring{A bijection between~$\RGF(12321)$ and~$\RGF(12231)$}{A bijection between RGF(12321) and RGF(12231)}}

In Section~\ref{section_grid_132} we showed a bijection between~$\Sort(132)$ and~$\RGF(12231)$. A direct combinatorial enumeration of~$\RGF(12231)$ could be obtained by using the labeled Motzkin path approach described in Section~\ref{section_cont_frac}, but so far the pattern~$12231$ proved to be rather more complicated than some other patterns in the same equivalence class. The main goal of this section is to obtain an independent\footnote{Without relying on the Wilf-equivalence showed in~\cite{JM}.} proof of the enumeration of~$\Sort(132)$ by means of a bijection~$\delta$ between~$\RGF(12231)$ and~$\RGF(12321)$.

For the rest of this chapter, we say that the {\rgf}~$R$ contains an occurrence of the pattern~$\tilde{2}31$ if~$R$ contains three elements~$r_{i_1}r_{i_2}r_{i_3}$ such that~$r_{i_1}r_{i_2}r_{i_3}\simeq 231$ and~$r_{i_1}$ is not an ltr-maximum of~$R$ (equivalently, $r_{i_1}$ is not the leftmost occurrence of the corresponding integer in~$R$). Due to Lemma~\ref{lemma_RGF_prop}, we have~$\RGF(12231)=\RGF(\tilde{2}31)$ and also~$\RGF(12321)=\RGF(321)$. This allows us to focus on~$\tilde{2}31$ and~$321$ instead of~$12231$ and~$12321$, respectively. More precisely, we shall define the promised map~$\delta$ from~$\RGF(12231)$ to~$\RGF(12321)$ by repeatedly transforming the rightmost occurrence of~$321$ into an occurrence of~$\tilde{2}31$, until the resulting {\rgf} avoids~$321$. This is formalized in what follows.

Let~$R=r_1\cdots r_n$ be a {\rgf}. Define~$\rmost(R,321)=i_1i_2i_3$, where~$r_{i_i}r_{i_2}r_{i_3}$ are the indices of the lexicographically rightmost occurrence of~$321$ in~$R$. More extensively, this can be expressed by saying that for any other occurrence~$r_{j_i}r_{j_2}r_{j_3}$ of~$321$ in~$R$, it must be either~$j_1<i_1$, or~$j_1=i_1$ and~$j_2<i_2$, or~$j_1=i_1$, $j_2=i_2$ and~$j_3<i_3$. If~$R$ avoids~$321$, we assume~$\rmost(321)= 000$ by convention. Similarly, denote by~$\lmost(R,\tilde{2}31)=i_1i_2i_3$ the indices of the lexicographically leftmost occurrence of~$\tilde{2}31$ in~$R$. If~$R$ avoids~$\tilde{2}31$, we assume~$\lmost(R,\tilde{2}31)=(n+1)(n+1)(n+1)$.

Let us now define the map~$\delta$. Suppose that~$R=r_1\cdots r_n\in\RGF(\tilde{2}31)$. Define recursively a {\rgf}~$\delta(R)$ as follows.

\begin{enumerate}
\item $R^{(0)}=R$.
\item For~$t\ge 0$, if~$R^{(t)}$ contains~$321$, then~$R^{(t+1)}$ is obtained from~$R^{(t)}$ by exchanging the elements~$r_{i_1}$ and~$r_{i_2}$, where~$i_1i_2i_3=\rmost(R^{(t)},321)$.
\item Finally, define~$\delta(R)=R^{(k)}$, where~$k$ is the minimum index such that~$R^{(k)}$ avoids~$321$.
\end{enumerate}

Observe that~$R^{(t)}$ is a {\rgf} for each~$t$ and that~$R^{(k)}$ avoids~$321$ by construction. Moreover, as a consequence of the next lemma, the integer~$k$ exists and thus the map~$\delta$ is well defined.

\begin{lemma}\label{lemma_lex_321}
For every~$t\ge 0$, we have~$\rmost(R^{(t+1)},321)<_\ell\rmost(R^{(t)},321)$, where~$<_\ell$ denotes the lexicographical order.
\end{lemma}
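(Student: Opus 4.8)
The plan is to track exactly how a single step changes the permutation and then classify the $321$-occurrences of the new word. Write $R=R^{(t)}$ and $R'=R^{(t+1)}$, and let $(i_1,i_2,i_3)=\rmost(R,321)$, so that $a:=r_{i_1}>b:=r_{i_2}>c:=r_{i_3}$ with $i_1<i_2<i_3$. By definition $R'$ is obtained from $R$ by swapping the values at positions $i_1$ and $i_2$; thus $R'$ agrees with $R$ everywhere except that position $i_1$ now carries $b$ and position $i_2$ now carries $a$. The goal is to show that \emph{every} occurrence of $321$ in $R'$, at indices $(j_1,j_2,j_3)$, satisfies $(j_1,j_2,j_3)<_\ell(i_1,i_2,i_3)$; this immediately yields $\rmost(R',321)<_\ell\rmost(R,321)$. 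I would organize the argument by comparing $j_1$ with $i_1$, exploiting repeatedly that the maximality of $(i_1,i_2,i_3)$ forbids any $321$ of $R$ lying lexicographically to its right.

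First I would prove that \textbf{no occurrence of $321$ in $R'$ starts at a position $j_1>i_1$}. Since only positions $i_1,i_2$ were altered and $j_1>i_1$, the only possibly-affected entry among $j_1,j_2,j_3$ is $i_2$. If $i_2\notin\{j_1,j_2,j_3\}$, all three entries are unchanged, so $(j_1,j_2,j_3)$ is a genuine $321$ of $R$ with $j_1>i_1$, contradicting the maximality of $i_1$. If $i_2$ participates, I would recover a contradiction in each sub-case by splicing in one of the ``old'' special entries $a=r_{i_1}$ or $c=r_{i_3}$: when $j_1=i_2$ (so $r'_{i_2}=a$ and $a>r_{j_2}>r_{j_3}$), the triple $(i_1,j_2,j_3)$ is a $321$ of $R$ that is $>_\ell(i_1,i_2,i_3)$ via its second coordinate; when $j_2=i_2$, the entry $r_{j_1}>a>b>c$ gives the $321$ occurrence $(j_1,i_2,i_3)$ of $R$ with $j_1>i_1$; and when $j_3=i_2$, the inequalities $r_{j_1}>r_{j_2}>a>c$ produce the $321$ occurrence $(j_1,j_2,i_3)$ of $R$ with $j_1>i_1$. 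Each contradicts maximality, so indeed $j_1\le i_1$ for every $321$ of $R'$.

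Next I would handle the occurrences with $j_1=i_1$, where the entry is $r'_{i_1}=b<a$. If such an occurrence avoids $i_2$, then $r'_{j_2}=r_{j_2}$ and $r'_{j_3}=r_{j_3}$, and from $a>b>r_{j_2}>r_{j_3}$ the same triple is a $321$ of $R$; maximality gives $(j_2,j_3)\le_\ell(i_2,i_3)$, and since $i_2\notin\{j_2,j_3\}$ the inequality is strict, so $(i_1,j_2,j_3)<_\ell(i_1,i_2,i_3)$. If instead the occurrence uses $i_2$, then $i_2$ carries $a>b=r'_{i_1}$, which is incompatible with $i_1$ being the largest entry of a decreasing triple, so no such occurrence exists. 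Combining this with the previous paragraph shows every $321$ of $R'$ is $<_\ell(i_1,i_2,i_3)$, which is the claim (and, as noted in the text, guarantees termination of the construction of $\delta$, since a strictly $<_\ell$-decreasing sequence of index triples in $\{1,\dots,n\}^3$ is finite).

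The main obstacle I anticipate is not the idea but the careful bookkeeping in the second paragraph: in the sub-cases where $i_2$ takes part in the new occurrence, one must choose correctly whether to reuse $a$ (at position $i_1$, to the \emph{left}) or $c$ (at position $i_3$, to the \emph{right}) and verify that the resulting triple has strictly increasing positions and strictly decreasing values while still lying $\ge_\ell(i_1,i_2,i_3)$. Keeping the position inequalities straight (e.g.\ $i_1<j_1<i_2<i_3$ versus $i_1<i_2<j_2<j_3$) is where a slip would occur; once these are pinned down, every verification reduces to transitivity of the known inequalities $a>b>c$ together with the local data of the assumed occurrence.
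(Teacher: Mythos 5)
Your proof is correct and follows essentially the same route as the paper's: both compare a putative occurrence $(j_1,j_2,j_3)$ of $321$ in $R^{(t+1)}$ against $(i_1,i_2,i_3)$ and, whenever a swapped position is involved, splice in the old entry at position $i_1$ or $i_3$ to manufacture an occurrence of $321$ in $R^{(t)}$ lying lexicographically to the right of $\rmost(R^{(t)},321)$, a contradiction. The differences are purely organizational: you quantify over all occurrences of $R^{(t+1)}$ and split sub-cases by which index equals $i_2$, whereas the paper argues by contradiction on $\rmost(R^{(t+1)},321)$ and in one sub-case invokes the inequalities forced by rightmost-ness (its mesh-pattern constraints) where you instead use a direct splice.
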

\begin{proof}
Let~$R^{(t)}=r^{(t)}_1\cdots r^{(t)}_n$ and, similarly, $R^{(t+1)}=r^{(t+1)}_1\cdots r^{(t+1)}_n$. Moreover, let~$\rmost(R^{(t)},321)=i_1i_2i_3$ and~$\rmost(R^{(t+1)},321)=j_1j_2j_3$. Note that, as illustrated in Figure~\ref{figure_321_231}, our hypothesis imposes some constraints on the elements of~$R^{(t)}$. More precisely, $r^{(t)}_j\le r^{(t)}_{i_2}$, for each~$j=i_1+1,\dots,i_2-1$. Also, for each~$j=i_2+1,\dots,i_3-1$, either~$r^{(t)}_j\le r^{(t)}_{i_3}$ or~$r^{(t)}_j\ge r^{(t)}_{i_1}$. Finally, $r^{(t)}_j\ge r^{(t)}_{i_2}$ for each~$j>i_3$. We will repeatedly use these inequalities throughout this proof. Our goal is now to show that~$j_1j_2j_3<_\ell i_1i_2i_3$. Suppose, by contradiction, that~$j_1j_2j_3\ge_\ell i_1i_2i_3$. Consider the following case analysis.

\begin{itemize}
\item Suppose~$j_1>i_1$. If~$j_1<i_2$, then necessarily~$r^{(t+1)}_{j_1}=r^{(t)}_{j_1}\le r^{(t)}_{i_2}$, due to the above constraints. Hence we must have~$j_2,j_3\neq i_2$, since otherwise the indices~$j_1,j_2,j_3$ would not correspond to an occurrence of~$321$ in~$R^{(t+1)}$. This implies that~$r^{(t+1)}_{j_1}r^{(t+1)}_{j_2}r^{(t+1)}_{j_3}=r^{(t)}_{j_1}r^{(t)}_{j_2}r^{(t)}_{j_3}$ is an occurrence of~$321$ in~$R^{(t)}$ as well, with~$j_1j_2j_3>_\ell i_1i_2i_3$: this is a contradiction, since we are assuming that~$\rmost(R^{(t)},321)=i_1i_2i_3$. Next suppose that~$j_1=i_2$ (and so~$j_2>i_2$). Note that~$r^{(t)}_{i_1}=r^{(t+1)}_{i_2}=r^{(t+1)}_{j_1}$, hence~$r^{(t)}_{i_1}r^{(t)}_{j_2}r^{(t)}_{j_3}$ is an occurrence of~$321$ in~$R^{(t)}$ with~$i_1j_2j_3>_\ell i_1i_2i_3$, which is impossible. Finally, suppose that~$j_1>i_2$. Then obviously~$r^{(t)}_{j_1}r^{(t)}_{j_2}r^{(t)}_{j_3}=r^{(t+1)}_{j_1}r^{(t+1)}_{j_2}r^{(t+1)}_{j_3}$ is an occurrence of~$321$ in~$R^{(t)}$, with~$j_1j_2j_3>_\ell i_1i_2i_3$, again a contradiction.

\item Suppose instead that~$j_1=i_1$ and~$j_2>i_2$. Then~$r^{(t+1)}_{i_1}=r^{(t)}_{i_2}$ and~$j_2>i_2$, so~$r^{(t)}_{i_2}r^{(t)}_{j_2}r^{(t)}_{j_3}$ is an occurrence of~$321$ in~$R^{(t)}$, with~$i_2j_2j_3>_\ell i_1i_2i_3$, which is impossible.

\item Finally, the case~$j_1=i_1$ and~$j_2=i_2$ is clearly impossible, since we have~$r^{(t+1)}_{i_1}=r^{(t)}_{i_2}<r^{(t)}_{i_1}=r^{(t+1)}_{i_2}$.
\end{itemize}
\end{proof}

\begin{figure}
\centering
\begin{tikzpicture}[scale=0.5, baseline=19pt]
\fill[NE-lines] (1.15,2.15) rectangle (1.85,4);
\fill[NE-lines] (2.15,1.15) rectangle (2.85,2.85);
\fill[NE-lines] (3.15,0) rectangle (3.85,1.85);
\draw [semithick] (0,0.85) -- (4,0.85);
\draw [semithick] (0,1.15) -- (4,1.15);
\draw [semithick] (0,1.85) -- (4,1.85);
\draw [semithick] (0,2.15) -- (4,2.15);
\draw [semithick] (0,2.85) -- (4,2.85);
\draw [semithick] (0,3.15) -- (4,3.15);
\draw [semithick] (0.85,0) -- (0.85,4);
\draw [semithick] (1.15,0) -- (1.15,4);
\draw [semithick] (1.85,0) -- (1.85,4);
\draw [semithick] (2.15,0) -- (2.15,4);
\draw [semithick] (2.85,0) -- (2.85,4);
\draw [semithick] (3.15,0) -- (3.15,4);
\filldraw (1,3) circle(5pt);
\filldraw (2,2) circle (5pt);
\filldraw (3,1) circle (5pt);
\node[] at (1,4.5){$i_1$};
\node[] at (2,4.5){$i_2$};
\node[] at (3,4.5){$i_3$};
\end{tikzpicture}
\hspace{50pt}
\begin{tikzpicture}[scale=0.5, baseline=19pt]
\fill[NE-lines] (1.15,2.15) rectangle (1.85,4);
\fill[NE-lines] (2.15,1.15) rectangle (2.85,2.85);
\fill[NE-lines] (3.15,0) rectangle (3.85,1.85);
\draw [semithick] (0,0.85) -- (4,0.85);
\draw [semithick] (0,1.15) -- (4,1.15);
\draw [semithick] (0,1.85) -- (4,1.85);
\draw [semithick] (0,2.15) -- (4,2.15);
\draw [semithick] (0,2.85) -- (4,2.85);
\draw [semithick] (0,3.15) -- (4,3.15);
\draw [semithick] (0.85,0) -- (0.85,4);
\draw [semithick] (1.15,0) -- (1.15,4);
\draw [semithick] (1.85,0) -- (1.85,4);
\draw [semithick] (2.15,0) -- (2.15,4);
\draw [semithick] (2.85,0) -- (2.85,4);
\draw [semithick] (3.15,0) -- (3.15,4);
\filldraw (1,2) circle (5pt);
\filldraw (2,3) circle (5pt);
\filldraw (3,1) circle (5pt);
\node[] at (1,4.5){$i_1$};
\node[] at (2,4.5){$i_2$};
\node[] at (3,4.5){$i_3$};
\end{tikzpicture}
\caption[The rightmost occurrence of~$321$ as a Cayley-mesh pattern.]{On the left, the rightmost occurrence of the pattern~$321$ in~$R^{(t)}$, with indices~$i_1i_2i_3$, represented as a Cayley-mesh pattern. On the right, the resulting (Cayley-mesh) pattern in~$R^{(t+1)}$, obtained by exchanging the elements in positions~$i_1$ and~$i_2$.}\label{figure_321_231}
\end{figure}
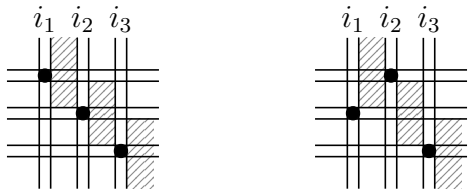

We wish to show that~$\delta$ is a bijection by proving that the recursive construction proposed above can be reversed (in the expected way!). Indeed, in order to obtain~$R^{(t)}$ from~$R^{(t+1)}$, it is sufficient to transform the leftmost occurrence of~$\tilde{2}31$ into an occurrence of~$321$ (see Figure~\ref{figure_diagram_gamma}). A formal proof is given in the next lemma.

\begin{figure}
\centering
$
\xymatrix{
 & \rmost(321)\dashrightarrow\tilde{2}31\ar@{<-}[dl]\ar@{->}[dr] & \\
R^{(t)}\ar@{<-}[dr] & & R^{(t+1)}\ar@{->}[dl]\\
 & 321\dashleftarrow\lmost(\tilde{2}31) &
}
$
\caption{The diagram of Lemma~\ref{lemma_inverse_gamma}.}\label{figure_diagram_gamma}
\end{figure}
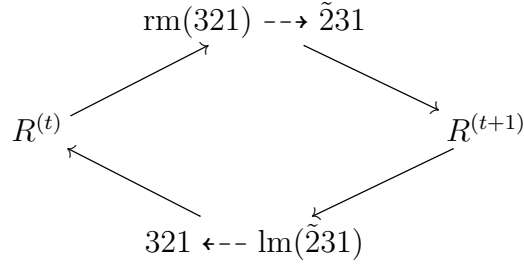

\begin{lemma}\label{lemma_inverse_gamma}
Let~$t\ge0$. Let~$\rmost(R^{(t)},321)=i_1i_2i_3$ and~$\lmost(R^{(t+1)},\tilde{2}31)=j_1j_2j_3$. Then~$i_1=j_1$ and~$i_2=j_2$.
\end{lemma}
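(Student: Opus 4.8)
The plan is to compare the two distinguished occurrences by reducing the claim to three sub-statements. Write $S=R^{(t)}$ and $S'=R^{(t+1)}$, and let $a=S_{i_1}>b=S_{i_2}>c=S_{i_3}$ be the values of the rightmost occurrence of $321$ in $S$, so that $S'$ is obtained from $S$ by the transposition $S'_{i_1}=b$, $S'_{i_2}=a$, and $S'_p=S_p$ for every other $p$. Throughout I will use the three inequalities forced by $(i_1,i_2,i_3)=\rmost(S,321)$ (the shaded regions of Figure~\ref{figure_321_231}, already exploited in Lemma~\ref{lemma_lex_321}): $S_p\le b$ for $i_1<p<i_2$; $S_p\le c$ or $S_p\ge a$ for $i_2<p<i_3$; and $S_p\ge b$ for $p>i_3$. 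Setting $(j_1,j_2,j_3)=\lmost(S',\tilde{2}31)$, it suffices to prove: (a) $(i_1,i_2,i_3)$ is itself an occurrence of $\tilde{2}31$ in $S'$, so that $(j_1,j_2,j_3)\le_\ell(i_1,i_2,i_3)$; (b-i) no occurrence of $\tilde{2}31$ in $S'$ has first index strictly smaller than $i_1$; and (b-ii) no occurrence of $\tilde{2}31$ in $S'$ has first index $i_1$ and second index strictly smaller than $i_2$. Together these pin down $j_1=i_1$ and $j_2=i_2$.

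For (a), note first that $S'_{i_1}S'_{i_2}S'_{i_3}=bac$ is order isomorphic to $231$. It remains to check that $S'_{i_1}=b$ is not a left-to-right maximum of $S'$. Here I use that $S$ (hence $S'$) is a {\rgf}: the first occurrence of the value $b$ necessarily precedes the first occurrence of the value $a$, and the latter occurs at a position $\le i_1$ because $S_{i_1}=a$. Thus some position $p<i_1$ satisfies $S_p=b$; since positions before $i_1$ are untouched by the swap, this same $b$ sits before $i_1$ in $S'$, so $S'_{i_1}=b$ is not a left-to-right maximum and $(i_1,i_2,i_3)$ is a genuine occurrence of $\tilde{2}31$.

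Statement (b-ii) is immediate from the constraints: if $(i_1,j_2,j_3)$ were an occurrence of $\tilde{2}31$ with $i_1<j_2<i_2$, its pattern would force $S'_{j_2}>S'_{i_1}=b$, whereas $S'_{j_2}=S_{j_2}\le b$ by the first constraint, a contradiction. The real work is (b-i). Here I would argue by contradiction: assuming an occurrence $(j_1,j_2,j_3)$ of $\tilde{2}31$ in $S'$ with $j_1<i_1$, whose entries realize the $231$-pattern (so $S'_{j_3}<S'_{j_1}<S'_{j_2}$) and with a witness $w<j_1$ satisfying $S_w\ge S'_{j_1}$ for the failure of the left-to-right maximum condition, the goal is to manufacture from these data an occurrence of $321$ in $S$ that is lexicographically strictly to the right of $(i_1,i_2,i_3)$, contradicting the fact that $(i_1,i_2,i_3)=\rmost(S,321)$. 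This is carried out by a case analysis on the positions of $j_2,j_3$ relative to $i_1,i_2,i_3$ (and on whether any of them coincide with the two swapped positions), relocating the occurrence by means of the three constraints above together with the witness $w$.

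The main obstacle is precisely (b-i). Two features make it delicate and block the naive argument. First, for $t\ge1$ the sequence $S=R^{(t)}$ need not avoid $\tilde{2}31$ — the procedure has already created such occurrences further to the right — so one cannot simply transfer the hypothetical occurrence back to $S$ and invoke avoidance; the contradiction must instead always be drawn against the \emph{rightmost}-$321$ property of $S$. Second, the transposition can alter the left-to-right-maximum status of entries and can create several new $231$ patterns simultaneously, so the bookkeeping of which of $j_1,j_2,j_3,w$ equal or straddle $i_1,i_2$ has to be handled carefully, exactly in the style of the case analysis already used for Lemma~\ref{lemma_lex_321}. Once (b-i) is established the lemma follows at once, and it is this identity $i_1=j_1$, $i_2=j_2$ that makes the reverse step (swapping the first two entries of the leftmost $\tilde{2}31$) undo the forward step, so that $\delta$ is a bijection.
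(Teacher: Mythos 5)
Your items (a) and (b-ii) are correct and coincide with the paper's proof: (a) is exactly the observation (via Lemma~\ref{lemma_RGF_prop}) that the swapped triple~$r^{(t+1)}_{i_1}r^{(t+1)}_{i_2}r^{(t+1)}_{i_3}$ is itself an occurrence of~$\tilde{2}31$ in~$R^{(t+1)}$, which gives~$j_1j_2j_3\le_{\ell}i_1i_2i_3$, and (b-ii) follows from the shaded-region constraints as you say. The gap is in (b-i), which you only sketch, and it is not a matter of delicate bookkeeping: the strategy you commit to there --- that ``the contradiction must instead always be drawn against the rightmost-$321$ property of~$S$'' --- provably cannot succeed. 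The only properties of~$S=R^{(t)}$ your argument invokes are that~$S$ is a {\rgf} and that~$\rmost(S,321)=i_1i_2i_3$ (with its attendant constraints), and these do not imply (b-i). Concretely, take~$S=12231321$. This is a {\rgf}, its only occurrences of~$321$ are at positions~$(4,7,8)$ and~$(6,7,8)$, so~$\rmost(S,321)=(6,7,8)$; swapping positions~$6$ and~$7$ yields~$S'=12231231$, which contains the occurrence~$S'_3S'_4S'_5=231$ of~$\tilde{2}31$ (witnessed by~$S'_2=2$), whose first index~$3$ is strictly smaller than~$i_1=6$. Since by definition~$S$ has no occurrence of~$321$ lexicographically to the right of~$(6,7,8)$, no case analysis, however careful, can manufacture the contradiction you aim for. (This~$S$ is of course not reachable as an~$R^{(t)}$ of the process --- that is essentially the content of the lemma --- but your argument never uses reachability, so it would have to apply to this~$S$ as well, and it cannot.)

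The missing ingredient is the induction on~$t$ that the paper runs, which is precisely what encodes reachability. The paper never derives the final contradiction from the rightmost-$321$ property: for~$t=0$ it uses that~$R^{(0)}$ avoids~$\tilde{2}31$ outright, and for~$t\ge 1$ it uses the inductive hypothesis (this very lemma at step~$t-1$) together with Lemma~\ref{lemma_lex_321} to conclude that~$\lmost(R^{(t)},\tilde{2}31)=s_1s_2s_3$ satisfies~$s_1s_2\ge_{\ell}i_1i_2$, i.e.\ that~$R^{(t)}$ has \emph{no} occurrence of~$\tilde{2}31$ whose first two indices precede~$i_1i_2$ lexicographically. The case analysis then transfers a hypothetical too-early occurrence of~$\tilde{2}31$ in~$R^{(t+1)}$ back to a too-early occurrence of~$\tilde{2}31$ in~$R^{(t)}$, contradicting that located leftmost occurrence --- not the rightmost-$321$ property. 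In the word~$S=12231321$ above this inductively propagated information fails ($\lmost(S,\tilde{2}31)=(3,4,5)$ already precedes~$(6,7)$), which is exactly why that word is not reachable and why your proof cannot do without the induction.
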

\begin{proof}
We again refer to Figure~\ref{figure_321_231} for an illustration of the constraints imposed on the elements of~$R^{(t)}$ by the position of the rightmost occurrence of~$321$ inside~$R^{(t)}$. We proceed by induction on~$t$.

Suppose first that~$t=0$, that is, $R^{(0)}=r^{(0)}_1\cdots r^{(0)}_n$ avoids~$\tilde{2}31$, but contains~$321$. Set~$R^{(1)}=r^{(1)}_1\cdots r^{(1)}_n$, $\rmost(R^{(0)},321)=i_1i_2i_3$ and~$\lmost(R^{(1)},\tilde{2}31)=j_1 j_2 j_3$. Note that~$r^{(1)}_{i_1}r^{(1)}_{i_2}r^{(1)}_{i_3}$ is an occurrence of~$\tilde{2}31$ in~$R^{(1)}$. Indeed, by Lemma~\ref{lemma_RGF_prop}, the first occurrence of the integer~$r^{(0)}_{i_2}$ in~$R^{(0)}$ precedes~$r^{(0)}_{i_1}$, since~$r^{(0)}_{i_1}>r^{(0)}_{i_2}$. Therefore~$j_1j_2j_3\le_{\ell} i_1i_2i_3$. We have to show that~$i_1=j_1$ and~$i_2=j_2$. Suppose, to the contrary, that~$j_1<i_1$. If either~$j_2=i_1$ or~$j_2=i_2$, then~$r^{(0)}_{j_1}r^{(0)}_{i_1}r^{(0)}_{j_3}$ would be an occurrence of~$\tilde{2}31$ in~$R^{(0)}$, which is impossible since~$R^{(0)}\in\RGF(\tilde{2}31)$. Thus we must have~$j_2\neq i_1$ and~$j_2\neq i_2$. In particular, since~$j_2\neq i_2$, we must have either~$j_3=i_1$ or~$j_3=i_2$, otherwise~$r^{(0)}_{j_1}r^{(0)}_{j_2}r^{(0)}_{j_3}=r^{(1)}_{j_1}r^{(1)}_{j_2}r^{(1)}_{j_3}$ would be an occurrence of~$\tilde{2}31$ in~$R^{(0)}$ as well. However, if either~$j_3=i_1$ or~$j_3=i_2$, then~$r^{(0)}_{j_1}r^{(0)}_{j_2}r^{(0)}_{i_2}$ would be an occurrence of~$\tilde{2}31$ in~$R^{(0)}$, which is again a contradiction. Therefore it has to be~$i_1=j_1$. Finally, the case~$j_1=i_1$ and~$j_2<i_2$ is forbidden, due to the restrictions depicted in Figure~\ref{figure_321_231}. Summing up, we must have~$i_1=j_1$ and~$i_2=j_2$, as desired.

Now suppose that~$t\ge 1$. Let~$R^{(t)}=r^{(t)}_1\cdots r^{(t)}_n$. For the rest of this proof, we fix the following notation:

\begin{itemize}
\item[-]~$\rmost(R^{(t-1)},321)=t_1t_2t_3$;
\item[-]~$\lmost(R^{(t)},\tilde{2}31)=s_1s_2s_3$;
\item[-]~$\rmost(R^{(t)},321)=i_1i_2i_3$;
\item[-]~$\lmost(R^{(t+1)},\tilde{2}31)=j_1j_2j_3$.
\end{itemize}

By the inductive hypothesis we have~$s_1=t_1$ and~$s_2=t_2$. Moreover, Lemma~\ref{lemma_lex_321} implies that~$t_1t_2t_3 >_{\ell} i_1i_2i_3$, hence~$t_1t_2\ge_{\ell} i_1i_2$ and~$s_1s_2\ge_{\ell} i_1i_2$. Note that~$r^{(t+1)}_{i_1}r^{(t+1)}_{i_2}r^{(t+1)}_{i_3}$ is an occurrence of~$\tilde{2}31$ in~$R^{(t+1)}$, so we must have~$j_1j_2j_3\le_{\ell} i_1 i_2 i_3$. Our goal is to show that~$i_1=j_1$ and~$i_2=j_2$. We shall proceed by contradiction, so we assume that~$j_1<i_1$ or~$j_2<i_2$. Our strategy consists in finding an occurrence of~$\tilde{2}31$ in~$R^{(t)}$ such that the indices of its first two elements strictly precede~$i_1i_2$ (in the lexicographical order). Indeed, this would imply that~$s_1s_2<_{\ell}i_1i_2$, since~$s_1s_2s_3=\lmost(R^{(t)},\tilde{2}31)$, which is impossible since we know that~$s_1s_2\ge_{\ell}i_1 i_2$.

Suppose first that~$j_1<i_1$. If~$\{j_2,j_3\}\cap\{i_1,i_2\}=\emptyset$, then~$r^{(t)}_{j_1}r^{(t)}_{j_2}r^{(t)}_{j_3}$ is the desired occurrence of~$\tilde{2}31$ in~$R^{(t)}$, since in this case~$j_1,j_2,j_3$ are not involved in the transition from~$R^{(t)}$ to~$R^{(t+1)}$ and we are assuming that~$j_1<i_1$. Therefore at least one of~$j_2$ and~$j_3$ must coincide with either~$i_1$ or~$i_2$. We will now show that, in each case, we are able to find an occurrence of~$\tilde{2}31$ in~$R^{(t)}$ with the desired property.

\begin{itemize}
\item If~$j_2=i_1$, then~$r^{(t+1)}_{j_2}=r^{(t+1)}_{i_1}<r^{(t)}_{i_1}$, hence~$r^{(t)}_{j_1}r^{(t)}_{j_2}r^{(t)}_{j_3}$ is an occurrence of~$\tilde{2}31$ in~$R^{(t)}$, and~$j_1j_2<_{\ell}i_1i_2$.
\item If~$j_2=i_2$, then~$r^{(t)}_{j_1}r^{(t)}_{i_1}r^{(t)}_{j_3}$ is an occurrence~$\tilde{2}31$ in~$R^{(t)}$, and~$j_1i_1 <_{\ell}i_1i_2$.
\item If~$j_3=i_1$, then~$r^{(t)}_{j_1}r^{(t)}_{j_2}r^{(t)}_{i_2}$ is an occurrence of~$\tilde{2}31$ in~$R^{(t)}$, and~$j_1j_2<_{\ell}i_1i_2$.
\item If~$j_3=i_2$, then~$r^{(t)}_{i_2}<r^{(t)}_{i_1}=r^{(t+1)}_{i_2}$, hence~$r^{(t)}_{j_1}r^{(t)}_{j_2}r^{(t)}_{i_2}$ is an occurrence of~$\tilde{2}31$ in~$R^{(t)}$, and~$j_1j_2< i_1i_2$.
\end{itemize}

The above case-by-case analysis shows that~$i_1=j_1$. Moreover, we cannot have~$j_2<i_2$; this is again due to the restrictions illustrated in Figure~\ref{figure_321_231}.
\end{proof}

\begin{theorem}\label{theorem_bij_12321_12231}
The map~$\delta$ is a size-preserving bijection between~$\RGF(12321)$ and~$\RGF(12231)$. Moreover, $\delta$ preserves the maximum value of a {\rgf}.
\end{theorem}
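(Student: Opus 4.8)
The plan is to exhibit an explicit two-sided inverse $\gamma\colon\RGF(12321)\to\RGF(12231)$ obtained by running the defining procedure of $\delta$ backwards: given $S=S^{(0)}\in\RGF(321)$, as long as $S^{(t)}$ contains $\tilde{2}31$ I swap the two entries in positions $j_1,j_2$, where $j_1j_2j_3=\lmost(S^{(t)},\tilde{2}31)$, and I set $\gamma(S)=S^{(m)}$ for the first index $m$ with $S^{(m)}$ avoiding $\tilde{2}31$. Throughout I use $\RGF(12231)=\RGF(\tilde{2}31)$ and $\RGF(12321)=\RGF(321)$ (Lemma~\ref{lemma_RGF_prop}), together with the elementary observation that transposing the first two entries of an occurrence of $\tilde{2}31$ turns it into an occurrence of $321$, and conversely. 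The map $\delta$ itself is already well defined and size preserving: Lemma~\ref{lemma_lex_321} guarantees termination, and since every step merely transposes two entries of the current {\rgf}, the whole multiset of values is left unchanged. In particular the maximum is preserved at each step, so $\max(\delta(R))=\max(R)$; this disposes of the last assertion with no further work.

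The core of the argument is to show $\gamma\circ\delta=\mathrm{id}$ and $\delta\circ\gamma=\mathrm{id}$, and here the key tool is Lemma~\ref{lemma_inverse_gamma}. Fix $R\in\RGF(\tilde{2}31)$ and let $R=R^{(0)}\to\cdots\to R^{(k)}=\delta(R)$ be the chain produced by $\delta$, with $\rmost(R^{(t)},321)=i_1i_2i_3$ at stage $t$. For each $t<k$ the swapped triple is an occurrence of $\tilde{2}31$ in $R^{(t+1)}$, and Lemma~\ref{lemma_inverse_gamma} identifies the first two coordinates of $\lmost(R^{(t+1)},\tilde{2}31)$ with $i_1,i_2$. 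Hence one $\gamma$-step applied to $R^{(t+1)}$ transposes positions $i_1,i_2$ again and recovers $R^{(t)}$. Since $R^{(0)}=R$ avoids $\tilde{2}31$, the $\gamma$-procedure started at $\delta(R)=R^{(k)}$ retraces the chain $R^{(k)},R^{(k-1)},\dots,R^{(0)}$ and halts precisely at $R$; therefore $\gamma(\delta(R))=R$. This direction needs nothing beyond Lemma~\ref{lemma_inverse_gamma}, because the chain is finite and fully explicit.

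For the opposite composition I would run the symmetric argument, which is where the real work lies. I need the two mirror statements: (a) a $\gamma$-step strictly increases $\lmost(\cdot,\tilde{2}31)$ in the lexicographic order, so that $\gamma$ terminates and is well defined on all of $\RGF(321)$; and (b) if $S^{(t+1)}$ arises from $S^{(t)}$ by the $\gamma$-step at $\lmost(S^{(t)},\tilde{2}31)=j_1j_2j_3$, then the first two coordinates of $\rmost(S^{(t+1)},321)$ are $j_1,j_2$. Statement (a) is the analogue of Lemma~\ref{lemma_lex_321} and (b) is the analogue of Lemma~\ref{lemma_inverse_gamma}; both should be proved by the very same case analyses, reading the diagram of Figure~\ref{figure_321_231} from right to left and exchanging the roles of ``rightmost $321$'' and ``leftmost $\tilde{2}31$''. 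Granting (a) and (b), the identical retracing argument yields $\delta(\gamma(S))=S$ for every $S\in\RGF(321)$: the $\gamma$-chain $S=S^{(0)}\to\cdots\to S^{(m)}=\gamma(S)$ is finite by (a), each $S^{(t+1)}$ contains the $321$ just created, and by (b) a $\delta$-step undoes each $\gamma$-step, so $\delta$ run on $\gamma(S)$ returns to $S$ and recognizes it as terminal since $S$ avoids $321$.

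The main obstacle is thus not the bookkeeping of the inverse correspondence but re-deriving the dual lemmas (a) and (b). Their proofs mirror those of Lemmas~\ref{lemma_lex_321} and~\ref{lemma_inverse_gamma}, and the delicate point is to check, case by case, that transposing the first two entries of the leftmost $\tilde{2}31$ cannot create a lexicographically earlier $321$ than predicted, nor an unexpectedly early $\tilde{2}31$, using exactly the positional constraints recorded in Figure~\ref{figure_321_231}. Once (a) and (b) are in place, $\delta$ and $\gamma$ are mutually inverse and size preserving, and the multiset-of-values observation makes both maximum preserving, completing the proof.
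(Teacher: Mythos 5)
Your proposal is correct and takes essentially the same route as the paper: the paper's entire argument for this theorem consists of Lemmas~\ref{lemma_lex_321} and~\ref{lemma_inverse_gamma}, after which the theorem is stated with no further proof, and your derivation of $\gamma\circ\delta=\mathrm{id}$ from those two lemmas (together with the multiset-of-values observation giving size and maximum preservation) is exactly that implicit argument made explicit. Your further point --- that $\delta\circ\gamma=\mathrm{id}$, and hence surjectivity, requires dual versions of the two lemmas (termination of $\gamma$, and that a $\delta$-step undoes each $\gamma$-step), provable by mirroring the same case analyses --- is accurate; the paper silently omits this direction altogether, so your reconstruction is, if anything, more careful than the published text about what remains to be checked.
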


As a consequence of Theorem~\ref{theorem_bij_12321_12231} and Corollary~\ref{corollary_enumer_12321}, the distribution of the maximum letter in {\rgfs} over~$\RGF_n(12231)$ is given by~$\sum_{i=k}^{n}\binom{n}{i}\narayana_{i,k}$. This provides a combinatorial (even if not direct) proof of the formula stated in Open Problem~\ref{open_prob_distribution_132} for the distribution of ltr-minima of~$\Sort(132)$.

\chapter{\texorpdfstring{The~$(\sigma,\tau)$-machine}{The (sigma,tau)-machine}}\label{chapter_sigma_tau_machine}

In this chapter we consider~$\Sigma$-machines where~$\Sigma=\lbrace\sigma,\tau\rbrace$ is a pair of patterns of length three. For specific pairs of patterns, the set of~$(\sigma,\tau)$-sortable permutations is enumerated by either the (binomial transform of) Catalan numbers or the Schr\"oder numbers. We also determine an infinite family of pairs of patterns, namely the pairs~$(\sigma,\hat{\sigma})$, whose enumeration is given by the Catalan numbers. Some of the mentioned cases were discussed in~\cite{BCKV}. Here we will sometimes follow an alternative approach. We also provide enumerative results for other pairs of patterns, which are currently unpublished.

\section{Two decomposition lemmas}

The avoidance of specific pairs of patterns deeply influences the geometric structure of the output of a~$(\sigma,\tau)$-stack. The following two decomposition lemmas are particularly useful.

\begin{lemma}\label{theorem_ltr-max_pairs_of_pat_312}
Consider the~$(312,\sigma)$-machine, where~$\sigma=\sigma_1\cdots\sigma_{k-1}\sigma_k\in\Perm_k$, with~$\sigma_{k-1}<\sigma_k$ and~$k\ge 3$. Given a permutation~$\pi$ of length~$n$, let~$\pi=M_1B_1\cdots M_tB_t$ be its ltr-max decomposition. Then:
\begin{enumerate}
\item Every time an ltr-maximum~$M_i$ is pushed into the~$(312,\sigma)$-stack, the~$(312,\sigma)$-stack contains the elements~$M_i,M_{i-1},\dots,M_2,M_1$, reading from top to bottom. Moreover, we have:
$$
\out{312,\sigma}(\pi)=\tilde{B}_1\cdots\tilde{B}_tM_t\cdots M_1,
$$
where~$\tilde{B}_i$ is a suitable rearrangement of~$B_i$.
\item If~$\pi$ is~$(312,\sigma)$-sortable, then~$M_j=n-t+j$, for each~$j=1,\dots,t$. Therefore~$\lbrace M_1,\dots,M_t\rbrace=\lbrace n-t+1,\dots,n\rbrace$.
\end{enumerate}
\end{lemma}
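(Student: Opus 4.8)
The plan is to establish the two items in order, with item~1 carrying essentially all the structural work and item~2 following quickly from the output formula together with Lemma~\ref{lemma_out_231}. The single observation I would isolate at the outset is that a strictly decreasing sequence avoids both $312$ and $\sigma$: it trivially avoids $312$, and it avoids $\sigma$ because $\sigma$ has the ascent $\sigma_{k-1}<\sigma_k$ whereas a decreasing sequence has no ascent at all. With this in hand I would prove item~1 by induction on $i$, using the invariant: immediately after $M_i$ is pushed, the content of the $(312,\sigma)$-stack read from top to bottom is exactly $M_iM_{i-1}\cdots M_1$. The base case $i=1$ is immediate, since $M_1=\pi_1$ enters the empty stack.

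For the inductive step I need two sub-claims. Sub-claim~(a): while the elements of $B_i$ are processed, no ltr-maximum $M_1,\dots,M_i$ is ever popped. The point is that an ltr-maximum can only be popped when it is at the top of the stack; at such a moment the whole stack reads $M_jM_{j-1}\cdots M_1$, a decreasing sequence, and the next input $y\in B_i$ satisfies $y<M_j$. Appending $y$ yields $yM_jM_{j-1}\cdots M_1$, whose only ascent is the initial pair $(y,M_j)$; since $\sigma$ has length $k\ge 3$, its terminal ascent $\sigma_{k-1}\sigma_k$ cannot be realized at the first two positions of a subsequence, and no $312$ can appear either, so the push is legal and $M_j$ stays. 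Sub-claim~(b): by the time $M_{i+1}$ is the next input, all of $B_i$ has been popped. Here I would use that $M_{i+1}$ is the current global maximum, so pushing it creates a $312$ unless the stack below it is already decreasing from top to bottom; hence the greedy phase pops until the remaining stack is decreasing. Any surviving $z\in B_i$ would sit immediately above $M_i$ with $z<M_i$, breaking monotonicity at the $M_i$ junction, so no block element survives. Thus the stack is again $M_i\cdots M_1$ and pushing $M_{i+1}$ gives $M_{i+1}M_i\cdots M_1$, closing the induction. The output formula then follows: the elements of each $B_i$ are exactly those popped strictly between the pushes of $M_i$ and $M_{i+1}$, so they emerge as blocks $\tilde B_1,\dots,\tilde B_t$ in this order, after which the final emptying outputs $M_tM_{t-1}\cdots M_1$.

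For item~2, assume $\pi$ is $(312,\sigma)$-sortable, so by Lemma~\ref{lemma_out_231} the word $\out{312,\sigma}(\pi)=\tilde B_1\cdots\tilde B_tM_t\cdots M_1$ avoids $231$. If $t=1$ the claim is trivial, since $M_1=\pi_1=n$. If $t\ge 2$ and some block element $b$ satisfied $b>M_1$, then in the output $b$ precedes $M_t=n$, which precedes $M_1$, and $M_1<b<n$; hence $b\,M_t\,M_1$ is an occurrence of $231$, a contradiction. Therefore every block element is smaller than $M_1$; as there are $n-t$ block elements and $t$ ltr-maxima with $M_1<\cdots<M_t=n$, this forces $\{M_1,\dots,M_t\}=\{n-t+1,\dots,n\}$, i.e.\ $M_j=n-t+j$ for each $j$.

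The main obstacle is sub-claim~(b): making rigorous that the greedy pop phase preceding the push of the running maximum evacuates the \emph{entire} current block rather than only part of it. The clean way I would phrase this is through the equivalence ``pushing the global maximum is legal if and only if the stack is decreasing from top to bottom'', which reduces (b) to the single observation that a residual block element immediately above $M_i$ destroys monotonicity. Everything else is bookkeeping on the order in which blocks and ltr-maxima leave the stack.
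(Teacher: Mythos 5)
Your proposal is correct and follows essentially the same route as the paper's proof: the ltr-maxima pile up at the bottom in decreasing order because both forbidden patterns have their ascent away from the first position (so a push onto the decreasing stack of maxima is always legal), each block~$B_i$ is flushed before~$M_{i+1}$ enters because a surviving block element would complete an occurrence of~$312$ beneath~$M_{i+1}$, and item~2 follows from a~$231$ occurrence~$b\,M_t\,M_1$ in the output. The only cosmetic differences are your explicit induction/invariant packaging of item~1 and, in item~2, bounding every block element by~$M_1$ directly instead of the paper's pigeonhole argument on the top~$t$ values.
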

\begin{proof}
\begin{enumerate}
\item Let us consider the action of the~$(312,\sigma)$-stack on input~$\pi$. Note that, since~$k\ge 3$, the element~$M_1$ remains at the bottom of the ~$(312,\sigma)$-stack until the end of the sorting procedure. Now, for each~$x\in B_1$, the elements~$M_2xM_1$ form an occurrence of~$312$. Therefore the block~$B_1$ is extracted before~$M_2$ enters the~$(312,\sigma)$-stack. As soon as~$M_2$ enters, the stack contains~$M_2M_1$, reading from top to bottom. Since~$M_2>M_1$, but~$\sigma_{k-1}<\sigma_k$ by hypothesis, $M_2$ cannot play the role of either~$\sigma_{k-1}$ in an occurrence of~$\sigma$ or~$1$ in an occurrence of~$312$. Thus~$M_2$ remains at the bottom of the stack (above~$M_1$) until the end of the sorting procedure. The thesis follows by iterating the same argument on each block~$B_i$, for~$i\ge 2$.
\item Suppose, for a contradiction, that there is an element~$j\in\lbrace n-t+1,\dots,n\rbrace$ which is not an ltr-maximum. Note that~$j\neq\pi_1=M_1$ and~$j\neq n=M_t$. Then, by what proved above, $\out{312,\sigma}(\pi)$ contains an occurrence~$jnM_1$ of~$231$, which contradicts the hypothesis that~$\pi$ is~$(312,\sigma)$-sortable.
\end{enumerate}
\end{proof}

\begin{lemma}\label{theorem_ltr-min_pairs_of_pat_132}
Consider the~$(132,\sigma)$-machine, where~$\sigma=\sigma_1\cdots\sigma_{k-1}\sigma_k\in\Perm_k$, with~$\sigma_{k-1}>\sigma_k$ and~$k\ge 3$. Given a permutation~$\pi$ of length~$n$, let~$\pi=m_1 B_1\cdots m_t B_t$ be its ltr-min decomposition. Then:
\begin{enumerate}
\item Every time an ltr-minimum~$m_i$ is pushed into the~$(132,\sigma)$-stack, the~$(132,\sigma)$-stack contains the elements~$m_i,m_{i-1},\dots,m_2,m_1$, reading from top to bottom. Moreover, we have:
$$
\out{132,\sigma}(\pi)=\tilde{B}_1\cdots\tilde{B}_tm_t\cdots m_1,
$$
where~$\tilde{B}_i$ is a suitable rearrangement of~$B_i$.
\item If~$\pi$ is~$(132,\sigma)$-sortable, then~$\tilde{B}_i$ is increasing for each~$i$. Moreover, for each~$i\le t-1$, we have~$B_i>B_{i+1}$ (i.e.~$x>y$ for each~$x\in B_i, y\in B_{i+1}$).
\end{enumerate}
\end{lemma}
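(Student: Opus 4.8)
The plan is to transpose to the paired stack the analysis already performed for the pure $132$-stack in Lemma~\ref{lemma_ltr_min_dec_132} (and the unimodality of Lemma~\ref{lemma_unimodal_stack_132}), and then to extract item~2 from the $231$-avoidance criterion of Lemma~\ref{lemma_out_231}.

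For item~1 I would follow the left-to-right minima through the sorting process and argue that they stay pinned to the bottom of the $(132,\sigma)$-stack, in the order $m_i,\dots,m_1$ read from top to bottom, until the final flush. The $132$-part of the restriction is handled exactly as in Lemma~\ref{lemma_ltr_min_dec_132}: when the next input letter is the minimum $m_{i+1}$, every block letter $x\in B_i$ still lying above $m_i$ satisfies $m_{i+1}<m_i<x$, so $m_{i+1}\,x\,m_i\simeq 132$; hence all of $B_i$ must be popped before $m_{i+1}$ can be pushed, which places $m_{i+1}$ directly above $m_i$. The genuinely new point is to check that the additional $\sigma$-restriction never ejects a minimum, and here I would use the hypothesis $\sigma_{k-1}>\sigma_k$. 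Whenever a minimum is exposed at the top of the stack, everything below it is again a block of minima, which read from top to bottom is strictly increasing ($m_i<m_{i-1}<\cdots<m_1$); pushing any letter $a$ on top can create neither a $132$ (that would require a descent among the chosen stacked letters) nor an occurrence of $\sigma$ (that would force $\sigma_2\cdots\sigma_k$ to be order-isomorphic to an increasing word, contradicting $\sigma_{k-1}>\sigma_k$). Consequently the only letters ever popped before the end are block letters, leaving in the grouped order $\tilde B_1,\dots,\tilde B_t$, and the minima are flushed last, giving $\out{132,\sigma}(\pi)=\tilde B_1\cdots\tilde B_t\,m_t\cdots m_1$.

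For item~2 I would invoke Lemma~\ref{lemma_out_231}: $\pi$ is $(132,\sigma)$-sortable if and only if $\out{132,\sigma}(\pi)$ avoids $231$. Using the output shape from item~1, let $C=\tilde B_1\cdots\tilde B_t$ be the ``core'' prefix preceding the minima. Since $m_t=1$ is the global minimum and occurs after all of $C$ in the output, any ascent $u<v$ inside $C$ (with $u$ preceding $v$) would produce $u\,v\,m_t\simeq 231$, a contradiction. Hence $C$ contains no ascent, i.e.\ it is strictly decreasing. In particular each factor $\tilde B_i$ is decreasing, and for $i<j$ every letter of $\tilde B_i$ exceeds every letter of $\tilde B_j$; taking $j=i+1$ and recalling that $\tilde B_i,\tilde B_{i+1}$ are rearrangements of $B_i,B_{i+1}$, this yields $B_i>B_{i+1}$ for each $i\le t-1$, which is the stated block ordering.

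The bookkeeping of item~1 is routine; the one delicate step — and the part I would write out in full detail — is the claim that the $\sigma$-restriction can never pop a minimum. Its entire weight rests on the descent $\sigma_{k-1}>\sigma_k$ at the foot of $\sigma$: this descent is precisely what prevents the increasing run of stacked minima from ever playing the role of the lower segment $\sigma_2\cdots\sigma_k$ of a forbidden pattern, and hence keeps the minima at the bottom. Once that is secured, both the output formula of item~1 and the monotonicity of the core in item~2 follow immediately from the $231$-criterion of Lemma~\ref{lemma_out_231}.
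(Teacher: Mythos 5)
Your proposal is correct, and both items follow essentially the same route as the paper: item~1 mirrors the paper's argument for Lemma~\ref{theorem_ltr-max_pairs_of_pat_312} (the minima stay pinned at the bottom, the $132$-restriction flushes each block before the next minimum enters, and the final descent $\sigma_{k-1}>\sigma_k$ prevents the run of stacked minima, which reads increasingly from top to bottom, from supplying the segment $\sigma_2\cdots\sigma_k$ of a forbidden occurrence), while item~2 is the same application of the $231$-criterion of Lemma~\ref{lemma_out_231} with $m_t=1$ playing the role of the~``$1$''.

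One point you pass over in silence deserves to be flagged explicitly: your argument proves that each $\tilde{B}_i$ is \emph{decreasing}, whereas the statement says \emph{increasing}. You are right, and the statement (together with the paper's own proof, which claims a \emph{descent} $xy$ yields $xym_t\simeq 231$) has the monotonicity direction garbled. A descent $x>y$ followed by $m_t$ gives the harmless pattern $321$; it is an \emph{ascent} $x<y$ followed by $m_t=1$ that produces the forbidden $231$, so sortability forces the entire prefix $\tilde{B}_1\cdots\tilde{B}_t$ of $\out{132,\sigma}(\pi)$ to contain no ascent, i.e.\ to be strictly decreasing --- exactly your conclusion (and by the same token the paper's ``suppose $x>y$ for $x\in B_i$, $y\in B_{i+1}$'' should read $x<y$). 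The corrected direction is also the one the paper actually uses downstream: in Lemma~\ref{lemma_132-231_equivalence} one has $\tilde{B}_i=\out{12}(B_i)$, and sortability of $\pi$ there requires $\out{12}(B_i)$ to be decreasing, consistently with Theorem~\ref{theorem_12machine}. So your proof establishes the correct version of the lemma; when your derivation contradicts the literal statement you are asked to prove, state the correction openly rather than substituting it tacitly.
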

\begin{proof}
\begin{enumerate}
\item The proof is analogous to the first part of Lemma~\ref{theorem_ltr-max_pairs_of_pat_312}.
\item Suppose that~$\pi$ is~$(132,\sigma)$-sortable. Suppose, for a contradiction, that~$\tilde{B}_i$ is not increasing, for some~$i$, and let~$xy$ be a descent in~$B_i$. Then, by what proved above, $\out{132,\sigma}(\pi)$ contains an occurrence~$xym_t$ of~$231$, which is impossible. Finally, suppose that~$x>y$, for~$x\in B_i$ and~$y\in B_{i+1}$. Then again~$xym_t$ is an occurrence of~$231$ in~$\out{132,\sigma}(\pi)$, a contradiction.
\end{enumerate}
\end{proof}

Some enumerative data for~$(\sigma,\tau)$-sortable permutations are reported in Table~\ref{table_pairs_of_pat}.

\begin{table}
\centering
\def\arraystretch{1.25}
\begin{tabular}{p{15mm}lcr}
\toprule
$(\sigma,\tau)$ & \textbf{Sequence}~$\lbrace\fsigma{\sigma,\tau}_n\rbrace_n$ & \textbf{Reference} & \textbf{OEIS}\\
\midrule
123,132\newline 123,213\newline 132,312 & 1, 2, 5, 14, 42, 132, 429, 1430 & Section~\ref{section_123_132} & A000108\\
231,321 & 1, 2, 5, 14, 42, 132, 429, 1430 & Remark~\ref{remark_R_circ_sigma_bij} & A000108\\
123,231 & 1, 2, 6, 21, 79, 310, 1252, 5168 &  & \\
123,312 & 1, 2, 5, 15, 51, 188, 731, 2950 & Section~\ref{section_123_312} & A007317\\
123,321 & 1, 2, 4, 7, 14, 28, 56, 112 & Section~\ref{section_123_321_and_132_321} & \\
132,213 & 1, 2, 5, 16, 61, 261, 1206, 5882 &  & \\
132,231 & 1, 2, 6, 22, 90, 394, 1806, 8558 & Section~\ref{section_132_231} & A006318\\
132,321 & 1, 2, 4, 10, 26, 72, 206, 606 & Section~\ref{section_123_321_and_132_321} & A102407\\
213,231 & 1, 2, 6, 23, 101, 483, 2450, 12978 &  & \\
213,312 & 1, 2, 5, 16, 61, 261, 1206, 5882 &  & \\
213,321 & 1, 2, 4, 12, 46, 200, 941, 4677 &  & \\
231,312 & 1, 2, 6, 23, 101, 484, 2471, 13254 &  & \\
312,321 & 1, 2, 4, 10, 28, 85, 274, 925 &  & \\
\bottomrule
\end{tabular}
\caption[Enumerative results for the~$(\sigma,\tau)$-machine.]{Enumerative results for~$(\sigma,\tau)$-sortable permutations, with~$\sigma$ and~$\tau$ patterns of length three and starting from~$(\sigma,\tau)$ sortable permutations of length one.}\label{table_pairs_of_pat}
\end{table}

\section{\texorpdfstring{The~$(132,231)$-machine}{The (132,231)-machine}}\label{section_132_231}

In this section we analyze the~$(132,231)$-machine. We use Lemma~\ref{theorem_ltr-min_pairs_of_pat_132} to provide a geometric description of the~$(132,231)$-sortable permutations, which we exploit to show that~$\Sort(132,231)=\Perm(1324,2314)$. As it is well known, the set~$\Perm(1324,2314)$ is enumerated by the large Schr\"oder numbers.

\begin{lemma}\label{lemma_132-231_equivalence}
Let~$\pi=m_1B_1\cdots m_tB_t$ be the ltr-min decomposition of a permutation~$\pi$. Write~$\out{132,231}(\pi)=\tilde{B}_1\cdots\tilde{B}_tm_t\cdots m_1$ as in Lemma~\ref{theorem_ltr-min_pairs_of_pat_132}. Then~$\tilde{B}_i=\out{12}(B_i)$, for each~$i$.
\end{lemma}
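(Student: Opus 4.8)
The plan is to show that, throughout the time the $(132,231)$-stack is digesting a single block $B_i$, it performs exactly the same sequence of push and pop operations on the entries of $B_i$ as an empty $12$-stack would on $B_i$ alone; since both machines then output the entries of $B_i$ in the order they are popped, this yields $\tilde B_i=\out{12}(B_i)$. The whole argument hinges on Lemma~\ref{theorem_ltr-min_pairs_of_pat_132}: while $B_i$ is being processed, the stack consists of $m_i,m_{i-1},\dots,m_1$ at the bottom (with $m_i$ on top of this group and $m_i$ smaller than every entry of $B_i$), topped by the entries of $B_i$ currently held; no entry of a previous block is present, and $B_i$ is entirely flushed before $m_{i+1}$ enters.

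First I would record an invariant, maintained by simultaneous induction on the number of operations: the portion of the stack lying above $m_i$ (the entries $c_1,\dots,c_p$ of $B_i$ still in the stack, read $c_1$ bottom to $c_p$ top) is strictly decreasing from top to bottom, i.e. $c_p>c_{p-1}>\dots>c_1$, so $c_p$ is the largest entry of $B_i$ currently stacked. This is exactly the shape maintained by a $12$-stack, whose contents must avoid $12$ read from top to bottom.

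The core step is then to identify the pop condition. Let $x$ be the next input entry of $B_i$ and assume the $B_i$-portion is nonempty with top $c_p$. If $x<c_p$, then since $m_i<x<c_p$ and these three occupy the positions top, next, and (further down) the bottom group, the triple $x\,c_p\,m_i$ is an occurrence of $231$ inside the stack, so the push is forbidden and $c_p$ must be popped. Conversely, if $x>c_p$ then, by the invariant, $x$ exceeds every $c_j$ while all $m_\ell$ lie below $m_i<x$; hence no entry below $x$ is larger than $x$, so $x$ can be neither the ``$1$'' of a $132$ nor the ``$2$'' of a $231$ (both roles require a strictly larger entry further down), and since the rest of the stack was already valid the push creates no forbidden pattern and is legal. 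When the $B_i$-portion is empty the push is likewise always legal, as placing $x$ onto the (top-to-bottom) increasing group $m_i,\dots,m_1$ produces no peak. In every case the decision ``pop iff $x$ is smaller than the current top'' depends only on the mutual order of the entries of $B_i$, which is precisely the $12$-stack rule; the same comparison governs the final flush, triggered by $m_{i+1}$ (or the end of the input), because pushing $m_{i+1}<m_i$ onto any remaining $c_j$ creates the occurrence $m_{i+1}\,c_j\,m_i\simeq 132$.

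I expect the main obstacle to be the careful bookkeeping in the ``push allowed'' direction: one must verify that the entry $m_i$ sitting at the bottom is exactly what turns the combined $\{132,231\}$ constraint into the single $12$ constraint, and that no forbidden pattern can sneak in using the ltr-minima in some other role. Establishing the top-to-bottom decreasing invariant simultaneously with the behavioural equivalence (rather than separately) is what keeps this clean, since it guarantees that the current top is the running maximum of the stacked part of $B_i$, so that the single comparison of $x$ against $c_p$ correctly decides every push. Once this is in place, the identical push/pop schedules force the popped subsequence of $B_i$ to coincide with $\out{12}(B_i)$, completing the proof.
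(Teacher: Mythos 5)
Your overall strategy is the same as the paper's: fix a block $B_i$, use Lemma~\ref{theorem_ltr-min_pairs_of_pat_132} to pin down the stack content $m_i,m_{i-1},\dots,m_1$ below the entries of $B_i$, and show that on $B_i$ the $(132,231)$-stack pops exactly when an empty $12$-stack would. Your pop direction and the final flush are fine: if the next entry $x$ satisfies $x<c_p$ then $x\,c_p\,m_i\simeq 231$ forces a pop, and $m_{i+1}\,c_j\,m_i\simeq 132$ empties the block before $m_{i+1}$ enters.

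The gap is in the push-legality direction. You justify that pushing $x>c_p$ is legal by asserting that ``no entry below $x$ is larger than $x$.'' This is false in general: the earlier left-to-right minima $m_{i-1},\dots,m_1$ sit below $m_i$ in the stack, and since ltr-minima decrease from left to right in $\pi$ we have $m_i<m_{i-1}<\cdots<m_1$, so these deeper entries may well exceed $x$. For example, for $\pi=51324$, when $x=3$ is about to enter, the entry $5=m_1$ lies below it in the stack. (Your own next sentence, which correctly describes $m_i,\dots,m_1$ as a top-to-bottom \emph{increasing} group, contradicts the assertion.) The conclusion that the push is legal is still true, but it needs an argument about the structure of the minima rather than their absence: every entry below $x$ exceeding $x$ is some $m_\ell$ with $\ell<i$, and since these increase going down, no two of them can play the ``$3$ above $2$'' required for $x$ to be the ``$1$'' of a $132$; moreover everything below such an $m_\ell$ is even larger, so there is no ``$1$'' available below it and $x$ cannot be the ``$2$'' of a $231$ either. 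The paper's proof sidesteps this pitfall by arguing directly on the pop side: in any triple $y\,z\,w$ (with $z$ above $w$) witnessing an occurrence of $132$ or $231$ that triggers the $(132,231)$-stack, one has $z>w$, and since the stacked ltr-minima increase going down, $z$ cannot be an ltr-minimum; hence $z\in B_i$, $yz\simeq 12$, and the $12$-stack is triggered as well. You should either repair your push-legality step along the lines above or replace it with this observation; as written, that step does not hold.
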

\begin{proof}
Let~$i\ge 1$. Consider the instant when~$m_i$ enters the~$(132,231)$-stack, that is as soon as~$B_i$ is the next block of~$\pi$ to be processed. Here, by Lemma~\ref{theorem_ltr-min_pairs_of_pat_132}, the stack contains~$m_i,m_{i-1},\dots,m_1$, reading from top to bottom. We shall prove that the behavior of the~$(132,231)$-stack on~$B_i$ is equivalent to the behavior of an empty~$12$-stack on input~$B_i$. In other words, the~$(132,231)$-stack performs a pop operation if and only if a~$12$-stack that ignores the current content of the~$(132,231)$-stack does the same. If either the next element of the input is~$m_{i+1}$ or the input is empty, then both the~$(132,231)$-stack and the~$12$-stack perform a pop operation. Otherwise, suppose that the next element of the input is~$y$, for some~$y$ in~$B_i$. Suppose that the~$(132,231)$-stack pops the element~$x\in B_i$. Thus the~$(132,231)$-stack contains two elements~$z,w$, with~$z$ above~$w$, such that~$yzw$ is an occurrence of either~$132$ or~$231$. Note that, since~$z>w$, $z$ is not an ltr-minimum. Therefore~$yz$ is an occurrence of~$12$ and the~$12$-stack performs a pop operation too, as desired. Conversely, suppose that the~$12$-stack pops the element~$x$. Thus the~$12$-stack contains an element~$z$ such that~$z>y$. Therefore~$yzm_i$ is an occurrence of~$231$ and the~$(132,231)$-stack performs a pop operation as well, as desired.
\end{proof}

\begin{corollary}
Let~$\pi=m_1B_1\cdots m_tB_t$ be the ltr-min decomposition of a permutation~$\pi$. The following are equivalent:
\begin{enumerate}
\item $B_i$ avoids~$213$ and~$B_i>B_{i+1}$, for each~$i$.
\item $\pi$ is~$(132,231)$-sortable.
\item $\pi\in\Perm(1324,2314)$.
\end{enumerate}
In particular, we have~$\Sort(132,231)=\Perm(1324,2314)$.
\end{corollary}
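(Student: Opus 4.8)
The plan is to prove the cyclic chain of implications by combining the explicit shape of the output word with the characterisation $\Sort(12)=\Perm(213)$ from Theorem~\ref{theorem_12machine}. First I would assemble the output form: Lemma~\ref{theorem_ltr-min_pairs_of_pat_132} together with Lemma~\ref{lemma_132-231_equivalence} give
$$
\out{132,231}(\pi)=\out{12}(B_1)\cdots\out{12}(B_t)\,m_t m_{t-1}\cdots m_1 ,
$$
and since $m_t=1<m_{t-1}<\cdots<m_1$ the suffix $m_t\cdots m_1$ is increasing. By Lemma~\ref{lemma_out_231}, $(132,231)$-sortability is exactly $231$-avoidance of this word, so condition (2) is really a statement about the displayed output.

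For (1)$\Rightarrow$(2): under (1) each $B_i$ avoids $213$, so by Theorem~\ref{theorem_12machine} each factor $\out{12}(B_i)$ equals $\antiid$ (decreasing); combined with $B_i>B_{i+1}$ for all $i$, the whole prefix $\out{12}(B_1)\cdots\out{12}(B_t)$ is a single decreasing run. The output is thus a decreasing run followed by an increasing run, and a short check on the three possible position patterns of a putative $231$ shows no occurrence exists. For (2)$\Rightarrow$(1): if some $B_i$ contained $213$, then Theorem~\ref{theorem_12machine} forces the factor $\out{12}(B_i)$ to contain $231$; and if $x\in B_i$, $y\in B_j$ with $i<j$ and $x<y$, then $x\,y\,m_t$ is an occurrence of $231$ in the output, since $m_t=1<x<y$ and $x$ precedes $y$ precedes $m_t$. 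Either way the output contains $231$, contradicting (2).

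For (1)$\Leftrightarrow$(3) I would argue both directions against the block data. If (1) fails because $B_i$ contains an occurrence $b a c$ of $213$, then $m_i b a c\simeq 1324$ (using $m_i<$ every element of $B_i$); if it fails because $x\in B_i$, $y\in B_j$, $i<j$, $x<y$, then $m_i\,x\,m_j\,y\simeq 2314$ (using $m_j<m_i<x<y$). Conversely, assume (1) and suppose $\pi$ contained $1324$ or $2314$; in both patterns the sub-occurrence on the last three of the four positions is a $213$, say $b a c$ with $a<b<c$, and the leading entry is smaller than $b$, so $b$ is not a left-to-right minimum and hence lies in some block $B_i$. Using the value-decreasing property $B_i>B_j$ for $i<j$, I would show that $a$ can be neither in a later block nor a left-to-right minimum (otherwise $c>b$ would be forced into a block strictly below $B_i$, i.e.\ $c<b$, a contradiction), so $a\in B_i$, and the same squeeze then forces $c\in B_i$; thus $b a c$ is a $213$ inside $B_i$, contradicting (1).

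Finally, $\Sort(132,231)=\Perm(1324,2314)$ is exactly the equivalence (2)$\Leftrightarrow$(3) quantified over all $\pi$. The main obstacle is the implication (1)$\Rightarrow$(3): the delicate point is proving that a $213$ witnessed anywhere in a permutation satisfying (1) must collapse into a single block, which is where the two halves of (1) — "blocks are value-decreasing" and "each block avoids $213$" — have to be used simultaneously. The remaining steps are short pattern bookkeeping, and the Schröder enumeration of $\Perm(1324,2314)$ is quoted as known.
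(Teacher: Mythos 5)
Your proof is correct, and it rests on the same two pillars as the paper's own argument --- the factorisation
$\out{132,231}(\pi)=\out{12}(B_1)\cdots\out{12}(B_t)\,m_t\cdots m_1$ coming from Lemma~\ref{theorem_ltr-min_pairs_of_pat_132} and Lemma~\ref{lemma_132-231_equivalence}, together with $\Sort(12)=\Perm(213)$ from Theorem~\ref{theorem_12machine} --- but the logical architecture is genuinely different. The paper proves the cycle $(1)\Rightarrow(2)\Rightarrow(3)\Rightarrow(1)$, so its bridge between sortability and the patterns $1324$, $2314$ is the implication $(2)\Rightarrow(3)$, which is argued through the machine: assuming sortability, an occurrence of $1324$ or $2314$ is traced through the blocks until a $231$ appears in the output (and this step needs a further sub-case analysis on where the entry playing the role of the ``$4$'' sits, a point the paper partly leaves to the reader). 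You instead make condition $(1)$ the hub, proving $(1)\Leftrightarrow(2)$ and $(1)\Leftrightarrow(3)$ separately; your $(1)\Rightarrow(3)$ ``squeeze'' argument --- any occurrence of $1324$ or $2314$ ends in a $213$ whose three entries are forced into a single block by the value-decreasing property $B_i>B_{i+1}$ --- never mentions the stack at all, and your $\lnot(1)\Rightarrow\lnot(3)$ direction coincides with the paper's $(3)\Rightarrow(1)$. This reorganisation buys a clean separation of concerns: $(1)\Leftrightarrow(3)$ becomes a purely combinatorial statement about the ltr-min decomposition, while the machine enters only in $(1)\Leftrightarrow(2)$, where the decreasing-prefix-then-increasing-suffix shape of the output makes $231$-avoidance immediate; the price is that you must prove both directions of $(1)\Leftrightarrow(2)$ explicitly, whereas the paper gets one of them for free from the cycle.
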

\begin{proof}
By Lemma~\ref{theorem_ltr-min_pairs_of_pat_132} and Lemma~\ref{lemma_132-231_equivalence}, we have:
$$
\out{132,231}(\pi)=\out{12}(B_1)\cdots\out{12}(B_t)m_t\cdots m_1.
$$
We will use this decomposition for the rest of the proof.

\begin{itemize}
\item $\left[1\Rightarrow 2\right]$ Suppose, for a contradiction, that~$\out{132,231}(\pi)$ contains an occurrence~$bca$ of~$231$. Note that, since~$c>a$, while~$m_t<\cdots<m_1$, $c$ is not an ltr-minimum of~$\pi$ (and thus neither~$b$ is). Moreover, since we assumed that~$B_i>B_{i+1}$ for each~$i$, $b$ and~$c$ must belong to the same block~$B_j$. Therefore~$\out{12}(B_j)$ is not decreasing and, by Theorem~\ref{theorem_12machine}, $B_j$ contains~$213$, which contradicts the hypothesis.

\item $\left[2\Rightarrow 3\right]$ Suppose, for a contradiction, that~$\pi$ contains~$1324$ or~$2314$. Initially, suppose that~$\pi$ contains an occurrence~$acbd$ of~$1324$. Note that~$b$, $c$ and~$d$ are not ltr-minima of~$\pi$. Let~$b\in B_j$ and~$c\in B_k$, for some~$j\le k$. If~$j=k$, then~$B_j$ contains an occurrence~$bac$ of~$213$. Therefore~$\out{12}(B_j)$ contains an occurrence of~$231$ due to Theorem~\ref{theorem_12machine}, which contradicts the hypothesis. Otherwise, if~$j<k$, then~$bcm_k$ is an occurrence of~$231$ in~$\out{132,231}(\pi)$, which is again impossible. The pattern~$2314$ can be addressed similarly, and it is left to the reader.

\item $\left[3\Rightarrow 1\right]$ Again we argue by contradiction. If~$B_i$ contains an occurrence~$bac$ of~$213$, then~$\pi$ contains an occurrence~$m_ibac$ of~$1324$, which is impossible. Otherwise, if~$\pi$ contains two elements~$x\in B_j$, $y\in B_k$, with~$x<y$ and~$j<k$, then~$m_jxm_ky$ is an occurrence of~$2314$, which is impossible too.
\end{itemize}
\end{proof}

\section{\texorpdfstring{The~$(123,321)$- and~$(132,321)$-machines}{The (123,321)- and (132,321)-machines}}\label{section_123_321_and_132_321}

The pairs of patterns~$(123,321)$ and~$(132,321)$ can be studied with similar tools. We show that, in both cases, sortable permutations avoid~$123$. Therefore the restriction involving the pattern~$321$ is never triggered when processing~$(123,321)$- and~$(132,321)$-sortable permutations. Equivalently, the~$(123,321)$-stack acts as a~$123$-stack when the input is a~$(123,321)$-sortable permutation and thus we can use the results of Chapter~\ref{chapter_pattern123} to describe~$\Sort(123,321)$. In a similar fashion, the~$(132,321)$-stack acts as a~$132$-stack when the input is a~$(123,321)$-sortable permutations, which allows us to use what we proved in Chapter~\ref{chapter_pattern123}.

\begin{lemma}\label{lemma_pairs_avoid_123}
Let~$\sigma\in\lbrace 123,132\rbrace$. If~$\pi$ is~$(\sigma,321)$-sortable, then~$\pi$ avoids~$123$.
\end{lemma}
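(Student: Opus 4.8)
The plan is to invoke Lemma~\ref{lemma_out_231}, which says that $\pi$ is $(\sigma,321)$-sortable if and only if $\out{\sigma,321}(\pi)$ avoids $231$. Thus it suffices to prove the contrapositive: if $\pi$ contains $123$, then $\out{\sigma,321}(\pi)$ contains $231$. The engine of the whole argument is a single invariant coming from $321$ being a forbidden pattern in the stack: at every instant, the elements currently in the $(\sigma,321)$-stack, listed in the order in which they were pushed (equivalently, read from bottom to top), contain no increasing subsequence of length three, since such a subsequence would read as $321$ from top to bottom. This invariant uses only the fact that $321\in\Sigma$, which is exactly why the statement is uniform in $\sigma\in\{123,132\}$.

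First I would reduce to a minimal situation by induction on $n=|\pi|$ combined with Lemma~\ref{lemma_prefix_sortable}. If $\pi$ is sortable, then its prefix $\pi_1\cdots\pi_{n-1}$, read as a pattern, is sortable; by the inductive hypothesis this prefix avoids $123$. Consequently, if $\pi$ contains $123$ at all, every occurrence must end at the last letter $\pi_n$. We are therefore reduced to the following clean setting: $P:=\pi_1\cdots\pi_{n-1}$ avoids $123$, there exist $a<b<n$ with $\pi_a<\pi_b<\pi_n$, and the goal is to locate a $231$ in the output. The base instance $\out{\sigma,321}(123)=231$ (checked directly for both $\sigma$) anchors the induction.

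Next I would trace the computation up to the instant just before $\pi_n$ enters, recording the residents $S$ (in input order) and the partial output. The clean case is when two residents $u<v<\pi_n$, with $u$ pushed before $v$, are both still present: then pushing $\pi_n$ would place $u,v,\pi_n$ in the stack as an increasing triple, i.e. a forbidden $321$ read top to bottom, so the greedy rule must pop. Since $v$ lies above $u$, legalizing the push forces $v$ out \emph{before} $\pi_n$ is ever pushed, after which $\pi_n$ comes to rest above some surviving smaller element and hence exits before it; reading the output one finds $v$, then $\pi_n$, then that smaller element, an occurrence of the shape $\mathbf{2}\,\mathbf{3}\,\mathbf{1}\simeq 231$. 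In the complementary case the residents below $\pi_n$ are decreasing in input order, so the increasing pair $\pi_a<\pi_b$ guaranteed inside $P$ cannot both survive, and the one that was popped into the partial output must be combined with $\pi_n$ and the resident directly beneath it to exhibit the pattern.

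The main obstacle is precisely this second case, together with guaranteeing that a suitable small witness survives \emph{below} $\pi_n$ so that the final ``$1$'' of the $231$ is emitted after $\pi_n$; one must also control the extra pops triggered by the $\sigma$-restriction, which behave differently for $\sigma=123$ and $\sigma=132$. I expect the resolution to be an extremal choice of occurrence — selecting, among all $123$-occurrences, the one that minimizes the position of the ``$2$''-element — and then tracking that element from the moment it is popped, using the $321$-invariant to ensure that no later element can slip beneath $\pi_n$ and spoil the relative order. Once the output order $2,3,1$ is secured in every case, the contradiction with $231$-avoidance of $\out{\sigma,321}(\pi)$ completes the proof.
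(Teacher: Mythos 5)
Your setup is sound (the contrapositive via Lemma~\ref{lemma_out_231}, the bottom-to-top reading of the $321$-restriction, the reduction via Lemma~\ref{lemma_prefix_sortable} to occurrences of $123$ ending at $\pi_n$), but the proof has genuine gaps in both of your cases, and the decisive one is left explicitly as an expectation rather than an argument. In your ``clean case'' (residents $u<v<\pi_n$ with $u$ below $v$), the forced pops do eject $v$, but they do not stop there: they continue until pushing $\pi_n$ is legal, and you give no reason why an element smaller than $v$ must still sit below $\pi_n$ when it finally enters. This survival claim is in fact true, but proving it needs the $\sigma$-restriction, not just the $321$-restriction: one must check that when an element smaller than $\pi_n$ is on top, a $\sigma$-trigger among the larger elements beneath it is impossible (that small element together with the two triggering elements would already form $\sigma$ inside the stack, for either $\sigma=123$ or $\sigma=132$), so the only pops that can continue are $321$-pops, each of which leaves a still smaller witness behind. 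This contradicts your opening claim that the whole argument runs uniformly on the $321$-invariant alone. Worse, the ``complementary case'' --- the witnesses were popped before $\pi_n$ arrives --- is the crux of the lemma, and ``I expect the resolution to be an extremal choice of occurrence'' is a plan, not a proof: nothing in your sketch explains \emph{why} the popped witness left the stack, which is exactly the information needed to build the $231$.

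For contrast, the paper avoids your survival problem by splitting on a different event: it fixes the lexicographically leftmost occurrence $abc$ of $123$ and asks whether $a$ (the smallest witness) is still in the stack when $c$ is pushed. If yes, then $b$ cannot be in the stack (else $cba\simeq 321$ would sit inside it), so the output reads $b,\dots,c,\dots,a\simeq 231$ with $a$ itself as the surviving small element --- no survival argument needed. If no, the paper analyzes the trigger $zyx$ of the pop of $a$: a $321$-trigger would produce a $123$-occurrence strictly to the left of $abc$, contradicting leftmostness; hence the trigger is an occurrence of $\sigma$ whose middle element is forced to be $a$ itself (otherwise, again by leftmostness, both stack elements exceed $a$ and the stack would already contain $\sigma$), with $x>a$. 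For $\sigma=132$ this is outright impossible, finishing that case; for $\sigma=123$ it yields a new element $z<a$ to the right of $a$, and the argument iterates on the trigger of $z$'s pop, moving strictly leftward until it terminates or exhibits $acz\simeq 231$ in $\out{\sigma,321}(\pi)$. This trigger analysis --- excluding $321$-triggers by extremality and exploiting the asymmetry between $123$ and $132$ --- is the engine your proposal is missing; without it, neither of your cases closes.
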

\begin{proof}
We prove the contrapositive statement by showing that if~$\pi$ contains~$123$, then~$\out{\sigma,321}(\pi)$ contains~$231$. Let~$abc$ be the leftmost\footnote{with respect to the lexicographical order of the indices.} occurrence of~$123$ in~$\pi$. Let us consider the instant when~$c$ is pushed into the~$(\sigma,321)$-stack. If~$a$ is still in the $(\sigma,321)$-stack when~$c$ enters, then~$b$ is not inside the $(\sigma,321)$-stack, since otherwise~$cba$ would be an occurrence of~$321$, which is forbidden. Thus~$\out{\sigma,321}(\pi)$ contains~$bca\simeq 231$ and we are done. Therefore we can assume that~$a$ is extracted before~$c$ enters the~$(\sigma,321)$-stack. Let us consider the instant when~$a$ is extracted from the~$(\sigma,321)$-stack. Let~$z$ be the next element of the input. Then there are two elements~$x,y$ into the stack, with~$x$ below~$y$, such that~$zyx$ is an occurrence of either~$\sigma$ or~$321$. Note that necessarily~$x$ comes before~$a$ in~$\pi$ (it could be~$y=a$). If~$zyx\simeq 321$, then~$xyz$ is an occurrence of~$123$ which precedes~$abc$, which is impossible due to our choice of~$abc$. Therefore we have~$zyx\simeq\sigma$. Next we show that~$y=a$. Indeed, suppose, for a contradiction, that~$y\neq a$. Note that both~$x$ and~$y$ are greater than~$a$, since otherwise we would have an occurrence~$xac$ or~$yac$ of~$123$ which precedes~$abc$, a contradiction. But then the~$(\sigma,321)$-stack would contain an occurrence~$xya$ of~$\sigma$, which is impossible. We can now assume~$y=a$. Note that~$x>a$ due to our choice of~$abc$. Thus~$\sigma\neq 132$, since~$zax$ is an occurrence of~$\sigma$ and~$x>a$. This completes the proof for the case~$\sigma=132$. Let us now assume that~$\sigma=123$ for the remaining part of the proof. Now, $zax\simeq 132$, thus~$c\neq z$. If~$z$ is still in the~$(123,321)$-stack when~$c$ enters, then~$\out{123,321}(\pi)$ contains an occurrence~$acz$ of~$231$, as desired. Otherwise, consider the instant when~$z$ is extracted from the~$(123,321)$-stack, with~$c$ still in the input. Let~$w$ be the next element of the input. Then there are two elements~$u,v$ into the stack, with~$u$ below~$v$, such that~$wvu$ is an occurrence of either~$123$ or~$321$. Observe that, since~$z$ is the next element of the input when~$a$ is extracted, the elements~$u$ and~$v$ precede~$a$ in~$\pi$ (otherwise they would have been extracted from the~$(123,321)$-stack before~$a$). Therefore it cannot be~$wvu\simeq 321$, due to our choice of~$abc$ as leftmost occurrence of~$123$ in~$\pi$. Finally, if~$wvu\simeq 123$, then we can repeat the same argument on the triple~$wvu$, in place of~$zyx$. Since~$wvu$ is strictly to the left of~$zyx$, iterating this process will sooner or later result in either a contradiction or in finding an occurrence of~$231$ in the output of the~$(123,321)$-stack, as desired.
\end{proof}

\begin{corollary}\label{corollary_sigma_321_char}
We have:
$$
\Sort(123,321)=\Sort(123)\cap\Perm(123)
$$
and
$$
\Sort(132,321)=\Sort(132)\cap\Perm(123).
$$
\end{corollary}
\begin{proof}
Let~$\sigma\in\lbrace 123,132\rbrace$. Due to Lemma~\ref{lemma_pairs_avoid_123}, any~$(\sigma,321)$-sortable permutation avoids~$123$. Therefore the behavior of a~$(\sigma,321)$-stack on a~$(\sigma,321)$-sortable permutation is equivalent to the behavior of a~$\sigma$-stack, since the restriction involving the pattern~$321$ is never triggered.
\end{proof}

Let us now focus on the pair~$(123,321)$. Since~$\Sort(123,321)=\Sort(123)\cap\Perm(123)$, we will describe this set by exploiting the characterization of~$\Sort(123)$ provided in Chapter~\ref{chapter_pattern123}. 

\begin{theorem}
Let~$f_n=\fsigma{123,321}_n$ be the number of~$(123,321)$-sortable permutations of length~$n$. Then:
\begin{equation*}
\begin{cases}
f_1=1;\\
f_2=2;\\
f_3=4;\\
f_n=7\cdot 2^{n-4},\ n\ge 4.
\end{cases}
\end{equation*}
\end{theorem}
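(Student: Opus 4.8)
The plan is to count $\Sort(123,321)$ by exploiting Corollary~\ref{corollary_sigma_321_char}, which identifies it with $\Sort(123)\cap\Perm(123)$, and then to feed the structural results on $\Sort(123)$ from Chapter~\ref{chapter_pattern123} through the extra requirement of $123$-avoidance. Since prefixes of sortable permutations are sortable (Lemma~\ref{lemma_prefix_sortable}) and prefixes of $123$-avoiders avoid $123$, I would first dispose of the values $n\le 3$ by direct inspection (giving $1,2,4$) and then assume $n\ge 4$. For such $n$ I would partition $\Sort_n(123)\cap\Perm_n(123)$ according to Lemma~\ref{lemma_123_init_asc}: permutations starting with a (necessarily consecutive) ascent $\pi_2=\pi_1+1$, and permutations starting with a descent, i.e. the elements of $\DSort_n(123)\cap\Perm_n(123)$. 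The goal is to count the two parts separately and show their sizes are $2^{n-3}$ and $5\cdot 2^{n-4}$, summing to $7\cdot 2^{n-4}$.

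For the ascent-start part I would argue that if $\pi$ is $123$-sortable with $\pi_1<\pi_2$ and avoids $123$, then $\pi_2=\pi_1+1$ must be the maximum $n$ (any larger element to its right would complete a $123$ with $\pi_1\pi_2$); hence $\pi=(n-1)\,n\,\pi_3\cdots\pi_n$. Viewing $\pi$ as the $2$-inflation of the first element of $\rho=(n-1)\pi_3\cdots\pi_n$ and applying Corollary~\ref{corollary_123_infl_cor} together with Corollary~\ref{corollary_123_starting_maximum} (here $\rho$ begins with its own maximum), I would obtain that $\pi$ is $123$-sortable iff $\pi_3\cdots\pi_n$ avoids $213$; combined with $123$-avoidance this says $\pi_3\cdots\pi_n\in\Perm_{n-2}(123,213)$. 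Using the classical count $|\Perm_m(123,213)|=2^{m-1}$, this part has size $2^{n-3}$.

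For the descent-start part I would split by the number of left-to-right maxima. The permutations with a single ltr-maximum are exactly those beginning with $n$; by Corollary~\ref{corollary_123_starting_maximum} and $123$-avoidance these are $n\rho$ with $\rho\in\Perm_{n-1}(123,213)$, contributing $2^{n-2}$. For two or more ltr-maxima I would use the bijection $\varphi:\DSort_{n-1}(123)\to\DSort_n({\ge}2;123)$ of Theorem~\ref{theorem_123_bijection}, together with the elementary observation that a permutation avoids $123$ iff the prefix preceding its maximum $n$ is decreasing and the permutation with $n$ deleted avoids $123$. Since $\varphi$ inserts $n$ immediately after $n-1$ (when $\rho_1\ne n-1$) or after $n-2$ (when $\rho_1=n-1$), I would check that the decreasing-prefix condition forces $\rho$ to begin with $(n-1)(n-2)$; hence these permutations correspond bijectively to the members of $\DSort_{n-1}(1;123)\cap\Perm(123)$ whose second entry is $n-2$, i.e. to the permutations $(n-1)(n-2)\tau'$ with $\tau'\in\Perm_{n-3}(123,213)$, contributing $2^{n-4}$. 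Adding the three parts yields $2^{n-3}+2^{n-2}+2^{n-4}=7\cdot 2^{n-4}$.

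The main obstacle is this last step: controlling how the insertion of the new maximum performed by $\varphi$ interacts with $123$-avoidance. The delicate point is that inserting $n$ right after the previous maximum $n-1$ almost always creates a forbidden $123$ (the ascent ending at $n-1$, followed by $n$), so that only the very rigid permutations beginning $(n-1)(n-2)$ survive; making this precise requires careful use of Corollary~\ref{corollary_123_before_maximum} and the decreasing-prefix criterion, together with a clean treatment of the boundary value $n=4$, where the factor $2^{n-4}$ first becomes an honest power of two. The remaining ingredients are the standard enumeration $|\Perm_m(123,213)|=2^{m-1}$ and the verification that the three classes are disjoint and exhaustive.
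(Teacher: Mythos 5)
Your proposal is correct and follows essentially the same route as the paper: both reduce to $\Sort(123)\cap\Perm(123)$ via Corollary~\ref{corollary_sigma_321_char} and then split the count into the same three classes — permutations starting with the maximum, permutations starting with a consecutive ascent, and permutations with exactly two left-to-right maxima — of sizes $2^{n-2}$, $2^{n-3}$ and $2^{n-4}$ respectively. Your partition by the shape of the permutation is just the image-side reading of the paper's case analysis on the parameters $(k,h)$ of the unique construction from Section~\ref{section_enumeration_123} (the cases $(k,h)=(n,0)$, $(n-1,0)$, $(n-1,1)$), with your decreasing-prefix argument playing the role of the paper's observation that $k\ge n-1$.
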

\begin{proof}
Each permutation of length one and two is~$(123,321)$-sortable, while~$132$ and~$123$ are the only permutations of length three that are not~$(123,321)$-sortable. Let~$n\ge 4$ and let~$\pi\in\Sort(123,321)$. Due to Corollary~\ref{corollary_sigma_321_char}, we have~$\Sort(123,321)=\Sort(123)\cap\Perm(123)$. Therefore~$\pi$ can be uniquely constructed according to the procedure described in Section\ref{section_enumeration_123}, which we recall below, as long as occurrences of~$123$ are not created:
\begin{enumerate}
\item Choose~$\alpha=\alpha_1\alpha_2\cdots\alpha_k\in\Perm_k (213)$, with~$\alpha_1=k\ge 2$;
\item add~$h$ new maxima, $k+1,\dots k+h$, one at a time, using the bijection~$\varphi$ of Theorem~\ref{theorem_123_bijection};
\item add~$n-k-h$ consecutive ascents at the beginning, by inflating the first element of the permutation, according to Corollary~\ref{corollary_123_infl_cor}.
\end{enumerate}

Observe that it must be~$k\ge n-1$. Otherwise, if~$k<n-1$, then it would be~$k+h\ge 2$, which would necessarily result in an occurrence of~$123$ in~$\pi$. Thus we have either~$k=n$ or~$k=n-1$. We distinguish the following cases.

\begin{itemize}
\item If~$k=n$, then~$\pi=\alpha$ is a permutation of~$\Perm_n(213,123)$ with~$\pi_1=n$. Notice that~$\pi$ is uniquely obtained by adding the initial maximum to a permutation in~$\Perm_{n-1}(213,123)$. In other words, this operation yields a bijection between the set of~$(123,321)$-sortable permutations of length~$n$ that start with their maximum value and~$\Perm_{n-1}(213,123)$. It is well known that~$|\Perm_{n-1}(213,123)|=2^{n-2}$.

\item Suppose that~$k=n-1$ and~$h=1$, that is~$\pi$ is obtained by adding a new maximum, immediately after~$n-2$, to some~$\alpha\in\Perm_{n-1}(213,123)$, with~$\alpha_1=n-1$. Then it must be necessarily~$\alpha_2=n-2$, otherwise~$\alpha_2n-2n$ would be an occurrence of~$123$ in~$\pi$. Therefore~$\alpha\in\Perm_{n-1}(213,123)$, with~$\alpha_1=n-1$ and~$\alpha_2=n-2$. Similarly to the previous item, removing the first two elements of~$\alpha$ yields a bijection with~$\Perm_{n-3}(213,123)$, and~$|\Perm_{n-3}(213,123)|=2^{n-4}$.

\item Finally, suppose that~$k=n-1$ and~$h=0$, that is~$\pi$ is obtained from some~$\alpha\in\Perm_{n-1}(213,123)$, with~$\alpha_1=n-1$, by inflating the first element of~$\alpha$ by one. Since this operation cannot create an occurrence of~$123$, each~$\alpha\in\Perm_{n-1}(213,123)$ such that~$\alpha_1=n-1$ is a suitable choice, and~$|\Perm_{n-1}(213,123)|=2^{n-3}$.
\end{itemize}

At the end we obtain:
$$
f_n=2^{n-2}+2^{n-4}+2^{n-3}=7\cdot 2^{n-4},
$$
as desired.
\end{proof}

Next we consider the set~$\Sort(132,321)$. We shall define a bijection between~$(132,321)$-sortable permutations of length~$n$ and Dyck paths of semilength~$n$ that avoid the (consecutive) pattern~$\D\U\D\U$. More precisely, we will describe a bijection between two generating trees for these families. First we refine the geometric description of~$\Sort(132,321)$ by polishing the characterization of~$\Sort(132)$ provided in Chapter~\ref{chapter_pattern132}. Recall from Section~\ref{section_mesh_132} that~$\mu=(132,\left\lbrace(0,2),(2,0),(2,1)\right\rbrace)$ is the mesh pattern depicted in Figure~\ref{figure_mesh_pattern_132}.

\begin{theorem}\label{theorem_132_321_char}
We have:
$$
\Sort(132,321)=\Perm(\mu,123).
$$
\end{theorem}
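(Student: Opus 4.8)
The goal is to show $\Sort(132,321)=\Perm(\mu,123)$, where the right-hand side uses the mesh pattern $\mu=(132,\{(0,2),(2,0),(2,1)\})$ of Figure~\ref{figure_mesh_pattern_132}. The plan is to leverage Corollary~\ref{corollary_sigma_321_char}, which already tells us that $\Sort(132,321)=\Sort(132)\cap\Perm(123)$, together with the characterization $\Sort(132)=\Perm(2314,\mu)$ from Theorem~\ref{theorem_mesh_patterns_necessary_132}. Combining these two facts immediately gives
$$
\Sort(132,321)=\Perm(2314,\mu)\cap\Perm(123)=\Perm(2314,\mu,123).
$$
Thus the entire content of the theorem reduces to proving that, in the presence of the forbidden patterns $\mu$ and $123$, the pattern $2314$ is redundant; in other words, $\Perm(\mu,123)=\Perm(2314,\mu,123)$. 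Since one inclusion ($\Perm(2314,\mu,123)\subseteq\Perm(\mu,123)$) is trivial, the real work is the reverse inclusion: every permutation avoiding both $\mu$ and $123$ automatically avoids $2314$.

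The key step is therefore the following implication: if $\pi$ contains $2314$, then $\pi$ contains $123$ or $\mu$. The natural approach is to take an occurrence $bcad$ of $2314$ in $\pi$ (so $a<b<c<d$, appearing in the order $b,c,a,d$) and argue directly. First I would observe that the subsequence $bcd$ is itself an occurrence of $123$ — the three values $b<c<d$ occur left to right in the order $b,c,d$, since $a$ sits between $c$ and $d$ positionally but plays no role in this triple. Hence $\pi$ contains $123$ outright, and we are done. This makes the redundancy of $2314$ almost immediate: any $2314$ occurrence $bcad$ contains the increasing subsequence $bcd$, which is a $123$. So I do not even need $\mu$ for this direction; avoidance of $123$ alone already forbids $2314$.

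With that observation the proof collapses cleanly:
$$
\Perm(2314,\mu,123)=\Perm(\mu,123),
$$
because $\Perm(123)\subseteq\Perm(2314)$ (every $2314$ contains a $123$), and intersecting with $\Perm(\mu)$ preserves the equality. Chaining the equalities yields $\Sort(132,321)=\Perm(\mu,123)$, as desired. The only subtlety to state carefully is the containment $\Perm(123)\subseteq\Perm(2314)$, i.e. that the classical pattern $2314$ contains $123$ as a (classical) subpattern — this is a one-line check on the values. I do not anticipate any genuine obstacle here: the entire argument is a short combination of the already-established Corollary~\ref{corollary_sigma_321_char} and Theorem~\ref{theorem_mesh_patterns_necessary_132} together with the elementary fact that $2314\ge 123$. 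The main thing to be careful about is not to over-complicate the reduction: once Corollary~\ref{corollary_sigma_321_char} is invoked, the problem is purely a statement about pattern containment among the three patterns $2314$, $\mu$, and $123$, and the $123$-avoidance subsumes the $2314$-avoidance.
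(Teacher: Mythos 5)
Your proposal is correct and is essentially identical to the paper's own (one-line) proof: combine Corollary~\ref{corollary_sigma_321_char} with Theorem~\ref{corollary_mesh_pattern_char_132} and observe that $123\le 2314$, so the avoidance of $2314$ is subsumed by the avoidance of $123$. Your explicit verification that any occurrence $bcad$ of $2314$ contains the $123$-occurrence $bcd$ is exactly the containment the paper invokes.
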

\begin{proof}
It follows from Theorem~\ref{corollary_mesh_pattern_char_132} and Corollary~\ref{corollary_sigma_321_char}, since~$123\le 2314$.
\end{proof}

The grid decomposition of a permutation~$\pi$ was introduced in Section~\ref{section_grid_132}. Let~$\pi=m_1B_1m_2B_2\dots m_tB_t$ be the ltr-min decomposition of~$\pi$. Recall that:

\begin{itemize}
\item for~$i\ge 1$, the~$j-th$ \textit{vertical strip} of~$\pi$ is~$B_j$;
\item for~$i\ge 1$, the~$i-th$ \textit{horizontal strip} of~$\pi$ is~$H_i=\left\lbrace x\in\pi: m_{i-1}<x< m_i\right\rbrace$, where~$m_0=+\infty$.
\item for any pair of indices~$i,j$, the \textit{cell} of indices~$i,j$ of~$\pi$ is~$C_{i,j}=H_i\bigcap B_j$ (note that~$C_{i,j}$ is empty for each~$i>j$).
\end{itemize}

\begin{proposition}\label{proposition_cell_at_most_one_element_132_321}
Let~$\pi$ be a~$(132,321)$-sortable permutation. Then each cell~$C_{i,j}$ contains at most one element.
\end{proposition}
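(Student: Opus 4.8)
The plan is to prove this by contradiction, leveraging the characterization $\Sort(132,321)=\Perm(\mu,123)$ from Theorem~\ref{theorem_132_321_char}, together with the grid-structural results already established for $\Sort(132)$ in Section~\ref{section_grid_132}. Suppose some cell $C_{i,j}$ contains at least two elements. By Proposition~\ref{proposition_layered_cells_132}, which applies since every $(132,321)$-sortable permutation is in particular $132$-sortable, we know $C_{i,j}\in\Perm(132,213)$; that is, $C_{i,j}$ is co-layered. The key observation I would exploit is that a co-layered permutation on two or more elements must contain either an ascent or a descent, and I will show that both possibilities are forbidden once we additionally require avoidance of $123$.

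First I would handle the case where $C_{i,j}$ contains two elements forming an ascent $x<y$ (with $x$ preceding $y$). Here the recollection of the ltr-min structure is crucial: the ltr-minimum $m_i$ sits below the block, so $m_i$ together with $x,y$ would naturally form a pattern. More to the point, since $x<y$ are both in $H_i\cap B_j$ and $m_i<x$, the three elements $m_i\,x\,y$ realize $123$, contradicting $\pi\in\Perm(123)$ by Theorem~\ref{theorem_132_321_char}. This disposes of the ascent case immediately and cleanly. Next I would handle the descent case, where $C_{i,j}$ contains $x>y$ with $x$ preceding $y$. Now I invoke Lemma~\ref{lemma_inversion_in_a_cell_132}: an inversion inside a cell forces the existence of an element $z$ strictly between $x$ and $y$ in $\pi$ with $z<m_i$. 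But then I claim we can locate an occurrence of $123$: the element $z<m_i<x$ combined with two suitably chosen larger entries yields the forbidden pattern. The precise triple to isolate requires care, since $z$ is itself small; the natural candidate is to use $z$ together with elements witnessing that $\pi$ is long enough, or alternatively to re-derive a contradiction directly from $m_i x y\simeq 132$ being an occurrence of the mesh pattern $\mu$.

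The cleanest route, which I would pursue, is the following. Since every cell of a $132$-sortable permutation is co-layered and a co-layered permutation on $\geq 2$ elements is either increasing or contains a descent, the ascent case is killed by $123$-avoidance as above, so it suffices to rule out any descent inside a single cell. For a descent $x>y$ in $C_{i,j}$, Lemma~\ref{lemma_inversion_in_a_cell_132} gives $z<m_i$ lying between $x$ and $y$; then $m_i\,x\,z\,y$ exhibits (after checking the value inequalities $z<m_i<y<x$, so $m_i,x,z,y$ realize the pattern $3\,4\,1\,2$ or a suitable relative) a configuration I can compare against the forbidden patterns. Actually the slickest argument notes that $x$ and $z$ with $z<m_i<x$, together with the ltr-minimum $m_t=1$ appearing last, or with the global structure, force $123$; I would verify that $z,x$ and some later ltr-maximum close the pattern.

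The main obstacle I anticipate is the descent case: unlike the ascent case, which falls out instantly from a single $123$ occurrence, ruling out intra-cell descents requires combining the geometric auxiliary element $z$ from Lemma~\ref{lemma_inversion_in_a_cell_132} with $123$-avoidance, and one must be careful that the three chosen indices genuinely form an increasing subsequence in both position and value. I would resolve this by checking that since $C_{i,j}$ is co-layered with at most the two elements $x>y$ under consideration, any descent already contradicts $\Perm(2314,\mu)$ once the witness $z<m_i$ is inserted, because $m_i\,x\,z$ together with a $j$-th block element produces a $2314$ or the triple $m_i x y$ becomes an occurrence of $\mu$; the additional hypothesis $\pi\in\Perm(123)$ then eliminates the sole remaining increasing possibility, forcing $|C_{i,j}|\leq 1$.
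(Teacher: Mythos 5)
Your ascent case is correct and is exactly the paper's argument: two elements $x<y$ in $C_{i,j}$ together with $m_i$ give an occurrence $m_i\,x\,y$ of $123$, contradicting $\Sort(132,321)=\Perm(\mu,123)$. The genuine gap is in the descent case. After invoking Lemma~\ref{lemma_inversion_in_a_cell_132} to obtain $z<m_i$ lying between $x$ and $y$, none of your proposed completions works. First, $m_i\,x\,z$ (an occurrence of $231$) cannot in general be extended to an occurrence of $2314$: that would require an element larger than $x$ to the right of $z$, which need not exist --- in the proof of Proposition~\ref{proposition_layered_cells_132} such a fourth element is supplied by an occurrence of $213$ inside the cell, but here the cell may consist of just $x$ and $y$. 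Second, the claim that ``$m_i\,x\,y$ becomes an occurrence of $\mu$'' is backwards: the witness $z$ sits positionally between $x$ and $y$ with $z<m_i<y$, so it lies in the shaded box $(2,0)$ of $\mu$, and is therefore precisely what prevents $m_i\,x\,y$ from being an occurrence of $\mu$. Third, $z$ and $x$ cannot start an occurrence of $123$, since $x$ precedes $z$ in $\pi$ and $x>z$; likewise $m_t=1$ is useless for building a $123$. So the descent case, which you yourself flag as the main obstacle, is never actually closed.

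The missing idea is that $z$ cannot be a left-to-right minimum: $z$ lies strictly between $x$ and $y$ in position, and $x,y$ belong to the same block $B_j$, so every element positionally between them is also in $B_j$ and hence not an ltr-minimum. Consequently some element smaller than $z$ precedes $z$ in $\pi$ --- the paper takes the ltr-minimum $m_\ell$ of the horizontal strip containing $z$. Then $m_\ell\,z\,y$ is an occurrence of $123$: the values satisfy $m_\ell<z<m_i<y$ and the positions are increasing. This closes the descent case using only $123$-avoidance, which is where your argument needed to land.
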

\begin{proof} Suppose, for a contradiction, that the cell~$C_{i,j}$ contains at least two elements and let~$C_{i,j}=xy\cdots$. If~$x<y$, then~$m_ixy$ is an occurrence of~$123$, which contradicts Theorem~\ref{theorem_132_321_char}. Otherwise, if~$x>y$, then due to Lemma~\ref{lemma_inversion_in_a_cell_132} there has to be some element~$z$ between~$x$ and~$y$ in~$\pi$ such that~$z<m_i$. Let~$m_j$ be the ltr-minimum of the horizontal strip that contains~$z$, for some~$j\ge i$. Notice that~$m_j\neq z$, since~$x$ and~$y$ are in the same cell and~$z$ is placed between~$x$ and~$y$ in~$\pi$. Then~$m_jzy$ is an occurrence of~$123$ in~$\pi$, which is again impossible.
\end{proof}

\begin{proposition}\label{proposition_active_cell_132_321}
Let~$\pi$ be a~$(132,321)$-sortable permutation. Suppose that the cell~$C_{i,j}$ is not empty. Then the cell~$C_{u,v}$ is empty for each pair of indices~$u,v$ such that~$i<u$ and~$j\le v$.
\end{proposition}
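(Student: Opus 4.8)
The plan is to argue by contradiction, using the pattern characterization $\Sort(132,321)=\Perm(\mu,123)$ of Theorem~\ref{theorem_132_321_char} together with the ltr-min decomposition. Suppose $C_{i,j}$ is nonempty, say $x\in C_{i,j}$, and suppose for contradiction that $C_{u,v}$ is nonempty, say $y\in C_{u,v}$, with $i<u$ and $j\le v$. First I would record the relative order of $x$, $y$ and the relevant left-to-right minimum. Since $C_{i,j}$ lies in block $B_j$ and $C_{u,v}$ in block $B_v$ with $j\le v$, the minimum $m_j$ opening $B_j$ precedes both $x$ and $y$ in $\pi$, and $x,y>m_j$; moreover the horizontal-strip indices $i<u$ fix the value order, so that $x$ and $y$ lie in distinct strips and $m_j<x<y$. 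The key structural inputs are Lemma~\ref{theorem_ltr-min_pairs_of_pat_132} (giving $\out{132,321}(\pi)=\tilde B_1\cdots\tilde B_t m_t\cdots m_1$ and, for a sortable $\pi$, the decreasing behaviour of the blocks $B_i>B_{i+1}$) and Proposition~\ref{proposition_cell_at_most_one_element_132_321}.

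The main dichotomy is whether $x$ precedes $y$ in $\pi$. If it does, then $m_j\,x\,y$ has positions and values both increasing, i.e. $m_j x y\simeq 123$, contradicting $\pi\in\Perm(123)$. This case is automatic when $v>j$ (the whole block $B_v$ follows $x$), so I may assume $v=j$, i.e. $x$ and $y$ lie in the same vertical strip $B_j$. In that situation $y$ preceding $x$ is the only alternative; but note this is already enough to force an honest mesh argument, since $x$ and $y$ sit in different cells $C_{i,j}\neq C_{u,j}$ (consistent with Proposition~\ref{proposition_cell_at_most_one_element_132_321}, so no immediate contradiction).

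The remaining (boundary) case is therefore $v=j$ with $y$ before $x$ and $y>x$, which produces $m_j\,y\,x\simeq 132$. I would then show that this occurrence is in fact an occurrence of the mesh pattern $\mu=(132,\{(0,2),(2,0),(2,1)\})$, contradicting $\pi\in\Perm(\mu)$. Condition (ii) of $\mu$ (every element strictly between $y$ and $x$ in $\pi$ exceeds $x$) follows from $123$-avoidance: any intervening $w\in B_j$ with $w<x$ would give $m_j\,w\,x\simeq 123$. The delicate point is condition (i): no element preceding $m_j$ may have value strictly between $x$ and $y$. Using the block decomposition, such an element cannot lie in an earlier block $B_{j'}$ ($j'<j$), since $B_{j'}>B_j\ni y$ would force it above $y$; so it could only be an earlier left-to-right minimum $m_{j'}$ with $x<m_{j'}<y$. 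Ruling this out is the main obstacle, and I expect to handle it exactly as in the proof of Lemma~\ref{lemma_inversion_in_a_cell_132}: if such an $m_{j'}$ exists one slides to the occurrence anchored at $m_{j'}$ and the adjacent element of $B_{j'}$, iterating until the conditions of $\mu$ are met or a shorter forbidden configuration ($123$ or $2314$) appears.

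In summary, the generic case is a one-line $123$ produced from the opening minimum $m_j$, and the whole difficulty is concentrated in verifying the left-shading condition of $\mu$ in the same-block descent case; this is the complementary region to the one treated by Lemma~\ref{lemma_no_switch_component_132} (which produced a $2314$ for the northeast region), and I would present it by the same style of extremal/iterative choice of the anchoring left-to-right minimum, invoking Theorem~\ref{theorem_132_321_char} to convert each configuration into an explicit forbidden pattern.
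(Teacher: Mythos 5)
Your reduction is careful and correctly isolates the only hard case, but the step you leave open --- ruling out an earlier left-to-right minimum $m_{j'}$ with $x<m_{j'}<y$ so that $m_j\,y\,x$ becomes an occurrence of~$\mu$ --- is a genuine gap, and it cannot be filled, because the statement you would need is false. Take $\pi=3142$: its left-to-right minima are $m_1=3$ and $m_2=1$, so $B_1=\emptyset$ and $B_2=42$, and the two non-minima $4$ and $2$ lie in \emph{different horizontal strips of the same column}. This is exactly your unresolved boundary case, with $x=2$, $y=4$, $v=j=2$, $y$ preceding $x$, and $m_1=3$ strictly between them in value (so $m_1$ occupies the shaded region of~$\mu$, just as you feared). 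Yet $\pi=3142$ is $(132,321)$-sortable: it avoids $123$, its unique occurrence of $132$ is $142$, which is not an occurrence of~$\mu$ precisely because of the element $3$; equivalently, a direct computation gives $\out{132,321}(3142)=4213$, which avoids $231$. Hence Proposition~\ref{proposition_active_cell_132_321} as stated (with $j\le v$) is false, and no iteration in the style of Lemma~\ref{lemma_inversion_in_a_cell_132} can rescue the same-column case; your proof fails exactly where it must.

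For comparison, the paper's own proof is a single line asserting that $m_jxy$ is an occurrence of $123$. That assertion silently uses two things: that $x$ precedes $y$ in $\pi$, which holds only when $v>j$ (i.e., exactly outside your problematic case), and that $u>i$ forces $x<y$ --- the convention you also adopt, although it is the opposite of the paper's written definition $H_i=\lbrace x\in\pi : m_i<x<m_{i-1}\rbrace$, under which a larger strip index means \emph{smaller} values. So what the paper actually proves, and the only version of the proposition that is true under that value convention, is the strict statement with $i<u$ and $j<v$; for that statement your ``generic case'' paragraph (the $123$ argument anchored at $m_j$) is already a complete proof and coincides with the paper's. The correct response to the same-column configuration is therefore not a finer mesh argument but a correction of the statement to $v>j$; your instinct that this case demanded ``an honest mesh argument'' should instead have been read as a signal to test it on small examples, where $3142$ refutes it.
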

\begin{proof}
Suppose that~$x\in C_{i,j}$ and~$y\in C_{u,v}$, with~$i<u$ and~$j\le v$. Then~$m_jxy$ is an occurrence of~$123$ in~$\pi$, which is impossible due to Theorem~\ref{theorem_132_321_char}.
\end{proof}

\begin{proposition}\label{proposition_decr_blocks_132_321}
Let~$\pi$ be a~$(132,321)$-sortable permutation. Then~$B_i>B_{i+1}$ for each pair of consecutive vertical strips~$B_i,B_{i+1}$.
\end{proposition}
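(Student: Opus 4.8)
The plan is to reduce the claim to the fact that a $(132,321)$-sortable permutation avoids the classical pattern $123$. By Theorem~\ref{theorem_132_321_char} we have $\Sort(132,321)=\Perm(\mu,123)$, so in particular every such $\pi$ avoids $123$. I will combine this avoidance with the relative positions of the blocks in the ltr-min decomposition $\pi=m_1B_1\cdots m_tB_t$.

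First I would fix a pair of consecutive vertical strips $B_i,B_{i+1}$ and argue by contradiction: suppose there are $x\in B_i$ and $y\in B_{i+1}$ with $x<y$. The decisive observation is positional. Since $x$ lies in $B_i$, it occurs after $m_i$ and before $m_{i+1}$; since $y$ lies in $B_{i+1}$, it occurs after $m_{i+1}$. Hence $m_i$, $x$, $y$ appear in this left-to-right order in $\pi$. For the values, every element of $B_i$ strictly exceeds the preceding ltr-minimum $m_i$ (otherwise it would itself be a new ltr-minimum, contradicting $x\in B_i$), so $m_i<x$; together with the assumption $x<y$ this gives $m_i<x<y$.

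Putting the two observations together, $m_ixy$ is an occurrence of $123$ in $\pi$, contradicting $\pi\in\Perm(123)$. Therefore no such pair exists, i.e.\ $x>y$ for all $x\in B_i$ and $y\in B_{i+1}$, which is exactly $B_i>B_{i+1}$.

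I expect no serious obstacle: once $123$-avoidance is available, the argument is a short three-element check, and the only point demanding care is the routine bookkeeping of the ltr-min decomposition (confirming that an element of $B_i$ genuinely precedes every element of $B_{i+1}$ and strictly exceeds $m_i$). As an even more immediate alternative, one could instead invoke Lemma~\ref{theorem_ltr-min_pairs_of_pat_132} with $\sigma=321$: since $\sigma_{k-1}=2>1=\sigma_k$ and $k=3\ge 3$, its second part yields $B_i>B_{i+1}$ for each $i\le t-1$ directly.
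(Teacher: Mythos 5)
Your proof is correct, and both routes you give work; they are, however, not quite the route the paper takes. The paper's own proof is a one-line citation of Lemma~\ref{lemma_ltr_min_dec_132} and Theorem~\ref{theorem_132_321_char}: since $123\le 2314$, Theorem~\ref{theorem_132_321_char} gives $\Sort(132,321)=\Perm(\mu,123)\subseteq\Perm(\mu,2314)=\Sort(132)$, so $\pi$ is $132$-sortable, and item~2 of Lemma~\ref{lemma_ltr_min_dec_132} (whose proof goes through the behaviour of the $132$-stack and an occurrence of $231$ in the output) then yields $B_i>B_j$ for all $i<j$. Your primary argument instead uses only the $123$-avoidance half of Theorem~\ref{theorem_132_321_char} and replaces the stack-behaviour lemma with the elementary observation that a violation $x<y$ with $x\in B_i$, $y\in B_{i+1}$ forces the occurrence $m_ixy$ of $123$; the positional and value bookkeeping you do ($m_i$ before $x$ before $y$, and $m_i<x$ since elements of $B_i$ are not ltr-minima) is exactly what is needed. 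This is more elementary and in fact proves something slightly stronger: every $123$-avoiding permutation, sortable or not, has strictly decreasing blocks in its ltr-min decomposition, and the same three-element check gives $B_i>B_j$ for all $i<j$, not just consecutive strips. Your alternative route, applying Lemma~\ref{theorem_ltr-min_pairs_of_pat_132} with $\sigma=321$ (its hypotheses $\sigma_{k-1}=2>1=\sigma_k$ and $k=3\ge 3$ are indeed satisfied), is also valid and is the most direct of the three, since its second item states the desired conclusion verbatim for $(132,321)$-sortable permutations.
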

\begin{proof}
It follows from Lemma~\ref{lemma_ltr_min_dec_132} and Theorem~\ref{theorem_132_321_char}.
\end{proof}

Now, any prefix of a~$(132,321)$-sortable permutation is~$(132,321)$-sortable by Lemma~\ref{lemma_prefix_sortable}. Therefore every permutation of~$\Sort_n(132,321)$ can be uniquely constructed by inserting a new rightmost integer~$a\in\lbrace 1,\dots,n\rbrace$ in a permutation~$\pi\in\Sort_{n-1}(132,321)$, and suitably rescaling the other elements\footnote{i.e. adding~$1$ to each integer~$b$ such that~$b\ge a$.}. Every permutation obtained this way from~$\pi$ is said to be a \textit{child} of~$\pi$. Now, due to Propositions~\ref{proposition_cell_at_most_one_element_132_321} and \ref{proposition_decr_blocks_132_321}, there is at most one way to insert a new rightmost element in each cell of the last vertical strip of a~$(132,321)$-sortable permutation. Indeed each cell can contain at most one element due to Proposition~\ref{proposition_cell_at_most_one_element_132_321} and the value of this element is completely determined due to Proposition~\ref{proposition_decr_blocks_132_321}: it has to be less than all the other elements in the same horizontal strip. Finally, some choices will be forbidden due to Theorem~\ref{theorem_132_321_char}. Given a~$(132,321)$-sortable permutation~$\pi$, where~$\pi=m_1B_1\cdots m_tB_t$ is the usual ltr-min decomposition, a cell~$C_{i,t}$ is said to be \textit{active} if inserting a new rightmost element in the cell~$C_{i,t}$ yields a~$(132,321)$-sortable permutation. Next we characterize precisely which cells are active. First we introduce a new parameter.

Let~$\pi=m_1B_1\cdots m_tB_t$. Let~$y$ be the rightmost element of~$\pi$ which is not an ltr-minimum and suppose that~$y\in C_{i,j}$, for some~$i,j$. Then the \textit{depth} of~$\pi$ is~$\depth(\pi)=t-i+1$. If~$\pi=m_1\cdots m_t$ is the decreasing permutation, we assume~$\depth(\pi)=t$.

\begin{theorem}\label{theorem_gener_permutation_132_321}
Let~$\pi=m_1 B_1\cdots m_tB_t$ a~$(132,321)$-sortable permutation and let~$d=\depth(\pi)$.
\begin{enumerate}
\item If~$B_t$ is not empty, then the cell~$C_{i,t}$ is active if and only if~$i<d$. In this case~$\pi$ has~$d$ children.
\item If~$B_t$ is empty, then the cell~$C_{i,t}$ is active if and only if~$i\le d$. In this case~$\pi$ has~$d+1$ children.
\end{enumerate}
\end{theorem}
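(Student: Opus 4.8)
The plan is to decide, level by level, which cells of the last vertical strip admit a legal insertion, using the characterization $\Sort(132,321)=\Perm(\mu,123)$ of Theorem~\ref{theorem_132_321_char}, and then to count. First I would dispose of the new-minimum insertion: a rightmost new minimum is simultaneously the smallest and the last element, so in any occurrence of $123$ it would have to be the largest point (impossible, it is smallest) and in any occurrence of the underlying $132$ of $\mu$ it would have to be the first point (impossible, it is last); hence it creates neither $123$ nor $\mu$ and always yields a child. It therefore suffices to count the active cells $C_{i,t}$ and add one. By Propositions~\ref{proposition_cell_at_most_one_132_321} and~\ref{proposition_decr_blocks_132_321}, inserting into $C_{i,t}$ forces both the value of the new element (it must be the minimum of its horizontal strip) and the emptiness of the target cell, so being active is a genuine yes/no question for each level.

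The structural heart is locating $y$. Since $\pi$ avoids $123$, its non-ltr-minima form a strictly decreasing subsequence: if two non-ltr-minima $z,w$ with $z$ before $w$ had $z<w$, then the ltr-minimum opening the block of $z$, together with $z$ and $w$, would be an occurrence of $123$. Thus $y$ is the smallest block element; it lies in the lowest nonempty horizontal strip, and every strip below it is empty. This is exactly what $d=\depth(\pi)$ records, namely the number of strips from the level of $y$ down to the bottom, with the convention $\depth=t$ covering the degenerate decreasing case with no block elements. For the $123$-test I then claim that inserting the forced value $a$ into $C_{i,t}$ creates a $123$ if and only if some block element lies in a strip strictly below $H_i$: forward via $m_j\,q\,a$ with $q$ such a block element in $B_j$ (so $m_j<q<a$), and conversely because, absent any block element below $H_i$, the elements smaller than $a$ preceding it are only the decreasing ltr-minima $m_i,\dots,m_t$. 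Hence the $123$-safe cells are precisely those at or below the level of $y$.

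The main obstacle will be the $\mu$-check for these survivors. Any occurrence of $\mu$ created by the insertion must have $a$ as its apex (middle-valued, last) point; its smallest point $\alpha$ must be an ltr-minimum $m_\ell$ (the only elements below $a$ are ltr-minima) and its largest point $c>a$ must precede $a$. I would split on the position of $c$ relative to the last ltr-minimum $m_t=1$. If $c$ precedes $m_t$, then $m_t$ lies between $c$ and $a$ and is smaller than $a$, violating the shading of $\mu$ that forces every element between its last two points to exceed the middle value; this settles the whole case $B_t=\emptyset$, where $c\in B_t$ is impossible. If instead $c$ follows $m_t$, then $c\in B_t$ and, $y$ being the minimum of $B_t$, one has $a<y\le c$; here I exhibit the ltr-minimum $m_p$ at the level of $y$, which precedes $\alpha=m_\ell$ (since $\ell>p$ in this subcase) and has value strictly between $a$ and $c$, violating the left shading of $\mu$. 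Either way no occurrence of $\mu$ survives, so every $123$-safe empty candidate is active. The delicate part is the bookkeeping—verifying exactly which ltr-minima fall in the open interval $(a,c)$ and precede $\alpha$—and this is where I expect the real work to lie.

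Finally I would assemble the count, invoking Lemma~\ref{lemma_prefix_sortable} to justify that every child is obtained by one rightmost insertion. When $B_t\neq\emptyset$ the cell at the level of $y$ is occupied by $y$ itself, so the active cells are exactly the empty strips strictly below $y$; these are the cells $C_{i,t}$ with $i<d$, there are $d-1$ of them, and with the new-minimum child $\pi$ has $d$ children. When $B_t=\emptyset$ the entire last strip is empty, so the level of $y$ becomes available too; the active cells are then $C_{i,t}$ with $i\le d$, there are $d$ of them, and $\pi$ has $d+1$ children. Throughout I would keep the orientation of the horizontal-strip indexing fixed so that ``weakly/strictly below the level of $y$'' matches the stated ranges $i\le d$ and $i<d$.
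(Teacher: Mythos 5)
Your proposal is correct and follows essentially the same route as the paper's proof: both reduce everything to the characterization $\Sort(132,321)=\Perm(\mu,123)$ of Theorem~\ref{theorem_132_321_char}, locate the rightmost non-ltr-minimum $y$, show that insertion in a cell above the level of $y$ creates an occurrence $m_v\,y\,a$ of $123$, and kill any potential occurrence of $\mu$ by exhibiting an element ($m_t$, or the ltr-minimum at the level of $y$) sitting in a shaded box. If anything, your write-up is more complete than the paper's, which leaves the $\mu$-check for the cells below the level of $y$ ``to the reader'' and whose stated index ranges only match a bottom-up labelling of the horizontal strips---the orientation issue you correctly flag and fix in your closing remark.
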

\begin{proof}
If~$\pi$ is the decreasing permutation, then~$\depth(\pi)=t$ by definition. Moreover~$B_t$ is empty and the cell~$C_{i,t}$ is active for each~$i=1,\dots,d-1$ due to Theorem~\ref{theorem_132_321_char}. Since inserting a new rightmost minimum is always allowed, in this case~$\pi$ has~$d$ children.

Otherwise, let~$y$ be the rightmost element of~$\pi$ which is not an ltr-minimum and suppose that~$y\in C_{u,v}$, for some~$u,v$, with~$d=t-u+1$. Note that every cell~$C_{i,t}$, with~$i>d$, is not active. Indeed inserting a new rightmost element~$a\in C_{i,t}$, with~$i>d$, creates an occurrence~$m_vya$ of~$123$, and this is forbidden due to Theorem~\ref{theorem_132_321_char}. On the other hand, we shall prove that each cell~$C_{i,t}$, with~$i<d$, is active. Due to the same Theorem~\ref{theorem_132_321_char}, it is sufficient to show that inserting a new rightmost element~$a$ in the cell~$C_{i,t}$, with~$i<d$, cannot create an occurrence of either~$123$ or~$\mu$. Suppose that~$\pi_{j_1}\pi_{j_2}a\simeq 123$, for some indices~$j_1<j_2$. Note that~$\pi_{j_2}$ is not an ltr-minimum of~$\pi$, therefore it precedes~$y$ in~$\pi$. Moreover we have~$\pi_{j_2}<y$, since~$y>a>\pi_{j_2}$. Thus~$\pi_{j_1}\pi_{j_2}y$ is an occurrence of~$123$ in~$\pi$, which contradicts the fact that~$\pi$ is~$(132,321)$-sortable. The pattern~$\mu$ can be treated analogously, so we leave the details to the reader.

Finally, we wish to prove that the cell~$C_{t,d}$ is active if and only if~$B_t$ is empty. If~$B_t$ is not empty, then~$C_{t,d}$ is not active due to Proposition~\ref{proposition_active_cell_132_321}. If instead~$B_t$ is empty, then inserting~$a$ in~$C_{t,h}$ cannot create an occurrence of~$123$. This can be proved by repeating the same argument used above. Instead, suppose that~$a$ creates an occurrence~$\pi_{j_1}\pi_{j_2}a$ of~$132$. Then, since~$B_t$ is empty, $\pi_{j_1}\pi_{j_2}m_t a$ is an occurrence of~$2143$, and thus~$\pi_{j_1}\pi_{j_2}a$ is not an occurrence of~$\mu$, as desired.
\end{proof}

Theorem~\ref{theorem_gener_permutation_132_321} allows us to define a generating tree for~$\Sort(132,321)$. The node corresponding to the~$(132,321)$-sortable permutation~$\pi$, with~$\pi=m_1B_1\cdots m_tB_t$, is equipped with two labels~$(d,b)$, where:
\begin{itemize}
\item $d=\depth(\pi)$ is the depth of~$\pi$.
\item $b$ is a boolean counter with value~$b=0$, if~$B_t$ is empty, and~$b=1$, otherwise.
\end{itemize}

According to Theorem~\ref{theorem_gener_permutation_132_321}, the following rule provides a generating tree for~$\Sort(132,321)$:

\begin{equation*}\label{equation_gen_tree_132_321}
\Omega_1:
\begin{cases}
(1,0)\\
(d,0)\longrightarrow (d+1,0)(1,1)(2,1)\cdots(d-1,1)(d,1)\\
(d,1)\longrightarrow (d+1,0)(1,1)(2,1)\cdots(d-1,1)
\end{cases}
\end{equation*}

We shall prove that the above tree is a generating tree for Dyck paths avoiding~$\D\U\D\U$ as well. Let us consider the generating tree for Dyck paths described in Example~\ref{example_dyck_paths_new_peak}. In this tree, the children of a given Dyck path~$P$ are obtained by inserting a new peak~$\U\D$ either before a~$\D$ step of the last descending run or at the end of~$P$. To obtain a generating tree for~$\D\U\D\U$-avoiding paths, we have to remove all the children where this operation creates an occurrence of the forbidden pattern~$\D\U\D\U$. More precisely, let~$k$ be the length of the last descending run of~$P$ and write~$P=p_1\cdots p_{i-1}p_i\D^k$, where~$p_i$ is the rightmost~$\U$ step of~$P$. If~$p_{i-1}=\U$, then inserting a new peak is always allowed. Otherwise, if~$p_{i-1}=\D$, and thus~$p_{i-1}p_i=\D\U$, then we cannot insert a new peak immediately before~$p_{i+2}$, since~$p_{i-1}p_ip_{i+1}\U$ would be an occurrence of~$\D\U\D\U$. We then assign to each Dyck path~$P$ two labels~$(k,b)$, where:
\begin{itemize}
\item $k$ is the length of the last descending run of~$P$;
\item $b$ is a boolean counter with value~$0$, if the step that precedes the last~$\U$ step of~$P$ is~$\U$, and~$1$ otherwise. We assume~$b=0$ for the path~$\U\D$.
\end{itemize}

According to what observed above, the following is a generating tree for Dyck paths avoiding~$\D\U\D\U$:

\begin{equation*}\label{equation_gen_tree_DUDU}
\Omega_2:
\begin{cases}
(1,0)\\
(k,0)\longrightarrow (k+1,0)(1,1)(2,1)\cdots(k-1,1)(k,1)\\
(k,1)\longrightarrow (k+1,0)(1,1)(2,1)\cdots(k-1,1)
\end{cases}
\end{equation*}

\begin{corollary}
The number of~$(132,321)$-sortable permutation of length~$n$ is equal to the number of Dyck paths of semilength~$n$ avoiding~$\D\U\D\U$.
\end{corollary}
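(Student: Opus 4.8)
The plan is to prove the corollary by showing that the two generating trees $\Omega_1$ and $\Omega_2$, which have already been constructed in the excerpt, are literally the same succession rule. As noted in the preliminary section on generating trees, a bijection between two combinatorial families is immediately obtained whenever both families are generated by the same succession rule; hence establishing the identity of $\Omega_1$ and $\Omega_2$ is exactly what is needed.

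First I would recall the two rules as they were derived. Theorem~\ref{theorem_gener_permutation_132_321} gives, for $(132,321)$-sortable permutations labeled by $(d,b)$ where $d=\depth(\pi)$ and $b$ is the boolean counter recording whether the last vertical strip $B_t$ is empty,
$$
\Omega_1:
\begin{cases}
(1,0)\\
(d,0)\longrightarrow (d+1,0)(1,1)(2,1)\cdots(d-1,1)(d,1)\\
(d,1)\longrightarrow (d+1,0)(1,1)(2,1)\cdots(d-1,1).
\end{cases}
$$
On the other side, the analysis of $\D\U\D\U$-avoiding Dyck paths labeled by $(k,b)$, with $k$ the length of the last descending run and $b$ recording whether the step preceding the final $\U$ step is a $\D$, yields
$$
\Omega_2:
\begin{cases}
(1,0)\\
(k,0)\longrightarrow (k+1,0)(1,1)(2,1)\cdots(k-1,1)(k,1)\\
(k,1)\longrightarrow (k+1,0)(1,1)(2,1)\cdots(k-1,1).
\end{cases}
$$

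The key observation is then purely syntactic: under the identification of the first label ($d \leftrightarrow k$) and of the boolean second label, $\Omega_1$ and $\Omega_2$ have the same root $(1,0)$ and identical productions for both the $(\cdot,0)$ and the $(\cdot,1)$ cases. Since a generating tree is completely determined by its root label together with its productions, the trees generated by $\Omega_1$ and $\Omega_2$ are isomorphic as labeled rooted trees. By construction (Theorem~\ref{theorem_gener_permutation_132_321}), the nodes at level $n$ of the first tree are in bijection with $\Sort_n(132,321)$, while by the discussion preceding $\Omega_2$ the nodes at level $n$ of the second tree are in bijection with the Dyck paths of semilength $n$ avoiding $\D\U\D\U$. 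Matching the two trees level by level therefore gives a size-preserving bijection between the two families, which proves the equality of cardinalities.

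I do not expect any genuine obstacle here, since all the substantive work has already been carried out in Theorem~\ref{theorem_gener_permutation_132_321} and in the explicit derivation of $\Omega_2$; the only point requiring a little care is to state clearly that the correspondence is set up through the common succession rule, so that the bijection is obtained by comparing the two generating trees level by level rather than by an ad hoc direct construction. The proof is thus a one-line invocation: the two families share the generating tree encoded by $\Omega_1=\Omega_2$, and nodes at level $n$ count permutations in $\Sort_n(132,321)$ on one side and $\D\U\D\U$-avoiding Dyck paths of semilength $n$ on the other.
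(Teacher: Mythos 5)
Your proposal is correct and matches the paper's own argument: the paper's proof of this corollary is precisely the observation that the rules $\Omega_1$ and $\Omega_2$ are identical, with the generating-tree machinery (Theorem~\ref{theorem_gener_permutation_132_321} and the derivation of $\Omega_2$) doing all the substantive work beforehand. Your more detailed spelling-out of the level-by-level correspondence is just an expanded version of that same one-line invocation.
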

\begin{proof}
The rules~$\Omega_1$ and~$\Omega_2$ are identical.
\end{proof}

An example of the bijection between~$\Sort(132,321)$ and the set of~$\D\U\D\U$-avoiding Dyck paths induced by the rules~$\Omega_1$ and~$\Omega_2$ is illustrated in Figure~\ref{figure_gen_trees_DUDU_132_321}.

\begin{figure}
\centering
\begin{tikzpicture}[scale=0.75]
\draw [help lines] (3,1) grid (4,5);
\draw [help lines] (2,2) grid (3,5);
\draw [help lines] (1,3) grid (2,5);
\draw [help lines] (0,4) grid (1,5);
\node[scale=1] at (0,4) {$\bullet$};
\node[scale=1] at (1,3) {$\bullet$};
\node[scale=1] at (2,2) {$\bullet$};
\node[scale=1] at (3,1) {$\bullet$};
\node[scale=1] at (0.5,4.66) {$\bullet$};
\node[scale=1] at (1.5,4.33) {$\bullet$};
\node[scale=1] at (3.5,2.5) {$\bullet$};
\node[scale=1] at (0.5,5.5) {$B_1$};
\node[scale=1] at (1.5,5.5) {$B_2$};
\node[scale=1] at (2.5,5.5) {$B_3$};
\node[scale=1] at (3.5,5.5) {$B_4$};
\node[scale=1] at (-0.5,4.5) {$H_1$};
\node[scale=1] at (-0.5,3.5) {$H_2$};
\node[scale=1] at (-0.5,2.5) {$H_3$};
\node[scale=1] at (-0.5,1.5) {$H_4$};
\node[scale=1] at (4.5,4.5) {$\times$};
\node[scale=1] at (4.5,3.5) {{$\times$}};
\node[scale=1] at (4.5,2.5) {{$\checkmark$}};
\node[scale=1] at (4.5,1.5) {$\checkmark$};
\node[scale=1] at (4.5,0.5) {$\checkmark$};
\end{tikzpicture}
\hfill
\begin{tikzpicture}[scale=0.5]
\fillPath{1,-1,1,1,-1,1,1,1,-1,-1,1,-1,-1,-1}{0}{0}
\node[scale=1] at (11,3.5) {$\checkmark$};
\node[scale=1] at (12,2.5) {$\times$};
\node[scale=1] at (13,1.5) {$\checkmark$};
\node[scale=1] at (14,0.5) {$\checkmark$};
\end{tikzpicture}
\caption[The grid decomposition of a~$(132,321)$-sortable permutations and the corresponding Dyck path.]{On the left, the grid decomposition of the~$(132,321)$-sortable permutation~$\pi=6837214$, where~$\depth(\pi)=3$ and the last block~$B_4$ is not empty. On the right, the corresponding Dyck path~$P$, with~$k=3$ and~$b=1$. Active sites are marked with~``$\checkmark$", while the symbol~``$\times$" marks those sites that are not active.}\label{figure_gen_trees_DUDU_132_321}
\end{figure}
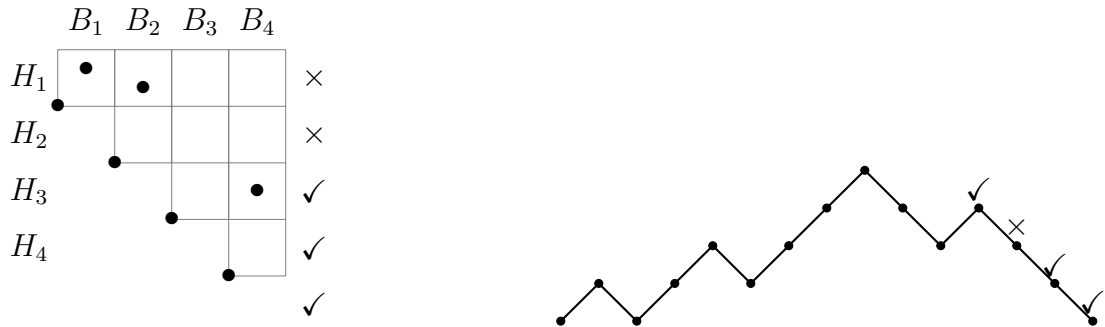

\section{\texorpdfstring{The~$(123,132)$-machine}{The (123,132)-machine}}\label{section_123_132}

In this section we discuss the~$(123,132)$-machine. In~\cite{BCKV}, the authors show that permutations in~$\Sort(123,132)$ are characterized by the avoidance of four (generalized) patterns of length four. Then they prove that the distribution of the first element in~$\Sort(123,132)$ is given by the well known Catalan triangle (sequence~A009766 in~\cite{Sl}). An immediate consequence is that~$(123,132)$-sortable permutations are enumerated by the Catalan numbers. In this thesis we follow an alternative path. We first provide a decomposition lemma for~$(123,132)$-sortable permutations, then we collect several geometric properties of~$\Sort(123,132)$ that lead, towards a step by step procedure, to the same enumerative result.

Although Lemma~\ref{theorem_ltr-min_pairs_of_pat_132} does not apply to~$\Sort(123,132)$, it is still useful to look at the ltr-min decomposition of~$(123,132)$-sortable permutations.

\begin{lemma}\label{lemma_123_132_decomp}
Let~$\pi=m_1B_1\cdots m_tB_t$ be the ltr-min decomposition of the permutation~$\pi$. Then:
\begin{enumerate}
\item $\out{123,132}(\pi)=\tilde{B}_1\tilde{B}_2m_2\tilde{B}_3m_3\cdots\tilde{B}_km_km_1$, where~$\tilde{B}_i$ is a rearrangement of~$B_i$, for each~$i$.
\item If~$\pi$ is~$(123,132)$-sortable, then~$B_i>B_{i+1}$\footnote{That is~$x>y$ for each~$x\in B_i, y\in B_{i+1}$.} for each~$i=1,\dots,t-1$.
\item If~$\pi$ is~$(123,132)$-sortable, then~$B_1$ is increasing and~$\tilde{B}_1=\reverse(B_1)$ is the reverse of~$B_1$.
\item If~$\pi$ is~$(123,132)$-sortable, then~$x<m_{i-1}$ for each~$x\in B_i$ and~$i\ge 3$. Moreover, we have~$\tilde{B}_i=\out{12}(B_i)$.
\end{enumerate}
\end{lemma}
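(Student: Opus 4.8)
The plan is to reduce all four assertions to one precise description of the greedy pop rule of the $(123,132)$-stack, and then read them off from the ltr-min decomposition $\pi=m_1B_1\cdots m_tB_t$, where $m_1>m_2>\cdots>m_t=1$.

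First I would record the mechanical rule. Both $123$ and $132$ have their smallest entry on top when read top to bottom, so the stack contains a forbidden occurrence exactly when some entry has at least two larger entries below it. Hence, when the next input entry is $y$, pushing $y$ creates a forbidden pattern if and only if the stack currently holds at least two entries greater than $y$. Moreover, whenever the algorithm pops, the popped entry exceeds $y$: if the top were smaller than $y$ while two entries below exceeded $y$, the top would already have two larger entries beneath it, contradicting validity. Thus on input $y$ the stack pops its topmost entries, each greater than $y$, until exactly one entry greater than $y$ remains, and then pushes $y$.

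From this rule, part (1) follows by induction along the decomposition. The entry $m_1=\pi_1$ stays at the bottom of the stack throughout and leaves only at the final flush. When a new ltr-minimum $m_{i+1}$ arrives, every entry seen so far exceeds it, so its arrival pops the stack down to the single bottom entry $m_1$; for $i\ge 2$ this flushes $\tilde{B}_i m_i$ to the output (the stack having read $\tilde{B}_i,m_i,m_1$ from top to bottom), while for $i=1$ it flushes only $\tilde{B}_1$ since $m_1$ is retained. Then $m_{i+1}$ is pushed directly above $m_1$ and $B_{i+1}$ is processed on top of $m_{i+1},m_1$. The terminal flush outputs $\tilde{B}_t m_t m_1$. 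Concatenating yields $\out{123,132}(\pi)=\tilde{B}_1\tilde{B}_2m_2\cdots\tilde{B}_tm_tm_1$.

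Parts (2) and (3) then use that $m_t=1$ occurs next-to-last in the output. For (2), if $x\in B_i$, $y\in B_j$ with $i<j$ and $x<y$, then $x$ precedes $y$ (their segments appear in index order) and both precede $m_t=1$; since $1<x<y$, the triple $x\,y\,1\simeq 231$, contradicting Lemma~\ref{lemma_out_231}, so $B_i>B_j$ for all $i<j$. For (3), the same trick forces $\tilde{B}_1$ to be decreasing, a noninversion $u<v$ inside it giving $u\,v\,1\simeq 231$. I claim no pop occurs while $B_1$ is processed: if one did, triggered by an incoming $z\in B_1$, the pop rule leaves an entry $e'>z$ below and pushes $z$ above it; being LIFO, $z$ exits before $e'$, so $z$ precedes $e'$ in $\tilde{B}_1$ with $z<e'$, a noninversion — impossible. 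Hence (as $m_1<B_1$ never blocks a push) all of $B_1$ is pushed in input order and flushed in reverse, so $\tilde{B}_1=\reverse(B_1)$; decreasingness of $\tilde{B}_1$ then forces $B_1$ increasing.

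For part (4) with $i\ge 3$, I would first show $x<m_{i-1}$ for every $x\in B_i$: otherwise $m_{i-1}$ (which precedes $\tilde{B}_i$ in the output by part (1)), $x$, and the later $m_t=1$ form $m_{i-1}\,x\,1\simeq 231$, since $1<m_{i-1}<x$. As $m_1>m_{i-1}$, this gives $m_1>x$ for all $x\in B_i$, so while $B_i$ is processed the stack rests on $m_i,m_1$ (top to bottom) with $m_i$ below and $m_1$ above every entry of $B_i$ in value. For incoming $y\in B_i$ the entry $m_1$ is always one entry greater than $y$ and $m_i$ never is, so the pop rule blocks $y$ exactly when some $B_i$-entry in the stack exceeds $y$, and it pops precisely those $B_i$-entries greater than $y$ — the behavior of a $12$-stack on $B_i$, whose emptying matches the flush at $m_{i+1}$. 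Hence $\tilde{B}_i=\out{12}(B_i)$. The main obstacle is pinning down the greedy pop rule and the fact that each pop removes an entry larger than the incoming one; once that is secured, the four parts are bookkeeping on the decomposition, the only genuinely distinct steps being the LIFO noninversion argument in (3) and the reduction to the $12$-stack in (4).
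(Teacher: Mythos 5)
Your proposal is correct and takes essentially the same route as the paper: both arguments track the greedy $(123,132)$-stack along the ltr-min decomposition to get the output form in (1), derive (2)--(3) and the first half of (4) from occurrences of $231$ involving the trailing minima, and prove $\tilde{B}_i=\out{12}(B_i)$ by showing the stack with $m_i,m_1$ at the bottom simulates a $12$-stack on $B_i$. Your up-front ``at most one larger entry below'' pop rule is a cleaner unifying device than the paper's case-by-case pattern checks, and your proof of (3) (first $\tilde{B}_1$ decreasing, then no pops) is in fact tighter than the paper's, which tacitly assumes the entry following the leftmost descent of $B_1$ is pushed without triggering any pop.
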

\begin{proof}
\begin{enumerate}
\item Consider the action of the~$(123,132)$-stack on the input permutation~$\pi$. Note that the element~$m_1$ remains at the bottom of the $(123,132)$-stack until the end of the sorting procedure. Now, since~$m_2xm_1$ is an occurrence of~$132$ for each~$x$ in~$B_1$, all the elements in~$B_1$ are extracted from the~$(123,132)$-stack before~$m_2$ enters. Then the element~$m_2$ can never play the role of~$2$ in either~$123$ or~$132$, since~$m_2<m_1$ and~$m_1$ is the only element below~$m_2$ in the~$(123,132)$-stack. Therefore~$m_2$ is never involved in any occurrence of either~$123$ or~$132$ with~$m_1$ and some elements of~$B_2$. By repeating the same argument, we deduce that the block~$B_2$ has to be extracted from the~$(123,132)$-stack before~$m_3$ enters ($m_3xm_2\simeq 132$ for each~$x\in B_2$). Then~$m_2$ is extracted too, since~$m_3m_2m_1\simeq 123$. Next, $m_3$ is pushed above~$m_1$. The thesis follows by iterating the same argument on the remaining blocks.

\item Suppose, for a contradiction, that there are two elements~$x\in B_i$ and~$y\in B_{i+1}$ such that~$x<y$. Then, due to what proved in the previous item, $\out{123,132}(\pi)$ contains an occurrence~$xym_t$ of~$231$, contradicting the fact that~$\pi$ is~$(123,132)$-sortable.

\item Suppose, for a contradiction, that~$B_1$ is not increasing. Write~$B_1=x_1\cdots x_kx_{k+1}\cdots$, where~$x_k>x_{k+1}$ is the leftmost descent of~$B_1$. Then the elements~$x_1,\dots,x_k,x_{k+1}$ are pushed into the~$(123,132)$-stack and~$S^{123,132}(\pi)$ contains an occurrence~$x_{k+1}x_km_1$ of~$231$, contradicting the fact that~$\pi$ is~$(123,132)$-sortable. Thus~$B_1$ is increasing. In particular, each element of~$B_1$ can be pushed into the~$(123,132)$-stack, so that~$\tilde{B}_1=\reverse(B_1)$.

\item Let~$i\ge 3$. If~$B_i$ contains an element~$x>m_{i-1}$, then~$\out{123,132}(\pi)$ contains an occurrence~$m_{i-1}xm_i$ of~$231$, a contradiction with~$\pi$~$(123,132)$-sortable. Finally, we show that~$\tilde{B}_i=\out{12}(B_i)$. Consider the instant when~$m_i$ is pushed into the~$(123,132)$-stack, that is as soon as the first element of~$B_i$ is the next one to be processed. As a consequence of what said in the first item of this lemma, at this step the~$(123,132)$-stack contains~$m_i m_1$, with~$m_i$ above~$m_1$. We show that, on~$B_i$, the behavior of the~$(123,132)$-stack is equivalent to the behavior of a~$12$-stack that ignores~$m_i m_1$. Suppose that the~$12$-stack performs a pop operation. In other words, the restriction of the~$12$-stack is triggered by an occurrence~$y_2y_1$ of~$12$, where~$y_2$ is the next element of the input and~$y_1$ is in the~$12$-stack. Notice that~$m_1>x$ for each~$x\in B_i$, since we have just proved that~$x<m_{i-1}$ and obviously~$m_{i-1}<m_1$. Thus~$y_2y_1m_1\simeq 123$ and the~$(123,132)$-stack performs a pop operation too. Conversely, suppose that the~$(123,132)$-stack performs a pop operation, that is the restriction of the~$(123,132)$-stack is triggered by an occurrence~$y_3y_2y_1$ of either~$123$ or~$231$, where~$y_3$ is the next element of the input. If~$y_3y_2y_1\simeq 123$, then~$y_2\neq m_i$, since~$m_1>m_2$ and~$y_3<y_2$. Therefore~$y_2$ and~$y_3$ are elements of~$B_i$ and the restriction of the~$12$-stack is triggered by~$y_3y_2\simeq 12$, as desired. Otherwise, suppose that~$y_3y_2y_1\simeq 132$. Note that both~$m_1$ and~$m_i$ cannot be part of this occurrence. Indeed~$m_1\neq y_1$, since~$m_1>x$ for each~$x\in B_i$, and~$m_i\neq y_1,y_2$ since~$m_i<x$ for each~$x\in B_i$. Therefore~$y_3<y_1$ are elements of~$B_i$ that trigger the restriction of the~$12$-stack. This completes the proof.
\end{enumerate}
\end{proof}

What proved so far determines the structure of a~$(123,132)$-sortable permutation~$\pi=m_1B_1\cdots m_tB_t$, except for the second block~$B_2$. Indeed, since~$B_1$ is increasing and~$B_i>B_{i+1}$ for each~$i$, then~$B_1$ contains the biggest elements of~$\pi$ in increasing order. Then, for each~$i\ge 3$, the block~$B_i$ is a~$213$-avoiding permutation due to Theorem~\ref{theorem_12machine}. Moreover, since~$m_i<x< m_{i-1}$ for each~$x\in B_i$, elements in~$m_iB_i$ are strictly greater than elements in~$m_{i+1}B_{i+1}$. This guarantees that occurrences of~$213$ are not created if we juxtapose~$m_iB_i$ and~$m_{i+1}B_{i+1}$. As a consequence, we can equivalently say that~$m_3B_3\cdots m_tB_t$ is a~$213$-avoiding permutation. Next we describe the structure of the remaining block~$B_2$. For the rest of this section, let~$\xi=(132,\lbrace 0,2\rbrace,\emptyset)$ be the bivincular pattern depicted in Figure~\ref{figure_bivincular_pattern_132_0_2}. A geometric description of the set~$\Perm(\xi)$ was provided in Section~\ref{section_bivincular_result}. Recall that an occurrence of~$\xi$ in a permutation~$\pi=\pi_1\cdots\pi_n$ is a descent~$\pi_i>\pi_{i+1}$ such that~$\pi_{i+1}>\pi_1$. Finally, denote by~$*213$ an occurrence of either~$4213$, $3214$, $2314$, or~$1324$. Note that for any permutation~$\pi=\pi_1\cdots\pi_n$, we have~$\pi\in\Perm(*213)$ if and only if~$\pi_2\cdots\pi_n$ avoids~$213$.

\begin{lemma}\label{lemma_second_block_123_132}
Let~$\pi=m_1m_2B_2$ be a permutation with two ltr-minima and such that the first block in the ltr-min decomposition is empty. Then~$\pi$ is~$(123,132)$-sortable if and only if~$\pi\in\Perm(\xi,*213)$.
\end{lemma}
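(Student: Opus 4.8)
The plan is to reduce sortability to a condition on the output of the first stack, then translate both that output condition and the two forbidden patterns into statements about the single block $B_2$, and finally compare them through a careful analysis of the $(123,132)$-stack acting on $B_2$.

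First I would record the shape of the output. Since the first block is empty and $m_2=1$ is the global minimum, Lemma~\ref{lemma_123_132_decomp}(1) with $t=2$ gives $\out{123,132}(\pi)=\tilde B_2\,1\,m_1$, where $\tilde B_2$ is the rearrangement of $B_2$ produced by the stack. By Lemma~\ref{lemma_out_231}, $\pi$ is $(123,132)$-sortable iff $\out{123,132}(\pi)$ avoids $231$. Because $1$ is the minimum and sits immediately before the last entry $m_1$, any $231$ in $\tilde B_2\,1\,m_1$ would need an ascent inside $\tilde B_2$ (taking $1$ as the ``$1$''), while conversely a single ascent $a$ before $b$ in $\tilde B_2$ already yields $ab1\simeq 231$; hence sortability is equivalent to $\tilde B_2$ being decreasing. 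In parallel I would translate the hypotheses $\pi\in\Perm(\xi)$ and $\pi\in\Perm(*213)$: since $\pi_1=m_1$ and $\pi_2=1<m_1$, an occurrence of $\xi$ (a consecutive descent whose smaller entry exceeds $\pi_1$) must live inside $B_2$, so $\pi$ avoids $\xi$ iff $B_2$ has no consecutive descent $ab$ with $b>m_1$; and by the stated description of $*213$ together with the fact that the leading $1$ cannot be the middle element of a $213$, $\pi$ avoids $*213$ iff $B_2$ avoids $213$. Thus the lemma reduces to the single claim: $\tilde B_2$ is decreasing iff $B_2\in\Perm(213)$ and $B_2$ has no consecutive descent above $m_1$.

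To analyse the stack I would first pin down its push rule. While $B_2$ is processed the stack reads $s_1\cdots s_k\,1\,m_1$ from the top, and since a freshly pushed $x$ is topmost it can only be the smallest entry of a new $123$ or $132$; hence pushing $x$ is illegal exactly when two entries below $x$ exceed it. For small $x$ (with $x<m_1$) the entry $m_1$ already supplies one, so the stack pops until no $s_i$ exceeds $x$; for large $x$ (with $x>m_1$) it pops until at most one $s_i$ exceeds $x$. The entries $1$ and $m_1$ remain at the bottom until the final flush. With this in hand, the ``only if'' half (sortable $\Rightarrow$ avoidance) follows by exhibiting an ascent in $\tilde B_2$ whenever a forbidden pattern occurs. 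If $B_2$ has a large consecutive descent $a,x$ with $a>x>m_1$, then $a$ is on top when $x$ is read, so after the push a strictly larger entry remains below $x$ and is output after it, an ascent. If $B_2$ contains $uvw\simeq 213$ (so $v<u<w$), I would split into cases: if $u$ has already left the stack when $v$ is read it is output before $w$; if $u$ is still present and $v$ is small, then $v$ pops $u$ before $w$ appears; and if $u$ is still present and $v$ is large, the push of $v$ again leaves a larger entry beneath it. In every case an element is output before a strictly larger one, so $\tilde B_2$ is not decreasing.

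The converse (avoidance $\Rightarrow$ $\tilde B_2$ decreasing) is the crux, and I expect it to be the main obstacle. The key is the invariant that, as long as $B_2$ has no large consecutive descent, the $B_2$-part of the stack is always decreasing from top to bottom. I would prove this by considering the first moment it fails: a large $x$ is pushed above a strictly larger entry, which forces its immediate predecessor (the current top) to exceed $x$, i.e.\ a large consecutive descent, a contradiction. Granting the invariant, large elements never trigger a pop, so every burst of pops is caused either by a small element clearing all larger entries or by the final flush, and each burst is emitted in decreasing order. It then remains to glue consecutive bursts together, and here $213$-avoidance enters: any element read after such a burst must be smaller than every element of that burst, for otherwise it would complete a $213$ with the small element that triggered the burst. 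This monotonicity across bursts, combined with the within-burst monotonicity, shows that the whole output $\tilde B_2$ is decreasing, completing the equivalence. Setting up the stack invariant and the cross-burst comparison carefully is the delicate part; the rest is bookkeeping with the push rule.
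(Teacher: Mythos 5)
Your proposal is correct, and it takes a genuinely different route from the thesis. The paper's proof is pattern-chasing by contradiction in both directions: for sortable~$\Rightarrow$ avoidance it assumes an occurrence of~$\xi$ (resp.\ of~$213$ in~$m_2B_2$) and traces when the relevant entries exit the~$(123,132)$-stack until a~$231$ appears in~$\out{123,132}(\pi)$; for the converse it starts from a~$231$ in the output and extracts either a~$213$ or a~$\xi$-occurrence; both directions lean on open-ended iterations (``repeat the same argument\dots sooner or later this will lead to a contradiction''). You instead reformulate sortability as the single condition that~$\tilde{B}_2$ be decreasing (using Lemma~\ref{lemma_123_132_decomp} and Lemma~\ref{lemma_out_231}, exactly as the paper does for the output shape~$\tilde{B}_2m_2m_1$), translate both forbidden patterns into conditions on~$B_2$ alone, and then work from an explicit push rule: a push of~$x$ is illegal precisely when two stack entries exceed~$x$, so small entries ($x<m_1$) clear everything above them while large entries pop down to one larger survivor. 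Your forward direction then produces an ascent in~$\tilde{B}_2$ by a finite case analysis (no iteration needed), and your converse is a direct structural argument: the invariant that, absent a~$\xi$-occurrence, the~$B_2$-part of the stack stays decreasing from top to bottom (so large entries never trigger pops), plus the burst decomposition in which~$213$-avoidance forces every later entry below the minimum of each burst. I checked the delicate points — the first-failure argument for the invariant (the top of the stack is the immediate predecessor of the offending entry, and by the invariant it is the stack maximum), and the cross-burst monotonicity (survivors are below the triggering small entry, later entries are below the burst minimum by~$213$-avoidance) — and they hold. What your route buys is rigor and transparency: it replaces the paper's unbounded ``repeat until contradiction'' steps with a clean induction with explicit termination, and it exposes the mechanism of the stack (all pops come from small entries or the final flush) rather than refuting individual configurations; the paper's route is shorter to write and consistent in style with the surrounding chapter.
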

\begin{proof}
By Lemma~\ref{lemma_123_132_decomp} we have~$\out{123,132}(\pi)=\tilde{B_2}m_2m_1$. Suppose initially that~$\pi$ is~$(123,132)$-sortable. We wish to prove that~$\pi$ avoids~$\xi$ and~$m_2B_2$ avoids~$213$. Suppose, for a contradiction, that~$\pi$ contains an occurrence~$m_1\pi_i\pi_{i+1}$ of~$\xi$. If~$\pi_{i+1}$ enters the~$(123,132)$-stack before~$\pi_i$ is extracted, then~$\out{123,132}(\pi)$ contains an occurrence~$\pi_{i+1}\pi_i m_2$ of~$231$, a contradiction with~$\pi$ being~$(123,132)$-sortable. Therefore~$\pi_i$ must be extracted when~$\pi_{i+1}$ is the next element of the input. Let~$m_1m_2x_1\cdots x_l\pi_i$ be the elements contained in the~$(123,132)$-stack at this point. Notice that it has to be~$x_1<x_2<\cdots<x_l<c$, otherwise~$\out{123,132}(\pi)$ would contain an occurrence of~$231$ (with~$m_2$ playing the role of~$1$), which is impossible. Now, starting from the top of the stack (which is~$\pi_i>\pi_{i+1}$) and going down, consider the last element~$y$ such that~$y>\pi_{i+1}$. Since~$m_2<\pi_{i+1}$, it must be either~$y=\pi_i$ or~$y=x_j$, for some~$j$. In any case, $\pi_{i+1}$ enters above~$y$ and~$\out{123,132}(\pi)$ contains an occurrence~$\pi_{i+1}ym_2$ of~$231$, which is again a contradiction.\\
Otherwise, suppose, for a contradiction, that~$m_2B_2$ contains an occurrence~$\pi_i\pi_j\pi_k$ of~$213$. Notice that~$i>2$, since~$m_2=1$. If~$\pi_j<m_1$, then~$\pi_j\pi_im_1$ is an occurrence of either~$321$ or~$231$. Thus~$\pi_i$ has to be extracted before~$\pi_j$ enters the~$(123,132)$-stack. But this results in an occurrence~$pi_i\pi_jm_2$ of~$231$ in~$\out{123,132}(\pi)$, which contradicts the fact that~$\pi$ is~$(123,132)$-sortable. Therefore we can suppose~$\pi_j>m_1$. If~$j=i+1$, then~$\pi_1\pi_i\pi_{i+1}$ is an occurrence of~$\xi$ and we are back to the previous case. Instead, if~$j>i+1$, consider the element~$\pi_{i+1}$. If~$\pi_{i+1}<\pi_j$, then we repeat the same argument replacing~$\pi_j$ with~$\pi_{i+1}$. Finally, if~$\pi_{i+1}>\pi_j$, we repeat the same argument replacing~$\pi_i$ with~$\pi_{i+1}$. Sooner or later this will lead to a contradiction.

Conversely, suppose that~$\pi$ is not~$(123,132)$-sortable. Equivalently, let~$bca$ be an occurrence of~$231$ in~$\out{123,132}(\pi)=\tilde{B}_2m_2m_1$. Note that~$m_2\neq b,c$, since~$m_2<m_1$. Therefore~$b$ and~$c$ are elements of~$\tilde{B}_i$. We distinguish two cases. Initially, suppose that~$b$ precedes~$c$ in~$\pi$ (and in~$\out{123,132}(\pi)$ as well). Therefore~$b$ is extracted from the~$(123,132)$-stack before~$c$ enters. Let~$y$ be the next element of the input when~$b$ is extracted. If~$y<b$, then~$byc$ is an occurrence of~$213$ in~$m_2B_2$, as desired. Otherwise, let~$y>b$. Since the restriction of the~$(123,132)$-stack is triggered by~$y$, there are two elements into the stack~$u,v$, with~$u$ below~$v$, such that~$yvu$ is an occurrence of either~$123$ or~$132$. Since we assumed~$y>b$, we have~$bvu\simeq yvu$ and thus~$bvu$ is an occurrence of either~$123$ or~$132$ inside the~$(123,132)$-stack, which is impossible. Finally, suppose that~$c$ precedes~$b$ in~$\pi$. Since~$b$ is extracted before~$c$, $c$ is still inside the~$(123,132)$-stack when~$b$ enters. This implies that~$b>m_1$, otherwise~$bcm_1$ would be an occurrence of either~$123$ (if~$m_1>c$) or~$132$ (if~$m_1<c$), which is impossible. Similarly, for each element~$x$ between~$c$ and~$b$ in~$\pi$ we have~$x>m_1$. Now, if~$c$ and~$b$ are consecutive in position, then~$m_1cb$ is an occurrence of~$\xi$, as wanted. Otherwise, let~$x$ be the element immediately after~$c$ in~$\pi$. If~$x<c$, then~$m_1cx$ is the desired occurrence of~$\xi$. Otherwise, if~$c<x$ (and thus~$x>b$), we can repeat the same argument using~$x$ instead of~$c$.
\end{proof}

\begin{lemma}\label{lemma_first_block_123_132}
Let~$\pi$ be a permutation of length~$n$. Let~$\pi=m_1m_2B_2m_3B_3\cdots m_tB_t$ be the ltr-min decomposition of~$\pi$ and suppose that~$B_1$ is empty. For~$k\ge 1$, define:
$$
\bar{\pi}=m_1(n+1)(n+2)\dots(n+k)m_2B_2\cdots m_tB_t.
$$
Then~$\pi$ is~$(123,132)$-sortable if and only if~$\bar{\pi}$ is~$(123,132)$-sortable.
\end{lemma}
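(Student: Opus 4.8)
The plan is to run the $(123,132)$-stack on $\bar{\pi}$ step by step, show that the $k$ inserted maxima are emitted as a decreasing prefix that returns the stack to exactly the configuration it would have had while processing $\pi$, and then observe that such a prefix of largest values is invisible to the $231$-avoidance test. Throughout I would use Lemma~\ref{lemma_out_231}, so that $\Sigma$-sortability for $\Sigma=\{123,132\}$ is equivalent to $231$-avoidance of the output, and I would aim to prove the single identity
$$\out{123,132}(\bar{\pi})=(n+k)(n+k-1)\cdots(n+1)\,\out{123,132}(\pi).$$

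First I would simulate the prefix $m_1(n+1)(n+2)\cdots(n+k)$ of $\bar{\pi}$. After $m_1=\pi_1$ is pushed, each maximum can be pushed in turn: when $(n+j)$ enters, it is the largest element in the stack, so the triple it forms with any two elements below it is order-isomorphic to a decreasing pattern, and a decreasing triple is neither $123$ nor $132$ (both of those begin with their smallest entry). Hence just before $m_2=\pi_2$ is read, the stack reads $(n+k)(n+k-1)\cdots(n+1)m_1$ from top to bottom. The decisive step is processing $m_2$: since $m_2<m_1<n+1<\cdots<n+k$, the element $m_2$ is smaller than everything in the stack, and together with any two stack entries (which are larger than $m_2$ and decreasing top to bottom) it forms an occurrence of $132$. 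The right-greedy rule therefore forbids pushing $m_2$ until at most one element remains; in fact one checks that even a single maximum $(n+1)$ sitting above $m_1$ forces $m_2(n+1)m_1\simeq 132$, so all $k$ maxima are popped, in the order $(n+k),(n+k-1),\dots,(n+1)$, leaving the stack equal to $[m_1]$ precisely when $m_2$ is finally pushed.

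At that instant the stack contains only $m_1$, which is exactly the configuration reached while running $\pi$ immediately after $m_1$ is pushed and before $m_2$ is read (here I use the hypothesis that $B_1$ is empty, so $m_2$ directly follows $m_1$ in $\pi$). Since the maxima have already left the stack and the remaining input $m_2B_2\cdots m_tB_t$ is identical in the two runs, the two computations coincide from this point onward, and $m_1$ exits last in both. This yields the displayed identity, with $W=\out{123,132}(\pi)$ appearing after the decreasing block of maxima.

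Finally I would verify that prepending $(n+k)\cdots(n+1)$ preserves $231$-avoidance, via a short role-by-role check in a putative occurrence $bca$ of $231$: a maximum cannot be the smallest entry $a$ (its only predecessors are larger maxima, which are decreasing and so cannot supply the required ascent $b<c$); it cannot be the middle entry $b$ (it would need a strictly larger entry to its right, yet all larger values precede it); and it cannot be the largest entry $c$ (its predecessors are larger maxima, so it has no smaller entry $b$ before it). Hence every occurrence of $231$ lies entirely in $W$, so $\out{123,132}(\bar{\pi})$ avoids $231$ if and only if $\out{123,132}(\pi)$ does, and Lemma~\ref{lemma_out_231} closes the equivalence. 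The main obstacle is the stack simulation of the second paragraph — one must argue carefully that the greedy discipline pops all $k$ maxima and restores the stack to the exact state $[m_1]$, since this is what guarantees that the rest of the run is genuinely unchanged; the remaining verifications are routine pattern checks.
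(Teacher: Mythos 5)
Your proof is correct and follows essentially the same route as the paper: simulate the $(123,132)$-stack to obtain the identity $\out{123,132}(\bar{\pi})=(n+k)(n+k-1)\cdots(n+1)\,\out{123,132}(\pi)$, then observe that the prepended decreasing block of maxima can take part in no occurrence of~$231$. The only difference is that you spell out the greedy pops and the role-by-role $231$-check that the paper leaves as ``easy to realize''.
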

\begin{proof}
Let~$k\ge 1$ and suppose that~$\pi$ is~$(123,132)$-sortable. Consider the action of the~$(123,132)$-stack on input~$\bar{\pi}$. Since~$m_1<n+1<n+2<\cdots<n+k$, the prefix of~$\bar{\pi}$ up to~$n+k$ is pushed into the~$(123,132)$-stack. Then, since~$m_2(n+1)m_1\simeq 132$, the elements~$n+k,n+k-1,\dots,n+1$ are extracted from the~$(123,132)$-stack. Therefore we have:
$$
\out{123,132}(\bar{\pi})=(n+k)(n+k-1)\dots (n+1)\out{123,132}(\pi).
$$
It is easy to realize that~$\out{123,132}(\pi)$ contains~$231$ if and only if~$\out{123,132}(\bar{\pi})$ contains~$231$, thus the thesis follows.
\end{proof}

\begin{corollary}\label{corollary_123_132_char}
Let~$\pi=m_1B_1\cdots m_tB_t$ be the ltr-min decomposition of the permutation~$\pi$. Then~$\pi$ is~$(123,132)$-sortable if and only if the following four conditions are satisified:
\begin{enumerate}
\item $B_i>B_{i+1}$ for each~$i=1,\dots,t-1$;
\item $B_1$ is increasing;
\item $m_1m_2B_2\in\Perm(\xi,*213)$;
\item $m_3B_3\cdots m_tB_t\in\Perm(213)$.
\end{enumerate}
\end{corollary}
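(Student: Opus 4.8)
The plan is to prove both implications by assembling the three structural lemmas just established, using Lemma~\ref{lemma_out_231} throughout to reduce $(123,132)$-sortability of $\pi$ to the single requirement that $\out{123,132}(\pi)$ avoid $231$. The backbone is the (unconditional) output formula of Lemma~\ref{lemma_123_132_decomp}(1), namely $\out{123,132}(\pi)=\tilde B_1\tilde B_2 m_2\tilde B_3 m_3\cdots\tilde B_t m_t m_1$, which lets me locate any hypothetical occurrence of $231$ block by block.

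For the forward implication, suppose $\pi$ is $(123,132)$-sortable. Conditions (1) and (2) are read off directly from Lemma~\ref{lemma_123_132_decomp}(2) and (3). For condition (4) I combine Lemma~\ref{lemma_123_132_decomp}(4) — which supplies both the value inequalities $x<m_{i-1}$ for $x\in B_i$ and the identity $\tilde B_i=\out{12}(B_i)$ — with Theorem~\ref{theorem_12machine}: since $\out{123,132}(\pi)$ avoids $231$ each $\tilde B_i=\out{12}(B_i)$ is decreasing, so every $B_i$ with $i\ge 3$ avoids $213$, and the inequalities make $m_i B_i$ dominate $m_{i+1}B_{i+1}$ in value, so no $213$ can straddle two blocks and $m_3B_3\cdots m_tB_t$ avoids $213$ as a whole. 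For condition (3) I restrict to $\rho=m_1m_2B_2$: when the stack processes $B_2$ inside $\pi$ it rests on exactly $m_2$ above $m_1$ (the block $B_1$ having been flushed before $m_2$ enters), which is the same configuration seen when $\rho$ is processed alone; hence $\out{123,132}(\rho)=\tilde B_2 m_2 m_1$ is precisely the subword of $\out{123,132}(\pi)$ supported on $\{m_1,m_2\}\cup B_2$, this subword inherits $231$-avoidance, $\rho$ is $(123,132)$-sortable, and Lemma~\ref{lemma_second_block_123_132} gives $\rho\in\Perm(\xi,*213)$.

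For the backward implication I assume (1)--(4). Conditions (1) and (2) say $B_1$ is the set of largest values arranged increasingly, so $\pi$ is exactly the image under the insertion of Lemma~\ref{lemma_first_block_123_132} of the permutation $\pi_0=m_1m_2B_2m_3B_3\cdots m_tB_t$ obtained by deleting $B_1$; by that lemma it suffices to sort $\pi_0$, so I may assume $B_1=\emptyset$. It then remains to show $\out{123,132}(\pi)=\tilde B_2 m_2\tilde B_3 m_3\cdots\tilde B_t m_t m_1$ avoids $231$. I argue that any occurrence $bca$ of $231$ must lie entirely in the $m_1m_2B_2$-region of the output or entirely in the $m_3B_3\cdots$-region: the former contradicts condition (3) through Lemma~\ref{lemma_second_block_123_132} (again $\tilde B_2 m_2 m_1$ is the output of $m_1m_2B_2$), and the latter is excluded because, once the $m_3B_3\cdots$ band lies below $m_1,m_2$, the stack acts there as a plain $12$-stack (exactly as in the proof of Lemma~\ref{lemma_123_132_decomp}(4)), so condition (4) plus Theorem~\ref{theorem_12machine} forces that portion of the output to be decreasing.

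The main obstacle is precisely the claim used twice above that no occurrence of $231$ \emph{straddles} the boundary between the two regions — equivalently, that $\pi$ has no $213$ whose ``$2$'' lies in $\{m_1,m_2\}\cup B_2$ while its ``$13$'' reaches into $m_3B_3\cdots$. Excluding this needs the interleaving $x<m_{i-1}$ for $x\in B_i$, $i\ge 3$ (so that $m_2$ dominates all of $m_3B_3\cdots$ and the $12$-stack reduction applies): in the forward direction this is free from Lemma~\ref{lemma_123_132_decomp}(4), but in the backward direction it must be recovered from the hypotheses, and this is the delicate step, since the block inequality of condition (1) alone does not constrain how the ltr-minima $m_i$ interleave with the later blocks. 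The crux of the write-up is therefore to verify that conditions (1), (3) and (4) \emph{jointly} pin down this value-band interleaving; once it is secured, the block-by-block case analysis on the output formula closes both directions.
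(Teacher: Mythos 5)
Your architecture is the same as the paper's (the paper's proof is literally a one-line citation of Lemmas~\ref{lemma_123_132_decomp}, \ref{lemma_second_block_123_132} and~\ref{lemma_first_block_123_132}), and your forward direction is sound. The problem is exactly the step you flagged and deferred: in the backward direction you need to recover, from conditions (1), (3), (4), the value-band interleaving $x<m_{i-1}$ for all $x\in B_i$, $i\ge 3$. This recovery is impossible. For $i\ge 4$ it does follow from condition (4), since $x\in B_i$ with $x>m_{i-1}$ makes $m_{i-1}\,m_i\,x$ an occurrence of $213$ inside $m_3B_3\cdots m_tB_t$. But for $i=3$ no condition compares the entries of $B_3$ with $m_2$: the entry $m_2$ lives only in the prefix $m_1m_2B_2$ seen by condition (3), the block $B_3$ lives only in the suffix seen by condition (4), and condition (1) compares $B_3$ with $B_2$ only, which may be empty.

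Concretely, $\pi=3214$ has ltr-min decomposition $m_1=3$, $B_1=\emptyset$, $m_2=2$, $B_2=\emptyset$, $m_3=1$, $B_3=4$, and satisfies all four conditions (the first two vacuously; $m_1m_2B_2\simeq 21$ and $m_3B_3\simeq 12$ are too short to contain $\xi$, $*213$ or $213$), yet $\out{123,132}(3214)=2413$ contains the occurrence $2\,4\,1$ of $231$ --- precisely the straddling pattern $m_2\,x\,m_3$ with $x\in B_3$ and $x>m_2$ that you were trying to rule out --- so $3214$ is not $(123,132)$-sortable. (A counterexample with $B_2\neq\emptyset$ is $42513$.) So the deferred step is not merely delicate, it is false, and no completion of your plan can prove the statement as written; the paper's one-line proof hides the same issue, since Lemma~\ref{lemma_123_132_decomp}(4) delivers the interleaving only under the sortability hypothesis, i.e.\ only in the forward direction. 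The backward implication becomes true, and your block-by-block analysis of the output formula does close it, once the interleaving $x<m_{i-1}$ (for $x\in B_i$, $i\ge 3$) is added as a fifth condition; this strengthened form is what the subsequent enumeration argument actually uses when it value-separates (``juxtaposes'') $m_1m_2B_2$ above $m_3B_3\cdots m_tB_t$.
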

\begin{proof}
It is an immediate consequence of lemmas~\ref{lemma_123_132_decomp}, \ref{lemma_second_block_123_132} and~\ref{lemma_first_block_123_132}.
\end{proof}

The structural characterization of Corollary~\ref{corollary_123_132_char} can be exploited in order to compute the number of~$(123,132)$-sortable permutations. For~$n\ge 1$ and~$1\le k\le n$, let~$b_{n,k}$ be the~$(n,k)$-th ballot number. The triangle of ballot numbers is also known as the Catalan triangle (sequence~A009766 in~\cite{Sl}, see Figure~\ref{figure_ballot}). Let~$\tilde{b}_{n,k}=b_{n,n+1-k}$ be the triangle obtained by reversing its rows (sequence~A033184 in~\cite{Sl}).

\begin{lemma}\label{lemma_ballot_computation}
\begin{enumerate}
\item $\displaystyle{\sum_{i=1}^s\tilde{b}_{s,i}\binom{n-1-s+i}{i}=b_{n,s+1}}$. 
\item $\displaystyle{1+\sum_{s=1}^{n-1} b_{n,s+1}}=\catalan_n$.
\end{enumerate}
\end{lemma}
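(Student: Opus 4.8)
I will work with the standard identification of the ballot numbers with (a reindexing of) Catalan's triangle: writing $T(m,j)$ for the entry A009766 with $0\le j\le m$, one has $b_{n,k}=T(n-1,k-1)$, so that $b_{n,1}=T(n-1,0)=1$ and, since $\tilde{b}_{n,k}=b_{n,n+1-k}$, the reversed triangle $\tilde{b}_{n,k}$ is A033184. Two facts will drive the proof. First, $\tilde{b}_{n,k}$ counts the Dyck paths of semilength $n$ with exactly $k$ returns to the $x$-axis; decomposing such a path into its $k$ arches (each an occurrence $\U P \D$ with $P\in\Dyck$) gives the column generating function $\sum_{n\ge k}\tilde{b}_{n,k}\,x^{n}=\bigl(x\,\CatalanFun(x)\bigr)^{k}$. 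Second, the full triangle has bivariate generating function $\mathcal{T}(x,y)=\sum_{m\ge 0}\sum_{j=0}^{m}T(m,j)\,x^{m}y^{j}=\dfrac{1-y\,\CatalanFun(xy)}{1-x-y}$. Granting these, \emph{Part 2} is immediate: setting $y=1$ yields $\mathcal{T}(x,1)=(\CatalanFun(x)-1)/x=\sum_m\catalan_{m+1}x^m$, i.e. the row sums of the triangle are Catalan numbers, so $\sum_{k=1}^{n}b_{n,k}=\catalan_n$; subtracting the single boundary term $b_{n,1}=1$ gives $\sum_{s=1}^{n-1}b_{n,s+1}=\catalan_n-1$, which is exactly the claim.

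\textbf{Plan for Part 1.} The cleanest route is to package the whole family of identities (over all admissible $n,s$) into one bivariate generating function and check an algebraic equality. On the left I would form $G(x,y)=\sum_{n\ge 2}\sum_{s=1}^{n-1}\Bigl(\sum_{i=1}^{s}\tilde{b}_{s,i}\binom{n-1-s+i}{i}\Bigr)x^{n}y^{s}$ and sum over $n$ first, using the elementary expansion $\sum_{n}\binom{n-1-s+i}{i}x^{n}=x^{s+1}/(1-x)^{i+1}$ (substitute $q=n-1-s$ and recall $\sum_{q\ge 0}\binom{q+i}{i}x^{q}=(1-x)^{-i-1}$). This turns the inner $i$-sum into a value of the row-generating polynomial of $\tilde{b}$, and the column generating function above gives $\sum_{s,i\ge 1}\tilde{b}_{s,i}u^{i}z^{s}=\dfrac{u\,z\,\CatalanFun(z)}{1-u\,z\,\CatalanFun(z)}$. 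Evaluating at $z=xy$ and $u=(1-x)^{-1}$ collapses $G$ to the closed form
\[
G(x,y)=\frac{x^{2}y\,\CatalanFun(xy)}{(1-x)\bigl[(1-x)-xy\,\CatalanFun(xy)\bigr]}.
\]
On the right, $H(x,y)=\sum_{n\ge 2}\sum_{s=1}^{n-1}b_{n,s+1}x^{n}y^{s}=x\sum_{m\ge 1}\sum_{s=1}^{m}T(m,s)x^{m}y^{s}=x\bigl(\mathcal{T}(x,y)-(1-x)^{-1}\bigr)$, and a short simplification of $\mathcal{T}$ yields $H(x,y)=\dfrac{xy\,[\,1-(1-x)\CatalanFun(xy)\,]}{(1-x)(1-x-y)}$.

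\textbf{Finishing and the main obstacle.} It then remains to verify $G=H$. Cancelling the common factor $xy/(1-x)$, this reduces to the rational identity $x\,\CatalanFun(xy)\,(1-x-y)=\bigl[1-(1-x)\CatalanFun(xy)\bigr]\bigl[(1-x)-xy\,\CatalanFun(xy)\bigr]$, which I would prove by writing $c=\CatalanFun(xy)$ and repeatedly substituting the Catalan functional equation $c=1+xy\,c^{2}$ (equivalently $xy\,c^{2}=c-1$): expanding the right-hand side and replacing $(1-x)xy\,c^{2}$ by $(1-x)(c-1)$ makes all quadratic-in-$c$ terms disappear and collapses it to $xc(1-x-y)$, matching the left-hand side. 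The genuinely delicate step is establishing $\mathcal{T}(x,y)=(1-y\,\CatalanFun(xy))/(1-x-y)$, because the Pascal-type recurrence $T(m,j)=T(m,j-1)+T(m-1,j)$ holds only in the range $1\le j\le m$ and \emph{fails} on the superdiagonal; summing it naively over all $j\ge 1$ gives the wrong (rational, Catalan-free) answer. I would handle this by summing the recurrence only over $1\le j\le m$, which introduces a correction term $\sum_{m}T(m,m)(xy)^{m}=\CatalanFun(xy)$ coming from the omitted diagonal entries $T(m,m)=\catalan_m$; it is precisely this term that produces the $\CatalanFun(xy)$ in the numerator. A purely inductive alternative is available as a cross-check—induct on $n$ using $b_{n+1,s+1}-b_{n,s+1}=b_{n+1,s}$ and the Pascal identity $\binom{n-s+i}{i}-\binom{n-1-s+i}{i}=\binom{n-1-s+i}{i-1}$—but the bookkeeping mixes the parameters $n$ and $s$, so I expect the generating-function argument to be both shorter and more transparent.
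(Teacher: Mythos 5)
Your proof is correct, but for part 1 it takes a genuinely different route from the paper's. The paper stays inside the triangle and argues combinatorially: after the change of index $j=s+1-i$, which rewrites the left-hand side as $\sum_{j=1}^{s}b_{s,j}\binom{n-j}{s-j+1}$, it iterates the partial-sum recurrence $b_{n,s+1}=\sum_{i=1}^{s+1}b_{n-1,i}$ from row $s$ down to row $n$, so that each contribution of $b_{s,j}$ to $b_{n,s+1}$ is indexed by a weakly increasing sequence $t_1\le\cdots\le t_{n-s-1}$ with values in $\lbrace j,\dots,s+1\rbrace$, and the number of such sequences is exactly the multiset coefficient $\binom{n-j}{s-j+1}$. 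You instead prove the whole two-parameter family at once with bivariate generating functions: the arch decomposition gives $\sum_{s\ge i}\tilde{b}_{s,i}z^{s}=\bigl(z\,\CatalanFun(z)\bigr)^{i}$, the Pascal recurrence with the diagonal correction gives $\mathcal{T}(x,y)=\bigl(1-y\,\CatalanFun(xy)\bigr)/(1-x-y)$, and the final identity collapses under $c=1+xy\,c^{2}$. I checked all three computations, including the closed forms for $G$ and $H$ and the cancellation of the quadratic terms in $c$, and they are sound; in particular you correctly identified that the naive summation of the recurrence over all $j$ fails and that the omitted diagonal contributes the term $\sum_{m}T(m,m)(xy)^{m}=\CatalanFun(xy)$, which is precisely where the Catalan series enters. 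For part 2 the two proofs coincide in substance: both reduce to the row-sum fact $\sum_{k=1}^{n}b_{n,k}=\catalan_n$, which the paper cites and you rederive from $\mathcal{T}(x,1)$. What the paper's argument buys is a short, self-contained combinatorial explanation of where the binomial coefficient comes from; what yours buys is a mechanical and reusable computation (explicit closed forms for both generating functions) that dispatches all instances simultaneously, at the price of invoking the standard identifications of the triangle and its reverse (A009766/A033184) and more delicate boundary bookkeeping.
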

\begin{proof}
\begin{enumerate}
\item We have:
\begin{equation*}
\begin{split}
\sum_{i=1}^{s}\tilde{b}_{s,i}\binom{n-1-s+i}{i}=\\
\sum_{i=1}^{s}b_{s,s+1-i}\binom{n-1-s+i}{i}=\\
\sum_{j=1}^{s}b_{s,j}\binom{n-j}{s-j+1}.
\end{split}
\end{equation*}
We shall prove that
$$
b_{n,s+1}=\displaystyle{\sum_{j=1}^{s} b_{s,j}\binom{n-j}{s-j+1}}
$$
by showing that each coefficient~$b_{s,j}$ contributes~$\binom{n-j}{s-j+1}$ times to~$b_{n,s+1}$ (see Figure~\ref{figure_ballot}). It is well known that ballot numbers are defined, for example, by the recurrence:
$$
b_{n,s+1}=\sum_{i=1}^{s+1} b_{n-1,i}.
$$
In other words, in the triangle~$b_{n,k}$ each coefficient~$b_{n,k}$ is obtained by summing the coefficients of indices~$1,2,\dots,k$ of the previous row. Let us consider the coefficient~$b_{s,j}$, for some~$j$. The contribution of~$b_{s,j}$ to~$b_{n,s+1}$ is obtained by choosing (see again Figure~\ref{figure_ballot}):

\begin{itemize}
\item a coefficient~$b_{s+1,t_1}$ in the~$(s+1)$-th row, with~$t_1\in\lbrace j,j+1,\dots,s+1\rbrace$;
\item a coefficient~$b_{s+2,t_2}$ in the~$(s+2)$-th row, with~$t_1\le t_2\le s+1$;
\item a coefficient~$b_{s+3,t_3}$ in the~$(s+3)$-th row, with~$t_2\le t_3\le s+1$;
\item[$\vdots$]
\item a coefficient~$b_{n-1,t_{n-s-1}}$ in the~$(n-1)$-th row, with~$t_{n-s-2}\le t_{n-s-1}\le s+1$.
\end{itemize}

Finally, there are~$\binom{(n-s-1)+(s+1-j+1)-1}{(s+1-j+1)-1}=\binom{n-j}{s-j+1}$ sequences~$t_1\le t_2\le\dots\le t_{n-s-1}$, with values in~$\lbrace j,j+1,\dots,s+1\rbrace$, therefore the desired equality follows.

\item Since~$b_{n,1}=1$, we have
$$
1+\sum_{s=1}^{n-1} b_{n,s+1}=b_{n,1}+\sum_{t=2}^{n} b_{n,t}=\sum_{t=1}^{n} b_{n,t}=\catalan_n.
$$
\end{enumerate}
\end{proof}

\begin{figure}
\begin{minipage}{7cm}
\centering
\def\arraystretch{1.1}
\begin{tabular}{c|ccccccc}
$n\backslash k$  & 1 & 2 & 3 & 4 & 5 & 6 & 7\\
\hline
1 & 1 & & & & & & \\
2 & 1 & 1 & & & & & \\
3 & 1 & 2 & 2 & & & & \\
4 & 1 & 3 & 5 & 5 & & & \\
5 & 1 & 4 & 9 & 14 & 14 & & \\
6 & 1 & 5 & 14 & 28 & 42 & 42 & \\
7 & 1 & 6 & 20 & 48 & 90 & 132 & 132\\
\end{tabular}
\end{minipage}
\hspace{25pt}
\begin{minipage}{6cm}
\centering
\begin{tikzpicture}[baseline=20pt, scale=0.5]
\draw[ultra thin] (0,0) grid (8,8);
\draw[dotted] (0,8)--(2,7)--(2,6)--(4,5)--(5,4)--(5,3)--(5,2)--(7,1)--(8,0);
\node[scale=1] at (0,8){$\bullet$};
\node[scale=1] at (0,8){$\bullet$};
\node[scale=1] at (2,7){$\bullet$};
\node[scale=1] at (2,6){$\bullet$};
\node[scale=1] at (4,5){$\bullet$};
\node[scale=1] at (5,4){$\bullet$};
\node[scale=1] at (5,3){$\bullet$};
\node[scale=1] at (5,2){$\bullet$};
\node[scale=1] at (7,1){$\bullet$};
\node[scale=1] at (8,0){$\bullet$};
\node[above,scale=1] at (0,8){$(s,j)$};
\node[below,scale=1] at (8,0){$(n,s+1)$};
\node[right,scale=1] at (2,7){$t_{1}$};
\node[right,scale=1] at (2,6){$t_{2}$};
\node[above,scale=1] at (5,2){$t_{n-s-2}$};
\node[above,scale=1] at (7,1){$t_{n-s-1}$};
\end{tikzpicture}
\end{minipage}
\caption[The Catalan triangle.]{The triangle of the ballot numbers, on the left, and the construction described in Lemma~\ref{lemma_ballot_computation}, on the right.}\label{figure_ballot}
\end{figure}
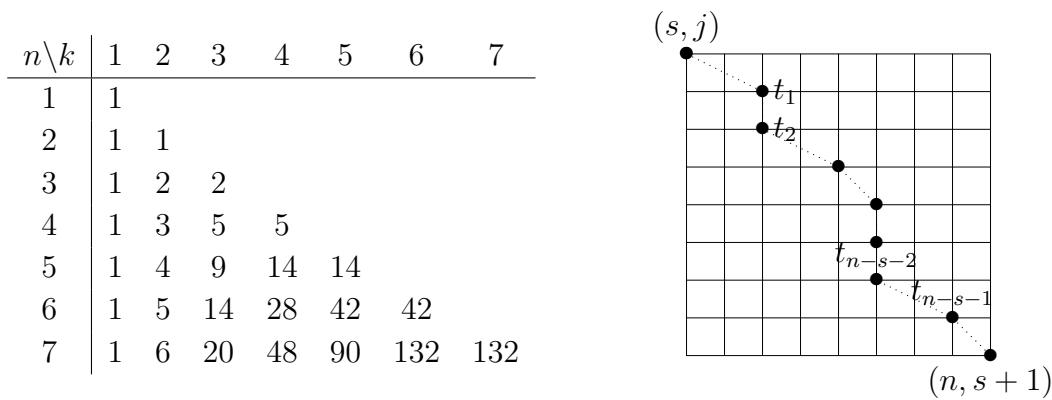

\begin{theorem}
Let~$f_n=\fsigma{123,132}_n$ be the number of~$(123,132)$-sortable permutations of length~$n$. Let~$g_n$ be the number of~$(123,132)$-sortable permutation of length~$n$ with at least two ltr-minima and where the first block~$B_1$ in the ltr-min decomposition is empty. Denote by~$h_n$ the number of permutations~$\pi$ in~$\Perm_n(\xi,*213)$ such that~$\pi_2=1$ (i.e. where~$\pi_1$ and~$\pi_2$ are the only two ltr-minima of~$\pi$.). Then:
\begin{enumerate}
\item $h_1=0$ and ~$h_{n+1}=\catalan_{n}$, for each~$n\ge 1$.
\item $g_1=0$ and~$g_{n+1}=\catalan_{n+1}-\catalan_n$, for each~$n\ge 1$.
\item $f_{n}=\catalan_{n}$, for each~$n\ge 1$.
\end{enumerate}
\end{theorem}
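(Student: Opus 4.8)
The three identities are proved in the stated order, each feeding the next; the substance is concentrated in part 1, while parts 2 and 3 follow from a Catalan convolution and a telescoping sum.

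\smallskip
\noindent\textbf{Part 1.} Here $h_1=0$ is immediate. For $h_{n+1}$ I would first set up the bijection $\Phi$ that deletes the forced entry $\pi_2=1$ and standardises, $\Phi(\pi)=\std(\pi_1\pi_3\cdots\pi_{n+1})$. Because $\pi_2=1$ is the global minimum at the front of the suffix, it can never be the ``$2$'' of a $213$, so $\pi\in\Perm(*213)$ iff $\pi_3\cdots\pi_{n+1}$ avoids $213$; and rewriting the descent condition that defines $\xi$ shows $\pi$ avoids $\xi$ iff $\Phi(\pi)$ does. Thus $\Phi$ is a bijection onto $\Perm_n(\xi,*213)$, whence $h_{n+1}=|\Perm_n(\xi,*213)|$. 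I would then evaluate this by refining on the first entry $\rho_1=s+1$ and using the first-element decomposition $\rho=(s+1)B_0\,b_1B_1\cdots b_sB_s$ of Section~\ref{section_bivincular_result}, under which $\xi$-avoidance is exactly the statement that each block $B_j$ is increasing. The claim is that the refined count equals the ballot number $b_{n,s+1}$, and summing over $s$ via Lemma~\ref{lemma_ballot_computation}(2) then yields $\catalan_n$.

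\smallskip
\noindent\textbf{The main obstacle} is proving the refined count is $b_{n,s+1}$; this is the technical heart of the whole theorem. I would show that $*213$-avoidance factorises cleanly: the small subword $b_1\cdots b_s$ must avoid $213$; a large entry may go into a block $B_j$ only when $b_1<\cdots<b_j$ (otherwise an inversion among the smalls together with the big yields $213$), so the ``active'' blocks are exactly $B_0,\dots,B_{j^{*}}$ where $j^{*}$ is the length of the initial increasing run of the smalls; and across active blocks the big values are forced to decrease (a larger big following a smaller one with a small between them again creates $213$). Hence, once an initial run of length $i$ is fixed, the $n-1-s$ big entries are distributed as a weak composition of $n-1-s$ into $i+1$ parts, i.e. in $\binom{n-1-s+i}{i}$ ways. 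Since the $213$-avoiding arrangements of $[s]$ with initial increasing run of length $i$ number $\tilde b_{s,i}$, the refined count is $\sum_{i=1}^{s}\tilde b_{s,i}\binom{n-1-s+i}{i}=b_{n,s+1}$ by Lemma~\ref{lemma_ballot_computation}(1). Checking that these exclusions exhaust all occurrences of $213$ (a big as the high entry, a big as the middle entry, a small as the middle entry) is the delicate verification.

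\smallskip
\noindent\textbf{Part 2.} I would apply Corollary~\ref{corollary_123_132_char}. With $B_1$ empty, write $\pi=\underbrace{m_1m_2B_2}_{H}\,\underbrace{m_3B_3\cdots m_tB_t}_{T}$. By Lemma~\ref{lemma_123_132_decomp} every entry of $T$ is smaller than $m_2=\min(H)$, so $H$ occupies the top values and $T$ the bottom ones; the condition $B_i>B_{i+1}$ then becomes automatic (within $T$ from $T\in\Perm(213)$, across the cut from $H>T$). Thus $\pi\mapsto(\std H,\std T)$ is a bijection onto pairs in which $\std H$ is counted by $h_{|H|}$ (its minimum sits in position $2$) and $\std T\in\Perm(213)$. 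Inserting $h_\ell=\catalan_{\ell-1}$ from part 1 (and recalling $|\Perm_j(213)|=\catalan_j$) gives
$$
g_{n+1}=\sum_{\ell=2}^{n+1}h_\ell\,\catalan_{\,n+1-\ell}=\sum_{i=1}^{n}\catalan_i\catalan_{n-i}=\catalan_{n+1}-\catalan_n,
$$
the last step being the Catalan convolution with its $i=0$ term removed; and $g_1=0$ because a length-one permutation has a single ltr-minimum.

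\smallskip
\noindent\textbf{Part 3.} A permutation of $\Sort_n(123,132)$ has either one ltr-minimum, forcing the identity by condition~(2) of Corollary~\ref{corollary_123_132_char}, or at least two. By Lemma~\ref{lemma_first_block_123_132} (inflating $m_1$), every sortable permutation with at least two ltr-minima arises uniquely from one with empty $B_1$ by reinserting the top $n-m$ values as the increasing block $B_1$; since $B_1$ must consist of the largest entries in increasing order, this correspondence is invertible and size-graded. Hence the number with at least two ltr-minima is $\sum_{m=2}^{n}g_m$, and
$$
f_n=1+\sum_{m=2}^{n}g_m=1+\sum_{m=2}^{n}\bigl(\catalan_m-\catalan_{m-1}\bigr)=1+\catalan_n-\catalan_1=\catalan_n,
$$
the sum telescoping.
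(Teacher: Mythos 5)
Your proposal is correct and takes essentially the same route as the paper: part~1 rests on the identical decomposition into a $213$-avoiding word of small entries, refined by its leftmost descent, with the large entries forced into blocks that are increasing internally and decreasing across blocks, giving $\sum_{i}\tilde{b}_{s,i}\binom{n-1-s+i}{i}=b_{n,s+1}$ and then $\catalan_n$ via Lemma~\ref{lemma_ballot_computation}, while parts~2 and~3 are the same convolution and telescoping arguments. Your preliminary bijection deleting $\pi_2=1$ and your explicit verification that condition~(1) of Corollary~\ref{corollary_123_132_char} holds automatically in the reconstruction are cosmetic refinements of steps the paper leaves implicit.
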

\begin{proof}
\begin{enumerate}
\item Let~$\pi\in\Perm(\xi,*213)$ be a permutation of length at least two and suppose that~$\pi_2=1$. Let~$\pi_1=k$ and write:
$$
\pi=k1A_1\alpha_1A_2\alpha_2\cdots A_s\alpha_sA_{s+1},
$$
where~$\alpha_i<k$ for each~$i$ and the elements of the blocks~$A_1,\dots,A_{s+1}$ are greater than~$k$. Let~$i$ be the minimum index such that~$\alpha_i>\alpha_{i+1}$. Note that~$A_j$ is empty for each~$j\ge i+2$. Otherwise, if~$x\in A_j$, then~$\pi_1\alpha_i\alpha_{i+1}x$ would be an occurrence of~$*213$ in~$\pi$, which contradicts the hypothesis. Analogously, it must be~$x>y$ for each~$x\in B_j$ and~$y\in B_{j+1}$, with~$j=1,\dots,i$, or else~$\pi_1x\alpha_j y$ would be an occurrence of~$*213$. Moreover, the block~$B_j$ is increasing for each~$j=1,\dots,i+1$. Otherwise, a descent~$x>y$ in~$B_j$ would result in an occurrence~$\pi_1xy$ of~$\xi$, which is impossible. What observed so far is enough to completely characterize a permutation~$\pi\in\Perm_{n+1}(\xi,*213)$, with~$\pi_2=1$. Such permutation is either the increasing permutation or it can be constructed as follows (see Figure~\ref{figure_123_132_grid}):
\begin{itemize}
\item Choose a permutation~$\alpha\in\Perm_s(213)$, for some~$1\le s\le n-1$. Let~$i$ be the index of the leftmost descent in~$\alpha$ (if~$\alpha$ is the increasing permutation, let~$i=n-1$).
\item Choose how to distribute~$n-1-s$ elements in the blocks~$A_1,\dots,A_i$. As a consequence of what observed above, the blocks are increasing and the relative order of the blocks is uniquely determined, therefore there are~$\binom{n-1-s+i}{i}$ different ways to distribute such elements.
\item Finally, add the two initial elements~$\pi_1\pi_2=k1$.
\end{itemize}

Now, it is well known that the the number of~$213$-avoiding permutations of length~$s$, where~$i$ is the index of the leftmost descent, is given by~$\tilde{b}_{s,i}$. Therefore we have:
$$
h_{n+1}=1+\sum_{s=1}^{n-1}\left(\sum_{i=1}^s\tilde{b}_{s,i}\binom{n-1-s+i}{i}\right),
$$
and the thesis follows from Lemma~\ref{lemma_ballot_computation}.

\item Due to Corollary~\ref{corollary_123_132_char}, each~$(123,132)$-sortable permutation~$\pi=m_1m_2B_2\cdots m_tB_t$, where~$B_1$ is empty, is obtained by juxtaposing a~$213$-avoiding permutation~$m_3B_3\cdots m_tB_t$ to a permutation~$m_1m_2B_2\in\Perm(\xi,*213)$. By summing over the length~$j$ of the~$213$-avoider (note that~$0\le j\le n-1$), we obtain:
$$
g_{n+1}=\sum_{j=0}^{n-1}h_{n+1-j}\catalan_j =\sum_{j=0}^{n-1}\catalan_{n-j}\catalan_j.
$$
Finally, it is well known that:
$$
\catalan_{n+1}=\sum_{j=0}^n\catalan_{n-j}\catalan_j=\catalan_{n}+\sum_{j=0}^{n-1}\catalan_{n-j}\catalan_j,
$$
thus the thesis follows.

\item Let~$\pi$ be a~$(123,132)$-sortable permutation of length~$n$. If~$\pi$ has one ltr-minimum, then~$\pi$ is the increasing permutation due to Lemma~\ref{lemma_123_132_decomp}. Otherwise, according to Lemma~\ref{lemma_first_block_123_132}, $\pi$ is obtained from a~$(123,132)$-sortable permutation, with~$B_1$ empty, by inserting~$k\ge 0$ consecutive ascents~$k+1,k+2,\dots,n-1,n$ immediately after~$\pi_1$. Therefore, summing over~$k$, we have:
$$
f_n=1+\sum_{k=2}^n g_n =\catalan_1+\sum_{k=2}^n (\catalan_n -\catalan_{n-1}) =\catalan_n.
$$
\end{enumerate}
\end{proof}

\begin{figure}
\centering
\begin{tikzpicture}[baseline=20pt,scale=0.5]
\draw[thick] (0,7) -- (12,7);
\draw[thin] (1,7) grid (3,13);
\draw[thin] (5,7) grid (7,13);
\draw[thick] (1,12) -- (2,13);
\draw[thick] (2,11) -- (3,12);
\draw[thick] (5,8) -- (6,9);
\draw[thick] (6,7) -- (7,8);
\node[scale=1] at (1.25,13.5){$A_{1}$};
\node[scale=1] at (2.75,13.5){$A_{2}$};
\node[scale=1] at (4,10){$\dots$};
\node[scale=1] at (5.25,13.5){$A_{i}$};
\node[scale=1] at (6.75,13.5){$A_{i+1}$};
\node[scale=1] at (0,7){$\bullet$};
\node[scale=1] at (0,6.5){$\pi_1$};
\node[scale=1] at (1,0){$\bullet$};
\node[scale=1] at (2.5,0){$\pi_2=1$};
\draw[thin] (1,0) -- (1,12);
\node[scale=1] at (2,2){$\bullet$};
\draw[thin] (2,2) -- (2,12);
\node[scale=1] at (3,3){$\bullet$};
\draw[thin] (3,3) -- (3,12);
\node[scale=1] at (5,5){$\bullet$};
\node[scale=1] at (4,4){$\iddots$};
\draw[thin] (5,5) -- (5,12);
\node[scale=1] at (6,6){$\bullet$};
\draw[thin] (6,6) -- (6,12);
\node[scale=1] at (7,4){$\bullet$};
\draw[thin] (7,4) -- (7,12);
\node[scale=1] at (2.5,1.5){$\alpha_{1}$};
\node[scale=1] at (3.5,2.5){$\alpha_{2}$};
\node[scale=1] at (5.5,4.5){$\alpha_{i-1}$};
\node[scale=1] at (6.5,5.5){$\alpha_{i}$};
\node[scale=1] at (7.5,3.5){$\alpha_{i+1}$};
\node[scale=1] at (10,4){$\alpha_{i+2}\cdots\alpha_s$};
\node[scale=1] at (10,10){$\emptyset$};
\end{tikzpicture}
\caption[The geometric structure of~$\Perm(\xi,*213)$.]{The geometric structure of a permutation~$\pi\in\Perm(\xi,*213)$, with~$\pi_2=1$.}\label{figure_123_132_grid}
\end{figure}
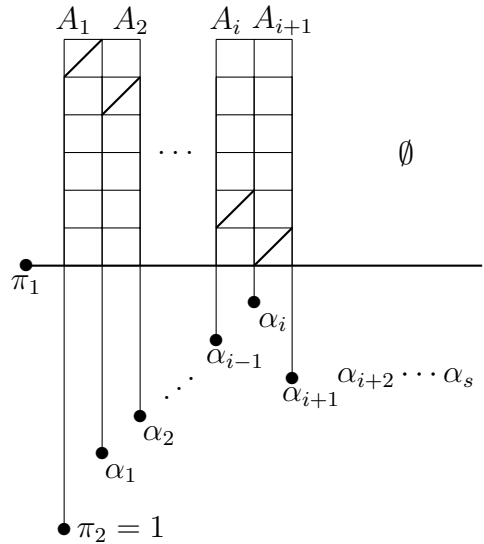

\section{\texorpdfstring{The~$(123,312)$-machine}{The (123,312)-machine}}\label{section_123_312}

In this section we provide a structural description of~$(123,312)$-sortable permutations and define a generating tree for the set~$\Sort(123,312)$. The enumeration of~$\Sort(123,312)$, which is given by the binomial transform of the Catalan numbers, was proved in~\cite{BCKV} by means of a bijection with a family of pattern-avoiding partial permutations.

Let~$\pi=M_1B_1\cdots M_tB_t$ the ltr-max decomposition of a permutation~$\pi$. By Lemma~\ref{theorem_ltr-max_pairs_of_pat_312}, we have~$\out{123,312}(\pi)=\tilde{B}_1\cdots\tilde{B}_tM_t\cdots M_1$, where~$\tilde{B}_i$ is a suitable rearrangement of~$B_i$. Due to the same theorem, if~$\pi$ is~$(123,312)$-sortable and has length~$n$, then~$M_j=n-t+j$, for each $j=1,\dots,t$.

\begin{theorem}\label{123_312_avoids_213}
Let~$\pi=M_1B_1\cdots M_tB_t$ be a~$(123,312)$-sortable permutation. Then:
\begin{enumerate}
\item $B_i$ avoids~$213$ for each~$i$.
\item $\tilde{B}_i=\out{12}(B_i)$, for each~$i$.
\end{enumerate}
\end{theorem}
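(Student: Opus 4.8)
The plan is to prove the statement by showing that, on a $(123,312)$-sortable input, the $(123,312)$-stack processes each block $B_i$ exactly as an (initially empty) $12$-stack would, in direct analogy with Lemma~\ref{lemma_132-231_equivalence}. This equivalence is precisely part~(2), and part~(1) will then drop out from the $12$-machine. First I would invoke Lemma~\ref{theorem_ltr-max_pairs_of_pat_312}: while the elements of $B_i$ are being processed, the bottom of the $(123,312)$-stack consists of $M_i,M_{i-1},\dots,M_1$ read from top to bottom, and all of these exceed every element of $B_i$. Crucially, since $\pi$ is sortable, part~(2) of that lemma gives $\{M_1,\dots,M_t\}=\{n-t+1,\dots,n\}$, so every $M_l$ exceeds every non-ltr-maximum of $\pi$; in particular $M_l>y$ for each base element $M_l$ and each $y\in B_i$. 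This single fact is what makes the whole argument go through.

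Next I would set up the stack invariant: the portion of $B_i$ sitting above the base $M_i,\dots,M_1$ is always decreasing when read from top to bottom, exactly like the content of a $12$-stack. Writing this portion as $z_1>z_2>\cdots>z_s$ (top to bottom) and letting $y\in B_i$ be the next input element, I would check that the two stacks pop in lockstep. If the $12$-stack pops, i.e.\ $y<z_1$, then $yz_1M_i$ is an occurrence of $123$ (since $y<z_1<M_i$), so the $(123,312)$-stack must pop as well. Conversely, if the $(123,312)$-stack pops, then pushing $y$ would create a forbidden $123$ or $312$ in which $y$, being at the very top, plays the first (topmost) role. A $312$ would require two elements below $y$, both smaller than $y$, forming an ascent downwards; but the only elements smaller than $y$ are the $z_j$ (the base is entirely larger in value), and these decrease downward, so no $312$ can appear. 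A $123$ requires an element above another, both larger than $y$, with the upper one smaller; taking the lower one to be $M_i$ and the upper one to be $z_1$, this happens precisely when $z_1>y$, i.e.\ when $y<z_1$. Hence the $(123,312)$-stack pops if and only if $y<z_1$, matching the $12$-stack. The same comparison shows that the arrival of $M_{i+1}$ flushes the whole $B_i$-portion (again because each $z_j<M_i<M_{i+1}$ yields a blocking $312$), which corresponds to the end-of-input flush of the $12$-stack. Therefore $\tilde{B}_i=\out{12}(B_i)$, which is part~(2).

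For part~(1) I would then argue by contradiction using part~(2). If some $B_i$ contained $213$, then by Theorem~\ref{theorem_12machine} its $12$-stack output $\out{12}(B_i)=\tilde{B}_i$ would contain $231$. Since $\out{123,312}(\pi)=\tilde{B}_1\cdots\tilde{B}_tM_t\cdots M_1$ by Lemma~\ref{theorem_ltr-max_pairs_of_pat_312}, this occurrence of $231$ survives inside $\out{123,312}(\pi)$, contradicting the $(123,312)$-sortability of $\pi$ by Lemma~\ref{lemma_out_231}. Hence every $B_i$ avoids $213$.

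The main obstacle is the interaction between the block elements and the base in the backward direction of the pop equivalence: a priori a $312$ (or a $123$ mediated by a deep base element) could block a push that the $12$-stack would perform, breaking the correspondence. The resolution, and the real content of the argument, is the observation that sortability forces the ltr-maxima to occupy the top $t$ values, so that no base element can ever serve as the ``$1$'' or ``$2$'' of a $312$ straddling the base; this is exactly where both the hypothesis $\sigma_{k-1}<\sigma_k$ underlying Lemma~\ref{theorem_ltr-max_pairs_of_pat_312} and the sortability assumption are used. Once this is in place, the remaining case checks are routine and parallel the $(132,231)$ analysis of Lemma~\ref{lemma_132-231_equivalence}.
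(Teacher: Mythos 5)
Your proposal is correct, but its logical architecture is the reverse of the paper's. The paper proves part~(1) first, by a direct and fairly intricate argument: from an occurrence $bac$ of $213$ in $B_i$ it locates the instant $a$ is popped, shows the triggering pattern must be a $123$ (ruling out $312$ via the value separation of the ltr-maxima), deduces that the next input element $x$ satisfies $x<a$, and then iterates on the new occurrence $bxc$ of $213$, whose middle element lies strictly further right, until a contradiction is reached; only then does it prove part~(2), and in the backward direction of that equivalence it invokes part~(1) to exclude $312$ triggers. You instead prove part~(2) first and self-contained: you exclude $312$ triggers not through part~(1) but through the invariant that the block portion of the stack is decreasing from top to bottom --- which is forced by the stack's own $123$-restriction together with the crucial value fact, from part~(2) of Lemma~\ref{theorem_ltr-max_pairs_of_pat_312}, that sortability makes every $M_l$ larger than every block element --- and then part~(1) falls out of part~(2) combined with Theorem~\ref{theorem_12machine} and Lemma~\ref{lemma_out_231}, since a $213$ in $B_i$ would put a $231$ inside $\tilde{B}_i=\out{12}(B_i)$ and hence inside $\out{123,312}(\pi)$. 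This reversal buys a shorter, more modular proof that exactly parallels the treatment of the $(132,231)$-machine in Lemma~\ref{lemma_132-231_equivalence}, as you intended, replacing the paper's iterative descent argument by a citation of the $12$-machine characterization; what the paper's order buys is a proof of part~(1) that does not rest on the full lockstep simulation. The only looseness in your write-up is that the ``only if'' half of your $123$-trigger analysis (any such trigger forces $y<z_1$, because the middle element of a $123$ read down the stack can never be an ltr-maximum) is asserted rather than spelled out, but the case check is immediate from the same two facts you already isolated, so this is presentational rather than a gap.
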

\begin{proof}
Let~$i\ge 2$. Notice that, as a consequence of Lemma~\ref{theorem_ltr-max_pairs_of_pat_312}, immediately after the push of~$M_i$ into the~$(123,312)$-stack, the~$(123,312)$-stack contains the elements~$M_i\cdots M_2M_1$, reading from top to bottom. Moreover, these elements remain at the bottom of the~$(123,312)$-stack until the end of the sorting procedure, since they are the last elements of~$\out{123,312}(\pi)$. This fact will be used for the rest of the proof.

\begin{enumerate}
\item Suppose, for a contradiction, that~$B_i$ contains an occurrence~$bac$ of~$213$, for some~$i$.  Therefore, since~$abM_i$ is an occurrence of~$123$, $b$ is extracted from the~$(123,312)$-stack before~$a$ enters. Since~$\pi$ is~$(123,312)$-sortable, this implies that~$a$ is then extracted before~$c$ enters, otherwise~$bca$ would be an occurrence of~$231$ in~$\out{123,312}(\pi)$. Consider the instant when~$a$ is extracted from the~$(123,312)$-stack. Let~$x$ be the next element of the input. Since a pop operation is performed, there must be two elements~$y,z$ in the~$(123,312)$-stack, with~$y$ above~$z$, such that~$xyz$ is an occurrence of either~$123$ or~$312$. Notice that~$z=M_j$, for some~$j\le i$. Otherwise, if~$z\in B_i$, then~$yzM_i$ would be an occurrence of~$123$ in the~$(123,312)$-stack, a contradiction. Since~$y<z$, we necessarily have~$y\in B_i$ ($y$ cannot be an ltr-maximum). Now, $xyz$ is not an occurrence of~$312$. Otherwise it would be~$x>z=M_j$ and thus~$x$ would be an ltr-maximum. But since~$x$ precedes~$c\in B_i$ (it could be~$x=c$), this is impossible. Therefore~$xyz\simeq 123$. If~$x>a$, then it would be~$y>a$ as well and thus~$M_iya$ would be an occurrence of~$123$ in the~$(123,312)$-stack, which is impossible. So we have~$x<a$ and~$bxc$ is an occurrence of~$213$ in~$B_i$ (note that~$x$ is strictly to the right of~$a$), thus we can repeat the same argument using~$bxc$ instead of~$bac$, until we eventually find a contradiction.

\item Let us consider the action of the~$(123,312)$-stack on the block~$B_i$. We wish to show that the behavior of the~$(123,312)$-stack when processing~$B_i$ is equivalent to the behavior of an empty~$12$-stack on input~$B_i$. In other words, we prove that the restriction of the~$(123,312)$-stack is triggered if and only if the next element of the input forms an occurrence of~$12$ together with some other element in the~$(123,312)$-stack. As soon as~$M_i$ enters the~$(123,312)$-stack (and the first element of~$B_i$ is the next one to be processed), the~$(123,312)$-stack contains the elements~$M_i\cdots M_2M_1$, reading from top to bottom. Observe that~$B_i$ avoids~$213$ by what proved above, therefore the~$(123,312)$-stack cannot be triggered by an occurrence of~$312$ when processing~$B_i$. Suppose that the next element of the input~$x$ forms an occurrence~$xy$ of~$12$ with some~$y\in B_i$. Then~$xyM_i$ is an occurrence of~$123$ in the~$(123,312)$-stack, as desired. Conversely, suppose that the~$(123,312)$-stack is triggered by an occurrence of~$xyz$ of~$123$, where~$x$ is the next element of the input. Since~$M_i>M_{i-1}>\cdots>M_1$, it must be~$y\in B_i$. Thus~$xy$ is an occurrence of~$12$ that triggers the~$12$-stack, as wanted.
\end{enumerate}
\end{proof}

As a consequence of what proved so far in this section, for any~$(123,312)$-sortable permutation~$\pi=M_1B_1\cdots M_tB_t$ of length~$n$, we have~$B_i\in\Sort(213)$ and~$M_1,\dots,M_t=n-t+1,\dots,n$. Moreover, $\out{123,312}(\pi)=\tilde{B}_1\cdots\tilde{B}_tM_t\cdots M_1$, where~$\tilde{B}_i$ is decreasing. Therefore, for any three elements~$x,y,z$, with~$x\in B_i$, $y\in B_j$ and~$z\in B_k$, with~$i<j\le k$, $xyz$ cannot be an occurrence of~$231$. Otherwise~$xyz$ would still be an occurrence of~$231$ in~$\out{123,312}(\pi)$, contradicting the fact that~$\pi$ is~$(123,312)$-sortable. From now on, we say that~$xyz$ is an occurrence of~$2-3-1$ if~$xyz\simeq 231$, with~$x\in B_i$, $y\in B_j$, $z\in B_k$ and~$i<j<k$. In the analogous case, but when~$j=k$, we say that~$xyz$ is an occurrence of~$2-31$.

\begin{theorem}\label{theorem_4_conditions_123_312}
Let~$\pi=M_1B_1\cdots M_tB_t$ be the ltr-max decomposition of a permutation of length~$n$. Let~$\out{123,312}(\pi)=\tilde{B}_1\cdots\tilde{B}_tM_t\cdots M_1$. Then~$\pi$ is~$(123,312)$-sortable if and only if the following four conditions are satisfied:
\begin{enumerate}
\item $M_j=n-t+j$, for each $j=1,\dots,t$.
\item $B_i$ avoids~$213$ for each~$i$ (and thus~$\tilde{B}_i$ is decreasing for each~$i$).
\item $\pi$ avoids~$2-3-1$.
\item $\pi$ avoids~$2-31$.
\end{enumerate}
\end{theorem}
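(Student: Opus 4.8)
The plan is to reduce everything to Lemma~\ref{lemma_out_231}: $\pi$ is $(123,312)$-sortable if and only if $\out{123,312}(\pi)$ avoids $231$, and to exploit the explicit shape $\out{123,312}(\pi)=\tilde{B}_1\cdots\tilde{B}_tM_t\cdots M_1$ guaranteed by Lemma~\ref{theorem_ltr-max_pairs_of_pat_312}(1), which holds for \emph{every} permutation. The forward implication is essentially already assembled: if $\pi$ is sortable then condition~(1) is Lemma~\ref{theorem_ltr-max_pairs_of_pat_312}(2), condition~(2) is Theorem~\ref{123_312_avoids_213}(1), and conditions~(3) and~(4) are exactly the observation recorded just before the statement (an occurrence of $2-3-1$ or $2-31$ in $\pi$ survives, as an occurrence of $231$, into $\out{123,312}(\pi)$, because distinct blocks keep their left-to-right order and each $\tilde{B}_i$ is decreasing). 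So the substance of the proof is the converse.

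For the converse I assume (1)--(4) and show $\out{123,312}(\pi)$ avoids $231$. First I note that (2) alone already forces $\tilde{B}_i=\out{12}(B_i)$ to be decreasing for each $i$: the argument of Theorem~\ref{123_312_avoids_213}(2) uses only that $B_i$ avoids $213$ (to exclude $312$-triggers) together with the unconditional stack shape of Lemma~\ref{theorem_ltr-max_pairs_of_pat_312}(1), neither of which needs sortability, so $B_i\in\Sort(12)$ and $\out{12}(B_i)=\antiid$ by Theorem~\ref{theorem_12machine}. Suppose now, for contradiction, that $bca\simeq 231$ in $\out{123,312}(\pi)$, with $a<b<c$. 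Using (1), the suffix $M_t\cdots M_1$ consists of the $t$ largest values in strictly decreasing order, so it contains no ascent; a short value/position argument then shows that $b$, $c$ and $a$ all lie in the prefix $\tilde{B}_1\cdots\tilde{B}_t$, say $b\in\tilde{B}_i$, $c\in\tilde{B}_j$, $a\in\tilde{B}_k$ with $i\le j\le k$. Since each $\tilde{B}_i$ is decreasing and $b<c$, the elements $b,c$ cannot share a block, so $i<j$. If $j<k$ then $bca$ is literally an occurrence of $2-3-1$ in $\pi$, contradicting (3); and if $j=k$ with $c$ preceding $a$ inside $B_j$, then $bca$ is an occurrence of $2-31$, contradicting (4).

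The main obstacle is the remaining case $j=k$ in which $a$ precedes $c$ inside the block $B_j$ while $c$ is output before $a$; here the $12$-dynamics on $B_j$ has swapped the pair. The key is to locate the element $w\in B_j$, occurring after $c$, whose arrival pops $c$ before $a$, which forces $a<w<c$. Comparing $w$ with $b$ then splits into two situations: if $w<b$ then $b,c,w$ form an occurrence of $2-31$ in $\pi$ (with $c$ before $w$ in $B_j$), a contradiction; if $w>b$ then $b,w,a$ is again a $231$ in the output of exactly the same swapped type but with the strictly smaller ``top'' value $w$ in place of $c$. Iterating this replacement produces a strictly decreasing chain of candidate top values, all bounded below by $b$, so the process must terminate, and termination can only occur through the $w<b$ branch, yielding the desired $2-31$ occurrence. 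I expect this terminating iterative argument (and the careful bookkeeping that the swapped pair always supplies a legitimate $w$) to be the delicate part; everything else is direct casework on the block decomposition.
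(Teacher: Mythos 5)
Your forward direction coincides with what the paper does (it is assembled from Lemma~\ref{theorem_ltr-max_pairs_of_pat_312}, Theorem~\ref{123_312_avoids_213} and the discussion preceding the statement; the paper explicitly leaves the converse to the reader). The genuine gap is in your converse, exactly in the ``swapped pair'' case that you yourself single out as delicate. The pivot element $w$ you want need not exist: once $c$ has been pushed on top of $a$ inside the simulated $12$-stack, $c$ exits before $a$ no matter what comes next --- in particular at the flush forced by the arrival of $M_{j+1}$ or by the end of the input --- so no element of $B_j$ smaller than $c$ and following $c$ is needed to produce the inversion. Worse, under your reading of condition (4) (an occurrence of $2-31$ must have increasing positions in $\pi$) the statement you are trying to prove is false. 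Take $\pi=42513$: here $M_1=4$, $B_1=(2)$, $M_2=5$, $B_2=(1,3)$, so (1) and (2) hold, (3) is vacuous, and (4) holds in your sense because the only available triple is $2,1,3\simeq 213$; yet $\out{123,312}(42513)=23154$, which contains $231$, so $\pi$ is not $(123,312)$-sortable. Hence no amount of iteration can close your case analysis.

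The fix lies in the interpretation of condition (4), and it is visible in how the paper uses it in Lemma~\ref{lemma_blocks_between_two_el_123_312}: there the avoidance of $2-31$ is imposed on $\out{123,312}(\pi)$, not on $\pi$. Equivalently, (4) must be read as: there are no $x\in B_i$ and $y,z\in B_j$ with $i<j$ and $z<x<y$, \emph{irrespective} of the relative order of $y$ and $z$ inside $B_j$ (no later block straddles the value of an earlier block element). Since $\tilde{B}_j$ is decreasing, an ordered occurrence of $2-31$ in the output is exactly an unordered straddle in $\pi$, which is why the two formulations agree on the output but not on $\pi$. With this reading your problematic case ($a$ before $c$ in $B_j$ with $a<b<c$) is excluded by (4) outright, and the converse reduces to the two easy cases you already handled correctly --- which is surely the ``easy to check'' argument the paper intends. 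A minor further point: ``(2) alone'' does not give $\tilde{B}_i=\out{12}(B_i)$; you also need (1), because without it a $312$-trigger can use an ltr-maximum as its middle value (for $\pi=31524$ one has $B_2=24$, which avoids $213$, yet $\tilde{B}_2=24$ is increasing, not $\out{12}(24)=42$). Since you assume (1) in the converse this costs nothing, but the justification should cite it.
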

\begin{proof}
If~$\pi$ is~$(123,312)$-sortable, then~$\pi$ satisfies all the above conditions as a consequence of what proved before in this section. Conversely, it is easy to check that, if~$\pi$ satisfies the above conditions, then~$\out{123,312}(\pi)$ avoids~$231$. We leave this part of the proof to the reader.
\end{proof}

In~\cite{BCKV}, the structural description of Theorem~\ref{theorem_4_conditions_123_312} is reformulated in terms of avoidance of (generalized) patterns. This ultimately leads to a bijection between~$\Sort(123,312)$ and the set of partial permutations avoiding the pattern~$213$, whose enumeration is given by the binomial transform of the Catalan numbers (sequence~A007317 in~\cite{Sl}). We refer the reader to~\cite{BCKV} for a definition of partial permutations, as well as for a detailed proof of the results mentioned above. In what follows we provide a generating tree for~$\Sort(123,312)$. As usual, we wish to generate all the permutations in~$\Sort(123,312)$ by inserting a new rightmost element (and rescaling the others). Before doing that, we reformulate the third condition of Theorem~\ref{theorem_4_conditions_123_312} in the following lemma, whose easy proof is omitted.

\begin{lemma}\label{lemma_blocks_between_two_el_123_312}
Let~$\pi=M_1B_1\cdots M_tB_t$ be the ltr-max decomposition of the~$(123,312)$-sortable permutation~$\pi$. Let~$\out{123,312}(\pi)=\tilde{B}_1\cdots\tilde{B}_tM_t\cdots M_1$. Then~$\out{123,312}(\pi)$ avoids~$2-31$ if and only if for each~$x\in B_i$, $y\in B_j$, with~$i<j$, we have:
\begin{itemize}
\item if~$y>x$, then~$B_j>x$.
\item If~$y<x$, then~$B_j<x$.
\end{itemize}
\end{lemma}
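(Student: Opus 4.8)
The plan is to reduce the statement to a purely value-based \emph{straddling} condition on a single block, exploiting that each $\tilde{B}_j$ appearing in $\out{123,312}(\pi)=\tilde{B}_1\cdots\tilde{B}_tM_t\cdots M_1$ is decreasing. First I would record the ingredients already available: by Lemma~\ref{theorem_ltr-max_pairs_of_pat_312} the output has the displayed block form with the ltr-maxima $M_t\cdots M_1$ sitting (decreasingly) at the end, and by Theorem~\ref{123_312_avoids_213} each $\tilde{B}_i=\out{12}(B_i)$ is decreasing. The crucial observation is the reinterpretation of a $2-31$ occurrence: such an occurrence is a subword $xyz\simeq 231$ of the output with $x$ in some $\tilde{B}_i$ and $y,z$ both in a later block $\tilde{B}_j$ ($i<j$). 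Since $\tilde{B}_j$ is decreasing, reading it from left to right lists its elements in strictly decreasing value; hence the larger of $y,z$ necessarily precedes the smaller, and the requirement $z<x<y$ (the content of $xyz\simeq 231$) is met by some ordered pair drawn from $\tilde{B}_j$ if and only if $B_j$ contains two values straddling $x$, i.e.\ one value greater than $x$ and one value smaller than $x$. Thus ``the output contains $2-31$'' is equivalent to ``there exist $i<j$ and $x\in B_i$ such that $B_j$ straddles $x$''.

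Second, I would observe that the two bullet conditions in the statement are exactly the negation of this straddling property: together they assert that, for every $x\in B_i$ and every strictly later block $B_j$, the block $B_j$ lies entirely above $x$ or entirely below $x$ (note $x\notin B_j$ since distinct blocks are disjoint). Concretely, for the backward implication I would assume the two bullets and suppose, for a contradiction, that the output contains $xyz\simeq 231$ with $x\in\tilde{B}_i$ and $y,z\in\tilde{B}_j$; then $y>x>z$ with $y,z\in B_j$, so the first bullet applied to $y$ forces $B_j>x$, contradicting $z<x$. For the forward implication I would argue contrapositively: if some bullet fails, there are $i<j$, $x\in B_i$ and $y\in B_j$ with (say) $y>x$ while some $z\in B_j$ has $z<x$; since $\tilde{B}_j$ is decreasing, $y$ precedes $z$ in the output and $x$ (lying in the earlier $\tilde{B}_i$) precedes both, so $xyz\simeq 231$ is a $2-31$ occurrence. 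The symmetric failure of the second bullet is handled identically, swapping the roles of the large and small elements of $B_j$.

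The only genuinely delicate point---and the reason the proof is ``easy'' only once it is set up correctly---is that the relative order of $y$ and $z$ that matters is their order in the output block $\tilde{B}_j$, not their order in $\pi$; these can differ, because $\tilde{B}_j$ is the rearrangement $\out{12}(B_j)$ of $B_j$. Pinning this down is exactly what lets the decreasing monotonicity of $\tilde{B}_j$ do all the work, converting a positional statement into the value-only straddling condition. I would also note briefly that the trailing ltr-maxima $M_t\cdots M_1$ and the elements of blocks other than $B_i$ and $B_j$ play no role, since a $2-31$ occurrence is by definition confined to one ``$2$'' from an earlier block together with a ``$31$'' from a single strictly later block.
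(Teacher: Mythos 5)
You cannot be graded against the paper's own argument here, because there is none: the lemma is introduced as one ``whose easy proof is omitted.'' Your proof is correct, and it is almost certainly the argument the author had in mind. The reduction is sound: by Lemma~\ref{theorem_ltr-max_pairs_of_pat_312} the output has the block form $\tilde{B}_1\cdots\tilde{B}_t M_t\cdots M_1$, and since each $B_j$ avoids $213$, each $\tilde{B}_j=\out{12}(B_j)$ is decreasing (Theorem~\ref{123_312_avoids_213} combined with Theorem~\ref{theorem_12machine}); consequently an occurrence of $2-31$ whose ``$31$'' part lies in $\tilde{B}_j$ exists if and only if $B_j$ contains values on both sides of some $x$ in an earlier block, and your two case analyses correctly show that the bullets are exactly the negation of this straddling condition. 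You also put your finger on the only delicate point: the order of $y$ and $z$ that matters is their order inside $\tilde{B}_j$, which the decreasing rearrangement fixes independently of their order in $\pi$.

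One remark concerns the statement rather than your proof. Since the lemma assumes $\pi$ is $(123,312)$-sortable, $\out{123,312}(\pi)$ avoids $231$ outright (Lemma~\ref{lemma_out_231}), hence trivially avoids $2-31$, so as literally stated the equivalence collapses to the claim that the bullets hold for every sortable $\pi$. Your argument, however, uses nothing beyond the block form of the output and the decreasingness of the $\tilde{B}_j$'s, so it establishes the equivalence for any $\pi$ whose blocks avoid $213$ --- which is the generality actually needed for the lemma to serve as a reformulation of condition~$4$ of Theorem~\ref{theorem_4_conditions_123_312} and to justify the generating tree that follows.
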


In other words, due to Lemma~\ref{lemma_blocks_between_two_el_123_312}, each block~$B_j$ of a~$(123,312)$-sortable permutation~$\pi$ is bounded between two previous elements of~$\pi$. Now, let~$\pi=M_1B_1\cdots M_tB_t$ be a~$(123,312)$-sortable permutation of length~$n$. We distinguish three possible ways to insert a new rightmost element~$x$ in order to get a~$(123,312)$-sortable permutation:

\begin{enumerate}
\item[$(A)$] Insert a new ltr-maximum~$x=n+1$;
\item[$(B)$] if~$B_t$ is empty, insert the first element of~$B_t$;
\item[$(C)$] if~$B_t$ is not empty, insert an element in~$B_t$.
\end{enumerate}

In order to provide a generating tree for~$(123,312)$-sortable permutations, we assign to each element of~$\Sort(123,312)$ two labels~$(k,m)$. The label~$k$ denotes the number of active sites of the current block~$B_t$, thus it takes into account the insertion of~$x$ according to~$(C)$. Due to Lemma~\ref{lemma_blocks_between_two_el_123_312}, the relative position of the block~$B_t$ with respect to the previous blocks is uniquely determined by its first element. Therefore we just have to make sure that~$B_t$ avoids~$213$, as stated in Theorem~\ref{theorem_4_conditions_123_312}, and the active sites related to the label~$k$ will be Catalan-type:
$$
(k)\rightarrow (2)(3)\cdots (k)(k+1).
$$
The other label~$m$ denotes the number of active sites with respect to the relative order of the blocks and it takes into account the insertion of~$x$ according to~$(B)$. More precisely, since we have to avoid creating an occurrence of~$2-3-1$, the label~$m$ will be Catalan-type too:
$$
(m)\rightarrow (2)(3)\cdots (m)(m+1).
$$
Notice that inserting~$x$ according to~$(C)$ affects the label~$m$ as well. Indeed a new element in the block~$B_t$ creates one additional active site for the relative order of the blocks (so~$m$ is increased by one). Finally, inserting a new ltr-maximum~$x=n+1$ according to~$(A)$ always produces a permutation where the last block is empty, that is where~$k=1$. Note that this operation does not affect the label~$m$.

\begin{theorem}\label{theorem_gen_tree_123_312}
The following rule provides a generating tree for~$\Sort(123,312)$:
\begin{equation}
\Omega:
\begin{cases}
(1,0)\longrightarrow (1,0)(2,2)\\
(1,m)\longrightarrow (1,m)(2,2)(2,3)\cdots (2,m)(2,m+1),\ m\ge 2\\
(k,m)\longrightarrow (1,m)(2,m+1)(3,m+1)\cdots (k+1,m+1),\ k,m\ge 2\\
\end{cases}
\end{equation}\label{rule_123_312}
\end{theorem}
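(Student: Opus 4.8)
The plan is to show that the tree generated by $\Omega$ is isomorphic to the tree obtained by generating $(123,312)$-sortable permutations through rightmost insertions. By Lemma~\ref{lemma_prefix_sortable}, every $\pi\in\Sort_n(123,312)$ with $n\ge 2$ is obtained in a unique way from its prefix $\pi_1\cdots\pi_{n-1}\in\Sort_{n-1}(123,312)$ by appending a new rightmost element (rescaling the others), so the children of a node are exactly the legal such insertions. First I would record that these insertions split into the three mutually exclusive and exhaustive types $(A)$, $(B)$, $(C)$ introduced above: either the new element is a new maximum, or it enters the last block $B_t$, the latter being of type $(B)$ when $B_t$ was empty and of type $(C)$ otherwise. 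It then suffices to verify, for each label $(k,m)$, that the number and labels of the children produced by the three operations coincide with the productions of $\Omega$; the root $\pi=1$ has $B_1$ empty and no earlier block elements, hence label $(1,0)$, matching the axiom.

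For operation $(A)$ I would argue it is always legal: a new global maximum inserted in the last position can never be the ``$1$'' of an occurrence of $213$, $2$-$3$-$1$ or $2$-$31$, and by Lemma~\ref{theorem_ltr-max_pairs_of_pat_312} it keeps the maxima contiguous, so all four conditions of Theorem~\ref{theorem_4_conditions_123_312} are preserved. The resulting permutation has an empty last block, so its block-label drops to $k=1$; since by definition $2$-$3$-$1$ involves only block elements, adding a maximum leaves the block-placement count $m$ unchanged. This accounts for the first child $(1,m)$ (respectively $(1,0)$) in each production. For operation $(C)$, Lemma~\ref{lemma_blocks_between_two_el_123_312} guarantees that $B_t$ occupies a fixed value-window bounded by two earlier elements, so inserting a new rightmost element of $B_t$ inside this window cannot create $2$-$3$-$1$ or $2$-$31$; the only constraint is that $B_t$ remain $213$-avoiding. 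The legal insertions are therefore governed by the standard Catalan succession rule of Example~\ref{example_dyck_paths_new_peak}, namely $(k)\to(2)(3)\cdots(k{+}1)$, while each such insertion adds one element to $B_t$ and hence exactly one new admissible slot for a future block's first element, giving $m\mapsto m+1$. This yields the children $(2,m+1),\dots,(k+1,m+1)$ of the third production.

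The delicate point, and the expected main obstacle, is operation $(B)$: inserting the first element of an empty $B_t$. Here one must describe precisely the set of values that can be placed so as to avoid creating an occurrence of $2$-$3$-$1$ (with the new element as the ``$1$'') while fixing the value-window of $B_t$ compatibly with Lemma~\ref{lemma_blocks_between_two_el_123_312}. I would prove, by induction along the tree, that these admissible slots are exactly $m$ in number and that performing the insertion transforms the block-placement parameter according to the Catalan rule $(m)\to(2)(3)\cdots(m+1)$, the newly created block having a single element and thus block-label $k=2$; this produces the children $(2,2),\dots,(2,m+1)$ of the second production and, in the boundary case of the root value $m=0$, the single child $(2,2)$ of the first production. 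Combining the three operations shows that a node labelled $(k,m)$ has precisely the children prescribed by $\Omega$, and since the insertion generation is size-increasing and uniquely reversible, the nodes at level $n$ are in bijection with $\Sort_n(123,312)$, completing the proof. The heart of the argument is thus the inductive bookkeeping of the $2$-$3$-$1$--forbidding slots that shows the $m$-coordinate evolves by a genuine Catalan succession.
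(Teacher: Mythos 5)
Your proposal follows the same route as the paper's proof: generate $\Sort(123,312)$ by appending rightmost elements, split the legal insertions into the operations $(A)$, $(B)$, $(C)$, and track how the label pair $(k,m)$ evolves under each one --- $(A)$ resetting $k$ to $1$ while fixing $m$, $(C)$ acting Catalan-style on $k$ while incrementing $m$, and $(B)$ acting Catalan-style on $m$ while producing $k=2$, with the identity permutations forming the special $(1,0)$ branch. The step you flag as the delicate one (the Catalan behaviour of $m$ under operation $(B)$) is treated no more rigorously in the paper, which simply asserts it from the discussion following Lemma~\ref{lemma_blocks_between_two_el_123_312}, so your plan matches the paper's argument in both structure and substance.
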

\begin{proof}
Let~$\pi=M_1 B_1\cdots M_t B_t$ be a~$(123,312)$-sortable permutation with label~$(k,m)$, for some integers~$k,m$. Suppose we insert a new rightmost element~$x$. This can be done according to either condition~$(A)$, $(B)$ or~$(C)$, as described below Lemma~\ref{lemma_blocks_between_two_el_123_312}. We discuss each case separately.

\begin{itemize}
\item If~$(k,m)=(1,0)$, then~$\pi=1\cdots n$ consists solely of ltr-maxima, since~$m=0$. If~$x$ is a new ltr-maximum, then the label of the resulting permutation is again~$(1,0)$. Otherwise, if~$x$ is the first (and only) element of~$B_t$, then the resulting label is~$(2,2)$.

\item If~$(k,m)=(1,m)$, for some~$m\ge 2$, then the last element of~$\pi$ is~$M_t$ and~$B_t$ is empty. If~$x$ is a new ltr-maximum, then the resulting label is~$(1,m)$, as noted above. Otherwise, $x$ is the first element of~$B_t$, according to~$(B)$. Then the behavior of the label~$m$ is Catalan-type, according to the relative order of the blocks. Moreover, the label~$k$ of any resulting permutation is always~$2$, since~$x$ is the only element of~$B_t$ in the resulting permutation.

\item Finally, let~$k,m\ge 2$. Then, since~$k\ge 2$, the block~$B_t$ is not empty (and it has~$k$ Catalan-type active sites). Again if~$x$ is a new ltr-maximum, then the resulting label is~$(1,m)$. Otherwise, $x$ is an element of~$B_t$. Therefore the behavior of the label~$k$ is Catalan-type, according to~$(C)$. The label~$m$, instead, is increased by one, as noted before.
\end{itemize}
\end{proof}

The problem of showing directly that the family of objects generated by the rule of~Theorem~\ref{theorem_gen_tree_123_312} is counted by the binomial transform of the Catalan numbers remains open.

\begin{openproblem}
Prove directly that objects generated by Rule~\ref{rule_123_312} are counted by the binomial transform of Catalan numbers.
\end{openproblem}

\section{\texorpdfstring{The family of~$(\sigma$,$\hat{\sigma})$-machines}{The family of (sigma,hat(sigma))-machines}}

We end this chapter by mentioning a result for a family of pairs of patterns. Let~$\sigma$ be a permutation of length~$k$. Recall that~$\hat{\sigma}=\sigma_2\sigma_1\sigma_3\cdots\sigma_k$ is the permutation obtained by interchanging the first two entries of~$\sigma$. Then the map~$\pi\mapsto\out{\sigma,\hat{\sigma}}(\pi)$ is bijective from~$\Sort(\sigma,\hat{\sigma})$ to~$\Perm(231)$. More precisely, each~$(\sigma,\hat{\sigma})$-sortable permutation~$\pi$ is obtained uniquely from a~$231$-avoiding permutation~$\alpha$ as:
$$
\pi=\reverse(\out{\sigma,\hat{\sigma}}(\reverse(\alpha)).
$$
The above equality gives a constructive description of~$\Sort(\sigma,\hat{\sigma})$. Indeed it follows immediately that:
$$
\Sort(\sigma,\hat{\sigma})=\reverse(\mapsigma{\sigma}(\reverse(\Perm(231)))
$$
A proof of this fact (which will be obtained as a corollary of a much more general result for Cayley permutations) is postponed to Remark~\ref{remark_R_circ_sigma_bij} in Section~\ref{section_operator_Cayley}. An immediate consequence is that~$\fsigma{\sigma,\hat{\sigma}}_n=\catalan_n$, for each permutation~$\sigma$ and~$n\ge 1$.

\chapter{\texorpdfstring{Dynamical aspects of the~$\sigma$-machine}{Dynamical aspects of the sigma-machine}}\label{chapter_sorted_perm_fertilities}

In this chapter we analyze some dynamical aspects of $\sigma$-machines by regarding a $\sigma$-stack as an operator $\pi\mapsto\out{\sigma}(\pi)$. This approach has been adopted recently for pattern-avoiding machines in~\cite{BCKV,Ber,Ce,DZ}. Suppose to perform a deterministic sorting procedure. Then it is natural to study the map~$\mapsigma{}$ that associates to an input string~$\pi$ the (uniquely determined) output of the sorting process. Some of the problems that arise, and are classically considered, are the following:
\begin{itemize}
\item Determine the fertility of a string, which is the number of its pre-images under the map~$\mapsigma{}$.
\item Determine the image of~$\mapsigma{}$, i.e. the strings with positive fertility. These are often called sorted permutations~\cite{BM2}.
\end{itemize}

In what follows, we define some properties of the operator~$\mapsigma{\sigma}$ associated to a~$\sigma$-stack, then we start to collect the first related results.

\section{Sorted permutations and fertility}\label{section_sorted_perm_fertilities}

Let~$\sigma$ be a permutation of length two or more. The map~$\mapsigma{\sigma}$ is defined by:
\begin{align*}
\mapsigma{\sigma}:\ &\Perm\to\Perm\\
&\pi\mapsto\out{\sigma}(\pi).
\end{align*}
Define~$\sorted(\sigma)=\mapsigma{\sigma}(\Sort(\sigma))$. Permutations in~$\sorted(\sigma)$ are thus images of~$\sigma$-sortable permutations through the map~$\mapsigma{\sigma}$. We call them the \textit{$\sigma$-sorted} permutations. Notice that~$\sorted(\sigma)=\mapsigma{\sigma}(\Perm)\cap\Perm(231)$.

\begin{remark}
In the adopted definition, $\sigma$-sorted permutations are those that are both output of the~$\sigma$-stack and~$12$-sortable. A different notion (of sorted permutations) can be obtained by considering the set~$\mapsigma{\sigma}(\Perm)$, thus including all the possible outputs of the~$\sigma$-stack. This framework will be adopted in Chapter~\ref{chapter_sort_words_various_kind} in the (more general) case of Cayley permutations.
\end{remark}

The \textit{$\sigma$-fertility} of a permutation~$\pi$ is
$$
\fert{\sigma}(\pi)=|\left(\mapsigma{\sigma}\right)^{-1}(\pi)|.
$$
Due to the adopted definition of~$\sigma$-sorted permutations, we have:
$$
|\Sort_n(\sigma)|=\sum_{\gamma\in\sorted(\sigma)}\fert{\sigma}(\gamma).
$$
A permutation~$\sigma$ is \textit{surjective} if~$\sorted(\sigma)=\Perm(231)$. A permutation~$\sigma$ is \textit{injective} if~$\fert{\sigma}(\pi)\le 1$ for each~$\pi\in\Perm(231)$. We say that~$\sigma$ is \textit{bijective} if~$\sigma$ is both surjective and injective. In this case, for each~$n\ge 1$, we have:
$$
|\Sort_n(\sigma)|=\sum_{\gamma\in\sorted(\sigma)}\fert{\sigma}(\gamma)=\sum_{\gamma\in\Perm_n(231)}1=\catalan_n,
$$
where~$\catalan_n$ is the~$n$-th Catalan number. Equivalently, $\sigma$ is said to be injective, surjective or bijective if the (restricted) map~$\mapsigma{\sigma}:\Sort(\sigma)\to\Perm(231)$ is respectively injective, surjective or bijective. Finally, $\sigma$ is said to be \textit{effective} if~$\sorted(\sigma)\subseteq\Perm(\sigma)$, that is the~$\sigma$-stack succesfully performs its task of preventing occurrences of~$\sigma$ to be produced. One of the main goals of this chapter is to characterize which patterns are effective. First we prove a simple lemma that leads to an equivalent definition of effectiveness.

\begin{lemma}\label{lemma_sorted_inclusions}
We have:
$$
\Perm(231,\sigma)\subseteq\sorted(\sigma)\subseteq\Perm(231).
$$
\end{lemma}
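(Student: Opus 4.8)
The statement to prove is the chain of inclusions
$$
\Perm(231,\sigma)\subseteq\sorted(\sigma)\subseteq\Perm(231).
$$

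The plan is to establish the two inclusions separately. The right-hand inclusion is immediate from the very definition of $\sorted(\sigma)$. Indeed, we defined $\sorted(\sigma)=\mapsigma{\sigma}(\Perm)\cap\Perm(231)$, so any $\sigma$-sorted permutation is by construction an element of $\Perm(231)$; this requires no further argument.

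The substance of the lemma lies in the left-hand inclusion $\Perm(231,\sigma)\subseteq\sorted(\sigma)$. Here I would take an arbitrary permutation $\pi\in\Perm(231,\sigma)$ and exhibit a preimage of $\pi$ under $\mapsigma{\sigma}$ lying in $\Sort(\sigma)$, together with checking that $\pi$ itself avoids $231$ (which is given). The natural candidate preimage is $\reverse(\pi)$. The key observation is that since $\pi$ avoids $\sigma$, its reverse $\reverse(\pi)$ avoids $\reverse(\sigma)$. By the first item of Lemma~\ref{lemma_hat_sigma}, when an input permutation avoids $\reverse(\sigma)$ the restriction of the $\sigma$-stack is never triggered, so every element is pushed directly and $\out{\sigma}(\reverse(\pi))=\reverse(\reverse(\pi))=\pi$. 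Thus $\pi$ lies in the image $\mapsigma{\sigma}(\Perm)$. Since $\pi$ also avoids $231$ by hypothesis, we conclude $\pi\in\mapsigma{\sigma}(\Perm)\cap\Perm(231)=\sorted(\sigma)$. It remains to confirm that the chosen preimage is genuinely a $\sigma$-sortable permutation, so that $\pi\in\mapsigma{\sigma}(\Sort(\sigma))$: by Lemma~\ref{lemma_out_231}, $\reverse(\pi)$ is $\sigma$-sortable precisely when $\out{\sigma}(\reverse(\pi))=\pi$ avoids $231$, which again holds by hypothesis.

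The main (and only mild) obstacle is making sure the hypothesis $\sigma$-avoidance of $\pi$ is correctly transported to $\reverse(\sigma)$-avoidance of $\reverse(\pi)$ so that Lemma~\ref{lemma_hat_sigma} applies; this is exactly the statement, recalled in Section~\ref{section_stats_and_decomp}, that the trivial bijection $\reverse$ respects pattern containment, namely $\pi$ contains $\sigma$ if and only if $\reverse(\pi)$ contains $\reverse(\sigma)$. Once this is in hand, both the identification of the output and the sortability check follow mechanically from Lemma~\ref{lemma_hat_sigma} and Lemma~\ref{lemma_out_231}, and no computation beyond applying these earlier results is needed.
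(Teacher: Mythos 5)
Your proof is correct and follows essentially the same route as the paper: both establish the right-hand inclusion directly from the definition of $\sorted(\sigma)$, and both obtain the left-hand inclusion by observing that any $\pi\in\Perm(231,\sigma)$ is the output of the $\sigma$-stack on $\reverse(\pi)$ (via the first item of Lemma~\ref{lemma_hat_sigma}), with $\reverse(\pi)$ being $\sigma$-sortable since $\pi$ avoids $231$. Your element-wise phrasing matches the paper's set-level identity $\mapsigma{\sigma}(\Perm(132,\reverse(\sigma)))=\Perm(231,\sigma)$, so no substantive difference exists.
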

\begin{proof}
The inclusion~$\sorted(\sigma)\subseteq\Perm(231)$ has already been noted. For the other inclusion, observe that every~$\sigma$-avoiding permutation is equal to the output of~$\mapsigma{\sigma}$ on its reverse, and therefore, every such permutation that also avoids~$231$ belongs to~$\sorted(\sigma)$. Moreover, by Lemma~\ref{lemma_hat_sigma} we have:
$$
\mapsigma{\sigma}(\Perm(132,\reverse(\sigma)))=\reverse(\Perm(132,\reverse(\sigma)))=\Perm(231,\sigma),
$$
and thus~$\Perm(231,\sigma)\subseteq\sorted(\sigma)$.
\end{proof}

\begin{corollary}\label{corollary_effective_patterns}
A pattern~$\sigma$ is effective if and only if~$\sorted(\sigma)=\Perm(231,\sigma)$.
\end{corollary}
\begin{proof}
If~$\sigma$ is effective, that is~$\sorted(\sigma)\subseteq\Perm(\sigma)$, then by Lemma~\ref{lemma_sorted_inclusions} we have~$\Perm(231,\sigma)\subseteq\sorted(\sigma)\subseteq\Perm(231,\sigma)$ and thus~$\sorted(\sigma)=\Perm(231,\sigma)$. The other implication is trivial.
\end{proof}

\begin{theorem}\label{theorem_sorted_hat_231}
Let~$\sigma$ be a permutation of length two or more. If~$\hat{\sigma}\ge 231$, then~$\sorted(\sigma)=\Perm(231,\sigma)$ and~$\sigma$ is both injective and effective.
\end{theorem}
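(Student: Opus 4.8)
The plan is to show that, under the hypothesis $\hat{\sigma}\ge 231$, the operator $\mapsigma{\sigma}$ acts on $\sigma$-sortable permutations simply as the reverse map, from which all three claims follow at once. First I would invoke Theorem~\ref{theorem_suff_cond_class}, which applies precisely because $\hat{\sigma}$ contains $231$, to obtain $\Sort(\sigma)=\Perm(132,\reverse(\sigma))$. In particular every $\sigma$-sortable permutation avoids $\reverse(\sigma)$, so the first item of Lemma~\ref{lemma_hat_sigma} gives $\out{\sigma}(\pi)=\reverse(\pi)$ for each $\pi\in\Sort(\sigma)$. Thus the restriction of $\mapsigma{\sigma}$ to $\Sort(\sigma)$ coincides with $\reverse$.

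For injectivity, I would note that $\reverse$ is a bijection on $\Perm$, hence its restriction to $\Sort(\sigma)$ is injective; moreover, every preimage under $\mapsigma{\sigma}$ of a $231$-avoiding permutation is automatically $\sigma$-sortable (if $\out{\sigma}(\gamma)$ avoids $231$, then $\gamma\in\Sort(\sigma)$ by Lemma~\ref{lemma_out_231}). Therefore the fertility $\fert{\sigma}(\pi)$ of any $\pi\in\Perm(231)$ counts exactly the preimages of $\pi$ lying in $\Sort(\sigma)$, and since $\mapsigma{\sigma}$ equals the injective map $\reverse$ there, we get $\fert{\sigma}(\pi)\le 1$. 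Hence $\sigma$ is injective.

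For the identification of $\sorted(\sigma)$, I would compute $\sorted(\sigma)=\mapsigma{\sigma}(\Sort(\sigma))=\reverse(\Sort(\sigma))=\reverse(\Perm(132,\reverse(\sigma)))$. Using that the trivial bijection $\reverse$ preserves pattern containment (sending a pattern $y$ to $\reverse(y)$), together with $\reverse(132)=231$ and $\reverse(\reverse(\sigma))=\sigma$, this last set equals $\Perm(231,\sigma)$; this is precisely the computation already carried out inside the proof of Lemma~\ref{lemma_sorted_inclusions}. Hence $\sorted(\sigma)=\Perm(231,\sigma)$, and effectiveness follows immediately from Corollary~\ref{corollary_effective_patterns}.

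There is no serious obstacle here: the whole argument rests on the single observation that the hypothesis forces the $\sigma$-stack never to trigger its restriction on sortable inputs, reducing $\mapsigma{\sigma}$ to $\reverse$. The only point requiring a moment's care is reconciling the fertility-based definition of injectivity with the restricted-map formulation, which is handled by the remark that every preimage of a $231$-avoider automatically lies in $\Sort(\sigma)$.
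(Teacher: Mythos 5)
Your proof is correct and follows essentially the same route as the paper's own argument: apply Theorem~\ref{theorem_suff_cond_class} to get $\Sort(\sigma)=\Perm(132,\reverse(\sigma))$, deduce via Lemma~\ref{lemma_hat_sigma} that $\mapsigma{\sigma}$ acts as $\reverse$ on $\Sort(\sigma)$, and conclude injectivity, the identity $\sorted(\sigma)=\reverse(\Perm(132,\reverse(\sigma)))=\Perm(231,\sigma)$, and effectiveness via Corollary~\ref{corollary_effective_patterns}. Your extra remark reconciling the fertility-based definition of injectivity with the restricted-map formulation (every preimage of a $231$-avoider is automatically sortable) is a point the paper leaves implicit, but it is a clarification rather than a different approach.
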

\begin{proof}
Suppose that~$\hat{\sigma}\ge 231$. By Theorem~\ref{theorem_suff_cond_class}, we have~$\Sort(\sigma)=\Perm(132,\reverse(\sigma))$. Therefore, if~$\pi\in\Sort(\sigma)$, then~$\pi$ avoids~$\reverse(\sigma)$ and thus~$\mapsigma{\sigma}(\pi)=\reverse(\pi)$. In other words, the operator~$\mapsigma{\sigma}$ acts as the reverse operator on~$\Sort(\sigma)$ and thus the~$\sigma$-machine is injective. Finally, we have:
$$
\sorted(\sigma)=\reverse(\Sort(\sigma))=\reverse(\Perm(132,\reverse(\sigma)))=\Perm(231,\sigma),
$$
hence the~$\sigma$-machine is effective due to Corollary~\ref{corollary_effective_patterns}.
\end{proof}

Due to Theorem~\ref{theorem_sorted_hat_231}, if both~$\hat{\sigma}$ and~$\sigma$ contain~$231$, then~$\sorted(\sigma)=\Perm(231)$ and the~$\sigma$-machine is also surjective. On the other hand, if~$\hat{\sigma}\ge 231$, but~$\sigma$ avoids~$231$, then~$\sorted(\sigma)$ is strictly contained in~$\Perm(231)$ and the~$\sigma$-machine is not surjective.

\section{Characterization of effective patterns}\label{section_effective_char}

In Lemma~\ref{lemma_sorted_inclusions} we have proved that~$\Perm(231,\sigma)\subseteq\sorted(\sigma)$. Our next goal is to provide a characterization of the effective patterns. Such patterns are precisely those where the equality holds, as stated in Corollary~\ref{corollary_effective_patterns}.

\begin{proposition}\label{proposition_sorted_necessary}
Let~$\sigma$ be a permutation of length two or more. If~$\hat{\sigma}=1\oplus\alpha$, for some~$\alpha\in\Perm(231)$, then~$\sigma$ is not effective.
\end{proposition}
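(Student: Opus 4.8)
The plan is to invoke Corollary~\ref{corollary_effective_patterns}, which states that $\sigma$ is effective if and only if $\sorted(\sigma)=\Perm(231,\sigma)$. Since the inclusion $\Perm(231,\sigma)\subseteq\sorted(\sigma)$ always holds by Lemma~\ref{lemma_sorted_inclusions}, to prove that $\sigma$ is \emph{not} effective it suffices to exhibit a single permutation $\gamma\in\sorted(\sigma)$ that contains $\sigma$. Recalling that $\sorted(\sigma)=\mapsigma{\sigma}(\Perm)\cap\Perm(231)$, this reduces to producing an input permutation $\pi$ whose output $\out{\sigma}(\pi)$ contains $\sigma$ and avoids $231$; such a $\pi$ is automatically $\sigma$-sortable, and $\out{\sigma}(\pi)$ is then the desired witness.

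First I would unpack the hypothesis. Writing $\sigma=\sigma_1\cdots\sigma_k$, the condition $\hat{\sigma}=\sigma_2\sigma_1\sigma_3\cdots\sigma_k=1\oplus\alpha$ with $\alpha\in\Perm(231)$ is equivalent to $\sigma_2=1$ together with $\sigma_1\sigma_3\cdots\sigma_k\simeq\alpha$ being $231$-avoiding. From this I would deduce the clean auxiliary fact that $\sigma$ \emph{itself} avoids $231$: its minimum sits in position~$2$, and in any occurrence of $231$ the smallest of the three entries must be the last, so $\sigma_2=1$ cannot participate in such an occurrence; hence any $231$ in $\sigma$ would already appear in $\sigma_1\sigma_3\cdots\sigma_k$, which is impossible. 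This fact is what will guarantee that the $\sigma$-part of the constructed output carries no internal $231$.

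The heart of the argument is the explicit input
$$
\pi_\sigma=\reverse(\sigma)\ominus\antiid_{\sigma_1-1},
$$
namely $\reverse(\sigma)$ placed on the top $k$ values, followed by a decreasing block of $\sigma_1-1$ smaller entries. I have checked on the shortest instances ($\sigma=21,213,312,3124$, which give inputs $231,4231,43521,643521$ and outputs $213,2134,31254,312546$) that $\out{\sigma}(\pi_\sigma)$ does contain $\sigma$ and avoids $231$. The intuition is that feeding $\reverse(\sigma)$ on its own yields exactly $\hat{\sigma}$ (compare Lemma~\ref{lemma_reverse_outputs_hat}), that is, with the minimum preceding $\sigma_1$; the extra decreasing block perturbs the computation so that the entry playing the role of $\sigma_1$ is extracted ahead of the entry playing the role of the minimum, so that the output realizes $\sigma$ rather than $\hat{\sigma}$, while the relative order of $\sigma_3\cdots\sigma_k$ is preserved and the largest entries are emptied out at the end.

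The main obstacle is the verification of this claim for all admissible $\sigma$. I would carry it out by simulating the $\sigma$-stack on $\pi_\sigma$ in two phases — first the processing of the $\reverse(\sigma)$-block, then of the $\antiid_{\sigma_1-1}$-block — tracking the stack contents throughout and establishing that the resulting word (i) contains a copy of $\sigma$ and (ii) avoids $231$. For~(ii), the decomposition $\gamma=(\text{a }\sigma\text{-part})\cup(\text{largest entries})$ should force every candidate $231$ to be either internal to the $\sigma$-part, which is excluded since $\sigma$ avoids $231$, or to use a largest entry in a role incompatible with the increasing-position constraints, which is likewise excluded. The genuinely delicate point is making the phase analysis uniform — in particular pinning down exactly which entries are popped when $\sigma_1$ is the maximum of $\sigma$ versus when it is not — and confirming that the choice $\sigma_1-1$ for the length of the auxiliary block is always sufficient to trigger the crucial extraction.
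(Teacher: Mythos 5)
Your overall strategy matches the paper's: reduce (via Corollary~\ref{corollary_effective_patterns}) to exhibiting a $\sigma$-sortable $\pi$ with $\out{\sigma}(\pi)\ge\sigma$, observe that $\sigma$ itself avoids $231$, and take as input $\reverse(\sigma)$ followed by a block of $\sigma_1-1$ smaller entries. However, your specific witness $\pi_\sigma=\reverse(\sigma)\ominus\antiid_{\sigma_1-1}$ is wrong in general, and this is not merely a verification obstacle: the claim that $\out{\sigma}(\pi_\sigma)$ contains $\sigma$ is false. Take $\sigma=4132$, which satisfies the hypothesis since $\hat{\sigma}=1432=1\oplus 321$ and $321\in\Perm(231)$. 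Your input is $5647321$, and a direct simulation of the $4132$-stack gives $\out{\sigma}(5647321)=4123765$: the only pop forced before the final emptying is that of the entry $4$ (when $7$ arrives), after which $3$, $2$, $1$ all enter the stack and hence exit in increasing order $1,2,3$. The word $4123765$ does avoid $231$ (so $\pi_\sigma$ is sortable), but it contains no occurrence of $4132$: the first entry of such an occurrence must exceed three later entries, so it could only be the $4$, and the candidates $1,2,3$ appear after it in increasing order, containing no pattern $132$. So $\pi_\sigma$ witnesses nothing. The same failure occurs for $\sigma=41325$.

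The missing idea is that the trailing block is reversed by the stack: it is pushed on top of everything and popped first, so an input block $B$ contributes $\reverse(B)$ to the output. For the output to recreate the prefix $\sigma_2\cdots\sigma_t$ of $\sigma$ (where $t=\sigma_1$) in its original order, the block must be order isomorphic to $\reverse(\sigma_2\cdots\sigma_t)$, not to $\antiid_{t-1}$. This is exactly the paper's construction $\pi=\reverse(\sigma)\ominus\reverse(\sigma_2\cdots\sigma_t)$, whose output is $\sigma'_2\,\sigma_2\cdots\sigma_t\,\sigma'_1\,\sigma'_3\cdots\sigma'_k$ (with $\sigma'_i=\sigma_i+t-1$); this contains $\sigma$ because $\sigma'_2$ has value $t=\sigma_1$, and it avoids $231$ because $\sigma$ does. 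Note the block lengths agree ($|\sigma_2\cdots\sigma_t|=\sigma_1-1$); only the internal order differs, and the two constructions coincide precisely when $\sigma_2\cdots\sigma_t$ is increasing. That is the case in all four of your test patterns ($21$, $213$, $312$, $3124$), which is why your experiments passed while the general claim fails.
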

\begin{proof}
Let~$\hat{\sigma}=1\oplus\alpha$, with~$\alpha\in\Perm(231)$. We show that there is a~$\sigma$-sortable permutation~$\pi$ such that~$\mapsigma{\sigma}(\pi)\ge\sigma$. Let~$\sigma=\sigma_1\sigma_2\cdots\sigma_k$, where~$\sigma_2=1$ since the first element of~$\hat{\sigma}$ is~$1$ by hypothesis. Suppose that~$\sigma_1=t$, for some~$2\le t\le k$. Define
$$
\pi=\reverse(\sigma)\ominus\reverse(\sigma_2\cdots\sigma_t)=\sigma'_k\sigma'_{k-1}\cdots\sigma'_2\sigma'_1\sigma_t\sigma_{t-1}\cdots\sigma_2,
$$
where~$\sigma'_i=\sigma_i+t-1$ for each~$i$. Notice that:
$$
\sigma'_2=\sigma_2+t-1=1+t-1=t=\sigma_1.
$$
We shall prove that:
$$
\mapsigma{\sigma}(\pi)=\sigma_1\sigma_2\cdots\sigma_t\sigma'_1\sigma'_3\cdots\sigma'_k.
$$
Due to our assumptions, we have that~$\sigma=t1\sigma_3\cdots\sigma_k$ and~$\sigma$ avoids~$231$. Thus it must be
$$
\lbrace\sigma_3,\dots,\sigma_t\rbrace=\lbrace 2,\dots,t-1\rbrace\text{ and }\lbrace\sigma_{t+1},\dots,\sigma_k\rbrace=\lbrace t+1,\dots,k\rbrace,
$$
otherwise there would be two indices~$i\in\lbrace2,\dots,t-1\rbrace$ and~$j\in\lbrace t+1,\dots,k\rbrace$ such that~$\sigma_i>t$ and~$\sigma_j<t$. But then~$\sigma$ would contain an occurrence~$\sigma_1\sigma_i\sigma_j$ of~$231$, which is a contradiction. An immediate consequence is that the string~$w=\sigma_1\cdots\sigma_t\sigma'_{t+1}\cdots\sigma'_k$ is order isomorphic to~$\sigma$. Indeed~$w$ is obtained from~$\sigma$ by adding~$t-1$ to the elements~$\sigma'_{t+1},\dots,\sigma'_k$. Moreover, the string~$z=\sigma_2\cdots\sigma_t\sigma'_1\sigma'_3\cdots\sigma'_k$ avoids~$\sigma$. Indeed we have~$\sigma_1=t$, whereas~$\sigma_i<t$ for each~$i\le t$. Thus no element amongst~$\sigma_2,\dots,\sigma_t$ can play the role of~$\sigma_1$ in an occurrence of~$\sigma$ in~$z$. Finally, the remaining suffix of~$z$ is too short to contain~$\sigma$ (it has length~$k-1$). Now, let us consider the action of the~$\sigma$-stack on input~$\pi$. Since~$\pi$ contains the prefix~$\sigma'_k\sigma'_{k-1}\cdots\sigma'_2\sigma'_1$, the first element that cannot be pushed into the~$\sigma$-stack is~$\sigma'_1$, which causes the pop of~$\sigma'_2$. Then~$\sigma'_1$ is pushed and, immediately after, the~$\sigma$-stack contains~$\sigma'_k\cdots\sigma'_3\sigma'_1$, reading from bottom to top. The remaining elements of the input are~$\sigma_t\cdots\sigma_2$. Notice that~$\sigma'_k\cdots\sigma'_3\sigma'_1\sigma_t\cdots\sigma_2=\reverse(z)$, which avoids~$\reverse(\sigma)$ due to what observed above. Therefore all the remaining elements of the input are pushed into the~$\sigma$-stack directly and the output is:
$$
\mapsigma{\sigma}(\pi)=\sigma'_2\sigma_2\cdots\sigma_t\sigma'_1\sigma'_3\cdots\sigma'_k.
$$
This is precisely what we wanted, since~$\sigma'_2=\sigma_1$. Now, $\mapsigma{\sigma}(\pi)$ contains the substring~$w$, which is an occurrence of~$\sigma$. Finally, it is easy to show that~$\mapsigma{\sigma}(\pi)$ avoids~$231$, since~$\sigma$ avoids~$231$ and~$\sigma'_i>\sigma_j$, for each~$i,j$. Thus~$\pi$ is~$\sigma$-sortable and~$\mapsigma{\sigma}(\pi)\ge\sigma$. This completes the proof.
\end{proof}

Next we show that if~$\hat{\sigma}$ is not the direct sum of~$1$ plus a~$231$-avoiding permutation, then~$\sigma$ is effective. If~$\hat{\sigma}\ge 231$, then the desired results follows immediately by Theorem~\ref{theorem_sorted_hat_231}. We just need to address the remaining case where~$\hat{\sigma}$ avoids~$231$ and~$\sigma_2\neq 1$.

\begin{proposition}\label{proposition_sorted_suff}
Let~$\sigma$ be a permutation of length two or more. If~$\hat{\sigma}$ avoids~$231$ and~$\sigma_2\neq 1$, then~$\sigma$ is effective.
\end{proposition}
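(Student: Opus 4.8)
The plan is to invoke Corollary~\ref{corollary_effective_patterns} together with Lemma~\ref{lemma_sorted_inclusions}: since $\Perm(231,\sigma)\subseteq\sorted(\sigma)\subseteq\Perm(231)$ always holds, to prove that $\sigma$ is effective it suffices to show that $\out{\sigma}(\pi)$ avoids $\sigma$ for every $\sigma$-sortable $\pi$. Two easy reductions come first. If $\sigma$ itself contains $231$, then any permutation containing $\sigma$ contains $231$; but $\out{\sigma}(\pi)$ avoids $231$ when $\pi$ is sortable, so it must avoid $\sigma$ and we are done. Hence I may assume $\sigma\in\Perm(231)$. Similarly, by the first item of Lemma~\ref{lemma_hat_sigma}, if $\pi$ avoids $\reverse(\sigma)$ then $\out{\sigma}(\pi)=\reverse(\pi)$, which contains $\sigma$ if and only if $\pi$ contains $\reverse(\sigma)$; so in this case the output avoids $\sigma$. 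Thus I may restrict attention to sortable $\pi$ that contain $\reverse(\sigma)$, that is, to inputs on which the $\sigma$-stack actually triggers its restriction.

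The key structural observation, which sharply narrows the shape of $\sigma$, is that the hypotheses force $\sigma_1=1$. Indeed, let $1=\sigma_p$. Since $\sigma_2\neq 1$ we have $p\neq 2$; suppose, for a contradiction, that $p\geq 3$. As $\sigma$ avoids $231$ and $\sigma_p=1$ is its minimum, any ascent $\sigma_a<\sigma_b$ with $a<b<p$ would give $\sigma_a\sigma_b\sigma_p\simeq 231$; hence the prefix $\sigma_1\sigma_2\cdots\sigma_{p-1}$ is strictly decreasing and in particular $\sigma_1>\sigma_2$. But then in $\hat{\sigma}=\sigma_2\sigma_1\sigma_3\cdots\sigma_k$ the three entries $\sigma_2,\sigma_1,\sigma_p$ occur in this order (as $p\geq 3$) and satisfy $\sigma_p<\sigma_2<\sigma_1$, so that $\sigma_2\sigma_1\sigma_p\simeq 231$, contradicting the assumption that $\hat{\sigma}$ avoids $231$. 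Therefore $p=1$, i.e. $\sigma=1\oplus\tau$ with $\tau=\sigma_2\cdots\sigma_k$ a (necessarily $231$-avoiding) permutation. This is exactly the situation complementary to Proposition~\ref{proposition_sorted_necessary}: there $\sigma_2=1$ places the minimum of $\hat{\sigma}$ at its front and enables a bad example, whereas here the hypothesis $\sigma_2\neq 1$ pushes the minimum of $\sigma$ to the very front of $\sigma$ itself.

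It remains to treat the case $\sigma=1\oplus\tau$, and this is where the main difficulty lies. I would argue by contradiction: assume a sortable $\pi$ (containing $\reverse(\sigma)$) with $\out{\sigma}(\pi)=\gamma$ containing an occurrence $w_1w_2\cdots w_k\simeq\sigma$, written in output order, so that $w_1$ is the smallest of the $w_i$ and is output first, while $\gamma$ avoids $231$. The engine is the remark that $w_1,\dots,w_k$ are never simultaneously in the $\sigma$-stack: being output in this order, they would otherwise occupy the stack from top to bottom as $w_1\cdots w_k\simeq\sigma$, violating the defining restriction. Consequently $w_1$ is popped by a genuine trigger, some input element $a$ whose insertion would create $\sigma$ with $a$ playing the role of $\sigma_1$, rather than during the final flush. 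Examining the last pop of this triggering burst produces, exactly as in Lemma~\ref{lemma_reverse_outputs_hat} and in the proof of Theorem~\ref{theorem_bivincular_pattern_gen_result}, an occurrence of $\hat{\sigma}$ in $\gamma$ of the form $d_2\,a\,d_3\cdots d_k$, where $d_2,\dots,d_k$ are the stack elements realizing $\sigma_2,\dots,\sigma_k$ and $a$, playing the role of $\sigma_1=1$, is smaller than all of them.

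The obstacle is that, precisely because $\sigma_1=1$, this forced $\hat{\sigma}$ has its minimum $a$ in second position and hence, as $\hat{\sigma}$ avoids $231$, yields no immediate $231$ in the output; this is in sharp contrast with the vacuous case $\sigma_1>\sigma_2$ dispatched above, where the minimum lands in a later position and instantly creates $\sigma_2\sigma_1\sigma_p\simeq 231$. Closing the argument therefore requires genuinely combining the forced $\hat{\sigma}$ with the ambient occurrence $w_1\cdots w_k$: I would track, at the trigger popping $w_1$, the elements $w_2,\dots,w_{j-1}$ that sit below $w_1$ in the stack (for the least $j$ such that $w_j$ enters after $w_1$ is popped) and show that an element of value strictly between $a$ and $d_2$ must then be output before $d_2$, producing a $231$ in $\gamma$ and contradicting sortability. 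This final case analysis, which is the dual of the explicit construction in Proposition~\ref{proposition_sorted_necessary} and is where the hypothesis $\sigma_2\neq 1$ does its real work, is the delicate heart of the proof.
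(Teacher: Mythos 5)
Everything you actually prove is correct, and it coincides with the opening of the paper's own argument: the reduction (via Lemma~\ref{lemma_sorted_inclusions} and Corollary~\ref{corollary_effective_patterns}) to showing that $\out{\sigma}(\pi)$ avoids $\sigma$ for sortable $\pi$, the dismissal of the cases $\sigma\ge 231$ and $\pi\not\ge\reverse(\sigma)$, the conclusion that $\sigma_1=1$ (you derive it directly from the hypotheses after reducing to $\sigma\in\Perm(231)$, the paper derives it from the fact that a counterexample $\gamma$ would contain both $\sigma$ and $\hat{\sigma}$ while avoiding $231$ --- both are fine), and the observation that the element $w_1$ playing the role of $\sigma_1$ cannot be popped during the final flush, since otherwise $w_1,\dots,w_k$ would sit in the stack top-to-bottom as a forbidden occurrence of $\sigma$. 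But from there on your text is a plan, not a proof: you yourself label the remaining step ``the delicate heart of the proof'' and never carry it out. Nothing you write shows why an element of value strictly between $a$ and $d_2$ must be output before $d_2$; indeed it is not clear that this single target is even the right dichotomy, since in one of the two cases the paper's contradiction is not a $231$ in $\gamma$ at all, but a forbidden occurrence of $\sigma$ \emph{inside the stack}.

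Concretely, what is missing is the following case analysis (the paper's). Take the leftmost occurrence $\tilde{\sigma}_1\cdots\tilde{\sigma}_k$ of $\sigma$ in $\gamma$, and let $\sigma'_1$ be the triggering input element when $\tilde{\sigma}_1$ is popped, with $\sigma'_2,\dots,\sigma'_k$ in the stack realizing $\sigma'_1\sigma'_2\cdots\sigma'_k\simeq\sigma$. Since $\sigma_1=1$, one has $\tilde{\sigma}_1>\sigma'_1$ (else the stack would contain $\tilde{\sigma}_1\sigma'_2\cdots\sigma'_k\simeq\sigma$), and $\tilde{\sigma}_2$ must follow $\sigma'_1$ in $\gamma$ (else $\tilde{\sigma}_1\tilde{\sigma}_2\sigma'_1\simeq 231$ in $\gamma$). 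Now split on whether $\tilde{\sigma}_2$ precedes or follows $\sigma'_1$ in $\pi$. If $\tilde{\sigma}_2$ is still in the stack at the trigger, one argues --- by examining the trigger that would have to pop $\tilde{\sigma}_2$, and using again $\sigma_1=1$ together with the $231$-avoidance of $\gamma$ --- that every $\tilde{\sigma}_j$, $j\ge 3$, must also still be in the stack when $\tilde{\sigma}_1$ is popped, so the stack holds $\tilde{\sigma}_1\tilde{\sigma}_2\cdots\tilde{\sigma}_k\simeq\sigma$, a contradiction. If instead $\tilde{\sigma}_2$ is still in the input, then $\sigma'_1$ is itself popped by a later trigger $\sigma''_1$ with $\sigma'_1>\sigma''_1$, one checks that $\tilde{\sigma}_2$ must also follow $\sigma''_1$ in $\gamma$, and one iterates with $\sigma''_1$ in place of $\sigma'_1$; the iteration terminates because the successive triggering elements lie strictly further right in $\pi$. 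Neither the two-case split, nor the in-stack argument, nor the termination argument appears in your proposal, so the proposition remains unproved as written.
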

\begin{proof}
We show that~$\sorted(\sigma)\subseteq\Perm(\sigma)$. Let~$\gamma\in\sorted(\sigma)$ and suppose, for a contradiction, that~$\gamma\ge\sigma$. Let~$\pi$ be a~$\sigma$-sortable permutation such that~$\mapsigma{\sigma}(\pi)=\gamma$. If~$\pi$ avoids~$\reverse(\sigma)$, then~$\gamma=\mapsigma{\sigma}(\pi)=\reverse(\pi)$ avoids~$\sigma$, which is a contradiction. Therefore we can assume that~$\pi\ge\reverse(\sigma)$. By Lemma~\ref{lemma_hat_sigma}, we have that~$\gamma\ge\hat{\sigma}$. Thus~$\gamma$ contains both~$\sigma$ and~$\hat{\sigma}$ and moreover~$\gamma$ avoids~$231$. Since~$\sigma_2\neq 1$, it must be~$\sigma_1=1$. Otherwise, if~$\sigma_i=1$, with~$i\ge 3$, then it would be either~$\sigma_1\sigma_2\sigma_i\simeq 231$, if~$\sigma_1<\sigma_2$, or~$\sigma_2\sigma_1\sigma_i\simeq 231$, if~$\sigma_1>\sigma_2$. In the first case, it would be~$\gamma\ge\sigma\ge 231$, which is impossible. In the second case, it would be~$\gamma\ge\hat{\sigma}\ge 231$, again a contradiction.

Now, let~$\tilde{\sigma}_1\cdots\tilde{\sigma}_k$ be the leftmost occurrence of~$\sigma$ in~$\gamma$. Let us consider the instant when~$\tilde{\sigma}_1$ is extracted from the~$\sigma$-stack. If the input is empty, then the~$\sigma$-stack must contain all the elements~$\tilde{\sigma}_1\cdots\tilde{\sigma}_k$, from top to bottom, which is impossible by definition of~$\sigma$-stack. Thus~$\tilde{\sigma}_1$ is extracted due to the fact that the next element of the input, say~$\sigma'_1$, triggers the restriction of the~$\sigma$-stack. More explicitly, $\sigma'_1$ realizes an occurrence of~$\sigma$ together with some elements~$\sigma'_2\cdots\sigma'_k$ contained in the~$\sigma$-stack (from top to bottom). Since~$\sigma_1=1$, it must be~$\tilde{\sigma}_1>\sigma'_1$, otherwise~$\tilde{\sigma}_1\sigma'_2\cdots\sigma'_k$ would be an occurrence of~$\sigma$ inside the~$\sigma$-stack. If~$\tilde{\sigma}_2$ precedes~$\sigma'_1$ in~$\gamma$, then~$\tilde{\sigma}_1\tilde{\sigma}_2\sigma'_1\simeq 231$ in~$\gamma$, which is impossible. Thus we can assume that~$\tilde{\sigma}_2$ follows~$\sigma'_1$ in~$\gamma$. We consider two cases, according to whether or not~$\tilde{\sigma}_2$ follows~$\sigma'_1$ in~$\pi$.

\begin{itemize}
\item Suppose initially that~$\tilde{\sigma}_2$ precedes~$\sigma'_1$ in~$\pi$, and thus~$\tilde{\sigma}_2$ is contained in the~$\sigma$-stack when~$\sigma'_1$ is the next element of the input. Consider the instant when~$\tilde{\sigma}_1$ is extracted from the~$\sigma$-stack. Recall that at this point~$\sigma'_1$ is the next element of the input and~$\sigma'_1\sigma'_2\cdots\sigma'_k\simeq\sigma$, for some elements~$\sigma'_2\cdots\sigma'_k$ contained in the~$\sigma$-stack. Moreover, the top element of the~$\sigma$-stack is~$\tilde{\sigma}_1$ and~$\tilde{\sigma}_2$ is still contained in the~$\sigma$-stack. We shall prove that~$\tilde{\sigma}_j$ is contained in the~$\sigma$-stack for each~$j\ge3$. Suppose, for a contradiction, that~$\tilde{\sigma}_j$ follows~$\sigma'_1$ in the input, for some~$j\ge 3$. Notice that~$\tilde{\sigma}_2$ is extracted from the~$\sigma$-stack before~$\tilde{\sigma}_j$ enters. Let~$\sigma'''_1\sigma'''_2\cdots\sigma'''_k$ be the occurrence of~$\sigma$ that causes the pop of~$\tilde{\sigma}_2$, with~$\sigma'''_1$ the next element of the input. Again we have~$\tilde{\sigma}_2>\sigma'''_1$, since~$\sigma_1=1$. Moreover~$\gamma$ contains~$\tilde{\sigma}_1\tilde{\sigma}_2\sigma'''_2$, thus it must be~$\sigma'''_1>\tilde{\sigma}_2$, or else~$\tilde{\sigma}_1\tilde{\sigma}_2\sigma'''_1$ would be an occurrence of~$231$ in~$\gamma$, which is impossible. But then, when~$\tilde{\sigma}_1$ is the top of the~$\sigma$-stack, the~$\sigma$-stack contains~$\tilde{\sigma}_1\sigma'''_2\cdots\sigma'''_k$, which is an occurrence of~$\sigma$ since~$\sigma'''_1>\tilde{\sigma}_1$ (and~$\sigma_1=1$), again a contradiction. We can thus assume that, when~$\tilde{\sigma}_1$ is extracted, $\tilde{\sigma}_j$ is contained in the~$\sigma$-stack for each~$j\ge 3$. But this is impossible, since the~$\sigma$-stack would contain an occurrence~$\tilde{\sigma}_1\tilde{\sigma}_2\cdots\tilde{\sigma}_k$ of~$\sigma$.

\item Suppose instead that~$\tilde{\sigma}_2$ follows~$\sigma'_1$ in~$\pi$, and thus~$\sigma'_1$ is extracted from the~$\sigma$-stack before~$\tilde{\sigma}_2$ enters. Let us consider the instant when~$\sigma'_1$ is extracted. At this point, the~$\sigma$-stack contains some elements~$\sigma''_2\cdots\sigma''_k$ that realize an occurrence of~$\sigma$ together with the next element~$\sigma''_1$ of the input. Again it must be~$\sigma'_1>\sigma''_1$, otherwise (since~$\sigma_1=1$) the~$\sigma$-stack would contain an occurrence~$\sigma'_1\sigma''_2\cdots\sigma''_k$ of~$\sigma$. Thus~$\tilde{\sigma}_1>\sigma'_1>\sigma''_2$ and~$\tilde{\sigma}_1\tilde{\sigma}_2\sigma''_1\simeq 231$. This means that~$\tilde{\sigma}_2$ must follow~$\sigma''_1$ in~$\gamma$. Now, if~$\tilde{\sigma}_2$ precedes~$\sigma''_1$ in~$\pi$, then we are back to the previous case. Otherwise, we can repeat the same argument on~$\tilde{\sigma}_1\tilde{\sigma}_2\sigma''_1$, with~$\sigma''_1$ in place of~$\sigma'_1$. Sooner or later, since~$\sigma''_1$ is strictly to the right of~$\sigma'_1$ in~$\pi$, this will result in a contradiction.
\end{itemize}
\end{proof}

What proved so far in this section, together with Lemma~\ref{lemma_sorted_inclusions}, leads to the following characterization of effective patterns.

\begin{corollary}\label{corollary_sorted_class}
Let~$\sigma$ be a permutation of length two or more. Then~$\sigma$ is not effective if and only if~$\hat{\sigma}=1\oplus\alpha$, for some~$\alpha\in\Perm(231)$.
\end{corollary}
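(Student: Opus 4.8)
The plan is to obtain the corollary purely by assembling the three substantial facts already established—Proposition~\ref{proposition_sorted_necessary}, Theorem~\ref{theorem_sorted_hat_231} and Proposition~\ref{proposition_sorted_suff}—once the syntactic shapes of their hypotheses have been reconciled. The only genuinely new ingredient is the elementary observation that the decomposition $\hat{\sigma}=1\oplus\alpha$ with $\alpha\in\Perm(231)$ is equivalent to the conjunction of two conditions that appear separately in the propositions: that the first entry of $\hat{\sigma}$ equals $1$ (that is, $\sigma_2=1$, since $\hat{\sigma}=\sigma_2\sigma_1\sigma_3\cdots\sigma_k$ so that $\hat{\sigma}_1=\sigma_2$), and that $\hat{\sigma}$ avoids $231$. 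Indeed, $\hat{\sigma}=1\oplus\beta$ holds for some permutation $\beta$ exactly when the leftmost entry of $\hat{\sigma}$ is its global minimum, i.e.\ $\sigma_2=1$; and when this happens, prepending that minimum cannot create an occurrence of $231$ (the minimum would have to serve as the middle value of the pattern, which is impossible), so $\hat{\sigma}$ avoids $231$ if and only if $\beta$ does. Thus $\hat{\sigma}=1\oplus\alpha$ with $\alpha\in\Perm(231)$ if and only if $\sigma_2=1$ and $\hat{\sigma}$ avoids $231$.

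For the forward implication I would argue by contraposition. Suppose $\hat{\sigma}$ is not of the form $1\oplus\alpha$ with $\alpha\in\Perm(231)$; by the equivalence above this means $\sigma_2\neq 1$ or $\hat{\sigma}\ge 231$. If $\hat{\sigma}\ge 231$, then Theorem~\ref{theorem_sorted_hat_231} gives directly $\sorted(\sigma)=\Perm(231,\sigma)$, so $\sigma$ is effective. Otherwise $\hat{\sigma}$ avoids $231$, and in order to fall outside the excluded form we must be in the case $\sigma_2\neq 1$; then Proposition~\ref{proposition_sorted_suff} applies and again yields that $\sigma$ is effective. These two cases exhaust the possibilities, so whenever $\hat{\sigma}\neq 1\oplus\alpha$ (with $\alpha\in\Perm(231)$) the pattern $\sigma$ is effective.

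The reverse implication is immediate: if $\hat{\sigma}=1\oplus\alpha$ with $\alpha\in\Perm(231)$, then Proposition~\ref{proposition_sorted_necessary} asserts precisely that $\sigma$ is not effective. Combining the two directions gives the stated equivalence. I do not anticipate any serious obstacle here, since all the analytic content—the explicit construction of a sortable permutation whose output contains $\sigma$ in Proposition~\ref{proposition_sorted_necessary}, and the stack-trajectory analysis in Proposition~\ref{proposition_sorted_suff}—has already been carried out. The remaining work is the bookkeeping needed to verify that the hypotheses of Theorem~\ref{theorem_sorted_hat_231} and of Propositions~\ref{proposition_sorted_necessary} and~\ref{proposition_sorted_suff} tile the set of all patterns $\sigma$ of length at least two, with the cases $\hat{\sigma}\ge 231$ and ($\hat{\sigma}$ avoids $231$, $\sigma_2\neq 1$) producing effectiveness and the remaining case $\hat{\sigma}=1\oplus\alpha$ producing non-effectiveness, exactly as required.
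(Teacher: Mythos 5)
Your proposal is correct and follows essentially the same route as the paper: the paper likewise obtains the corollary by combining Proposition~\ref{proposition_sorted_necessary} (for the non-effective direction) with Theorem~\ref{theorem_sorted_hat_231} and Proposition~\ref{proposition_sorted_suff}, splitting the complementary case exactly as you do into $\hat{\sigma}\ge 231$ versus ($\hat{\sigma}$ avoids $231$ and $\sigma_2\neq 1$). Your explicit verification that $\hat{\sigma}=1\oplus\alpha$ with $\alpha\in\Perm(231)$ is equivalent to $\sigma_2=1$ together with $\hat{\sigma}$ avoiding $231$ is the same bookkeeping the paper leaves implicit.
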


An immediate consequence of Corollary~\ref{corollary_sorted_class} is that there are~$\catalan_{n-1}=|\Perm_{n-1}(231)|$ patterns of length~$n$ that are not effective. For instance, there is one such pattern of length two, namely~$21$, since~$\hat{21}=12=1\oplus 1$. Similarly, there are two such patterns of length three, namely~$213$ and~$312$ (see Table~\ref{table_sorted_perms}).

\begin{table}
\centering
\def\arraystretch{1.1}
\begin{tabular}{lclr}
\toprule
$\sigma$ &~$\sorted(\sigma)$ & \textbf{Sequence}~$\lbrace|\sorted_n(\sigma)|\rbrace_n$ & \textbf{OEIS}\\
\midrule
123 &~$\Perm(123,231)$ & 1, 2, 4, 7, 11, 16, 22, 29, 37,$\dots$ & A000124\\
132 &~$\Perm(132,231)$ & 1, 2, 4, 8, 16, 32, 64, 128, 256,$\dots$ & A000079\\
213 & ? & 1, 2, 4, 9, 22, 58, 161, 466, 1390 & \\
231 &~$\Perm(231)$ & 1, 2, 5, 14, 42, 132, 429, 1430, 4862,$\dots$ & A000108\\
312 & ? & 1, 2, 4, 8, 17, 40, 104, 291, 855 & \\
321 &~$\Perm(231,321)$ & 1, 2, 4, 8, 16, 32, 64, 128, 256,$\dots$ & A000079\\
\bottomrule
\end{tabular}
\caption[Enumerative results for~$\sigma$-sorted permutations.]{$\sigma$-sorted permutations for patterns~$\sigma$ of length three, starting from~$\sigma$-sorted permutations of length one.}\label{table_sorted_perms}
\end{table}

\section{\texorpdfstring{Fertility and sorted permutations of the~$123$-machine}{Fertility and sorted permutations of the 123-machine}
}

Fertility and sorted permutations for the~$123$-machine can be determined from the results proved in Chapter~\ref{chapter_pattern123}. Recall that any~$\pi\in\Sort_n(123)$ which is not the identity permutation can be uniquely constructed as follows:

\begin{itemize}
\item choose~$\alpha\in\Perm_k(213)$, with~$\alpha_1=k\ge 2$;
\item add~$h$ new maxima~$k+1,\ldots,k+h$, one at a time, using the bijection~$\varphi$ of Theorem~\ref{theorem_123_bijection};
\item add~$t=n-k-h$ consecutive ascents at the beginning, by inflating the first element of the permutation, according to Corollary~\ref{corollary_123_infl_cor}.
\end{itemize}

We wish to exploit the above construction to describe the set~$\sorted(123)$ and compute the fertility of~$123$-sorted permutations. Let~$\pi$ be a~$123$-sortable permutation. If~$\pi$ starts with a descent~$\pi_1>\pi_2$, with~$\pi_1=k$, then by Lemma~\ref{lemma_123_output}, we have~$\out{123}(\pi)=n(n-1)\cdots (k+1)(k-1)\cdots 21k$. Moreover, observe that inserting~$t$ consecutive ascents~$\pi_1(\pi_1+1)\cdots(\pi_1+t)$ at the beginning does not affect the behavior of the~$123$-stack. Indeed the elements~$\pi_1(\pi_1+1)\cdots(\pi_1+t)$ act as a single element at the bottom of the~$123$-stack during the sorting process. Therefore, if~$\pi'$ is obtained from~$\pi$ by~$t$-inflating~$\pi_1=k$, then we have:
$$
\out{123}(\pi')=\underbrace{(k+t+h)\cdots (k+t+1)}_{(I)}\underbrace{k-1 \cdots 21}_{(II)}\underbrace{(k+t)\cdots (k+1)k}_{(III)},
$$
where the segment~$(I)$ corresponds to the~$h$ new maxima added using~$\varphi$, $(II)$ corresponds to the elements of~$\alpha\in\Perm_k(213)$ and~$(III)$ contains the~$t$-inflation of~$\pi_1$. The fertility of~$\out{123}(\pi')$ is then~$\catalan_{k-1}$, since there are~$\catalan_{k-1}$ permutations~$\alpha$ in~$\Perm_k(213)$ whose first element is equal to~$k$.

\begin{corollary}\label{corollary_sorted_123}
We have:
$$
\sorted(123)_n=\lbrace\identity_h^{-1}\ominus\left(\identity_k^{-1}\oplus\identity_t^{-1}\right): k\ge 2,\ h,t\ge 0,\ k+h+t=n\rbrace\dot{\cup}\lbrace\identity_n^{-1}\rbrace.
$$
Moreover, the fertility of~$\identity_h^{-1}\ominus(\identity_k^{-1}\oplus\identity_t^{-1})$ is equal to~$\catalan_{k-1}$.
\end{corollary}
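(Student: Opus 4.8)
The plan is to read off both the set $\sorted(123)_n$ and the fertilities directly from the explicit description of $\Sort(123)$ obtained in Section~\ref{section_enumeration_123}. Recall that every non-identity $\pi\in\Sort_n(123)$ arises in a unique way from a triple $(\alpha,h,t)$, where $\alpha=\alpha_1\cdots\alpha_k\in\Perm_k(213)$ with $\alpha_1=k\ge 2$, by first adding $h$ new maxima through the bijection $\varphi$ of Theorem~\ref{theorem_123_bijection} and then prepending $t$ consecutive ascents by inflating the first entry (Corollary~\ref{corollary_123_infl_cor}), so that $k+h+t=n$. Since $\sorted(123)=\mapsigma{123}(\Sort(123))$, it suffices to compute $\out{123}(\pi)$ for each such $\pi$, to check that the identity input is the only one falling outside the pattern, and then to determine the fibres of $\mapsigma{123}$.

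The crucial observation I would isolate first is that the output of a permutation starting with a descent depends only on its length and its first entry, \emph{not} on its internal structure: by Lemma~\ref{lemma_123_output}, a permutation $\rho$ of length $k+h$ with $\rho_1=k$ and starting with a descent has $\out{123}(\rho)=(k+h)\cdots(k+1)(k-1)\cdots 21\,k$. Prepending $t$ consecutive ascents leaves the stack behaviour unchanged, because the inflated prefix acts as a single bottom element and is emitted in reverse; tracking the rescaling then gives
$$
\out{123}(\pi)=(k+t+h)\cdots(k+t+1)\,(k-1)\cdots 21\,(k+t)\cdots(k+1)k .
$$
This word decomposes into three maximal decreasing runs, whose value ranges are respectively the top $h$ values, the bottom values, and the middle values. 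Recognising the highest run as a skew summand placed to the left, and the remaining two runs as a direct sum (lower values on the left, higher values on the right), identifies $\out{123}(\pi)$ with the permutation $\identity_h^{-1}\ominus\bigl(\identity_k^{-1}\oplus\identity_t^{-1}\bigr)$ of the statement. Adjoining $\mapsigma{123}(\identity_n)=\reverse(\identity_n)=\identity_n^{-1}$, which is the single sorted permutation whose output is a single decreasing run and hence is not of the three-run shape, yields the claimed description of $\sorted(123)_n$. For the reverse inclusion I would simply exhibit, for prescribed $(h,k,t)$ with $k\ge 2$, one admissible witness $\alpha=\antiid_k$ realising the corresponding sum.

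For the fertility I would exploit exactly the output-independence noted above. Because $\out{123}(\pi)$ depends only on the parameters $(k,h,t)$ and not on the particular $\alpha$, all preimages of a fixed $\gamma=\identity_h^{-1}\ominus(\identity_k^{-1}\oplus\identity_t^{-1})$ share the same triple $(k,h,t)$; hence the fibre of $\mapsigma{123}$ over $\gamma$ is exactly the collection of $(\alpha,h,t)$ with $\alpha\in\Perm_k(213)$ and $\alpha_1=k$. By the count recorded after Corollary~\ref{corollary_123_starting_maximum} there are $\catalan_{k-1}$ such $\alpha$, so $\fert{123}(\gamma)=\catalan_{k-1}$. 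To guarantee that the fibre is well defined and that the displayed union is disjoint, I must also verify that $(h,k,t)$ is recoverable from $\gamma$: the three decreasing runs and their value intervals are determined by $\gamma$, so $h$ and the two remaining block lengths can be read off, and the constraint $k\ge 2$ rules out the degenerate overlap with the adjoined permutation $\identity_n^{-1}$.

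The main obstacle will be the bookkeeping in the second paragraph: proving rigorously that the $t$ inflation steps and the $h$ insertions via $\varphi$ influence $\out{123}(\pi)$ only through $(k,h,t)$ — that is, that the stack genuinely treats the inflated increasing prefix as one indivisible element and that each added maximum leaves the decreasing-run skeleton intact — together with matching the lengths of the three runs against the subscripts appearing in $\identity_h^{-1}\ominus(\identity_k^{-1}\oplus\identity_t^{-1})$. Once this output-independence and the run-length correspondence are secured, both the set equality and the fertility formula follow with essentially no further computation.
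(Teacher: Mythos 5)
Your proposal is correct and follows essentially the same route as the paper: the unique construction of $\Sort_n(123)$ from Section~\ref{section_enumeration_123}, the computation of the output via Lemma~\ref{lemma_123_output} combined with the invariance under inflating the first element, and the fertility count $\catalan_{k-1}$ coming from the number of permutations $\alpha\in\Perm_k(213)$ with $\alpha_1=k$. Your explicit checks that $(h,k,t)$ is recoverable from the three decreasing runs of the output (giving well-definedness of the fertility and disjointness of the union) are details the paper leaves implicit, but they do not change the argument.
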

\begin{proof}
If~$\pi=\identity_n$, then~$\out{123}(\pi)=\identity_n^{-1}$. The rest follows from what discussed before.
\end{proof}

From the description obtained in Corollary~\ref{corollary_sorted_123}, and in accordance with Corollary~\ref{corollary_sorted_class}, it is easy to deduce that~$\sorted(123)=\Perm(231,123)$. Corollary~\ref{corollary_sorted_123} can be used to obtain an alternative proof of the enumeration of~$\Sort(123)$:
\begin{align*}
\Sort(123)=\sum_{\gamma\in\sorted(123)}\fert{123}(\gamma)=&\\
1+\sum_{k\ge 2}\sum_{h,t\ge 0, k+h+t=n}\catalan_{k-1}=&\\
1+\sum_{k=2}^n(n-k+1)\catalan_{k-1}=&\\
1+\sum_{k=1}^{n-1}(n-k)\catalan_k,
\end{align*}
which is the same as what we got in Theorem~\ref{theorem_123_enum}.

\chapter{Sorting words of various types}\label{chapter_sort_words_various_kind}

In this chapter we extend~$\Sigma$-machines to Cayley permutations, ascent sequences and modified ascent sequences, which has been defined in Section~\ref{section_sequences_integers}. Pattern-avoiding machines are built upon the notion of pattern, which is inherently more general, thus it is natural to allow different sets of strings as input sequences. The idea of analyzing sorting procedures on words is not new in the literature~\cite{AAAHH,ALW,DK}. For example, classical stacksort on~$\nat^*$ has been discussed in~\cite{DK}. Due to the presence of sequences with repeated elements, there are two possibilities when defining a stack sorting algorithm on~$\nat^*$. One can either allow a letter to sit on a copy of itself in the stack or force a pop operation if the next element of the input is equal to the top element of the stack. In this thesis we choose the former possibility, leaving the latter for future investigation. This is equivalent to regarding a classical stack as a~$21$-avoiding stack (instead of as a~$(11,21)$-stack). Moreover, we relax the condition for the output to be sorted by requiring that it is weakly increasing. The following theorem was proved in~\cite{DK}.
 
\begin{theorem}\cite{DK}\label{theorem_hare_stacksort}
Let~$\pi$ be a word on~$\nat$. Then~$\pi$ is sortable using a~$21$-stack if and only if~$\pi$ avoids~$231$.
\end{theorem}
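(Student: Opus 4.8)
The plan is to prove the two directions separately, adapting the classical argument of Lemma~\ref{lemma_classical_stacksort} to the setting of words on~$\nat$, being careful about the convention that a letter is allowed to sit on a copy of itself in the~$21$-stack.

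\medskip

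\textbf{Sufficiency (avoiding~$231$ implies sortable).} First I would show that if~$\pi$ avoids~$231$, then the right-greedy algorithm on the~$21$-stack produces a weakly increasing output. The key invariant is that, under the right-greedy discipline, the content of the~$21$-stack read from top to bottom is always weakly increasing (equivalently, the stack is~$21$-avoiding by definition of the device). I would argue that whenever an element~$\pi_j$ is forced to be popped, it is popped because the next input letter~$\pi_i$ satisfies~$\pi_i > \pi_j$ (a pop is triggered exactly when pushing would create an inversion~$21$ in the stack, reading top to bottom). The standard claim is then: in the output, the letters come out in weakly increasing order precisely when~$\pi$ avoids~$231$. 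To see this, suppose for contradiction that the output has a strict descent, i.e. some~$b$ is output immediately before some~$a$ with~$a < b$. Then~$b$ was popped before~$a$ entered the stack, which (by the greedy rule) means there is an input letter~$c$ scanned between them with~$c > b$, and~$c$ is scanned after~$b$ was pushed but triggers the pop. The three letters then realize an occurrence~$b\,c\,a$ with~$a < b < c$, that is a~$231$ pattern in~$\pi$, contradicting the hypothesis. The mild subtlety here, compared to the permutation case, is handling equalities: since equal letters may stack, I would check that weak inequalities are respected and that an occurrence of~$231$ genuinely requires the three values to be in strict relative order~$a<b<c$, which is consistent with the pattern definition given in Section~\ref{section_pattern_avoidance}.

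\medskip

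\textbf{Necessity (sortable implies avoiding~$231$).} For the converse I would prove the contrapositive: if~$\pi$ contains an occurrence~$bca \simeq 231$ (so~$a < b < c$ with~$b$, then~$c$, then~$a$ appearing in this order), then the~$21$-stack cannot sort~$\pi$. The argument is that when the letter~$a$ is eventually scanned, both~$b$ and~$c$ have already been processed, and since~$a$ is smaller than both, any weakly increasing output would need~$a$ to precede~$b$ and~$c$. But~$b$ was pushed before~$c$, and because~$c > b$ the greedy~$21$-stack pushes~$c$ on top of~$b$ without popping~$b$ (reading top to bottom the stack stays weakly increasing), so~$b$ cannot be output before~$c$; consequently at least one of~$b,c$ is output before~$a$, producing a strict descent in the output. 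I would phrase this in terms of the impossibility of reordering~$a$ to the front, exactly as in the classical proof, noting that the equal-letters convention only makes the stack \emph{more} permissive and therefore does not rescue a~$231$ pattern.

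\medskip

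I expect the main obstacle to be the careful bookkeeping around repeated values: the device regards the stack as~$21$-avoiding rather than~$(11,21)$-avoiding, so equal letters are permitted to coexist in the stack. I would need to verify at each step that the relevant inequalities are the strict ones dictated by the pattern~$231$ (whose three entries are distinct), and that the greedy pop condition is correctly stated as ``pop iff the incoming letter is strictly greater than the current top.'' Once this convention is pinned down, both directions follow the template of the classical single-stack result, so the real content of the proof is the translation of that template to~$\nat^*$ rather than any new combinatorial idea. I would present the two implications as the two halves of the proof, each built on the weakly-increasing-output characterization established in the sufficiency step.
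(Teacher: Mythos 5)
Your sufficiency direction is sound. Note first that the paper does not actually prove this theorem (it quotes it from~\cite{DK}), so your argument must stand on its own; the half showing that avoiding~$231$ implies sortable does. You correctly pin down the dynamics of the device: the stack content read from top to bottom is weakly increasing, so the top is a minimum of the content, and a pop is triggered exactly when the incoming letter is strictly greater than the top. Given a strict descent $b$ then $a$ in the output, your extraction of a letter $c$ with $b\ldots c\ldots a$ in the input and $a<b<c$ is the standard argument and is correct; the only point left implicit is that $b$ cannot have been popped during the final emptying of the stack (otherwise $a$ would already have been pushed, so $a$ and $b$ would have coexisted in the stack, forcing $a$ --- which may sit above $b$ but never below it --- to exit first).

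The necessity direction, however, has the monotonicity of the $21$-stack backwards, and this is a genuine error, not a bookkeeping slip. You assert that, since $c>b$, ``the greedy $21$-stack pushes $c$ on top of $b$ without popping $b$ (reading top to bottom the stack stays weakly increasing).'' Pushing $c$ above $b$ with $c>b$ is precisely what the device forbids: read from top to bottom this is an occurrence of $21$. It also contradicts the pop rule you yourself stated and used in the sufficiency half (incoming strictly greater than the top forces pops); what you have described is a $12$-avoiding stack. The ensuing inference (``so $b$ cannot be output before $c$; consequently at least one of $b,c$ is output before $a$'') is therefore built on a false premise, and for the actual device its first claim is outright wrong: $b$ is necessarily output before $c$. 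The correct argument is the opposite one and is shorter: when $c$ is scanned, if $b$ is still in the stack then the top is at most $b<c$, so pops cascade and expel $b$ before $c$ enters; hence $b$ exits before $a$ is even pushed, so $b$ precedes $a$ in the output, and $b>a$ shows the output is not weakly increasing. With this repair both directions follow the classical template of Lemma~\ref{lemma_classical_stacksort}, as you intended.
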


Patterns live\footnote{As standardized sequences.} in the set~$\Cay$ of Cayley permutations, thus it seems appropriate to start our analysis by studying~$\sigma$-machines where both input sequences and the forbidden pattern that defines the constraint of the stack are elements of~$\Cay$. We then consider~$\sigma$-machines on ascent sequences and modified ascent sequences. Following a principle of uniformity, we always require forbidden patterns and input sequences to be chosen in the same set. Notice that the output of the~$\sigma$-stack in all these cases is a word on~$\nat$, therefore we can use Theorem~\ref{theorem_hare_stacksort} to determine whether an input sequence is sortable. In Chapter~\ref{chapter_single_pattern}, we characterized those patterns~$\sigma$ such that the set of~$\sigma$-sortable permutations is a class. The main goal of this chapter is to prove analagous results for the sets~$\Cay$, $\Ascseq$ and~$\Modasc$. In Section~\ref{section_operator_Cayley} we study the operator~$\mapsigma{\sigma}$ on Cayley permutations. Some enumerative and structural properties of ascent and modified ascent sequences are derived in Section~\ref{section_asc_seq_stack} and Section~\ref{section_modasc_seq_stack}.

\section{\texorpdfstring{The~$\sigma$-machine on Cayley permutations}{The sigma-machine on Cayley permutations}
}\label{section_sorting_Cayley_perms}

In this section we consider~$\sigma$-machines on Cayley permutations. Some of the results contained here can be found in~\cite{Ce}. We give a formal definition of these devices in the case of Cayley permutations. The corresponding machines, on~$\Ascseq$ and~$\Modasc$, are defined analogously. Let~$\sigma$ be a Cayley permutation of length at least two. A~$\sigma$\textit{-stack} is a stack that is not allowed to contain an occurrence of the pattern~$\sigma$ when reading the elements from top to bottom. The term~$\sigma$\textit{-machine} refers to performing a right-greedy algorithm on two stacks in series: a~$\sigma$-stack, followed by a~$21$-avoiding stack. A Cayley permutation~$\pi$ is~$\sigma$\textit{-sortable} if the output of the~$\sigma$-machine on input~$\pi$ is weakly increasing. All the definitions and notations regarding~$\sigma$-machines on Cayley permutations are inherited from the classical case. If necessary, we add an apex to avoid confusion: for instance, we denote by~$\Sort^{\Cay}(\sigma)$ the set of~$\sigma$-sortable Cayley permutations. Note that, being~$\out{\sigma}(\pi)$ the input of the~$21$-stack, Theorem~\ref{theorem_hare_stacksort} guarantees that~$\pi\in\Sort^{\Cay}(\sigma)$ if and only if~$\out{\sigma}(\pi)$ avoids~$231$. This fact will be used repeatedly for the rest of this chapter. In analogy with Definition~\ref{definition_hat_sigma}, let~$\hat{\sigma}=\sigma_2\sigma_1\sigma_3\cdots\sigma_k$ be the Cayley permutation obtained from~$\sigma$ by interchanging~$\sigma_1$ with~$\sigma_2$. Denote by~$\reverse:\Cay\to\Cay$ the reverse operator on Cayley permutations.

\begin{remark}\label{remark_hat_Cayley}
For any~$\sigma\in\Cay$, if the input Cayley permutation~$\pi$ avoids~$\reverse(\sigma)$, then the restriction of the~$\sigma$-stack is never triggered and thus~$\out{\sigma}(\pi)=\reverse(\pi)$. Otherwise, the leftmost occurrence of~$\sigma$ results necessarily in an occurrence of~$\hat{\sigma}$ in~$\out{\sigma}(\pi)$. The proof of this fact is identical to that of Lemma~\ref{lemma_reverse_outputs_hat}. An analogous result can be similarly obtained by replacing~$\Cay$ with either~$\Ascseq$ or~$\Modasc$.
\end{remark}

The next result is the analogue of Theorem~\ref{theorem_suff_cond_class} on Cayley permutations. The proof is identical, with Remark~\ref{remark_hat_Cayley} playing the role of Lemma~\ref{lemma_hat_sigma}. We report it anyway for completeness.

\begin{theorem}\label{theorem_suff_class_Cayley}
Let~$\sigma$ be a Cayley permutation. If~$\hat{\sigma}$ contains~$231$, then~$\Sort^{\Cay}(\sigma)=\Cay(132,\reverse(\sigma))$. In this case, $\Sort^{\Cay}(\sigma)$ is a class with basis either~$\{132,\reverse(\sigma)\}$, if~$\reverse(\sigma)$ avoids~$132$, or~$\{132\}$, otherwise.
\end{theorem}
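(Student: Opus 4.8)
The plan is to mirror the proof of Theorem~\ref{theorem_suff_cond_class} essentially verbatim, with Remark~\ref{remark_hat_Cayley} taking the place of Lemma~\ref{lemma_hat_sigma} and Theorem~\ref{theorem_hare_stacksort} taking the place of Lemma~\ref{lemma_classical_stacksort}. I would prove the asserted equality by a double inclusion. First I would establish the Cayley analogue of Corollary~\ref{corollary_av_132_sigmarev}, namely $\Cay(132,\reverse(\sigma))\subseteq\Sort^{\Cay}(\sigma)$: given $\pi\in\Cay(132,\reverse(\sigma))$, the fact that $\pi$ avoids $\reverse(\sigma)$ means the restriction of the $\sigma$-stack is never triggered, so by Remark~\ref{remark_hat_Cayley} we have $\out{\sigma}(\pi)=\reverse(\pi)$; since $\pi$ also avoids $132$, its reverse avoids $\reverse(132)=231$, and hence $\pi$ is $\sigma$-sortable by Theorem~\ref{theorem_hare_stacksort}.

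For the reverse inclusion $\Sort^{\Cay}(\sigma)\subseteq\Cay(132,\reverse(\sigma))$, I would take a $\sigma$-sortable Cayley permutation $\pi$ and argue by cases on whether $\pi$ contains $\reverse(\sigma)$. If $\pi\ge\reverse(\sigma)$, then Remark~\ref{remark_hat_Cayley} forces $\out{\sigma}(\pi)$ to contain $\hat{\sigma}$, which by hypothesis contains $231$; thus $\out{\sigma}(\pi)\ge 231$, contradicting $\sigma$-sortability via Theorem~\ref{theorem_hare_stacksort}. If instead $\pi$ avoids $\reverse(\sigma)$ but contains $132$, then again $\out{\sigma}(\pi)=\reverse(\pi)$ by Remark~\ref{remark_hat_Cayley}, and $\reverse(\pi)\ge\reverse(132)=231$, another contradiction. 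Hence $\pi$ avoids both $132$ and $\reverse(\sigma)$, and combining the two inclusions yields $\Sort^{\Cay}(\sigma)=\Cay(132,\reverse(\sigma))$.

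The basis statement then follows formally. Any avoidance set in $\Cay$ is downward closed in the Cayley-pattern poset (read through standardization), so $\Cay(132,\reverse(\sigma))$ is a class. If $\reverse(\sigma)$ avoids $132$, the two forbidden patterns form an antichain and the basis is $\{132,\reverse(\sigma)\}$; here one should note that the hypothesis $\hat{\sigma}\ge 231$ forces $|\sigma|\ge 3$, so $\reverse(\sigma)$ has length at least three and cannot itself be a pattern of $132$, giving genuine incomparability. If instead $\reverse(\sigma)$ contains $132$, then avoiding $132$ already implies avoiding $\reverse(\sigma)$, so $\Cay(132,\reverse(\sigma))=\Cay(132)$ and the basis reduces to $\{132\}$.

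Since the entire argument is a formal consequence of Remark~\ref{remark_hat_Cayley} and Theorem~\ref{theorem_hare_stacksort}, the one genuinely delicate point is the remark itself, whose proof is asserted to be identical to that of Lemma~\ref{lemma_reverse_outputs_hat}. I therefore expect the main obstacle to be checking that the leftmost-occurrence analysis of Lemma~\ref{lemma_reverse_outputs_hat} survives the presence of repeated letters: that occurrences of $\sigma$ are read through standardization, that a letter is permitted to sit on an equal copy in the $\sigma$-stack, and that the element $s_3$ of the leftmost occurrence of $\reverse(\sigma)$ is still in the stack when $s_1$ enters, so that $s_2 s_1 s_3\cdots s_k\simeq\hat{\sigma}$ is produced in the output. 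Once this is verified in the Cayley setting, the theorem goes through unchanged.
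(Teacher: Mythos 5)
Your proof is correct and takes essentially the same approach as the paper's: the same double inclusion, with Remark~\ref{remark_hat_Cayley} supplying $\out{\sigma}(\pi)=\reverse(\pi)$ in the $\reverse(\sigma)$-avoiding case and $\out{\sigma}(\pi)\ge\hat{\sigma}\ge 231$ otherwise, and Theorem~\ref{theorem_hare_stacksort} converting sortability into $231$-avoidance of the output. Your extra observations on the basis statement and on re-checking the leftmost-occurrence argument of Lemma~\ref{lemma_reverse_outputs_hat} in the presence of repeated letters only make explicit what the paper leaves implicit.
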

\begin{proof}
We start by proving that~$\Sort^{\Cay}(\sigma)\subseteq\Cay(132,\reverse(\sigma))$. Let~$\pi\in\Sort^{\Cay}(\sigma)$. Note that~$\out{\sigma}(\pi)$ avoids~$231$. Suppose, for a contradiction, that~$\pi$ contains~$\reverse(\sigma)$. Then~$\out{\sigma}(\pi)$ contains~$\hat{\sigma}$ due to Remark~\ref{remark_hat_Cayley} and~$\hat{\sigma}$ contains~$231$ by hypothesis, which is impossible. Otherwise, if~$\pi$ avoids~$\reverse(\sigma)$, but contains~$132$, then~$\out{\sigma}(\pi)=\reverse(\pi)$ due to the same remark. Moreover~$\reverse(\pi)$ contains~$231$ by hypothesis, again a contradiction with~$\pi\in\Sort^{\Cay}(\sigma)$. This proves that~$\Sort^{\Cay}(\sigma)\subseteq\Cay(132,\reverse(\sigma))$.

Conversely, suppose that~$\pi$ avoids both~$132$ and~$\reverse(\sigma)$. Then, again by Remark~\ref{remark_hat_Cayley}, we have~$\out{\sigma}(\pi)=\reverse(\pi)$, which avoids~$\reverse(132)=231$ by hypothesis, therefore~$\pi$ is~$\sigma$-sortable.
\end{proof}

Next we show that the condition of Theorem~\ref{theorem_suff_class_Cayley} is also necessary for~$\Sort^{\Cay}(\sigma)$ in order to be a class. The only exception is given by the pattern~$\sigma=12$.

\begin{theorem}\label{theorem_12_stack_Cayley}
We have:
$$
\Sort^{\Cay}(12)=\Cay(213).
$$
\end{theorem}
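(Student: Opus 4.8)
The statement to prove is $\Sort^{\Cay}(12)=\Cay(213)$. Since $\sigma=12$, we have $\reverse(\sigma)=21$ and $\hat{\sigma}=\sigma_2\sigma_1=21$, which avoids $231$. So Theorem~\ref{theorem_suff_class_Cayley} does not apply (its hypothesis fails), and indeed this is the stated exceptional case. The plan is therefore to mimic the proof of the permutation analogue, Theorem~\ref{theorem_12machine}, adapting it to Cayley permutations with repeated letters. The key structural fact I expect to use is the same block decomposition: writing $\pi$ around its leftmost copy of the minimal letter, I can argue that the $12$-stack processes the left block, the minimum, and the right block essentially independently.

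Let me sketch the LaTeX-ready argument.

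\begin{proof}
We show both inclusions by characterizing the output of the $12$-stack. First we prove that if $\pi\in\Sort^{\Cay}(12)$, then $\out{12}(\pi)$ is weakly decreasing and $\pi$ avoids $213$, proceeding by induction on the length of $\pi$. The base case is trivial. For the inductive step, let $a=\min(\pi)$ be the smallest letter and write $\pi=Lm R$, where $m$ is the \emph{leftmost} occurrence of $a$ in $\pi$, $L$ is the prefix preceding it and $R$ is the suffix following it. Since every $x\in L$ satisfies $x\ge a=m$, and $m$ is strictly smaller than any letter it could sit above (a copy of $a$ in $L$ would make $\std$ of that pair equal to $11$, not $12$, so $m$ never sits on an equal element either — here the convention that the stack is $12$-avoiding, not $(11,12)$-avoiding, matters), the element $m$ enters the $12$-stack only when it is empty, and it is extracted only at the very end. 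Thus $\out{12}(\pi)=\out{12}(L)\,\out{12}(R)\,m$. By induction $\out{12}(L)$ and $\out{12}(R)$ are weakly decreasing; moreover every letter of $\out{12}(L)$ must be $\ge$ every letter of $\out{12}(R)$, for otherwise a pair $xy$ with $x<y$ taken from the two blocks would give $xym\simeq 231$ in $\out{12}(\pi)$, contradicting $12$-sortability via Theorem~\ref{theorem_hare_stacksort}. Hence $\out{12}(\pi)$ is weakly decreasing. The avoidance of $213$ follows exactly as in Theorem~\ref{theorem_12machine}: an occurrence $bac$ cannot lie entirely in $L$ or entirely in $R$ (induction plus the fact that a $213$ standardizes to a $231$ in the output of the corresponding block), and if $b\in L$, $c\in R$ then $bcm\simeq 231$ in the output, which is impossible.

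Conversely, suppose $\pi\notin\Sort^{\Cay}(12)$, so $\out{12}(\pi)$ contains an occurrence $bca$ of $231$ by Theorem~\ref{theorem_hare_stacksort}. Since the $12$-stack cannot repair strict inversions, the non-inversion $bc$ in the output comes from $b$ preceding $c$ in $\pi$ with $b<c$; moreover $b$ must have been popped before $c$ was pushed, which forces an intermediate input element $x$ with $x<b$ (the element whose arrival triggers the pop of $b$). The three letters $b,x,c$ then form an occurrence of $213$ in $\pi$, as required. This completes both inclusions.
\end{proof}

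**Main obstacle.** The genuine subtlety, and the place I would be most careful, is the handling of repeated letters under the chosen stack convention. In the permutation case all comparisons are strict, but for Cayley permutations I must verify that equal letters behave correctly: the $12$-stack is defined to forbid only the pattern $12$ (strict increase), so a letter may legitimately sit atop an equal letter, and I must check this never creates or destroys an occurrence of $231$ in the output in a way that breaks the block decomposition $\out{12}(\pi)=\out{12}(L)\,\out{12}(R)\,m$. In particular I should confirm that $m$ (the leftmost minimum) is never forced out early by an equal copy of itself, and that "weakly decreasing" is the correct analogue of the decreasing output $\antiid$ from Theorem~\ref{theorem_12machine}. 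Once the weak-versus-strict bookkeeping is pinned down, the argument is a faithful transcription of the permutation proof, with Theorem~\ref{theorem_hare_stacksort} substituting for Lemma~\ref{lemma_out_231}.
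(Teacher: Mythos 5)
Your proof is correct and takes essentially the same approach as the paper's: the paper decomposes $\pi$ around \emph{all} copies of the minimal letter at once, $\pi=A_11A_21\cdots A_k1A_{k+1}$, with $\out{12}(\pi)=\out{12}(A_1)\cdots\out{12}(A_{k+1})1\cdots 1$, rather than recursing on the leftmost minimum as you do, but both arguments rest on the same ingredients — the minima exit last without affecting the sorting of the blocks, induction on the blocks for one inclusion, and the identical trigger-element argument producing an occurrence of $213$ for the other. The weak-versus-strict bookkeeping you flag as the main obstacle does work out, precisely because $L$ contains no copy of the minimum (so every element of $L$ strictly blocks $m$) and equal letters never trigger the $12$-restriction (so copies of the minimum in $R$ can sit harmlessly atop $m$).
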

\begin{proof}
Let~$\pi$ be a Cayley permutation. Suppose that~$\pi$ contains~$k$ occurrences of the minimum element~$1$ and write:
$$
\pi=A_11A_21\cdots A_k1A_{k+1}.
$$
It is easy to see that:
$$
\out{12}(\pi)=\out{12}(A_1)\out{12}(A_2)\cdots\out{12}(A_k)\out{12}(A_{k+1})1\cdots 1.
$$
Indeed any entry equal to~$1$ enters the~$12$-stack only if the~$12$-stack is either empty or contains other copies of~$1$ only. Moreover, any entry equal to~$1$ cannot play the role of~$2$ in an occurrence of the (forbidden) pattern~$12$. Therefore the presence of some copies of~$1$ at the bottom of the~$12$-stack does not affect the sorting process of the block~$A_i$, for each~$i$.

Now, suppose that~$\pi$ contains an occurrence~$bac$ of~$213$. We prove that~$\pi$ is not~$12$-sortable by showing that~$\out{12}(\pi)$ contains~$231$. We argue by induction on the length of~$\pi$. Let~$\pi=A_11A_21\cdots A_k1A_{k+1}$ and~$\out{12}(\pi)=\out{12}(A_1)\out{12}(A_2)\cdots\out{12}(A_k)\out{12}(A_{k+1})1\cdots 1$ as above. Suppose that~$b\in A_i$ and~$c\in A_j$, for some~$i\le j$ (note that~$b,c\neq 1$). If~$i=j$, then~$A_i$ contains an occurrence~$bac$ of~$213$. Thus~$\out{12}(A_i)$ contains~$231$ by the inductive hypothesis\footnote{Formally we apply the inductive hypothesis to~$\std(A_i)$, since not necessarily~$A_i$ is a Cayley permutation.}, as wanted. Otherwise, let~$i<j$. Then~$b\in\out{12}(A_i)$, $c\in\out{12}(A_j)$ and the elements~$b$ and~$c$, together with any copy of~$1$, realize an occurrence of~$231$ in~$\out{12}(\pi)$, as desired.

Conversely, suppose that~$\pi=\pi_1\cdots\pi_n$ is not~$12$-sortable, i.e.~$\out{12}(\pi)$ contains~$231$. We prove that~$\pi$ contains~$213$. Let~$bca$ be an occurrence of~$231$ in~$\out{12}(\pi)$. Observe that~$b$ must precede~$c$ in~$\pi$, since a non-inversion in the output necessarily comes from a non-inversion in the input, being the stack~$12$-avoiding. However, $b$ is extracted before~$c$ enters. Let~$x$ be the next element of the input when~$b$ is extracted. Since the stack is~$12$-avoiding, then the top~$b$ is greater than or equal to any other element contained in the~$12$-stack. Thus~$x<b$ and also~$x\neq c$, since~$c>b$. Finally, the triple~$bxc$ is an occurrence of~$213$ in~$\pi$, as desired.
\end{proof}

\begin{theorem}\label{theorem_necess_class_Cayley}
Let~$\sigma$ be a Cayley permutation and suppose that~$\sigma\neq 12$. If~$\hat{\sigma}$ avoids~$231$, then~$\Sort^{\Cay}(\sigma)$ is not a class.
\end{theorem}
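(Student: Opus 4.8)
The plan is to exhibit, for every such $\sigma$, a $\sigma$-sortable Cayley permutation that contains a pattern which is not $\sigma$-sortable, thereby witnessing that $\Sort^{\Cay}(\sigma)$ is not closed downwards. I would split the argument according to whether $\sigma$ has a repeated letter. If $\sigma$ is an ordinary permutation, I would reduce to the already established permutation results. On a permutation input the $\sigma$-stack never encounters two equal letters, so it behaves exactly as the classical $\sigma$-stack; hence $\Sort^{\Cay}(\sigma)\cap\Perm=\Sort(\sigma)$, and for a permutation $\pi$ one has $\pi\in\Sort^{\Cay}(\sigma)$ if and only if $\pi\in\Sort(\sigma)$. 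When $\sigma$ has length at least three, Theorem~\ref{theorem_necess_cond_class} yields permutations $\gamma\le\alpha$ with $\alpha\in\Sort(\sigma)$ and $\gamma\notin\Sort(\sigma)$; the only length-two permutation with $\hat{\sigma}$ avoiding $231$ and $\sigma\neq 12$ is $\sigma=21$, which is covered by Theorem~\ref{theorem_west_2stack}. In either case $\alpha\in\Sort^{\Cay}(\sigma)$ while $\gamma\le\alpha$ is not $\sigma$-sortable, so $\Sort^{\Cay}(\sigma)$ is not a class.

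Assume now that $\sigma$ has a repeated letter. Then $\reverse(\sigma)$ has a repeated letter as well, so $\reverse(\sigma)$ is not a pattern of $132$, since the only Cayley patterns of $132$ are $1$, $12$, $21$ and $132$, none of which has a repeat. By Remark~\ref{remark_hat_Cayley} the restriction of the $\sigma$-stack is never triggered on the input $132$, so $\out{\sigma}(132)=\reverse(132)=231$, and Theorem~\ref{theorem_hare_stacksort} gives $132\notin\Sort^{\Cay}(\sigma)$. It therefore suffices to produce a single $\sigma$-sortable Cayley permutation containing an occurrence of $132$, which will be exactly the converse situation to Theorem~\ref{theorem_suff_class_Cayley}.

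To build such a witness $\alpha$ I would carry an occurrence $a<b<c$ of $132$ and surround it with auxiliary equal letters of value at least $b$. These twin letters serve two purposes at once: being part of an occurrence of $\reverse(\sigma)$ they trigger the $\sigma$-stack, forcing the smallest letter $a$ out before the largest letter $c$ is pushed (so $a$ is emitted early and cannot become the trailing small letter of a $231$), while being at least $b$ they themselves cannot play that trailing role once $c$ and $b$ have left the stack. A model to generalize is $\sigma=211$: here $\alpha=33142$ works, since $\out{211}(33142)=13243$ avoids $231$ whereas the subword $142$ is an occurrence of $132$. The plan is to turn this into a family valid for arbitrary $\sigma$ with a repeat, defining the inserted values in terms of the letters of $\sigma$ (treating $\sigma_1<\sigma_2$, $\sigma_1>\sigma_2$ and $\sigma_1=\sigma_2$ separately, the last having no permutation analogue), and then to verify directly that the output avoids $231$ by exploiting that any hypothetical $231$ in the output must involve one of the inserted letters, while the remaining letters reproduce a copy of the $231$-avoiding word $\hat{\sigma}$.

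The hard part will be making this construction work uniformly. Unlike the permutation setting, the triggering mechanism of the $\sigma$-stack depends intimately on the shape of $\sigma$ (for $\sigma=211$ a pop is forced by a large letter standing above an equal small pair, whereas for $\sigma=112$ it is forced by a repeated letter standing above a larger one, and for $\sigma=11$ a single duplicated small letter already reorders the output, as in the witness $1312$). Consequently the naive value-shift of the proof of Theorem~\ref{theorem_necess_cond_class} breaks down: ties among $\sigma_2,\dots,\sigma_k$ destroy the order-isomorphism that made the permutation computation of $\out{\sigma}(\alpha)$ go through, which is precisely why the spurious occurrences of $231$ appear. I expect the cleanest route is to anchor the witness on an occurrence of $\reverse(\sigma)$ forced to the bottom of the stack, so that by Remark~\ref{remark_hat_Cayley} its image is a copy of $\hat{\sigma}$ and hence $231$-avoiding, and to attach the extra $132$ so that its three letters are output in a decreasing (thus $231$-free) order relative to this $\hat{\sigma}$-block; reconciling this single scheme with the three cases above, rather than resorting to ad hoc witnesses, is the main obstacle.
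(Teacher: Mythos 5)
Your treatment of the case where $\sigma$ is a classical permutation is correct, and it is a legitimate shortcut that differs from the paper: you observe that on permutation inputs the Cayley $\sigma$-stack coincides with the classical one, so $\Sort^{\Cay}(\sigma)\cap\Perm=\Sort(\sigma)$ and the witness pairs of Theorem~\ref{theorem_necess_cond_class} (and of Theorem~\ref{theorem_west_2stack} for $\sigma=21$) transfer verbatim, whereas the paper instead runs a single construction covering permutations and repeated-letter patterns at once (with $11$, $21$, $231$ handled by a small table). Your two concrete witnesses for the repeated-letter case also check out: $1312$ is $11$-sortable and contains $132$, and $\out{211}(33142)=13243$ avoids $231$ while $33142$ contains $132$.

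The genuine gap is the repeated-letter case of length at least three, which is precisely the new content of this theorem, and your proposal does not prove it: you supply two examples and a plan whose central step you yourself flag as unresolved. The paper closes exactly this gap with a different dichotomy from the one you propose. Instead of splitting on $\sigma_1<\sigma_2$, $\sigma_1>\sigma_2$, $\sigma_1=\sigma_2$ (which, as you note, breaks under ties), it splits on whether $\sigma_1$ is the \emph{strict minimum} of $\sigma$; and instead of surrounding the $132$ with large equal letters, it inserts one or two \emph{new strictly smallest} letters into the reverse of a shifted copy of $\sigma$. Concretely: if $\sigma_1$ is the strict minimum, take $\beta=\sigma'_k\cdots\sigma'_3\,1\,\sigma'_2\sigma'_1$ with $\sigma'_i=\sigma_i+1$; otherwise take $\beta=\sigma''_k\cdots\sigma''_2\,1\,\sigma''_1\,2$ with $\sigma''_i=\sigma_i+2$. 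At every critical push the stack holds exactly $k$ letters, so the trigger test is plain order-isomorphism with $\sigma$, and the case hypothesis is exactly what makes it fail: in the first case the content starts with a descent onto the new letter $1$ while $\sigma_1<\sigma_2$; in the second case the content starts with its strict minimum while $\sigma_1$ is not the strict minimum of $\sigma$. The output is then a copy of $\hat{\sigma}$ (which avoids $231$ by hypothesis) together with the inserted letters, which are the smallest values and sit too far left to play any role in an occurrence of $231$. Because the inserted letters are new minima, the order-isomorphism with the shifted copy of $\sigma$ is untouched by ties among $\sigma_2,\dots,\sigma_k$ --- this is exactly the obstruction you identified, and it is why ``new small letters'' rather than ``large twins'' is the right move; note also that $\sigma_1=\sigma_2$ needs no separate case, being absorbed into ``$\sigma_1$ is not the strict minimum.''
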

\begin{proof}
Let~$\sigma=\sigma_1\cdots\sigma_k$, with~$k\ge 2$. We show that there are two Cayley permutations~$\alpha$ and~$\beta$ such that~$\alpha\le\beta$, $\beta$ is~$\sigma$-sortable and~$\alpha$ is not~$\sigma$-sortable. Table~\ref{table_alpha_beta_Cayley} shows an example of such permutations for patterns~$\sigma$ of length two and for~$\sigma=231$. Now, suppose that~$\sigma$ has length at least three and~$\sigma\neq 231$. Then the Cayley permutation~$\alpha=132$ is not~$\sigma$-sortable. Indeed, $\out{\sigma}(\alpha)=\reverse(\alpha)=231$, since~$\alpha$ avoids~$\reverse(\sigma)$. Define~$\beta$ according to the following case by case analysis:

\begin{itemize}
\item Suppose that~$\sigma_1$ is the strict minimum of~$\sigma$, that is~$\sigma_1=1$ and~$\sigma_i\ge 2$ for each~$i\ge 2$. Define:
$$
\beta=\sigma'_k\cdots\sigma'_31\sigma'_2\sigma'_1,
$$
where~$\sigma'_i=\sigma_i+1$ for each~$i$. Note that~$\beta\in\Cay$ and~$1\sigma'_2\sigma'_1$ is an occurrence of~$132$ in~$\beta$. We prove that~$\beta$ is~$\sigma$-sortable by showing that~$\out{\sigma}(\beta)$ avoids~$231$. The action of the~$\sigma$-stack on input~$\beta$ is depicted in Figure~\ref{figure_sorting_beta_Cayley}. The first~$k-1$ elements of~$\beta$ are pushed into the~$\sigma$-stack, since~$\sigma$ has length~$k$. Then the~$\sigma$-stack contains~$1\sigma'_3\cdots\sigma'_k$, reading from top to bottom, and the next element of the input is~$\sigma'_2$. Note that~$\sigma'_2>1$, whereas~$\sigma_1<\sigma_2$, therefore~$\sigma'_2 1\sigma'_3\cdots\sigma'_k$ is not an occurrence of~$\sigma$ and so~$\sigma'_2$ is pushed. The next element of the input is now~$\sigma'_1$. Here~$\sigma'_1\sigma'_2\sigma'_3\cdots\sigma'_k$ is an occurrence of~$\sigma$, thus~$\sigma'_2$ is extracted before~$\sigma'_1$ enters. After this pop operation, the~$\sigma$-stack contains~$1\sigma'_3\cdots\sigma'_k$. Again we have~$\sigma'_1>1$, whereas~$\sigma_1<\sigma_2$, therefore~$\sigma'_1$ is pushed into the~$\sigma$-stack. The resulting string is:
$$
\out{\sigma}(\beta)=\sigma'_2\sigma'_11\sigma'_3\sigma'_4\cdots\sigma'_k.
$$
We show that~$\out{\sigma}(\beta)$ avoids~$231$. Note that~$\sigma'_2\sigma'_1\sigma'_3\sigma'_4\cdots\sigma'_k\simeq\hat{\sigma}$ avoids~$231$ by hypothesis. Moreover, the element~$1$ cannot be part of an occurrence of~$231$, because~$\sigma'_2>\sigma'_1$ and~$1$ is strictly less than the other elements of~$\beta$. Therefore~$\out{\sigma}(\beta)$ avoids~$231$, as desired.

\item Next suppose that~$\sigma_1$ is not the strict minimum of~$\sigma$, i.e. either~$\sigma_1\neq 1$ or~$\sigma_i=1$ for some~$i\ge 2$. Define
$$
\beta=\sigma''_k\cdots\sigma''_21\sigma''_12,
$$
where~$\sigma''_i=\sigma_i+2$ for each~$i$. Note that~$\beta\in\Cay$ and~$1\sigma''_22$ is an occurrence of~$132$ in~$\beta$. Consider the action of the~$\sigma$-stack on input~$\beta$. Again the first~$k-1$ elements of~$\beta$ are pushed into the~$\sigma$-stack. Then the~$\sigma$-stack contains~$\sigma''_2\cdots\sigma''_k$, reading from top to bottom, and the next element of the input is~$1$. Note that~$1\sigma''_2\cdots\sigma''_k$ is not an occurrence of~$\sigma$. Indeed~$1<\sigma''_i$ for each~$i$, while~$\sigma_1$ is not the strict minimum of~$\sigma$ by hypothesis. Therefore~$1$ enters the~$\sigma$-stack. The next element of the input is then~$\sigma''_1$, which realizes an occurrence of~$\sigma$ together with~$\sigma''_2\cdots\sigma''_k$. Thus~$1$ and~$\sigma''_2$ are extracted before~$\sigma''_1$ is pushed. Finally, the last element of the input is~$2$. Again~$2$ can be pushed into the~$\sigma$-stack, since~$2$ is strictly smaller than every element in the~$\sigma$-stack, whereas~$\sigma_1$ is not the strict minimum of~$\sigma$ by hypothesis. The resulting string is:
$$
\out{\sigma}(\beta)=1\sigma''_22\sigma''_1\sigma''_3\cdots\sigma''_k.
$$
Note that~$\sigma''_2\sigma''_1\sigma''_3\cdots\sigma''_k\simeq\hat{\sigma}$ avoids~$231$ by hypothesis. Finally, it is easy to realize that the elements~$1$ and~$2$ cannot be part of an occurrence of~$231$, similarly to the previous case. This completes the proof.
\end{itemize}
\end{proof}

\begin{table}
\centering
\def\arraystretch{1.1}
\begin{tabular}{lcc}
\toprule
$\sigma$ & $\sigma$\textbf{-sortable Cayley permutation} & \textbf{Non-}$\sigma$\textbf{-sortable pattern}\\
\midrule
11 & 3132 & 132\\
21 & 35241 & 3241\\
231 & 361425 & 1324\\
\bottomrule
\end{tabular}
\caption[Non-classes of~$\sigma$-sortable Cayley permutations.]{The case by case analysis of Theorem~\ref{theorem_necess_class_Cayley}}\label{table_alpha_beta_Cayley}
\end{table}

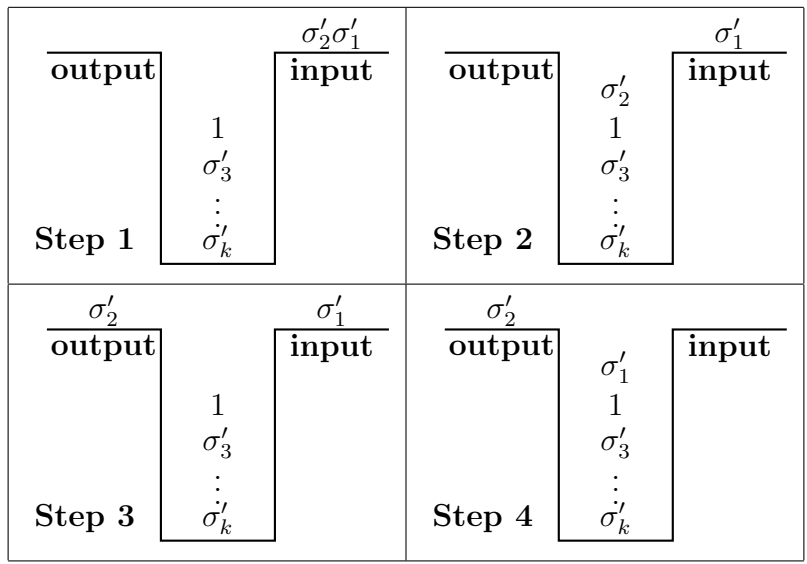
\begin{figure}
\centering
\def\arraystretch{5}
\begin{tabular}{|c|c|}
\hline
\begin{tikzpicture}[baseline=20pt]
\draw[thick] (0,3)--(1.5,3)--(1.5,0.2)--(3,0.2)--(3,3)--(4.5,3);
\node at (3.75,2.75){\textbf{input}};
\node at (0.75,2.75){\textbf{output}};
\node at (3.75,3.25){$\sigma'_2\sigma'_1$};
\node at (2.25,0.5){$\sigma'_k$};
\node at (2.25,1){$\vdots$};
\node at (2.25,1.5){$\sigma'_3$};
\node at (2.25,2){$1$};
\node at (0.5,0.5){\textbf{Step 1}};
\end{tikzpicture}
&
\begin{tikzpicture}[scale=1, baseline=20pt]
\draw[thick] (0,3)--(1.5,3)--(1.5,0.2)--(3,0.2)--(3,3)--(4.5,3);
\node at (3.75,2.75){\textbf{input}};
\node at (0.75,2.75){\textbf{output}};
\node at (3.75,3.25){$\sigma'_1$};
\node at (2.25,0.5){$\sigma'_k$};
\node at (2.25,1){$\vdots$};
\node at (2.25,1.5){$\sigma'_3$};
\node at (2.25,2){$1$};
\node at (2.25,2.5){$\sigma'_2$};
\node at (0.5,0.5){\textbf{Step 2}};
\end{tikzpicture}\\
\hline
\begin{tikzpicture}[scale=1, baseline=20pt]
\draw[thick] (0,3)--(1.5,3)--(1.5,0.2)--(3,0.2)--(3,3)--(4.5,3);
\node at (3.75,2.75){\textbf{input}};
\node at (0.75,2.75){\textbf{output}};
\node at (3.75,3.25){$\sigma'_1$};
\node at (2.25,0.5){$\sigma'_k$};
\node at (2.25,1){$\vdots$};
\node at (2.25,1.5){$\sigma'_3$};
\node at (2.25,2){$1$};
\node at (0.75,3.25){$\sigma'_2$};
\node at (0.5,0.5){\textbf{Step 3}};
\end{tikzpicture}
&
\begin{tikzpicture}[scale=1, baseline=20pt]
\draw[thick] (0,3)--(1.5,3)--(1.5,0.2)--(3,0.2)--(3,3)--(4.5,3);
\node at (3.75,2.75){\textbf{input}};
\node at (0.75,2.75){\textbf{output}};
\node at (2.25,0.5){$\sigma'_k$};
\node at (2.25,1){$\vdots$};
\node at (2.25,1.5){$\sigma'_3$};
\node at (2.25,2){$1$};
\node at (2.25,2.5){$\sigma'_1$};
\node at (0.75,3.25){$\sigma'_2$};
\node at (0.5,0.5){\textbf{Step 4}};
\end{tikzpicture}\\
\hline
\end{tabular}
\caption[The action of the~$\sigma$-stack described in Theorem~\ref{theorem_necess_class_Cayley}.]{The action of the~$\sigma$-stack on input~$\beta$ described in Theorem~\ref{theorem_necess_class_Cayley}.}\label{figure_sorting_beta_Cayley}
\end{figure}

\begin{corollary}\label{corollary_class_vs_nonclass_Cayley}
Let~$\sigma$ be a Cayley permutation of length three or more. Then~$\Sort^{\Cay}(\sigma)$ is not a permutation class if and only if~$\hat{\sigma}$ avoids~$231$. Otherwise, if~$\hat{\sigma}$ contains~$231$, then~$\Sort^{\Cay}(\sigma)$ is a class with basis either~$\{132,\reverse(\sigma)\}$, if~$\reverse(\sigma)$ avoids~$132$, or~$\{132\}$, otherwise.
\end{corollary}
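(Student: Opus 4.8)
The plan is to assemble \texttt{Corollary~\ref{corollary_class_vs_nonclass_Cayley}} as a direct synthesis of the three preceding results, treating it as a clean dichotomy statement rather than proving anything genuinely new. The structure mirrors exactly the classical analogue (\texttt{Corollary~\ref{corollary_class_nonclass_char}}) and the Cayley-specific machinery already developed in \texttt{Theorem~\ref{theorem_suff_class_Cayley}}, \texttt{Theorem~\ref{theorem_12_stack_Cayley}}, and \texttt{Theorem~\ref{theorem_necess_class_Cayley}}. Since $\sigma$ is assumed to have length three or more, the exceptional pattern $\sigma=12$ from \texttt{Theorem~\ref{theorem_necess_class_Cayley}} is excluded by hypothesis, which removes the only obstacle to a perfectly clean equivalence.

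First I would handle the forward direction of the biconditional. If $\hat{\sigma}$ avoids $231$, then since $\sigma$ has length at least three (so $\sigma\neq 12$), \texttt{Theorem~\ref{theorem_necess_class_Cayley}} applies directly and yields that $\Sort^{\Cay}(\sigma)$ is not a permutation class. Conversely, if $\Sort^{\Cay}(\sigma)$ is not a class, then $\hat{\sigma}$ must avoid $231$: for if $\hat{\sigma}$ contained $231$, \texttt{Theorem~\ref{theorem_suff_class_Cayley}} would give $\Sort^{\Cay}(\sigma)=\Cay(132,\reverse(\sigma))$, which is manifestly a class (an avoidance set). This establishes the equivalence ``$\Sort^{\Cay}(\sigma)$ is not a class $\iff \hat{\sigma}$ avoids $231$.''

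For the second assertion, the basis description in the case $\hat{\sigma}\ge 231$ is nothing more than a restatement of the conclusion of \texttt{Theorem~\ref{theorem_suff_class_Cayley}}, which already spells out that the basis is $\{132,\reverse(\sigma)\}$ when $\reverse(\sigma)$ avoids $132$ and $\{132\}$ otherwise. So the proof reduces to a one-line citation. Concretely, the write-up would read: the first claim follows from \texttt{Theorem~\ref{theorem_necess_class_Cayley}} and \texttt{Theorem~\ref{theorem_suff_class_Cayley}}, noting that $\sigma\neq 12$ since $\sigma$ has length three or more; the basis description is precisely the content of \texttt{Theorem~\ref{theorem_suff_class_Cayley}}.

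I do not anticipate a genuine obstacle here, since every component has already been proved. The only point requiring mild care is the logical bookkeeping: one must explicitly invoke the length hypothesis to discard the $\sigma=12$ exception, and one should confirm that the two theorems quoted cover complementary cases (the necessity theorem covers $\hat{\sigma}$ avoiding $231$, the sufficiency theorem covers $\hat{\sigma}$ containing $231$) so that together they exhaust all patterns of length at least three. The ``hard part,'' such as it is, is purely expository—ensuring the corollary is stated as a genuine iff and that the reader sees it as the Cayley-permutation counterpart of \texttt{Corollary~\ref{corollary_class_nonclass_char}}.

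\begin{proof}
Since $\sigma$ has length three or more, we have $\sigma\neq 12$. Hence the dichotomy follows by combining Theorem~\ref{theorem_suff_class_Cayley} and Theorem~\ref{theorem_necess_class_Cayley}: if $\hat{\sigma}$ avoids $231$, then $\Sort^{\Cay}(\sigma)$ is not a class by Theorem~\ref{theorem_necess_class_Cayley}; if instead $\hat{\sigma}$ contains $231$, then $\Sort^{\Cay}(\sigma)=\Cay(132,\reverse(\sigma))$ by Theorem~\ref{theorem_suff_class_Cayley}, which is a class. The description of the basis in the latter case is exactly the conclusion of Theorem~\ref{theorem_suff_class_Cayley}.
\end{proof}
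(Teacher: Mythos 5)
Your proof is correct and is essentially the paper's own argument: the paper states this corollary without a separate proof precisely because it is the immediate assembly of Theorem~\ref{theorem_suff_class_Cayley} and Theorem~\ref{theorem_necess_class_Cayley}, with the length hypothesis ruling out the exceptional pattern~$12$, exactly as you do. Nothing is missing.
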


Cayley permutations avoiding any classical permutation pattern of length three are enumerated by sequence A226316 in~\cite{Sl}. We end this section by analyzing the~$21$-machine. The~$11$-machine will be discussed in Section~\ref{section_operator_Cayley}, thus completing the analysis of~$\sigma$-machines for patterns of length two. The analogue of the~$21$-machine on classical permutations consists in applying a right-greedy algorithm on two stacks in series, which is precisely the well known case of West~$2$-stacksort~\cite{We2}. Recall from Theorem~\ref{theorem_west_2stack} that~$\Sort(21)=\Perm(2341,3\bar{5}241)$. The barred pattern~$3\bar{5}241$ can be represented as a mesh pattern, as shown in Figure~\ref{figure_mesh_west_Cayley}. In order to prove an analogous result for the~$12$-machine on Cayley permutations, we define mesh patterns on Cayley permutations (see~\cite{Ce}). To extend mesh patterns to strings that may contain repeated elements, we simply allow the shading of boxes that correspond to repeated elements. Instead of giving a formal definition, we refer the reader to~\cite{Ce} and to the example depicted in Figure~\ref{figure_mesh_west_Cayley}. We will use the term \textit{Cayley-mesh pattern} to denote mesh patterns on Cayley permutations. For the rest of this section, let~$\zeta$ be the Cayley-mesh pattern depicted in Figure~\ref{figure_mesh_west_Cayley}.

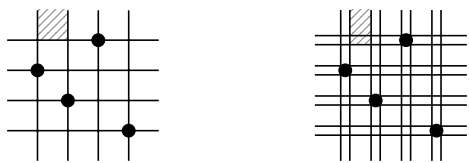
\begin{figure}
\centering
\begin{DrawPerm}
\meshBox{(1,4)}{(2,5)}
\fillPerm{3,2,4,1}{4.99}{4.99}
\end{DrawPerm}
\hspace{50pt}
\begin{tikzpicture}[scale=0.4, baseline=20pt]
\fill[NE-lines] (1.15,3.85) rectangle (1.85,4.15);
\fill[NE-lines] (1.15,4.15) rectangle (1.85,5);
\draw [semithick] (0,0.85) -- (5,0.85);
\draw [semithick] (0,1.15) -- (5,1.15);
\draw [semithick] (0,1.85) -- (5,1.85);
\draw [semithick] (0,2.15) -- (5,2.15);
\draw [semithick] (0,2.85) -- (5,2.85);
\draw [semithick] (0,3.15) -- (5,3.15);
\draw [semithick] (0,3.85) -- (5,3.85);
\draw [semithick] (0,4.15) -- (5,4.15);
\draw [semithick] (0.85,0) -- (0.85,5);
\draw [semithick] (1.15,0) -- (1.15,5);
\draw [semithick] (1.85,0) -- (1.85,5);
\draw [semithick] (2.15,0) -- (2.15,5);
\draw [semithick] (2.85,0) -- (2.85,5);
\draw [semithick] (3.15,0) -- (3.15,5);
\draw [semithick] (3.85,0) -- (3.85,5);
\draw [semithick] (4.15,0) -- (4.15,5);
\fillPerm{3,2,4,1}{0}{0}
\end{tikzpicture}
\caption[The Cayley-mesh pattern describing~$12$-sortable Cayley permutations.]{On the left, the barred pattern~$3\bar{5}241$, equivalent to the mesh pattern~$(3241,\{(1,4)\})$. On the right, the Cayley-mesh pattern~$\zeta$. The additional shaded box in~$\zeta$ keeps into account the case of an occurrence of~$3241$ that is part of an occurrence of~$34241$.}\label{figure_mesh_west_Cayley}
\end{figure}

\begin{lemma}\label{lemma_increas_stack_Cayley}
Let~$\pi=\pi_1\cdots\pi_n\in\Cay$. Suppose that~$\pi_i<\pi_j$, for some~$i<j$. Then~$\pi_i$ precedes~$\pi_j$ in~$\out{21}(\pi)$.
\end{lemma}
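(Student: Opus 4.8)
The statement merely says that the $21$-stack preserves the relative order of any two entries forming a non-inversion, so the plan is to track what happens to $\pi_i$ and $\pi_j$ during the sorting process and show that $\pi_i$ must leave the stack first. First I would record the only two facts about the $21$-stack that are needed. Since the $21$-stack avoids the pattern $21$ read from top to bottom, its content is always weakly increasing from top to bottom; in particular every element lying \emph{above} a given element $x$ in the stack is at most $x$. Moreover, under the right-greedy discipline the next input letter $y$ is pushed precisely when the stack is empty or $y$ is less than or equal to the current top, and otherwise the top is popped; equivalently, when $y$ is processed the stack is repeatedly popped while its top is strictly smaller than $y$, and only then is $y$ pushed.

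Now fix indices $i<j$ with $a:=\pi_i<\pi_j=:b$. Because $i<j$, the letter $a$ is processed (and hence enters the stack) strictly before $b$. I would then split into two cases according to the configuration of the stack at the instant just before $b$ is processed. If $a$ has already been popped by that moment, then $a$ has been written to the output before $b$ has even entered the stack, and we are done. Otherwise $a$ is still in the stack when $b$ becomes the next input letter.

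In this remaining case I would use the weak monotonicity of the stack: every element sitting above $a$ is $\le a$, and since $a<b$ all of these elements are strictly smaller than $b$. Hence, as $b$ is processed, the right-greedy rule pops each of them, after which $a$ itself is on top; as $a<b$ as well, $a$ is popped too, and all of this happens before $b$ is pushed. Consequently $a$ is output before $b$ enters the stack, so again $a$ precedes $b$ in $\out{21}(\pi)$. Combining the two cases proves the claim.

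Finally, I expect essentially no obstacle here: the only point requiring care is the verification that $a$ cannot be ``trapped'' beneath $b$ in the stack, which is exactly ruled out by the top-to-bottom monotonicity invariant (everything above $a$ is $\le a<b$, so nothing shields $a$ from being popped when $b$ arrives). The presence of repeated letters in a Cayley permutation causes no trouble, since the hypothesis $\pi_i<\pi_j$ is a strict inequality and only strict comparisons are used in the popping argument.
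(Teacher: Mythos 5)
Your proof is correct and takes essentially the same route as the paper: the paper's one-line argument observes that $\pi_j$ sitting above $\pi_i$ in the stack would form a forbidden occurrence of $21$, hence $\pi_i$ must be extracted before $\pi_j$ enters, which is exactly the content of your second case. Your write-up merely unwinds this into the explicit monotonicity invariant and greedy popping rule, which is fine but not a different argument.
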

\begin{proof}
We have~$\pi_j\pi_i\simeq 21$, thus~$\pi_i$ must be extracted from the~$21$-stack before~$\pi_j$ enters.
\end{proof}

\begin{theorem}
We have:
$$
\Sort^{\Cay}(21)=\Cay(2341,\zeta).
$$
\end{theorem}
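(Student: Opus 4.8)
The plan is to prove the equality $\Sort^{\Cay}(21)=\Cay(2341,\zeta)$ by following the same blueprint used for West's result (Theorem~\ref{theorem_west_2stack}), but adapting it to Cayley permutations where repeated letters matter. The key structural tool is Lemma~\ref{lemma_increas_stack_Cayley}, which says the $21$-stack never repairs a non-inversion: if $\pi_i<\pi_j$ with $i<j$, then $\pi_i$ precedes $\pi_j$ in $\out{21}(\pi)$. Combined with Theorem~\ref{theorem_hare_stacksort}, sortability is equivalent to $\out{21}(\pi)$ avoiding $231$. So the whole argument reduces to characterizing, in terms of patterns in the \emph{input}, exactly when the output of the first $21$-stack contains $231$.

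First I would prove the inclusion $\Sort^{\Cay}(21)\subseteq\Cay(2341,\zeta)$ by contrapositive: if $\pi$ contains $2341$ or the Cayley-mesh pattern $\zeta$, then $\out{21}(\pi)$ contains $231$. For the classical part, an occurrence $bcad\simeq 2341$ forces a specific behavior of the greedy $21$-stack; by Lemma~\ref{lemma_increas_stack_Cayley} the relative order of the smaller elements is preserved, and one checks that $c$, $a$ and $d$ (or a suitable triple) survive as a $231$ in the output, exactly as in the permutation case. For $\zeta$, the point of the extra shaded box (relative to the ordinary mesh pattern $(3241,\{(1,4)\})$) is precisely to handle the situation where an occurrence of $3241$ embeds inside an occurrence of $34241$; here a repeated letter sitting in the shaded region would otherwise prevent the $231$ from appearing in the output, and the shading rules that configuration out. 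I would track where the three relevant elements end up after the first pass and confirm a $231$ is produced.

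For the reverse inclusion $\Cay(2341,\zeta)\subseteq\Sort^{\Cay}(21)$, I would again argue by contrapositive: assuming $\out{21}(\pi)$ contains an occurrence $bca\simeq 231$, I would reconstruct a forbidden input pattern. By Lemma~\ref{lemma_increas_stack_Cayley}, $b$ must precede $c$ in $\pi$ (a non-inversion in the output comes from a non-inversion in the input), and $b$ must be popped before $c$ enters the stack. The element $z$ that triggers this pop, together with $b$, $c$ and the elements already in the stack, assembles either a $2341$ or an occurrence of $\zeta$ in $\pi$. The analysis splits into cases according to whether the triggering element equals one of the existing stack entries and whether any letters are repeated, and the Cayley-mesh shading of $\zeta$ is exactly what is needed to capture the repeated-letter case that the plain barred pattern $3\bar{5}241$ would miss on $\Cay$.

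The main obstacle will be the careful bookkeeping in the reverse direction when repeated letters occur: one must verify that every way the first $21$-stack can output a $231$ is accounted for by one of the two forbidden patterns, and in particular that the single extra shaded box in $\zeta$ (and not more) suffices to upgrade West's barred pattern to the Cayley setting. This is where I expect the proof to require the most delicate case distinction, paralleling but strictly generalizing the argument behind Theorem~\ref{theorem_west_2stack}; the reference~\cite{Ce} and Figure~\ref{figure_mesh_west_Cayley} should be invoked to justify the precise shape of $\zeta$.
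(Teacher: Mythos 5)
Your proposal is correct and follows essentially the same route as the paper's proof: both reduce sortability to $231$-avoidance of $\out{21}(\pi)$ via Theorem~\ref{theorem_hare_stacksort}, use Lemma~\ref{lemma_increas_stack_Cayley} to show non-inversions are preserved, and then repeat West's argument in both directions, with the extra shaded box of $\zeta$ accounting precisely for an element equal to the ``$4$'' sitting between the ``$3$'' and the ``$2$'' (the $34241$ configuration). The paper itself presents this as a sketch with the same case analysis you outline (in particular, upgrading an intermediate element $x$ with $c<x<d$ to a $2341$ occurrence), so your plan matches it in both structure and the delicate repeated-letter bookkeeping you flag.
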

\begin{proof}
We can essentially repeat the argument used by West for classical permutations, but incorporating the additional shaded box in~$\zeta$, which corresponds to an occurrence of~$3241$ that is part of an occurrence of~$34241$. We sketch the proof below, leaving some technical details to the reader.

Suppose that~$\pi$ is~$21$-sortable. Suppose, for a contradiction, that~$\pi$ contains an occurrence~$bcda$ of~$2341$ and consider the action of the~$21$-stack on~$\pi$. By Lemma~\ref{lemma_increas_stack_Cayley}, $b$ is extracted from the~$21$-stack before~$c$ enters. Similarly, $c$ is extracted before~$d$ enters. Thus~$\out{21}(\pi)$ contains the occurrence~$bca$ of~$231$, a contradiction with~$\pi$ being~$21$-sortable. Otherwise, suppose that~$\pi$ contains an occurrence~$cbda$ of~$3241$. We show that there is an element~$x$ between~$c$ and~$b$ in~$\pi$ such that~$x\ge d$. If~$x<c$ for each entry~$x$ between~$c$ and~$b$, then~$b$ is pushed into the~$21$-stack before~$c$ is popped. This results in the occurrence~$bca$ of~$231$ in~$\out{21}(\pi)$, a contradiction with the fact that~$\pi$ is~$21$-sortable. Otherwise, suppose there is at least one element~$x$ between~$c$ and~$b$ in~$\pi$, with~$x\ge c$. If~$x=c$, we can repeat the same argument with~$xbda$ instead of~$cbda$. If~$c<x<d$, then~$cxda\simeq 2341$, which is impossible due to what said in the previous case. Therefore it has to be~$x\ge d$, as desired.

Conversely, suppose that~$\pi$ is not~$12$-sortable. Equivalently, let~$bca$ be an occurrence of~$231$ in~$\out{21}(\pi)$. We show that either~$\pi$ contains~$2341$ or~$\pi$ contains an occurrence~$cbda$ of~$3241$ such that~$x<d$ for each~$x$ between~$c$ and~$b$ in~$\pi$. Observe that~$a$ follows~$c$ and~$b$ in~$\pi$ due to Lemma~\ref{lemma_increas_stack_Cayley}. Suppose that~$b$ comes before~$c$ in~$\pi$. Note that~$c$ is extracted from the~$21$-stack before~$a$ enters. Let~$d$ be the next element of the input when~$c$ is extracted. Then~$d>c$ and~$bcda$ is an occurrence of~$2341$, as wanted. Otherwise, suppose that~$b$ follows~$c$ in~$\pi$, and thus~$\pi$ contains~$cba$. Since~$c$ is not extracted before~$b$ enters, it has to be~$x\le c$ for each~$x$ between~$c$ and~$b$ in~$\pi$. Moreover, $c$ is extracted before~$a$ enters. When~$c$ is extracted, the next element~$d$ of the input is such that~$d>c$. This results in an occurrence~$cbda$ of~$3241$ with the desired property.
\end{proof}

Observe that, due to the presence of the Cayley-mesh pattern~$\zeta$, the set~$\Sort^{\Cay}(21)$ is not a class. For instance, the~$21$-sortable Cayley permutation~$34241$ contains the non-sortable pattern~$3241$. The problem of enumerating~$\Sort^{\Cay}(21)$ remains to be solved.

\begin{openproblem}
Enumerate the set of~$21$-sortable Cayley permutations. The initial terms of the sequence are~$1,3,13,73,483,3547,27939,231395$ (not in~\cite{Sl}).
\end{openproblem}

Recall that West~$2$-stack sortable permutations are precisely those classical permutations that are~$21$-sortable. It would thus be interesting to find a combinatorial argument for the enumeration of~$\Sort^{\Cay}(21)$ that generalizes the one for West~$2$-stack sortable permutations (and allows us to recollect the classical result as a particular instance of this new, more general, approach).

\subsection{\texorpdfstring{Fully bijective~$\sigma$-stacks}{Fully-bijective sigma-stacks}
}\label{section_operator_Cayley}

In this section we regard a~$\sigma$-stack as a map~$\mapsigma{\sigma}:\Cay\to\Cay$ and investigate the properties of the resulting operator. This approach is slightly more general than the one adopted in Chapter~\ref{chapter_sorted_perm_fertilities}, where we studied the behavior of the restriction of~$\mapsigma{\sigma}$ to the set~$\Sort(\sigma)$ of~$\sigma$-sortable permutations. Recall (from Section~\ref{section_sorted_perm_fertilities}) that a classical permutation~$\sigma$ is said to be bijective if the map~$\mapsigma{\sigma}:\Sort(\sigma)\to\Perm(231)$ is bijective. Analogously, given a Cayley permutation~$\sigma$, we say that~$\sigma$ is \textit{bijective} if~$\mapsigma{\sigma}:\Sort^{\Cay}(\sigma)\to\Cay(231)$ is bijective. A Cayley permutation~$\sigma$ is \textit{fully bijective} if the map~$\mapsigma{\sigma}:\Cay\to\Cay$ is bijective. Notice that if~$\sigma$ is fully bijective, then~$\sigma$ is bijective and thus $\Sort^{\Cay}(\sigma)$ and~$\Cay(231)$ are Wilf-equivalent. The main goal of this section is to provide a characterization of fully bijective patterns.

We start by discussing the pattern~$\sigma=11$. The following is a useful decomposition lemma for the~$11$-stack.

\begin{lemma}\label{lemma_11_decom}
Let~$\pi=\pi_1\cdots\pi_n$ be a Cayley permutation. Suppose that~$\pi$ contains~$k+1$ occurrences~$\pi_1,\pi_1^{(1)},\dots,\pi_1^{(k)}$ of the integer~$\pi_1$, for some~$k\ge 0$. Write:
$$
\pi=\pi_1B_1\pi_1^{(1)}B_2\cdots\pi_1^{(k)}B_k.
$$
Then:
$$
\mapsigma{11}(\pi)=\mapsigma{11}(B_1)\pi_1\mapsigma{11}(B_2)\pi_1^{(1)}\cdots\mapsigma{11}(B_k)\pi_1^{(k)}.
$$
\end{lemma}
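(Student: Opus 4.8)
The plan is to analyze the action of the $11$-stack on $\pi$ by tracking exactly when each copy of the minimal value $\pi_1$ enters and leaves the stack, and to verify that the copies of $\pi_1$ act as ``barriers'' that decouple the sorting of the intermediate blocks $B_1,\dots,B_k$. This is closely analogous to the decomposition already established for the $12$-stack in the proof of Theorem~\ref{theorem_12_stack_Cayley}, where occurrences of the minimum $1$ split the word and $\out{12}(\pi)$ factors through the blocks; here the role of ``smallest element'' is played by the repeated value $\pi_1$, and the forbidden pattern is $11$ rather than $12$.

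First I would record two elementary observations about how $\pi_1$ interacts with the $11$-stack. The key point is that $\pi_1$ is the \emph{first} element of $\pi$ and is therefore pushed onto the empty $11$-stack at the very start. Since the stack may not contain an occurrence of $11$ (two equal entries), a later copy $\pi_1^{(j)}$ of the same value cannot be pushed while \emph{any} copy of $\pi_1$ remains in the stack: that would create the forbidden pattern $11$ reading from top to bottom. Conversely, a copy of $\pi_1$ can never be forced out of the stack by a larger incoming element, because $\pi_1$ is a minimal value of $\pi$, so no entry $x$ satisfies $x\,\pi_1 \simeq 11$ unless $x=\pi_1$; and the only entries equal to $\pi_1$ are the copies themselves. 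Combining these, the copies of $\pi_1$ behave like a last-in-first-out sub-stack that sits at the bottom: each copy $\pi_1^{(j)}$ can only enter once the previous copy has been output, which forces the block $B_j$ lying between them to be fully processed and emptied from the stack before $\pi_1^{(j)}$ is pushed.

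The main step is then to argue, block by block, that the behavior of the $11$-stack while reading $B_j$ is identical to its behavior on $B_j$ in isolation, i.e.\ that the presence of a copy of $\pi_1$ at the bottom of the stack does not affect the sorting of $B_j$. This is because $\pi_1$ is strictly smaller than every element of each $B_j$ (the $B_j$ contain no copies of $\pi_1$, and $\pi_1$ is a minimum), so $\pi_1$ can never play the role of either entry in an occurrence of $11$ involving an element of $B_j$; hence it is invisible to the restriction of the stack. Therefore $B_j$ is output exactly as $\mapsigma{11}(B_j)$, after which the copy $\pi_1^{(j-1)}$ (for $j\ge 2$) sitting just below is released immediately before the next copy enters. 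Reading off the output in order gives precisely $\mapsigma{11}(B_1)\pi_1\mapsigma{11}(B_2)\pi_1^{(1)}\cdots\mapsigma{11}(B_k)\pi_1^{(k)}$, since each $\pi_1^{(j-1)}$ is emitted right after $\mapsigma{11}(B_j)$ and the last copy $\pi_1^{(k)}$ is emitted at the very end when the stack is emptied.

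I expect the main obstacle to be the careful bookkeeping needed to justify that a copy of $\pi_1$ is released at exactly the right moment---namely immediately after its following block has been entirely output and immediately before the next copy is pushed---rather than earlier or later. The subtle case is verifying that no copy of $\pi_1$ is popped prematurely: one must rule out that some element of a block could trigger the release of $\pi_1$. This is handled by the minimality of $\pi_1$ noted above, but stating it cleanly (perhaps as a short auxiliary claim that once pushed, a copy of $\pi_1$ stays in the stack until all elements pushed after it have been output) is where the argument needs the most precision. Once that claim is in place, the factorization of the output follows by a straightforward induction on $k$, peeling off the prefix $\pi_1 B_1$ and applying the inductive hypothesis to the remaining Cayley permutation $\pi_1^{(1)}B_2\cdots\pi_1^{(k)}B_k$.
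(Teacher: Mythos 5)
Your proof follows the same route as the paper's: the copies of the value $\pi_1$ act as barriers, each block $B_j$ is sorted as if in isolation because the bottom copy of $\pi_1$ is invisible to the restriction, and that copy is flushed (after the residue of its block) precisely when the next copy arrives at the input; iterating gives the stated factorization.

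There is, however, a concrete error in your justification, invoked at both key points. You claim that $\pi_1$ is a \emph{minimal} value of $\pi$ (``$\pi_1$ is strictly smaller than every element of each $B_j$'', ``this is handled by the minimality of $\pi_1$''). This is false in general: $\pi$ is an arbitrary Cayley permutation and $\pi_1$ is merely its first letter, not its minimum. For instance $\pi=2121$ satisfies the hypotheses with $\pi_1=2$: both blocks equal the single letter $1$, and their entries are \emph{smaller} than $\pi_1$. Fortunately, none of the conclusions you draw actually needs minimality: since the forbidden pattern is $11$, an occurrence of it involving $\pi_1$ forces the other entry to be \emph{equal} to $\pi_1$, and the blocks contain no copies of $\pi_1$ by construction --- a fact you also state. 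This alone shows that the bottom copy of $\pi_1$ cannot take part in any occurrence of $11$ while a block is processed, and that pops can cascade down to it only when the incoming letter is itself a copy of $\pi_1$ (once everything above $\pi_1$ has been popped, pushing $x$ is still illegal if and only if $x=\pi_1$). Replacing every appeal to minimality by this equality argument makes your proof correct and essentially identical to the paper's. One further small point: in your closing induction, the suffix $\pi_1^{(1)}B_2\cdots$ and the blocks $B_j$ need not themselves be Cayley permutations, so formally one should pass to their standardizations (as the paper does in an analogous proof); since the stack's behavior depends only on the order-isomorphism type of the input, nothing changes.
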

\begin{proof}
Consider the action of the~$11$-stack on input~$\pi$. Since~$x\neq\pi_1$ for each~$x\in B_1$, the sorting process of~$B_1$ is not affected by the presence of~$\pi_1$ at the bottom of the~$11$-stack. Then, when the next element of the input is the second occurrence~$\pi_1^{(1)}$ of~$\pi_1$, the~$11$-stack is emptied, since~$\pi_1\pi_1^{(1)}$ is an occurrence of the forbidden pattern~$11$. The initial elements of~$\mapsigma{11}(\pi)$ are thus~$\mapsigma{11}(B_1)\pi_1$. Finally, $\pi_1^{(1)}$ is pushed into the (empty)~$11$-stack and the same argument can be repeated.
\end{proof}

\begin{theorem}\label{theorem_11_bij}
The map~$(\reverse\circ\mapsigma{11})$ is an involution on~$\Cay$. Therefore the pattern~$11$ is fully bijective.
\end{theorem}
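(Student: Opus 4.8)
The plan is to prove the stronger claim that $\reverse\circ\mapsigma{11}$ is an involution on $\Cay$, from which full bijectivity of $11$ follows immediately: if $T := \reverse\circ\mapsigma{11}$ satisfies $T\circ T = \identity$, then $T$ is a bijection, and since $\reverse$ is a bijection on $\Cay$, the map $\mapsigma{11} = \reverse\circ T$ is a composition of bijections, hence bijective. So the entire burden is to verify $T(T(\pi)) = \pi$ for every $\pi\in\Cay$.

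First I would set up an induction on the length $n$ of $\pi$, using the decomposition of Lemma~\ref{lemma_11_decom} as the engine. Write $\pi = \pi_1 B_1 \pi_1^{(1)} B_2 \cdots \pi_1^{(k)} B_k$, where the $\pi_1^{(j)}$ are the $k+1$ copies of the first letter $\pi_1$ and the blocks $B_i$ contain the intervening letters (all distinct from $\pi_1$). Lemma~\ref{lemma_11_decom} gives
$$
\mapsigma{11}(\pi) = \mapsigma{11}(B_1)\,\pi_1\,\mapsigma{11}(B_2)\,\pi_1^{(1)}\cdots\mapsigma{11}(B_k)\,\pi_1^{(k)}.
$$
Applying the reverse operator turns this into
$$
T(\pi) = \reverse(\mapsigma{11}(\pi)) = \pi_1^{(k)}\,\reverse(\mapsigma{11}(B_k))\cdots \pi_1^{(1)}\,\reverse(\mapsigma{11}(B_2))\,\pi_1\,\reverse(\mapsigma{11}(B_1)),
$$
that is, $T(\pi) = \pi_1\, T(B_k)\, \pi_1\, T(B_{k-1})\cdots \pi_1\, T(B_1)$, reading all copies of $\pi_1$ as the same value. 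The key structural observation is that $\pi_1$ is the \emph{minimum} value of $\pi$ (since $\pi$ is a Cayley permutation and $\pi_1$ is, a priori, just the first letter — here I must be slightly careful, so I would instead argue directly in terms of the positions of the copies of $\pi_1$ rather than assuming minimality). The crucial point I expect to exploit is that $T(\pi)$ is again a word whose repeated-first-letter decomposition has exactly the same first letter $\pi_1$ with the same number $k+1$ of copies, and whose blocks are $T(B_k), T(B_{k-1}),\dots,T(B_1)$ in that order.

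The heart of the argument is then to compute $T(T(\pi))$ by applying the same decomposition-reversal a second time. Since the decomposition of $T(\pi)$ has leading letter $\pi_1$ with the $k+1$ copies separating the blocks $T(B_k),\dots,T(B_1)$, a second application of $T$ reverses both the order of these blocks (restoring $B_1,\dots,B_k$) and replaces each $T(B_i)$ by $T(T(B_i))$. By the inductive hypothesis $T(T(B_i)) = \std^{-1}$-corrected $B_i$; more precisely I would apply induction to the standardizations $\std(B_i)$ and check that the operations performed by the $11$-stack depend only on relative order, so that $T\circ T$ acts as the identity on each block once values are restored to their original scale. Combining this with the double reversal of block order gives $T(T(\pi)) = \pi_1 B_1 \pi_1^{(1)} B_2 \cdots \pi_1^{(k)} B_k = \pi$.

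The main obstacle I anticipate is bookkeeping the value-rescaling across the two passes: the blocks $B_i$ are not themselves Cayley permutations, so the inductive hypothesis must be stated for standardizations, and I must confirm that interleaving the copies of $\pi_1$ (which are strictly separated in value from the block contents only if $\pi_1$ is the minimum) does not corrupt the relative order within each block during either stack pass. I would resolve this by first establishing, as a preliminary lemma, that in a Cayley permutation the first letter together with its repeated copies behaves as a single ``frozen'' minimal-or-boundary element for the $11$-stack — this is exactly the content of Lemma~\ref{lemma_11_decom} — and that consequently the relative order of the entries of each $B_i$ is preserved by $\mapsigma{11}$ up to the within-block action, which is itself $\mapsigma{11}$ on $\std(B_i)$. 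Once this is pinned down, the induction closes cleanly and the involution property, and hence full bijectivity of $11$, follows.
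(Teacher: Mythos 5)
Your proposal is correct and follows essentially the same route as the paper: induction on length, with Lemma~\ref{lemma_11_decom} providing the block decomposition that is tracked through two applications of $\reverse\circ\mapsigma{11}$, the double reversal of block order cancelling and the inductive hypothesis closing each block. Your extra care about the blocks $B_i$ not being Cayley permutations (handled via standardization, since the $11$-stack's action depends only on the equality pattern of its input) is a point the paper's proof glosses over, but it is a minor refinement rather than a different argument.
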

\begin{proof}
We proceed by induction on the length of input permutations. Let~$\pi=\pi_1\cdots\pi_n$ be a Cayley permutation of length~$n$. The case~$n=1$ is trivial. If~$n\ge 2$, write~$\pi=\pi_1B_1\pi_1^{(1)}B_2\cdots\pi_1^{(k)}B_k$ as in Lemma~\ref{lemma_11_decom}. Then, using the same lemma and the inductive hypothesis:
\begin{equation*}
\begin{split}
\left[\reverse\circ\mapsigma{11}\right]^2(\pi)=&\\
\left[\reverse\circ\mapsigma{11}\right]^2\left(\pi_1 B_1\pi_1^{(1)} B_2\cdots\pi_1^{(k)} B_k\right) =&\\
\left[\reverse\circ\mapsigma{11}\circ\reverse\right]\left(\mapsigma{11}(B_1)\pi_1\mapsigma{11}(B_2)\pi_1^{(1)}\cdots\mapsigma{11}(B_k)\pi_1^{(k)}\right) =&\\
\left[\reverse\circ\mapsigma{11}\right]\left(\pi_1^{(k)}\reverse(\mapsigma{11}(B_k))\cdots\pi_1^{(1)}\reverse(\mapsigma{11}(B_2))\pi_1\reverse(\mapsigma{11}(B_1))\right) =&\\
\reverse\left( \mapsigma{11}(\reverse(\mapsigma{11}(B_k)))\pi_1^{(k)}\cdots\mapsigma{11}(\reverse(\mapsigma{11}(B_2)))\pi_1^{(1)}\mapsigma{11}(\reverse(\mapsigma{11}(B_1)))\pi_1\right) =&\\
\pi_1\left[\reverse\circ\mapsigma{11}\right]^2 (B_1)\pi_1^{(1)}\left[\reverse\circ\mapsigma{11}\right]^2(B_2)\cdots\pi_1^{(k)}\left[\reverse\circ\mapsigma{11}\right]^2(B_k) =&\\
\pi_1 B_1\pi_1^{(1)} B_2\cdots\pi_1^{(k)} B_k =\pi &\\
\end{split}
\end{equation*}
Therefore~$(\reverse\circ\mapsigma{11})^2(\pi)=\pi$, as desired. Finally, the reverse map~$\reverse$ is bijective, thus~$\mapsigma{11}$ is a bijection on~$\Cay$ with inverse~$\reverse\circ\mapsigma{11}\circ\reverse$.
\end{proof}

An immediate consequence of Theorem~\ref{theorem_11_bij} is that every~$11$-sortable Cayley permutation~$\pi$ is obtained from a~$231$-avoiding Cayley permutation by applying~$\reverse\circ\mapsigma{11}\circ\reverse$, that is:
$$
\Sort^{\Cay}(11)=\reverse\circ\mapsigma{11}\circ\reverse(\Cay(231)).
$$
We wish to generalize this result by encoding the action of~$\mapsigma{\sigma}$ as a labeled Dyck path. In what follows, we always consider labeled Dyck paths where the label of each up step is equal to the label of its matching down step. This allows us to represent a labeled Dyck path as a pair~$\mathcal{P}=(P,\pi)$, where~$P$ is the underlying Dyck path and~$\pi$ is the string obtained by reading the labels of the up steps of~$P$ from left to right. Let~$\sigma$ be a Cayley permutation and let~$\pi$ be an input permutation for the~$\sigma$-stack. Define a labeled Dyck path~$\pathsigma{\sigma}(\pi)$ as follows, starting from the empty path:

\begin{itemize}
\item insert an up step~$\U$ labeled~$a$ whenever an element~$a$ is pushed into the~$\sigma$-stack;
\item insert a down step~$\D$ labeled~$a$ whenever an element~$a$ is extracted from the~$\sigma$-stack.
\end{itemize}

Equivalently, if~$P_{\sigma}(\pi)$ is the unlabeled Dyck path obtained by recording the push operations of the~$\sigma$-stack with~$\U$ and the pop operations with~$\D$, then~$\pathsigma{\sigma}(\pi)=(P_{\sigma}(\pi),\pi)$. It is easy to realize that~$P_{\sigma}(\pi)$ is a Dyck path. Indeed the number of push and pop operations performed by the~$\sigma$-stack on~$\pi$ is the same (it is equal to the length of~$\pi$), therefore the number of~$\U$ steps matches the number of~$\D$ steps (and thus the path ends on the~$x$-axis). Moreover, the path never goes below the~$x$-axis, since this would correspond to performing a pop operation when the~$\sigma$-stack is empty, which is not possible. An example of this construction, when~$\sigma=11$, is depicted in Figure~\ref{figure_Dyck_pathsigma}. We collect several properties of~$\pathsigma{\sigma}(\pi)$ in the following lemma, whose easy proof is omitted.

\begin{figure}
\centering
\begin{tikzpicture}[scale=0.75, baseline=20pt]
\draw [help lines] (0,0) -- (10,0);
\draw [thick] (0,0) -- (4,4);
\draw [thick] (4,4) -- (7,1);
\draw [thick] (7,1) -- (8,2);
\draw [thick] (8,2) -- (10,0);
\draw [dotted] (0.5,0.5) -- (9.5,0.5);
\draw [dotted] (1.5,1.5) -- (6.5,1.5);
\draw [dotted] (2.5,2.5) -- (5.5,2.5);
\draw [dotted] (3.5,3.5) -- (4.5,3.5);
\draw [dotted] (7.5,1.5) -- (8.5,1.5);
\filldraw (0,0) circle (3pt);
\filldraw (1,1) circle (3pt);
\filldraw (2,2) circle (3pt);
\filldraw (3,3) circle (3pt);
\filldraw (4,4) circle (3pt);
\filldraw (5,3) circle (3pt);
\filldraw (6,2) circle (3pt);
\filldraw (7,1) circle (3pt);
\filldraw (8,2) circle (3pt);
\filldraw (9,1) circle (3pt);
\filldraw (10,0) circle (3pt);
\node[above,left] at (0.75,0.75) {$4$};
\node[above,left] at (1.75,1.75) {$2$};
\node[above,left] at (2.75,2.75) {$1$};
\node[above,left] at (3.75,3.75) {$3$};
\node[below,left] at (4.75,3.25) {$3$};
\node[below,left] at (5.75,2.25) {$1$};
\node[below,left] at (6.75,1.25) {$2$};
\node[above,left] at (7.75,1.75) {$2$};
\node[below,left] at (8.75,1.25) {$2$};
\node[below,left] at (9.75,0.25) {$4$};
\end{tikzpicture}
\caption[A Dyck path encoding the action of the~$11$-machine.]{The Dyck path~$UUUUDDDUDD$ which encodes~$\mapsigma{11}(42132)$. Dotted lines connect matching steps, which have the same label.}\label{figure_Dyck_pathsigma}
\end{figure}
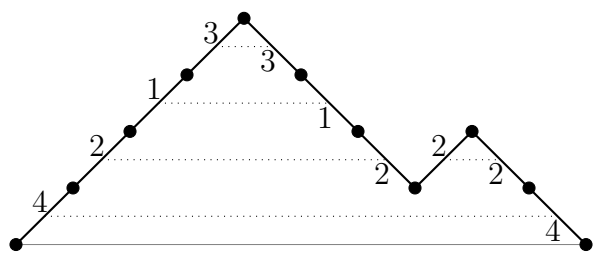

\begin{lemma}\label{lemma_pathsigma_prop}
Let~$\sigma$ be a Cayley permutation. Let~$\pi=\pi_1\cdots\pi_n$ be a Cayley permutation of length~$n$ and let~$\pathsigma{\sigma}(\pi)=(P_{\sigma}(\pi),\pi)$. Then:
\begin{enumerate}
\item The input~$\pi$ is obtained by reading the labels of the up steps of~$P_{\sigma}(\pi)$ from left to right. The output~$\out{\sigma}(\pi)$ is obtained by reading the labels of the down steps from left to right.

\item The height of~$P_{\sigma}(\pi)$ after each up (respectively down) step is equal to the number of elements contained in the~$\sigma$-stack after the corresponding push (respectively pop) operation.

\item A pop operation empties the~$\sigma$-stack if and only if the corresponding~$\D$ step of~$P_{\sigma}(\pi)$ ends on the~$x$-axis. Notice that the decomposition of~$\pi$ considered in Lemma~\ref{lemma_11_decom} corresponds to the decomposition of~$P_{\sigma}(\pi)$ obtained by considering the returns on the~$x$-axis.

\item The labels of the down steps are uniquely determined by the labels of the up steps. Conversely, the labels of the down steps uniquely determine the labels of the up steps. More precisely, matching steps have the same label. Indeed any element pushed into the~$\sigma$-stack by an up step is then popped by the matching down step.

\item Two consecutive steps of~$P_{\sigma}(\pi)$ form a valley~$\D\U$ if and only if, denoting by~$a$ the label of~$\D$ and~$b$ the label of~$\U$, $b$ plays the role of~$\sigma_1$ in an occurrence of~$\sigma$ that triggers the restriction of the~$\sigma$-stack, while~$a$ plays the role of~$\sigma_2$ in that same occurrence. Therefore the number of valleys of~$P_{\sigma}(\pi)$ is equal to the number of elements of~$\pi$ that trigger the restriction of the~$\sigma$-stack.

\item If~$\sigma_1=\sigma_2$, then the steps~$\D$ and~$\U$ in each valley~$\D\U$ have the same label.
\end{enumerate}
\end{lemma}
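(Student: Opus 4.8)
The statement collects six properties of the labeled Dyck path $\pathsigma{\sigma}(\pi) = (P_{\sigma}(\pi), \pi)$, each of which follows directly from the defining construction: an up step (labeled $a$) records a push of $a$, and a down step (labeled $a$) records a pop of $a$. My plan is to verify each item in turn, leaning on this dictionary between stack operations and path steps.

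Let me sketch the approach step by step.

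\emph{Items (1) and (4): labels and their matching.} Since up steps correspond exactly to pushes, reading their labels left to right reproduces the order in which elements enter the $\sigma$-stack, which is precisely the input order, so this string is $\pi$. Dually, down steps record pops in the order elements leave the stack, which by definition is $\out{\sigma}(\pi)$. For item (4), the key observation is the LIFO (last-in-first-out) discipline of a stack: the element pushed by a given up step is popped by exactly one down step, namely its matching $\D$ in the first-return sense of Section~\ref{section_lattice_paths}, and the two carry the same label. This is a standard bijective fact about Dyck paths and, combined with the stack dynamics, forces the down-step labels to be determined by the up-step labels and vice versa.

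\emph{Items (2) and (3): height and returns.} By construction each push raises the current content of the stack by one and each pop lowers it by one, which is exactly how the height of $P_{\sigma}(\pi)$ evolves along $\U$ and $\D$ steps. Hence the height after a step equals the number of elements in the stack after the corresponding operation, giving item (2). Item (3) is then immediate: the stack is empty after a pop if and only if the height drops to zero, i.e.\ the $\D$ step returns to the $x$-axis; the remark relating this to the decomposition of Lemma~\ref{lemma_11_decom} is just the observation that the occurrences of $\pi_1$ (for $\sigma=11$) are precisely the elements whose pop empties the stack.

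\emph{Items (5) and (6): valleys and the restriction.} A valley $\D\U$ means a pop is immediately followed by a push, i.e.\ an element is extracted and then the next input element $b$ enters. In the right-greedy algorithm a pop occurs only when the restriction of the $\sigma$-stack is triggered, so $b$ plays the role of $\sigma_1$ in an occurrence of $\sigma$ whose remaining entries sit in the stack, and the just-popped element $a$ (the former top) plays the role of $\sigma_2$; this is item (5), and counting valleys counts exactly the triggering elements. For item (6), if $\sigma_1=\sigma_2$ then in such an occurrence the values realizing $\sigma_1$ and $\sigma_2$ are equal, so $a=b$ and the two steps of the valley share a label. I do not expect any genuine obstacle here, since each claim is a direct translation of the stack dynamics; the only point requiring minor care is making the ``$\sigma_1$/$\sigma_2$ role'' assignment in item (5) precise, ensuring that the popped element $a$ is indeed the one occupying the $\sigma_2$-position of the triggering occurrence rather than some deeper stack element.
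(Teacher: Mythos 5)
Your dictionary between stack operations and path steps is exactly the intended argument; in fact the paper introduces this as a lemma ``whose easy proof is omitted,'' so there is no official proof to compare against, and items (1)--(4) are indeed immediate translations of the construction, as you say. However, the point you flag at the end and defer --- that in a valley the popped element $a$ really occupies the $\sigma_2$-position of a triggering occurrence, rather than lying deeper in the stack --- is the only substantive content of items (5) and (6), so leaving it open leaves the proof incomplete precisely where it matters.

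Here is how to close it. Recall that the processing of each input element consists of zero or more pops followed by exactly one push, and that a pop is performed only while the next input element, if placed on top, would complete an occurrence of $\sigma$ read from top to bottom in the stack. Let $\D\U$ be a valley, where the $\D$ pops $a$ and the $\U$ pushes $b$; then $b$ is the next input element at the moment $a$ is popped, so there exist stack elements $c_2,\dots,c_k$, listed from top to bottom, with $bc_2\cdots c_k\simeq\sigma$. Since the push of $b$ happens immediately after this pop, no such occurrence survives the removal of $a$; but an occurrence not involving $a$ would survive it, because its elements keep their relative positions in the stack and $b$ is still the next input element. Hence every triggering occurrence contains $a$, and since $a$ is the top of the stack it is the topmost among $c_2,\dots,c_k$, that is, $a=c_2$, which is exactly the $\sigma_2$-role. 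Conversely, an element $b$ that triggers the restriction is pushed only after at least one pop, so its $\U$ step is preceded by a $\D$, while an element that does not trigger it is pushed immediately after the push of the preceding input element (or is the first step), so its $\U$ is not preceded by a $\D$; the trailing pops that empty the stack at the end are followed by no $\U$ at all. This gives both directions of item (5) and the count of valleys, and item (6) follows at once: if $\sigma_1=\sigma_2$, then in any occurrence the values playing those two roles must be equal, so $a=b$.
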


Recall from Section~\ref{section_lattice_paths} that the reverse path~$\reverse(P)$ of a Dyck path~$P$ is its symmetric with respect to the vertical line~$x=n$, where~$n$ is the semilength of~$P$. The following lemma shows that if~$\sigma_1=\sigma_2$, then the path that encodes the action of the~$\sigma$-stack on~$\pi$ is the reverse of the path that encodes the action of the~$\sigma$-stack on input~$\reverse(\mapsigma{\sigma}(\pi))$.

\begin{lemma}\label{lemma_pathsigma_reverse}
Let~$\sigma=\sigma_1\cdots\sigma_k$ be a Cayley permutation. Let~$\pi=\pi_1\cdots\pi_n$ be a Cayley permutation and let~$\gamma=\reverse(\mapsigma{\sigma}(\pi))$. Consider the two labeled Dyck paths~$\pathsigma{\sigma}(\pi)=(P_{\sigma}(\pi),\pi)$ and~$\pathsigma{\sigma}(\gamma)=(P_{\sigma}(\gamma),\gamma)$.
\begin{enumerate}
\item If~$\sigma_1=\sigma_2$, then~$P_{\sigma}(\pi)=\reverse(P_{\sigma}(\gamma))$.
\item If~$P_{\sigma}(\pi)=\reverse(P_{\sigma}(\gamma))$, then~$(\reverse\circ\mapsigma{\sigma})^2(\pi)=\pi$.
\end{enumerate}
\end{lemma}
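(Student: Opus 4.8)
The plan is to deduce part~(2) as a purely combinatorial bookkeeping consequence of the labelling conventions, and to prove part~(1) by tracking the greedy $\sigma$-stack on $\gamma$ against the mirror path, with the hypothesis $\sigma_1=\sigma_2$ entering at exactly one place. For part~(2), recall $\gamma=\reverse(\mapsigma{\sigma}(\pi))$, so $(\reverse\circ\mapsigma{\sigma})^2(\pi)=\reverse(\mapsigma{\sigma}(\gamma))$ and it suffices to show $\mapsigma{\sigma}(\gamma)=\reverse(\pi)$. By Lemma~\ref{lemma_pathsigma_prop}, $\out{\sigma}(\gamma)$ is read off the labels of the down steps of $P_{\sigma}(\gamma)$ from left to right. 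Using the hypothesis in the form $P_{\sigma}(\gamma)=\reverse(P_{\sigma}(\pi))$, I would invoke the description of the reverse path: reversal swaps $\U\leftrightarrow\D$ and reverses the order of the steps, while matching steps keep their common label. Hence the down steps of $\reverse(P_{\sigma}(\pi))$, read left to right, are exactly the up steps of $P_{\sigma}(\pi)$ read right to left, whose labels spell $\reverse(\pi)$. Thus $\out{\sigma}(\gamma)=\reverse(\pi)$ and $(\reverse\circ\mapsigma{\sigma})^2(\pi)=\pi$.

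For part~(1), set $Q:=\reverse(P_{\sigma}(\pi))$; it is a labelled Dyck path whose up-step word is $\gamma$. Since the $\sigma$-stack is deterministic, it is enough to show that running it greedily on $\gamma$ reproduces $Q$. I would prove, by induction on the number $t$ of steps performed, the invariant that after $t$ steps the greedy stack coincides, as an ordered stack with its labels, with the content of $P_{\sigma}(\pi)$ after $2n-t$ steps. This holds because a vertical cut of $Q$ at abscissa $t$ is the mirror of the cut of $P_{\sigma}(\pi)$ at abscissa $2n-t$, and the matching pairs straddling a cut are nested with equal labels, so the two ordered stacks agree. The base case $t=0$ is the empty stack on both sides.

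For the inductive step I would split according to the type of the $(t+1)$-st step of $Q$, which is the swap of the $(2n-t)$-th step of $P_{\sigma}(\pi)$. If this step is an up step (so $P_{\sigma}(\pi)$ pops there, with label $b$), then pushing $b$ onto the current stack reconstructs exactly the content of $P_{\sigma}(\pi)$ after $2n-t-1$ steps, a state that genuinely occurred while sorting $\pi$ and therefore avoids $\sigma$; so the push is legal and greedy performs it, matching $Q$. If instead the step is a down step (so $P_{\sigma}(\pi)$ pushes there), I must show greedy is forced to pop. Here is the only use of $\sigma_1=\sigma_2$: the relevant pop of $P_{\sigma}(\pi)$ forms a valley $\D\U$, and by items~(5) and~(6) of Lemma~\ref{lemma_pathsigma_prop} the two steps of this valley carry the same label $c$, which is precisely the next element of $\gamma$ to be pushed. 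Tracing the occurrence of $\sigma$ that triggered that pop while sorting $\pi$ shows that pushing $c$ onto the current stack produces the subword $c\,c\,e_3\cdots e_k\simeq\sigma$, where $e_3,\dots,e_k$ is the lower part of the triggering occurrence, untouched because only pushes occur between the two relevant times. Hence greedy must pop, again matching $Q$; in both cases the invariant is maintained, and at $t=2n$ the two runs coincide, giving $Q=P_{\sigma}(\gamma)$, that is $P_{\sigma}(\pi)=\reverse(P_{\sigma}(\gamma))$.

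The main obstacle is the forced-pop case of part~(1): one has to certify that the deterministic greedy rule is obstructed at exactly the positions where $Q$ performs a down step, and this is where the equal-label property of valleys—valid precisely when $\sigma_1=\sigma_2$—is indispensable. Without it, the popped and incoming labels could differ, and the reconstructed occurrence of $\sigma$ need not materialize, so the greedy run could diverge from $Q$. Everything else is careful but routine tracking of matching steps and of stack contents under reversal of the encoding path.
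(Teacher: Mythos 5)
Your proof is correct, and for part~(1) it takes a genuinely different route from the paper's. The paper proceeds by induction on the number of valleys of~$P_{\sigma}(\pi)$: it locates the leftmost valley, excises the corresponding pyramid~$\U^j\D^j$ (standardizing the shortened word), applies the inductive hypothesis, and then argues that the pyramid gets re-inserted at the mirrored position when sorting~$\gamma$, the hypothesis~$\sigma_1=\sigma_2$ entering because the roles of the two equal valley labels are interchanged between the two runs. You instead set up a step-by-step bisimulation: an induction on the number of operations showing that the greedy run on~$\gamma$ reproduces~$\reverse(P_{\sigma}(\pi))$, with the invariant that its stack equals the mirrored stack content of the run on~$\pi$. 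Your two cases are sound: when the mirrored step is an up step, pushing is legal because it recreates a stack state that genuinely occurred while sorting~$\pi$, hence contains no occurrence of~$\sigma$; when it is a down step, the valley of~$P_{\sigma}(\pi)$ facing the cut has, by items~$5$ and~$6$ of Lemma~\ref{lemma_pathsigma_prop}, the same label~$c$ on both of its steps, so the incoming letter of~$\gamma$ is~$c$ and pushing it would recreate the occurrence~$c\,c\,e_3\cdots e_k\simeq\sigma$ (the elements~$e_3,\dots,e_k$ of the original triggering occurrence are still in the stack, since only pushes intervene between the triggering pop and the cut), forcing the pop. Both proofs hinge on exactly the same key fact---valleys carry equal labels when~$\sigma_1=\sigma_2$---but your local invariant buys a more uniform argument that avoids the excision, standardization and re-insertion bookkeeping of the paper, at the cost of the somewhat delicate verification that stack states mirror correctly across the cut; the paper's valley induction, in turn, makes the pyramid structure explicit. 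Your part~(2) is essentially the paper's own argument: reading the down-step labels of the reversed path gives~$\mapsigma{\sigma}(\gamma)=\reverse(\pi)$. One minor point you should add to the forced-pop case: if no up step of~$\reverse(P_{\sigma}(\pi))$ remains after the current position, the input~$\gamma$ is exhausted and pops are trivially forced, so the valley argument is only needed when a next letter exists.
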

\begin{proof} 
\begin{enumerate}
\item Suppose that~$\sigma_1=\sigma_2$. We proceed by induction on the number of valleys of~$P_{\sigma}(\pi)$. If~$P_{\sigma}(\pi)$ has zero valleys, then~$\pi$ avoids~$\reverse(\sigma)$ by point~$5.$ of Lemma~\ref{lemma_pathsigma_prop}. Therefore~$\mapsigma{\sigma}(\pi)=\reverse(\pi)$ and~$\gamma=\reverse^2(\pi)=\pi$. Since~$P_{\sigma}(\pi)=\U^n\D^n$ is a pyramid, the thesis follows immediately since each pyramid is equal to its reverse.

Now suppose that~$P_{\sigma}(\pi)$ has at least one valley. Let~$P_{\sigma}(\pi)=p_1\cdots p_{2n}$ and write~$P_{\sigma}(\pi)=\U^i\U^j\D^j\U^l\D Q$, where~$p_{i+2j},p_{i+2j+1}$ is the leftmost valley and~$Q=p_{i+2j+l+2}\cdots p_n$ is the remaining suffix of~$P_{\sigma}(\pi)$ (see Figure~\ref{figure_pathsigma_prefix}). Observe that the label of both~$p_{i+2j}$ and~$p_{i+2j+1}$ is equal to~$\pi_{i+1}$ as a consequence of items~$4.$, $5.$ and~$6.$ of Lemma~\ref{lemma_pathsigma_prop}. Item~$5.$ also implies that~$p_{i+2j+1}$ plays the role of~$\sigma_1$ in an occurrence of~$\sigma$ that triggers the restriction of the~$\sigma$-stack. More precisely, as soon as~$\pi_{i+j}$ is pushed (i.e. after the up step~$p_{i+j}$ in~$P_{\sigma}(\pi)$), $\pi_{i+j+1}$ is the next element of the input. Since the next segment of the path is~$\D^j$, $j$ pop operations are performed before pushing~$\pi_{i+j+1}$. This means that the element~$\pi_{i+1}$, corresponding to the last down step, plays the role of~$\sigma_2$ in an occurrence of~$\sigma$, while~$\pi_{i+j+1}$ plays the role of~$\sigma_1$. Moreover, there are~$k-2$ elements in the~$\sigma$-stack that play the role of~$\sigma_3,\dots,\sigma_k$. Since the elements in the~$\sigma$-stack correspond to the labels of the initial prefix~$\U^i$, $\pi_1\cdots\pi_i$ contains an occurrence of~$\sigma_k\cdots\sigma_3$ (claim~I). Then, after that~$j$ pop operations are performed, the~$\sigma$-stack contains~$\pi_i\cdots\pi_1$, reading from top to bottom, and the elements~$\pi_{i+j+1},\pi_{i+j+2},\dots,\pi_{i+j+l}$ are pushed (claim~II). Now, write:
$$
\pi=\underbrace{\pi_1\cdots\pi_i}_{A}\ \underbrace{\pi_i+1\cdots\pi_{i+j}}_{B}\ \underbrace{\pi_{i+j+1}\cdots\pi_{i+j+l}}_{C}\ \underbrace{\pi_{i+j+l+1}\cdots\pi_n}_{D},
$$
where the elements of~$A$ correspond to the initial prefix~$\U^i$ of~$P_{\sigma}(\pi)$, $B$ corresponds to~$\U^j$, $C$ to~$\U^l$ and~$D$ to the remaining up steps. Consider the string:
$$
ACD=\pi_1\cdots\pi_i\pi_{i+j+1}\cdots\pi_n,
$$
obtained by removing the segment~$B=\pi_{i+1}\cdots\pi_{i+j}$ from~$\pi$. Let~$\tilde{\pi}=\std(ACD)$ be the standardization of~$ACD$. Note that~$\pathsigma{\sigma}(\tilde{\pi})$ is obtained from~$\pathsigma{\sigma}(\pi)$ by cutting out the pyramid~$\U^j\D^j$, which corresponds to the removed segment~$B$. Indeed the elements contained in the~$\sigma$-stack when~$\pi_i$ enters are exactly the same as the elements contained in the~$\sigma$-stack when~$\pi_{i+j+1}$ is pushed, thus we can safely cut out the pyramid~$\U^j\D^j$ without affecting the sorting procedure. Therefore:
$$
\mapsigma{\sigma}(\pi)=\reverse(B)\mapsigma{\sigma}(\tilde{\pi})
$$
and
$$
\gamma=\reverse(\mapsigma{\sigma}(\pi))=\reverse(\mapsigma{\sigma}(\tilde{\pi}))B.
$$
Now, since~$P_{\sigma}(\tilde{\pi})$ has one valley less than~$P_{\sigma}(\pi)$, by the inductive hypothesis we have~$P_{\sigma}(\tilde{\pi})=\reverse(P_{\sigma}(\tilde{\gamma}))$, where~$\tilde{\gamma}=\reverse(\mapsigma{\sigma}(\tilde{\pi}))$. The only difference bewteen~$P_{\sigma}(\pi)$ and~$P_{\sigma}(\tilde{\pi})$ is the removed pyramid~$\U^j\D^j$. Therefore, if we show that~$P_{\sigma}(\gamma)$ is obtained from~$P_{\sigma}(\tilde{\gamma})$ by reinserting the pyramid~$\U^j\D^j$ in the same place, the thesis follows. We have~$\gamma=\reverse(\mapsigma{\sigma}(\tilde{\pi}))B$ and~$\tilde{\gamma}=\reverse(\mapsigma{\sigma}(\tilde{\pi}))$. Consider the last push operation performed by the~$\sigma$-stack when processing~$\tilde{\gamma}$, which corresponds to the last up step of~$\pathsigma{\sigma}(\tilde{\gamma})$. Note that, since~$P_{\sigma}(\tilde{\pi})=\reverse(P_{\sigma}(\tilde{\gamma})$, this is also the first down step of~$P_{\sigma}(\tilde{\pi})$, and thus the first pop operation performed when processing~$\tilde{\pi}$. Therefore the elements contained in the~$\sigma$-stack when the last push operation is performed, while processing~$\tilde{\gamma}$, are~$\pi_{i+j+l}\cdots\pi_{i+j+1}\pi_i\cdots\pi_1$, reading from top to bottom. If we sort~$\gamma$ instead of~$\tilde{\gamma}$, we have to process the additional segment~$B$. Now, the first element of~$B$ is~$\pi_{i+1}$. As a consequence of claim~I, $\pi_{i+1}$ realizes an occurrence of~$\sigma$ together with~$\pi_{i+j+1}$ (which plays the role of~$\sigma_2$) and other~$k-2$ elements in~$\pi_1\cdots\pi_i$. The only difference is that, contrary to what happened when sorting~$\pi$, the role of~$\pi_{i+1}$ and~$\pi_{i+j+1}$ are interchanged: here the hypothesis~$\sigma_1=\sigma_2$ is relevant. As a result, before pushing the first element~$\pi_{i+1}$ of~$B$, we have to pop each element of the~$\sigma$-stack up to~$\pi_{i+j+1}$, $\pi_{i+j+1}$ included. Then, the~$\sigma$-stack contains~$\pi_i\cdots\pi_1$, reading from top to bottom. Therefore we can push~$\pi_{i+1}=\pi_{i+j+1}$ and the remaining elements of~$B$, this time because of claim~II. This means that~$P_{\sigma}(\gamma)$ is obtained by inserting a pyramid~$\U^j\D^j$ immediately before the last~$i$ down steps of~$P_{\sigma}(\tilde{\gamma})$, as desired.

\item By hypothesis, $P_{\sigma}(\gamma)=\reverse(P_{\sigma}(\pi))$, therefore the word~$w$ obtained by reading the labels of the down steps of~$P_{\sigma}(\gamma)$ (from left to right) is~$w=\reverse(\pi)$. By definition of~$\pathsigma{\sigma}(\gamma)$, we also have~$w=\mapsigma{\sigma}(\gamma)$. Therefore:
$$
\reverse(\pi)=\mapsigma{\sigma}(\gamma)=\mapsigma{\sigma}(\reverse(\mapsigma{\sigma}(\pi)))$$
and the thesis follows by applying the reverse operator to both sides of the equality.
\end{enumerate}
\end{proof} 

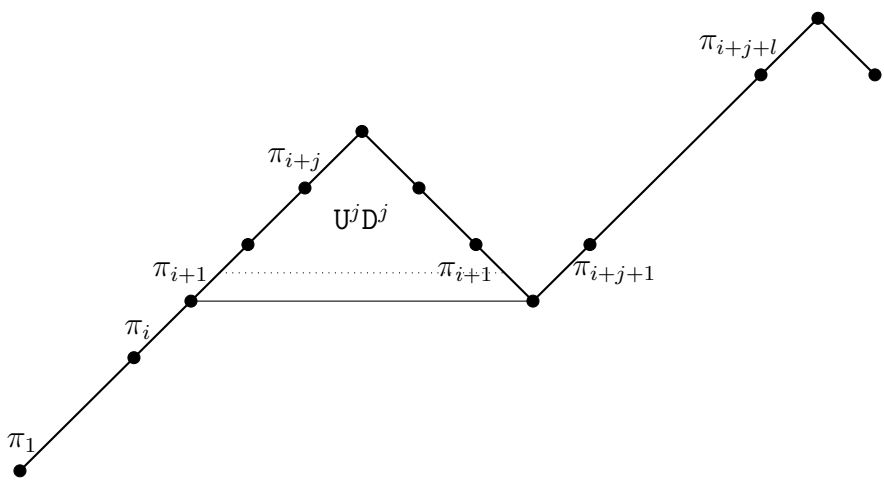
\begin{figure}
\centering
\begin{tikzpicture}[scale=0.75, baseline=20pt]
\draw [thick] (0,0) -- (6,6);
\draw [thick] (6,6) -- (9,3);
\draw [thick] (9,3) -- (14,8);
\draw [thick] (14,8) -- (15,7);
\filldraw (0,0) circle (3pt);
\filldraw (2,2) circle (3pt);
\filldraw (3,3) circle (3pt);
\filldraw (4,4) circle (3pt);
\filldraw (5,5) circle (3pt);
\filldraw (6,6) circle (3pt);
\filldraw (7,5) circle (3pt);
\filldraw (8,4) circle (3pt);
\filldraw (9,3) circle (3pt);
\filldraw (10,4) circle (3pt);
\filldraw (13,7) circle (3pt);
\filldraw (14,8) circle (3pt);
\filldraw (15,7) circle (3pt);
\node at (6,4.5) {$\U^j\D^j$};
\node[above,left] at (2.5,2.5){$\pi_{i}$};
\node[above,left] at (5.5,5.5){$\pi_{i+j}$};
\node[above,left] at (13.5,7.5){$\pi_{i+j+l}$};
\node[above,left] at (0.5,0.5){$\pi_{1}$};
\node[above,left] at (3.5,3.5){$\pi_{i+1}$};
\node[left] at (8.5,3.5){$\pi_{i+1}$};
\node[right] at (9.5,3.5){$\pi_{i+j+1}$};
\draw [dotted] (3.5,3.5) -- (8.5,3.5);
\draw [dotted] (1,1) -- (2,2);
\draw [thin] (3,3) -- (9,3);
\end{tikzpicture}
\caption[The path~$P_{\sigma}(\pi)$ of Lemma~\ref{lemma_pathsigma_reverse}.]{The (prefix of the) path~$P_{\sigma}(\pi)$ mentioned in the proof of Lemma~\ref{lemma_pathsigma_reverse}.}\label{figure_pathsigma_prefix}
\end{figure}

As a consequence of what proved so far in this section, we obtain the desired characterization of fully bijective Cayley permutations.

\begin{theorem}\label{theorem_pathsigma_bij}
Let~$\sigma=\sigma_1\cdots\sigma_k\in\Cay$. Then~$\sigma$ is fully bijective if and only if~$\sigma_1=\sigma_2$.
\end{theorem}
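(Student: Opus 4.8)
The plan is to prove the two implications separately: the forward implication follows immediately from the path machinery already developed, while the reverse implication reduces to producing an explicit pair of colliding inputs.

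For the sufficiency ($\sigma_1=\sigma_2 \Rightarrow$ fully bijective) I would invoke Lemma~\ref{lemma_pathsigma_reverse}. Its first item, under the hypothesis $\sigma_1=\sigma_2$, gives $P_{\sigma}(\pi)=\reverse(P_{\sigma}(\gamma))$ for every $\pi\in\Cay$, where $\gamma=\reverse(\mapsigma{\sigma}(\pi))$; its second item then yields $(\reverse\circ\mapsigma{\sigma})^2(\pi)=\pi$. Hence $\reverse\circ\mapsigma{\sigma}$ is an involution on $\Cay$, in particular a bijection, and since $\reverse$ is itself a bijection on $\Cay$, the operator $\mapsigma{\sigma}=\reverse\circ(\reverse\circ\mapsigma{\sigma})$ is bijective. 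This is exactly the argument already used for $\sigma=11$ in Theorem~\ref{theorem_11_bij}, now available for every $\sigma$ whose first two letters coincide.

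For the necessity ($\sigma_1\neq\sigma_2 \Rightarrow$ not fully bijective) I would exhibit two distinct Cayley permutations of length $k$ with the same image, contradicting injectivity. The two witnesses are $\reverse(\sigma)$ and $\reverse(\hat{\sigma})$. A direct run of the $\sigma$-stack on $\reverse(\sigma)=\sigma_k\cdots\sigma_1$ (the first $k-1$ letters are pushed, since at those moments the stack holds fewer than $k$ elements and cannot contain $\sigma$; the arrival of $\sigma_1$ forces the pop of $\sigma_2$; then $\sigma_1$ and the remaining letters are released) shows $\mapsigma{\sigma}(\reverse(\sigma))=\hat{\sigma}$, in the spirit of Lemma~\ref{lemma_reverse_outputs_hat} and Remark~\ref{remark_hat_Cayley}. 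For the second witness the key observation is that, because $\sigma$ and $\hat{\sigma}$ are both Cayley permutations (hence already standardized), order-isomorphism between words of length $k$ collapses to equality: $\hat{\sigma}$ contains $\sigma$ if and only if $\hat{\sigma}=\sigma$, i.e. if and only if $\sigma_1=\sigma_2$. Since $\sigma_1\neq\sigma_2$, $\hat{\sigma}$ avoids $\sigma$, so $\reverse(\hat{\sigma})$ avoids $\reverse(\sigma)$, and Remark~\ref{remark_hat_Cayley} gives $\mapsigma{\sigma}(\reverse(\hat{\sigma}))=\reverse(\reverse(\hat{\sigma}))=\hat{\sigma}$. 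Finally $\reverse(\sigma)\neq\reverse(\hat{\sigma})$ because $\sigma\neq\hat{\sigma}$, again by $\sigma_1\neq\sigma_2$. Thus $\mapsigma{\sigma}$ sends two distinct inputs to $\hat{\sigma}$ and cannot be bijective.

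The step I expect to be the main obstacle is the necessity direction, specifically ruling out that the two candidate witnesses coincide or that $\hat{\sigma}$ might contain $\sigma$; both pitfalls are dissolved by the single standardization remark above, so the real work is to phrase that cleanly and to verify the one explicit computation $\mapsigma{\sigma}(\reverse(\sigma))=\hat{\sigma}$ with care. To guard against the palindromic situation (where $\reverse(\sigma)=\sigma$, so the naive "$\sigma$ versus $\reverse(\sigma)$" collision would fail) I would check the example $\sigma=121$, whose witnesses are $\reverse(\sigma)=121$ and $\reverse(\hat{\sigma})=112$, both mapped to $\hat{\sigma}=211$, confirming that using $\reverse(\hat{\sigma})$ rather than $\sigma$ as the second input resolves that case.
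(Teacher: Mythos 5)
Your proposal is correct and follows essentially the same route as the paper: the sufficiency direction invokes Lemma~\ref{lemma_pathsigma_reverse} to get that $\reverse\circ\mapsigma{\sigma}$ is an involution, and the necessity direction exhibits the same colliding pair $\reverse(\sigma)$ and $\reverse(\hat{\sigma})$, both mapping to $\hat{\sigma}$. The only difference is that you spell out the details the paper leaves as ``easy to realize'' (the stack computation giving $\mapsigma{\sigma}(\reverse(\sigma))=\hat{\sigma}$, and the standardization argument showing $\hat{\sigma}$ avoids $\sigma$ so that Remark~\ref{remark_hat_Cayley} applies to $\reverse(\hat{\sigma})$), and these details are verified correctly.
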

\begin{proof}
Suppose that~$\sigma_1\neq\sigma_2$. Then~$\hat{\sigma}\neq\sigma$ and so~$\reverse(\sigma)\neq\reverse(\hat{\sigma})$. Finally, it is easy to realize that:
$$
\mapsigma{\sigma}(\reverse(\sigma))=\hat{\sigma}=\mapsigma{\sigma}((\reverse(\hat{\sigma}))),
$$
therefore~$\mapsigma{\sigma}$ is not injective on~$\Cay$.

Conversely, suppose that~$\sigma_1=\sigma_2$. By Lemma~\ref{lemma_pathsigma_reverse}, we have that~$(\reverse\circ\mapsigma{\sigma})^2$ is the identity on~$\Cay$, therefore~$\reverse\circ\mapsigma{\sigma}$ is bijective. Finally, since the reverse map~$\reverse$ is bijective, $\mapsigma{\sigma}$ is a bijection too, as desired.
\end{proof}

\begin{remark}\label{remark_R_circ_sigma_bij}
As we pointed out in the proof of Lemma~\ref{lemma_pathsigma_reverse}, the hypothesis~$\sigma_1=\sigma_2$ guarantees that the Dyck path obtained when sorting~$\pi$ is equal to the reverse of the path obtained when sorting~$\reverse(\mapsigma{\sigma}(\pi))$. This is sufficient for the operator~$\mapsigma{\sigma}$ in order to be bijective on~$\Cay$. More precisely, the crucial property is that the roles of~$\sigma_1$ and~$\sigma_2$ are interchanged in the paths associated to~$\pi$ and~$\reverse(\mapsigma{\sigma}(\pi))$: this is precisely where the hypothesis~$\sigma_1=\sigma_2$ plays a role. Due to this reason, an analogous argument can be repeated on stacks avoiding the pair of patterns~$(\sigma,\hat{\sigma})$. Indeed every time an occurrence of~$\sigma$ triggers the~$(\sigma,\hat{\sigma})$-stack on input~$\pi$, an occurrence of~$\hat{\sigma}$, where the roles of~$\sigma_1$ and~$\sigma_2$ are interchanged, triggers the~$(\sigma,\hat{\sigma})$-stack on input~$\reverse(\out{\sigma_1,\sigma_2}(\pi))$. The converse statement is true as well. Therefore the operator~$\mapsigma{\sigma,\hat{\sigma}}$ is bijective on~$\Cay$. Similarly, since classical permutations are mapped into classical permutations by any operator~$\mapsigma{\Sigma}$, the operator~$\mapsigma{\sigma,\hat{\sigma}}$ associated to the classical~$(\sigma,\hat{\sigma})$-machine is bijective on~$\Perm$. This result was generalized by Berlow in~\cite{Ber}: the map~$\mapsigma{\Sigma}$ is bijective if and only if for every~$\sigma\in\Sigma$ we have~$\hat{\sigma}\in\Sigma$ as well.
\end{remark}

\begin{remark}\label{remark_enumeration_via_paths}
The encoding of the action of a~$\sigma$-stack as Dyck paths could theoretically lead to the enumeration of~$\Sort^{\Cay}(\sigma)$. Indeed, due to Lemma~\ref{lemma_pathsigma_prop}, the number of~$\sigma$-sortable Cayley permutations of length~$n$ is equal to the number of labeled Dyck paths of semilength~$n$ such that:
\begin{itemize}
\item Reading the labels of the down steps from left to right yields a~$231$-avoiding Cayley permutations.
\item Each valley of the path corresponds to an element of the input permutation that triggers the restriction of the~$\sigma$-stack, as described in item~$5.$ of Lemma~\ref{lemma_pathsigma_prop}.
\end{itemize}
A natural question would thus be the following. Given a Dyck path~$P$, are there any parameters of~$P$ that allow us to describe (and enumerate) the set of Cayley permutations that can be used to suitably label~$P$? In other words, can we describe those Cayley permutations where the action of the~$\sigma$-stack is encoded by the same Dyck path~$P$?
\end{remark}

\section{\texorpdfstring{The~$\sigma$-machine on ascent sequences}{The sigma-machine on ascent sequences}}\label{section_asc_seq_intro}

We spend the last section of this chapter by discussing~$\sigma$-machines on classical ascent sequences~$\Ascseq$ and modified ascent sequences~$\Modasc$ (see Section~\ref{section_sequences_integers}). Recall that ascent sequences are a bijective encoding of Fishburn permutations~$\Fish=\Perm(\fishpattern)$, where~$\fishpattern$ is the bivincular pattern~$\fishpattern=(231,\lbrace 1\rbrace,\lbrace 1\rbrace)$. The maps that link the sets~$\Ascseq$, $\Modasc$ and~$\Fish$, as well as the pattern~$\fishpattern$, are depicted again (for convenience) in Figure~\ref{figure_fishpat_diagram_ascseq_recall}.

\begin{figure}
\begin{minipage}{7cm}
\centering
$
\fishpattern=
\begin{DrawPerm}
\meshBox{(1,0)}{(2,4)}
\meshBox{(0,1)}{(4,2)}
\fillPerm{2,3,1}{3.99}{3.99}
\end{DrawPerm}
$
\end{minipage}
\begin{minipage}{7cm}
\centering
$
\xymatrix{
\Ascseq \ar@{->}[rr]^{\phi} \ar@{->}[d]_{\mod}& & \Fish\\
\Modasc\ar@{->}[urr]_{\phi'} & &\\
}
$
\end{minipage}
\caption[]{The pattern~$\fishpattern$, on the left. How the bijections~$\phi$, $\mod$ and~$\phi'$ are related, on the right.}\label{figure_fishpat_diagram_ascseq_recall}
\end{figure}

Recently, Claesson and the current author~\cite{CeCl} initiated the development of a theory of transport of patterns between Fishburn permutations and ascent sequences. One of their goals is to achieve a better understanding of the notion of pattern involvement on the sets~$\Ascseq$ and~$\Modasc$, which proved to be rather challenging. The transport relies on a high-level generalization of~$\phi'$, which they call the \textit{Burge transpose}. One of the main results of~\cite{CeCl} is an explicit construction of a set of patterns~$\mathcal{B}(p)$ such that~$\phi'$ maps the set~$\Modasc(\mathcal{B}(p))$ of modified ascent sequences avoiding every pattern in~$\mathcal{B}(p)$ to the set~$\Fish(p)$ of Fishburn permutations avoiding~$p$. The set~$\mathcal{B}(p)$ is called the \textit{Fishburn basis} of~$p$. We refer the reader to~\cite{CeCl} for the definition of Burge transpose and for a detailed construction of~$\mathcal{B}(p)$.

\begin{theorem}[\cite{CeCl}, Transport of patterns from~$\Fish$ to~$\Modasc$]\label{theorem_transport_modasc_fish}
For any permutation~$p$, we have:
$$
\Fish(p)=\phi'\bigl(\Modasc(\mathcal{B}(p))\bigr).
$$
Therefore~$\Fish(p)$ and~$\Modasc(\mathcal{B}(p))$ are Wilf-equivalent subsets of~$\Cay$.
\end{theorem}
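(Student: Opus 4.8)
The plan is to reduce the set identity to a pointwise statement about the bijection $\phi'$ and then to leverage the Burge transpose to control how patterns are transported. Since $\phi'\colon\Modasc\to\Fish$ is a size-preserving bijection, the equality $\Fish(p)=\phi'\bigl(\Modasc(\mathcal{B}(p))\bigr)$ is equivalent to the elementwise claim: for every $x\in\Modasc$, the Fishburn permutation $\phi'(x)$ avoids $p$ if and only if $x$ avoids every pattern in $\mathcal{B}(p)$. Taking contrapositives, it suffices to establish the \emph{transport lemma}: $\phi'(x)$ contains an occurrence of $p$ if and only if $x$ contains an occurrence of some pattern $q\in\mathcal{B}(p)$. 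Once this is proved, the Wilf-equivalence is immediate, because $\phi'$ preserves length and hence $|\Fish_n(p)|=|\Modasc_n(\mathcal{B}(p))|$ for all $n$.

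First I would recall the encoding underlying $\phi'$: a modified ascent sequence $x$ is represented as a Burge word, i.e.\ a two-row array (biword) whose columns are sorted according to a fixed total order, and $\phi'(x)$ is read off from the Burge transpose of this array. The crucial structural feature is that the transpose acts on columns, so a choice of $k$ columns in the transposed array corresponds bijectively to a choice of $k$ columns in the original array. This is what makes pattern transport local enough to be tractable: an occurrence of $p$ in $\phi'(x)$ singles out a sub-biword, and pulling that sub-biword back through the transpose yields a sub-biword of $x$ whose standardization is a well-defined pattern.

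The definition of the Fishburn basis is then exactly engineered for this purpose. I would define $\mathcal{B}(p)$ as the collection of all standardizations of the transposed sub-biwords that can arise from a single occurrence of $p$; reducing this (possibly redundant) collection to its minimal patterns under containment gives the finite antichain $\mathcal{B}(p)$. With this definition the reverse direction of the transport lemma holds essentially by construction: if $x$ contains some $q\in\mathcal{B}(p)$, then applying the transpose to the witnessing columns recreates an occurrence of $p$ in $\phi'(x)$. The forward direction requires showing that \emph{every} occurrence of $p$ in $\phi'(x)$ pulls back to an occurrence of one of the finitely many patterns collected in $\mathcal{B}(p)$, with no occurrence escaping the list.

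The main obstacle, and where the real work lies, is verifying that the transport is faithful in both directions despite the subtleties of working inside $\Cay$ rather than $\Perm$. Because modified ascent sequences carry repeated letters and because the Fishburn condition is bivincular (the pattern $\fishpattern=(231,\{1\},\{1\})$ imposes adjacency constraints on both positions and values), one must check that the Burge transpose correctly tracks equalities of letters, the relative order of equal entries, and the adjacency data that distinguish genuine occurrences from near-misses. Concretely, the hard part will be proving that the set of transposed patterns is simultaneously complete and non-redundant: that it neither omits an occurrence (which would make $\phi'\bigl(\Modasc(\mathcal{B}(p))\bigr)$ too small) nor introduces a spurious pattern whose presence in $x$ fails to force an occurrence of $p$ in $\phi'(x)$ (which would make it too large). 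Establishing this closure, using the precise combinatorics of the Burge transpose developed in~\cite{CeCl}, is the technical core of the argument; the remaining steps are then bookkeeping.
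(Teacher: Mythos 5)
This theorem is never proved in the thesis: it is imported verbatim from~\cite{CeCl}, and the text explicitly refers the reader to that paper both for the definition of the Burge transpose and for the construction of the Fishburn basis~$\mathcal{B}(p)$. So there is no internal proof to compare your attempt against; the only fair benchmark is what a complete proof would require. Judged that way, your proposal is a plan rather than a proof. Your opening reduction is correct and essentially free: since~$\phi'\colon\Modasc\to\Fish$ is a size-preserving bijection, the set identity~$\Fish(p)=\phi'\bigl(\Modasc(\mathcal{B}(p))\bigr)$ is equivalent to the pointwise claim that~$\phi'(x)$ avoids~$p$ if and only if~$x$ avoids every pattern of~$\mathcal{B}(p)$, and the Wilf-equivalence then follows from length preservation. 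But the two ingredients carrying all the mathematical content --- an actual construction of~$\mathcal{B}(p)$ and the two-directional transport lemma --- are respectively circular and deferred in your write-up.

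The concrete gap is this. You define~$\mathcal{B}(p)$ as ``all standardizations of the transposed sub-biwords that can arise from a single occurrence of~$p$.'' With that definition, the direction that holds by fiat is the one you call hard (every occurrence of~$p$ in~$\phi'(x)$ pulls back, by construction, to some member of the list), whereas the direction you claim holds ``essentially by construction'' --- if~$x$ contains some~$q\in\mathcal{B}(p)$ then~$\phi'(x)$ contains~$p$ --- is exactly where such an argument can fail. The Burge transpose is not a local operation: it acts on the \emph{whole} biword of~$x$ and re-sorts all of its columns, so the positions that the columns witnessing~$q$ occupy inside~$\phi'(x)$ are not, in general, obtained by transposing that sub-biword in isolation; the surrounding letters of~$x$ (in particular repeated letters, whose ties govern how equal entries are ordered after sorting) can alter the relative order of precisely those columns. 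Membership~$q\in\mathcal{B}(p)$ only certifies that in \emph{some} ambient word a sub-biword standardizing to~$q$ transposes onto an occurrence of~$p$, not that it does so in \emph{every} ambient word containing~$q$. Bridging that quantifier gap is the substance of the theorem in~\cite{CeCl}, and nothing in your outline supplies it. (Your finiteness claim for~$\mathcal{B}(p)$ is fine, by contrast: pull-backs of an occurrence of~$p$ consist of~$|p|$ columns, so all candidate patterns have length~$|p|$ and there are only finitely many of them.)
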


The current author aims to add one more piece to the general picture by analyzing the behavior of~$\sigma$-machines on ascent and modified ascent sequences. It is worth noticing that, in some cases, we can use the results obtained in the previous section on Cayley permutations, as we show in the following result.

\begin{theorem}\label{theorem_Cayley_class_to_ascseq}
Let~$X\in\lbrace\Ascseq,\Modasc\rbrace$ and let~$\sigma\in\Cay\cap X$. If~$\Sort^{\Cay}(\sigma)=\Cay(A)$, for a set of patterns~$A$, then~$\Sort^{X}(\sigma)=X(A)$.
\end{theorem}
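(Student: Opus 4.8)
The plan is to reduce the $\sigma$-sortability of a word in $X$ to the $\sigma$-sortability of its standardization in $\Cay$, exploiting the fact that the $\sigma$-stack operates purely on the relative order of the elements it processes. Fix $\pi\in X$ and let $\std(\pi)\in\Cay$ be its standardization (note $\std(\pi)$ is always a Cayley permutation, which is all we shall need). Since the $\sigma$-stack is right-greedy and performs a pop exactly when pushing the next input element would create an occurrence of $\sigma$ read from top to bottom, and since the existence of such an occurrence depends only on the relative order of the next element and the current stack content, the sequence of push and pop operations performed on $\pi$ and on $\std(\pi)$ coincides step by step. First I would make this precise by induction on the number of processed elements, using the fact recalled in Section~\ref{section_sequences_integers} that $x_{i_1}\cdots x_{i_k}$ is an occurrence of a Cayley permutation $y$ in $x$ if and only if $\std(x_{i_1}\cdots x_{i_k})=y$. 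The conclusion is the commutation identity
$$
\out{\sigma}(\std(\pi))=\std\bigl(\out{\sigma}(\pi)\bigr),
$$
which holds because the output is merely a rearrangement of the input and standardization is a value-relabeling that commutes with rearrangement.

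From this identity the preservation of sortability follows at once. By Theorem~\ref{theorem_hare_stacksort}, $\pi$ is $\sigma$-sortable if and only if $\out{\sigma}(\pi)$ avoids $231$; and a word avoids $231$ if and only if its standardization does, because standardization preserves the occurrences of every pattern. Hence $\out{\sigma}(\pi)$ avoids $231$ exactly when $\std(\out{\sigma}(\pi))=\out{\sigma}(\std(\pi))$ avoids $231$, which by the same theorem applied in $\Cay$ means precisely that $\std(\pi)\in\Sort^{\Cay}(\sigma)$. Therefore, for every $\pi\in X$,
$$
\pi\in\Sort^{X}(\sigma)\iff\std(\pi)\in\Sort^{\Cay}(\sigma).
$$

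It then remains to transport the hypothesis $\Sort^{\Cay}(\sigma)=\Cay(A)$ through standardization. Using again that $\pi$ contains a pattern $a$ if and only if $\std(\pi)$ contains $a$, we get $\std(\pi)\in\Cay(A)$ if and only if $\pi$ avoids every pattern of $A$, that is, $\pi\in X(A)$. Combining this with the equivalence above yields $\Sort^{X}(\sigma)=X(A)$, as desired.

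The one point that deserves care, and which I expect to be the only real subtlety, is that $\std(\pi)$ need not belong to $X$: indeed $\Ascseq$ is not contained in $\Cay$, so standardization does not map $X$ into itself. This causes no difficulty, however, because the argument applies the hypothesis to $\std(\pi)$ solely as an element of $\Cay$ (which it always is), never as an element of $X$; membership in $\Sort^{\Cay}(\sigma)=\Cay(A)$ is tested only through the two standardization-invariant properties above. The step-by-step coincidence of the stack operations on $\pi$ and $\std(\pi)$ is the most delicate part to write out, but it is entirely routine given the definitions.
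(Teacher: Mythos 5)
Your proposal is correct and follows essentially the same route as the paper's own proof: standardize the word, observe that the $\sigma$-stack performs exactly the same operations on $\pi$ and on $\std(\pi)$ since those operations depend only on relative order, and then transfer both $\sigma$-sortability and avoidance of $A$ through standardization. Your explicit commutation identity $\out{\sigma}(\std(\pi))=\std(\out{\sigma}(\pi))$ and the remark that $\std(\pi)$ is used only as an element of $\Cay$ (never of $X$) are just slightly more detailed formulations of what the paper states in passing.
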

\begin{proof}
Let~$w\in X$ and let~$w'=\std(w)$ be the standardization of~$w$. Notice that~$w'\in\Cay$. It is easy to observe that, for every pattern~$y$, $w$ contains~$y$ if and only if~$w'$ contains~$y$. More precisely, any subsequence~$w_{i_1}\cdots w_{i_k}$ of~$w$ is order isomorphic to the corresponding subsequence~$w'_{i_1}\cdots w'_{i_k}$ of~$w'=\std(w)$. An immediate consequence is that the action of the~$\sigma$-stack on~$w$ is identical to the action of the~$\sigma$-stack on~$w'$. Therefore~$w$ is~$\sigma$-sortable if and only if~$w'$ is~$\sigma$-sortable, which in turn is equivalent to~$w'\in\Cay(A)$ by hypothesis. Finally, $w'\in\Cay(A)$ if and only if~$w\in X(A)$, thus the thesis follows.
\end{proof}

Due to Corollary~\ref{corollary_class_vs_nonclass_Cayley} and Theorem~\ref{theorem_Cayley_class_to_ascseq}, if~$\sigma\in\Cay\cap\Ascseq$ and~$\hat{\sigma}$ contains~$231$, then~$\Sort^{\Ascseq}(\sigma)$ is a class with basis~$\lbrace 132,\reverse(\sigma)\rbrace$. An analogous result holds for modified ascent sequences. On the other hand, if~$\Sort^{\Cay}(\sigma)$ is not a class, not necessarily the same holds for~$\Sort^{\Ascseq}(\sigma)$ and~$\Sort^{\Modasc}(\sigma)$.

\subsection{Classical ascent sequences}\label{section_asc_seq_stack}

We start by recalling some useful results from the literature. The following lemma was proved by Duncan and Steingrimsson in~\cite{DS}.

\begin{lemma}\cite{DS}\label{lemma_ascseq_is_RGF}
The set~$\Ascseq(y)$ consists solely of {\rgfs} if and only if~$y\le 12123$.
\end{lemma}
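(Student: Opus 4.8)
The statement to prove is Lemma~\ref{lemma_ascseq_is_RGF}: the set $\Ascseq(y)$ consists solely of {\rgfs} if and only if $y\le 12123$. Recall that an ascent sequence $x=x_1\cdots x_n$ satisfies $x_1=1$ and $x_{i+1}\le 2+\asc(x_1\cdots x_i)$, whereas a {\rgf} satisfies the stronger constraint $x_{i+1}\le 1+\max\{x_1,\dots,x_i\}$. Since $\RGF\subseteq\Cay$ and every {\rgf} is an ascent sequence, the distinction is precisely that an ascent sequence may ``jump'' past the current maximum by exploiting the ascent count. Thus the claim is really a characterization of which forbidden pattern $y$ forces every $y$-avoiding ascent sequence to obey the {\rgf} restriction.

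The plan is to prove the two implications separately and to reduce everything to the single critical pattern $12123$. For the harder direction---that $y\le 12123$ implies $\Ascseq(y)\subseteq\RGF$---I would argue the contrapositive: take an ascent sequence $x$ that is \emph{not} a {\rgf} and exhibit an occurrence of $12123$ (hence of every $y\le 12123$) inside $x$. First I would isolate the leftmost violation of the {\rgf} condition, i.e.\ the smallest index $i$ with $x_{i+1}>1+\max\{x_1,\dots,x_i\}$; call this entry $b=x_{i+1}$ and note $b\ge 2+\max\{x_1,\dots,x_i\}$ since all entries are integers. The key structural observation is that, because $x$ is an ascent sequence, the bound $b\le 2+\asc(x_1\cdots x_i)$ forces the prefix $x_1\cdots x_i$ to contain at least $b-2\ge \max\{x_1,\dots,x_i\}$ ascents. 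The hard part will be converting this supply of ascents into an explicit copy of the length-five pattern $12123$: I would show that the abundance of ascents below the value $b$, together with the fact that $b$ exceeds every earlier entry by at least two, lets me select indices realizing the shape $12123$ (two earlier ascents whose values straddle appropriately, capped by the large jump), using the minimality of $i$ to guarantee the needed relative order. This extraction step is the main obstacle, since it requires a careful bookkeeping of where the ascents sit relative to the gap that $b$ leaps over.

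For the converse direction---that $y\not\le 12123$ implies $\Ascseq(y)\not\subseteq\RGF$---I would produce, for each such $y$, an explicit ascent sequence that avoids $y$ yet fails to be a {\rgf}. The natural candidate is a short ascent sequence built to contain the minimal non-{\rgf} jump, for instance a sequence of the form $1\,2\,\cdots$ that accumulates enough ascents to legally place an entry two above the current maximum, thereby violating the {\rgf} condition while keeping the sequence as pattern-sparse as possible. Since $12123$ is a specific pattern, ``$y\not\le 12123$'' means $y$ is not contained in $12123$; I would verify that my witness sequence avoids any such $y$ by a direct containment check, exploiting that the witness can be taken of length at most five and that its only length-five pattern is essentially $12123$ itself. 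The cleanest route is to observe that $12123$ is the unique obstruction, so it suffices to check that the witness contains $12123$ (establishing failure of the {\rgf} property) and contains no pattern strictly above $12123$ in the containment order.

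Finally I would assemble the two halves into the stated equivalence, noting that the appearance of the distinguished pattern $12123$ on both sides is what makes the threshold sharp: it is simultaneously the largest pattern whose avoidance still forces the {\rgf} structure and the smallest whose presence is unavoidable in any non-{\rgf} ascent sequence. I expect the first (contrapositive) implication to carry the technical weight, while the converse reduces to inspecting one or two small explicit examples; throughout I would lean on the elementary fact, already used implicitly in the excerpt, that containment of patterns is transitive, so that exhibiting $12123$ automatically yields every $y\le 12123$.
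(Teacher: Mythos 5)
A preliminary remark: the paper does not prove this lemma at all --- it imports it from~\cite{DS} --- so your proposal has to be judged as a self-contained proof. Your converse direction is essentially correct. The witness you gesture at is $12124$: it is an ascent sequence (its prefix $1212$ has two ascents, so a final $4$ is legal), it is not a {\rgf} (the final $4$ exceeds $1+\max(1212)=3$), and since $\std(12124)=12123$, the patterns contained in $12124$ are exactly those contained in $12123$, so it avoids every $y\not\le 12123$. Two corrections to your wording, though: containing $12123$ does not ``establish failure of the {\rgf} property'', because $12123$ is itself a {\rgf} --- the witness fails to be a {\rgf} by construction; and it is not enough that the witness contain ``no pattern strictly above $12123$'', you need that every pattern it contains lies \emph{below} $12123$, which also rules out patterns incomparable with $12123$ (such as $1111$).

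The genuine gap is in the hard direction, exactly at the step you call ``the main obstacle'' and then do not carry out. Your reduction is right: if $x$ is an ascent sequence whose leftmost violation of the {\rgf} condition is $x_{i+1}=b$, then the prefix $x_1\cdots x_i$ is itself a {\rgf} with maximum $M$ and at least $b-2\ge M$ ascents, and since $b$ exceeds every letter of the prefix, any occurrence of $1212$ in the prefix extends to an occurrence of $12123$ in $x$. But the assertion that such a prefix must contain $1212$ is the entire content of the lemma, and ``two earlier ascents whose values straddle appropriately'' plus ``careful bookkeeping'' is not a proof of it. The missing argument is short but it is an idea, not bookkeeping: the letter $1$ can never be an ascent top, so ascent tops take values in $\{2,\dots,M\}$; since there are at least $M$ ascents, by pigeonhole some value $v$ is an ascent top twice, say $x_p<v=x_{p+1}$ and $x_q<v=x_{q+1}$ with $p+1<q$ (note $q\neq p+1$, since $x_q<v=x_{p+1}$). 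By the {\rgf} condition at position $p+1$ we have $x_q\le v-1\le\max\{x_1,\dots,x_p\}$, and since in a {\rgf} the running maximum grows by at most one at each step, every value up to that maximum --- in particular the value $x_q$ --- already occurs at some position $r\le p$. Then $x_rx_{p+1}x_qx_{q+1}$ is an occurrence of $1212$, as needed. Without the pigeonhole on ascent tops and this first-occurrence property of {\rgfs}, your outline cannot be completed, so as written it does not prove the statement.
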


\begin{lemma}\label{lemma_avoids123_maxatmost2}
Let~$x\in\Ascseq(123)$. Then~$\max(x)\le 2$.
\end{lemma}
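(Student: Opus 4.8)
The claim is that any ascent sequence avoiding $123$ has maximum value at most $2$. Recall that an ascent sequence satisfies $x_1=1$ and $x_{i+1}\le 2+\asc(x_1\cdots x_i)$, so the growth of the entries is controlled entirely by the number of ascents accumulated so far. The plan is to prove the contrapositive in spirit: I will show that in order for an entry to reach the value $3$, the sequence must already exhibit an occurrence of the pattern $123$.

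First I would argue that, in order to have $\max(x)\ge 3$, there must be some position $i$ with $x_{i+1}\ge 3$. By the defining inequality of ascent sequences, $x_{i+1}\le 2+\asc(x_1\cdots x_i)$, so $x_{i+1}\ge 3$ forces $\asc(x_1\cdots x_i)\ge 1$; that is, the prefix $x_1\cdots x_i$ must contain at least one ascent. The key step is then to locate such an ascent and combine it with the large entry $x_{i+1}$. Let $j$ be a position where $x_j<x_{j+1}$ is an ascent inside the prefix $x_1\cdots x_i$ (so $j+1\le i<i+1$). I would then examine the three values $x_j$, $x_{j+1}$ and $x_{i+1}$ at the increasing positions $j<j+1\le i+1$ and show that they form an occurrence of $123$.

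The point requiring care is verifying the strict inequalities $x_j<x_{j+1}<x_{i+1}$. The first is immediate from the choice of $j$ as an ascent. For the second, I would choose the first entry of the sequence that attains value $3$ (or more), i.e. take $i+1$ minimal with $x_{i+1}\ge 3$; then every earlier entry is at most $2$, so in particular $x_{j+1}\le 2<3\le x_{i+1}$. This yields $x_j<x_{j+1}<x_{i+1}$ at three increasing positions, an occurrence of $123$, contradicting $x\in\Ascseq(123)$. I expect the main subtlety to be bookkeeping around the minimality of $i+1$: one must ensure that the chosen ascent at position $j$ genuinely lies strictly before position $i+1$ and that the three selected indices are distinct and increasing, which follows since $j<j+1\le i<i+1$. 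Once these index inequalities are pinned down, the argument closes immediately, giving $\max(x)\le 2$ for every $x\in\Ascseq(123)$.
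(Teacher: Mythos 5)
Your proof is correct, but it takes a genuinely different route from the paper's. The paper's proof is a two-line citation: it invokes Lemma~\ref{lemma_ascseq_is_RGF} (Duncan--Steingr\'imsson), which says that~$\Ascseq(y)$ consists solely of restricted growth functions if and only if~$y\le 12123$; since~$123\le 12123$, every~$x\in\Ascseq(123)$ is an \rgf, and the \rgf\ condition~$x_{i+1}\le 1+\max\lbrace x_1,\dots,x_i\rbrace$ forces values to be introduced in increasing order, so the first occurrences of~$1$, $2$, $3$ would sit at increasing positions and form an occurrence of~$123$. You instead work directly from the ascent-sequence inequality~$x_{i+1}\le 2+\asc(x_1\cdots x_i)$: taking~$i+1$ minimal with~$x_{i+1}\ge 3$, the inequality forces an ascent~$x_j<x_{j+1}$ with~$j+1\le i$, and minimality gives~$x_{j+1}\le 2<3\le x_{i+1}$, so~$x_jx_{j+1}x_{i+1}$ is an occurrence of~$123$ at the increasing positions~$j<j+1<i+1$. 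Your index bookkeeping is sound, so the argument closes. What each approach buys: the paper's version is essentially free once the structural result of~\cite{DS} is available, and it keeps the lemma inside the \rgf\ framework that the surrounding section relies on; your version is self-contained and elementary, requiring nothing beyond the definition of an ascent sequence, which makes it arguably preferable as a standalone proof even though it is longer than the paper's citation.
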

\begin{proof}
It follows from Lemma~\ref{lemma_ascseq_is_RGF} and the definition of {\rgf}.
\end{proof}

Next we show that the pattern~$\sigma=11$ is an instance where~$\Sort^{\Ascseq}(\sigma)$ is a permutation class, whereas~$\Sort^{\Cay}(\sigma)$ is not.

\begin{theorem}\label{theorem_ascseq_11}
We have:
$$
\Sort^{\Ascseq}(11)=\Ascseq(1213,1223).
$$
\end{theorem}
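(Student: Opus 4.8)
The plan is to prove the set equality $\Sort^{\Ascseq}(11)=\Ascseq(1213,1223)$ by a double inclusion, exploiting the $11$-stack decomposition lemma (Lemma~\ref{lemma_11_decom}) and the sortability criterion from Theorem~\ref{theorem_hare_stacksort}, namely that an ascent sequence $x$ is $11$-sortable if and only if $\out{11}(x)$ avoids $231$. The key structural tool is the decomposition $\out{11}(\pi)=\mapsigma{11}(B_1)\pi_1\mapsigma{11}(B_2)\pi_1^{(1)}\cdots\mapsigma{11}(B_k)\pi_1^{(k)}$, obtained by splitting $x$ at the occurrences of its first letter. Before starting the inclusions, I would record that any $11$-sortable ascent sequence must in particular be $123$-avoiding: if $x$ contained $123$, then since no two equal letters can both be below a larger one in an increasing $11$-stack, the standard argument (as in Lemma~\ref{lemma_123_init_asc}-style reasoning) forces an occurrence of $231$ in the output. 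This lets me invoke Lemma~\ref{lemma_avoids123_maxatmost2}, so every sortable sequence has maximum at most $2$ and, by Lemma~\ref{lemma_ascseq_is_RGF}, is a restricted growth function consisting only of the letters $1$ and $2$.

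For the inclusion $\Sort^{\Ascseq}(11)\subseteq\Ascseq(1213,1223)$ I would argue contrapositively: suppose $x$ contains $1213$ or $1223$ and show $\out{11}(x)$ contains $231$. Both patterns have the shape $1\,a\,2\,b$ with a repeated low value sandwiching the middle; the natural move is to locate the relevant copies of the letter $1$ (or the repeated letter) as block-separators in the Lemma~\ref{lemma_11_decom} decomposition, and track where the middle and final entries land after sorting. Since equal letters are flushed out of the $11$-stack as soon as a second copy arrives, I expect the occurrence to produce two entries that exit the stack in the wrong relative order followed by a later smaller separator, yielding a $231$ in the output. I would handle $1213$ and $1223$ as two parallel cases, each time pinning down which push/pop the forbidden pattern forces.

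For the reverse inclusion $\Ascseq(1213,1223)\subseteq\Sort^{\Ascseq}(11)$, I would take $x\in\Ascseq(1213,1223)$ and show $\out{11}(x)$ avoids $231$, arguing by induction on length using the block decomposition $x=\pi_1 B_1\pi_1^{(1)}B_2\cdots\pi_1^{(k)}B_k$. Each block $B_i$ contains no copy of $\pi_1=1$ and, by the avoidance hypothesis restricted to $B_i$, should itself be sortable; the induction hypothesis then makes each $\mapsigma{11}(B_i)$ a $231$-avoider. The content of the argument is that the separators $\pi_1,\pi_1^{(1)},\dots$ interleaved between the sorted blocks cannot create a new $231$: I would verify that avoiding $1213$ controls the relative order of letters drawn from two different blocks (preventing a ``$23$'' from one side completed by a later ``$1$''), while avoiding $1223$ controls the interaction of a repeated middle letter with the separators. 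Because all letters are in $\{1,2\}$, the case analysis for a putative $231=b\,c\,a$ occurrence in the output is quite restricted: the role of $a$ is forced onto a separator $1$, and the $bc$ ascending pair must come either from a single block or straddle two blocks, each of which I can exclude.

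The main obstacle I anticipate is the reverse inclusion, specifically the bookkeeping that shows no $231$ straddles a block boundary. The subtlety is that the $11$-stack does not simply reverse each block independently — the separators $\pi_1^{(j)}$ trigger full flushes, so the global output is a genuine interleaving, and I must confirm that the two forbidden patterns $1213$ and $1223$ are exactly the obstructions that rule out a cross-block $231$. I would expect to need the precise flush behaviour (a second copy of $1$ empties everything pushed since the previous $1$) together with the fact that, within the ascent-sequence setting, the letters are bounded by $2$; these two facts should collapse the case analysis enough to make the exclusion transparent. A careful statement of how an occurrence of $231$ in $\out{11}(x)$ pulls back to an occurrence of one of $1213$, $1223$ in $x$ — essentially the converse direction of the first inclusion done uniformly — is likely the cleanest way to finish, and I would try to phrase both inclusions so that they share this pullback computation.
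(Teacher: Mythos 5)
Your proposal has a fatal gap at the very first step: the claim that every $11$-sortable ascent sequence avoids $123$ is false. The sequence $x=123$ (or $12\cdots k$ for any $k$) is an ascent sequence with no repeated letters, so the restriction of the $11$-stack is never triggered, and $\out{11}(123)=321$, which avoids $231$; hence $123$ is $11$-sortable while containing $123$. The theorem itself already rules out your claim: $\Ascseq(1213,1223)$ contains $12\cdots k$ for every $k$ (both forbidden patterns require a repeated letter), so $11$-sortable sequences cannot all have maximum at most $2$. Consequently your appeal to Lemma~\ref{lemma_avoids123_maxatmost2} and Lemma~\ref{lemma_ascseq_is_RGF} to reduce to words over $\{1,2\}$ collapses, and with it the whole case analysis of your reverse inclusion, which explicitly leans on that reduction (``the role of $a$ is forced onto a separator $1$'' is only justified when all letters are bounded by $2$). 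A secondary problem is the induction itself: in the block decomposition of Lemma~\ref{lemma_11_decom}, the blocks $B_i$ of an ascent sequence need not standardize to ascent sequences (their letters are licensed by ascents occurring in earlier blocks and separators), so an inductive hypothesis stated for ascent sequences cannot be applied to $\mapsigma{11}(B_i)$ without additional justification.

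For contrast, the paper's proof needs neither a bound on the alphabet nor induction. It first observes that the last letter of $\out{11}(x)$ is $1$ (the bottom $1$ leaves the stack only when displaced by another $1$, or at the very end). For the forward inclusion it takes, inside occurrences of $1213$, the leftmost occurrence of the sub-pattern $121$ and runs an iterative pop analysis to exhibit a $231$ in the output ending at that final $1$; the $1223$ case is immediate. For the converse, given a $231$ occurrence $bca$ in $\out{11}(x)$, avoidance of $1213$ forces the prefix of $x$ up to the letter playing the role of $c$ to be weakly increasing (and $x$ to be an {\rgf}), avoidance of $1223$ forces the distinct letters between $2$ and $c-1$ in that prefix to occur exactly once, and then the explicit stack content $(c-1)\cdots b\cdots a\cdots 21$ at the moment $c$ is pushed contradicts either possible relative order of $b$ and $c$ in $x$. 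If you want to salvage your plan, you must discard the $123$-avoidance claim and redo the cross-block analysis for unbounded letters, which in practice leads you back to a direct argument of this kind.
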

\begin{proof}
Let~$x\in\Ascseq$. Observe that the first element~$x_1=1$ is extracted from the~$11$-stack if and only if the next element of the input is equal to~$1$, which in this case replace~$x_1$ at the bottom of the~$11$-stack. Thus the last element~$x_{last}$ of~$\out{11}(x)$ is~$x_{last}=1$. This fact will be repeatedly used for the rest of this proof.

Suppose initially that~$x$ contains an occurrence~$x_ix_jx_kx_{\ell}$ of~$1213$. Without losing generality, we can assume that~$x_ix_jx_k$ is the leftmost~$121$ in any occurrence of~$1213$ where~$x_{\ell}$ plays the role of~$3$. We wish to prove that~$x$ is not~$11$-sortable by showing that~$\out{11}(x)$ contains~$231$. If~$x_i$ is contained in the~$11$-stack when~$x_k$ is the next element of the input, then, since~$x_kx_i$ is an occurrence of~$11$, every element up to~$x_i$ must be extracted before pushing~$x_k$. Therefore~$x_j$ is extracted before~$x_k$ enters and~$\out{11}(x)$ contains the occurrence~$x_jx_{\ell}x_{last}$ of~$231$, as wanted. Otherwise, suppose that~$x_i$ is extracted before~$x_j$ enters the~$11$-stack. Let~$y$ be the next element of the input when~$x_i$ is extracted. Consider the following two cases.

\begin{itemize}
\item $yx_i$ is an occurrence of~$11$ (and thus~$x_i$ must be extracted); in this case we can repeat the same argument replacing~$x_i$ with~$y$ until we fall in the next case.
\item $y\neq x_i$ and there is an entry~$y'$ contained in the~$11$-stack such that~$yy'$ is an occurrence of~$11$ (thus the top element~$x_i$ must be extracted). If~$y<x_i$, then~$y'x_iy$ is an occurrence of~$121$ (with~$x_i<x_{\ell}$) that precedes~$x_ix_jx_k$, which is impossible due to our choice of~$x_ix_jx_k$. On the other hand, suppose that~$y>x_i$. Then it must be~$x_i>1$, or else~$x_1x_jx_k$ would be an occurrence of~$121$ that precedes~$x_ix_jx_k$, which is again impossible. Finally, we get the desired occurrence~$x_iyx_{last}$ of~$231$ in~$\out{11}(x)$, as desired.
\end{itemize}

Next suppose that~$x$ contains an occurrence~$x_ix_jx_kx_{\ell}$ of~$1223$. Then~$x_j$ must be extracted from the~$11$-stack before~$x_k$ enters (since~$x_jx_k$ is an occurrence of~$11$) and thus~$\out{11}(x)$ contains an occurrence~$x_jx_{\ell}x_{last}$ of~$231$, which is what we wanted.

Conversely, suppose that~$x$ avoids~$1213$ and~$1223$. Suppose, for a contradiction, that~$x$ is not~$11$-sortable. Equivalently, let~$x_i=b$, $x_j=c$ and~$x_k=a$ be three elements of~$x$ that result in an occurrence~$x_ix_jx_k=bca$ of~$231$ in~$\out{11}(x)$. Since~$x$ avoids~$1213$, then~$x$ is a {\rgf} by Lemma~\ref{lemma_ascseq_is_RGF}. Moreover, again due to the avoidance of~$1213$, the prefix~$x_1\cdots x_j$ of~$x$ must be weakly increasing, that is:
$$
x=1^{t_1}2^{t_2}\cdots a^{t_a}\cdots b^{t_b}\cdots(c-1)^{t_{c-1}}c^{t_c}x_j\cdots,
$$
for some integers~$t_u\ge 1$, $u=1,\dots,c-1$, and~$t_c\ge 0$. Moreover, since~$x$ avoids~$1223$, it must be~$t_u=1$ for each~$2\le u\le c-1$. Now, it is easy to observe that as soon as~$x_j=c$ enters the~$11$-stack, the content of the~$11$-stack is~$(c-1)\cdots b\cdots a\cdots 321$, reading from top to bottom. Therefore, if~$i>j$ then~$x_j$ is extracted before~$x_i$ enters, since~$x_i=b$ forms an occurrence of~$11$ together with the other copy of~$b$ in the~$11$-stack. But this is impossible since~$x_i$ precedes~$x_j$ in~$\out{11}(x)$. Thus it must be~$i<j$. But then~$x_j$ enters the~$11$-stack above~$x_i$, which is again impossible since we supposed that~$x_i$ precedes~$x_j$ in~$\out{11}(x)$.
\end{proof}

Due to Lemma~\ref{lemma_ascseq_is_RGF}, we have~$\Ascseq(1213,1223)=\RGF(1213,1223)$. The enumeration of~$\RGF(1213,1223)$ can be found in~\cite{JMS}, where the authors determine all the Wilf-equivalence classes of pairs of patterns of length four. The arising sequence is A005183 in~\cite{Sl}.

Next we analyze the~$12$-machine. Since~$\Sort^{\Cay}(12)=\Cay(213)$, we can apply Theorem~\ref{theorem_Cayley_class_to_ascseq}.

\begin{theorem}\label{theorem_ascseq_12}
We have:
$$
\Sort^{\Ascseq}(12)=\Ascseq(213).
$$
\end{theorem}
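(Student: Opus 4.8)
The statement to prove is $\Sort^{\Ascseq}(12) = \Ascseq(213)$.

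My plan is to invoke Theorem~\ref{theorem_Cayley_class_to_ascseq} directly, since it is tailor-made for exactly this situation. The hypotheses of that theorem require that $\sigma \in \Cay \cap X$ for $X \in \{\Ascseq, \Modasc\}$, and that $\Sort^{\Cay}(\sigma) = \Cay(A)$ for some set of patterns $A$. Here $\sigma = 12$, which is certainly both a Cayley permutation and an ascent sequence, so the membership hypothesis $\sigma \in \Cay \cap \Ascseq$ holds. The key structural input is Theorem~\ref{theorem_12_stack_Cayley}, which establishes that $\Sort^{\Cay}(12) = \Cay(213)$; this matches the required form $\Sort^{\Cay}(\sigma) = \Cay(A)$ with $A = \{213\}$. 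Everything needed is therefore already in place, and the proof is essentially a one-line application.

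Concretely, the first step is to observe that $12 \in \Cay \cap \Ascseq$. Second, I would cite Theorem~\ref{theorem_12_stack_Cayley} to record that $\Sort^{\Cay}(12) = \Cay(213)$, identifying the pattern set as $A = \{213\}$. Third, applying Theorem~\ref{theorem_Cayley_class_to_ascseq} with $X = \Ascseq$ and this choice of $A$ yields immediately that $\Sort^{\Ascseq}(12) = \Ascseq(213)$, which is the desired conclusion.

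There is essentially no obstacle here: the real content lives in the two theorems being cited, namely the characterization of $12$-sortable Cayley permutations (whose proof rests on the block decomposition of the $12$-stack, tracking how copies of the minimum are processed and how an inversion in the output forces a $213$ in the input) and the transfer principle of Theorem~\ref{theorem_Cayley_class_to_ascseq} (whose proof uses that standardization preserves both pattern containment and the action of the $\sigma$-stack, so an ascent sequence is $\sigma$-sortable if and only if its standardization is). The only thing worth double-checking is that the transfer theorem genuinely applies verbatim, i.e.\ that $\Sort^{\Cay}(12)$ is expressed as an avoidance class $\Cay(A)$ rather than as some set defined by a non-classical (mesh) pattern; since Theorem~\ref{theorem_12_stack_Cayley} gives precisely the classical avoidance set $\Cay(213)$, the hypothesis is satisfied and the application is clean.
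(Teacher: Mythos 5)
Your proof is correct and matches the paper exactly: the paper states this theorem with no separate proof, justifying it in the preceding sentence by precisely the same two ingredients you cite, namely $\Sort^{\Cay}(12)=\Cay(213)$ (Theorem~\ref{theorem_12_stack_Cayley}) together with the transfer principle of Theorem~\ref{theorem_Cayley_class_to_ascseq} applied with $X=\Ascseq$ and $A=\{213\}$. Your additional check that the hypothesis requires a classical avoidance class (not a mesh-pattern description) is a sensible verification, and it holds here.
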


Notice that~$\Ascseq(213)=\Ascseq(1213)$ due to Lemma~\ref{lemma_RGF_prop} and Lemma~\ref{lemma_ascseq_is_RGF}. The enumeration of~$\Ascseq(213)$ can be found in \cite{DS}.

\begin{theorem}\label{theorem_ascseq_121}
We have:
$$
\Sort^{\Ascseq}(121)=\Ascseq(213).
$$
\end{theorem}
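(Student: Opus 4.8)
The plan is to prove the two inclusions $\Sort^{\Ascseq}(121)\subseteq\Ascseq(213)$ and $\Ascseq(213)\subseteq\Sort^{\Ascseq}(121)$ separately, exploiting the fact that the target set coincides with the answer obtained for the $12$-machine in Theorem~\ref{theorem_ascseq_12}. Two structural facts will guide the whole argument. First, $\reverse(121)=121$, so by the ascent-sequence analogue of Remark~\ref{remark_hat_Cayley}, whenever the input $x$ avoids $121$ the restriction is never triggered and $\out{121}(x)=\reverse(x)$; hence such an $x$ is $121$-sortable iff $x$ avoids $132$. Second, $\hat{121}=211$ avoids $231$, so we are genuinely in the ``non-class'' regime and cannot appeal to Theorem~\ref{theorem_suff_class_Cayley}. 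Throughout I would use that $\Ascseq(213)=\Ascseq(1213)$ and, by Lemma~\ref{lemma_ascseq_is_RGF} (since $1213\le 12123$), that every element of $\Ascseq(213)$ is in fact a {\rgf}; this lets me deploy the prefix structure of $213$-avoiding ({\rgf}) ascent sequences, exactly as in the proof of Theorem~\ref{theorem_ascseq_11}.

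For the inclusion $\Sort^{\Ascseq}(121)\subseteq\Ascseq(213)$ I would argue the contrapositive, mirroring the converse direction of Theorem~\ref{theorem_12_stack_Cayley}. Assume the ascent sequence $x$ contains an occurrence $bac$ of $213$ (values $a<b<c$ in positions $i<j<k$), and chosen leftmost in the appropriate sense. Tracking the $121$-stack, I would show that $b$ must be extracted before $c$ is pushed: either $b$ is already gone, or the element forcing $b$ out, together with $b$ and the final output of the bottom letter, realizes $231$ in $\out{121}(x)$. The subtlety compared with the $12$-stack is that the $121$-stack pops only when the incoming letter completes a $121$ with a strictly larger letter sitting above an equal copy; so I expect to need a short case analysis (as in Theorem~\ref{theorem_ascseq_11}) on whether the popping letter $y$ satisfies $y<b$ or $y\ge b$, in each case producing the forbidden $231$.

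For the inclusion $\Ascseq(213)\subseteq\Sort^{\Ascseq}(121)$, take $x\in\Ascseq(213)$ and aim to show $\out{121}(x)$ avoids $231$. The $121$-avoiding case is immediate from the reverse identity above, since a $213$-avoiding {\rgf} that also avoids $121$ avoids $132$ as well. The general case I would handle by proving a synchronization lemma: for $213$-avoiding ascent sequences the $121$-stack and the $12$-stack perform the same push/pop sequence at every step, so that $\out{121}(x)=\out{12}(x)$, whereupon Theorem~\ref{theorem_ascseq_12} finishes the proof. The synchronization would be an induction using the rigid weakly-increasing ``block'' structure that $1213$-avoidance imposes on the prefix (cf.\ the decomposition $x=1^{t_1}2^{t_2}\cdots$ appearing in Theorem~\ref{theorem_ascseq_11}): under this structure the $12$-stack's local trigger $y<\mathrm{top}$ and the $121$-stack's trigger ``a copy of $y$ lies below a larger letter'' must fire simultaneously.

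The main obstacle is precisely this synchronization step. In general the two stacks diverge, because the $121$-stack reacts to repeated letters buried deep in the stack rather than to the top alone; it is only the restrictive shape of $213$-avoiding (equivalently $1213$-avoiding) ascent sequences that forces the two pop-rules to agree. Making that agreement rigorous — showing no configuration with an ``isolated'' value below a larger one can arise on such inputs, and that equal letters always stack in the synchronizing way — is where the real work lies, and it is there that I expect to lean most heavily on Lemma~\ref{lemma_ascseq_is_RGF}, Lemma~\ref{lemma_RGF_prop}, and the explicit block decomposition of $\Ascseq(1213)$.
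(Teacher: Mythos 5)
Your proposal is correct in substance; your first inclusion follows the paper's line, but your second inclusion takes a genuinely different route, so a comparison is in order. For $\Sort^{\Ascseq}(121)\subseteq\Ascseq(213)$ the paper argues exactly as you sketch, by contrapositive, except that it immediately replaces the $213$ occurrence by the leftmost occurrence of $1213$ (via $\Ascseq(213)=\Ascseq(1213)$, which you also invoke): the repeated letter of $1213$ is what makes the forced pop of the ``$2$'' immediate, and the resulting $231$ in $\out{121}(x)$ ends with $x_1=1$, which never leaves the bottom of the $121$-stack. Your version with a $213$ occurrence $bac$ also closes, along the lines of your case analysis: if $a=1$, the bottom copy of $1$ sits below $b$ and forces $b$ out before $a$ enters, giving $b\,c\,x_1\simeq 231$ in the output; if $a\ge 2$ and $b$ is never forced out before $c$ enters, then $a$ enters above $b$, exits before it, and $a\,b\,x_1$ is itself a $231$. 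For the converse inclusion the paper does not synchronize the two stacks: it assumes $x$ avoids $213$ while $\out{121}(x)$ contains a $231$, uses that $x$ is then a {\rgf} whose prefix up to the letter playing the ``$3$'' is weakly increasing ($x=1^{t_1}2^{t_2}\cdots$), and rules out both possible orderings of the two large letters directly. Your synchronization lemma is nevertheless true and provable: while the two stacks agree, their common content is weakly increasing from bottom to top (the $12$-stack invariant), so a divergence means the next letter $y$ is smaller than the top yet has no copy in the stack; by the {\rgf} property the value $y$ occurred earlier, every earlier copy of it was flushed by some input $y'<y$, and the current top was pushed after that flush, so $y,y'$ and the current top form a $213$ in $x$ — contradiction. (The paper's remark that the stacks differ on $12132$ is no obstruction, since $12132$ contains $213$.) What each approach buys: the paper's direct argument is shorter and self-contained, while yours isolates the cleaner structural fact that the $12$- and $121$-stacks coincide exactly on $213$-avoiding inputs, which explains the equality $\Sort^{\Ascseq}(12)=\Sort^{\Ascseq}(121)$ that the paper records only as an a posteriori coincidence.
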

\begin{proof}
Let~$x\in\Ascseq$. Suppose that~$x\ge 213$. We show that~$x$ is not~$121$-sortable. Due to Lemma~\ref{lemma_ascseq_is_RGF}, $x\ge 213$ if and only if~$x\ge 1213$. Let~$x_ix_jx_kx_l$ be the leftmost occurrence of~$1213$ in~$x$. Observe that~$x_j$ must be extracted from the~$121$-stack before~$x_k$ enters. Therefore~$x_jx_lx_1$ is an occurrence of~$231$ in~$\out{121}(x)$, as wanted.

Conversely, suppose that~$x$ is not~$121$-sortable and let~$bca$ be an occurrence of~$231$ in~$\out{121}(x)$. Let~$b=x_i$, $c=x_j$ and~$a=x_k$, for some~$i,j,k$. Suppose, for a contradiction, that~$x$ avoids~$213$. Then~$x$ is a {\rgf} due to Lemma~\ref{lemma_ascseq_is_RGF}. Since~$x$ avoids~$213$, the elements before~$x_j=c$ in~$x$ must be in weakly increasing order. More precisely, it must be:
$$
x=1^{t_1}2^{t_2}\cdots a^{t_a}\cdots b^{t_b}\cdots c^{t_c}x_j,
$$
for some integers~$t_u\ge 1$, $u=1,\dots,c-1$, and~$t_c\ge 0$. Now, if~$i>j$, then~$x_j=c$ is extracted before~$x_i$ enters, since otherwise~$x_ix_jb$ would be an occurrence of~$121$ in the~$12$-stack. But this contradicts our assumption that~$x_i,x_j,x_k$ results in an occurrence of~$231$ in~$\out{121}(x)$. Therefore~$i<j$ and, by hypothesis, $x_i=b$ is extracted from the~$121$-stack before~$x_j=c$ enters. But this is impossible, since the prefix of~$x$ up to~$x_j$ is weakly increasing.
\end{proof}

Notice that the set of~$121$-sortable Cayley permutations is not a permutation class due to Corollary~\ref{corollary_class_vs_nonclass_Cayley}. By Theorems~\ref{theorem_ascseq_12} and \ref{theorem_ascseq_121}, we have $\Sort^{\Ascseq}(12)=\Sort^{\Ascseq}(121)$. However, the operations performed by the~$12$-stack and the~$121$-stack are not always the same. For example, $\out{12}(12132)=23211$, whereas~$\out{121}(12132)=22311$.

\begin{theorem}\label{theorem_contains123_class}
Let~$\sigma\in\Ascseq$ and suppose that~$\sigma$ contains~$123$. Then~$\Sort^{\Ascseq}(\sigma)=\Ascseq(132)$.
\end{theorem}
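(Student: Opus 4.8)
The plan is to invoke Theorem~\ref{theorem_hare_stacksort}, which reduces $\sigma$-sortability of an ascent sequence $x$ to the condition that $\out{\sigma}(x)$ avoids $231$, and then prove the two inclusions $\Ascseq(132)\subseteq\Sort^{\Ascseq}(\sigma)$ and $\Sort^{\Ascseq}(\sigma)\subseteq\Ascseq(132)$ separately. The engine for the first inclusion is an elementary remark about ascent sequences: \emph{every ascent sequence that contains $321$ also contains $132$}. Indeed, if $x_i>x_j>x_k$ (with $i<j<k$) is an occurrence of $321$, then $x_j>x_k\ge 1$ forces $x_j\ge 2$, while $x_1=1$ is the global minimum and occurs first; since $x_i\ge 3>x_1$ we have $i\neq 1$, so $x_1<x_j<x_i$ at positions $1<i<j$ is an occurrence of $132$. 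Consequently $\Ascseq(132)\subseteq\Ascseq(321)$. As $\sigma\ge 123$, its reverse $\reverse(\sigma)$ contains $321$, whence $\Ascseq(321)\subseteq\Ascseq(\reverse(\sigma))$; thus every $132$-avoiding ascent sequence also avoids $\reverse(\sigma)$.

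For $\Ascseq(132)\subseteq\Sort^{\Ascseq}(\sigma)$, let $x\in\Ascseq(132)$. By the remark above, $x$ avoids $\reverse(\sigma)$, so by Remark~\ref{remark_hat_Cayley} the restriction of the $\sigma$-stack is never triggered and $\out{\sigma}(x)=\reverse(x)$. Since $\reverse(132)=231$, the word $\reverse(x)$ avoids $231$ precisely because $x$ avoids $132$, so $x$ is $\sigma$-sortable.

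For the reverse inclusion I would argue the contrapositive: if $x\in\Ascseq$ contains $132$, then $\out{\sigma}(x)$ contains $231$. First I normalise the witness. Because $x_1=1$ is the first letter and the global minimum, any occurrence of $132$ can be taken to use $x_1$ as its ``$1$'': if $x_b>x_c$ with $1<x_c<x_b$ at positions $b<c$, then $x_1=1<x_c<x_b$ at positions $1<b<c$ is again an occurrence of $132$. The target is the occurrence $(x_c,x_b,x_1)$ of $231$ in the output, that is \emph{mid, then high, then the minimum}. The minimum $x_1$ sits at the bottom of the $\sigma$-stack and is normally emptied last, so it follows $x_b$ in $\out{\sigma}(x)$; and if $x_b$ is still in the stack when $x_c$ is read, then $x_c$ is pushed above $x_b$ and popped before it, so $x_c$ precedes $x_b$ in the output. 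In this generic situation $\out{\sigma}(x)$ contains $x_c\,x_b\,x_1\simeq 231$, since $1=x_1<x_c<x_b$. Moreover the subcase where $x$ avoids $\reverse(\sigma)$ is immediate, because then $\out{\sigma}(x)=\reverse(x)$ contains $\reverse(132)=231$ by Remark~\ref{remark_hat_Cayley}.

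The main obstacle is exactly the non-generic configuration, occurring when $x$ \emph{contains} $\reverse(\sigma)$: here the high element $x_b$ may be forced out of the $\sigma$-stack before $x_c$ is read (so that $x_b$ precedes $x_c$ in the output), and in principle $x_1$ itself could be ejected early, since $\sigma_1=1$ permits the minimum to participate in a forbidden occurrence. Controlling these premature pops is the technical heart of the argument. I expect to resolve it by choosing the occurrence of $132$ extremally --- for instance taking the position $c$ of the mid element as far left as possible and $b$ as the rightmost large position preceding it --- and then tracing the stack exactly in the spirit of the proofs of Theorems~\ref{theorem_ascseq_11} and~\ref{theorem_ascseq_121}: any premature pop is triggered by an incoming letter playing the role of $\sigma_1=1$ together with an increasing chain already in the stack (as $\sigma\ge 123$), and iterating on that triggering letter should either expose a $231$ in the output directly or contradict the extremality of the chosen occurrence. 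Turning this uniform reduction, valid for every $\sigma\ge 123$, into a clean case-free statement is where the real work lies.
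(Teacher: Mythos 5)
Your first inclusion ($\Ascseq(132)\subseteq\Sort^{\Ascseq}(\sigma)$) is correct and is exactly the paper's argument: since $x_1=1$, any occurrence of $321$ in an ascent sequence yields an occurrence of $132$, so a $132$-avoiding ascent sequence avoids $\reverse(\sigma)\ge 321$, the restriction of the $\sigma$-stack never fires, and $\out{\sigma}(x)=\reverse(x)$ avoids $231$.

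The gap is in the converse inclusion, and you have located it yourself: the case where $x$ contains $\reverse(\sigma)$ is not proved, only deferred to an extremal-witness, stack-tracing argument that you ``expect'' to work. That is precisely the case the paper dispatches with an observation you never invoke: by Remark~\ref{remark_hat_Cayley}, if $x$ contains $\reverse(\sigma)$, then $\out{\sigma}(x)$ contains an occurrence of $\hat{\sigma}$ (produced by the leftmost occurrence of $\reverse(\sigma)$). Since $\sigma\ge 123$, the pattern $\hat{\sigma}$ contains an occurrence $ab$ of $12$ with $a>1$, so the corresponding entries $y_1<y_2$ of the occurrence of $\hat{\sigma}$ in $\out{\sigma}(x)$ satisfy $y_1\ge 2$. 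Moreover, because $\sigma$ has length at least three, the bottom entry $x_1=1$ can never be popped prematurely (a stack holding only $x_1$ plus one incoming letter cannot contain $\sigma$), so $x_1$ is the \emph{last} letter of $\out{\sigma}(x)$ --- this also shows that your worry about $x_1$ being ``ejected early'' is unfounded. Then $y_1y_2x_1$ is an occurrence of $231$, and $x$ is not $\sigma$-sortable. Note that this argument uses no occurrence of $132$ in $x$ at all: \emph{every} ascent sequence containing $\reverse(\sigma)$ is non-sortable, and such sequences automatically contain $132$ by your own opening remark, so no normalization or extremal choice of the $132$-witness is needed. Your proposed alternative --- iterating on the letters that trigger pops, in the spirit of Theorems~\ref{theorem_ascseq_11} and~\ref{theorem_ascseq_121} --- is exactly the work that Remark~\ref{remark_hat_Cayley} lets you skip, and since it is never carried out, the proposal as written does not constitute a proof of this inclusion.
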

\begin{proof} 
Let~$x\in\Ascseq$. Suppose that~$x\ge 132$. We show that~$x$ is not~$\sigma$-sortable. If~$x$ avoids~$\reverse(\sigma)$, then~$\out{\sigma}(x)=\reverse(x)$ contains~$\reverse(132)=231$, which means that~$x$ is not~$\sigma$-sortable. On the other hand, suppose that~$x$ contains~$\reverse(\sigma)$. Notice that~$\out{\sigma}(x)$ contains~$\hat{\sigma}$ due to Remark~\ref{remark_hat_Cayley}. Moreover, since~$\sigma$ contains~$123$, it is easy to observe that~$\hat{\sigma}$ contains an occurrence~$ab$ of~$12$ such that~$a>1$. Let~$y_1$ and~$y_2$ be the elements that correspond to~$a$ and~$b$ in such occurrence of~$12$ in~$\out{\sigma}(x)$. Notice also that the last element of~$\out{\sigma}(x)$ is~$x_1=1$. Therefore~$y_1y_2x_1$ is an occurrence of~$231$ in~$\out{\sigma}(x)$. Thus~$x$ is not~$\sigma$-sortable.

Conversely, we show that if~$x$ avoids~$132$, then~$x$ is~$\sigma$-sortable. If~$x$ avoids~$\reverse(\sigma)$, then~$\out{\sigma}(x)=\reverse(x)$ avoids~$231$ and thus~$x$ is~$\sigma$-sortable. Otherwise, suppose that~$x\ge\reverse(\sigma)$. Since~$\sigma\ge 123$, $x$ contains an occurrence~$x_{i_1}x_{i_2}x_{i_3}$ of~$\reverse(123)=321$. But then~$x_1x_{i_1}x_{i_2}$ would be an occurrence of~$132$ in~$x$, a contradiction.
\end{proof}

\begin{theorem}\label{theorem_avoids123_nonclass}
Let~$\sigma$ be an ascent sequence of length at least four and suppose that~$\sigma$ avoids~$123$. Then~$\Sort^{\Ascseq}(\sigma)$ is not a class.
\end{theorem}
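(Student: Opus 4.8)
The plan is to exhibit two ascent sequences $\alpha\le\beta$ with $\beta$ being $\sigma$-sortable and $\alpha$ not, which shows that $\Sort^{\Ascseq}(\sigma)$ is not closed downward. First I would record the basic reduction: since $\sigma\in\Ascseq$ avoids $123$, Lemma~\ref{lemma_avoids123_maxatmost2} gives $\max(\sigma)\le 2$, so $\sigma=\sigma_1\cdots\sigma_k$ is a binary word over $\{1,2\}$ with $\sigma_1=1$ and $k\ge 4$. In particular $\hat\sigma$ is binary, hence automatically avoids $231$, so we are in the regime complementary to Theorem~\ref{theorem_contains123_class}. A useful preliminary observation (to be proved as a short lemma) is a clean sortability criterion specific to this setting: because $\sigma$ has length $k\ge 4$ and $x_1=1$ is the global minimum, a $\sigma$-stack containing fewer than $k$ elements can never contain $\sigma$; thus from the configuration in which only $x_1$ sits at the bottom every push is legal, so $x_1$ is never popped before the final flush and is output \emph{last}. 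Consequently $\out{\sigma}(x)$ always ends in $1$, and by Theorem~\ref{theorem_hare_stacksort} an input $x$ is $\sigma$-sortable if and only if the entries of $\out{\sigma}(x)$ that are $\ge 2$ form a weakly decreasing subsequence (any ascent $u<v$ among such entries, together with the trailing $1$, would produce $uv1\simeq 231$, and conversely a $231$ forces such an ascent).

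For the non-sortable witness I would take the fixed ascent sequence $\alpha=1232$, which works uniformly for every admissible $\sigma$. Indeed $1232\in\Ascseq$, and it avoids $\reverse(\sigma)$ for trivial reasons: $\reverse(\sigma)$ has length $k\ge 4$ and only two distinct values, whereas $1232$ has length $4$ and three distinct values, so an occurrence is impossible. Hence the $\sigma$-restriction is never triggered on $1232$ and, by Remark~\ref{remark_hat_Cayley} (stated there for $\Cay$ but equally valid on $\Ascseq$), $\out{\sigma}(1232)=\reverse(1232)=2321\ge 231$, so $1232\notin\Sort^{\Ascseq}(\sigma)$.

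The heart of the argument is then to construct a $\sigma$-sortable ascent sequence $\beta$ with $1232\le\beta$. Note first that $\beta$ \emph{must} contain $\reverse(\sigma)$: if it avoided it, then $\out{\sigma}(\beta)=\reverse(\beta)$, and since $\beta\supseteq 1232\supseteq 132$ this reverse contains $231$, so $\beta$ would not be sortable. Thus the $\sigma$-stack has to be triggered on $\beta$, and the triggering must \emph{repair} the embedded $132$: concretely, using the analysis in the proof of Lemma~\ref{lemma_reverse_outputs_hat}, the occurrence of $\reverse(\sigma)$ produces the swap turning $\sigma_1,\sigma_2$ into $\hat\sigma$, and I would arrange $\beta$ so that this swap expels the maximal element of the $132$ from the stack \emph{before} the smaller swapped-in element, so that the entries $\ge 2$ of $\out{\sigma}(\beta)$ come out weakly decreasing (which by the criterion above is exactly sortability). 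This construction will be organized as a case analysis on the shape of $\sigma$ — for instance on whether $\sigma_2=1$ or $\sigma_2=2$, and on the positions and multiplicities of the $1$s and $2$s in $\sigma$ — in the spirit of the Cayley-permutation argument of Theorem~\ref{theorem_necess_class_Cayley}. The key point to verify in each case is that $1232\le\beta$ (so $\alpha=1232$ is a genuine sub-ascent-sequence of $\beta$, and by Lemma~\ref{lemma_prefix_sortable} it is not a prefix, consistently with $\beta$ being sortable) and that $\out{\sigma}(\beta)$ has its large entries weakly decreasing.

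I expect the main obstacle to be exactly this last construction. The crucial structural difficulty, absent in the permutation and Cayley settings, is that an ascent sequence is forced to begin with $1$; by the lemma above this minimum is pinned to the bottom of the stack and emerges last, so any uncancelled ascent among the larger output entries is fatal. This is precisely why the naive adaptations of the Cayley witness (which start with a large entry) fail, and the construction of $\beta$ must thread the $\reverse(\sigma)$-triggered swap so as to keep the large part of the output monotone while still letting a restandardized sub-word realize $1232$. Making this work uniformly across all binary $\sigma$ of length $\ge 4$, including the degenerate pattern $\sigma=1^k$ (where the trigger fires only on $k$ equal stacked entries and so requires additional repetitions in $\beta$), is where the real work lies; once a suitable $\beta$ is produced in each case, the conclusion that $\Sort^{\Ascseq}(\sigma)$ is not a class is immediate.
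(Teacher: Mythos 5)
Your overall plan coincides with the paper's: exhibit the non-sortable ascent sequence $1232$ together with a $\sigma$-sortable ascent sequence containing it. The parts you actually prove are correct. Your counting-of-distinct-values argument that $1232$ avoids $\reverse(\sigma)$ is, if anything, cleaner than the paper's, and your preliminary observations are sound: since $\sigma$ has length $k\ge 4$, the initial entry $1$ of any input ascent sequence is never popped before the final flush, so $\out{\sigma}(x)$ ends in $1$, and sortability is then equivalent to the entries $\ge 2$ of the output forming a weakly decreasing subsequence.

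The gap is that you never construct $\beta$. Everything after ``the heart of the argument'' is a list of properties $\beta$ ought to have (it must contain $\reverse(\sigma)$; the triggered swap should expel the large entry of the embedded $132$ first; a case analysis on the shape of $\sigma$ should organize this), followed by your own admission that making this work uniformly ``is where the real work lies.'' But that work \emph{is} the theorem: the whole content of the statement is the production, for every binary ascent sequence $\sigma$ of length $k\ge 4$, of an explicit $\beta\in\Ascseq$ with $1232\le\beta$ and $\out{\sigma}(\beta)$ avoiding $231$, together with the verification of all three conditions. For comparison, the paper does this with three explicit constructions: if $\sigma=1^k$, take $\beta=1^{k-1}2312$, for which $\out{\sigma}(\beta)=32121^{k-1}$; if $\max(\sigma)=2$ and $\sigma_k=1$, take $\beta=\sigma_k\cdots\sigma_2 3\sigma_1 2$ (the reverse of $\sigma$ with a $3$ inserted before the final $\sigma_1$ and a $2$ appended), for which $\out{\sigma}(\beta)=3\sigma_2 1\sigma_1\sigma_3\cdots\sigma_k$; and if $\max(\sigma)=2$ and $\sigma_k=2$, prepend an extra $1$ to the previous construction. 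None of the verifications there is automatic: that $\beta$ is an ascent sequence uses $\sigma_k=1$ (or the prepended $1$); that $\beta$ contains $1232$ uses the existence of an index $m\notin\{1,k\}$ with $\sigma_m=2$; and the computation of the output is done by tracking the single triggering of the stack. Until some such construction is carried out and checked, your argument establishes only that $1232\notin\Sort^{\Ascseq}(\sigma)$, not that $\Sort^{\Ascseq}(\sigma)$ fails to be closed downwards.
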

\begin{proof}
Let~$\sigma=\sigma_1\cdots\sigma_k$, with~$k\ge 4$. Observe that the ascent sequence~$1232$ is not~$\sigma$-sortable. Indeed~$1232$ avoids~$\reverse(\sigma)$, since~$\sigma$ has length at least four and~$\reverse(1232)=2321$ is not an ascent sequence. Thus~$\out{\sigma}(1232)=\reverse(1232)=2321$ contains~$231$. We shall define an ascent sequence~$\alpha$ such that~$\alpha$ contains~$1232$ and~$\alpha$ is~$\sigma$-sortable, thus showing that~$\Sort^{\Ascseq}(\sigma)$ is not a class. Due to Lemma~\ref{lemma_avoids123_maxatmost2}, we have~$\max(\sigma)\le 2$ for each~$i$. We distinguish the following cases.

\begin{itemize}
\item Suppose that~$\max(\sigma)=1$, i.e.~$\sigma=1\cdots 1=1^k$, for some~$k\ge 1$. Define:
$$
\alpha=1^{k-1}2312.
$$
Note that~$\alpha\ge 1232$ and~$\alpha$ is an ascent sequence. Finally, it is easy to check that:
$$
\out{\sigma}(\alpha)=32121^{k-1},
$$
which avoids~$231$. Thus~$\alpha$ is~$\sigma$-sortable.

\item Suppose that~$\max(\sigma)=2$ and the last element is~$\sigma_k=1$. Define:
$$
\alpha=\sigma_k\cdots\sigma_23\sigma_12.
$$
Since~$\max(\sigma)=2$, there is an index~$m$ such that~$\sigma_m=2$. Notice that by our assumptions it must be~$m\neq k$ and~$m\neq 1$, thus~$\alpha$ contains~$1232$. Also~$3\le\asc(\sigma_k\cdots\sigma_2)+2$ (and~$\sigma_k=1$), thus~$\alpha$ is an ascent sequence. Finally, an easy computation shows that:
$$
\out{\sigma}(\alpha)=3\sigma_21\sigma_1\sigma_3\cdots\sigma_k,
$$
which avoids~$231$ (for example, because the initial~$3$ is the only element greater than~$2$ in~$\out{\sigma}(\alpha)$).

\item Suppose that~$\max(\sigma)=2$ and the last element is~$\sigma_k=2$. Similarly to the previous case, define:
$$
\alpha=1\sigma_k\cdots\sigma_23\sigma_12.
$$
Due to the insertion of the initial~$1$, $\alpha$ is again an ascent sequence and~$\alpha$ contains~$1232$. Finally, we have that:
$$
\out{\sigma}(\alpha)=3\sigma_21\sigma_1\sigma_3\cdots\sigma_k 1,
$$
which avoids~$231$. We leave the details to the reader.
\end{itemize}
\end{proof}

\begin{corollary}\label{corollary_class_nonclass}
Let~$\sigma$ be an ascent sequence. If~$\sigma\in\lbrace 11,12,121\rbrace$, then~$\Sort^{\Ascseq}(\sigma)$ is a class. In all the other cases, $\Sort^{\Ascseq}(\sigma)$ is a class if and only if~$\sigma\ge 123$. Moreover, if~$\sigma\ge 123$, then~$\Sort^{\Ascseq}(\sigma)=\Ascseq(132)$.
\end{corollary}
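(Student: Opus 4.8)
The statement is a collation of the results of this subsection, glued together with a short verification of the length-three patterns that none of those results reaches directly. The plan is to partition the ascent sequences $\sigma$ (of length at least two) into four groups and dispatch each group to the appropriate theorem, leaving only one genuinely new case to settle by hand.

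First I would clear the three exceptional patterns. For $\sigma=11$, $\sigma=12$ and $\sigma=121$, Theorems~\ref{theorem_ascseq_11}, \ref{theorem_ascseq_12} and~\ref{theorem_ascseq_121} give $\Sort^{\Ascseq}(11)=\Ascseq(1213,1223)$ and $\Sort^{\Ascseq}(12)=\Sort^{\Ascseq}(121)=\Ascseq(213)$; each is an avoidance set, hence a class, which settles the first assertion. Next I would treat the implication $\sigma\ge 123\Rightarrow$ class, which also proves the ``Moreover'' clause at once: Theorem~\ref{theorem_contains123_class} says that $\sigma\ge 123$ forces $\Sort^{\Ascseq}(\sigma)=\Ascseq(132)$, again a class. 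This is exactly the ``if'' half of ``class if and only if $\sigma\ge 123$'', so only the contrapositive of the ``only if'' remains: for $\sigma\notin\{11,12,121\}$ avoiding $123$, the set $\Sort^{\Ascseq}(\sigma)$ is not a class.

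For this last part I would argue by length. If $|\sigma|\ge 4$ there is nothing to add, since this is precisely Theorem~\ref{theorem_avoids123_nonclass}. If $|\sigma|=2$ the only ascent sequences are $11$ and $12$, both already excluded, so this case is vacuous. The main obstacle is therefore the length-three case, which Theorem~\ref{theorem_avoids123_nonclass} does not cover because it assumes $|\sigma|\ge 4$. Here the only ascent sequences are $111,112,121,122,123$; discarding the exceptional $121$ and the non-$123$-avoiding $123$ leaves exactly $\sigma\in\{111,112,122\}$.

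I would show that for these three patterns the construction in the proof of Theorem~\ref{theorem_avoids123_nonclass} still applies verbatim, the only step needing fresh justification being the non-sortability of the input $1232$. That proof deduces non-sortability of $1232$ from the fact that it avoids $\reverse(\sigma)$, a deduction that was licensed only through the length hypothesis $|\sigma|\ge 4$; so I would check directly that $1232$ avoids $\reverse(111)=111$, $\reverse(112)=211$ and $\reverse(122)=221$ (it has no three equal letters, and no large letter followed by two equal smaller ones), whence in each case $\out{\sigma}(1232)=\reverse(1232)=2321$ contains $231$ and $1232$ is not $\sigma$-sortable. With this established, the witnesses produced by the case analysis of Theorem~\ref{theorem_avoids123_nonclass}---namely $112312$ for $\sigma=111$ and $1\sigma_3\sigma_2 3\sigma_1 2$ (that is, $121312$ for $\sigma=112$ and $122312$ for $\sigma=122$)---are ascent sequences containing $1232$ whose outputs under $\mapsigma{\sigma}$ avoid $231$, so each is $\sigma$-sortable. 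Thus $\Sort^{\Ascseq}(\sigma)$ contains $\alpha$ but not the pattern $1232\le\alpha$, and so fails to be closed downward. Assembling the four groups yields the corollary; I expect the bookkeeping to be routine and the verification of the three length-three patterns to be the only real content.
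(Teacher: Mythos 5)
Your proposal is correct and follows essentially the same route as the paper: the exceptional patterns are dispatched to Theorems~\ref{theorem_ascseq_11}, \ref{theorem_ascseq_12} and~\ref{theorem_ascseq_121}, patterns containing~$123$ to Theorem~\ref{theorem_contains123_class}, patterns of length at least four avoiding~$123$ to Theorem~\ref{theorem_avoids123_nonclass}, and the residual cases~$\sigma\in\lbrace 111,112,122\rbrace$ are settled by explicit witnesses ($112312$, $121312$, $122312$ containing the non-sortable pattern~$1232$), which are exactly the entries of Table~\ref{table_short_patterns_ascseq} that the paper cites. The only difference is cosmetic: the paper records these witnesses in a table, whereas you re-derive them by checking that the construction of Theorem~\ref{theorem_avoids123_nonclass} and the avoidance of~$\reverse(\sigma)$ by~$1232$ still go through at length three.
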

\begin{proof}
Patterns~$\sigma$ of length at most three, except~$123$, are discussed in Table~\ref{table_short_patterns_ascseq} and theorems~\ref{theorem_ascseq_11}, \ref{theorem_ascseq_12} and \ref{theorem_ascseq_121}. Patterns of greater length and the pattern~$123$ are discussed in Theorems~\ref{theorem_contains123_class} and \ref{theorem_avoids123_nonclass}.
\end{proof}

It is easy to observe that, in accordance with Corollary~\ref{corollary_class_vs_nonclass_Cayley} and Theorem~\ref{theorem_Cayley_class_to_ascseq}, if~$\hat{\sigma}$ contains~$231$, then~$\sigma$ contains~$123$ and~$\Sort^{\Ascseq}(\sigma)$ is indeed a class, as stated in Corollary~\ref{corollary_class_nonclass}.

\begin{table}
\centering
\def\arraystretch{1.1}
\begin{tabular}{lcc}
\toprule
$\sigma$ & $\sigma$\textbf{-sortable ascent sequence} & \textbf{Non-}$\sigma$\textbf{-sortable pattern}\\
\midrule
111 & 112312 & 1232\\
112 & 121312 & 1232\\
122 & 122312 & 1232\\
\bottomrule
\end{tabular}
\caption[Non-classes of~$\sigma$-sortable ascent sequences.]{Patterns~$\sigma$ of length at most three where~$\Sort^{\Ascseq}(\sigma)$ is not a class.}\label{table_short_patterns_ascseq}
\end{table}

\subsection{Modified ascent sequences}\label{section_modasc_seq_stack}

The following two results can be found in~\cite{CeCl}.
\begin{lemma}\cite{CeCl}\label{lemma_modasc_ascent_tops}
Let~$x\in\Cay_n$ be a Cayley permutation. Then~$x$ is a modified ascent sequence if and only if the following two conditions hold:
\begin{enumerate}
\item $x_1=1$;
\item an entry~$x_i=k>1$ is the leftmost occurrence of the integer~$k$ in~$x$ if and only if~$x_{i-1}<x_i$ (that is~$x_i$ is an ascent top).
\end{enumerate}
\end{lemma}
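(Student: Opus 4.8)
The plan is to prove both implications simultaneously by induction on $n$, matching the recursive construction of $\Modasc$ recalled in Section~\ref{section_sequences_integers} against an analogous ``append a last letter'' description of the stated conditions. Throughout, say that a Cayley permutation $x=x_1\cdots x_n$ has property $(\star)$ if $x_1=1$ and, for every $i\ge 2$ with $x_i>1$, the entry $x_i$ is the leftmost occurrence of its value if and only if $x_{i-1}<x_i$. The base case $n=1$ is immediate, since $\Modasc_1=\{1\}$ and $1$ satisfies $(\star)$ vacuously. For the inductive step I would first record two elementary, repeatedly used facts: that incrementing by one every entry of a word that is $\ge$ a fixed threshold (or symmetrically decrementing every entry strictly above it) preserves all pairwise relations $<,=,>$ among the entries, hence preserves both the ascent set and the equality pattern that determines leftmost occurrences; and, the key counting identity, that a word with property $(\star)$ has exactly $\max(x)-1$ ascent tops, because under $(\star)$ the ascent tops are precisely the leftmost occurrences of the values $2,3,\dots,\max(x)$, so that $\asc(x)=\max(x)-1$.

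For the forward implication I would take $x\in\Modasc_n$ and write it through the recursive construction as arising from some $y\in\Modasc_{n-1}$, which satisfies $(\star)$ by the inductive hypothesis. In the non-ascent case $x=ya$ with $a$ at most the last letter $b$ of $y$, the prefix of $x$ is literally $y$, so $(\star)$ holds on positions $1,\dots,n-1$; the new letter is not an ascent top (as $x_{n-1}=b\ge a$), and if $a>1$ then $a\le b\le\max(y)$ forces $a$ to already occur in the Cayley permutation $y$, so $a$ is not a leftmost occurrence, exactly as $(\star)$ requires. In the ascent case $x=y'a$, where $y'$ is obtained from $y$ by opening up the fresh value $a$ (incrementing every entry that is at least $a$), the first elementary fact shows that $(\star)$ transfers from $y$ to the prefix of $x$; moreover $a$ does not occur in $y'$, so its appearance at position $n$ is its leftmost occurrence, while $x_{n-1}<a$ makes it an ascent top, again matching $(\star)$.

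For the reverse implication I would take a Cayley permutation $x$ with property $(\star)$ and invert the construction according to whether the last letter $a=x_n$ satisfies $a\le x_{n-1}$ or $a>x_{n-1}$. In the first subcase $(\star)$ guarantees that $a$ is not a leftmost occurrence (when $a>1$), so deleting $x_n$ leaves a Cayley permutation $y$ still satisfying $(\star)$; by induction $y\in\Modasc_{n-1}$, and $x=ya$ is a legal non-ascent append. In the second subcase $a$ is an ascent top, hence by $(\star)$ a leftmost occurrence, so $a$ is absent from the prefix; decrementing by one every prefix entry strictly above $a$ and deleting $x_n$ yields a Cayley permutation $y$ with $(\star)$, so $y\in\Modasc_{n-1}$ by induction. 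Here I must also verify the growth bound $a\le 2+\asc(y)$ required for a legal ascent append, and this is where the counting identity earns its keep: from $\asc(x)=\max(x)-1$ and $\asc(x)=\asc(y)+1$ one gets $a\le\max(x)=\asc(y)+2$ automatically.

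The step I expect to be the main obstacle is the bookkeeping around the ascent case: one must check precisely that the uniform increment or decrement used to open or close the fresh value $a$ leaves every entry equal to $1$ fixed, sends the prefix to (respectively comes from) a genuine Cayley permutation with no value gaps, and preserves property $(\star)$ verbatim. All of this rests on the order- and equality-preservation fact, so the crux is to state and apply that fact cleanly enough that the two directions become mirror images of each other.
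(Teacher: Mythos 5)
Your proof is correct, but there is nothing in the paper to compare it against: the thesis does not prove this lemma, it only cites it from~\cite{CeCl}. Your induction along the recursive construction of~$\Modasc$ recalled in Section~\ref{section_sequences_integers} is the natural self-contained argument, and all the essential steps are present. Three remarks. First, you silently (and rightly) corrected a typo in that recalled construction: the thesis says~$y'$ is obtained from~$y$ by incrementing each entry \emph{less than or equal to}~$a$, which cannot be right (it would destroy~$x_1=1$ and the Cayley property); the correct operation, which you use, increments each entry \emph{greater than or equal to}~$a$, thereby freeing the value~$a$ so that the appended letter becomes a leftmost occurrence. Second, your counting identity~$\asc(x)=\max(x)-1$ under condition~$(\star)$ is exactly the right key lemma: it is what makes the growth bound~$a\le 2+\asc(y)$ fall out automatically in the reverse direction, and it is the step a naive attempt would miss. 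Third, one caveat to spell out when writing this up: your order-and-equality preservation fact for the \emph{decrement} (lowering by one every prefix entry strictly above~$a$) is only valid when no entry of the prefix equals~$a$, since otherwise an entry~$a+1$ would collide with an entry~$a$; this hypothesis does hold in your second subcase precisely because~$(\star)$ forces~$x_n=a$ to be a leftmost occurrence, so the prefix avoids the value~$a$ — but the argument should say so explicitly. With that caveat made explicit, both directions are complete.
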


\begin{theorem}\cite{CeCl}\label{theorem_modasc_mesh_patterns}
Let~$\mathfrak{a}$ and~$\mathfrak{b}$ the Cayley-mesh patterns depicted in Figure~\ref{figure_mesh_patterns_modasc}. Then:
$$
\Modasc=\Cay(\mathfrak{a},\mathfrak{b}).
$$
\end{theorem}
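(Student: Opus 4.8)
The plan is to avoid arguing directly about the (somewhat delicate) recursive definition of modified ascent sequences and instead lean on the internal characterization already recorded in Lemma~\ref{lemma_modasc_ascent_tops}. That lemma says that a Cayley permutation $x=x_1\cdots x_n$ lies in $\Modasc$ precisely when (i) $x_1=1$ and (ii) for every $k>1$, an entry $x_i=k$ is the leftmost occurrence of $k$ in $x$ if and only if $x_{i-1}<x_i$. So it suffices to prove that a Cayley permutation satisfies (i) and (ii) if and only if it avoids both $\mathfrak{a}$ and $\mathfrak{b}$; the identity $\Modasc=\Cay(\mathfrak{a},\mathfrak{b})$ then follows immediately.

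First I would unfold the biconditional in (ii) into its two ``bad'' local configurations and check that, together with (i), they describe exactly two forbidden shapes. Configuration (A): there is a letter $x_i=k>1$ that is the leftmost occurrence of $k$ but is not an ascent top, i.e.\ either $i=1$ or $x_{i-1}\ge x_i$; in the latter case necessarily $x_{i-1}>x_i$, since a leftmost occurrence cannot be preceded by an equal letter. The case $i=1$ here is exactly a violation of (i) with $x_1=k>1$, so configuration (A) uniformly absorbs the failure of condition (i) as well. Configuration (B): there is an ascent top $x_{i-1}<x_i=k$ that is not the leftmost occurrence of $k$, i.e.\ some earlier entry $x_j=k$ with $j<i-1$ (and $j<i-1$ rather than $j=i-1$ because $x_{i-1}<k$).

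Next I would read off Figure~\ref{figure_mesh_patterns_modasc} and match configuration (A) with an occurrence of $\mathfrak{a}$ and configuration (B) with an occurrence of $\mathfrak{b}$, using the Cayley-mesh formalism of the previous section. The translation is the core of the argument: the position-adjacency of the two boxed columns encodes that $x_{i-1}$ and $x_i$ are consecutive; the plotted points encode the descent (for $\mathfrak{a}$) or the ascent plus an earlier equal copy (for $\mathfrak{b}$), and a point strictly below $x_i$ witnesses $k>1$; the shaded boxes to the left at the height of $x_i$ encode ``leftmost occurrence'' (no earlier copy of $k$) in $\mathfrak{a}$, whereas the earlier equal point in $\mathfrak{b}$ encodes ``not a leftmost occurrence.'' Once this dictionary is fixed, both inclusions are routine: if $x$ fails (i) or (ii) then one of (A), (B) occurs and produces an occurrence of $\mathfrak{a}$ or $\mathfrak{b}$; conversely an occurrence of $\mathfrak{a}$ (resp.\ $\mathfrak{b}$) forces configuration (A) (resp.\ (B)), so $x$ cannot satisfy the hypotheses of Lemma~\ref{lemma_modasc_ascent_tops} and hence $x\notin\Modasc$.

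The step I expect to be the main obstacle is the faithful translation of ``leftmost occurrence'' into the shadings of the Cayley-mesh patterns. In the Cayley setting equal values share a row, so the shaded boxes must rule out (for $\mathfrak{a}$) or require (for $\mathfrak{b}$) an earlier copy of a repeated value without inadvertently constraining unrelated entries; getting the shaded regions and the value-adjacency exactly right, and in particular checking that the degenerate case $i=1$ of (A) really is captured by $\mathfrak{a}$ and that (B) is not spuriously triggered by the $k>1$ witness, is the delicate part. Everything else, namely the reduction through Lemma~\ref{lemma_modasc_ascent_tops} and the two equivalences once the dictionary is in place, is bookkeeping.
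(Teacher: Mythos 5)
Your overall strategy --- reduce everything to Lemma~\ref{lemma_modasc_ascent_tops} and translate its two failure modes into occurrences of the two Cayley-mesh patterns --- is exactly how the paper treats this statement: it presents the theorem as a reformulation of that lemma, with one pattern per failure mode. However, your dictionary, which you yourself call the core of the argument, does not match Figure~\ref{figure_mesh_patterns_modasc}, and in one place the mismatch is mathematically fatal rather than cosmetic. First, the labels are swapped: $\mathfrak{a}$ is the \emph{three}-point pattern $212$ with the column between its last two entries shaded, so an occurrence of $\mathfrak{a}$ is an ascent top possessing an earlier equal copy --- your configuration (B); $\mathfrak{b}$ is the \emph{two}-point pattern $21$ with the column between its entries shaded and the row-box to the left of the first entry at the height of the smaller value shaded, so an occurrence of $\mathfrak{b}$ is a leftmost occurrence adjacently preceded by a strictly larger entry --- your configuration (A). Since both patterns are forbidden simultaneously, the swap alone would not affect the set equality, but it shows the ``reading off the figure'' step has not actually been carried out. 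Second, and this is the real error: you assert that the descent pattern contains ``a point strictly below $x_i$'' witnessing $k>1$. Pattern $\mathfrak{b}$ has no such point, and it must not have one: its instances with $k=1$ are precisely what force $x_1=1$ in the inclusion $\Cay(\mathfrak{a},\mathfrak{b})\subseteq\Modasc$. With your version of the pattern, the Cayley permutation $213$ would avoid both of your patterns (it has no repeated values, and its only descent lands on the value $1$, which admits no witness below) while lying outside $\Modasc$; the equivalence you set out to prove would then be false.

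Relatedly, the degenerate case $i=1$ of your configuration (A), i.e.\ $x_1=k>1$, is not ``uniformly absorbed'': no mesh occurrence can be anchored at position $1$, since both patterns require an entry immediately to the left of the offending letter. The correct argument is indirect: if $x_1>1$, then since $x$ is a Cayley permutation the value $1$ occurs, its leftmost occurrence sits at some position $j\ge 2$, and $x_{j-1}>1=x_j$ with no earlier copy of $1$, which is an occurrence of $\mathfrak{b}$. Conversely, once avoidance of $\mathfrak{b}$ has been used (via the value $1$) to establish $x_1=1$, every leftmost occurrence of a value $k>1$ automatically sits at a position $\ge 2$ and hence has a predecessor, so avoidance of $\mathfrak{b}$ can then be applied to it to conclude it is an ascent top. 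This detour through the value $1$ is the missing ingredient; you flagged it as the expected obstacle but did not supply it, and without it neither direction of the equivalence with Lemma~\ref{lemma_modasc_ascent_tops} is complete.
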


\begin{figure}
\centering
$
\mathfrak{a}\,=\,
\begin{tikzpicture}[scale=0.50, baseline=19pt]
\fill[NE-lines] (2.15,0) rectangle (2.85,3);
\draw [semithick] (0,0.85) -- (4,0.85);
\draw [semithick] (0,1.15) -- (4,1.15);
\draw [semithick] (0,1.85) -- (4,1.85);
\draw [semithick] (0,2.15) -- (4,2.15);
\draw [semithick] (0.85,0) -- (0.85,3);
\draw [semithick] (1.15,0) -- (1.15,3);
\draw [semithick] (1.85,0) -- (1.85,3);
\draw [semithick] (2.15,0) -- (2.15,3);
\draw [semithick] (2.85,0) -- (2.85,3);
\draw [semithick] (3.15,0) -- (3.15,3);
\filldraw (1,2) circle (5pt);
\filldraw (2,1) circle (5pt);
\filldraw (3,2) circle (5pt);
\end{tikzpicture}
\hspace{50pt}
\mathfrak{b}\,=\,
\begin{tikzpicture}[scale=0.50, baseline=19pt]
\fill[NE-lines] (1.15,0) rectangle (1.85,3);
\fill[NE-lines] (0,0.85) rectangle (0.85,1.15);
\draw [semithick] (0,0.85) -- (3,0.85);
\draw [semithick] (0,1.15) -- (3,1.15);
\draw [semithick] (0,1.85) -- (3,1.85);
\draw [semithick] (0,2.15) -- (3,2.15);
\draw [semithick] (0.85,0) -- (0.85,3);
\draw [semithick] (1.15,0) -- (1.15,3);
\draw [semithick] (1.85,0) -- (1.85,3);
\draw [semithick] (2.15,0) -- (2.15,3);
\filldraw (1,2) circle (5pt);
\filldraw (2,1) circle (5pt);
\end{tikzpicture}
$
\caption[Cayley-mesh patterns characterizing modified ascent sequences.]{Cayley-mesh patterns such that~$\Modasc=\Cay(\mathfrak{a},\mathfrak{b})$.}\label{figure_mesh_patterns_modasc}
\end{figure}

Theorem~\ref{theorem_modasc_mesh_patterns} is essentially a reformulation of Lemma~\ref{lemma_modasc_ascent_tops} in terms of Cayley-mesh patterns. The avoidance of~$\mathfrak{a}$ implies that every ascent top is the leftmost occurrence of the corresponding integer. Conversely, to avoid~$\mathfrak{b}$ implies that each entry that is not an ascent top is not the leftmost occurrence of the corresponding integer.

\begin{lemma}\label{lemma_modasc213_1213}
We have:
$$
\Modasc(213)=\Modasc(1213).
$$
Moreover, the set~$\Modasc(213)$ is enumerated by the Catalan numbers.
\end{lemma}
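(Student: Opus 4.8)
The plan is to prove the two assertions separately: first the pattern equivalence $\Modasc(213)=\Modasc(1213)$, and then the Catalan enumeration of $\Modasc(213)$, via a generating tree governed by the succession rule $\Omega$ of Example~\ref{example_dyck_paths_new_peak}.

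\emph{Equivalence of the two avoidance conditions.} Since $1213$ contains $213$ (its last three letters form an occurrence), every $213$-avoider avoids $1213$, giving $\Modasc(213)\subseteq\Modasc(1213)$ for free. For the reverse inclusion I would prove the contrapositive: if $x\in\Modasc$ contains an occurrence $x_ix_jx_k\simeq 213$ (so $i<j<k$ and $x_j<x_i<x_k$), then $x$ contains $1213$. As $x_i\ge 2$ and $x_1=1$, necessarily $i\ge 2$. The idea is a descent on values: set $v_0=x_j$, which occurs at a position in the open interval $(i,k)$ and satisfies $v_0<x_i$. Given a value $v_t<x_i$ occurring at some position $q_t\in(i,k)$, if $v_t$ also occurs at a position $<i$, then that earlier occurrence together with $x_i$, the occurrence at $q_t$, and $x_k$ forms $v_t\,x_i\,v_t\,x_k\simeq 1213$ and we are done. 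Otherwise the leftmost occurrence $p_t$ of $v_t$ satisfies $p_t>i$ (it cannot equal $i$, since $x_i\ne v_t$), and in particular $v_t\ge 2$; by Lemma~\ref{lemma_modasc_ascent_tops} the entry $x_{p_t}$ is an ascent top, so $x_{p_t-1}<v_t$, and one checks $p_t-1\in(i,k)$. Setting $v_{t+1}=x_{p_t-1}$ continues the descent with $v_{t+1}<v_t$. Since the $v_t$ strictly decrease among positive integers the process terminates, and because the value $1$ occurs at position $1<i$, termination happens exactly when a $1213$ is produced. Thus $\Modasc(1213)\subseteq\Modasc(213)$, hence equality.

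\emph{A generating tree and its label.} I would build $\Modasc(213)$ by appending a rightmost letter, using the recursive construction of $\Modasc$: deleting and re-standardising the last letter of any $x'\in\Modasc_{n+1}$ yields a unique parent $x\in\Modasc_n$, and appending reverses this. As standardisation preserves patterns and deletion preserves avoidance, the parent of a $213$-avoider is again a $213$-avoider, so the construction restricts to a generating tree on $\Modasc(213)$. The appends available at $x$ (last letter $b$, maximum $M$) are indexed by $a\in\{1,\dots,2+\asc(x)\}$: for $a\le b$ one appends $a$ unchanged, and for $a>b$ one appends a new ascent top after incrementing the larger entries of the prefix, an order-preserving relabelling that alters neither the ascents nor the inversions of the prefix. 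A short analysis shows that appending $a$ creates a $213$ if and only if $a$ exceeds $t(x)$, the least value that is the top of an inversion of $x$ (with $t(x)=+\infty$ when $x$ is weakly increasing), because a $213$ ending at the new letter is exactly an inversion with top $<a$ followed by that letter. Hence the number of children is $k(x)=\min\bigl(2+\asc(x),\,t(x)\bigr)$, and I take this to be the label of $x$.

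\emph{The succession rule and conclusion.} The heart of the proof is to show that the child obtained by appending $a$ has label $a+1$; then a node of label $k$ has children of labels $2,3,\dots,k+1$, which is precisely $\Omega$, and since the root $x=1$ has label $\min(2,+\infty)=2$, Example~\ref{example_dyck_paths_new_peak} yields $|\Modasc_n(213)|=\catalan_n$. Two structural facts drive the computation: every value larger than $1$ first occurs as an ascent top (Lemma~\ref{lemma_modasc_ascent_tops}), forcing $M=\asc(x)+1$; and, for $x$ possessing an inversion, the last letter satisfies $b<t(x)$ strictly — if $x$ dipped below an earlier value $b$ and then returned to a value $\ge b$ it would create a $213$, while it cannot return to $b$ itself without producing an illegal ascent top. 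Using $b<t(x)$ and the fact that all of $1,\dots,M$ occur, I would compute that appending $a$ makes $t$ of the child equal to $a+1$ whenever a value exceeding $a$ survives in the prefix, and otherwise makes $2+\asc$ of the child equal to $a+1$; in each case the bound $a\le k(x)$ together with $M=\asc(x)+1$ gives $\min(2+\asc,\,t)=a+1$ for the child, as required. I expect this final verification of $k(x^{(a)})=a+1$ to be the main obstacle, since it demands careful bookkeeping of how $\asc$ and the inversion-top statistic $t$ transform under the append-and-relabel operation, split according to whether the new letter is a weak descent ($a\le b$), a new ascent top below the current maximum, or a new maximum. The strict inequality $b<t(x)$ is the delicate point that excludes the degenerate boundary behaviour and makes the child labels land exactly on $\{2,\dots,k+1\}$.
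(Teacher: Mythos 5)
Your first assertion, $\Modasc(213)=\Modasc(1213)$, is proved exactly as in the paper: both arguments iterate the step ``pass to the letter preceding the leftmost occurrence of the middle value of the $213$, which is strictly smaller by Lemma~\ref{lemma_modasc_ascent_tops}'', terminating because the value $1$ sits at position $1$. The enumeration is where you genuinely depart. The paper gets it in two lines from external machinery: Theorem~\ref{theorem_transport_modasc_fish} gives $\Fish(3124)=\phi'\bigl(\Modasc(1213,2314)\bigr)$ since $\mathcal{B}(3124)=\lbrace 1213,2314\rbrace$, the Catalan enumeration of $\Fish(3124)$ is quoted from~\cite{GW}, and then $\Modasc(1213,2314)=\Modasc(213,2314)=\Modasc(213)$ by the equivalence just proved. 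You instead stay inside $\Modasc$ and build a generating tree isomorphic to the Catalan tree of Example~\ref{example_dyck_paths_new_peak}, labelling $x$ by $k(x)=\min\bigl(2+\asc(x),\,t(x)\bigr)$. I checked the points you flag as delicate and they all hold: $\max(x)=\asc(x)+1$ is immediate from Lemma~\ref{lemma_modasc_ascent_tops}; the strict inequality $b<t(x)$ holds, because if $b\ge t(x)$ then past the bottom of an inversion with top $t(x)$ the word returns to a value $\ge t(x)$ (at latest at the final letter), and the first such return is an ascent top, hence a leftmost occurrence, so it cannot equal $t(x)$ and must exceed it, completing a $213$; and the child obtained by appending $a$ does get label $a+1$ in every regime --- for $a\le b$ with $a<\max(x)$, and for $b<a\le\max(x)$, the value $a+1$ of the child's prefix (the old entry $a+1$, respectively the old entry $a$ relabelled to $a+1$) tops an inversion over the final letter $a$ while no smaller value tops any inversion, whereas the boundary cases $a=b=\max(x)$ and $a=\max(x)+1$ can only occur when $t(x)=+\infty$ (by $b<t(x)$, respectively $t(x)\ge a>\max(x)$), so $x$ and the child are weakly increasing and the child's label is $2+\asc(x^{(a)})=a+1$. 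So the proposal is correct: it trades the paper's brevity, bought at the price of relying on~\cite{CeCl} and~\cite{GW}, for a self-contained elementary argument that moreover produces a recursive construction of $\Modasc(213)$ and hence an implicit bijection with Dyck paths.
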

\begin{proof}
We start by showing that~$\Modasc(213)=\Modasc(1213)$. Let~$x=x_1\cdots x_n\in\Modasc$. It is enough to show that if~$x\ge 213$, then~$x\ge 1213$. Let~$x_ix_jx_k$ be an occurrence of~$213$ in~$x$. Let~$j'$ be the index of the leftmost occurrence of the integer~$x_j$ in~$x$. If~$j'<i$, then~$x_{j'}x_ix_jx_k$ is an occurrence of~$1213$. If~$i<j'\le j$, then by Lemma~\ref{lemma_modasc_ascent_tops} it must be~$x_{j'-1}<x_j'$. Therefore we can repeat the same argument on the occurrence~$x_ix_{j'-1}x_k$ of~$213$, until we either find an occurrence of~$1213$ or a contradiction.

Now, by Theorem~\ref{theorem_transport_modasc_fish}, the set~$\Fish(3124)$ is Wilf-equivalent to the set~$\Modasc(\mathcal{B}(3124))$, where the Fishburn basis of~$3124$ is~$\mathcal{B}(3124)=\lbrace 1213,2314\rbrace$ (see again~\cite{CeCl}). In~\cite{GW}, the set~$\Fish(3124)$ is shown to be enumerated by the Catalan numbers. Finally, due to what proved above we have~$\Modasc(1213)=\Modasc(213)$. Thus:
$$
\Modasc(1213,2314)=\Modasc(213,2314)=\Modasc(213),
$$
and the thesis follows.
\end{proof}

\begin{theorem}\label{theorem_modasc_11}
We have:
$$
\Sort^{\Modasc}(11)=\Modasc(1213,1223).
$$
\end{theorem}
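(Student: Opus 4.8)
The plan is to prove the two inclusions separately, mirroring the proof of Theorem~\ref{theorem_ascseq_11} but replacing the $\RGF$-specific tools by the structural description of modified ascent sequences. Note first that the shortcut of Theorem~\ref{theorem_Cayley_class_to_ascseq} is unavailable here: since $\Sort^{\Cay}(11)$ is not a class (Theorem~\ref{theorem_necess_class_Cayley}), it is not of the form $\Cay(A)$, so the result must be established directly. The one preliminary fact I would record at the outset is that every $x\in\Modasc$ has $x_1=1$ by Lemma~\ref{lemma_modasc_ascent_tops}; consequently the letter $1$ stays at the bottom of the $11$-stack until a later copy of $1$ is read, so the last letter of $\out{11}(x)$ is always $1$. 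This is precisely the hypothesis that drives the whole argument for ascent sequences, and it holds verbatim for modified ascent sequences.

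For the inclusion $\Sort^{\Modasc}(11)\subseteq\Modasc(1213,1223)$ I would argue the contrapositive, splitting into the cases where $x$ contains $1213$ or where $x$ contains $1223$. In both cases the argument of Theorem~\ref{theorem_ascseq_11} produces an occurrence of $231$ in $\out{11}(x)$ using only the $11$-stack mechanics together with $x_1=1$ (the manufactured occurrence always ends with the final letter $1$ of the output, and the sub-case analysis only ever appeals to $x_1=1$ when it invokes the prefix $x_1x_jx_k$). Since nothing in that half of the proof uses the defining inequality of ascent sequences, it transfers word for word once we know $x_1=1$; hence this inclusion requires no new idea.

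The substantial half is the reverse inclusion $\Modasc(1213,1223)\subseteq\Sort^{\Modasc}(11)$. Here I would first invoke Lemma~\ref{lemma_modasc213_1213} to replace avoidance of $1213$ by avoidance of $213$, so that $x\in\Modasc(213,1223)$. Assuming for contradiction that $x$ is not sortable, I fix an occurrence $bca\simeq 231$ in $\out{11}(x)$ with $b=x_i$, $c=x_j$, $a=x_k$ and $a<b<c$, and aim for the same dichotomy as in Theorem~\ref{theorem_ascseq_11}: if $i>j$ then the copy of $b$ already present in the stack forces $c=x_j$ to be popped before $x_i$ enters, contradicting that $b$ precedes $c$ in the output; if $i<j$ then $c$ is pushed above $b$, again contradicting the order $b,c$ in the output. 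To run this dichotomy I must describe the content of the $11$-stack at the instant $c$ is about to be pushed, and in particular show that a copy of $b$ is still present. In place of the $\RGF$ shape used for ascent sequences, I would derive this from the ascent-top characterization of Lemma~\ref{lemma_modasc_ascent_tops} (leftmost occurrences are exactly the ascent tops) together with $213$- and $1223$-avoidance, tracking where the first occurrences of the intermediate values sit relative to $x_i$ and $x_j$.

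The hard part will be exactly this stack-content analysis, and it genuinely cannot be imported from the ascent-sequence proof, because Lemma~\ref{lemma_ascseq_is_RGF} fails here: $\Modasc(213)\not\subseteq\RGF$. For instance $1312\in\Modasc(213,1223)$ is not a restricted growth function, yet it is $11$-sortable, since $\out{11}(1312)=3121$ avoids $231$. Thus the clean staircase description ``the stack equals $(c-1)\cdots 21$ read top to bottom'', which makes the ascent-sequence dichotomy immediate, is no longer literally valid, and the repeated minima permitted by modified ascent sequences must be dealt with by hand. I expect that $1223$-avoidance is precisely what controls the repetitions, forcing every value strictly between $1$ and $c$ to occur only once in the relevant prefix, so that after discarding the spurious copies of the minimum the effective stack content is again a decreasing run containing $b$; making this reduction rigorous using only the modified-ascent-sequence axioms is the crux of the proof.
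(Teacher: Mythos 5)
Your first inclusion is fine and is exactly what the paper does: the proof that $11$-sortable sequences avoid $1213$ and $1223$ transfers verbatim from Theorem~\ref{theorem_ascseq_11}, using only the stack mechanics and the fact that~$x_1=1$ for every~$x\in\Modasc$; your observation that Theorem~\ref{theorem_Cayley_class_to_ascseq} is unavailable here is also correct.

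The reverse inclusion, however, contains a genuine gap, and you name it yourself: the ``stack-content analysis'' is never carried out, and the route you propose for closing it does not suffice. Avoidance of~$1223$ does force each value~$v$ with~$1<v<c$ to occur at most once before~$x_j=c$ (two copies of~$v$ followed by~$c$ give~$1vvc\simeq 1223$ together with~$x_1$), but it places no restriction on repeated values that are greater than or equal to~$c$, because an occurrence of~$1223$ needs a strictly larger letter \emph{after} the repeated pair. A value~$z\ge c$ with one copy sitting in the stack below~$b$ and a second copy arriving in the input between~$x_i$ and~$x_j$ triggers a pop that expels~$b$ before~$c$ enters (and the modified-ascent-sequence axioms do not forbid this configuration: the repeated copy of~$z$ only needs a letter~$\ge z$ immediately before it, consistent with every letter between~$x_i$ and~$x_j$ being~$>b$). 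So your intended conclusion in the case~$i<j$, namely that~$c$ is pushed above~$b$, cannot be derived from uniqueness of the intermediate values, and symmetrically in the case~$i>j$ you cannot guarantee that a copy of~$b$ is still in the stack when~$x_i$ arrives.

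The paper's proof is structured precisely to sidestep this: it fixes the \emph{leftmost} occurrence~$x_ix_jx_k$ of~$231$ in~$\out{11}(x)$, accepts that~$b$ (case~$i<j$), respectively the first occurrence~$x_{i'}$ of the integer~$b$ (case~$i>j$), is popped before~$x_j$ enters, and inspects the trigger of that pop: the next input letter~$y$ must equal some stack entry~$y'$. If~$y=b$ one reads off~$x_1x_iyx_j\simeq 1223$; if~$y<b$ one reads off~$y'x_iyx_j\simeq 1213$; and if~$y>b$ one exhibits an occurrence of~$231$ in the output ($x_iy'x_k$, respectively~$x_{i'}y'x_1$) whose middle letter exits before~$x_j$, contradicting leftmostness. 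This local argument needs no global description of the stack and no appeal to Lemma~\ref{lemma_modasc213_1213}. Note that your sketch never fixes a leftmost occurrence of~$231$, so even after completing a structural analysis the sub-case of a large trigger ($y>b$) would remain out of reach with the tools you set up.
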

\begin{proof}
The proof of the inclusion~$\Sort^{\Modasc}(11)\subseteq\Modasc(1213,1223)$ is identical to the analogous inclusion of Theorem~\ref{theorem_ascseq_11}.

Conversely, let~$x\in\Modasc$ and suppose that~$x$ is not~$11$-sortable. Let~$x_i=b$, $x_j=c$ and~$x_k=a$ be the three elements of~$x$ that result in the leftmost occurrence~$x_ix_jx_k=bca$ of~$231$ in~$\out{11}(x)$, with~$a<b<c$. We wish to show that~$x$ contains~$1213$ or~$1223$. We distinguish two cases, according to whether~$i<j$ or~$i>j$.

\begin{itemize}
\item Suppose that~$i<j$. Note that~$x_i=b$ is extracted from the~$11$-stack before~$x_j=c$ enters. Let~$y$ be the next element of the input when~$x_i$ is extracted. If~$y=b$, then~$x_1x_iyc\simeq 1223$. If~$y\neq b$, then there must be another copy~$y'$ of the integer~$y$ in the~$11$-stack. If~$y<b$, then~$y'x_iyc\simeq 1213$. Otherwise, if~$y>b$ then~$x_iy'x_k$ is an occurrence of~$231$ that precedes~$x_ix_jx_k$ in~$\out{11}(x)$, which is a contradiction.

\item Suppose that~$i>j$, that is~$x_j=c$ precedes~$x_i=b$ in~$x$. Since~$x_i$ precedes~$x_j$ in~$\out{11}(x)$, $x_j$ must be contained in the~$11$-stack when~$x_i$ enters. Let~$i'$ be the index of the first occurrence of the integer~$b$ in~$x$. If~$i'<j$, then~$x_{i'}$ is extracted from the~$11$-stack before~$x_j$ enters. Otherwise both~$x_{i'}=b$ and~$x_j$ (which is above~$x_{i'}$) would be extracted (at most) when~$x_i=b$ is the next element of the input, since~$x_i=x_{i'}$. But this is impossible due the hypothesis that~$x_i$ precedes~$x_j$ in~$\out{11}(x)$. Consider the instant when~$x_{i'}$ is extracted and let~$y$ be the next element of the input when this happens. If~$y=b$, then~$x_1x_{i'}yc\simeq 1223$, as desired. If~$y\neq x_{i'}$, then there must be another copy of the integer~$y$, say~$y'$, contained in the~$11$-stack. If~$y'<b$, then~$y'x_{i'}yc\simeq 1213$. Finally, if~$y'>b$ then~$x_{i'}y'x_1$ is an occurrence of~$231$ in~$\out{11}(x)$ that precedes~$x_ix_jx_k$, contradicting our choice of~$i,j,k$.
\end{itemize}
\end{proof}

\begin{theorem}\label{theorem_modasc_fibonacci_odd}
The set~$\Modasc(1213,1223)$ is enumerated by the odd index Fibonacci numbers (sequence~A001519 in~\cite{Sl}).
\end{theorem}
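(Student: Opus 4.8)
The plan is to enumerate $\Modasc(1213,1223)$ through a generating tree, exploiting that modified ascent sequences are built by appending a rightmost letter (the recursive construction of Section~\ref{section_sequences_integers}, or equivalently the ascent-top characterization of Lemma~\ref{lemma_modasc_ascent_tops}): every $x\in\Modasc_n$ is obtained uniquely from some $y\in\Modasc_{n-1}$ either by repeating an already-present value (a non-ascent, requiring no relabelling) or by inserting a new value as an ascent top (with an order-preserving relabelling of the larger entries). Since the class is closed under taking prefixes, it suffices to decide which appends stay inside $\Modasc(1213,1223)$. The crucial point is that the appended letter sits in the last position, so in any newly created occurrence of $1213$ or $1223$ it must play the role of the final, strictly maximal, entry; the first three entries of such an occurrence then form an occurrence of $121$ or $122$ already present in $y$. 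Writing $m(y)$ for the least middle value over all occurrences of $121$ and $122$ in $y$ (with $m(y)=\infty$ if there are none), an append of value $a$ is therefore legal if and only if $a\le m(y)$, the relabelling being harmless because it preserves the relative order of the old entries.

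From this I would extract the combinatorics of the children. The admissible $\Modasc$-appends have values in $\{1,\dots,b\}$ (repeats) and $\{b+1,\dots,2+\asc(y)\}$ (new ascent tops), where $b$ is the last letter of $y$; intersecting with $a\le m(y)$ gives exactly $\min\!\bigl(m(y),2+\asc(y)\bigr)$ children. Two short structural facts then collapse the bookkeeping: first, a sequence in $\Modasc(1213,1223)$ always satisfies $b\le m(y)$ (otherwise the last letter would extend a low occurrence of $121$ or $122$ to a forbidden pattern inside $y$), so all $b$ repeats are legal; second, at most one new-value append is legal (one shows $m(y)\le b+1$ unless $y$ is weakly increasing, and the weakly increasing members of the class are exactly $1^{a_1}23\cdots k$, for which again a single ascent top is available). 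Hence each node carries a label $(b,q)$ with $q\in\{0,1\}$ recording the last letter and the presence of a legal new-value append, and $b$ itself equals the number of repeat-children.

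Tracking how $(b,q)$ transforms under each legal append — the key computation being that repeating the value $1$ (or introducing a new top) leaves a legal new-value slot open, whereas repeating any value $a\ge 2$ closes it — I expect to obtain the succession rule
\[
\Omega:\quad (1,1);\qquad (p,0)\to(1,1)(2,0)\cdots(p,0),\qquad (p,1)\to(1,1)(2,0)\cdots(p,0)(p+1,1).
\]
It then remains to enumerate $\Omega$. Splitting the level generating functions according to the value of $q$ and applying the kernel method to the resulting pair of functional equations (with $b$ as catalytic variable) yields the ordinary generating function $\dfrac{t(1-t)}{1-3t+t^2}$ for $\sum_{n\ge1}|\Modasc_n(1213,1223)|\,t^n$, equivalently the recurrence $f_n=3f_{n-1}-f_{n-2}$ with $f_1=1,\ f_2=2$. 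These are the odd-index Fibonacci numbers A001519, in agreement with the data $1,2,5,13,34,\dots$ and with their appearance as Dyck paths of height at most $3$ in Table~\ref{table_decr_pattern}.

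The main obstacle is the precise evolution of $m(y)$ — equivalently of the label $q$ — under appending: a repeat can create a brand-new short occurrence of $121$ (for instance $1\cdots w\cdots 1$ when a copy of $1$ is appended) or of $122$, thereby lowering $m$, while a new-value append shifts the middle values of the surviving occurrences. Pinning down exactly when such a new short pattern appears, and hence proving the clean transitions of $\Omega$ rather than a messier rule, is where the real work lies; by contrast, once $\Omega$ is established the kernel-method solution is routine, if a little laborious. An alternative I would keep in reserve is to convert the label dynamics directly into a size- and height-preserving bijection with Dyck paths of height at most $3$, using the generation of the latter recalled in Example~\ref{example_dyck_paths_new_peak}, which would give the same count without solving a functional equation.
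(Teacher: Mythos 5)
Your proposal is correct, and it takes a genuinely different route from the paper. The paper never analyses the succession structure of~$\Modasc(1213,1223)$ directly: it first combines the transport theorem (Theorem~\ref{theorem_transport_modasc_fish}) with~$\Modasc(1213)=\Modasc(213)$ (Lemma~\ref{lemma_modasc213_1213}) to identify~$\phi'\bigl(\Modasc(1213,1223)\bigr)$ with the Fishburn permutations~$\Fish(1324,3124)$, and then enumerates the latter by the number of left-to-right maxima, proving~$g(n+1,1)=f(n)$, $g(n+1,2)=f(n)$ and~$g(n+1,k+1)=g(n,k)$ via explicit insertion bijections, whence~$f(n+1)=3f(n)-f(n-1)$. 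Your argument stays inside~$\Modasc$ itself, using only the recursive construction of modified ascent sequences (equivalently Lemma~\ref{lemma_modasc_ascent_tops}) and the observation that an appended letter can only play the role of the strictly maximal final entry of~$1213$ or~$1223$, so that legality is governed by the least middle value~$m(y)$ of occurrences of~$121$ and~$122$. I checked that your structural claims are all provable: every repeat is legal, since an occurrence with middle value below the last letter~$b$ would extend, via the last letter, to a forbidden pattern; and when~$b<\max(y)$ one can show~$m(y)\le b+1$ (consider the leftmost occurrence of~$b+1$ and argue on the suffix of unique ascent tops), so at most the single new value~$b+1$ is ever available; this yields exactly your rule~$\Omega$, and the kernel method applied to~$\Omega$ does give~$t(1-t)/(1-3t+t^2)$. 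Two inaccuracies to repair when writing this up, neither of which breaks the argument: the class is closed under the parent operation of the generating tree (remove the last letter and standardize), not under literal prefixes ($131$ is not in~$\Modasc$ although~$1312$ is); and the weakly increasing members of the class are~$1^{a_1}23\cdots(k-1)k^{a_k}$, not only~$1^{a_1}23\cdots k$ --- when~$a_k\ge 2$ no new ascent top is legal, since appending~$k+1$ would create~$1223$, but this still leaves at most one new-value child, so~$\Omega$ is unchanged. As for what each approach buys: the paper's proof is shorter because it reuses the Burge-transpose machinery of~\cite{CeCl} and yields, as a by-product, the Wilf-equivalence of~$\Modasc(1213,1223)$ with~$\Fish(1324,3124)$; yours is self-contained, produces the generating function directly, and refines the count by the label statistics of~$\Omega$, at the price of the delicate evolution of~$m(y)$ that you correctly identify as the real work.
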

\begin{proof}
Due to Theorem~\ref{theorem_transport_modasc_fish}, we have
$$
\Fish(1324)=\phi'\bigl(\Modasc(1223,1324)\bigr)\quad\text{and}\quad\Fish(3124)=\phi'\bigl(\Modasc(1213,2314)\bigr).
$$
Recall also that~$\Modasc(1213)=\Modasc(213)$, as proved in Lemma~\ref{lemma_modasc213_1213}. Thus, since~$213\le 1324$ and~$213\le 2314$, we have:
$$
\Modasc(1223,1324,1213,2314)=\Modasc(1223,213)=\Modasc(1213,1223)
$$
and
$$
\Fish(1324,3124)=\phi'\bigl(\Modasc(1223,1324,1213,2314)\bigr)=\phi'\bigl(\Modasc(1213,1223)\bigr).
$$ 
Let~$F(n)=\Fish_n(1324,3124)$ and let~$f(n)=|F(n)|$. We show that the coefficients~$f(n)$ satisfy~$f(1)=1$, $f(2)=2$ and~$f(n+1)=3f(n)-f(n-1)$, for~$n\ge 2$, which is a very well known recurrence for the odd index Fibonacci numbers. Let~$G(n,k)$ be the set:
$$
G(n,k)=\lbrace p\in F(n): \ltrmaxsize(p)=k\rbrace.
$$
Let~$g(n,k)=|G(n,k)|$. Notice that~$F(n)=\dot{\bigcup}_{k}G(n,k)$ and thus~$f(n)=\sum_{k=1}^{n}g(n,k)$. We show that, for any~$n\ge 1$:
$$
\begin{cases}
g(n+1,1)=f(n)\\
g(n+1,2)=f(n)\\
g(n+1,k+1)=g(n,k),\quad k\ge 2.
\end{cases}
$$
\begin{itemize}
\item Let us start by proving the first equation~$g(n+1,1)=f(n)$. We provide a bijection~$\alpha:F(n)\to G(n+1,1)$. Given~$p\in F(n)$, define~$\alpha(p)=1\ominus p$. Equivalently, $\alpha(p)$ is obtained from~$p$ by adding an initial maximum~$n+1$. It is easy to realize that~$\alpha(p)\in G(n+1,1)$ for each~$p\in F(n)$ and~$\alpha$ is injective. Finally, if~$q\in G(n+1,1)$, then~$q_1=n+1$ and the removal of~$n+1$ from~$q$ yields a permutation~$p\in F(n)$, thus~$\alpha$ is surjective too.

\item Similarly, we define a bijection~$\beta:F(n)\to G(n+1,2)$ by suitably adding a new maximum~$n+1$ to a permutation~$p\in F(n)$. If~$p\in G(n,1)$, then~$p_1=n$ and we set~$\beta(p)=n(n+1)p_2\cdots p_n$. Otherwise, if~$p\in G(n,k)$, for some~$k\ge 2$, let:
$$
p=m_1A_1m_2A_2\cdots m_kA_k
$$
be the ltr-max decomposition of~$p$. Then define~$\beta(p)$ by
$$
\beta(p)=m_1A_1(n+1)m_2A_2\cdots m_kA_k.
$$
It is easy to realize that~$\beta(p)$ avoids~$\fishpattern$, $1324$ and~$3124$. The case~$k=1$ is trivial. On the other hand, if~$k\ge 2$ then an occurrence of any of the listed patterns in~$\beta(p)$ should involve the new element~$n+1$, either as a~$4$ in an occurrence of~$1324$ or~$3124$ or as a~$3$ in an occurrence of~$\fishpattern$. But then, in all these cases, the element~$m_2$ would play the same role in an occurrence of the same pattern in~$p$, which is impossible since~$p\in F(n)$. A similar analysis shows that the removal of~$n+1$ from a permutation in~$G(n+1,2)$ yields a permutation in~$F(n)$, thus~$\beta$ is bijective and~$g(n+1,2)=f(n)$, as wanted.

\item Next suppose that~$k\ge 2$. We provide a bijection~$\gamma:G(n,k)\to G(n+1,k+1)$. Let~$p\in G(n,k)$ and write again:
$$
p=m_1A_1\cdots m_{k-1}A_{k-1}m_kA_k.
$$
Let~$A_k=a_1\cdots a_t$ and define:
$$
\gamma(p)=m_1A_1\cdots m_{k-1}A_{k-1}m'_{k-1}m'_ka'_1\cdots a'_t,
$$
where~$m'_k=m_k+1$ and~$a'_i=a_i+1$, if~$a_i>m_{k-1}$, or~$a'_i=a_i$, if~$a_i<m_k$. In other words, $\gamma(p)$ is obtained from~$p$ by inserting~$m_{k-1}+1$ immediately before~$m_k=n$ and suitably rescaling the other elements. Only those elements contained in the last block~$A_k$, $m_k$ included, eventually need to be rescaled. Notice that~$\gamma(p)$ has~$k+1$ ltr-maxima and~$\gamma$ is injective by construction. The proof that~$\gamma(p)$ avoids~$\fishpattern$, $1324$ and~$3124$ is identical to the previous cases, so we omit it. Finally, in the resulting permutation~$\gamma(p)$, the new element~$m_{k-1}+1$ is the~$k$-th ltr-maximum. Thus the inverse map~$\gamma^{-1}:G(n+1,k+1)\to G(n,k)$ is obtained by removing the~$k$-th ltr-maximum, which again does not create an occurrence of one of the forbidden patterns.
\end{itemize}

Due to what proved above, using induction, we have:
\begin{equation*}
\begin{split}
f(n+1)=\sum_{k=1}^{n+1}g(n+1,k)=&\\
g(n+1,1)+g(n+1,2)+\sum_{k=3}^{n+1}g(n+1,k)=&\\
f(n)+f(n)+\sum_{k=3}^{n+1}g(n,k-1)=&\\
f(n)+f(n)+\sum_{j=2}^{n}g(n,j)=&\\
f(n)+f(n)+\left[f(n)-g(n,1)\right]=&\\
3f(n)-f(n-1),
\end{split}
\end{equation*}
as desired.
\end{proof}

The next result for the pattern~$12$ is again a corollary of Theorem~\ref{theorem_Cayley_class_to_ascseq}.

\begin{theorem}\label{theorem_modasc_12}
We have:
$$
\Sort^{\Modasc}(12)=\Modasc(213).
$$
\end{theorem}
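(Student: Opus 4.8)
The plan is to obtain the statement as an immediate corollary of the transfer principle recorded in Theorem~\ref{theorem_Cayley_class_to_ascseq}, exactly as suggested by the sentence preceding the theorem. That result says that whenever $\sigma$ lies in $\Cay\cap X$ (for $X\in\lbrace\Ascseq,\Modasc\rbrace$) and the set of $\sigma$-sortable Cayley permutations happens to be a pattern-avoidance class $\Cay(A)$, then the set of $\sigma$-sortable words of $X$ is the corresponding class $X(A)$. So the whole argument reduces to checking the hypotheses for $\sigma=12$ and $X=\Modasc$, and then citing a previously established Cayley result.

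First I would verify that $12\in\Cay\cap\Modasc$. That $12$ is a Cayley permutation is clear, since $\lbrace 1,2\rbrace=[\max(12)]$. That $12$ is a modified ascent sequence follows at once from Lemma~\ref{lemma_modasc_ascent_tops}: its first letter is $1$, and its only other letter, the $2$ in the second position, is an ascent top that is indeed the leftmost (in fact the only) occurrence of the integer $2$. Alternatively one may read this off directly from the recursive construction of $\Modasc$ recalled in Section~\ref{section_sequences_integers}.

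Next I would invoke Theorem~\ref{theorem_12_stack_Cayley}, which gives $\Sort^{\Cay}(12)=\Cay(213)$; that is, $\Sort^{\Cay}(12)$ is the class avoiding the single pattern, so we may take $A=\lbrace 213\rbrace$. With both hypotheses of Theorem~\ref{theorem_Cayley_class_to_ascseq} now in place, applying that theorem with $X=\Modasc$ and $A=\lbrace 213\rbrace$ yields $\Sort^{\Modasc}(12)=\Modasc(213)$, which is precisely the claim.

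There is essentially no obstacle to surmount here, since all the content sits in the two results being combined. The only point that deserves attention is conceptual rather than technical, namely why the passage from Cayley permutations to modified ascent sequences is legitimate. This is the substance of Theorem~\ref{theorem_Cayley_class_to_ascseq}, whose proof rests on the fact that standardization is compatible with pattern containment and hence commutes with the action of the $\sigma$-stack: a word $w\in\Modasc$ is $12$-sortable if and only if $\std(w)\in\Cay$ is, and $\std(w)$ avoids $213$ if and only if $w$ does. Since we are entitled to assume Theorem~\ref{theorem_Cayley_class_to_ascseq}, nothing further needs to be carried out.
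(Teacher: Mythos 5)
Your proposal is correct and follows exactly the route the paper takes: the paper states this result without proof, noting only that it is "again a corollary of Theorem~\ref{theorem_Cayley_class_to_ascseq}," i.e.\ one combines $\Sort^{\Cay}(12)=\Cay(213)$ from Theorem~\ref{theorem_12_stack_Cayley} with the transfer principle applied to $X=\Modasc$ and $A=\lbrace 213\rbrace$. Your additional check that $12\in\Cay\cap\Modasc$ via Lemma~\ref{lemma_modasc_ascent_tops} is a fine (if routine) bit of diligence that the paper leaves implicit.
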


\begin{theorem}\label{theorem_modasc_121}
We have:
$$
\Sort^{\Modasc}(121)=\Modasc(213).
$$
\end{theorem}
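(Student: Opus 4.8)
The plan is to establish the two inclusions $\Sort^{\Modasc}(121)\subseteq\Modasc(213)$ and $\Modasc(213)\subseteq\Sort^{\Modasc}(121)$ by adapting, essentially verbatim, the proof of Theorem~\ref{theorem_ascseq_121}. Note first that Theorem~\ref{theorem_Cayley_class_to_ascseq} is of no help here, since $\hat{121}=211$ avoids $231$ and so $\Sort^{\Cay}(121)$ is not a class by Corollary~\ref{corollary_class_vs_nonclass_Cayley}; hence a direct argument is required. Throughout I would use Theorem~\ref{theorem_hare_stacksort} to rephrase $121$-sortability of $x$ as the condition that $\out{121}(x)$ avoids $231$, Lemma~\ref{lemma_modasc213_1213} to replace $213$ by $1213$ when convenient (so that $x\ge 213$ iff $x\ge 1213$), and Lemma~\ref{lemma_modasc_ascent_tops} in place of Lemma~\ref{lemma_ascseq_is_RGF} to extract the structural information the argument needs from the fact that $x$ is a modified ascent sequence (in particular $x_1=1$).

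For the inclusion $\Sort^{\Modasc}(121)\subseteq\Modasc(213)$ I would prove the contrapositive: if $x\in\Modasc$ contains $213$, hence $1213$, then $x$ is not $121$-sortable. Let $x_ix_jx_kx_l$ be the leftmost occurrence of $1213$, so $x_i=x_k<x_j<x_l$ with $i<j<k<l$. The point is that $x_j$ must be popped before $x_k$ enters the $121$-stack: were $x_i$ and $x_j$ both still in the stack when $x_k$ is pushed, the stack would read $x_k\,x_j\,x_i\simeq 121$ from top to bottom, which is forbidden. Since $x_1=1$ sits at the bottom of the stack and is never forced out before the final emptying (a later copy of $1$ evicts everything above the bottom $1$ but rests on top of it), the last letter of $\out{121}(x)$ is $1$, and $x_j\,x_l\,1$ is then an occurrence of $231$ in $\out{121}(x)$, so $x$ is not sortable.

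For the reverse inclusion I would again argue by contraposition: assuming $x\in\Modasc$ is not $121$-sortable, so $\out{121}(x)$ contains an occurrence $bca\simeq 231$ with $b=x_i$, $c=x_j$, $a=x_k$, I want to produce an occurrence of $213$ in $x$. The crux is the structural claim that, if $x$ were to avoid $213$, then the prefix of $x$ up to $x_j=c$ would be weakly increasing and free of gaps, i.e. of the form $1^{t_1}2^{t_2}\cdots c^{t_c}$ with $x_j$ its final $c$. Granting this, the two cases close exactly as in Theorem~\ref{theorem_ascseq_121}: if $i>j$, then a copy of $b$ already occurs in the prefix below $c$, so pushing $b$ on top of $c$ would create $b\,c\,b\simeq 121$, forcing $c$ out of the stack before $b$ enters and hence $c$ before $b$ in the output, contradicting $bca$; and if $i<j$, the weakly increasing prefix produces no pop operations, so $b=x_i$ is still in the stack when $c=x_j$ enters, again contradicting the order $bc$ in the output.

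The main obstacle is precisely this structural claim, where the ascent-sequence proof invoked Lemma~\ref{lemma_ascseq_is_RGF} to know that a $213$-avoider is a restricted growth function. For modified ascent sequences I cannot use that lemma, so I would instead combine avoidance of $213$ (which, for an arbitrary word, already forces every letter after the first descent to be bounded by the descent top, hence forces the prefix up to its leftmost maximum to be weakly increasing) with the facts that $\Modasc\subseteq\Cay$ and that $x$ satisfies Lemma~\ref{lemma_modasc_ascent_tops}: the Cayley property guarantees that a weakly increasing prefix ending in $c$ contains every value $1,\dots,c$ (no gaps), which is exactly what supplies the copy of $b$ needed in the case $i>j$, while Lemma~\ref{lemma_modasc_ascent_tops} pins down $x_1=1$ and the ascent-top/leftmost-occurrence dictionary used to locate $c$ inside the increasing part. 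Verifying that the occurrence of $231$ in the output indeed forces $c=x_j$ to lie in this increasing, gap-free prefix --- rather than past the first descent --- is the delicate point that I expect to require the most care.
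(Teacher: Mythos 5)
Both inclusions in your plan have genuine gaps, and they sit exactly where the paper's proof diverges from the ascent-sequence argument you are porting. For $\Sort^{\Modasc}(121)\subseteq\Modasc(213)$, your claim that ``$x_j$ must be popped before $x_k$ enters'' rests on a false dichotomy: the negation of that claim is only that $x_j$ is still in the stack when $x_k$ arrives; it does not put $x_i$ there as well. If $x_i$ was forced out \emph{before $x_j$ entered} (the pop of $x_i$ being triggered by some occurrence $y_2\,z\,y_1\simeq 121$ with $y_2,z$ below $x_i$ in the stack and $y_1$ the next input), then nothing forces $x_j$ out before $x_k$, and $x_j$ may survive past $x_l$; in that case $x_l$ exits \emph{before} $x_j$, and your witness $x_j\,x_l\,1$ simply does not occur in the output in that order. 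Leftmostness of the $1213$ occurrence disposes of this scenario only when the trigger element satisfies $z<x_l$ (for then $y_2\,z\,y_1\,x_l$ is an earlier $1213$); the case $z\ge x_l$ survives and needs a separate argument: one must show $x_i>1$ (otherwise $x_i\,z\,x_1$ would be a forbidden $121$ inside the stack) and then exhibit the different witness $x_i\,z\,x_1\simeq 231$ in the output. This missing case is precisely what occupies most of the paper's first inclusion (where the minimality is even taken over occurrences of $121$ whose middle entry is smaller than the ``$3$'', rather than over occurrences of $1213$); it is the heart of the proof, not a detail.

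For $\Modasc(213)\subseteq\Sort^{\Modasc}(121)$ the gap is worse: your structural claim is false. Membership in $\Cay$ says the \emph{whole word} hits every value up to its maximum; it says nothing about prefixes. For instance $1312\in\Modasc(213)$, and its prefix $1\,3$ ending at the value $3$ is weakly increasing but misses $2$, so it is not of the form $1^{t_1}2^{t_2}3^{t_3}$. You flag the verification that $c$ lies in a gap-free increasing prefix as ``the delicate point'', but as stated it is unattainable, and the paper never attempts it. Its converse is instead a purely local stack argument: take the \emph{leftmost} occurrence $bca$ of $231$ in $\out{121}(x)$ and split on whether $b$ precedes $c$ in $x$. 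If it does, analyze the trigger of the pop of $b$, with three subcases on the next input element versus $b$, each yielding an occurrence of $213$ in $x$, a forbidden $121$ inside the stack, or a $231$ in the output preceding $bca$. If $c$ precedes $b$, use Lemma~\ref{lemma_modasc_ascent_tops}: locate the first occurrence $x_{i'}$ of the integer $b$, rule out $i'<j$ (it would give an earlier $231$ in the output), note $x_{i'-1}<x_{i'}$, and iterate this descent ($x_{i'-1}=1$ forces $c$ out of the stack too soon; $x_{i'-1}>1$ restarts the argument at the first occurrence of $x_{i'-1}$) until a contradiction. So for the converse your route is not a viable adaptation of Theorem~\ref{theorem_ascseq_121}; a genuinely different argument, along the paper's lines, is required.
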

\begin{proof}
Due to Lemma~\ref{lemma_modasc213_1213}, we have~$\Modasc(213)=\Modasc(1213)$. Let~$x\in\Modasc$ and suppose that~$x\ge 1213$. We show that~$x$ is not~$121$-sortable. Let~$x_{i_1}x_jx_{i_2}x_k$ be an occurrence of~$1213$ in~$x$. We can assume that~$x_{i_1}x_jx_{i_2}$ is the leftmost occurrence of~$121$ such that the element that plays the role of~$2$ is smaller than~$x_k$. If~$x_j$ is extracted from the~$121$-stack before~$x_k$ enters, then~$\out{121}(x)$ contains~$x_jx_kx_1\simeq 231$, thus~$x$ is not~$121$-sortable. Otherwise, suppose that~$x_j$ is still in the~$121$-stack when~$x_k$ enters. Observe that, since~$x_{i_2}x_jx_{i_1}\simeq 121$, the element~$x_{i_1}$ is extracted before~$x_j$ enters. Otherwise~$x_{i_1}$ and~$x_j$ would be extracted from the~$121$-stack at most when~$x_{i_2}$ is the next element of the input (which contradicts the assumption that~$x_j$ is still in the~$121$-stack when~$x_k$ enters). Let~$y_1$ be the next element of the input when~$x_{i_1}$ is extracted. Since the~$121$-stack restriction is triggered, there are two elements in the~$121$-stack, say~$z$ and~$y_2$, with~$z$ above~$y_2$, such that~$y_1zy_2\simeq 121$. Due to our choice of~$i_1,j,i_2$, it must be~$z>x_k$ (and thus~$z\neq x_{i_1}$). Notice that it cannot be~$x_{i_1}=1$, since in that case the~$121$-stack would contain~$x_{i_1}zx_1\simeq 121$. Then~$x_{i_1}>1$ and~$\out{121}(x)$ contains an occurrence~$x_{i_1}zx_1$ of~$231$, as wanted.

Conversely, suppose that~$x$ is not~$121$-sortable. Equivalently, suppose there are three elements~$x_i=b$, $x_j=c$ and~$x_k=a$ such that~$x_ix_jx_k=bca$ is an occurrence of~$231$ in~$\out{121}(x)$. We show that~$x$ contains~$213$. We can assume that~$bca$ is the leftmost occurrence of~$231$ in~$\out{121}(x)$. We distinguish two cases, according to whether~$i<j$ or~$j>i$.

\begin{itemize}
\item Suppose that~$i<j$, i.e.~$x_i=b$ precedes~$x_j=c$ in~$x$. By hypothesis~$x_i$ is extracted from the~$121$-stack before~$x_j$ enters. Therefore, at that moment, the~$121$-stack contains two elements~$zy_1$, with~$z$ above~$y_1$, such that~$y_2zy_1\simeq 121$, where~$y_2$ is the next element of the input. If~$y_2<b$, then~$x_iy_2x_j$ is an occurrence of~$213$ in~$x$, as wanted. If instead~$y_2=b$, then the stack contains an occurrence~$x_izy_1$ of~$121$, which is forbidden. Finally, if~$y_1>b$, then also~$x>b$ and thus~$x_izx_k$ is an occurrence of~$231$ in~$\out{121}(x)$ that precedes~$x_ix_jx_k$, which is impossible due to our choice of~$i,j,k$. 

\item Suppose that~$j<i$, i.e.~$x_j=c$ precedes~$x_i=b$ in~$x$. Note that~$x_j$ must be in the~$121$-stack when~$x_i$ enters. Moreover, since~$x_ix_jx_k$ is the leftmost occurrence of~$231$ in~$\out{121}(x)$, $x_i=b$ is the first occurrence of the integer~$b$ that is extracted from the~$121$-stack. Let~$i'$ be the index of the first occurrence of the integer~$b$ in~$x$. If~$i'<j$, then~$x_{i'}x_jx_i\simeq 121$. Therefore at least one between~$x_{i'}$ and~$x_j$ must be extracted from the~$121$-stack before~$x_i$ enters (otherwise we would have~$x_ix_jx_{i'}\simeq 121$ inside the~$121$-stack). As said before, $x_j$ is still contained in the~$121$-stack when~$x_i$ enters, therefore~$x_{i'}$ is the one that has been extracted before. But then~$x_{i'}x_jx_k$ is an occurrence of~$231$ in~$\out{121}(x)$ that precedes~$x_ix_jx_k$, which is impossible. We can thus assume that~$i'>j$. Now, consider the element~$x_{i'-1}$. Due to Lemma~\ref{lemma_modasc_ascent_tops}, we have~$x_{i'-1}<x_{i'}$. If~$x_{i'-1}=1$, then~$x_{i'-1}x_jx_1\simeq 121$ and thus~$x_j$ is extracted from the~$121$-stack before~$x_i$ enters, which is a contradiction. If instead~$x_{i'-1}>1$, we can repeat the same argument, but using the first occurrence~$x_w$ of~$x_{i'-1}$ in~$x$ in place of~$x_{i'}$ (if~$w<j$, then~$x_j$ is extracted too soon and otherwise we consider~$x_{w-1}$). Sooner or later this would result in a contradiction.
\end{itemize}
\end{proof}

The proof of the next result is analogous to the proof of Theorem~\ref{theorem_contains123_class}, and it is left to the reader.

\begin{theorem}\label{theorem_modasc_class_suff_123}
Let~$\sigma=\sigma_1\cdots\sigma_k$ be a modified ascent sequence of length at least three and suppose that~$\sigma\ge 123$. Then~$\Sort^{\Modasc}(\sigma)=\Modasc(132)$.
\end{theorem}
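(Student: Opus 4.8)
The statement to prove is $\Sort^{\Modasc}(\sigma)=\Modasc(132)$ whenever $\sigma$ is a modified ascent sequence of length at least three with $\sigma\ge 123$. The author explicitly states this is analogous to Theorem~\ref{theorem_contains123_class} (the ascent sequence case), so the plan is to mirror that proof, working inside $\Modasc$ rather than $\Ascseq$ and exploiting the structural facts available for modified ascent sequences.

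\textbf{Overall strategy.} The plan is to prove a double inclusion by contrapositive on one side and a direct containment-avoidance argument on the other. First I would show that every $x\in\Modasc$ containing $132$ is not $\sigma$-sortable, which gives $\Sort^{\Modasc}(\sigma)\subseteq\Modasc(132)$. Then I would show that every $x\in\Modasc$ avoiding $132$ is $\sigma$-sortable, giving the reverse inclusion. Throughout I would use Remark~\ref{remark_hat_Cayley}, which says that if the input avoids $\reverse(\sigma)$ then $\out{\sigma}(x)=\reverse(x)$, and otherwise $\out{\sigma}(x)$ contains $\hat{\sigma}$; and the fact (from Theorem~\ref{theorem_hare_stacksort} applied in the $\Modasc$ setting) that $x$ is $\sigma$-sortable iff $\out{\sigma}(x)$ avoids $231$. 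A key recurring device is that the first entry of any modified ascent sequence is $x_1=1$ (Lemma~\ref{lemma_modasc_ascent_tops}), so $x_1$ remains at the bottom of the $\sigma$-stack and becomes the last letter of $\out{\sigma}(x)$, supplying the ``$1$'' of a prospective $231$.

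\textbf{The two inclusions.} For $\Sort^{\Modasc}(\sigma)\subseteq\Modasc(132)$, suppose $x\ge 132$. If $x$ avoids $\reverse(\sigma)$, then $\out{\sigma}(x)=\reverse(x)$, and since $x$ contains $132$ the reverse contains $\reverse(132)=231$, so $x$ is not sortable. If instead $x$ contains $\reverse(\sigma)$, then $\out{\sigma}(x)$ contains $\hat{\sigma}$ by Remark~\ref{remark_hat_Cayley}; since $\sigma\ge 123$ one checks that $\hat{\sigma}$ contains an occurrence $ab$ of $12$ with $a>1$ (the two entries playing the roles of the $2,3$ in a $123$ inside $\hat\sigma$), and together with the trailing $x_1=1$ of $\out{\sigma}(x)$ these give an occurrence of $231$, so again $x$ is not sortable. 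For the reverse inclusion, suppose $x$ avoids $132$. If $x$ avoids $\reverse(\sigma)$ then $\out{\sigma}(x)=\reverse(x)$ avoids $231$ and $x$ is sortable. Otherwise $x\ge\reverse(\sigma)$, and since $\sigma\ge 123$ we get $x\ge\reverse(123)=321$; but an occurrence $x_{i_1}x_{i_2}x_{i_3}$ of $321$ combined with the leading $x_1=1$ yields $x_1 x_{i_1} x_{i_2}\simeq 132$, contradicting that $x$ avoids $132$. Hence this case cannot occur, and $x$ is sortable.

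\textbf{Anticipated obstacle.} The delicate point, just as in the ascent-sequence analogue, is the step asserting that $\hat{\sigma}$ necessarily contains a $12$-occurrence whose smaller entry exceeds $1$, and more carefully that this produces a genuine $231$ in $\out{\sigma}(x)$ rather than a coincidental collision of equal values (recall we are in $\Cay$, where repeated letters are allowed). I would verify that the two entries witnessing the $123$ in $\sigma$ survive the $\sigma\to\hat\sigma$ swap of the first two positions in their relative order, so they still form an ascent $ab$ with $a\ge 2$, and that the trailing $1$ is strictly below $a$ in value; strictness of $1<a\le b$ is exactly what guarantees an honest $231$ pattern and not a weakly-increasing near-miss. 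The argument is otherwise entirely parallel to Theorem~\ref{theorem_contains123_class}, with $x_1=1$ (guaranteed here by Lemma~\ref{lemma_modasc_ascent_tops}) playing the role it played for ascent sequences, so no genuinely new machinery is required beyond this careful bookkeeping of strict inequalities among repeated values.
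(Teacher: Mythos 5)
Your proof is correct and takes essentially the same approach as the paper, which gives no separate argument at all but states that the proof is ``analogous to the proof of Theorem~\ref{theorem_contains123_class}'' and leaves it to the reader: your two inclusions (using $\out{\sigma}(x)=\reverse(x)$ when $x$ avoids $\reverse(\sigma)$; otherwise pairing a $12$-occurrence of $\hat{\sigma}$ whose smaller entry exceeds $1$ with the trailing $x_1=1$, respectively turning an occurrence of $321$ plus the leading $x_1=1$ into a $132$) are exactly that analogy carried out, with Lemma~\ref{lemma_modasc_ascent_tops} guaranteeing $x_1=1$. One cosmetic remark: where you write ``$1<a\le b$'', the entries $a,b$ play the roles of $2$ and $3$ in a $123$-occurrence of $\sigma$, so $a<b$ holds strictly, which is what a genuine $231$ requires and what your argument in fact uses.
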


\begin{theorem}\label{theorem_modasc_class_suff_122}
Let~$\sigma=\sigma_1\cdots\sigma_k$ be a modified ascent sequence of length at least three. If~$\sigma$ avoids~$123$ and~$\sigma_1\sigma_2\sigma_3\simeq 122$, then~$\Sort^{\Modasc}(\sigma)=\Modasc(132,\reverse(\sigma)\oplus 1)$.
\end{theorem}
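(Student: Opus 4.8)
The plan is to characterize $\Sort^{\Modasc}(\sigma)$ as the set of modified ascent sequences avoiding $132$ and $\reverse(\sigma)\oplus 1$, proceeding by two inclusions in the style of Theorem~\ref{theorem_contains123_class}. The hypothesis $\sigma_1\sigma_2\sigma_3\simeq 122$ tells us immediately that $\sigma_1=\sigma_2<\sigma_3$, so $\hat{\sigma}=\sigma$; moreover, since $\sigma$ avoids $123$, we know $\max(\sigma)\le 2$ by Lemma~\ref{lemma_avoids123_maxatmost2}, hence $\sigma=1\,1\,2\,\sigma_4\cdots\sigma_k$ is a word over $\{1,2\}$. This small alphabet is what makes $\reverse(\sigma)\oplus 1$ the relevant obstruction: appending a new strict maximum to $\reverse(\sigma)$ produces precisely the pattern that the $\sigma$-stack fails to prevent. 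First I would record the structural consequences of $\sigma_1=\sigma_2$: by Remark~\ref{remark_hat_Cayley}, if the input $x$ avoids $\reverse(\sigma)$ then $\out{\sigma}(x)=\reverse(x)$, while if $x$ contains $\reverse(\sigma)$ then $\out{\sigma}(x)$ contains $\hat{\sigma}=\sigma$.

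For the inclusion $\Sort^{\Modasc}(\sigma)\subseteq\Modasc(132,\reverse(\sigma)\oplus 1)$, I would argue the contrapositive on each forbidden pattern separately. If $x\ge 132$ and $x$ avoids $\reverse(\sigma)$, then $\out{\sigma}(x)=\reverse(x)\ge\reverse(132)=231$, so $x$ is not sortable. If instead $x\ge 132$ and $x\ge\reverse(\sigma)$, then $\out{\sigma}(x)$ contains $\sigma$, and because $\sigma$ starts with $1\,1\,2$ it contains an ascent $ab$ with $a>1$ followed (in the output) by the final entry $1=x_1$, giving an occurrence of $231$; this is the same mechanism as in Theorem~\ref{theorem_contains123_class}. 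The genuinely new case is when $x\ge\reverse(\sigma)\oplus 1$: here the ``$\oplus 1$'' element is a strict maximum after the copy of $\reverse(\sigma)$, and I would show that processing $\reverse(\sigma)$ forces the leftmost occurrence of $\sigma$ to surface in $\out{\sigma}(x)$ together with the trailing maximum, again producing $231$. The verification that $\reverse(\sigma)\oplus 1$ is exactly the threshold — not $\reverse(\sigma)$ alone — is where the hypothesis $\sigma_1=\sigma_2$ matters, since the repeated initial letter means $\reverse(\sigma)$ by itself outputs $\sigma=\hat\sigma$, which (avoiding $123$) need not contain $231$.

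For the reverse inclusion, suppose $x\in\Modasc$ avoids both $132$ and $\reverse(\sigma)\oplus 1$; I must show $\out{\sigma}(x)$ avoids $231$. If $x$ avoids $\reverse(\sigma)$ then $\out{\sigma}(x)=\reverse(x)$ avoids $231$ since $x$ avoids $132$, and we are done. So assume $x\ge\reverse(\sigma)$ but $x$ avoids $\reverse(\sigma)\oplus 1$. The key structural fact I would establish is that, because $x$ avoids $132$, every occurrence of $\reverse(\sigma)$ in $x$ must be a suffix-type occurrence with nothing strictly larger following it (else the leftmost element of $\reverse(\sigma)$, a copy of $\sigma$'s first entry, together with a later larger element would build $132$ or $\reverse(\sigma)\oplus 1$). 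Then I would track the $\sigma$-stack: the avoidance of $\reverse(\sigma)\oplus 1$ guarantees that once the restriction is triggered, no subsequent input element is large enough to create a fresh $231$ in the output, exploiting Lemma~\ref{lemma_modasc_ascent_tops} to control where leftmost occurrences of integers sit.

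The main obstacle I anticipate is the reverse inclusion, specifically proving that avoiding $\reverse(\sigma)\oplus 1$ (rather than some more complicated family of patterns) is sufficient. In the classical and Cayley settings the effective pattern for $\sigma_1=\sigma_2$ leads to a clean reverse-operator description, but here the interaction between the small alphabet $\{1,2\}$, the $\Modasc$ ascent-top constraint, and the trailing-maximum obstruction must be reconciled. I expect to need a careful instant-by-instant analysis of the stack at the moment the leftmost occurrence of $\sigma$ would be completed, showing that any putative output $231$ forces either a $132$ in $x$ or an occurrence of $\reverse(\sigma)\oplus 1$, contradicting the hypotheses. Throughout I would lean on Lemma~\ref{lemma_modasc213_1213} and Lemma~\ref{lemma_modasc_ascent_tops} to convert between classical and $1$-prefixed patterns and to locate leftmost occurrences, mirroring the bookkeeping already used in Theorem~\ref{theorem_modasc_121}.
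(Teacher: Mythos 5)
Your proposal rests on a misreading of the hypothesis, and the error propagates through every step. An occurrence of the pattern $122$ has its \emph{second and third} entries equal: $\sigma_1\sigma_2\sigma_3\simeq 122$ means $\sigma_1<\sigma_2=\sigma_3$, not $\sigma_1=\sigma_2<\sigma_3$ (that would be the pattern $112$, for which $\Sort^{\Modasc}(112)$ is \emph{not} a class, see Table~\ref{table_short_patterns_modasc}; under your reading the theorem would contradict that table). Consequently $\hat{\sigma}=\sigma_2\sigma_1\sigma_3\cdots\sigma_k\neq\sigma$, and your entire framing --- ``the repeated initial letter means $\reverse(\sigma)$ by itself outputs $\sigma=\hat{\sigma}$'', together with the $\sigma_1=\sigma_2$ bijectivity machinery --- is unavailable. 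A second, independent error: Lemma~\ref{lemma_avoids123_maxatmost2} concerns $\Ascseq$, not $\Modasc$, so you cannot conclude $\max(\sigma)\le 2$; for instance $\sigma=13312$ is a modified ascent sequence avoiding $123$ with $\sigma_1\sigma_2\sigma_3\simeq 122$ and $\max(\sigma)=3$, so $\sigma$ is not a word over $\{1,2\}$. What is true, and what the paper actually uses, is that $\sigma_1=1$ (by Lemma~\ref{lemma_modasc_ascent_tops}) and $\sigma_2=\sigma_3=\max(\sigma)$, since any larger entry after $\sigma_2$ would complete a $123$ with $\sigma_1\sigma_2$. Third, your treatment of the case $x\ge 132$, $x\ge\reverse(\sigma)$ borrows the mechanism of Theorem~\ref{theorem_contains123_class}, but that mechanism needs $\sigma\ge 123$, which guarantees $\hat{\sigma}$ has an ascent $ab$ with $a>1$; here $\sigma$ avoids $123$, so knowing that $\out{\sigma}(x)$ contains $\hat{\sigma}$ produces no occurrence of $231$ --- this is precisely why the correct basis involves the extra pattern $\reverse(\sigma)\oplus 1$ rather than $132$ alone.

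The paper's proof is a stack-tracking argument that exploits exactly the two facts you lost. For the inclusion $\Modasc(132,\reverse(\sigma)\oplus 1)\subseteq\Sort^{\Modasc}(\sigma)$ (argued contrapositively), given an occurrence $bca$ of $231$ in $\out{\sigma}(x)$ where $b$ is popped before $c$ enters, the triggering occurrence $\sigma'_1\sigma'_2\cdots\sigma'_k$ of $\sigma$ satisfies: if $\sigma'_2<c$ then \emph{every} $\sigma'_u<c$ because $\sigma_2=\max(\sigma)$, so $\sigma'_k\cdots\sigma'_2\sigma'_1c$ is an occurrence of $\reverse(\sigma)\oplus 1$; if $\sigma'_2>c$ then $x_1\sigma'_2c\simeq 132$; and the boundary case $\sigma'_2=c$ is resolved by an iteration using Lemma~\ref{lemma_modasc_ascent_tops}. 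The reverse inclusion similarly hinges on the equality $\sigma'_2=\sigma'_3$ forced by the $122$ prefix, again combined with leftmost-occurrence arguments and Lemma~\ref{lemma_modasc_ascent_tops}. None of this case analysis survives under your reading, and the one step you flag as vague in your own reverse inclusion (``no subsequent input element is large enough to create a fresh $231$'') is exactly where this analysis must live. The proof needs to be restarted from the correct hypothesis $\sigma_1=1<\sigma_2=\sigma_3=\max(\sigma)$.
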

\begin{proof}
Suppose that~$\sigma$ avoids~$123$ and let~$\sigma_2=\sigma_3=t$, for some~$t>1$. Observe that~$t=\max(\sigma)$, otherwise~$\sigma_1\sigma_2=1t$ would realize an occurrence of~$123$ together with the maximum of~$\sigma$. Let~$x$ be a modified ascent sequence.

We firts show that if~$x$ is not~$\sigma$-sortable, then~$x\notin\Modasc(132,\reverse(\sigma))\oplus 1)$, thus proving the inclusion~$\Sort^{\Modasc}(\sigma)\supseteq\Modasc(132,\reverse(\sigma)\oplus 1)$. Suppose that the three elements~$x_{i}=a$, $x_{j}=b$ and~$x_{k}=c$ result in an occurrence~$bca$ of~$231$ in~$\out{\sigma}(x)$. If~$j>k$, then~$x_1x_kx_j$ is an occurrence of~$132$ in~$x$, as wanted. Otherwise, suppose that~$j<k$ and~$x_j$ is extracted from the~$\sigma$-stack before~$x_k$ enters. When~$x_j$ is extracted, there must be~$k-1$ elements~$\sigma'_2,\dots,\sigma'_k$ in the~$\sigma$-stack, reading from top to bottom, such that~$\sigma'_1\sigma'_2\cdots\sigma'_k$ is an occurrence of~$\sigma$, where~$\sigma'_1$ is the next element of the input. Now, if~$\sigma'_2<c$, then~$\sigma'_u<c$ for each~$u$, since~$\sigma_2=\max(\sigma)$. Therefore~$\sigma'_k\cdots\sigma'_2\sigma'_1c$ is an occurrence of~$\reverse(\sigma)\oplus 1$, as wanted. If instead~$\sigma'_2>c$, then~$x_1\sigma'_2c$ is an occurrence of~$132$ in~$x$. Finally, suppose that~$\sigma'_2=c$. Thus~$x_k=c$ is not the leftmost occurrence of the integer~$c$ in~$x$, since~$x_k$ follows~$\sigma'_2=c$ in~$x$. By Lemma~\ref{lemma_modasc_ascent_tops}, it must be~$x_{k-1}\ge x_k$ (and thus~$x_{k-1}\neq\sigma'_1$). If~$x_{k-1}>x_k$, then~$x_1x_{k-1}x_k$ is an occurrence of~$132$. If~$x_{k-1}=x_k$, then we can repeat the same argument, but using~$x_{k-1}$ instead of~$x_k$. Sooner or later this will result in either an occurrence of~$132$ or a contradiction.

Conversely, we shall prove the inclusion~$\Sort^{\Modasc}(\sigma)\subseteq\Modasc(132,\reverse(\sigma)\oplus 1)$ by showing that if~$x$ contains either~$132$ or~$\reverse(\sigma)\oplus 1$, then~$x$ is not~$\sigma$-sortable. Suppose initially that~$x\ge132$. Without losing generality, choose the leftmost occurrence~$x_1x_ix_j$ of~$132$ in~$x$. If~$x_j$ enters the~$\sigma$-stack above~$x_i$, then~$\out{\sigma}(x)$ contains an occurrence~$x_jx_ix_1$ of~$231$, thus~$x$ is not~$\sigma$-sortable. Suppose instead that~$x_i$ is extracted from the~$\sigma$-stack before~$x_j$ enters. At that moment, the next element of the input~$\sigma'_1$ forms an occurrence of~$\sigma$ together with some elements~$\sigma'_2,\dots,\sigma'_k$ contained in the~$\sigma$-stack. Notice that~$\sigma'_2=\sigma'_3>1$, since~$\sigma_1\sigma_2\sigma_3\simeq 122$. Due to our choice of~$x_1x_ix_j$ as leftmost occurrence of~$132$ in~$x$, it must be~$\sigma'_u\le x_j$ for each~$u\le k-1$. Since~$\sigma_1<\sigma_2$, we also have~$\sigma'_1<x_j$ and thus~$\sigma'_2\neq x_i$. If~$\sigma'_2<x_j$, then~$\out{\sigma}(x)$ contains an occurrence~$\sigma'_2x_jx_1$ of~$231$ and we are done. Therefore we can assume~$\sigma'_2=x_j$ (and thus~$\sigma'_2=\sigma'_3=x_j$). Due to Lemma~\ref{lemma_modasc_ascent_tops}, it must be~$x_{j-1}\ge x_j$. Also~$x_{j-1}\ge x_i$, again due to our choice of~$x_1x_ix_j$ as leftmost occurrence of~$132$, and thus~$x_{j-1}\neq\sigma'_1$. If~$x_{j-1}>x_i$, then~$x_ix_{j-1}x_1$ is an occurrence of~$231$ in~$\out{\sigma}(x)$. Finally, let~$x_{j-1}=x_i$. Then again~$x_{j-2}\ge x_{j-1}$ due to Lemma~\ref{lemma_modasc_ascent_tops} and we can repeat the same argument on~$x_{j-2}$. Sooner or later, since the next element of the input is~$\sigma'_1\neq x_{j-1}$, either we will find an occurrence of~$132$ or a contradiction. This proves that if~$x\ge 132$, then~$x$ is not~$\sigma$-sortable. Next suppose that~$x$ avoids~$132$, but~$x$ contains an occurrence~$\sigma'_k\cdots\sigma'_2\sigma'_1m$ of~$\reverse(\sigma)\oplus 1$. If the elements~$\sigma'_k\cdots\sigma'_2$ are still in the~$\sigma$-stack when~$\sigma'_1$ is the next element of the input, then~$\sigma'_2$ is extracted and~$\out{\sigma}(x)$ contains an occurrence~$\sigma'_2mx_1$ of~$231$. Otherwise, there must have been a previous occurrence, say~$\sigma''_k\cdots\sigma''_2\sigma''_1$, of~$\reverse(\sigma)$ in~$x$ such that at least one element amongst~$\sigma'_k,\dots,\sigma'_2$ is extracted when~$\sigma''_1$ is the next element of the input, together with~$\sigma''_2$. Observe that it must be~$\sigma''_2\le m$, since we are assuming that~$x$ avoids~$132$. If~$\sigma''_2<m$, then again~$\sigma''_2mx_1$ is an occurrence of~$231$ in~$\out{\sigma}(x)$. Finally, if~$\sigma''_2=m$, then~$x_1\sigma''_2\sigma'_2$ is an occurrence of~$132$, a contradiction.
\end{proof}

\begin{lemma}\label{lemma_reverse_is_modasc}
Let~$x=x_1\cdots x_k$ be a modified ascent sequence and suppose that~$x$ avoids~$123$. If~$x_k=1$, then~$\reverse(x)$ is a modified ascent sequence. Otherwise, if~$x_k>1$, then~$1\reverse(x)$ is a modified ascent sequence.
\end{lemma}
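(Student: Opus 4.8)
The plan is to verify the two defining conditions of a modified ascent sequence from Lemma~\ref{lemma_modasc_ascent_tops}: that the first entry is $1$, and that an entry exceeding $1$ is the leftmost occurrence of its value if and only if it is an ascent top. Write $y=\reverse(x)$ in the case $x_k=1$ and $z=1\reverse(x)$ in the case $x_k>1$. The first condition is immediate, since $y_1=x_k=1$ in the first case and $z_1=1$ by construction in the second. For the second condition, the key translation is that a leftmost occurrence of a value $m>1$ in $\reverse(x)$ corresponds to a rightmost occurrence of $m$ in $x$, while the ascent-top inequality $y_{i-1}<y_i$ (resp. $z_{i-1}<z_i$) at such an entry corresponds, letting $p$ be the matching index in $x$, to a descent $x_p>x_{p+1}$. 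Thus everything reduces to the following claim about $x$: for $1\le p\le k-1$ with $x_p=m>1$, the entry $x_p$ is the rightmost occurrence of $m$ in $x$ if and only if $x_p>x_{p+1}$.

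Before proving the claim I would record a decisive consequence of $123$-avoidance. Since $x_1=1$, if some $x_p>1$ with $p<k$ satisfied $x_p<x_{p+1}$, then $x_1x_px_{p+1}$ would be an occurrence of $123$; hence whenever $x_p>1$ and $p<k$ we must have $x_p\ge x_{p+1}$, so the only alternative to a descent is the equality $x_p=x_{p+1}$. This settles one implication at once: if $x_p=x_{p+1}$ then $x_p$ is plainly not the rightmost occurrence of $m$.

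The substantive direction, which I expect to be the main obstacle, is that a descent forces a rightmost occurrence. Assume $x_p=m>1$ and $x_p>x_{p+1}$ but, for contradiction, that $m$ occurs again after $p$; let $q>p$ be the leftmost such position, so $x_q=m$ and, since $x_{p+1}<m$, we have $q\ge p+2$. As $x_q$ is not the leftmost occurrence of $m$, Lemma~\ref{lemma_modasc_ascent_tops} gives $x_{q-1}\ge x_q=m$. If $x_{q-1}=m$, then $x_{q-1}$ is an occurrence of $m$ with $p<q-1<q$, contradicting the minimality of $q$. If instead $x_{q-1}>m$, let $L$ be the leftmost occurrence of $m$; then $L\le p$ and $L\ge 2$ (because $m>1=x_1$), and Lemma~\ref{lemma_modasc_ascent_tops} yields $x_{L-1}<x_L=m$. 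Since $x_{q-1}>m=x_L$ we have $L<q-1$, so $x_{L-1}<x_L<x_{q-1}$ at positions $L-1<L<q-1$ is an occurrence of $123$, again a contradiction. This proves the claim.

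Finally I would assemble the pieces. Both $\reverse(x)$ and $1\reverse(x)$ remain Cayley permutations: reversal preserves the value multiset, and prepending a $1$ to a word already containing $1$ and every value up to its maximum keeps it a Cayley permutation. In the case $x_k>1$, the entry $z_2=x_k$ is trivially the rightmost occurrence of its value and satisfies $z_1=1<z_2$, so both sides of the ascent-top condition hold there without invoking the claim; every other entry exceeding $1$ is governed by the claim via the index translation above. Lemma~\ref{lemma_modasc_ascent_tops} then certifies that $\reverse(x)$ (resp. $1\reverse(x)$) is a modified ascent sequence, completing the argument.
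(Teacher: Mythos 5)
Your proposal is correct. It takes a genuinely different route from the paper's own proof. The paper argues globally: from $x_1=1$ and $123$-avoidance it deduces that all entries greater than $1$ appear in weakly decreasing order, and then uses Lemma~\ref{lemma_modasc_ascent_tops} to pin down the explicit normal form $x=1^{i_1}m^{j_1}1^{i_2}(m-1)^{j_2}\cdots 1^{i_{m-1}}2^{j_{m-1}}1^{i_m}$ (each value forming a single block preceded by at least one $1$), after which the statement is read off by inspecting the reverse of this word. You never derive this block decomposition; instead you prove exactly the local equivalence needed -- for $p\le k-1$ with $x_p>1$, the entry $x_p$ is the rightmost occurrence of its value if and only if $x_p>x_{p+1}$ -- by contradiction from $123$-avoidance and the ascent-top characterization, and then push it through the index translation induced by reversal, treating the boundary entry ($y_1$ or $z_2$) separately. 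What each buys: the paper's normal form is shorter once stated, makes the conclusion visible at a glance, and is structural information that is implicitly reused nearby (e.g.\ the observation in Theorem~\ref{theorem_modasc_class_nec} that the maximum occurs only once when $\sigma_1\sigma_2\sigma_3\not\simeq 122$); your entry-by-entry verification is more self-contained and makes the bookkeeping (Cayley property, index translation, boundary cases) fully explicit where the paper leaves it to the reader. One cosmetic remark: in your second sub-case the inequality $L<q-1$ follows directly from $L\le p$ and $q\ge p+2$; the justification you give ($x_{q-1}>m=x_L$) only yields $L\neq q-1$, so cite the former.
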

\begin{proof}
Let~$m=\max(x)$. Since~$x$ avoids~$123$ (and~$x_1=1$), the elements of~$x$ that are greater than~$1$ are in weakly decreasing order. Therefore, by Lemma~\ref{lemma_modasc_ascent_tops}, we have:
$$
x=1^{i_1}m^{j_1}1^{i_2}(m-1)^{j_2}\cdots 1^{i_{m-1}}2^{j_{m-1}}1^{i_m},
$$
where~$i_u\ge 1$ for each~$u<m$, $i_m\ge 0$ and~$j_v\ge 1$ for each~$v$. Therefore, again by Lemma~\ref{lemma_modasc_ascent_tops}, if~$x_k=1$ (or equivalently~$i_m\ge 1$) then~$\reverse(x)$ is a modified ascent sequence. Similarly, if~$x_k>1$, then we obtain a modified ascent sequence by inserting an additional~$1$ at the beginning of~$\reverse(x)$.
\end{proof}

\begin{theorem}\label{theorem_modasc_class_nec}
Let~$\sigma=\sigma_1\cdots\sigma_k$ be a modified ascent sequence of length at least four. If~$\sigma$ avoids~$123$ and~$\sigma_1\sigma_2\sigma_3$ is not an occurrence of~$122$, then~$\Sort^{\Modasc}(\sigma)$ is not a class.
\end{theorem}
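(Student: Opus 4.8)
The plan is to show that $\Sort^{\Modasc}(\sigma)$ fails to be closed downwards by exhibiting two modified ascent sequences $\gamma\le\alpha$ such that $\alpha$ is $\sigma$-sortable while $\gamma$ is not. This is the same overall strategy used for classical permutations in Theorem~\ref{theorem_necess_cond_class}, for Cayley permutations in Theorem~\ref{theorem_necess_class_Cayley}, and for ascent sequences in Theorem~\ref{theorem_avoids123_nonclass}; the novelty here is that \emph{both} witnesses must satisfy the stringent modified-ascent-sequence condition of Lemma~\ref{lemma_modasc_ascent_tops}, and this is exactly what prevents us from recycling the earlier constructions.

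For the non-sortable witness I would take $\gamma=1232$. This is a modified ascent sequence, since its only repeated letter, the final $2$, sits at a descent bottom and hence is not an ascent top. Because $\sigma$ has length at least four and is a modified ascent sequence, $\reverse(\sigma)$ ends with $\sigma_1=1=\min(\sigma)$, whereas the last letter of $1232$ is not its minimum; comparing the positions of the minima shows $\reverse(\sigma)\not\le 1232$. By Remark~\ref{remark_hat_Cayley} the restriction of the $\sigma$-stack is therefore never triggered on $\gamma$, so $\out{\sigma}(\gamma)=\reverse(1232)=2321$, which contains $231$; hence $\gamma$ is not $\sigma$-sortable.

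The bulk of the work is the construction of the sortable witness $\alpha$, which I would organize as a case analysis driven by the structure of $\sigma$. First I would record that a $123$-avoiding modified ascent sequence has the weakly-decreasing-large-values shape described in Lemma~\ref{lemma_reverse_is_modasc}, so that $\sigma$ also avoids $231$ and consequently $\hat{\sigma}$ avoids $231$ as well. Under the hypothesis $\sigma_1\sigma_2\sigma_3\not\simeq 122$, this shape leaves exactly the subcases $\sigma_1\sigma_2\sigma_3\simeq 111$, $112$, or $121$ (the degenerate $\sigma=1^k$ falling under $111$). In each subcase I would build $\alpha$ by embedding an occurrence of $1232$ together with a block of repeated letters whose sole purpose is to force the $\sigma$-stack restriction to fire; the repeats are always placed at non-ascent-top positions, so that $\alpha$ remains a modified ascent sequence by Lemma~\ref{lemma_modasc_ascent_tops}. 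The model case $\sigma=1^k$ is instructive: the witness $\alpha=1\,2^{k-1}\,3\,2$ is a modified ascent sequence containing $1232$, and it contains $\reverse(\sigma)=1^k$ through its $k$ copies of $2$; tracking the $1^k$-stack shows that when the final $2$ arrives the restriction forces $3$ and one $2$ out first, producing the weakly decreasing output $3\,2^{k}\,1$, which avoids $231$, so $\alpha$ is $\sigma$-sortable. For the remaining subcases the triggering block must be adapted to $\sigma$ so that the expelled letters realize an occurrence of $\hat{\sigma}$ (which avoids $231$ by the above), and one then verifies directly that $\out{\sigma}(\alpha)$ avoids $231$.

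I expect the main obstacle to be precisely this last verification, carried out uniformly for all $\sigma$: the triggering cascade must expel the stacked letters in an order that keeps the output $231$-avoiding, and this is delicate because the large values of $\sigma$ may repeat and the maximum of $\sigma$ is unbounded — unlike the ascent-sequence setting of Theorem~\ref{theorem_avoids123_nonclass}, where Lemma~\ref{lemma_avoids123_maxatmost2} caps the maximum at $2$. In particular, the naive modified-ascent-sequence analogues of the ascent-sequence witnesses of Theorem~\ref{theorem_avoids123_nonclass} typically fail to be sortable, so each gadget must be designed so that the extra letters supporting the $1232$ occurrence leave the $\sigma$-stack alongside an occurrence of $\hat{\sigma}$ without ever creating an ascent that later drops below itself. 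Once the three families of gadgets are exhibited and checked, the inclusion $\gamma\le\alpha$ together with the sortability of $\alpha$ and the non-sortability of $\gamma$ shows that $\Sort^{\Modasc}(\sigma)$ is not a class.
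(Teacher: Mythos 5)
Your opening steps are correct: $1232$ is a modified ascent sequence, your minimum-position argument does show it avoids $\reverse(\sigma)$ (so that $\out{\sigma}(1232)=\reverse(1232)=2321$ contains $231$), the reduction to the three cases $111$, $112$, $121$ is right, and the gadget $1\,2^{k-1}\,3\,2$ genuinely works for $\sigma=1^k$. But every pattern with $\max(\sigma)\ge 2$ is left as a plan (``the triggering block must be adapted to $\sigma$''), and this is not a routine omission that more work would fill in: for at least one pattern in scope the plan cannot be carried out at all.

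Take $\sigma=1121$, which satisfies all hypotheses: it is a modified ascent sequence of length four, it avoids $123$, and $\sigma_1\sigma_2\sigma_3\simeq 112\neq 122$. I claim that \emph{no} $1121$-sortable modified ascent sequence contains $1232$, so the sortable witness your strategy requires does not exist for this $\sigma$. Indeed: (a) an occurrence of $1121$ uses four letters, so a push is never blocked while the stack holds fewer than three letters; hence $\alpha_1=1$ is never popped before the input is exhausted, every output ends with $1$, and therefore a sortable input can never place a letter $u>1$ in the stack with a strictly larger letter below it (the pair would exit in ascending order and form a $231$ with the final $1$). (b) The $1121$-stack blocks a push of $x$ only when the stack contains, reading from the top, a copy of $x$, then a letter greater than $x$, then another copy of $x$; for $x>1$ the upper copy of $x$ would sit above a larger letter, which (a) forbids, so in a sortable run only input letters equal to $1$ ever trigger pops, and any two consecutive input letters greater than $1$ enter the stack back-to-back. (c) Now suppose a sortable $\alpha\in\Modasc$ contained an occurrence of $1232$, with repeated letter $b>1$ and some $c>b$ between two copies of $b$. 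The later copy $b'$ is not the leftmost occurrence of $b$, so by Lemma~\ref{lemma_modasc_ascent_tops} it is not an ascent top and its predecessor $w$ satisfies $w\ge b$; if $w>b$, then by (b) the letters $w$ and $b'$ are pushed consecutively and $b'$ ends up above $w>b'$, contradicting (a); so $w=b$, and iterating, all copies of $b$ up to $b'$ form a single consecutive run starting at the leftmost occurrence of $b$, leaving no room between two copies of $b$ for the larger letter $c$. This is exactly the trap the paper's proof avoids by choosing $1312$ as the non-sortable pattern: its repeated letter is $1$, which Lemma~\ref{lemma_modasc_ascent_tops} exempts from the ascent-top constraint, and the sortable witnesses can then be built uniformly from $\reverse(\sigma)$ by splicing in one or two new maxima (with a leading $1$ added when $\sigma_k>1$). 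To repair your argument you would have to abandon $1232$ as the universal lower witness, at least in case $112$, rather than merely supply the missing gadgets.
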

\begin{proof}
Observe that the modified ascent sequence~$\alpha=1312$ is not~$\sigma$-sortable, since~$\out{\sigma}(\alpha)=\reverse(\alpha)=2131\ge 231$, for each~$\sigma$ of length four or more. We wish to construct a~$\sigma$-sortable modified ascent sequence~$\beta$, with~$\beta\ge\alpha$. We distinguish some cases. Being the other cases similar, we give a detailed proof for the first one only.

\begin{itemize}
\item Suppose that~$\sigma_2=1$ and~$\sigma_k=1$. Define:
$$
\alpha=\sigma_k\cdots\sigma_3\sigma_2(m+2)\sigma_1(m+1),
$$
where~$m=\max(\sigma)$. Notice that~$\sigma_k(m+2)\sigma_1(m+1)$ is an occurrence of~$1312$ in~$\alpha$. Now, an easy computation shows that:
$$
\out{\sigma}(\alpha)=(m+2)\sigma_2(m+1)\sigma_1\sigma_3\cdots\sigma_k.
$$
Observe that~$\out{\sigma}(\alpha)$ avoids~$231$. Indeed~$m+2$ and~$m+1$ are not part of an occurrence of~$231$ (since~$\sigma_2=1$). Moreover, suppose, for a contradiction, that~$\sigma_3\cdots\sigma_k$ contains an occurrence~$\sigma_{i_i}\sigma_{i_2}\sigma_{i_3}$ of of~$231$. Then~$\sigma_1\sigma_{i_2}\sigma_{i_3}$ is an occurrence of~$123$ in~$\sigma$, which contradicts the hypothesis. Finally, we shall prove that~$\alpha$ is a modified ascent sequence. Notice that~$\reverse(\sigma)$ is a modified ascent sequence by Lemma~\ref{lemma_reverse_is_modasc}. The only additional elements are~$m+2$, which is placed immediately after~$\sigma_2=1$ and before~$\sigma_1=1$, and~$m+2$, which is placed at the end, immediately after~$\sigma_1=1$. Therefore~$\alpha$ is a modified ascent sequence by Lemma~\ref{lemma_modasc_ascent_tops}.

\item If~$\sigma_2=1$ and~$\sigma_k>1$, then define~$\alpha$ exactly as in the previous case, but inserting an additional~$1$ at the beginning (as in Lemma~\ref{lemma_reverse_is_modasc}).

\item Suppose that~$\sigma_2>1$ and~$\sigma_k=1$. Then~$\sigma_2=m$ is equal to the maximum value of~$\alpha$, because~$\alpha$ avoids~$123$. Since~$\sigma_1\sigma_2\sigma_3$ is not an occurrence of~$122$, Lemma~\ref{lemma_modasc_ascent_tops} implies that~$\sigma_2$ is the only occurrence of~$m$ in~$\sigma$. Define:
$$
\alpha=\sigma_k\cdots\sigma_3(m+1)\sigma_1\sigma_2.
$$
Then~$\alpha$ contains~$1312$, $\alpha$ is a modified ascent sequence and:
$$
\out{\sigma}(\alpha)=(m+1)\sigma_2\sigma_1\sigma_3\cdots\sigma_k,
$$
which avoids~$231$. We leave the details to the reader.

\item If~$\sigma_2>1$ and~$\sigma_k>1$, then define~$\alpha$ exactly as in the previous case, but inserting an additional~$1$ at the beginning (as in Lemma~\ref{lemma_reverse_is_modasc}).
\end{itemize}
\end{proof}

\begin{table}
\centering
\def\arraystretch{1.1}
\begin{tabular}{lcc}
\toprule
$\sigma$ &~$\sigma$\textbf{-sortable modified sequence} & \textbf{Non-}$\sigma$\textbf{-sortable pattern}\\
\midrule
111 & 11312 & 1312\\
112 & 121413 & 1312\\
\bottomrule
\end{tabular}
\caption[Non-classes of~$\sigma$-sortable modified ascent sequences.]{Patterns~$\sigma$ of length at most three where~$\Sort^{\Modasc}(\sigma)$ is not a class.}\label{table_short_patterns_modasc}
\end{table}

The following corollary, which is an immediate consequence of the results proven so far in this section, provides a characterization of the patterns~$\sigma$ where~$\Sort^{\Modasc}(\sigma)$ is a class.

\begin{corollary}\label{corollary_class_nonclass_modasc}
Let~$\sigma=\sigma_1\cdots\sigma_k$ be a modified ascent sequence. If~$\sigma\in\lbrace 11,12\rbrace$, then~$\Sort^{\Modasc}(\sigma)$ is a class. In all the other cases, $\Sort^{\Modasc}(\sigma)$ is a class if and only if~$\sigma\ge 123$ or~$\sigma_1\sigma_2\sigma_3\simeq 122$. Moreover, if~$\sigma\ge 123$, then~$\Sort^{\Modasc}(\sigma)=\Modasc(132)$. If instead~$\sigma$ avoids~$123$ and~$\sigma_1\sigma_2\sigma_3\simeq 122$, then~$\Sort^{\Modasc}(\sigma)=\Modasc(132,\reverse(\sigma)\oplus 1)$.
\end{corollary}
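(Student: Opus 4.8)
The plan is to derive the statement by assembling the classification results of this subsection, organized according to the length of $\sigma$ and to whether $\sigma$ contains $123$. First I would dispose of the two small patterns singled out in the statement: the only modified ascent sequences of length two are $11$ and $12$, and Theorem~\ref{theorem_modasc_11} and Theorem~\ref{theorem_modasc_12} exhibit $\Sort^{\Modasc}(11)=\Modasc(1213,1223)$ and $\Sort^{\Modasc}(12)=\Modasc(213)$. Both are sets of modified ascent sequences defined by avoidance of a fixed family of patterns, hence downward closed in $\Modasc$, i.e. classes; this settles the first assertion.

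For the remaining patterns (length at least three) I would first establish the ``if'' direction together with the two explicit descriptions in the ``Moreover'' clauses. If $\sigma\ge 123$, Theorem~\ref{theorem_modasc_class_suff_123} gives $\Sort^{\Modasc}(\sigma)=\Modasc(132)$; if instead $\sigma$ avoids $123$ and $\sigma_1\sigma_2\sigma_3\simeq 122$, Theorem~\ref{theorem_modasc_class_suff_122} gives $\Sort^{\Modasc}(\sigma)=\Modasc(132,\reverse(\sigma)\oplus 1)$. In either case $\Sort^{\Modasc}(\sigma)$ is presented as an avoidance set and is therefore a class, and these two identities simultaneously discharge both ``Moreover'' clauses, so no separate argument is needed for them.

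The converse (the ``only if'') is the heart of the matter: I must show that when $\sigma$ avoids $123$ and $\sigma_1\sigma_2\sigma_3\not\simeq 122$, the set $\Sort^{\Modasc}(\sigma)$ fails to be a class. For $\sigma$ of length at least four this is exactly Theorem~\ref{theorem_modasc_class_nec}, which exhibits a $\sigma$-sortable sequence dominating a non-$\sigma$-sortable one, so closure downward fails. What remains is a finite verification at length three, and this is the step I expect to be the main obstacle. Using Lemma~\ref{lemma_modasc_ascent_tops} I would first confirm that the modified ascent sequences of length three are precisely $111,112,121,122,123$; of these, $123$ and $122$ are already covered by the sufficiency theorems above, leaving $111,112$, and $121$ to be examined against the ``only if'' claim. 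The patterns $111$ and $112$ are handled directly by the explicit witnesses in Table~\ref{table_short_patterns_modasc}, which show $\Sort^{\Modasc}(111)$ and $\Sort^{\Modasc}(112)$ are not downward closed. The genuinely delicate case is $121$: Theorem~\ref{theorem_modasc_121} yields $\Sort^{\Modasc}(121)=\Modasc(213)$, and squaring this identity with the stated dichotomy is exactly the point at which the short-pattern bookkeeping has to be carried out most carefully, since $121$ neither contains $123$ nor begins with an occurrence of $122$. I therefore expect the bulk of the remaining effort to consist in making this length-three enumeration exhaustive and aligning it cleanly with the two sufficiency theorems, the length-four-and-beyond necessity theorem, and the tabulated base cases, so that the final case split reproduces exactly the stated criterion.
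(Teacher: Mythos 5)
Your assembly coincides with the paper's: its entire proof of this corollary is the citation list you reproduce, namely Theorems~\ref{theorem_modasc_11}, \ref{theorem_modasc_12} and \ref{theorem_modasc_121} together with Table~\ref{table_short_patterns_modasc} for the short patterns, and Theorems~\ref{theorem_modasc_class_suff_123}, \ref{theorem_modasc_class_suff_122} and \ref{theorem_modasc_class_nec} for everything else, and your use of Lemma~\ref{lemma_modasc_ascent_tops} to check that $111,112,121,122,123$ exhaust $\Modasc_3$ makes the case split airtight.

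The one step you leave open, however, cannot be closed as you hope. The pattern~$121$ is not a bookkeeping subtlety to be ``aligned'' with the stated dichotomy: it is a genuine exception that the statement omits. By Theorem~\ref{theorem_modasc_121} we have $\Sort^{\Modasc}(121)=\Modasc(213)$, which is an avoidance set and hence downward closed, i.e.\ a class; yet $121$ neither contains~$123$ nor satisfies $\sigma_1\sigma_2\sigma_3\simeq 122$, so the ``only if'' direction fails for $\sigma=121$ exactly as written. No amount of careful length-three enumeration will produce a non-class argument for~$121$, because none exists. The correct resolution is to amend the exceptional set to $\lbrace 11,12,121\rbrace$, in perfect analogy with the ascent-sequence version (Corollary~\ref{corollary_class_nonclass}, whose exception list does include~$121$); note that the paper's own one-line proof tacitly does this, listing~$121$ among the patterns ``solved'' by Theorem~\ref{theorem_modasc_121} alongside $11$ and~$12$ rather than among the cases covered by the necessity theorem. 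So your instinct in singling out~$121$ as the delicate point was exactly right, but the deliverable there is a correction to the statement, not a further argument; with that correction your proof is complete and identical in route to the paper's.
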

\begin{proof}
The patterns~$11$, $12$, $121$, $111$ and~$112$ were solved in Theorem~\ref{theorem_modasc_11}, Theorem~\ref{theorem_modasc_12}, Theorem~\ref{theorem_modasc_121} and Table~\ref{table_short_patterns_modasc}. The remaining cases were considered in Theorems~\ref{theorem_modasc_class_suff_123}, \ref{theorem_modasc_class_suff_122} and \ref{theorem_modasc_class_nec}.
\end{proof}

Similarly to what observed in the previous section, in accordance with Corollary~\ref{corollary_class_vs_nonclass_Cayley} and Theorem~\ref{theorem_Cayley_class_to_ascseq}, if~$\hat{\sigma}$ contains~$231$, then~$\sigma$ contains~$123$ and~$\Sort^{\Modasc}(\sigma)$ is a class, as in Corollary~\ref{corollary_class_nonclass_modasc}.

\appendix
\newgeometry{top=2.5cm,bottom=2.5cm}

\chapter{Index of integer sequences}\label{appendix_OEIS}

\centering
{\small
\begin{tabular}{lll}
\toprule
\textbf{OEIS} & \textbf{Sequence} & \textbf{References}\\
\midrule
\href{https://oeis.org/A000079}{A000079} & Powers of~$2$ & Appendix~\ref{appendix_basis_size_two}, Table~\ref{table_sorted_perms}\\
\href{https://oeis.org/A000108}{A000108} & Catalan numbers & Example~\ref{example_pat_involv}, Table~\ref{table_pairs_of_pat}, Table~\ref{table_sorted_perms}\\
\href{https://oeis.org/A000110}{A000110} & Bell numbers & Section~\ref{section_sequences_integers}\\
\href{https://oeis.org/A000124}{A000124} & Central polygonal numbers & Table~\ref{table_sorted_perms}\\
\href{https://oeis.org/A000670}{A000670} & Fubini numbers & Section~\ref{section_sequences_integers}\\
\href{https://oeis.org/A001006}{A001006} & Motzkin numbers & Section~\ref{section_lattice_paths}\\
\href{https://oeis.org/A001519}{A001519} & Odd indexed Fibonacci numbers & Appendix~\ref{appendix_basis_size_two}, Table~\ref{table_decr_pattern}, Theorem~\ref{theorem_modasc_fibonacci_odd}\\
\href{https://oeis.org/A002057}{A002057} & $4$-th convolution of Catalan numbers & Proposition~\ref{proposition_fourth_convolution}\\
\href{https://oeis.org/A006318}{A006318} & Large Schr\"oder numbers & Section~\ref{section_lattice_paths}, Table~\ref{table_pairs_of_pat}\\
\href{https://oeis.org/A007317}{A007317} & Binomial transform of Catalan numbers & Chapter~\ref{chapter_pattern132}, Table~\ref{table_pairs_of_pat}\\
\href{https://oeis.org/A009766}{A009766} & Catalan triangle (ballot numbers) & Section~\ref{section_123_132}\\
\href{https://oeis.org/A011782}{A011782} & & Table~\ref{table_decr_pattern}\\
\href{https://oeis.org/A022493}{A022493} & Fishburn numbers & Section~\ref{section_sequences_integers}\\
\href{https://oeis.org/A024175}{A024175} & & Table~\ref{table_decr_pattern}\\
\href{https://oeis.org/A033184}{A033184} & Catalan triangle transposed & Section~\ref{section_123_132}\\
\href{https://oeis.org/A080937}{A080937} & & Appendix~\ref{appendix_basis_size_two}, Table~\ref{table_decr_pattern}\\
\href{https://oeis.org/A102407}{A102407} & & Table~\ref{table_pairs_of_pat}\\
\href{https://oeis.org/A115139}{A115139} & Catalan polynomials & Section~\ref{section_decreasing_pattern}\\
\href{https://oeis.org/A116845}{A116845} & & Appendix~\ref{appendix_basis_size_two}\\
\href{https://oeis.org/A124302}{A124302} & & Appendix~\ref{appendix_basis_size_two}, Table~\ref{table_decr_pattern}\\
\href{https://oeis.org/A129591}{A129591} & & Theorem~\ref{theorem_first_el_dec_enumer}\\
\href{https://oeis.org/A202062}{A202062} & & Table~\ref{table_unsolved_patterns}\\
\href{https://oeis.org/A294790}{A294790} & Subtract~$n$ from partial sums & Chapter~\ref{chapter_pattern123}\\
 & of partial sums of Catalan numbers & \\
\bottomrule
\end{tabular}
}

\chapter{\texorpdfstring{Non-principal classes~$\Sort(\sigma)$}{Non-principal classes Sort(sigma)}}\label{appendix_basis_size_two}

\centering
{\small
\begin{tabular}{p{10mm}clr}
\toprule
$\sigma$ & \textbf{G.F.} & \textbf{Sequence}~$\lbrace\fsigma{\sigma}_n\rbrace_n$ & \textbf{OEIS}\\
\midrule
321 & \vphantom{\bigg|}~$\frac{1-t}{1-2t}$ & 1, 2, 4, 8, 16, 32, 64, 128, 256, 512 & A000079\\
\midrule
3214\newline
4213\newline
4312\newline
4321 & \vphantom{\bigg|}~$\frac{1-2t}{1-3t+t^2}$ & 1, 2, 5, 13, 34, 89, 233, 610, 1597, 4181 & A001519\\
\midrule
32145 & \vphantom{\bigg|}~$\frac{-3t^4+9t^3-12t^2+6t-1}{(t-1)(t^2-3t+1)^2}$ & 1, 2, 5, 14, 41, 121, 355, 1032, 2973, 8496 & A116845\\
52134 & \vphantom{\bigg|}~$\frac{(1-t)(2t-1)^2}{t^4-9t^3+12t^2-6t+1}$ & 1, 2, 5, 14, 41, 121, 355, 1033, 2986, 8594 & \\
54123 & \vphantom{\bigg|}~$\frac{1-4t+5t^2-3t^3}{t^4 -6t^3+8t^2-5t+1}$ & 1, 2, 5, 14, 41, 121, 356, 1044, 3057, 8948 & \\
32154\newline
42135\newline
43125\newline
43215\newline
52143\newline
53124\newline 
53214\newline
54132\newline
54213\newline
54312\newline
54321\newline & \vphantom{\bigg|}~$\frac{t^2-3t+1}{3t^2-4t+1}$ & 1, 2, 5, 14, 41, 122, 365, 1094, 3281, 9842 & A124302\\
\bottomrule
\end{tabular}
}

\centering
{\small
\begin{tabular}{p{10mm}clr}
\toprule
$\sigma$ & \textbf{G.F.} & \textbf{Sequence}~$\lbrace\fsigma{\sigma}_n\rbrace_n$ & \textbf{OEIS}\\
\midrule
321645\footnotemark\newline
$\vdots$\newline
654321 & \vphantom{\bigg|}~$\frac{3t^2-4t+1}{1-5t+6t^2-t^3}$ & 1, 2, 5, 14, 42, 131, 417, 1341, 4334, 14041 & A080937\\
421356\newline
431256\newline
432156 & \vphantom{\bigg|}~$\frac{2t^5-16t^4+29t^3-23t^2+8t-1}{9t^5-33t^4+46t^3-30t^2+9t-1}$ & 1, 2, 5, 14, 42, 131, 416, 1329, 4247, 13544 & \\
631245\newline
632145 & \vphantom{\bigg|}~$\frac{t^5-7t^4+17t^3-17t^2+7t-1}{4t^5-16t^4+29t^3-23t^2+8t-1}$ & 1, 2, 5, 14, 42, 131, 416, 1329, 4247, 13545 & \\
621345 & \vphantom{\bigg|}~$\Fsigma{621345(t)}$\footnotemark & 1, 2, 5, 14, 42, 131, 414, 1304, 4065, 12530 & \\
521346\newline
651243\newline
651324\newline
652134 & \vphantom{\bigg|}~$\frac{t^4-9t^3+12t^2-6t+1}{5t^4-17t^3+17t^2-7t+1}$ & 1, 2, 5, 14, 42, 131, 416, 1329, 4248, 13560 & \\
541236\newline
641235\newline
654123 & \vphantom{\bigg|}~$\frac{t^4-6t^3+8t^2-5t+1}{4t^4-11t^3+12t^2-6t+1}$ & 1, 2, 5, 14, 42, 131, 416, 1330, 4261, 13658 & \\
321465\newline
321546 & \vphantom{\bigg|}~$\frac{t^4-12t^3+16t^2-7t+1}{6t^4-23t^3+22t^2-8t+1}$ & 1, 2, 5, 14, 42, 131, 416, 1328, 4233, 13430 & \\
621354\newline
621435 & \vphantom{\bigg|}~$\frac{t^4-8t^3+12t^2-6t+1}{(t^2-3t+1)(4t^2-4t+1)}$ & 1, 2, 5, 14, 42, 131, 416, 1328, 4234, 13446 & \\
651234 & \vphantom{\bigg|}~$\frac{1-6t+14t^2-17t^3+10t^4-4t^5}{t^6-9t^5+20t^4-27t^3+19t^2-7t+1}$ & 1, 2, 5, 14, 42, 131, 414, 1306, 4094, 12766 & \\
321456 & \vphantom{\bigg|}~$\Fsigma{321456}(t)$\footnotemark & 1, 2, 5, 14, 42, 131, 414, 1304, 4063, 12497 & \\
\bottomrule
\end{tabular}
}
\small{
\footnotetext[1]{321654, 421365, 431265, 432165, 521436, 531246, 532146, 541326, 542136, 543126, 543216, 621534, 621543, 631254, 632154, 641325, 642135, 643125, 643215, 651423, 651432, 652143, 653124, 653214, 654132, 654213, 654312, 654321.}
\footnotetext[2]{\vphantom{\bigg|}~$\Fsigma{621345(t)}=\frac{(1-t)^3(2t-1)^3}{t^7-24t^6+74t^5-109t^4+89t^3-41t^2+10t-1}$.}
\footnotetext[3]{\vphantom{\bigg|}~$\Fsigma{321456}(t)=\frac{1-11t+50t^2-122t^3+175t^4-152t^5+79t^6-25t^7+4t^8}{(1-t)^3(t^2-3t+1)^3}$.}
}

\chapter{\texorpdfstring{Enumerative data for~$\Sort(\sigma)$}{Enumerative data for Sort(sigma)}}\label{appendix_sortable}

\centering
{\small
\begin{tabular}{ll}
\toprule
$\sigma$ & \textbf{Sequence}~$\lbrace\fsigma{\sigma}_n\rbrace_n$\\
\midrule
123 & 1, 2, 5, 13, 35, 99, 295, 920, 2975, 9892, 33605\\
132 & 1, 2, 5, 15, 51, 188, 731, 2950, 12235, 51822, 223191\\
213 & 1, 2, 5, 16, 62, 273, 1307, 6626, 35010, 190862, 1066317\\
231 & 1, 2, 6, 23, 102, 496, 2569, 13934, 78295, 452439, 2674769\\
312 & 1, 2, 5, 15, 52, 201, 843, 3764, 17659, 86245, 435492\\
321 & 1, 2, 4, 8, 16, 32, 64, 128, 256, 512, 1024\\
\midrule
1234 & 1, 2, 5, 14, 40, 113, 319, 918, 2731, 8438, 27011\\
1243 & 1, 2, 5, 14, 41, 122, 366, 1108, 3397, 10586, 33618\\
1324 & 1, 2, 5, 14, 42, 134, 455, 1640, 6229, 24692, 101205\\
1342 & 1, 2, 5, 14, 42, 132, 429, 1430, 4862, 16796, 58786\\
1423 & 1, 2, 5, 14, 44, 154, 588, 2396, 10237, 45284, 205608\\
1432 & 1, 2, 5, 14, 43, 144, 521, 2010, 8156, 34402, 149496\\
2134 & 1, 2, 5, 14, 45, 170, 740, 3567, 18408, 99505, 555982\\
2143 & 1, 2, 5, 14, 44, 157, 634, 2844, 13829, 71318, 383825\\
2314 & 1, 2, 5, 15, 53, 215, 972, 4767, 24837, 135434, 764875\\
2341 & 1, 2, 5, 14, 42, 132, 429, 1430, 4862, 16796, 58786\\
2413 & 1, 2, 5, 15, 52, 201, 842, 3745, 17435, 84119, 417617\\
2431 & 1, 2, 5, 14, 42, 132, 429, 1430, 4862, 16796, 58786\\
3124 & 1, 2, 5, 14, 44, 155, 603, 2541, 11401, 53758, 263847\\
3142 & 1, 2, 5, 14, 42, 132, 429, 1430, 4862, 16796, 58786\\
3214 & 1, 2, 5, 13, 34, 89, 233, 610, 1597, 4181, 10946\\
3241 & 1, 2, 5, 14, 42, 132, 429, 1430, 4862, 16796, 58786\\
3412 & 1, 2, 5, 15, 53, 214, 954, 4562, 22929, 119512, 640367\\
3421 & 1, 2, 5, 15, 53, 214, 954, 4562, 22929, 119512, 640367\\
4123 & 1, 2, 5, 14, 42, 135, 467, 1731, 6803, 28031, 119976\\
4132 & 1, 2, 5, 14, 43, 144, 522, 2027, 8334, 35894, 160531\\
4213 & 1, 2, 5, 13, 34, 89, 233, 610, 1597, 4181, 10946\\
4231 & 1, 2, 5, 14, 42, 132, 429, 1430, 4862, 16796, 58786\\
4312 & 1, 2, 5, 13, 34, 89, 233, 610, 1597, 4181, 10946\\
4321 & 1, 2, 5, 13, 34, 89, 233, 610, 1597, 4181, 10946\\
\bottomrule
\end{tabular}
}

\centering
{\small
\begin{tabular}{ll}
\toprule
$\sigma$ & \textbf{Sequence}~$\lbrace\fsigma{\sigma}_n\rbrace_n$\\
\midrule
12345 & 1, 2, 5, 14, 42, 129, 391, 1158, 3384, 9924\\
12354 & 1, 2, 5, 14, 42, 130, 405, 1257, 3883, 11980\\
12435 & 1, 2, 5, 14, 42, 130, 405, 1257, 3883, 11980\\
12453 & 1, 2, 5, 14, 42, 132, 429, 1430, 4862, 16796\\
12534 & 1, 2, 5, 14, 42, 131, 417, 1341, 4335, 14059\\
12543 & 1, 2, 5, 14, 42, 131, 417, 1341, 4335, 14059\\
13245 & 1, 2, 5, 14, 42, 131, 420, 1388, 4765, 17094\\
13254 & 1, 2, 5, 14, 42, 132, 430, 1447, 5032, 18110\\
13425 & 1, 2, 5, 14, 42, 132, 429, 1430, 4862, 16796\\
13452 & 1, 2, 5, 14, 42, 132, 429, 1430, 4862, 16796\\
13524 & 1, 2, 5, 14, 42, 132, 429, 1430, 4862, 16796\\
13542 & 1, 2, 5, 14, 42, 132, 429, 1430, 4862, 16796\\
14235 & 1, 2, 5, 14, 42, 133, 444, 1566, 5841, 22989\\
14253 & 1, 2, 5, 14, 42, 132, 429, 1430, 4862, 16796\\
14325 & 1, 2, 5, 14, 42, 132, 432, 1475, 5272, 19756\\
14352 & 1, 2, 5, 14, 42, 132, 429, 1430, 4862, 16796\\
14523 & 1, 2, 5, 14, 42, 132, 429, 1430, 4862, 16796\\
14532 & 1, 2, 5, 14, 42, 132, 429, 1430, 4862, 16796\\
15234 & 1, 2, 5, 14, 42, 135, 470, 1773, 7170, 30636\\
15243 & 1, 2, 5, 14, 42, 134, 456, 1657, 6403, 26098\\
15324 & 1, 2, 5, 14, 42, 134, 456, 1657, 6404, 26117\\
15342 & 1, 2, 5, 14, 42, 132, 429, 1430, 4862, 16796\\
15423 & 1, 2, 5, 14, 42, 133, 443, 1552, 5717, 22092\\
15432 & 1, 2, 5, 14, 42, 133, 443, 1552, 5717, 22092\\
21345 & 1, 2, 5, 14, 42, 136, 493, 2043, 9547, 48738\\
21354 & 1, 2, 5, 14, 42, 135, 474, 1851, 8061, 38601\\
21435 & 1, 2, 5, 14, 42, 135, 474, 1851, 8061, 38601\\
21453 & 1, 2, 5, 14, 42, 132, 429, 1430, 4862, 16796\\
21534 & 1, 2, 5, 14, 42, 134, 458, 1696, 6857, 30246\\
21543 & 1, 2, 5, 14, 42, 134, 458, 1696, 6857, 30246\\
23145 & 1, 2, 5, 14, 43, 146, 552, 2316, 10642, 52641\\
23154 & 1, 2, 5, 14, 43, 145, 539, 2208, 9896, 47917\\
23415 & 1, 2, 5, 14, 42, 132, 429, 1430, 4862, 16796\\
23451 & 1, 2, 5, 14, 42, 132, 429, 1430, 4862, 16796\\
23514 & 1, 2, 5, 14, 42, 132, 429, 1430, 4862, 16796\\
23541 & 1, 2, 5, 14, 42, 132, 429, 1430, 4862, 16796\\
24135 & 1, 2, 5, 14, 43, 144, 523, 2045, 8530, 37583\\
24153 & 1, 2, 5, 14, 42, 132, 429, 1430, 4862, 16796\\
24315 & 1, 2, 5, 14, 42, 132, 429, 1430, 4862, 16796\\
24351 & 1, 2, 5, 14, 42, 132, 429, 1430, 4862, 16796\\
\bottomrule
\end{tabular}
}

\centering
{\small
\begin{tabular}{ll}
\toprule
$\sigma$ & \textbf{Sequence}~$\lbrace\fsigma{\sigma}_n\rbrace_n$\\
\midrule
24513 & 1, 2, 5, 14, 42, 132, 429, 1430, 4862, 16796\\
24531 & 1, 2, 5, 14, 42, 132, 429, 1430, 4862, 16796\\
25134 & 1, 2, 5, 14, 43, 145, 534, 2119, 8921, 39327\\
25143 & 1, 2, 5, 14, 43, 144, 522, 2028, 8352, 36088\\
25314 & 1, 2, 5, 14, 42, 132, 429, 1430, 4862, 16796\\
25341 & 1, 2, 5, 14, 42, 132, 429, 1430, 4862, 16796\\
25413 & 1, 2, 5, 14, 42, 132, 429, 1430, 4862, 16796\\
25431 & 1, 2, 5, 14, 42, 132, 429, 1430, 4862, 16796\\
31245 & 1, 2, 5, 14, 42, 135, 471, 1793, 7400, 32692\\
31254 & 1, 2, 5, 14, 42, 134, 457, 1675, 6602, 27852\\
31425 & 1, 2, 5, 14, 42, 132, 429, 1430, 4862, 16796\\
31452 & 1, 2, 5, 14, 42, 132, 429, 1430, 4862, 16796\\
31524 & 1, 2, 5, 14, 42, 132, 429, 1430, 4862, 16796\\
31542 & 1, 2, 5, 14, 42, 132, 429, 1430, 4862, 16796\\
32145 & 1, 2, 5, 14, 41, 121, 355, 1032, 2973, 8496\\
32154 & 1, 2, 5, 14, 41, 122, 365, 1094, 3281, 9842\\
32415 & 1, 2, 5, 14, 42, 132, 429, 1430, 4862, 16796\\
32451 & 1, 2, 5, 14, 42, 132, 429, 1430, 4862, 16796\\
32514 & 1, 2, 5, 14, 42, 132, 429, 1430, 4862, 16796\\
32541 & 1, 2, 5, 14, 42, 132, 429, 1430, 4862, 16796\\
34125 & 1, 2, 5, 14, 43, 145, 538, 2188, 9650, 45495\\
34152 & 1, 2, 5, 14, 42, 132, 429, 1430, 4862, 16796\\
34215 & 1, 2, 5, 14, 43, 145, 538, 2188, 9650, 45495\\
34251 & 1, 2, 5, 14, 42, 132, 429, 1430, 4862, 16796\\
34512 & 1, 2, 5, 14, 42, 132, 429, 1430, 4862, 16796\\
34521 & 1, 2, 5, 14, 42, 132, 429, 1430, 4862, 16796\\
35124 & 1, 2, 5, 14, 43, 144, 522, 2027, 8332, 35849\\
35142 & 1, 2, 5, 14, 42, 132, 429, 1430, 4862, 16796\\
35214 & 1, 2, 5, 14, 43, 144, 522, 2027, 8332, 35849\\
35241 & 1, 2, 5, 14, 42, 132, 429, 1430, 4862, 16796\\
35412 & 1, 2, 5, 14, 42, 132, 429, 1430, 4862, 16796\\
35421 & 1, 2, 5, 14, 42, 132, 429, 1430, 4862, 16796\\
41235 & 1, 2, 5, 14, 42, 133, 445, 1578, 5924, 23418\\
41253 & 1, 2, 5, 14, 42, 132, 429, 1430, 4862, 16796\\
41325 & 1, 2, 5, 14, 42, 134, 456, 1658, 6422, 26314\\
41352 & 1, 2, 5, 14, 42, 132, 429, 1430, 4862, 16796\\
41523 & 1, 2, 5, 14, 42, 132, 429, 1430, 4862, 16796\\
41532 & 1, 2, 5, 14, 42, 132, 429, 1430, 4862, 16796\\
42135 & 1, 2, 5, 14, 41, 122, 365, 1094, 3281, 9842\\
\bottomrule
\end{tabular}
}

\centering
{\small
\begin{tabular}{ll}
\toprule
$\sigma$ & \textbf{Sequence}~$\lbrace\fsigma{\sigma}_n\rbrace_n$\\
\midrule
42153 & 1, 2, 5, 14, 42, 132, 429, 1430, 4862, 16796\\
42315 & 1, 2, 5, 14, 42, 132, 429, 1430, 4862, 16796\\
42351 & 1, 2, 5, 14, 42, 132, 429, 1430, 4862, 16796\\
42513 & 1, 2, 5, 14, 42, 132, 429, 1430, 4862, 16796\\
42531 & 1, 2, 5, 14, 42, 132, 429, 1430, 4862, 16796\\
43125 & 1, 2, 5, 14, 41, 122, 365, 1094, 3281, 9842\\
43152 & 1, 2, 5, 14, 42, 132, 429, 1430, 4862, 16796\\
43215 & 1, 2, 5, 14, 41, 122, 365, 1094, 3281, 9842\\
43251 & 1, 2, 5, 14, 42, 132, 429, 1430, 4862, 16796\\
43512 & 1, 2, 5, 14, 42, 132, 429, 1430, 4862, 16796\\
43521 & 1, 2, 5, 14, 42, 132, 429, 1430, 4862, 16796\\
45123 & 1, 2, 5, 14, 43, 146, 550, 2279, 10216, 48660\\
45132 & 1, 2, 5, 14, 43, 145, 538, 2187, 9628, 45205\\
45213 & 1, 2, 5, 14, 43, 145, 538, 2187, 9628, 45205\\
45231 & 1, 2, 5, 14, 42, 132, 429, 1430, 4862, 16796\\
45312 & 1, 2, 5, 14, 43, 145, 538, 2187, 9628, 45205\\
45321 & 1, 2, 5, 14, 43, 145, 538, 2187, 9628, 45205\\
51234 & 1, 2, 5, 14, 42, 131, 421, 1403, 4893, 17932\\
51243 & 1, 2, 5, 14, 42, 132, 432, 1476, 5288, 19908\\
51324 & 1, 2, 5, 14, 42, 132, 432, 1476, 5288, 19908\\
51342 & 1, 2, 5, 14, 42, 132, 429, 1430, 4862, 16796\\
51423 & 1, 2, 5, 14, 42, 133, 443, 1552, 5718, 22113\\
51432 & 1, 2, 5, 14, 42, 133, 443, 1552, 5718, 22113\\
52134 & 1, 2, 5, 14, 41, 121, 355, 1033, 2986, 8594\\
52143 & 1, 2, 5, 14, 41, 122, 365, 1094, 3281, 9842\\
52314 & 1, 2, 5, 14, 42, 132, 429, 1430, 4862, 16796\\
52341 & 1, 2, 5, 14, 42, 132, 429, 1430, 4862, 16796\\
52413 & 1, 2, 5, 14, 42, 132, 429, 1430, 4862, 16796\\
52431 & 1, 2, 5, 14, 42, 132, 429, 1430, 4862, 16796\\
53124 & 1, 2, 5, 14, 41, 122, 365, 1094, 3281, 9842\\
53142 & 1, 2, 5, 14, 42, 132, 429, 1430, 4862, 16796\\
53214 & 1, 2, 5, 14, 41, 122, 365, 1094, 3281, 9842\\
53241 & 1, 2, 5, 14, 42, 132, 429, 1430, 4862, 16796\\
53412 & 1, 2, 5, 14, 42, 132, 429, 1430, 4862, 16796\\
53421 & 1, 2, 5, 14, 42, 132, 429, 1430, 4862, 16796\\
54123 & 1, 2, 5, 14, 41, 121, 356, 1044, 3057, 8948\\
54132 & 1, 2, 5, 14, 41, 122, 365, 1094, 3281, 9842\\
54213 & 1, 2, 5, 14, 41, 122, 365, 1094, 3281, 9842\\
54231 & 1, 2, 5, 14, 42, 132, 429, 1430, 4862, 16796\\
54312 & 1, 2, 5, 14, 41, 122, 365, 1094, 3281, 9842\\
54321 & 1, 2, 5, 14, 41, 122, 365, 1094, 3281, 9842\\
\bottomrule
\end{tabular}
}

\chapter{Listings}\label{appendix_listings}

\begin{algorithm}
$Stack:=\emptyset$\;
\While{$i\leq n$}
{
    \If{$Stack=\emptyset$ or $\pi_i < TOP(Stack)$}
    {
        $\textnormal{execute}$ S\;
        $i:=i+1$\;
    }
    \Else{$\textnormal{execute}$ O\;}
}
\While{$Stack \neq \emptyset$}
{$\textnormal{execute}$ O\;}
\caption{Stacksort. Here~$Stack$ is the stack, $TOP(Stack)$ is the current top of the stack, $\pi=\pi_1\cdots\pi_n$ is the input permutation.}\label{listing_stacksort}
\end{algorithm}

\begin{algorithm}
$Stack_I:=\emptyset$\;
$Stack_{\sigma}:=\emptyset$\;
$i:=1$\;
\While{$i\leq n$}
{
    \If{$\sigma\nleq Stack_{\sigma}\circ\pi_i$}
    {
        $\textnormal{execute}$ $P_{\sigma}$\;
        $i:=i+1$\;
    }
    \ElseIf{$Stack_I =\emptyset$ or $TOP(Stack_{\sigma})<TOP(Stack_I )$}
    {$\textnormal{execute}$ $P_{I}$\;}
    {$\textnormal{execute}$ O\;}
}
\While{$Stack_{\sigma}\neq\emptyset$}
{
    \eIf{$Stack_I =\emptyset$ or $TOP(Stack_{\sigma})<TOP(Stack_I )$}
    {$\textnormal{execute}$ $P_{I}$\;}
    {$\textnormal{execute}$ O\;}
}
\While{$Stack_I\neq\emptyset$}
{$\textnormal{execute}$ O\;}
\caption{The~$\sigma$-machine. Here~$Stack_{\sigma}$ is the~$\sigma$-avoiding stack, $Stack_I$ is the increasing stack, $P_{\sigma}$ means pushing into~$Stack_{\sigma}$, $P_I$ means pushing into~$Stack_I$, O means moving~$TOP(Stack_I)$ into the output, $\circ$ is the concatenation operation.}\label{listing_sigma_avoiding}
\end{algorithm}

\end{document}